\documentclass[10pt,oneside, draft]{amsart}
\usepackage[a4paper, textwidth=16cm]{geometry}
\usepackage[T1]{fontenc}
\usepackage[english]{babel}
\usepackage{xcolor} 
\usepackage{tikz} 
\usepackage[draft=false]{hyperref} 
\usepackage{mathtools}
\usepackage{amsmath,amsthm,amssymb}
\usepackage[all,cmtip]{xy}


\usepackage[capitalise]{cleveref}  
\newcommand{\clevertheorem}[3]{%
	\newtheorem{#1}[thm]{#2}
	\crefname{#1}{#2}{#3}
}
\numberwithin{equation}{section} 
\numberwithin{figure}{section} 

\theoremstyle{plain} 
\newtheorem{thm}{Theorem}[section]
\crefname{thm}{Theorem}{Theorems}
\newtheorem*{thm*}{Theorem}
\clevertheorem{proposition}{Proposition}{Propositions}
\clevertheorem{prop}{Proposition}{Propositions}
\newtheorem*{prop*}{Proposition}
\clevertheorem{theorem}{Theorem}{Theorems}
\clevertheorem{lemma}{Lemma}{Lemmas}

\clevertheorem{corollary}{Corollary}{Corollaries}
\clevertheorem{cor}{Corollary}{Corollaries}
\clevertheorem{conj}{Conjecture}{Conjectures}
\clevertheorem{questn}{Question}{Questions}

\theoremstyle{definition} 
\clevertheorem{definition}{Definition}{Definitions}
\clevertheorem{assumption}{Assumption}{Assumptions}
\clevertheorem{defn}{Definition}{Definitions}
\clevertheorem{notn}{Notation}{Notations}
\clevertheorem{conv}{Convention}{Conventions}

\clevertheorem{remark}{Remark}{Remarks}
\clevertheorem{rmk}{Remark}{Remarks}
\clevertheorem{warn}{Warning}{Warnings}
\clevertheorem{example}{Example}{Examples}
\clevertheorem{summ}{Summary}{Summaries}

\usepackage{amsmath,amsthm,amssymb,amsfonts,extarrows}
\usepackage{enumerate,amscd,amsxtra,MnSymbol}
\usepackage{mathrsfs}
\usepackage{color}
\usepackage[cmtip,all]{xy}
\usepackage{eucal}
\usepackage{bbold}
\usepackage{upgreek}
\usepackage{paralist}
\usepackage{tikz-cd}
\usepackage{dsfont}
\usepackage{bbm}
\usepackage[bbgreekl]{mathbbol}

\DeclareMathSymbol\bbDelta \mathord{bbold}{"01}
\DeclareMathSymbol\bDelta \mathord{bbold}{"01}

\newtheorem{remark*}{Remark}

\newtheorem{construction}[thm]{Construction}

\newtheorem{notation}[thm]{Notation}

\newcommand{\bE}{{\mathbb E}}

\renewcommand{\P}{{\mathbb P}}

\newcommand{\mA}{{\mathcal A}}
\newcommand{\mB}{{\mathcal B}}
\newcommand{\mC}{{\mathcal C}}
\newcommand{\mD}{{\mathcal D}}
\newcommand{\mE}{{\mathcal E}}
\newcommand{\mF}{{\mathcal F}}

\newcommand{\mJ}{{\mathcal J}}
\newcommand{\mK}{{\mathcal K}}

\newcommand{\mM}{{\mathcal M}}
\newcommand{\mN}{{\mathcal N}}
\newcommand{\mO}{{\mathcal O}}
\newcommand{\mP}{{\mathcal P}}
\newcommand{\mQ}{{\mathcal Q}}

\newcommand{\mS}{{\mathcal S}}
\newcommand{\mT}{{\mathcal T}}
\newcommand{\mU}{{\mathcal U}}
\newcommand{\mV}{{\mathcal V}}
\newcommand{\mW}{{\mathcal W}}
\newcommand{\mX}{{\mathcal X}}
\newcommand{\mY}{{\mathcal Y}}
\newcommand{\mZ}{{\mathcal Z}}

\newcommand{\A}{{\mathrm A}}
\newcommand{\B}{{\mathrm B}}
\newcommand{\C}{{\mathrm C}}
\newcommand{\D}{{\mathrm D}}

\newcommand{\F}{{\mathrm F}}
\newcommand{\G}{{\mathrm G}}
\newcommand{\rH}{{\mathrm H}}

\newcommand{\K}{{\mathrm K}}
\renewcommand{\L}{{\mathrm L}}

\newcommand{\N}{{\mathrm N}}
\renewcommand{\P}{{\mathrm P}}
\newcommand{\Q}{{\mathrm Q}}
\newcommand{\R}{{\mathrm R}}
\newcommand{\rS}{{\mathrm S}}
\newcommand{\T}{{\mathrm T}}
\newcommand{\U}{{\mathrm U}}
\newcommand{\V}{{\mathrm V}}
\newcommand{\W}{{\mathrm W}}
\newcommand{\X}{{\mathrm X}}
\newcommand{\Y}{{\mathrm Y}}
\newcommand{\Z}{{\mathrm Z}}

\newcommand{\rc}{{c}}

\newcommand{\y}{\mathrm{y}}

\newcommand{\bj}{\mathrm{j}}
\newcommand{\bi}{\mathrm{i}}
\newcommand{\m}{\mathrm{m}}
\newcommand{\bk}{\mathrm{k}}

\newcommand{\q}{\mathrm{q}}
\newcommand{\g}{\mathrm{g}}
\newcommand{\n}{\mathrm{n}}

\newcommand{\op}{\mathrm{op}}
\newcommand{\s}{s}
\newcommand{\colim}{\mathrm{colim}}
\newcommand{\Mod}{{\mathrm{Mod}}}
\newcommand{\LMod}{{\mathrm{LMod}}}
\newcommand{\RMod}{{\mathrm{RMod}}}
\newcommand{\BMod}{{\mathrm{BMod}}}

\newcommand{\Ass}{  {\mathrm {   Ass  } }   }

\newcommand{\rev}{{\mathrm{rev}}}

\newcommand{\Env}{{\mathrm{Env}}}  
\newcommand{\ot}{\otimes}

\newcommand{\id}{\mathrm{id}}
\newcommand{\Cat}{\mathrm{Cat}}

\newcommand{\Set}{\mathrm{Set}}

\newcommand{\Alg}{\mathrm{Alg}}

\newcommand{\Comm}{\mathrm{Comm}}

\newcommand{\rep}{\mathrm{rep}}
\newcommand{\lan}{\mathrm{lan}}
\newcommand{\Pseu}{\mathrm{Pseu}}

\newcommand{\Mon}{\mathrm{Mon}}
\newcommand{\Fun}{\mathrm{Fun}}
\newcommand{\Op}{{\mathrm{Op}}}

\newcommand{\tu}{{\mathbb 1}}

\newcommand{\Mul}{{\mathrm{Mul}}}
\newcommand{\ev}{{\mathrm{ev}}}
\newcommand{\Ind}{{\mathrm{Ind}}}

\newcommand{\Enr}{{\mathrm{Enr}}} 

\newcommand{\LinFun}{{\mathrm{LinFun}}}

\newcommand{\Mor}{{\mathrm{Mor}}} 

\newcommand{\map}{{\mathrm{map}}}

\newcommand{\f}{{\mathrm{f}}}

\newcommand{\Ho}{{\mathrm{Ho}}}

\newcommand{\triv}{{\mathrm{triv}}}  
  
\newcommand{\LM}{{\mathrm{LM}}}  
  
\newcommand{\BM}{{\mathrm{BM}}}  

\newcommand{\mi}{{\mathrm{min}}}  
\newcommand{\ma}{{\mathrm{max}}}  
\newcommand{\Min}{{\mathrm{Min}}}  
\newcommand{\Max}{{\mathrm{Max}}}  
\newcommand{\Tw}{{\mathrm{Tw}}} 
\newcommand{\Act}{{\mathrm{Act}}} 
\newcommand{\cc}{{\mathrm{cc}}} 

\usepackage{pdfpages}

\title{On bienriched $\infty$-categories}

\author{Hadrian Heine, \\ University of Oslo, Norway, \\ hadriah@math.uio.no}

\begin{document}
	
\maketitle

\begin{abstract}
We extend Lurie's definition of enriched $\infty$-categories to the notions of left enriched, right enriched and bienriched $\infty$-categories, which generalize the notions of closed left tensored, right tensored and bitensored $\infty$-categories and share many desirable features with them.
We use bienriched $\infty$-categories to endow the $\infty$-category of enriched functors
with enrichment that generalizes both the internal hom of the tensor product of enriched $\infty$-categories when the latter exists, and the free cocompletion under colimits and tensors.
As an application we construct enriched Kan-extensions from operadic Kan-extensions, compute the monad for enriched functors, prove an end formula for morphism objects of enriched $\infty$-categories of enriched functors and a coend formula for the relative tensor product of enriched profunctors and construct transfer of enrichment from scalar extension of presentably bitensored $\infty$-categories.
In particular, we develop an independent theory of enriched $\infty$-categories for Lurie's model of enriched $\infty$-categories.

\end{abstract}

\tableofcontents


\section{Introduction}

\vspace{2mm}
Gepner-Haugseng \cite{GEPNER2015575} develop a theory of enriched $\infty$-categories
that both at the same time extends the theory of $\infty$-categories and the theory of non-commutative higher algebra: the authors prove that $\infty$-categories enriched in spaces are equivalent to Segal spaces \cite[Theorem 4.4.6.]{GEPNER2015575}, a model for $\infty$-categories, and define enriched $\infty$-categories as a many object version of $A_\infty$-algebras \cite[Definition 2.4.5.]{GEPNER2015575}, a homotopy coherent version of associative algebras, with which enriched $\infty$-categories share several properties:
any $A_\infty$-algebra in a combinatorial monoidal model category $\mM$ is equivalent to
an associative algebra in $\mM$ \cite[Theorem 4.1.8.4.]{lurie.higheralgebra}.
Similarly, every $\infty$-category enriched in the underlying monoidal $\infty$-category
of $\mM$ is equivalent to a $\mM$-enriched category \cite[Theorem 1.1.]{MR3402334}.
This covers most examples of interest since every presentably monoidal $\infty$-category $\mV$ underlies a monoidal combinatorial model category \cite[Remark 4.1.8.9.]{lurie.higheralgebra}.
Consequently, as long as one works with $\infty$-categories enriched in a presentably monoidal $\infty$-category, nothing new arises when passing from enriched category theory to enriched $\infty$-category theory so far as one is just interested in enriched $\infty$-categories but not in enriched functors. On the other hand $\infty$-categories of enriched functors are generally tremendously hard to model by their strict counterparts:
even in the most basic example of interest, enrichment in spaces, 
one finds that the $\infty$-category of (spaces-enriched) functors is not modeled by the category of simplicially enriched functors since Bergner's model structure on simplicial categories fails to be compatible with the cartesian product \cite[Definition A 3.4.9.]{lurie.HTT}. From this perspective enriched $\infty$-categories of enriched functors are the real topic and challenge of enriched $\infty$-category theory \cite{BENMOSHE2024107625}, \cite{berman2020enriched}, \cite{heine2024equivalence}, \cite{HINICH2020107129}, \cite{hinich2021colimits} and good control about enriched $\infty$-categories of enriched functors is highly desirable.

From the conceptual point of view there are several obstructions to the existence of enrichment on the $\infty$-category of enriched functors:
if $\mV$ is a presentably braided monoidal $\infty$-category, there is a natural candidate for a $\mV$-enriched $\infty$-category of $\mV$-enriched functors as the internal hom for a tensor product of $\mV$-enriched $\infty$-categories:
Gepner-Haugseng \cite[Corollary 5.6.12.]{GEPNER2015575}, Haugseng \cite{haugseng2023tensor}
construct a tensor product for enriched $\infty$-categories, which exhibits the $\infty$-category of small $\infty$-categories enriched in any presentably $\bE_{\n+1}$-monoidal $\infty$-category for $\n \geq 1$ as closed $\bE_\n$-monoidal.
On the other hand if $\mV$ is a presentably monoidal $\infty$-category, an important situation in non-commutative algebra and higher category theory \cite{campion2023gray}, there is no need for any $\mV$-enrichment on the $\infty$-category of $\mV$-enriched functors. However, the $\infty$-category of $\mV$-enriched presheaves on any small $\mV$-enriched $\infty$-category is always $\mV$-enriched as long as $\mV$ is presentably monoidal. This relies on the fact that $\mV$ has more structure than being a $\mV$-enriched $\infty$-category via its closed left $\mV$-action: the monoidal structure on $\mV$ endows $\mV$ with a closed $\mV,\mV$-biaction. 

To construct and systematically study enrichment on $\infty$-categories of enriched functors
we extend the concepts of left, right and bitensored $\infty$-categories to concepts of left, right and bienriched $\infty$-categories that generalize Lurie's notion of enriched $\infty$-category \cite[Definition 4.2.1.28.]{lurie.higheralgebra}:
Lurie defines enriched $\infty$-categories as a weakening of the notion of $\infty$-category left tensored over a monoidal $\infty$-category.
A left action of a monoidal $\infty$-category $\mV$ on an $\infty$-category $\mC$ gives rise to a non-symmetric $\infty$-operad $\mO$ whose colors are the objects of $\mV$ and $\mC$, and whose space of multimorphisms $$\Mul_\mO(\V_1,...,\V_\n,\X, \Y) $$ for colors $\V_1,...,\V_\n \in \mV$, $\n \geq 0$ and $\X,\Y \in \mC$ is the space of morphisms in $\mC$ from the left tensor $\V_1 \ot ... \ot \V_\n \ot \X $ to $\Y$. 
For $\mV$ and $\mC$ contractible we write $\LM$ for $\mO$. So $\LM$ is a non-symmetric $\infty$-operad with two colors $\mathfrak{a}, \mathfrak{m}$ whose algebras are left modules. 
By functoriality the $\infty$-operad $\mO$ comes equipped with a map of $\infty$-operads $\mO \to \LM$, 
which completely determines the left action of $\mV$ on $\mC$: for any objects $\V$ of $\mV$ and $\X $ of $\mC$ one can reconstruct the left tensor $\V \ot \X$ as the object corepresenting the functor $ \Mul_\mO(\V,\X;-): \mC \to \mS$. The left action is closed if and only if for every $\X,\Y \in \mC$ there is an object 
$\Mor_\mC(\X,\Y) $ of morphisms $\X \to \Y$ in $\mC$ representing the 
presheaf $\Mul_\mO(-,\X;\Y): \mV^\op \to \mS$. 
An arbitrary map of non-symmetric $\infty$-operads $\mO \to \LM$
whose fibers over $\mathfrak{a}$ and $\mathfrak{m}$ we denote by $\mV,\mC$, respectively, represents a closed left action of a closed monoidal $\infty$-category 
if the $\infty$-operad $\mV$ is a closed monoidal $\infty$-category, for every objects $\V \in \mV, \X,\Y \in \mC$ there is a left tensor $\V \ot \X \in \mC$
and a morphism object $\Mor_\mC(\X,\Y) \in \mV $ and the following natural maps are equivalences:
$$\Mor_\mC(\X,\Y)^\V \simeq \Mor_\mC(\V \ot \X,\Y),$$
\begin{equation}\label{psul}
\Mul_\mO(\V_1 \ot ... \ot \V_\n,\X;\Y) \simeq \Mul_\mO(\V_1,...,\V_\n,\X;\Y).\end{equation}
Lurie's notion of enriched $\infty$-category arises by removing the existence of left tensors from a closed left action: an $\infty$-category enriched in a monoidal $\infty$-category $\mV$ is a $\LM$-operad $\mO \to \LM$ whose fibers over $\mathfrak{a}$ and $\mathfrak{m}$ we denote by $\mV,\mC$, respectively, such that $\mV$ is a monoidal $\infty$-category, every pair of objects of $ \mC$ admits a morphism object and equivalence (\ref{psul}) holds.
If equivalence (\ref{psul}) holds but not all morphism objects exist, Lurie calls 
the $\LM$-operad $\mO \to \LM$ a pseudo-enriched $\infty$-category \cite[Definition 4.2.1.25.]{lurie.higheralgebra}.
He calls an arbitrary $\LM$-operad a weakly enriched $\infty$-category \cite[Notation 4.2.1.22.]{lurie.higheralgebra}. Pseudo-enriched and weakly enriched $\infty$-categories are crucial when studying enrichment on $\infty$-categories of enriched functors: for an arbitrary monoidal $\infty$-category $\mV$ there is no need that the $\infty$-category of $\mV$-enriched presheaves is $\mV$-enriched since $\mV$ itself is an example of an $\infty$-category of $\mV$-enriched presheaves.
But the $\infty$-category of presheaves is always pseudo-enriched in $\mV$. More generally, the $\infty$-category of presheaves enriched in any $\infty$-operad $\mV$ is weakly enriched in $\mV$.
In analogy the concept of bienriched $\infty$-category arises by removing the existence of bitensors from a closed biaction:
a biaction of two monoidal $\infty$-categories $\mV,\mW $ on an $\infty$-category $\mC$ gives rise to a non-symmetric $\infty$-operad $\mO$ whose colors are the objects of $\mV, \mW, \mC$ and whose space of multimorphisms $$\Mul_\mO(\V_1,...,\V_\n,\X, \W_1,...,\W_\m, \Y) $$ for $\V_1,...,\V_\n \in \mV, \W_1,...,\W_\m \in \mW$, $\n,\m \geq 0$ and $\X,\Y \in \mC$ is the space of morphisms $\V_1 \ot ... \ot \V_\n \ot \X \ot \W_1 \ot ...\ot \W_\m \to \Y$ in $\mC.$ The $\infty$-operad $\mO$ comes equipped with a map of non-symmetric $\infty$-operads $ \mO \to \BM$ to the $\infty$-operad $\BM$ governing bimodules,
which completely determines the biaction:
for every objects $\V$ of $\mV$, $\W \in \mW$ and $\X $ of $\mC$ the bitensor $\V \ot \X \ot \W$ corepresents the functor $$\Mul_\mO(\V,\X, \W;-): \mC \to \mS.$$ The biaction is closed if and only if for every $\X,\Y \in \mC$ the presheaf \begin{equation}\label{bbbp}
\Mul_\mO(-,\X,-;\Y): \mV^\op \times \mW^\op \to \mS\end{equation} is representable component-wise.
We define left enriched, right enriched and bienriched $\infty$-categories by removing the existence of bitensors from a closed biaction: left enriched, right enriched $\infty$-categories are maps of non-symmetric $\infty$-operads $\mO \to \BM$ 
such that for every $\X,\Y \in \mC$ the presheaf (\ref{bbbp}) is representable in the first component, in the second component, respectively, and a respective left, right pseudo-enrichedness condition holds
that is analogous to equivalence (\ref{psul}).
We define bienriched $\infty$-categories as maps of non-symmetric $\infty$-operads $\mO \to \BM$ that are simoultaneously left and right enriched 
$\infty$-categories.
By \cite[Proposition 4.8.1.17.]{lurie.higheralgebra} for every presentably monoidal $\infty$-categories $\mV,\mW$
the tensor product $\mV \ot \mW$ of presentable $\infty$-categories
identifies with the full subcategory of presheaves on $\mV \times \mW$ that are representable in both variables. Consequently, in this situation a bienriched $\infty$-category has morphism objects in $\mV\ot \mW.$ In general, a presheaf on $\mV \times \mW$ representable in both variables uniquely determines an adjunction $\mW \rightleftarrows\mV^\op,$
which we denote by \begin{equation}\label{asol}
\L\Mor_\mO(\X,\Y): \mW \rightleftarrows\mV^\op : \R\Mor_\mO(\X,\Y).\end{equation}
If $\mW$ is the $\infty$-category of spaces endowed with the cartesian structure, $\mV,\mW$-bienriched $\infty$-categories agree with $\mV$-enriched $\infty$-categories (Corollary \ref{coronn}) and adjunction (\ref{asol}) is uniquely determined by the value of the left adjoint $\L\Mor_\mO(\X,\Y)$ at the final space, which is the morphism object of $\X,\Y$.
Another example of $\mV,\mV$-bienriched $\infty$-category is the full subcategory $B\tu_\mV \subset \mV$ spanned by the tensor unit of a closed monoidal $\infty$-category $\mV$. Here  adjunction (\ref{asol}) for $\X=\Y=\tu_\mV$ is the "lax duality" adjunction $ \mV \rightleftarrows \mV^\op$ whose left (right) adjoint takes the right (left) internal hom to the tensor unit (Remark \ref{duall}).

Since pseudo-enriched and weakly enriched $\infty$-categories are crucial when studying enrichment on $\infty$-categories of enriched functors, 
we generalize Lurie's notions of pseudo-enriched and weakly enriched $\infty$-categories to notions of left pseudo-enriched, right pseudo-enriched and bipseudo-enriched $\infty$-categories and weakly left enriched, weakly right enriched and weakly bienriched $\infty$-categories that fit into the following diagram, which we draw for the bienriched case:

\begin{equation*} 
\begin{xy}
\xymatrix{
& \{\underset{\text{$\infty$-categories}}{\text{closed bitensored}}\} \ar[rd]^{ }\ar[ld]^{ }
\\
\{\underset{\text{$\infty$-categories}}{\text{bitensored}}\}\ar[d]\ar[rd] && \{\underset{\text{in a monoidal $\infty$-category}}{\text{$\infty$-categories bienriched}}\} \ar[d] \ar[ld]
\\
\{\underset{\text{$\infty$-categories}}{\text{oplax bitensored}}\}  \ar[rd] & \{\underset{\text{$\infty$-categories}}{\text{bipseudo-enriched}}\} \ar[d] & \{\underset{\text{in an $\infty$-operad}}{\text{$\infty$-categories bienriched}}\} \ar[ld] 
\\ & \{\underset{\text{$\infty$-categories}}{\text{weakly bienriched}}\}
}
\end{xy} 
\end{equation*}

In this work we develop a theory of left, right and bienriched $\infty$-categories that studies the structures in the latter diagram and contains the theory of enriched $\infty$-categories for Lurie's model as a special case. In particular, we build a theory of enriched $\infty$-categories for Lurie's model of enrichment, which we use in \cite{heine2024higher} to develop a theory of weighted colimits, and which is independent from the equivalent theories \cite{HEINE2023108941} of Gepner-Haugseng \cite{GEPNER2015575} and Hinich \cite{HINICH2020107129}.

To show the usefulness of left, right and bienriched $\infty$-categories we demonstrate that several principles of left, right and bitensored $\infty$-categories extend to left, right and bienriched $\infty$-categories: 
given an $\infty$-category $\mD$ with biaction of two monoidal $\infty$-categories $\mV,\mW$
and an $\infty$-category $\mC$ with a left $\mV$-action the $\infty$-category $\LinFun_\mV(\mC,\mD)$ of left $\mV$-linear functors $\mC \to \mD$, i.e. functors preserving the left $\mV$-action, carries a right $\mW$-action that applies object-wise the right $\mW$-action on the target $\mD$.
If $\mV$ is a braided monoidal $\infty$-category, the tensor product functor $\ot: \mV \times \mV \to \mV$ is a monoidal functor so that every left $\mV$-action on a monoidal $\infty$-category $\mD$ gives rise via restriction along the tensor product of $\mV$ to a left $\mV \ot \mV$-action on $\mD.$ By a general principle about the relationship between left actions and biactions a left $\mV \ot \mV$-action on $\mD$ is equivalently given by a $\mV, \mV^\rev$-biaction on $\mD,$ where $\mV^\rev$ is the reversed monoidal structure.
We extend these principles about left, right and biactions to principles about left, right and bienriched $\infty$-categories to formally treat the $\infty$-category of $\mV$-enriched functors like the familiar $\infty$-category of $\mV$-linear functors:
\begin{theorem}\label{1}(Theorem \ref{psinho})
Let $\mV$ be a locally small monoidal $\infty$-category and $\mW$ a closed monoidal $\infty$-category that admits small limits. Let $\mC$ be a small weakly left $\mV$-enriched $\infty$-category and $\mD$ a $\mV,\mW$-bienriched $\infty$-category. 
The $\infty$-category $$\L\Enr\Fun_\mV(\mC,\mD)$$ of left $\mV$-enriched functors $\mC \to \mD $ is a right $\mW$-enriched $\infty$-category.
\end{theorem}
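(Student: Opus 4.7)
The plan is to equip the $\infty$-category $\L\Enr\Fun_\mV(\mC,\mD)$ with a weakly right $\mW$-enriched structure that postcomposes with the right $\mW$-enrichment on $\mD$, and then to exhibit the required right morphism objects by an end construction using the closedness of $\mW$ and the hypothesis that $\mC$ is small while $\mW$ admits small limits.

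First I would construct a map of non-symmetric $\infty$-operads $\L\Enr\Fun_\mV(\mC,\mD)^\otimes \to \LM^\rev$ (the $\infty$-operad for right modules) by transporting the right slice of the bi-operadic structure $\mD \to \BM$ along the left $\mV$-enriched functoriality in $\mC$. Concretely, for left $\mV$-enriched functors $F,G : \mC \to \mD$ and objects $\W_1,\ldots,\W_\m \in \mW$, the multimorphism space should be the space of left $\mV$-enriched natural transformations $F \to G$ twisted on the right by $\W_1,\ldots,\W_\m$ through the right valence of $\mD$, i.e.\ a homotopy coherent family of points in $\Mul_\mD(F(\X); \W_1,\ldots,\W_\m; G(\X))$ natural in $\X \in \mC$ with respect to the left $\mV$-structure. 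The compatibility of left $\mV$- and right $\mW$-valences built into the biaction on $\mD$ ensures that this assembly is functorial in the $\W_i$ and defines a weakly right $\mW$-enriched structure on $\L\Enr\Fun_\mV(\mC,\mD)$.

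Next, to upgrade this to a genuine right enrichment, I need to produce for every pair $(F,G)$ a right morphism object $\R\Mor(F,G) \in \mW$ representing the presheaf $\W \mapsto \Mul(F;\W;G)$. For each $\X \in \mC$ the closedness of $\mW$ provides a pointwise right morphism object $\R\Mor_\mD(F(\X),G(\X)) \in \mW$, and these assemble into a functor on the twisted arrow $\infty$-category of $\mC$ that encodes the left $\mV$-enriched naturality. Since $\mC$ is small and $\mW$ admits small limits, the end
$$ \int_{\X \in \mC} \R\Mor_\mD(F(\X),G(\X)) $$
exists in $\mW$, and the proposal is to identify this object with $\R\Mor(F,G)$. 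The right pseudo-enrichedness equivalence $\Mul(F;\W_1 \ot \cdots \ot \W_\m;G) \simeq \Mul(F;\W_1,\ldots,\W_\m;G)$ is then inherited from the analogous equivalence that holds pointwise for $\mD$, since both sides are computed objectwise in $\X$ and commute with limits in $\mW$.

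The main obstacle will be the rigorous identification of the end with the correct multimorphism presheaf at the level of weakly enriched $\infty$-operads, i.e.\ verifying that the naturality in $\X$ that the end encodes is precisely the naturality demanded by left $\mV$-enriched functoriality of $F$ and $G$. I would handle this by phrasing both as a universal property in terms of multimorphism spaces and invoking the bi-enriched Yoneda-type identification, together with the operadic Kan extension and scalar extension machinery developed earlier in the paper for transferring morphism objects across closed bi-enriched $\infty$-categories.
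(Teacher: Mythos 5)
Your overall strategy differs from the paper's: the paper does not construct the right morphism object as an end at all. It first builds the weakly right $\mW$-enriched structure on $\L\Enr\Fun_\mV(\mC,\mD)$ via the relative internal hom $\Fun^{\mW^\ot}_{\mV^\ot\times\mW^\ot}(\mC^\circledast\times\mW^\ot,\mD^\circledast)$ (Lemma \ref{innerho}, Notation \ref{bbbb}), then embeds $\mD$ into its presentable bitensored envelope, where the functor $\infty$-category is automatically presentably right tensored over $\widehat{\mP}\Env(\mW)$ and hence has all morphism objects there; the content of the proof is showing these morphism objects land in $\mW$. This is done by a generation argument: by the enriched Yoneda lemma (Theorem \ref{expli}) the functor category is generated under small colimits by objects $\L\Mor_{\bar{\mC}}(\X,-)\ot\Z$, morphism objects out of these are computed by evaluation at $\X$ and so lie in $\mW$ because $\mD$ is right enriched, and the subcategory of functors with this property is closed under small colimits because $\mW$ is closed under small limits in $\widehat{\mP}\Env(\mW)$ (this is where completeness of $\mW$ and limit-preservation of $\R\Mor_\mW(\W,-)$ enter). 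The end formula you want to use is derived in the paper only afterwards (Corollary \ref{enrhom}, Theorem \ref{end}) as a consequence of the enrichment, not as its construction.

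The genuine gap in your proposal is the identification step you yourself flag as "the main obstacle". The end $\int_{\X\in\mC}\R\Mor_\mD(F(\X),G(\X))$ taken over the ordinary twisted arrow $\infty$-category of the underlying $\infty$-category $\mC$ does \emph{not} encode left $\mV$-enriched naturality; it represents the space of natural transformations of the underlying functors. The correct limit is a cobar-type limit indexed by $\Delta^\op$ and tuples of objects of $\mC^\simeq$, with terms $\R\Mor_\mD(\L\Mor_\mC(\Z_{\n-1},\Z_\n)\ot\cdots\ot\L\Mor_\mC(\Z_1,\Z_2)\ot\F(\Z_1),\G(\Z_\n))$, and producing it requires precisely the monadicity of restriction along $\mC^\simeq\subset\mC$ (Theorem \ref{expli}(3), Proposition \ref{mondec}) — i.e., the enriched Yoneda lemma, which is also the engine of the paper's argument. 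A $\mV$-enriched end cannot be defined via $\Tw(\mC)$ and cannot be defined via Definition \ref{Aho} either without circularity, since that definition already presupposes morphism objects in an enriched functor category. Two smaller inaccuracies: the pointwise objects $\R\Mor_\mD(F(\X),G(\X))$ exist because $\mD$ is right $\mW$-enriched, not because $\mW$ is closed (closedness of $\mW$ is needed so that the limit computing the end is compatible with the multimorphism spaces of $\mW$, i.e., so that $\Mul_\mW(\W_1,\ldots,\W_\m;-)$ commutes with the limit); and your first paragraph asserts rather than constructs the weakly right enriched operadic structure, which in the paper requires the fibrational internal hom and Remark \ref{reuil}.
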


\begin{theorem}\label{2} (Theorem \ref{Enrbi})
Let $\mV,\mW $ be presentably monoidal $\infty$-categories.
There is a canonical equivalence
$${_{\mV}\B\Enr_\mW} \simeq {_{\mV \otimes \mW^\rev}\L\Enr}$$
between the $\infty$-categories of $\mV,\mW$-bienriched $\infty$-categories and left $\mV\ot\mW^\rev$-enriched $\infty$-categories, where $\mV\ot\mW^\rev$ is the tensor product of presentable $\infty$-categories.
	
\end{theorem}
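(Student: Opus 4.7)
The plan is to reduce the equivalence to Lurie's identification of bimodules with left modules over the tensor product of presentable $\infty$-categories (\cite[Proposition 4.8.1.17.]{lurie.higheralgebra}), applied via the enveloping bitensored $\infty$-category of a (weakly) bi-enriched $\infty$-category constructed earlier in the paper. The proof naturally splits into a presentably bitensored comparison (which is essentially Lurie's result) and a descent to the (pseudo-)enriched setting via envelopes.

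First, at the underlying tensored level, for $\mV,\mW$ presentably monoidal the Lurie tensor product $\mV \otimes \mW^\rev$ carries a canonical presentably monoidal structure such that restriction of scalars along the canonical map $\mV \times \mW^\rev \to \mV \otimes \mW^\rev$ implements an equivalence between $\mV \otimes \mW^\rev$-left-tensored presentable $\infty$-categories and $\mV,\mW$-bitensored presentable $\infty$-categories; this is \cite[Proposition 4.8.1.17.]{lurie.higheralgebra} together with the standard translation between left actions of a tensor product of monoidal $\infty$-categories and bi-actions. Next, I would construct $\Phi : {_\mV}\B\Enr_\mW \to {_{\mV \otimes \mW^\rev}}\L\Enr$ by passing through envelopes. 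Given $\mC \in {_\mV}\B\Enr_\mW$, form its enveloping $\mV,\mW$-bitensored $\infty$-category $\Env(\mC)$. By the previous paragraph $\Env(\mC)$ is canonically $\mV \otimes \mW^\rev$-left-tensored, and the fully faithful bi-enriched embedding $\mC \hookrightarrow \Env(\mC)$ endows $\mC$ with a $\mV \otimes \mW^\rev$-left-enriched structure. The required morphism objects exist in $\mV \otimes \mW^\rev$ because, as noted in the discussion around equation (\ref{bbbp}), the presheaves $\Mul_\mO(-,\X,-;\Y) : \mV^\op \times \mW^\op \to \mS$ are representable in each variable, and such presheaves form exactly $\mV \otimes \mW^\rev$ by \cite[Proposition 4.8.1.17.]{lurie.higheralgebra}. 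Conversely, I would define $\Psi$ by restriction of scalars along $\mV \times \mW^\rev \to \mV \otimes \mW^\rev$ at the level of weakly enriched $\infty$-categories, and use envelopes on the left-enriched side to verify that the output indeed satisfies the left/right pseudo-enriched conditions together with separately-representable morphism presheaves.

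The main obstacle will be envelope compatibility: one must show that the enveloping $\mV,\mW$-bitensored $\infty$-category of a bi-enriched $\mC$ coincides, under the tensored equivalence of the first step, with the enveloping $\mV \otimes \mW^\rev$-left-tensored $\infty$-category of the left-enriched $\mC$ produced by $\Phi$. Once this compatibility is established, both $\Phi \circ \Psi$ and $\Psi \circ \Phi$ restrict to fully faithful embeddings into the two sides of the presentably bitensored equivalence, and mutual inverseness reduces to the already-known statement for presentable bitensored $\infty$-categories. The envelope compatibility itself should follow from the universal property of the envelope (it is left adjoint to the forgetful functor from bitensored, resp.\ left tensored, to weakly bi-enriched, resp.\ weakly left enriched, $\infty$-categories) together with naturality of restriction of scalars along $\mV \times \mW^\rev \to \mV \otimes \mW^\rev$.
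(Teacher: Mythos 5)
Your overall strategy -- reduce to \cite[Proposition 4.8.1.17.]{lurie.higheralgebra} at the presentably bitensored level and descend to enriched $\infty$-categories by embedding into presheaf envelopes -- is the same reduction the paper performs, and you have correctly isolated the envelope compatibility $\mP_{\mV,\mW}(\mC) \simeq \mP_{\mV\ot\mW^\rev}(\Phi(\mC))$ as the crux (this is Proposition \ref{bienve} / Corollary \ref{chan} in the paper). However, there is a genuine gap in how you propose to discharge that crux, and in the step you dismiss as ``the standard translation between left actions of a tensor product and bi-actions.''

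The problem is that a $\mV,\mW$-bi-enriched $\infty$-category is a fibration over $\mV^\ot \times \mW^\ot \to \Ass \times \Ass$, whereas a left $\mV\ot\mW^\rev$-enriched $\infty$-category is a fibration over $(\mV\ot\mW^\rev)^\ot \to \Ass$; comparing these two fibrational structures, \emph{together with their enriched functors}, is only ``standard'' when both sides are genuinely tensored (the paper's Lemma \ref{basi}, proved by a monadicity argument over $\Cat_\infty$). For non-tensored enriched $\infty$-categories the paper must first prove $\B\P\Enr \simeq \kappa^*(\L\P\Enr_\emptyset)$ (Theorem \ref{biii}), constructing the inverse by passing to the tensored localization $\widetilde{\B\Env}(-)$ and restricting back to a full subcategory, and then carve out the enriched objects on each side via the full faithfulness of transfer of enrichment along $\mV\times_\Ass\mW^\rev \hookrightarrow \langle\mV,\mW^\rev\rangle \simeq \mV\ot\mW^\rev$ (Corollary \ref{embeto}, Corollary \ref{abcd}). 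Your plan hides exactly this content inside the envelope compatibility: proving $\mP_{\mV,\mW}(\mC)\simeq\mP_{\mV\ot\mW^\rev}(\Phi(\mC))$ ``from the universal property of the envelope'' requires knowing that $\Enr\Fun_{\mV,\mW}(\mC,\mN) \simeq \Enr\Fun_{\mV\ot\mW^\rev}(\Phi(\mC),\mN)$ naturally in presentably bitensored $\mN$, which is precisely the functor-level statement of the theorem you are trying to prove (the two universal properties are stated against targets living over different bases, so they do not compare formally). The paper breaks this circle by proving the pseudo-enriched equivalence first and \emph{deriving} the envelope compatibility from it, not the other way around. To repair your argument you would need to supply an independent proof of the comparison of enriched functor $\infty$-categories over the two bases -- in effect reproving Theorem \ref{biii} -- before the envelope argument can close.
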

By Theorem \ref{2} for any $\infty$-category $\mD$ left enriched in a presentably braided monoidal $\infty$-category $\mV$ the restricted enrichment of $\mD$ along the left adjoint monoidal tensor product functor $\ot: \mV \times \mV \to \mV$ 
is an $\infty$-category bienriched in $\mV,\mV^\rev$.
So by Theorem \ref{1} for any small left $\mV$-enriched $\infty$-category $\mC$ the $\infty$-category $ \L\Enr_\mV(\mC,\mD)$ of left $\mV$-enriched functors $\mC \to \mD $ is a right $\mV^\rev$-enriched $\infty$-category, which is canonically identified with a left $\mV$-enriched $\infty$-category. We prove that this left $\mV$-enrichment on the $\infty$-category of left $\mV$-enriched functors $\mC \to \mD $ is in fact the internal hom for the tensor product of $\mV$-enriched $\infty$-categories:
\begin{theorem}\label{3}(Theorem \ref{cloff}) Let $\mV$ be a presentably symmetric monoidal $\infty$-category and $\mC,\mD$ small $\mV$-enriched $\infty$-categories. The left $\mV$-enriched $\infty$-category $ \L\Enr_\mV(\mC,\mD) $ provided by Theorems \ref{1} and \ref{2} is the internal hom for the tensor product of $\mV$-enriched $\infty$-categories.\end{theorem}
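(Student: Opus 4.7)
The plan is to verify that the left $\mV$-enriched $\infty$-category $\L\Enr_\mV(\mC,\mD)$ equipped with the enrichment produced by Theorems \ref{1} and \ref{2} satisfies the universal property of the internal hom in the presentably symmetric monoidal $\infty$-category ${_{\mV}\L\Enr}$ with the Gepner--Haugseng tensor product. Since internal homs in a presentably symmetric monoidal $\infty$-category are characterized up to canonical equivalence by their representability, it suffices to produce a natural equivalence of $\mV$-enriched functor $\infty$-categories
$$\L\Enr\Fun_\mV(\mA \otimes \mC, \mD) \simeq \L\Enr\Fun_\mV(\mA, \L\Enr_\mV(\mC, \mD))$$
for every small $\mV$-enriched $\mA$, and then invoke Yoneda in ${_{\mV}\L\Enr}$.

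The construction of this equivalence proceeds by chaining three correspondences. First, I would unwind the universal property of the tensor product of $\mV$-enriched $\infty$-categories: a $\mV$-enriched functor $\mA \otimes \mC \to \mD$ is the same data as a $\mV$-bilinear enriched functor $\mA, \mC \to \mD$, i.e.\ a $\mV \otimes \mV$-enriched functor into $\mD$ regarded with the $\mV \otimes \mV$-enrichment obtained by restriction along the monoidal multiplication $\mV \otimes \mV \to \mV$. Next, since $\mV$ is symmetric monoidal, the symmetry supplies an equivalence $\mV \simeq \mV^\rev$ of monoidal $\infty$-categories; under it, Theorem \ref{2} identifies such $\mV \otimes \mV$-enriched functors with $\mV, \mV$-bi-enriched functors $\mA, \mC \to \mD$, where $\mD$ now carries its canonical $\mV, \mV$-bi-enrichment. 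Finally, the right $\mV$-enrichment produced by Theorem \ref{1} on $\L\Enr\Fun_\mV(\mC, \mD)$ is built (via the end formula for morphism objects developed elsewhere in the paper) precisely to represent bi-enriched functors in the first slot, so right $\mV$-enriched functors $\mA \to \L\Enr\Fun_\mV(\mC, \mD)$ correspond naturally to $\mV, \mV$-bi-enriched functors $\mA, \mC \to \mD$; translating back through Theorem \ref{2} and the symmetry $\mV \simeq \mV^\rev$ yields the desired equivalence.

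The principal obstacle, I expect, will be to upgrade this chain of correspondences from a statement about underlying mapping spaces to one of $\mV$-enriched functor $\infty$-categories, so that a genuine Yoneda argument in ${_{\mV}\L\Enr}$ identifies $\L\Enr_\mV(\mC, \mD)$ with the internal hom rather than merely a pointwise-equivalent object. Concretely, one must check that the tensor--hom equivalence implicit in Theorem \ref{1} is an equivalence of enriched functor $\infty$-categories coherently in $\mA$, and that the passage between $\mV \otimes \mV$-enrichment and $\mV, \mV$-bi-enrichment via Theorem \ref{2}, together with the restriction of scalars along $\mV \otimes \mV \to \mV$, are all natural and compatible with enriched functor objects. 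Once these naturalities are in hand, the identification with the internal hom for the tensor product is forced by the uniqueness of adjoints.
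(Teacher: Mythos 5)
Your proposal follows essentially the same route as the paper: the paper's Proposition \ref{clof} establishes exactly your chain of equivalences (functors out of $\mM\otimes\mO=\mu_!(\mM\times\mO)$ correspond via transfer of enrichment to $\mV\times\mW$-enriched functors on the product, then via the bi-enrichment/left-enrichment equivalence of Theorem \ref{biii} to bi-enriched functors $\mM\times\mO^\rev\to\mN_\mu$, then via the currying equivalence of Proposition \ref{lehmmm} to functors into $\Enr\Fun_\mV(\mM,\mN)$), with Theorem \ref{psinho} supplying the enrichedness of the hom and naturality giving the internal-hom identification. The only cosmetic difference is that you attribute the representing property of the enrichment on $\L\Enr\Fun_\mV(\mC,\mD)$ to the end formula, whereas the paper builds it from the relative functor $\infty$-category $\Fun^{\mW^\ot}_{\mV^\ot\times\mW^\ot}$ (Lemma \ref{innerho}, Notation \ref{bbbb}) and derives the end formula afterwards.
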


Moreover we prove a refinement of Theorem \ref{1} that considers actions compatible with the enrichments and more generally pseudo-enrichments:

\begin{theorem}\label{15}(Theorem \ref{vvvl}, Proposition \ref{klmou})
Let $\mU, \mV, \mW$ be monoidal $\infty$-categories, $\mC$ an $\infty$-category weakly bienriched in $ \mV,\mU$ that is right tensored over $\mU$ 
and $\mD$ a $\mV,\mW$-bipseudo-enriched $\infty$-category. 
Then $$\L\Enr\Fun_\mV(\mC,\mD)$$ is bipseudo-enriched in $\mU,\mW$ and left tensored over $\mU$. The left 
$\mU$-action sends a left $\mV$-enriched functor $\phi: \mC \to \mD $ and an object $X$ of $\mU$ to the left $\mV$-enriched functor $\mC \xrightarrow{(-)\ot X}\mC \xrightarrow{\phi} \mD.$
\end{theorem}

Similarly, in our framework of bienriched $\infty$-categories we develop many constructions of enriched $\infty$-category theory via familiar constructions of the theory of left tensored, right tensored and bitensored $\infty$-categories. 
The $\infty$-category of presheaves on a small $\infty$-category $\mC$ is the free cocompletion under small colimits \cite[Theorem 5.1.5.6.]{lurie.HTT}, 
the initial $\infty$-category with small colimits into which $\mC$ embeds.
We construct enriched analoga of the free cocompletion.
Lurie \cite[Construction 2.2.4.1.]{lurie.higheralgebra} constructs for every $\infty$-operad $\mO$ and $\mO$-operad $\mB$ an $\mO$-monoidal envelope, the initial $\mO$-monoidal $\infty$-category into which $\mB$ embeds. 
We use the free cocompletion of the $\BM$-monoidal envelope of a small $\mV,\mW$-bienriched $\infty$-category $\mC$ 
to construct the free cocompletion under small colimits and left $\mV$-tensors, right $\mW$-tensors and $\mV, \mW$-bitensors, respectively: 
\begin{theorem}\label{4}(Theorem \ref{unipor3}). 
\begin{enumerate}
\item Let $\mV,\mW$ be presentably monoidal $\infty$-categories and $\mC$ a small $\mV, \mW$-bienriched $\infty$-category. There is a $\mV,\mW$-enriched embedding $\mC \to \mP_{\mV, \mW}(\mC)$ such that for any $\infty$-category $\mD$ presentably bitensored over $\mV,\mW$ the induced functor $$ \LinFun^\L_{\mV,\mW}(\mP_{\mV,\mW}(\mC),\mD) \to \Enr\Fun_{\mV,\mW}(\mC,\mD)$$ is an equivalence, where the left hand side is the $\infty$-category of left adjoint $\mV, \mW$-linear functors and the right hand side is the $\infty$-category of $\mV, \mW$-enriched functors.	
	
\item Let $\mV$ be a presentably monoidal $\infty$-category, $\mW$ a small monoidal $\infty$-category and $\mC$ a small $\infty$-category weakly bienriched in $\mV,\mW$ that exhibits $\mC$ as left $\mV$-enriched.
There is a $\mV,\mW$-enriched embedding $\mC \to \mP_{\mV}(\mC)$ such that 
for any $\infty$-category $\mD$ weakly bienriched in $\mV,\mW$ that exhibits $\mD$ as left tensored over $\mV$ the induced functor $$ \L\LinFun^\L_{\mV,\mW}(\mP_{\mV}(\mC),\mD) \to \Enr\Fun_{\mV,\mW}(\mC,\mD)$$ is an equivalence, where the left hand side is the $\infty$-category of $\mV, \mW$-enriched functors that preserve left tensors 
and small colimits.

\item Let $\mW$ be a presentably monoidal $\infty$-category, $\mV$ a small monoidal $\infty$-category and $\mC$ a small weakly bienriched $\infty$-category that exhibits $\mC$
as right $\mW$-enriched. There is a $\mV, \mW$-enriched embedding $\mC \to \mP_{\mW}(\mC)$ such that for any $\infty$-category $\mD$ weakly bienriched in $\mV,\mW$ that exhibits $\mD$ as right tensored over $\mW$ the induced functor $$ \R\LinFun^\L_{\mV,\mW}(\mP_{\mW}(\mC),\mD) \to \Enr\Fun_{\mV,\mW}(\mC,\mD)$$ is an equivalence, where the left hand side is the $\infty$-category of $\mV, \mW$-enriched functors that preserve right tensors 
and small colimits.

\end{enumerate}
\end{theorem}


We use the $\BM$-monoidal envelope to construct transfer of enrichment for left, right and bienriched $\infty$-categories from scalar extension of presentably left, right and bitensored $\infty$-categories (Theorem \ref{bica}), and use the construction of free cocompletion under small colimits and left tensors, right tensors and bitensors of Theorem \ref{4} to reveal the following fundamental relationship between transfer of enrichment and scalar extension of presentably left, right and bitensored $\infty$-categories:

\begin{theorem}(Theorem \ref{bica}, Proposition \ref{bicay})
Let $ \mC$ be a small $\infty$-category bienriched in presentably monoidal $\infty$-categories $\mV,\mW$ and $\alpha: \mV \to \mV', \beta: \mW \to \mW'$ monoidal functors between presentably monoidal $\infty$-categories that admit right adjoints $\gamma, \delta,$ respectively.

\begin{enumerate}
\item There is an adjunction $$(\alpha, \beta)_!: {_{\mV} \B\Enr}_{\mW} \rightleftarrows {_{\mV'} \B\Enr}_{\mW'}: (\alpha, \beta)^*, $$
where the right adjoint restricts and the left adjoint transfers bi-enrichment.

\item There is a canonical equivalence $ (\alpha, \beta)^* \simeq (\phi, \psi)_!,$
where $(\phi, \psi)_!$ transfers bi-enrichment along the right adjoints.

\item Let $ \mD$ be a small $\infty$-category bienriched in $\mV', \mW'$. A $\mV, \mW$-enriched functor $\mC \to (\alpha, \beta)^*(\mD)$
exhibits $\mD$ as transfer of enrichment, i.e. induces an equivalence
$(\alpha,\beta)_!(\mC) \simeq \mD$, if and only if it is essentially surjective 
and the induced $\mV,\mW$-linear left adjoint functor $$\mP_{\mV,\mW}(\mC) \to (\alpha, \beta)^*(\mP_{\mV',\mW'}(\mD))$$ induces an equivalence $\mV' \ot_{\mV} \mP_{\mV,\mW}(\mC) \ot_{\mW} \mW' \simeq \mP_{\mV',\mW'}(\mD)$.
\end{enumerate}
\end{theorem}

We 
link the universal property of free cocompletion of Theorem \ref{4} to another universal property of enriched presheaves: we use Theorem \ref{1}
to construct 
for every left $\mV$-enriched $\infty$-category an opposite right $\mV$-enriched $\infty$-category (Definition \ref{notori}, Proposition \ref{Dungo}). 
Since any closed monoidal $\infty$-category $\mV$ is canonically $\mV,\mV$-bienriched, by Theorem \ref{1} for any small left $\mV$-enriched $\infty$-category $\mC$ 
the $\infty$-category $\R\Enr_\mV(\mC^\op,\mV) $ of right $\mV$-enriched functors $\mC^\op \to \mV$ is left $\mV$-enriched. This left $\mV$-enrichment satisfies the following universal property: 
left $\mV$-enriched functors $ \mD \to \R\Enr_\mV(\mC^\op,\mV) $ from any left $\mV$-enriched $\infty$-category $\mD$ 
correspond to $\mV,\mV$-enriched functors $\mD \times \mC^\op \to \mV$, where the $\mV,\mV$-bi-enrichment on the product $\mD \times \mC^\op $ is induced from the left $\mV$-enrichment on $\mD$ and the right $\mV$-enrichment on $\mC^\op.$
We link this universal property 
that characterizes enriched functors to the $\infty$-category of enriched presheaves with the universal property of free cocompletion that characterizes enriched functors starting at the $\infty$-category of enriched presheaves:
\begin{theorem}\label{5}(Theorem \ref{unitol}) Let $\mV$ be a presentably monoidal $\infty$-category and $\mC$ a small left $\mV$-enriched $\infty$-category.
There is a left $\mV$-enriched equivalence $$\mP_\mV(\mC) \simeq \R\Enr_\mV(\mC^\op,\mV).$$
\end{theorem}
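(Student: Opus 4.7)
The plan is to produce the equivalence by constructing a map $\mP_\mV(\mC) \to \R\Enr_\mV(\mC^\op,\mV)$ via the two universal properties at hand, and then to verify it is an equivalence by an enriched Yoneda argument. First, I would define the $\mV$-enriched Yoneda embedding $\y\colon \mC \to \R\Enr_\mV(\mC^\op,\mV)$ by invoking the universal property recalled immediately before the theorem statement: such a left $\mV$-enriched functor is the same datum as a $\mV,\mV$-enriched functor $\mC \times \mC^\op \to \mV$, and I take the one classified by the morphism-object pairing $\Mor_\mC(-,-)$, with $\mV$ equipped with its canonical $\mV,\mV$-bi-enrichment coming from the closed monoidal structure.

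Next, I would verify that $\R\Enr_\mV(\mC^\op,\mV)$ is presentably left $\mV$-tensored. Since $\mV$ is presentably monoidal it is presentably $\mV,\mV$-bi-tensored, and the left $\mV$-enrichment on $\R\Enr_\mV(\mC^\op,\mV)$ produced by Theorem \ref{1} then upgrades to a left $\mV$-action that is computed pointwise; small colimits exist and are pointwise for the same reason. Applying Theorem \ref{4} with $\mW$ specialized to the $\infty$-category of spaces, the embedding $\y$ extends uniquely to a left adjoint $\mV$-linear functor $\bar{\y}\colon \mP_\mV(\mC) \to \R\Enr_\mV(\mC^\op,\mV)$, which is in particular left $\mV$-enriched.

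To conclude that $\bar{\y}$ is an equivalence I need two ingredients. Full faithfulness on the canonical embedding $\mC \hookrightarrow \mP_\mV(\mC)$ follows from an enriched Yoneda lemma for $\R\Enr_\mV(\mC^\op,\mV)$, identifying the morphism object $\Mor(\y(X),\y(Y))$ with $\Mor_\mC(X,Y)$ via evaluation and the pointwise structure. For full faithfulness and essential surjectivity in general I would argue that the essential image of $\bar{\y}$, which is closed under small colimits and left $\mV$-tensors because $\bar{\y}$ is a $\mV$-linear left adjoint, exhausts $\R\Enr_\mV(\mC^\op,\mV)$: every right $\mV$-enriched functor $F\colon \mC^\op \to \mV$ is canonically expressed as a colimit $F \simeq \colim_{(X,v)}\, v \otimes \y(X)$ of left $\mV$-tensors of representables indexed by the enriched category of elements of $F$.

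The main obstacle is establishing this enriched density statement, namely that every right $\mV$-enriched functor valued in $\mV$ is a canonical colimit of $\mV$-tensors of representables. The classical co-Yoneda formula does not transfer directly because one must control the compatibility between pointwise left $\mV$-tensors and the enriched morphism objects. My plan is to first prove density in the free cocompletion $\mP\B\Env(\mC)$ of the $\BM$-monoidal envelope from Theorem \ref{4}, where it reduces to the standard unenriched co-Yoneda lemma applied to the underlying functor, and then to descend the statement along the $\mV$-enriched localization $\mP\B\Env(\mC) \to \mP_\mV(\mC)$ by verifying that $\bar{\y}$ sends the localizing morphisms to equivalences; this last verification is where one uses that the target $\mV$ itself is the $\mV,\mV$-bi-enriched object whose tensors and enrichment match the structure imposed on the cocompletion.
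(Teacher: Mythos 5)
Your overall architecture is the same as the paper's: build the enriched Yoneda embedding $\y\colon \mC^\circledast \to \Enr\Fun_{\emptyset,\mV}(\mC^\op,\mV)^\circledast$ from the morphism-object pairing, extend it to a left adjoint $\mV$-linear functor $\bar{\y}$ out of $\mP_\mV(\mC)$ via the universal property of Theorem \ref{4}, and check $\bar{\y}$ is an equivalence. The paper packages the last step as Corollary \ref{Univ}: $\bar{\y}$ is an equivalence iff its restriction to $\mC$ is an embedding, each $\y(\X)$ is \emph{atomic} in the target, and the target is generated under small colimits and left tensors by the $\y(\X)$. Your first gap is here: essential surjectivity plus full faithfulness on representables does \emph{not} give "full faithfulness in general." To propagate full faithfulness from representables to all of $\mP_\mV(\mC)$ you need that $\Mor(\y(\X),-)$ preserves small colimits and tensors on the target (atomicity, i.e.\ Corollary \ref{yotul} (2), which identifies it with evaluation at $\X$), so that both sides of $\Mor(\A,\B)\to\Mor(\bar{\y}\A,\bar{\y}\B)$ convert colimits in $\A$ into limits; you only use evaluation to compute $\Mor(\y(\X),\y(\Y))$. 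Relatedly, your claim that the essential image of $\bar{\y}$ is closed under colimits because $\bar{\y}$ is a colimit-preserving left adjoint is circular: lifting a diagram from the essential image back to the source already requires full faithfulness.

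The second gap is the one you flag yourself, and your proposed repair does not work. What must be proved is that the \emph{target} $\Enr\Fun_{\emptyset,\mV}(\mC^\op,\mV)$ is generated under small colimits and left tensors by the representables. Proving co-Yoneda in $\mP\B\Env(\mC)$ and descending along the localization $\mP\B\Env(\mC)\to\mP_\mV(\mC)$ only yields generation of the \emph{source} $\mP_\mV(\mC)$ — which is automatic, since it is a localization of a presheaf category — and transporting that statement across $\bar{\y}$ presupposes the equivalence you are trying to prove. The paper establishes generation intrinsically to the target: the restriction functor $\Enr\Fun_{\emptyset,\mV}(\mC^\op,\mV)\to\Fun((\mC^\op)^\simeq,\mV)$ is monadic with an explicit left adjoint sending $(\mC^\op)^\simeq(\X,-)\ot \V$ to $\Mor_\mC(-,\X)\ot \V$ (Theorem \ref{expli}, Corollary \ref{yotul} (1)), and monadicity forces generation under colimits by the image of the left adjoint. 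The bar/coend resolution you are after (Proposition \ref{mondec}, Corollary \ref{uhoz}) is a \emph{consequence} of this monadicity, not an input; note also that in the enriched setting the correct density statement is a coend or bar resolution, not a colimit over an "enriched category of elements." Filling your gap essentially amounts to reproving the enriched Yoneda lemma of Theorem \ref{expli}.
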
 


By Theorem \ref{5} the enriched $\infty$-category of enriched presheaves carries two universal properties. By the universal property of enriched cocompletion every left $\mV$-enriched functor $\phi: \mC \to \mD$ between small left $\mV$-enriched $\infty$-categories uniquely extends to a small colimits preserving 
left $\mV$-linear functor $\bar{\phi}: \mP_{\mV}(\mC)\to \mP_{\mV}(\mD).$
By the second universal property there is a left $\mV$-enriched functor $\phi^*: \mP_{\mV}(\mD)\to \mP_{\mV}(\mC)$ precomposing with $\phi.$
Theorem \ref{5} implies the following corollary, which was asked by \cite[Question 1.5.]{BENMOSHE2024107625}:
\begin{corollary}(Corollary \ref{saewo}) \label{6} Let $\mV$ be a presentably monoidal $\infty$-category
and $\phi: \mC \to \mD$ a left $\mV$-enriched functor between small left $\mV$-enriched $\infty$-categories.
There is a left $\mV$-enriched adjunction $$\bar{\phi}: \mP_{\mV}(\mC)\rightleftarrows \mP_{\mV}(\mD): \phi^*.$$
\end{corollary}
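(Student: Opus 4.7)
The plan is to construct $\bar{\phi}$ and $\phi^*$ from the two complementary universal properties of $\mP_\mV(-)$ supplied by Theorems \ref{4} and \ref{5}, and then exhibit the enriched hom-equivalence by reducing via the enriched Yoneda lemma to the case of representables.

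First I would construct $\bar{\phi}$: specializing Theorem \ref{4} at $\mW = \mS$, the left $\mV$-enriched $\infty$-category $\mP_\mV(\mC)$ is the free cocompletion of $\mC$ under small colimits and left $\mV$-tensors, so the composite enriched functor $\mC \xrightarrow{\phi} \mD \to \mP_\mV(\mD)$ extends uniquely to a small-colimit-preserving left $\mV$-linear functor $\bar{\phi}: \mP_\mV(\mC) \to \mP_\mV(\mD)$ satisfying $\bar{\phi} \circ y_\mC \simeq y_\mD \circ \phi$, where $y_\mC, y_\mD$ denote the enriched Yoneda embeddings. On the other hand, using the identifications $\mP_\mV(-) \simeq \R\Enr_\mV((-)^\op,\mV)$ from Theorem \ref{5}, precomposition with the opposite right $\mV$-enriched functor $\phi^\op: \mC^\op \to \mD^\op$ defines a left $\mV$-enriched functor $\phi^*: \mP_\mV(\mD) \to \mP_\mV(\mC)$.

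To produce the enriched adjunction, I would exhibit a $\mV$-enriched natural equivalence
$$\Mor_{\mP_\mV(\mD)}(\bar{\phi}(F), G) \simeq \Mor_{\mP_\mV(\mC)}(F, \phi^*(G))$$
in $\mV$, natural in $F \in \mP_\mV(\mC)$ and $G \in \mP_\mV(\mD)$. With $G$ fixed, both sides, viewed as functors $\mP_\mV(\mC)^\op \to \mV$, send small colimits and left $\mV$-tensors in $F$ to the corresponding limits and cotensors in $\mV$: the left-hand side because $\bar{\phi}$ is left $\mV$-linear and a left adjoint, and the right-hand side by definition of enriched morphism objects. By the universal property of $\mP_\mV(\mC)$ from Theorem \ref{4}, it then suffices to construct the equivalence after restricting $F$ along $y_\mC$. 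For $F = y_\mC(c)$ with $c \in \mC$, the enriched Yoneda lemma (Propositions \ref{expli}, \ref{explicas}) identifies the left-hand side with $G(\phi(c))$ via $\bar{\phi}\circ y_\mC \simeq y_\mD \circ \phi$, and identifies the right-hand side with $(\phi^* G)(c) = G(\phi(c))$ by the definition of $\phi^*$ as precomposition; naturality in $c$ and $G$ is automatic.

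The main obstacle I expect is upgrading this pointwise computation on representables into a $\mV$-enriched natural equivalence in $F$ and $G$ simultaneously, rather than merely a pointwise equivalence of underlying $\infty$-categories. This requires organizing both sides as morphisms in the right $\mV$-enriched $\infty$-categories of left $\mV$-enriched functors supplied by Theorem \ref{1}, or equivalently, in view of Theorem \ref{2}, as $\mV\otimes\mV^\rev$-enriched functors on $\mP_\mV(\mC)^\op \times \mP_\mV(\mD)$; the enriched universal property of $\mP_\mV(\mC)$ and the enriched Yoneda identification must then be applied at this bi-enriched level to conclude.
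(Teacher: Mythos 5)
Your construction of the two functors matches the paper's: $\bar\phi$ comes from the free-cocompletion universal property (Theorem \ref{unipor3}) and $\phi^*$ is precomposition with $\phi^\op$ under the identification $\mP_\mV(-)\simeq \Enr\Fun_{\emptyset,\mV}((-)^\op,\mV)$ of Theorem \ref{unitol}. Where you diverge is in how the adjunction itself is produced, and the obstacle you flag at the end is precisely the point. The paper does not build the enriched hom-equivalence $\Mor_{\mP_\mV(\mD)}(\bar\phi(\F),\G)\simeq\Mor_{\mP_\mV(\mC)}(\F,\phi^*(\G))$ by hand: it first establishes, via the enriched Kan-extension machinery (Proposition \ref{siewa}, resting on Propositions \ref{laaaaan} and \ref{laan}), that the restriction functor $(\phi^\op)^*=\phi^*$ admits an enriched left adjoint $(\phi^\op)_!$ as a matter of general theory, and then the only thing left to check is that $(\phi^\op)_!$ agrees with $\bar\phi$ under Theorem \ref{unitol}. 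That identification is exactly Lemma \ref{siewal}/Lemma \ref{siewalt} (enriched left Kan extension sends $\L\Mor_\mC(\X,-)$ to $\L\Mor_\mD(\phi(\X),-)$) together with the uniqueness clause of the universal property, i.e. your Yoneda computation on representables, but deployed to compare two already-constructed left adjoints rather than to manufacture an adjunction.

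As written, your third step is incomplete: a pointwise equivalence of morphism objects on representables does not by itself assemble into an enriched adjunction, and your proposed fix (working at the $\mV\otimes\mV^\rev$-bi-enriched level) is not carried out. The gap is, however, easy to close with the paper's own tools, and I would recommend doing it this way rather than via the bi-enriched naturality argument: since $\bar\phi$ is a small-colimits-preserving $\mV$-linear functor between presentably left-tensored $\infty$-categories, it admits a right adjoint on underlying $\infty$-categories by the adjoint functor theorem, and Lemma \ref{Adj2} upgrades this automatically to an enriched adjunction because $\bar\phi$ is linear. Your Yoneda computation then identifies the resulting enriched right adjoint with $\phi^*$, since both corepresent $\G\mapsto\G(\phi(\cdot))$ on the generators. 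With that substitution your argument becomes a complete and correct proof, essentially equivalent to the paper's.
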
 


We use the description of the internal hom for the tensor product of enriched
$\infty$-categories (Theorem \ref{3}) and the identification of enriched presheaf 
$\infty$-categories (Theorem \ref{5}) to compute morphism objects in enriched
$\infty$-categories of enriched functors. 
Let $\mC$ be a small left $\mV$-enriched $\infty$-category and $\mD$ a $\mV,\mW$-bitensored $\infty$-category compatible with small colimits. Restriction to the space of objects $\mC^\simeq$ defines a right $\mW$-enriched functor \begin{equation}\label{aaal}
\L\Enr\Fun_\mV(\mC,\mD) \to \Fun(\mC^\simeq,\mD)\end{equation} from left $\mV$-enriched functors $\mC \to \mD $ to functors $\mC^\simeq \to \mD,$ where $\mC^\simeq$ is the maximal subspace in $\mC.$
We prove that this functor is monadic and compute the monad:

\begin{theorem}\label{Tff} (Theorem \ref{expli}, Corollary \ref{explicas})
Let $\mC $ be a small left $\mV$-enriched $\infty$-category and $\mD $ an $\infty$-category bitensored over $\mV,\mW$ compatible with small colimits.		
The right $\mW$-enriched functor (\ref{aaal}) admits a right $\mW$-enriched left adjoint $\phi$ and for every functor $\F : \mC^\simeq \to \mD$ there is a canonical equivalence of left $\mV$-enriched functors $\mC \to \mD:$
$$ \colim_{\Z \in \mM^\simeq}(\L\Mor_{\mM}(\Z,-) \ot \F(\Z)) \simeq \phi(\F).$$

\end{theorem}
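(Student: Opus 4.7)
The plan is to realize $\phi$ as the enriched left Kan extension along the inclusion $\iota\colon \mC^\simeq \hookrightarrow \mC$, where $\mC^\simeq$ is regarded as trivially left $\mV$-enriched so that $\L\Enr\Fun_\mV(\mC^\simeq,\mD) \simeq \Fun(\mC^\simeq,\mD)$. The first step is to invoke the enriched Kan extension machinery (Propositions \ref{laaaaan} and \ref{siewa}) to produce a left adjoint $\iota_!$ to restriction $\iota^*$; the hypothesis that $\mD$ is bi-tensored over $\mV,\mW$ compatibly with small colimits ensures that the enriched colimits needed for the pointwise formula exist. I then set $\phi := \iota_!$.

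Second, I would derive the explicit formula from this pointwise description. For each $\Z \in \mC^\simeq$, the assignment $\Y \mapsto \L\Mor_\mC(\Z,\Y)\ot\F(\Z)$ defines the free left $\mV$-enriched functor $\mC \to \mD$ corepresenting the pair $(\Z, \F(\Z))$, by the defining property of the morphism object $\L\Mor_\mC(\Z,-)$ together with the bi-tensor structure on $\mD$. Since $\mC^\simeq$ is a space, assembling these via a colimit over $\Z$ yields the claimed formula
$$ \phi(\F)(\Y)\;\simeq\;\colim_{\Z\in\mC^\simeq}\bigl(\L\Mor_\mC(\Z,\Y)\ot\F(\Z)\bigr).$$
This identification amounts to an enriched co-Yoneda principle, which in our setup reduces to the universal property of the $\BM$-operadic envelope $\B\Env(\mC)$ and the free cocompletion of Theorem \ref{4}.

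Third, the right $\mW$-enrichment of $\phi \dashv \iota^*$ is verified as follows. Theorem \ref{1} equips $\L\Enr\Fun_\mV(\mC,\mD)$ with a right $\mW$-enrichment transported pointwise from $\mD$, and similarly for $\Fun(\mC^\simeq,\mD)$; the restriction $\iota^*$ is manifestly pointwise, hence right $\mW$-enriched. For the left adjoint $\phi$ it suffices to observe that both the left tensor $\L\Mor_\mC(\Z,-)\ot(-)$ in $\mD$ and the colimit over $\mC^\simeq$ preserve the right $\mW$-action, by compatibility of the bi-tensor structure with small colimits.

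The main obstacle is showing that the colimit expression on the right-hand side genuinely assembles into a left $\mV$-enriched functor $\mC \to \mD$ (not merely a functor of underlying $\infty$-categories), and that the assignment $\F \mapsto \phi(\F)$ carries the correct right $\mW$-enriched functoriality. This requires tracking the enriched functoriality through the $\BM$-monoidal envelope and the operadic Kan extension formalism. Once this functoriality is in place, the universal property pinning down $\phi$ as left adjoint to $\iota^*$ follows formally from the enriched Yoneda lemma.
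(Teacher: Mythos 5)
Your overall strategy --- realize $\phi$ as the enriched left Kan extension along $\iota\colon (\mC^\simeq)^\circledast_{\mV}\hookrightarrow \mC^\circledast$ and then compute it pointwise --- is a genuinely different organization from the paper's, which instead proves an enriched Yoneda lemma (Theorem \ref{expli}(1): $\L\Mor_{\bar{\mC}}(\Z,-)\ot\N$ corepresents $\mD(\N,-)\circ\ev_\Z$), deduces from it that the restriction functor is monadic with left adjoint determined on the generators $\mC^\simeq(\Z,-)\ot\N\mapsto \L\Mor_{\bar\mC}(\Z,-)\ot\N$, and then obtains the formula from the decomposition $\F\simeq\colim_{\Z}\mC^\simeq(\Z,-)\ot\F(\Z)$ together with commutation of the left adjoint with colimits and tensors. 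However, as written your argument has two concrete gaps. First, the step you describe as an ``enriched co-Yoneda principle'' --- that the operadic pointwise Kan extension formula of Proposition \ref{laaaaan}(4), a colimit indexed by multimorphisms $\V_1,\dots,\V_\n,\Y\to\X$, collapses to $\colim_{\Z\in\mC^\simeq}\L\Mor_\mC(\Z,\X)\ot\F(\Z)$ --- is precisely where the hard content sits, and it does not reduce to ``the universal property of the $\BM$-operadic envelope and Theorem \ref{4}''. It requires knowing that $\Y\mapsto\L\Mor_{\bar\mC}(\Z,\Y)\ot\N$ is actually a left $\mV$-enriched functor corepresenting evaluation, i.e.\ that $\V\ot\L\Mor(\Z,\Y)\simeq\L\Mor(\Z,\V\ot\Y)$ in the envelope; this is the atomicity statement of Theorem \ref{corok} / Proposition \ref{awas} and is not a formal consequence of the defining property of morphism objects. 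If you grant yourself Theorem \ref{expli}(1), the cleanest completion is to check that both $\iota_!(\F)$ and the displayed colimit corepresent $\G\mapsto\lim_{\Z}\mD(\F(\Z),\G(\Z))$, at which point the Kan-extension detour is unnecessary.

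Second, the appeal to Propositions \ref{laaaaan} and \ref{siewa} does not literally apply to $\iota$. The trivial left enrichment on $\mC^\simeq$ is not left pseudo-enriched (the map $\Mul(\tu_\mV,\X;\Y)\to\Mul(\X;\Y)$ goes from the empty space to $\mC^\simeq(\X,\Y)$), hence not left enriched, so case (2) of Proposition \ref{siewa} is unavailable; and case (1) requires the source to be \emph{totally} small, which fails when $\mV$ is large (e.g.\ presentably monoidal), the main case of interest. This is repairable --- e.g.\ by reducing to $\mV^\kappa$ as in the proof of Proposition \ref{laaaaan} --- but it must be addressed. Finally, for the right $\mW$-enrichment of $\phi$, ``tensors and colimits preserve the $\mW$-action'' is not quite the right criterion: by Lemma \ref{Adj} one must verify the projection formula $\phi(\F\ot\W)\simeq\phi(\F)\ot\W$, which the paper checks by reducing to the generators $\mC^\simeq(\Z,-)\ot\N$; your sketch should make this reduction explicit.
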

 
We use Theorem \ref{Tff} to derive monadic resolutions of left $\mV$-enriched functors
and deduce the following description of morphism objects in the right $\mW$-enriched $\infty$-category $\L\Enr\Fun_\mV(\mC,\mD)  $ of left $\mV$-enriched functors
$\mC \to \mD:$

\begin{corollary}\label{55}(Corollary \ref{enrhom}) Let $\mV$ be a locally small monoidal $\infty$-category and $\mW$ a closed monoidal $\infty$-category that admits small limits. Let $\mC$ be a small left $\mV$-enriched $\infty$-category and $\mD$ a $\mV,\mW$-bienriched $\infty$-category that admits left tensors.
For every left $\mV$-enriched functors $\F,\G : \mC \to \mD$ there is a canonical equivalence in $\mW:$ $$\R\Mor_{\L\Enr\Fun_{\mV}(\mC,\mD)}(\F,\G) \simeq $$$$ \underset{[\n]\in \Delta^\op}{\lim} \lim_{\Z_1, ...., \Z_\n \in \mC^\simeq}  \R\Mor_{\mD}(\L\Mor_{\mC}(\Z_{\n-1},\Z_\n) \ot  ...\ot \L\Mor_{\mC}(\Z_1,\Z_2) \ot \F(\Z_1),\G(\Z_\n)).$$
	
\end{corollary}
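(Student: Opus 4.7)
The plan is to apply the functor $\R\Mor_{\L\Enr\Fun_\mV(\mC,\mD)}(-,\G)$ to a monadic bar resolution of $\F$ furnished by Theorem \ref{Tff}. In the stronger setting of that theorem—where $\mD$ is $\mV,\mW$-bitensored and compatible with small colimits—Theorem \ref{Tff} supplies a right $\mW$-enriched monadic adjunction $\phi : \Fun(\mC^\simeq,\mD) \rightleftarrows \L\Enr\Fun_\mV(\mC,\mD) : U$ in which $U$ is restriction to the space of objects and $\phi(h)(\Y) = \colim_{\Z \in \mC^\simeq} \L\Mor_\mC(\Z,\Y) \ot h(\Z)$. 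Writing $T = U\phi$, iteration yields $T^\n U\F(\Z_\n) \simeq \colim_{\Z_1,\ldots,\Z_{\n-1} \in \mC^\simeq} \L\Mor_\mC(\Z_{\n-1},\Z_\n) \ot \cdots \ot \L\Mor_\mC(\Z_1,\Z_2) \ot \F(\Z_1)$, and the canonical bar construction exhibits $\F$ as the geometric realization of the simplicial object $[\n] \mapsto \phi T^\n U\F$.

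Applying $\R\Mor_{\L\Enr\Fun_\mV(\mC,\mD)}(-,\G)$ to this resolution, I combine four inputs: (i) $\R\Mor(-,\G)$ sends colimits to limits in $\mW$; (ii) the right $\mW$-enriched adjunction identity $\R\Mor_{\L\Enr\Fun_\mV(\mC,\mD)}(\phi(-),\G) \simeq \R\Mor_{\Fun(\mC^\simeq,\mD)}(-,U\G)$; (iii) the pointwise formula $\R\Mor_{\Fun(\mC^\simeq,\mD)}(h,k) \simeq \lim_{\Z \in \mC^\simeq} \R\Mor_\mD(h(\Z),k(\Z))$ supplied by Theorem \ref{1} applied with trivial enrichment on $\mC^\simeq$; and (iv) the fact that $\R\Mor_\mD(-,\G(\Z_\n))$ converts colimits to limits in $\mW$, owing to closedness of the right $\mW$-enrichment on $\mD$ and completeness of $\mW$. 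Combining these yields the stated iterated limit formula in the bitensored, cocomplete case.

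To pass from the hypothesis of Theorem \ref{Tff} to the weaker hypothesis of the present statement—that $\mD$ is only $\mV,\mW$-bi-enriched and admits left tensors—I would reduce via an enriched Yoneda-type embedding of $\mD$ into a $\mV,\mW$-bitensored completion such as $\mP_{\mV,\mW}(\mD)$ from Theorem \ref{4}, enlarging the universe if necessary to handle the size hypotheses. This embedding is right $\mW$-enriched-fully-faithful and preserves left tensors and morphism objects, so the induced post-composition $\L\Enr\Fun_\mV(\mC,\mD) \hookrightarrow \L\Enr\Fun_\mV(\mC,\mP_{\mV,\mW}(\mD))$ is right $\mW$-enriched-fully-faithful; the tensor expressions $\L\Mor_\mC(\Z_{\n-1},\Z_\n) \ot \cdots \ot \F(\Z_1)$ on the right-hand side already live in $\mD$, so the formula proved for the completion restricts to the desired one. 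The main obstacle is precisely this reduction: verifying that post-composition with the enriched Yoneda embedding really does preserve right $\mW$-enriched morphism objects of functor $\infty$-categories, which hinges on the universal property supplied by Theorem \ref{4} together with the enriched Yoneda machinery developed earlier in the paper.
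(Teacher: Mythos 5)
Your proposal is correct and follows essentially the same route as the paper: the paper deduces the formula by resolving $\F$ via the monadic bar resolution of Proposition \ref{mondec} (coming from Theorem \ref{expli}/Corollary \ref{explicas}) and then applying $\R\Mor(-,\G)$ together with the adjunction identity of Corollary \ref{explicas} (1). The reduction you flag as the main obstacle is milder than you fear, since Corollary \ref{explicas} (1) is already proved under the hypothesis that the target merely admits left tensors, and the envelope embeddings preserve (multi-)morphism objects by Proposition \ref{morpre}, so only the existence of the bar resolution requires passing to a cocompletion.
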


We use the latter limit formula for morphism objects to derive a limit formula for the relative tensor product of $\mV$-enriched profunctors.
For right $\mV$-enriched $\infty$-categories $\mC,\mD$ we define $\mV$-enriched profunctors $\mC \to \mD$ as $\mV,\mV$-enriched functors $\mD^\op \times \mC \to \mV$, which extends the usual definition from enrichment in symmetric monoidal $\infty$-categories to enrichment in not necessarily symmetric monoidal $\infty$-categories. Enriched profunctors have several different descriptions: by Theorem \ref{5} a $\mV,\mV$-enriched profunctor $\mC \to \mD$ is likewise a right $\mV$-enriched functor $\mC \to \mP_\mV(\mD)$, which by Theorem \ref{4} is the same as a 
left adjoint right $\mV$-linear functor $\mP_\mV(\mC) \to \mP_\mV(\mD)$.
The latter description gives rise to a relative tensor product of enriched profunctors that extends the relative tensor product of bimodules: the relative tensor product of two $\mV$-enriched profunctors $\F: \mC \to \mD,\G: \mD \to \mE$ corresponding to left adjoint right $\mV$-linear functors $\mP_\mV(\mC) \to \mP_\mV(\mD), \mP_\mV(\mD) \to \mP_\mV(\mE)$ is the $\mV$-enriched profunctor $\F\ot_\mD\G: \mC \to \mE$ corresponding to the composition $\mP_\mV(\mC) \to \mP_\mV(\mD) \to \mP_\mV(\mE)$.
We use Corollary \ref{55} to deduce the following colimit formula for the relative tensor product of profunctors that extends the colimit formula for the relative tensor product of bimodules:
\begin{corollary}(Corollary \ref{reltu}) Let $\mC,\mD,\mE$ be right $\mV$-enriched $\infty$-categories and $\F: \mC \to \mD, \G: \mD \to \mE$ be $\mV$-enriched profunctors.
For every $\X \in \mC, \Y \in \mE$ there is a canonical equivalence $$ (\F \ot_\mD \G)(\Y,\X) \simeq $$$$\underset{[\n]\in \Delta^\op}{\colim}(\colim_{\Z_1, ...., \Z_\n \in \mD^\simeq} \F(\Z_1,\X) \ot\L\Mor_{\mD}(\Z_2,\Z_1) \ot .... \ot \L\Mor_{\mD}(\Z_{\n},\Z_{\n-1})\ot \G(\Y,\Z_{\n}))).$$
\end{corollary}

In case that $\mV$ is a presentably symmetric monoidal $\infty$-category, we prove a refinement of Corollary \ref{55}, which describes morphism objects in enriched functor $\infty$-categories via enriched ends: Gepner-Haugseng-Nikolaus \cite[Definition 2.9.]{articles} define ends for functors of $\infty$-categories via the twisted arrow $\infty$-category and prove that for any functors $\F,\G: \mC \to \mD$ of $\infty$-categories the space of natural transformations $\F \to \G$ 
is computed as the end $\int_{\X \in \mC} \map_\mD(\F(\X),\G(\X)).$ 
The authors define the end of a functor $\rH: \mC^\op \times \mC \to \mD$ as
the limit of the composition $\Tw(\mC) \to \mC^\op \times \mC \xrightarrow{\rH} \mD, $ where $\Tw(\mC) \to \mC^\op \times \mC$ is the twisted arrow left fibration that classifies the mapping space functor $\mC^\op \times \mC \to \mS, (\X,\Y) \mapsto \map_\mC(\X,\Y).$ 
This definition characterizes the end $\int_{\X\in \mC} \rH(\X,\X)$ as the object of $\mD$ representing the presheaf $\X \mapsto \map_{\Fun(\mC^\op \times \mC,\mS)}(\map_\mC, \map_\mD(\X,-) \circ \rH)$ on $\mC$.
The latter universal property generalizes to $\mV$-enriched $\infty$-categories
for any presentably symmetric monoidal $\infty$-category $\mV$ by replacing mapping spaces by morphism objects:
for any $\mV$-enriched $\infty$-categories $\mC,\mD$ and any $\mV$-enriched functor $\rH: \mC^\op \ot \mC \to \mD$ the $\mV$-enriched end of $\rH$
is the object of $\mD$ representing the $\mV$-enriched presheaf $\X \mapsto \Mor_{\Enr\Fun_\mV(\mC^\op \otimes \mC,\mV)}(\Mor_\mC, \Mor_\mD(\X,-) \circ \rH)$ on $\mC.$
We prove the following enriched end formula:
\begin{theorem}\label{44} (Theorem \ref{end}) Let $\mV$ be a presentably symmetric monoidal $\infty$-category 
and $\F,\G: \mC,\mD$ be $\mV$-enriched functors.	
There is a canonical equivalence
$$ \Mor_{\Enr\Fun_{\mV}(\mC,\mD)}(\F,\G) \simeq \int_{\X\in\mC} \Mor_\mD(\F(\X),\G(\X)).$$
	
\end{theorem}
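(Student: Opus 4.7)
The plan is to verify that both sides represent the same $\mV$-valued functor on $\mV^{\op}$, using the universal property of the enriched end recalled immediately before the statement. Applied to $\rH = \Mor_\mD \circ (\F^{\op} \otimes \G)$, that universal property gives, for every $\V \in \mV$,
\[
\Mor_\mV\Bigl(\V,\ \int^{\X\in\mC} \Mor_\mD \circ (\F^{\op} \otimes \G)\Bigr) \simeq \Mor_{\Enr\Fun_\mV(\mC^{\op} \otimes \mC, \mV)}\bigl(\Mor_\mC,\ \Mor_\mV(\V, -) \circ \Mor_\mD \circ (\F^{\op} \otimes \G)\bigr).
\]
Since $\mV$ is closed and $\mD$ is $\mV$-enriched, $\Mor_\mV(\V, -) \circ \Mor_\mD(-,-)$ is canonically equivalent to $\Mor_\mD(\V \otimes -, -)$, so the right-hand side is the space of morphisms of $\mV$-enriched functors $\mC^{\op} \otimes \mC \to \mV$ from $\Mor_\mC$ to $\Mor_\mD((\V \otimes \F)^{\op}(-), \G(-))$.

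On the other side, by Theorem \ref{3} the $\mV$-enrichment on $\Enr\Fun_\mV(\mC, \mD)$ is the internal hom for the tensor product of $\mV$-enriched $\infty$-categories, so
\[
\Mor_\mV(\V, \Mor_{\Enr\Fun_\mV(\mC, \mD)}(\F, \G)) \simeq \map_{\Enr\Fun_\mV(\mC, \mD)}(\V \otimes \F, \G),
\]
the space of $\mV$-enriched natural transformations from the $\V$-copower of $\F$ to $\G$. These two spaces are identified via the enriched Yoneda-type correspondence: a $\mV$-enriched natural transformation $\alpha : \F' \to \G$ between $\mV$-enriched functors $\mC \to \mD$ is determined by, and determines, the morphism of enriched functors $\mC^{\op} \otimes \mC \to \mV$ obtained as the composite of the enriched functoriality $\Mor_\mC(-,-) \to \Mor_\mD(\F'(-), \F'(-))$ with post-composition by $\alpha$. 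Applying this with $\F' = \V \otimes \F$ and using naturality in $\V$, the (enriched) Yoneda lemma delivers the claimed equivalence.

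The main obstacle is promoting this last correspondence to a coherent $\infty$-categorical equivalence; informally it expresses $\Mor_\mC$ as the universal hom-profunctor on $\mC$, but rigorously it requires serious coherence bookkeeping. My approach is to combine the enriched Yoneda lemma (Proposition \ref{expli}, Proposition \ref{explicas}) with the cosimplicial/monadic resolution of morphism objects in the enriched functor $\infty$-category provided by Theorem \ref{Tff} and Corollary \ref{enrhom}: both $\Mor_{\Enr\Fun_\mV(\mC, \mD)}(\F, \G)$ and the end $\int^{\X\in\mC} \Mor_\mD \circ (\F^{\op} \otimes \G)$ then admit explicit descriptions as cosimplicial limits indexed by the bar construction of $\mC$, which itself models the enriched twisted arrow $\infty$-category $\Tw(\mC)$; matching the two cosimplicial objects termwise finishes the comparison. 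A secondary technical point is to ensure the $\V$-copower $\V \otimes \F$ is available at the required level of generality; this is handled by first embedding $\mD$ into a $\mV$-enriched presheaf $\infty$-category via Theorem \ref{5}, where copowers and limits exist, and then using that morphism objects are preserved by fully faithful enriched embeddings.
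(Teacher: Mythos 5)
Your overall strategy (compare both sides after testing against objects $\V$ of $\mV$, reduce to an identification of enriched natural transformations $\V\otimes\F\to\G$ with maps of profunctors out of $\Mor_\mC$) is reasonable in outline, but it differs from the paper's proof and, as written, has a genuine gap exactly at the crux. The identification you call ``the enriched Yoneda-type correspondence'' --- that a map $\F'\to\G$ in $\Enr\Fun_\mV(\mC,\mD)$ is the same datum as a map $\Mor_\mC\to\Mor_\mD\circ(\F'^{\op}\otimes\G)$ in $\Enr\Fun_\mV(\mC^\op\otimes\mC,\mV)$ --- \emph{is} the content of the theorem (via Remark \ref{ioo} the end is by definition $\Mor_{\Enr\Fun_\mV(\mC^\op\otimes\mC,\mV)}(\Mor_\mC,-)$), so asserting it and deferring its proof to ``coherence bookkeeping'' is circular. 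The machinery you propose to discharge it is not available in the paper: there is no enriched twisted arrow $\infty$-category here, and no cobar/limit formula for the enriched end of Definition \ref{Aho}. Corollary \ref{enrhom} gives a cobar formula for $\R\Mor_{\Enr\Fun}(\F,\G)$, but to get a matching formula for the end you would first have to resolve $\Mor_\mC$ as an object of $\Enr\Fun_\mV(\mC^\op\otimes\mC,\mV)$ via Proposition \ref{mondec}; that resolution is indexed by chains of \emph{pairs} of objects of $\mC$ (since $\L\Mor_{\mC^\op\otimes\mC}((\Z,\Z'),(\Y,\Y'))\simeq\Mor_\mC(\Y,\Z)\otimes\Mor_\mC(\Z',\Y')$), so it does not match the single-chain cobar of Corollary \ref{enrhom} termwise, and even a termwise match would not by itself produce a coherent equivalence of cosimplicial objects. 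So the decisive step of your argument is neither proved nor reducible to results already in place.

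The paper's proof takes a different and much shorter route that you should compare with: starting from $\Mor_{\Enr\Fun_\mV(\mC^\op\otimes\mC,\mV)}(\Mor_\mC,\Mor_\mD\circ(\F^\op\otimes\G))$, it (i) curries to $\Mor_{\Enr\Fun_\mV(\mC,\Enr\Fun_\mV(\mC^\op,\mV))}(\rho_\mC,\Enr\Fun_\mV(\F^\op,\mV)\circ\rho_\mD\circ\G)$, (ii) uses the enriched Kan-extension adjunction $\F_!\dashv(\F^\op)^*$ on presheaf $\infty$-categories, (iii) invokes Corollary \ref{saewo} to identify $\F_!\circ\rho_\mC\simeq\rho_\mD\circ\F$ (naturality of the enriched Yoneda embedding), and (iv) concludes by full faithfulness of $\rho_\mD$ (Proposition \ref{yofaith} and Lemma \ref{faith}). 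Your first reduction (representing both sides as functors of $\V$, after embedding $\mD$ into enriched presheaves so that tensors exist) is fine but unnecessary once one uses these four steps; if you want to salvage your approach, the missing lemma you need is precisely the statement that $\Mor_\mC$ corepresents evaluation in the profunctor category, and the cleanest proof of that is the adjunction-plus-Yoneda-naturality argument above rather than a bar-resolution comparison.
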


We use Theorem \ref{44} to obtain the following coend formula for the relative tensor product:

\begin{corollary}(Corollary \ref{endrel})
Let $\mV$ be a presentably symmetric monoidal $\infty$-category, $\mC,\mD,\mE $ small right $\mV$-enriched $\infty$-categories and $\F:\mC \to \mD, \G:\mD \to \mE$ be $\mV$-enriched profunctors. For every $\X \in \mC, \Y \in \mE$ there is a canonical equivalence in $\mV: $$$ (\F \ot_{\mD} \G)(\X,\Y)\simeq \int^{\Z\in\mD} \F(\Z,\X) \ot \G(\Y,\Z).$$ 
	
\end{corollary}

\vspace{2mm}

\subsection{Notation and terminology}

We fix a hierarchy of Grothendieck universes whose objects we call small, large, very large, etc.
We call a space small, large, etc. if its set of path components and its homotopy groups are for any choice of base point. We call an $\infty$-category small, large, etc. if its maximal subspace and all its mapping spaces are.

\vspace{2mm}

We write 
\begin{itemize}
\item $\Set$ for the category of small sets.
\item $\Delta$ for (a skeleton of) the category of finite, non-empty, partially ordered sets and order preserving maps, whose objects we denote by $[\n] = \{0 < ... < \n\}$ for $\n \geq 0$.
\item $\mS$ for the $\infty$-category of small spaces.
\item $ \Cat_\infty$ for the $\infty$-category of small $\infty$-categories.
\item $\Cat_\infty^{\rc \rc} $ for the $\infty$-category of large $\infty$-categories with small colimits and small colimits preserving functors.
\end{itemize}

\vspace{2mm}

We often indicate $\infty$-categories of large objects by $\widehat{(-)}$, for example we write $\widehat{\mS}, \widehat{\Cat}_\infty$ for the $\infty$-categories of large spaces, $\infty$-categories.

For any $\infty$-category $\mC$ containing objects $\A, \B$ we write
\begin{itemize}
\item $\mC(\A,\B)$ for the space of maps $\A \to \B$ in $\mC$,
\item $\mC_{/\A}$ for the $\infty$-category of objects over $\A$,
\item $\Ho(\mC)$ for its homotopy category,
\item $\mC^{\triangleleft}, \mC^{\triangleright}$ for the $\infty$-category arising from $\mC$ by adding an initial, final object, respectively,
\item $\mC^\simeq $ for the maximal subspace in $\mC$.
\end{itemize}

Note that $\Ho(\Cat_\infty)$ is cartesian closed and for small $\infty$-categories $\mC,\mD$ we write $\Fun(\mC,\mD)$ for the internal hom, the $\infty$-category of functors $\mC \to \mD.$ 

We often call a fully faithful functor $\mC \to \mD$ an embedding.
We call a functor $\phi: \mC \to \mD$ an inclusion (of a subcategory)
if one of the following equivalent conditions holds:
\begin{itemize}
\item For any $\infty$-category $\mB$ the induced map
$\Cat_\infty(\mB,\mC) \to \Cat_\infty(\mB,\mD)$ is an embedding.
\item The functor $\phi: \mC \to \mD$ induces an embedding on maximal subspaces and on all mapping spaces.

\end{itemize}





Let $\rS$ be an $\infty$-category and $\mE \subset \Fun([1],\rS)$ a full subcategory. We call a functor $\X \to \rS$ a cocartesian fibration relative to $\mE$ if for every morphism $[1] \to \rS$ that belongs to $\mE$ the pullback
$[1] \times_\rS \X \to [1]$ is a cocartesian fibration whose cocartesian morphisms are preserved by the projection $[1] \times_\rS \X \to \X.$
We call a functor $\X \to \Y$ over $\rS$ a map of cocartesian fibrations relative to $\mE$ if it preserves cocartesian lifts of morphisms of $\mE.$
We write $\Cat_{\infty/\rS}^\mE \subset \Cat_{\infty/\rS}$ for the subcategory of cocartesian fibrations relative to $\mE$ and maps of such.
(see \cite[Definition 2.3.]{heine2023monadicity} for details).

\subsubsection*{Acknowledgements}We thank David Gepner and Markus Spitzweck for helpful discussions. We thank Julie Rasmusen for carefully reading a first draft of this paper.

\label{rel}

\section{Enrichment}
In this section we recall the theory of weakly enriched $\infty$-categories of \cite{HEINE2023108941} and further develope this theory for our needs.

\subsection{Weak enrichment}

We start with defining weakly enriched $\infty$-categories.
To define such we first need to introduce non-symmetric $\infty$-operads
\cite[Definition 2.2.6.]{GEPNER2015575}, \cite[Definition 2.16.]{HEINE2023108941}.

\begin{notation}\label{ooop}
Let $\Ass:= \Delta^\op$ be the category of finite non-empty totally ordered sets and order preserving maps.
We call a map $[\n] \to [\m]$ in $\Ass$
\begin{itemize}
\item inert if it corresponds to a map of $\Delta$ of the form $[\m] \simeq \{\bi, \bi+1,..., \bi+\m \} \subset [\n]$ for some $\bi \geq 0.$
\item active if it corresponds to a map of $\Delta$, which preserves the minimum and maximum.
\end{itemize}
\end{notation}
\begin{remark}
For every $\n \geq 0$ there are $\n$ inert morphisms $[\n] \to [1]$, where the $\bi$-th inert morphism $[\n] \to [1]$ for $1 \leq \bi \leq \n$ corresponds to the map $[1] \simeq \{\bi-1, \bi\} \subset [\n]$.
\end{remark}

\begin{definition}
A morphism in $ \Ass$ preserves the minimum (maximum) if it corresponds to a map $[\m] \to [\n]$ in $\Delta$ sending $0$ to $0$
(sending $\m$ to $\n$).
	
\end{definition}

\begin{definition}\label{ek}A (non-symmetric) $\infty$-operad is a cocartesian fibration $\phi: \mV^\ot \to \Ass$ relative to the collection of inert morphisms such that the following conditions hold, where we set $\mV:=\mV^\ot_{[1]}:$

\begin{itemize}
\item For every $\n \geq 0$ the family $[\n]\to [1]$ of all inert morphisms in $\Ass$ induces an equivalence $\mV^\ot_{[\n]} \to \mV^{\times \n}.$

\item For every $\Y,\X \in \mV^\ot $ lying over $[\m], [\n] \in \Ass $ the family
$\X \to \X_\bj$ for $1 \leq \bj \leq \n $ of $\phi$-cocartesian lifts of
the inert morphisms $[\n]\to [1]$ induces 
a pullback square
\begin{equation*} 
\begin{xy}
\xymatrix{
\mV^\ot(\Y,\X) \ar[d]^{} \ar[r]^{ }
& \prod_{1 \leq \bj \leq \n} \mV^\ot(\Y,\X_\bj) \ar[d]^{} 
\\  \Ass([\m], [\n]) 
\ar[r]^{} & \prod_{1 \leq \bj \leq \n} \Ass([\m], [1]). 
}
\end{xy} 
\end{equation*}

\end{itemize}	
	
\end{definition}
\begin{notation}
For every $\infty$-operad $\phi: \mV^\ot \to \Ass$ we call $\mV:= \mV^\ot_{[1]}$ the underlying $\infty$-category.
The $\infty$-category $\mV^\ot_{[0]}$ is contractible. We write $\emptyset$ for the unique object.
\end{notation}

\begin{example}\label{empp}
Let $\emptyset^\ot \subset \Ass$ be the full subcategory spanned by $[0] \in \Ass.$
Then $\emptyset^\ot$ is contractible and $\emptyset^\ot \subset \Ass$ is an $\infty$-operad that is the initial $\infty$-operad.
	
\end{example}

\begin{notation}
	
Let $\mV^\ot \to \Ass$ be an $\infty$-operad and $\V_1,..., \V_{\n},\W \in \mV$ for $\n \geq 0 $. Let
$$\Mul_{\mV}(\V_1,..., \V_\n;\W)$$ be the full subspace of $\mV^\ot(\V,\W)$ spanned by the morphisms $\V \to \W$ in $\mV^\ot$ lying over the active morphism $[1] \to [\n]$ in $\Delta,$ where $\V \in \mV^\ot_{[\n]} \simeq \mV^{\times \n}$ corresponds to $(\V_1,..., \V_\n) $.
\end{notation}

\begin{definition}
An $\infty$-operad $\phi: \mV^\ot \to \Ass$ is a monoidal $\infty$-category if $\phi$ is a cocartesian fibration.
\end{definition}

\begin{remark}
A cocartesian fibration $\phi: \mV^\ot \to \Ass$ is a monoidal $\infty$-category if and only if the first condition
of Definition \ref{ek} holds. The second condition is automatic if $\phi$ is a cocartesian fibration \cite[Remark 2.17.]{HEINE2023108941}.
\end{remark}

\begin{definition}
\begin{enumerate}
\item An $\infty$-operad $\phi: \mV^\ot \to \Ass$ is locally small if the mapping spaces of $\mV^\ot$ are small.

\item An $\infty$-operad $\phi: \mV^\ot \to \Ass$ is small if it is locally small and $\mV$ is small.

\end{enumerate}	
\end{definition}
\begin{remark}\label{oprer}

By the first axiom of Definition \ref{ek} an $\infty$-operad $\phi: \mV^\ot \to \Ass$ is small if and only if $\mV^\ot$ is small.
By the second axiom an $\infty$-operad $\mV^\ot \to \Ass$ is locally small if and only if the multi-morphism spaces of $ \mV^\ot \to \Ass $ are small.	
So an $\infty$-operad $\mV^\ot \to \Ass$ is small if and only if $\mV$ is small and the multi-morphism spaces of $ \mV^\ot \to \Ass $ are small.	
	
\end{remark}

\begin{notation}

For every monoidal $\infty$-category $\mV^\ot \to \Ass $ we write $\ot: \mV \times \mV \simeq \mV^\ot_{[2]} \to \mV^\ot_{[1]}= \mV$ for the functor induced by the unique active map $[2] \to [1]$ in $\Ass$ and we write $\tu_\mV: \mV^\ot_{[0]} \to \mV^\ot_{[1]}=\mV$ for the functor induced by the unique active map $[0] \to [1]$ in $\Ass$.
	
\end{notation}

\begin{definition}\label{compost}
Let $\mK \subset \Cat_\infty$ be a full subcategory.

\begin{enumerate}
\item
An $\infty$-operad $\mV^\ot \to \Ass$ is compatible with $\mK$-indexed colimits if $\mV$ admits $\mK$-indexed colimits and for every $\V_1,..., \V_\n, \V \in \mV$ for $\n \geq0$ and $0 \leq \bi \leq \n$ the presheaf $\Mul_\mV(\V_1,..,\V_\bi,-, \V_{\bi+1},..., \V_\n;\V)$ on $\mV$ preserves $\mK$-indexed limits.

\item A monoidal $\infty$-category is compatible with $\mK$-indexed colimits
if it is compatible with $\mK$-indexed colimits as an $\infty$-operad.
\end{enumerate}
\end{definition}

\begin{remark}
A monoidal $\infty$-category $\mV^\ot \to \Ass$ is compatible with $\mK$-indexed colimits
if and only if $\mV$ admits $\mK$-indexed colimits and such are preserved by the tensor product component-wise.

\end{remark}

\begin{definition}
	
A monoidal $\infty$-category $\mV^\ot \to \Ass$ is
\begin{enumerate}
\item closed if the tensor product $\ot: \mV \times \mV \to \mV$ is left adjoint component-wise.
\item presentably if it is closed and $\mV$ is presentable.
\item $\kappa$-compactly generated for a regular cardinal $\kappa$ if 
$\mV^\ot \to \Ass $ is compatible with small colimits, $\mV$ is $\kappa$-compactly generated and the monoidal structure of $\mV$ restricts to the full subcategory $\mV^\kappa \subset \mV$ of $\kappa$-compact objects.
\end{enumerate}
\end{definition}

\begin{remark}

By \cite[Proposition 7.15.]{Rune} every presentably monoidal $\infty$-category is $\kappa$-compactly generated for some regular cardinal $\kappa.$
\end{remark}

We also consider maps of $\infty$-operads:
\begin{definition}
Let $\mV^\ot \to \Ass, \mW^\ot \to \Ass$ be $\infty$-operads.
A map of $\infty$-operads $\mV^\ot \to \mW^\ot$ is a map of cocartesian fibrations
$\mV^\ot \to \mW^\ot$ relative to the collection of inert morphisms of $\Ass$.
\end{definition}

\begin{definition}
	
A map of $\infty$-operads $\mV^\ot \to \mW^\ot$ is an embedding
if it is fully faithful.
\end{definition}
\begin{definition}
Let $\mV^\ot \to \Ass, \mW^\ot \to \Ass$ be monoidal $\infty$-categories.
\begin{itemize}
\item A lax monoidal functor $\mV^\ot \to \mW^\ot$ is a map of $\infty$-operads $\mV^\ot \to \mW^\ot$.
\item A monoidal functor $\mV^\ot \to \mW^\ot$ is a map of cocartesian fibrations $\mV^\ot \to \mW^\ot$ over $\Ass.$	
\end{itemize}	
\end{definition}

\begin{notation}We fix the following notation:
\begin{itemize}
	
\item Let $\Op_\infty \subset \Cat_{\infty/\Ass}$ be the subcategory of $\infty$-operads and maps of $\infty$-operads.

\item Let $\Mon \subset\Op_\infty$ be the subcategory of monoidal $\infty$-categories and monoidal functors.

\item Let $\L\Mon \subset \widehat{\Mon}$ be the subcategory of monoidal $\infty$-categories compatible with small colimits and monoidal functors preserving small colimits.

\end{itemize}
	
\end{notation}


\begin{notation}
For every $\infty$-operads $\mV^\ot \to \Ass, \mW^\ot \to \Ass$ let $$\Alg_\mV(\mW) \subset \Fun_\Ass(\mV^\ot,\mW^\ot)$$ be the full subcategory of maps of $\infty$-operads.
If $\mV^\ot \to \Ass, \mW^\ot \to \Ass$ are monoidal $\infty$-categories,
let $$\Fun^{\ot,\L}(\mV,\mW) \subset\Fun^\ot(\mV,\mW) \subset \Alg_\mV(\mW)$$ be the full subcategories of monoidal functors (that induce on underlying $\infty$-categories a left adjoint).
\end{notation}
 
\begin{definition}\label{monob} Let $\mD$ be an $\infty$-category that admits finite products. 
\begin{enumerate}
\item A monoid object in $\mD$ is a functor $\beta: \Ass \to \mD$ such that for every $\n \geq 0$ the family $[\n]\to [1]$ of all inert morphisms in $\Ass$ induces an equivalence $\beta([\n]) \to \beta([1])^{\times\n}.$

\item Let $\beta$ be a monoid object in $\mD.$ A left $\beta$-action object in $\mD$ is a map $\alpha \to \beta $ of functors $ \Ass \to \mD$ such that 
for any $\n \geq 0$ the morphism $\{\n\}\subset [\n]$ in $\Delta$ induces an equivalence $\alpha([\n]) \to \beta([\n]) \times \alpha([0]).$

\end{enumerate} \end{definition}


\begin{example}\label{eopl}Monoid objects in $\Cat_\infty$ are precisely classified by monoidal $\infty$-categories.
\end{example} 

\begin{remark}\label{carst}

By \cite[Proposition 2.39.]{HEINE2023108941} for every $\infty$-category $\mD$ that admits finite products there is a monoidal $\infty$-category $\mD^\times \to \Ass$, the cartesian monoidal structure, whose tensor unit is the final object and whose tensor product is the product, and such that associative algebras in $\mD^\times$ precisely correspond to monoid objects in $\mD$. Moreover if $\beta$ is a monoid object in $\mD$ corresponding to an associative algebra $\beta'$ in $\mD^\times,$ by \cite[Remark 3.28.]{HEINE2023108941} a left $\beta$-action object in $\mD$ corresponds to a map $ \gamma \to \beta'$ of functors $ \Ass \to \mD^\times$ that lies over the map $(-)^{\triangleright}\to \id $ of functors $ \Ass \to \Ass$ such that 
$\gamma$ is a map of cocartesian fibrations relative to the collection of inert morphisms preserving the minimum.
\end{remark}
 
\vspace{2mm}
Next we define weakly bienriched $\infty$-categories \cite[Definition 3.3.]{HEINE2023108941}:

\begin{definition}\label{bla}
Let $\mV^\ot \to \Ass, \mW^\ot \to \Ass$ be $\infty$-operads.
An $\infty$-category weakly bienriched in $\mV, \mW$
is a map $\phi=(\phi_1,\phi_2): \mM^\circledast \to \mV^\ot \times \mW^\ot $ of cocartesian fibrations relative to the collection of inert morphisms of $\Ass \times \Ass$ whose first component preserves the maximum and whose second component preserves the minimum such that the following conditions hold:
\begin{enumerate}
\item for every $\n,\m \geq 0$ the map $[0]\simeq \{\n\} \subset [\n]$ in the first component and the map $[0]\simeq \{0\} \subset [\m]$ in the second component induce an equivalence
$$ \theta: \mM^\circledast_{[\n][\m]} \to \mV^\ot_{[\n]} \times  \mM^\circledast_{[0][0]} \times\mW^\ot_{[\m]},$$
\vspace{1mm}
\item for every $\X,\Y \in \mM^\circledast$ lying over $([\m'], [\n']), ([\m], [\n]) \in \Ass \times \Ass$ the cocartesian lift $\Y \to \Y'$ of the map $[0]\simeq \{\m\} \subset [\m]$ and $[0]\simeq \{0\} \subset [\n]$ induces a
pullback square
\begin{equation*} 
\begin{xy}
\xymatrix{
\mM^\circledast(\X,\Y)  \ar[d]^{} \ar[r]^{ }
& \mM^\circledast(\X,\Y') \ar[d]^{} 
\\ \mV^\ot(\phi_1(\X),\phi_1(\Y)) \times \mW^\ot(\phi_2(\X),\phi_2(\Y))  
\ar[r]^{}  & \mV^\ot(\phi_1(\X),\phi_1(\Y')) \times \mW^\ot(\phi_2(\X),\phi_2(\Y')). 
}
\end{xy} 
\end{equation*}
\end{enumerate}

\end{definition}

\begin{remark}
In \cite[Definition 3.3.]{HEINE2023108941} we define weakly bienriched $\infty$-categories under the name weakly bitensored $\infty$-categories
and reserve the notion of weakly bienriched $\infty$-categories for a different structure of enrichment based on Gepner-Haugseng's model of enriched $\infty$-categories \cite{GEPNER2015575}.
In \cite{HEINE2023108941} we prove an equivalence between weakly bienriched $\infty$-categories and weakly bitensored $\infty$-categories
justifying our choice of terminology. Moreover we prefer to use this terminology since
we specialize the notion of weakly bienriched $\infty$-categories of Definition \ref{bla} to several notions of enrichment.	
\end{remark}

\begin{notation}

For every weakly bienriched $\infty$-category $\phi: \mM^\circledast \to \mV^\ot \times \mW^\ot$ we call $\mM:= \mM^\circledast_{[0][0]} $ the underlying $\infty$-category of $\phi$ and say that $\phi$ exhibits $\mM$ as weakly bienriched in $\mV,\mW$.

\end{notation}

\begin{example}
Let $\mV^\ot \to \Ass$ be an $\infty$-operad.
We write $\mV^\circledast \to \Ass \times \Ass$ for the pullback of $\mV^\ot \to \Ass$ along the functor $\Ass \times \Ass \to \Ass, \ ([\n],[\m]) \mapsto [\n]\ast [\m]$.
The two functors $\Ass \times \Ass \times [1] \to \Ass$ corresponding to the natural transformations $(-)\ast \emptyset \to (-)\ast (-), \ \emptyset \ast (-) \to (-)\ast (-)$ send the morphism $\id_{[\n],[\m]}, 0 \to 1$ to
an inert one and so give rise to functors
$\mV^\circledast \to \mV^\ot \times \Ass, \ \mV^\circledast \to \Ass \times \mV^\ot $ over $\Ass\times \Ass$. The resulting functor $\mV^\circledast \to \mV^\ot \times \mV^\ot$ is an $\infty$-category weakly bienriched in $\mV,\mV.$

\end{example}


\begin{example}
Let $\mM^\circledast \to \mV^\ot \times \mW^\ot$ be a weakly bienriched $\infty$-category and $\mN \subset \mM$ a full subcategory.
Let $\mN^\circledast \subset \mM^\circledast$ be the full subcategory spanned by all objects of $\mM^\circledast$ lying over some
$(\V,\W)\in \mV^\ot \times \mW^\ot$ corresponding some object of $\mN \subset \mM.$
The restriction $\mN^\circledast \subset \mM^\circledast \to \mV^\ot \times \mW^\ot$ is a weakly bienriched $\infty$-category, whose underlying $\infty$-category is $\mN$.
We call $\mN^\circledast \to \mV^\ot \times \mW^\ot$ the full weakly bienriched subcategory of $ \mM^\circledast $ spanned by $\mN.$

\end{example}

\begin{example}\label{euuz}
Let $\mM^\circledast \to \mV^\ot \times \mW^\ot$ be a weakly bienriched $\infty$-category and $\K$ an $\infty$-category.
\begin{itemize}
\item 
The functor $\K \times \mM^\circledast \to \mV^\ot \times \mW^\ot$ 	
is a weakly bienriched $\infty$-category that exhibits $\K \times \mM$
as weakly bienriched in $\mV,\mW.$

\item The pullback $(\mM^\K)^\circledast:= \mV^\ot \times \mW^\ot\times_{\Fun(\K,\mV^\ot \times \mW^\ot)} \Fun(\K,\mM^\circledast) \to \mV^\ot \times \mW^\ot$	along the diagonal functor $\mV^\ot \times \mW^\ot\to \Fun(\K,\mV^\ot \times \mW^\ot)$ is a weakly bienriched $\infty$-category that exhibits $\Fun(\K,\mM)$
as weakly bienriched in $\mV,\mW.$
\end{itemize}
\end{example}

\begin{definition}
An $\infty$-category weakly left enriched in $\mV$ is an $\infty$-category weakly bienriched in $\mV, \emptyset$.
An $\infty$-category weakly right enriched in $\mV$ is an $\infty$-category weakly bienriched in $ \emptyset, \mV$.

\end{definition}


\begin{notation}\label{mult}
Let $\mM^\circledast \to \mV^\ot \times \mW^\ot$ be a weakly bienriched $\infty$-category and $\V_1,..., \V_{\n} \in \mV, \ \X, \Y \in \mM, \W_1,...,\W_\m \in \mW$ for some $\n,\m \geq 0 $. Let
$$\Mul_{\mM}(\V_1,..., \V_\n,\X,\W_1,...,\W_\m; \Y)$$ be the full subspace of $\mM^\circledast(\Z,\Y)$ spanned by the morphisms $\Z \to \Y$ in $\mM^\circledast$ lying over the map $[0] \simeq \{0 \} \subset [\n],[0] \simeq \{\m \} \subset [\m]$ in $\Delta \times \Delta,$ where $\Z \in \mM_{[\n],[\m]}^\circledast \simeq \mV^{\times \n} \times \mM \times \mW^{\times\m}$ corresponds to $(\V_1,..., \V_\n,\X,\W_1,...,\W_\m) $.
\end{notation}

\begin{remark}For every $\infty$-operad $\mV^\ot \to \Ass$ the projection $\mM^\circledast:=\mV^\circledast \to \mV^\ot$ gives rise to an embedding
$\mV^\circledast(\Z,\Y) \to \mV^\ot(\Z,\Y) $
that restricts to an equivalence
$$\Mul_{\mM}(\V_1,..., \V_\n,\X,\W_1,...,\W_\m; \Y) \simeq \Mul_{\mV}(\V_1,..., \V_\n,\X,\W_1,...,\W_\m; \Y).$$
\end{remark}

\begin{definition}Let $\phi: \mM^\circledast \to \mV^\ot \times \mW^\ot $ be a weakly bienriched $\infty$-category.
\begin{enumerate}
\item We call $\phi$ locally small if the $\infty$-categories $\mV^\ot, \mW^\ot, \mM^\circledast$ are locally small.

\item We call $\phi$ small if $\phi$ is locally small and $\mM$ is small.

\item We call $\phi$ absolute small if $\phi$ is small and $\mV,\mW$ are small.

\end{enumerate}

\end{definition}

\begin{remark}Let $\phi: \mM^\circledast \to \mV^\ot \times \mW^\ot $ be a weakly bienriched $\infty$-category.
The first axiom of Definition \ref{bla} implies that $\phi$ is absolute small if and only if $ \mM^\circledast, \mV^\ot, \mW^\ot$ are small.
Remark \ref{oprer} and the axioms of Definition \ref{bla} imply that $\phi$ is locally small if and only if the multi-morphism spaces (Notation \ref{mult}) of $\mV^\ot \to \Ass, \mW^\ot \to \Ass, \phi $ are small.	So $\phi$ is small if and only if $\mM$ is small and the multi-morphism spaces of $\mV^\ot \to \Ass, \mW^\ot \to \Ass, \phi  $ are small.		
\end{remark}

\subsection{Tensors and cotensors}

Next we specialize to an important class of weakly bienriched $\infty$-categories, which is easier in nature.

The next definition is \cite[Definition 3.12.]{HEINE2023108941}:

\begin{definition}\label{leftten}
Let $\phi: \mM^\circledast \to \mV^\ot \times \mW^\ot$ be a weakly bienriched $\infty$-category.

\begin{enumerate}
\item We say that $\phi: \mM^\circledast \to \mV^\ot \times \mW^\ot$ exhibits $\mM$ as left tensored over $\mV$ if $\mV^\ot \to \Ass$ is a monoidal $\infty$-category and $\phi$ is a map of cocartesian fibrations over $\Ass$
via projection to the first factor.


\item We say that $\phi: \mM^\circledast \to \mV^\ot \times \mW^\ot$ exhibits $\mM$ as right tensored over $\mW$ if $\mW^\ot \to \Ass$ is a monoidal $\infty$-category and $\phi$ is a map of cocartesian fibrations over $\Ass$
via projection to the second factor.


\item We say that $\phi: \mM^\circledast \to \mV^\ot \times \mW^\ot$ exhibits $\mM$ as bitensored over $\mV,\mW$ if $\phi$ exhibits $\mM$ as left tensored over $\mV$ and right tensored over $\mW$.



\end{enumerate}

\end{definition} 

\begin{remark}

Let $\mV^\ot \to \Ass, \mW^\ot \to \Ass$ be monoidal $\infty$-categories and $\phi: \mM^\circledast \to \mV^\ot \times \mW^\ot$ a map of cocartesian fibrations over $\Ass \times \Ass.$
Then $\phi: \mM^\circledast \to \mV^\ot \times \mW^\ot$ exhibits $\mM$ as bitensored over $\mV,\mW$ if and only if condition (1) of Definition \ref{bla} holds. Condition (2) is then automatic. 
\end{remark}

We apply Definition \ref{leftten} in particular to the case that $\mV^\ot=\emptyset^\ot$ or $\mW^\ot=\emptyset^\ot$.

\begin{example}
	
Let $\mM^\circledast \to \mV^\ot $ be a (weakly) left tensored $\infty$-category
and $\mN^\circledast \to \mW^\ot $ a (weakly) right tensored $\infty$-category.
The functor $\mM^\circledast \times \mN^\circledast \to \mV^\ot \times \mW^\ot $ is a (weakly) bitensored $\infty$-category.



	
\end{example}

\begin{definition}
A left tensored, right tensored, bitensored $\infty$-category $\mM^\circledast \to \mV^\ot \times \mW^\ot$, respectively, is small if $\mM^\circledast, \mV^\ot, \mW^\ot$ are small.
	
\end{definition}

\begin{definition}Let $\kappa$ be a small regular cardinal. 
\begin{enumerate}
\item A left tensored $\infty$-category $ \mM^\circledast \to \mV^\ot \times \mW^\ot$ is compatible with $\kappa$-small colimits if $\mV^\ot \to \Ass$ is compatible with $\kappa$-small colimits, $\mM$ admits $\kappa$-small colimits, for every $\V \in \mV, \X \in \mM$ the functors $(-) \ot \X: \mV \to \mM, \V \ot (-): \mM \to \mM$ preserve $\kappa$-small colimits and for every $\W_1,...,\W_\m \in \mW$
for $\m \geq0$ and $\Y \in \mM$ the functor $\Mul_\mM(-,\W_1,...,\W_\m;\Y): \mM^\op \to \mS$ preserves $\kappa$-small limits.

\item A right tensored $\infty$-category $\mM^\circledast \to \mV^\ot \times \mW^\ot$ is compatible with $\kappa$-small colimits if $(\mM^\rev)^\circledast \to (\mW^\rev)^\ot \times (\mV^\rev)^\ot $ is compatible with $\kappa$-small colimits.

\item A bitensored $\infty$-category is compatible with $\kappa$-small colimits if it is a left and right tensored $\infty$-category compatible with $\kappa$-small colimits, i.e. $\mV^\ot \to \Ass, \mW^\ot \to \Ass$ are compatible with $\kappa$-small colimits, $\mM$ admits $\kappa$-small colimits and for every $\V \in \mV, \W \in \mW, \X \in \mM$ the functors $(-)\ot \X: \mV \to \mM, \X \ot (-): \mW \to \mM, \V \ot (-), (-) \ot \W: \mM \to \mM$ preserve $\kappa$-small colimits.



\end{enumerate}

\end{definition}

\begin{definition}Let $\kappa$ be a small regular cardinal.

\begin{enumerate}
\item A presentably left tensored $\infty$-category is a left tensored $\infty$-category  $\phi: \mM^\circledast \to \mV^\ot \times \mW^\ot$ compatible with small colimits such that $\mV,\mM$ are presentable.

\item A presentably right tensored $\infty$-category is a right tensored $\infty$-category  $\phi: \mM^\circledast \to \mV^\ot \times \mW^\ot$ compatible with small colimits such that $\mM, \mW$ are presentable.

\item A presentably bitensored $\infty$-category is a presentably left tensored and  presentably right tensored $\infty$-category.

\item A left tensored $\infty$-category $\mM^\circledast \to \mV^\ot \times \mW^\ot$ is $\kappa$-compactly generated if $\mV^\ot \to \Ass$ and $\mM$ are $\kappa$-compactly generated and the left $\mV$-action on $\mM$ restricts to a left $\mV^\kappa$-action on $\mM^\kappa$. 

\item A right tensored $\infty$-category $\mM^\circledast \to \mV^\ot \times \mW^\ot$ is $\kappa$-compactly generated if $\mW^\ot \to \Ass$ and $\mM$ are $\kappa$-compactly generated and the right $\mW$-action on $\mM$ restricts to a right $\mW^\kappa$-action on $\mM^\kappa$. 

\item A bitensored $\infty$-category $\mM^\circledast \to \mV^\ot \times \mW^\ot$ is $\kappa$-compactly generated if the left $\mV$-action and right $\mW$-action are
$\kappa$-compactly generated.

\end{enumerate}	
\end{definition}

By \cite[Proposition 7.15.]{Rune} every presentably left tensored, right tensored, bitensored $\infty$-category, respectively, is $\kappa$-compactly generated for some regular cardinal $\kappa$.


Next we generalize the notions of left, right and bitensored $\infty$-categories (compatible with small colimits) by introducing the notions of left and right tensors and conical colimits.

\begin{definition}\label{Defo}
	
Let $\mM^\circledast \to \mV^\ot\times \mW^\ot$ be a weakly bienriched $\infty$-category and $\V \in \mV, \W \in \mW, \X,\Y \in \mM$.
	
\begin{enumerate}

\item A multi-morphism $\psi \in \Mul_\mM(\V,\X;\Y)$ exhibits $\Y$ as the left tensor of $\V, \X$ if for every $\Z \in \mM, \V_1,...,\V_\n \in \mV, \W_1,...,\W_\m \in \mW$ for $\n,\m \geq 0 $ the following map is an equivalence:
\begin{equation*}\label{tenss}
\Mul_\mM(\V_1,...,\V_\n, \Y, \W_1,...,\W_\m; \Z) \to \Mul_{\mM}(\V_1,...,\V_\n,\V,\X, \W_1,...,\W_\m; \Z).
\end{equation*}
We write $\V \ot \X $ for $\Y$. 

\item A multi-morphism $\psi \in \Mul_\mM(\X,\W;\Y)$ exhibits $\Y$ as the right tensor of $\X, \W$ if for every $\Z \in \mM, \V_1,...,\V_\n \in \mV, \W_1,...,\W_\m \in \mW$ for $\n,\m \geq 0 $ the following map is an equivalence:
\begin{equation*}\label{tenss}
\Mul_\mM(\V_1,...,\V_\n, \Y, \W_1,...,\W_\m; \Z) \to \Mul_{\mM}(\V_1,...,\V_\n,\X,\W, \W_1,...,\W_\m; \Z).
\end{equation*}
We write $\X\ot\W $ for $\Y$. 


\vspace{1mm}


\item A multi-morphism $\tau \in \Mul_\mM(\V,\Y;\X)$ exhibits $\Y$ as the left cotensor of $\V, \X$ if for every $\Z \in \mM, \V_1,...,\V_\n \in \mV, \W_1,...,\W_\m\in \mW$ for $\n,\m \geq 0 $ the following map is an equivalence: $$\Mul_\mM(\V_1,...,\V_\n, \Z, \W_1,...,\W_\m; \Y) \to \Mul_{\mM}(\V,\V_1,...,\V_\n,\Z, \W_1,...,\W_\m; \X).
$$ 

We write $^{\V}\X$ for $\Y$.

\item A multi-morphism $\tau \in \Mul_\mM(\Y,\W;\X)$ exhibits $\Y$ as the right cotensor of $\X, \W$ if for every $\Z \in \mM, \V_1,...,\V_\n \in \mV, \W_1,...,\W_\m\in \mW$ for $\n,\m \geq 0 $ the following map is an equivalence: $$\Mul_\mM(\V_1,...,\V_\n, \Z, \W_1,...,\W_\m; \Y) \to \Mul_{\mM}(\V_1,...,\V_\n,\Z, \W_1,...,\W_\m, \W; \X).
$$ 

We write $\X^\W$ for $\Y$.

\end{enumerate}
\end{definition}

\begin{remark}\label{reuj}

Let $\mM^\circledast \to \mV^\ot\times \mW^\ot$ be a weakly bienriched $\infty$-category, $\V \in \mV, \W \in \mW$ and $ \X \in \mM$. 
If the respective left and right tensors exist,
there is a canonical equivalence
$ (\V \ot \X) \ot \W \simeq \V \ot (\X\ot \W).$
In this case we refer to $ (\V \ot \X) \ot \W \simeq \V \ot (\X\ot \W)$ as the bitensor of $\V,\X,\W.$
Similarly, if the respective left and right cotensors exist,
there is a canonical equivalence
$ (\X^\V)^\W \simeq (\X^\W)^\V$ and we refer to the latter object as the bicotensor of $\V,\X,\W.$
\end{remark}

\begin{definition}

A weakly bienriched $\infty$-category $\mM^\circledast \to \mV^\ot \times \mW^\ot$ admits
\begin{itemize}


\item left (co)tensors if for every object $\V \in \mV, \X \in \mM$ there is a left (co)tensor of $\V$ and $\X$.

\item right (co)tensors if for every object $\W \in \mW, \X \in \mM$ there is a right (co)tensor of $\X$ and $\W.$

\end{itemize}


\begin{example}
Every weakly bienriched $\infty$-category that is left, right, bitensored admits left tensors, right tensors, left and right tensors, respectively.	
	
\end{example}

\begin{remark}\label{locre}
Let $\phi: \mM^\circledast \to \mV^\ot \times \mW^\ot$ be a weakly bienriched $\infty$-category that admits left and right tensors.
Then $\phi$ is a locally cocartesian fibration. 
	
\end{remark}

\end{definition}

Next we define conical colimits and conical limits:


\begin{definition}Let $\mM^\circledast \to \mV^\ot \times \mW^\ot$ be a weakly bienriched $\infty$-category.
	
\begin{enumerate}
\item Let $\F: \K^\triangleright \to \mM$ be a functor. 
We say that $\F$ is a conical colimit diagram 
if for every $\V_1,...,\V_\n \in \mV, \W_1,...,\W_\m \in \mW$ for $\n,\m \geq 0$
and $\Y \in \mM$ the presheaf $\Mul_\mM(\V_1,...,\V_\n,-, \W_1,...,\W_\m;\Y)$ on $\mM$ sends $\F^\op: (\K^\op)^\triangleleft \to \mM^\op$ to a limit diagram.
We say that $\mM$ admits $\K$-indexed conical colimits if $\mM$ admits the conical colimit of every functor starting at $\K.$

\item Let $\F: \K^\triangleleft \to \mM$ be a functor. 
We say that $\F$ is a conical limit diagram 
if for every $\V_1,...,\V_\n \in \mV, \W_1,...,\W_\m \in \mW$ for $\n,\m \geq 0$
and $\X \in \mM$ the presheaf $\Mul_\mM(\V_1,...,\V_\n,\X, \W_1,...,\W_\m;-)$
on $\mM$ sends $\F$ to a limit diagram.
We say that $\mM$ admits $\K$-indexed conical limits if $\mM$ admits the conical limit of every functor starting at $\K.$

\end{enumerate}
\end{definition}

\begin{remark}\label{laulo}
Let $\mM^\circledast \to \mV^\ot \times \mW^\ot$ be a weakly bienriched $\infty$-category. 

\begin{enumerate}
\item Every conical colimit diagram is a colimit diagram.

\item If $\mM^\circledast \to \mV^\ot \times \mW^\ot$ admits left and right cotensors,
every colimit diagram is conical. 

\item Assume that $\mM^\circledast \to \mV^\ot \times \mW^\ot$ admits left tensors
and let $\G: \K \to \mM$ be a functor.
The colimit of $\G$ is conical if and only if forming left tensors preserves the colimit of $\G$ and for every $\W_1,...,\W_\m \in \mW$ for $\m \geq 0$
and $\Y \in \mM$ the presheaf $\Mul_\mM(-, \W_1,...,\W_\m;\Y)$ on $\mM$ preserves the limit of $\G^\op.$

\item Assume that $\mM^\circledast \to \mV^\ot \times \mW^\ot$ has left and right tensors and let $\G: \K \to \mM$ be a functor.
The colimit of $\G$ is conical if and only if forming left and right tensors preserves the colimit of $\G$.
\end{enumerate}
	
\end{remark}




\begin{definition}Let $\mK \subset \Cat_\infty$ be a full subcategory.
A weakly bienriched $\infty$-category $\phi: \mM^\circledast \to \mV^\ot \times \mW^\ot$ is compatible with $\mK$-indexed colimits if $\phi$ admits $\mK$-indexed conical colimits and for every $\X, \Y \in \mM$, $\V_1,...,\V_\n \in \mV, \W_1,...,\W_\m \in \mW$ for $\n,\m \geq0$ and $0 \leq \bi \leq \n, 0 \leq \bj \leq \m$ the presheaves 
$$  \Mul_\mM(\V_1,..,\V_\bi,-, \V_{\bi+1},...,\V_\n,\X,\W_1,...,\W_\m;\Y)$$$$ \Mul_\mM(\V_1,...,\V_\n,\X,\W_1,..,\W_\bj,-, \W_{\bj+1},...,\W_\m;\Y)$$ on $\mV,\mW$, respectively, preserve $\mK$-indexed limits.
	
\end{definition}

\subsection{Enriched functors}

Next we define maps of weakly bienriched $\infty$-categories.
The next definition is \cite[Definition 3.18., 3.19.]{HEINE2023108941}:
\begin{definition}\label{linmapp}
Let $\alpha: \mV^\ot \to \mV'^\ot,\beta: \mW^\ot \to \mW'^\ot$ be maps of $\infty$-operads and $\phi: \mM^\circledast \to \mV^\ot \times \mW^\ot, \phi':\mM'^\circledast \to \mV'^\ot \times \mW'^\ot $ weakly bienriched $\infty$-categories.
An enriched functor $\phi \to \phi'$ is a commutative square of $\infty$-categories over $\Ass \times \Ass$ 
\begin{equation*} 
\begin{xy}
\xymatrix{
\mM^\circledast  \ar[d]^{\phi} \ar[rr]^{\gamma}
&&\mM'^\circledast \ar[d]^{\phi'} 
\\ \mV^\ot \times \mW^\ot
\ar[rr]^{\alpha \times \beta}  && \mV'^\ot \times \mW'^\ot
}
\end{xy} 
\end{equation*}
such that $\gamma$ preserves cocartesian lifts of inert morphisms of $\Ass \times \Ass$ whose first component preserves the maximum and whose second component preserves the minimum.

\begin{itemize}
\item An enriched functor $\phi \to \phi'$ is an embedding if $\alpha, \beta, \gamma$ are fully faithful.	
 
\item An enriched functor $\phi \to \phi'$ is left (right) linear if it preserves left (right) tensors.

\item An enriched functor $\phi \to \phi'$ is linear if it is left and right linear.

\item An enriched functor $\phi \to \phi'$ is left $\mV$-enriched if $\alpha$ is the identity.

\item An enriched functor $\phi \to \phi'$ is right $\mW$-enriched if $\beta$ is the identity.

\item An enriched functor $\phi \to \phi'$ is $\mV,\mW$-enriched if it is left $\mV$-enriched and right $\mW$-enriched.

\item An enriched functor $\phi \to \phi'$ is left $\mV$-linear if it is left linear
and left $\mV$-enriched.

\item An enriched functor $\phi \to \phi'$ is right $\mW$-linear if it is right linear
and right $\mW$-enriched.

\item An enriched functor $\phi \to \phi'$ is $\mV,\mW$-linear if it is
left $\mV$-linear and right $\mW$-linear (or equivalently $\mV,\mW$-enriched and linear).


\end{itemize}
\end{definition}

\begin{notation}Let $\mM^\circledast \to \mV^\ot \times \mW^\ot, \mN^\circledast \to \mV'^\ot \times \mW'^\ot$ be weakly bienriched $\infty$-categories.


Let $$\Enr\Fun(\mM,\mN) \subset (\Fun(\mV^\ot, \mV'^\ot) \times \Fun(\mW^\ot, \mW'^\ot)) \times_{\Fun(\mM^\circledast,\mV'^\ot \times \mW'^\ot) } \Fun(\mM^\circledast,\mN^\circledast) $$ be the full subcategory of enriched functors.





\end{notation} 

\begin{notation}\label{enrfunc}
Let $\mM^\circledast \to \mV^\ot \times \mW^\ot, \mN^\circledast \to \mV^\ot \times \mW^\ot$ be weakly bienriched $\infty$-categories.

Let $$ \Enr\Fun_{\mV,\mW}(\mM,\mN) \subset \Fun_{\mV^\ot \times \mW^\ot}(\mM^\circledast,\mN^\circledast)$$ be the full subcategory of $\mV,\mW$-enriched functors.

\end{notation}

\begin{construction}

Let $\mM^\circledast \to \mV^\ot \times \mW^\ot, \mN^\circledast \to \mV^\ot \times \mW^\ot$ be weakly bienriched $\infty$-categories.

Restriction along the embedding $\mM \subset \mM^\circledast$
induces a functor
\begin{equation}\label{forgetol} \Enr\Fun_{\mV,\mW}(\mM,\mN) \to \Fun(\mM,\mN). \end{equation}
    
\end{construction}

\begin{lemma}\label{conserva}

Let $\mM^\circledast \to \mV^\ot \times \mW^\ot, \mN^\circledast \to \mV^\ot \times \mW^\ot$ be weakly bienriched $\infty$-categories.
The functor $$\Enr\Fun_{\mV,\mW}(\mM,\mN) \to \Fun(\mM,\mN)$$ of \ref{forgetol} is conservative.
    
\end{lemma}

\begin{proof}

Let $\F, \G: \mM^\circledast \to \mN^\circledast $ be $\mV,\mW$-enriched functors and $ \alpha$ a morphism $\F \to \G$ in $\Enr\Fun_{\mV,\mW}(\mM,\mN)$, i.e. a natural transformation 
$\F \to \G$ of functors $\mM^\circledast \to \mN^\circledast$ over
$\mV^\ot \times \mW^\ot$.
Let $\X \in \mM^\circledast$ lying over $([\n],[\m]) \in \Ass \times \Ass$. By \cref{bla} there is a cocartesian lift $\f: \X \to \X'$ in $\mM^\circledast$ of the morphism in $\Ass \times \Ass $ whose first component is the map $[0]\simeq \{\n\} \subset [\n]$ in $\Delta$ and whose second component is the map $[0]\simeq \{0\} \subset [\m]$ in $\Delta$. In particular, $\X' \in \mM.$
By definition the enriched functors $\F,\G$ send the cocartesian lift $\f: \X \to \X'$ to a morphism cocartesian over $\Ass \times \Ass$.
Hence in the commutative square
$$\begin{xy}
\xymatrix{
\F(\X) \ar[d]^{\F(\f)} \ar[rr]^{\alpha_\X}
&&\G(\X) \ar[d]^{\G(\f)}
\\ \F(\X')
\ar[rr]^{\alpha_{\X'}}  && \G(\X')
}
\end{xy} $$
both vertical morphisms are cocartesian over $\Ass \times \Ass.$
Therefore the morphism $\alpha_{\X'}$ is the image of $\alpha_\X$ under the induced functor $\mN^\circledast_{[\n],[\m]} \to \mN^\circledast_{[0],[0]} =\mN.$
Thus $\alpha_\X$ is sent by the 
induced functor \begin{equation}\label{erfmu}
\mN^\circledast_{[\n],[\m]} \to \mV_{[n]}^{\ot} \times \mN \times \mW_{[m]}^{\ot}\end{equation} to the triple $(\id_\Y,\alpha_{\X'}, \id_\Z),$ where $\Y \in \mV^\ot, \Z \in \mW^\ot $ are the images of $\X$.

By \cref{bla} (1) the induced functor \ref{erfmu} is an equivalence.
Hence $\alpha_\X: \F(\X) \to \G(\X) $ is an equivalence
if $\alpha_{\X'}: \F(\X') \to \G(\X') $ is an equivalence.

\end{proof}

\begin{notation}Let $\mM^\circledast \to \mV^\ot \times \mW^\ot, \mN^\circledast \to \mV^\ot \times \mW^\ot$ be weakly bienriched $\infty$-categories.

\begin{enumerate}

\item Let $$\L\LinFun_{\mV, \mW}(\mM,\mN) \subset \Enr\Fun_{\mV, \mW}(\mM,\mN),$$$$ \R\LinFun_{\mV, \mW}(\mM,\mN)\subset \Enr\Fun_{\mV, \mW}(\mM,\mN),$$$$ \LinFun_{\mV, \mW}(\mM,\mN) \subset \Enr\Fun_{\mV, \mW}(\mM,\mN)$$ be the full subcategories of left linear, right linear, linear $\mV,\mW$-enriched functors, respectively.

\item 
Let $$ \LinFun^\L_{\mV, \mW}(\mM,\mN) \subset \LinFun_{\mV, \mW}(\mM,\mN)$$ be the full subcategory of $\mV,\mW$-linear functors whose underlying functor admits a right adjoint.

\end{enumerate}
\end{notation} 

For the next remark we use the notation of Example \ref{euuz}:

\begin{remark}\label{2-cat}
Let $\mM^\circledast \to \mV^\ot \times \mW^\ot, \mN^\circledast \to \mV^\ot \times \mW^\ot$ be weakly bienriched $\infty$-categories and $\K$ an $\infty$-category.
The canonical equivalences $$ \Fun(\K,\Fun_{\mV^\ot\times\mW^\ot}(\mM^\circledast,\mN^\circledast)) \simeq \Fun_{\mV^\ot\times\mW^\ot}(\K\times \mM^\circledast,\mN^\circledast)
\simeq\Fun_{\mV^\ot\times\mW^\ot}(\mM^\circledast,(\mN^\K)^\circledast)$$
restrict to equivalences
$$ \Fun(\K,\Enr\Fun_{\mV,\mW}(\mM,\mN)) \simeq \Enr\Fun_{\mV,\mW}(\K\times\mM,\mN)
\simeq\Enr\Fun_{\mV,\mW}(\mM,\mN^\K).$$

Let $ \mN^\circledast \to \mV'^\ot \times \mW'^\ot$ be a weakly bienriched $\infty$-category. The canonical equivalence $$ \Fun(\K,(\Fun(\mV^\ot, \mV'^\ot) \times \Fun(\mW^\ot, \mW'^\ot)) \times_{\Fun(\mM^\circledast,\mV'^\ot \times \mW'^\ot) } \Fun(\mM^\circledast,\mN^\circledast)) \simeq$$$$ (\Fun(\K \times \mV^\ot, \mV'^\ot) \times \Fun(\K \times \mW^\ot, \mW'^\ot)) \times_{\Fun(\K \times \mM^\circledast,\mV'^\ot \times \mW'^\ot) } \Fun(\K \times \mM^\circledast,\mN^\circledast)$$
restricts to an equivalence
$$ \Fun(\K,\Enr\Fun(\mM,\mN)) \simeq \Enr\Fun(\K\times\mM,\mN).$$

\end{remark}

We will often use the following proposition, which is \cite[Proposition 4.2.4.2.]{lurie.higheralgebra}:

\begin{proposition}\label{Line}
For every bitensored $\infty$-category $\mM^\circledast \to \mV^\ot \times \mW^\ot$
and $\infty$-category $\K$ the forgetful functor
$$\LinFun_{\mV, \mW}(\mV \times \K \times \mW,\mM) \to \Fun(\K,\mM)$$ is an equivalence. 
	
\end{proposition}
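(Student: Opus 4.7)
The strategy is to exhibit $\mV \times \K \times \mW$ as the free $\mV, \mW$-bitensored $\infty$-category on the $\infty$-category $\K$, so that $\mV, \mW$-linear functors out of it correspond to arbitrary functors out of $\K$. Concretely, the bitensored structure on $\mV \times \K \times \mW$ is the external product: the left $\mV$-action and right $\mW$-action come from the monoidal structures on $\mV$ and $\mW$, and act trivially on the $\K$-coordinate. Thus for $V' \in \mV$, $W' \in \mW$, $(V,k,W) \in \mV \times \K \times \mW$ one has $V' \otimes (V,k,W) \otimes W' = (V' \otimes V,\, k,\, W \otimes W')$.

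The first step is to construct an inverse to the restriction functor. Given $G : \K \to \mM$, define $\widetilde G : \mV \times \K \times \mW \to \mM$ on objects by $\widetilde G(V,k,W) = V \otimes G(k) \otimes W$. Formally this is the composite
\[
\mV^\ot \times_{\Ass} \K \times_{\Ass} \mW^\ot \xrightarrow{\id \times G \times \id} \mV^\ot \times_{\Ass} \mM^\circledast \times_{\Ass} \mW^\ot \to \mM^\circledast
\]
where the second arrow is the bi-action structure map of $\mM$; here $\K$ is viewed fiberwise over $\Ass \times \Ass$ via the constant cocartesian fibration. By construction this is a map of cocartesian fibrations over $\Ass \times \Ass$, preserves cocartesian lifts over inert edges, and is strictly $\mV, \mW$-linear (i.e.\ preserves left and right tensors), since the bi-action structure map of $\mM$ does. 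This assembles into a functor $\Fun(\K, \mM) \to \LinFun_{\mV,\mW}(\mV \times \K \times \mW, \mM)$.

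The second step is to verify that restriction along the inclusion $\K \simeq \{\tu_\mV\} \times \K \times \{\tu_\mW\} \hookrightarrow \mV \times \K \times \mW$ is inverse to $G \mapsto \widetilde G$. One direction is immediate: the unit equivalences of the bi-action give $\widetilde G|_\K(k) = \tu_\mV \otimes G(k) \otimes \tu_\mW \simeq G(k)$, naturally in $k$. For the other direction, let $F : \mV \times \K \times \mW \to \mM$ be a $\mV, \mW$-linear functor and set $G := F|_\K$. Then $\mV, \mW$-linearity of $F$ yields a natural equivalence
\[
F(V,k,W) \simeq V \otimes F(\tu_\mV, k, \tu_\mW) \otimes W = V \otimes G(k) \otimes W = \widetilde G(V,k,W),
\]
so $F \simeq \widetilde{F|_\K}$. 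Combined with Remark \ref{2-cat}, which lets us internalize the argument by replacing $\mM$ with the bitensored $\infty$-category $\mM^\Delta^1$ of arrows, one upgrades these pointwise equivalences to natural equivalences of functors of $\infty$-categories, so that both composites are equivalent to the identity.

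The main technical obstacle is checking that the construction $G \mapsto \widetilde G$ and the passage from pointwise equivalences to genuine equivalences of functors is coherent at the level of $\infty$-categories rather than merely on homotopy categories. The cleanest way to organize this is to observe that the forgetful functor from $\mV, \mW$-bitensored $\infty$-categories (and linear functors) to $\infty$-categories admits a left adjoint given by $\K \mapsto \mV \times \K \times \mW$ with the external bi-action described above, and the claim is exactly the unit/counit identity of this adjunction. Once the adjunction is identified, the equivalence in the statement is a formal consequence.
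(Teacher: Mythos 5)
You should first note that the paper does not actually prove this proposition: it is quoted verbatim from Lurie, \cite[Proposition 4.2.4.2.]{lurie.higheralgebra}, so there is no in-paper argument to match yours against. Your strategy — exhibit $\mV\times\K\times\mW$ with the external biaction as the free bitensored $\infty$-category on $\K$ and write down an explicit inverse $G\mapsto\widetilde G$ — is indeed the idea behind Lurie's result, but as written your argument has a genuine gap, and it sits exactly where you locate "the main technical obstacle."

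The problem is that your proposed resolution is circular. Saying that the forgetful functor $\BMod\to\Cat_\infty$ (over fixed $\mV,\mW$) admits a left adjoint whose value on $\K$ is $\mV\times\K\times\mW$ with unit $k\mapsto(\tu_\mV,k,\tu_\mW)$ is, via the $\Cat_\infty$-enrichment of Remark \ref{2-cat}/\ref{2-catt}, precisely the statement that $\LinFun_{\mV,\mW}(\mV\times\K\times\mW,\mM)\to\Fun(\K,\mM)$ is an equivalence for all $\mM$; one cannot invoke the adjunction to "formally deduce" the proposition. (An abstract existence argument for a left adjoint, say by presentability, does not help either, since identifying that left adjoint with $\K\mapsto\mV\times\K\times\mW$ again amounts to the proposition.) The non-circular content that remains to be supplied is twofold. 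First, the functor $\widetilde G$ is not literally a composite with "the bi-action structure map of $\mM$": in the fibrational formalism of Definition \ref{bla}/\ref{leftten} the biaction is encoded by cocartesian edges of $\mM^\circledast\to\Ass\times\Ass$, and producing from $G:\K\to\mM$ a map of cocartesian fibrations $\mV^\circledast\times\K\times\mW^\circledast\to\mM^\circledast$ over $\mV^\ot\times\mW^\ot$, functorially in $G$, already requires an argument (this is where Lurie does real work). Second, the verification that $F\simeq\widetilde{F|_\K}$ must be carried out as an equivalence in $\LinFun_{\mV,\mW}(\mV\times\K\times\mW,\mM)$, not objectwise; the pointwise identities $F(V,k,W)\simeq V\ot F(\tu,k,\tu)\ot W$ coming from linearity of $F$ do not by themselves assemble into a natural equivalence of functors, and Remark \ref{2-cat} only reduces naturality in $F$ to the objectwise statement for the bitensored $\infty$-categories $\mM^\L$, not the coherence in $(V,k,W)$. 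Since the paper treats this as an external input, the cleanest fix is simply to cite \cite[Proposition 4.2.4.2.]{lurie.higheralgebra} rather than reprove it.
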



\begin{lemma}\label{colas}

Let $\mV^\ot \to \Ass, \mW^\ot \to \Ass$ be $\infty$-operads, $\mM^\circledast \to \mV^\ot \times \mW^\ot, \mN^\circledast \to \mV^\ot \times \mW^\ot$ weakly bienriched $\infty$-categories such that $\mN$ admits left and right tensors and $\mK \subset \Cat_\infty$ a full subcategory. 
\begin{enumerate}
\item If $\mN$ admits $\mK$-indexed conical colimits, 
the $\infty$-categories $$ \Enr\Fun_{\mV,\mW}(\mM, \mN), \ \L\LinFun_{\mV,\mW}(\mM, \mN), \ \R\LinFun_{\mV,\mW}(\mM, \mN), \ \LinFun_{\mV,\mW}(\mM, \mN)$$ admit $\mK$-indexed colimits and the forgetful functors $$ \Enr\Fun_{\mV,\mW}(\mM, \mN) \to \Fun(\mM, \mN), \ \L\LinFun_{\mV,\mW}(\mM, \mN) \to \Fun(\mM, \mN),$$$$ \R\LinFun_{\mV,\mW}(\mM, \mN) \to \Fun(\mM, \mN), \ \LinFun_{\mV,\mW}(\mM, \mN) \to \Fun(\mM, \mN)$$ 
and embeddings $$ \LinFun_{\mV,\mW}(\mM, \mN) \subset  \Enr\Fun_{\mV,\mW}(\mM, \mN),$$ $$\L\LinFun_{\mV,\mW}(\mM, \mN) \subset  \Enr\Fun_{\mV,\mW}(\mM, \mN), $$$$ \R\LinFun_{\mV,\mW}(\mM, \mN) \subset \Enr\Fun_{\mV,\mW}(\mM, \mN)$$ preserve $\mK$-indexed colimits.

\item If for every $\V \in \mV, \W \in \mW$ the functors $\V \ot (-), (-) \ot \W: \mN \to \mN$ are accessible, $\mM^\circledast \to \mV^\ot \times \mW^\ot$ is absolute small and $\mN$ is accessible, then $$ \Enr\Fun_{\mV,\mW}(\mM, \mN), \ \L\LinFun_{\mV,\mW}(\mM, \mN), \ \R\LinFun_{\mV,\mW}(\mM, \mN)$$ are accessible.

\item If for every $\V \in \mV, \W \in \mW$ the functors $\V \ot (-), (-) \ot \W: \mN \to \mN$ preserve small colimits, $\mM^\circledast \to \mV^\ot \times \mW^\ot$ is absolute small and $\mN$ is presentable, then $$ \Enr\Fun_{\mV,\mW}(\mM, \mN), \ \L\LinFun_{\mV,\mW}(\mM, \mN), \ \R\LinFun_{\mV,\mW}(\mM, \mN)$$ are presentable and the embeddings $$ \LinFun_{\mV,\mW}(\mM, \mN) \subset  \Enr\Fun_{\mV,\mW}(\mM, \mN), $$$$ \L\LinFun_{\mV,\mW}(\mM, \mN) \subset  \Enr\Fun_{\mV,\mW}(\mM, \mN), $$$$ \R\LinFun_{\mV,\mW}(\mM, \mN) \subset  \Enr\Fun_{\mV,\mW}(\mM, \mN)$$ admit right adjoints.


\end{enumerate}

\end{lemma}

\begin{proof}
This follows from \cite[Proposition 5.4.7.11., Remark 5.4.7.13.]{lurie.HTT} in view of Remark \ref{locre}.
Moreover the last part of (1) follows from the first part of (1).
(3) follows from (1) and (2).
	
\end{proof}

\begin{notation}

Let $$\omega\B\Enr \subset (\Op_{\infty} \times \Op_{\infty}) \times_{ \Cat_{\infty / \Ass \times \Ass} } \Fun([1], \Cat_{\infty / \Ass \times \Ass}) $$ be the subcategory of weakly bienriched $\infty$-categories.
\end{notation}

\begin{notation}
Evaluation at the target restricts to a forgetful functor $\omega\B\Enr \to \Op_\infty \times \Op_\infty$ 
whose fibers over $\infty$-operads 
$\mV^\ot \to \Ass, \mW^\ot \to \Ass$ we denote by $ _\mV\omega\B\Enr_{\mW}.$
\end{notation}

\begin{remark}\label{fori}

There is a canonical equivalence
$$ \omega\B\Enr_{\emptyset, \emptyset} \simeq \Cat_\infty, \ \mM^\circledast \to \emptyset^\ot \times \emptyset^\ot \mapsto \mM^\circledast$$
inverse to the functor $\K \mapsto \emptyset^\ot \times \K \times \emptyset^\ot \simeq \K.$

\end{remark}

\begin{remark}\label{2-catt}

There is a canonical left action of $\Cat_\infty$ on $\omega\B\Enr$
that sends $\K, \mM^\circledast \to \mV^\ot \times \mW^\ot$ to
$\K \times \mM^\circledast \to \mV^\ot \times \mW^\ot.$
The forgetful functor $\omega\B\Enr \to \Op_\infty \times \Op_\infty$ is $\Cat_\infty$-linear, where the target carries the trivial action.
The forgetful functor $\omega\B\Enr \to \Cat_\infty$ is $\Cat_\infty$-linear, where the target carries the left action
induced by the cartesian product.
Thus for any small $\infty$-operads $\mV^\ot \to \Ass, \mW^\ot \to \Ass$ the fiber $_\mV \omega\B\Enr_\mW$ carries an induced left $\Cat_\infty$-action that admits the same description and the forgetful functor $_\mV \omega\B\Enr_\mW \to \Cat_\infty$ is left $\Cat_\infty$-linear, where the target carries the left action induced by the cartesian product. These actions are closed by Remark \ref{2-cat}.

	
\end{remark}

\begin{proposition}\label{pr} For every small $\infty$-operads $\mV^\ot \to \Ass, \mW^\ot \to \Ass$ the $\infty$-category ${_\mV\omega\B\Enr}_{\mW}$ is compactly generated.
\end{proposition}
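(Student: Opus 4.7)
The plan is to exhibit ${_\mV\omega\B\Enr}_{\mW}$ as a presentable $\infty$-category with a small family of compact generators. I would first establish presentability by realizing ${_\mV\omega\B\Enr}_{\mW}$ as an accessible reflective subcategory of $\Cat_{\infty/\mV^\ot \times \mW^\ot}^{\mE}$, where $\mE \subset \Fun([1], \mV^\ot \times \mW^\ot)$ is the class of inert morphisms appearing in Definition \ref{bla} (first component preserving the maximum, second preserving the minimum). Because $\mV^\ot$ and $\mW^\ot$ are small, this ambient $\infty$-category is presentable (a version of straightening for cocartesian fibrations relative to a small class over a small base). The conditions defining a weakly bi-enriched $\infty$-category — the Segal-type equivalence $\theta$ and the pullback square on mapping spaces of Definition \ref{bla} — are accessible conditions of the form ``a given map is an equivalence'', indexed by the small set of test diagrams in $\mV^\ot \times \mW^\ot$. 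Intersecting these accessible localizations yields accessibility and presentability of ${_\mV\omega\B\Enr}_{\mW}$, and also shows it is closed under small limits and filtered colimits in the ambient category.

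Next, I would construct a small set of compact generators. For each $\n, \m \geq 0$ and tuple $\V_1, \dots, \V_\n \in \mV, \W_1, \dots, \W_\m \in \mW$, construct a free weakly bi-enriched $\infty$-category $\mF_{\V_\bullet, \W_\bullet}$ containing two distinguished objects $\X, \Y$ and a universal multi-morphism $(\V_1, \dots, \V_\n, \X, \W_1, \dots, \W_\m) \to \Y$, in the sense that $\mV, \mW$-enriched functors $\mF_{\V_\bullet, \W_\bullet} \to \mM$ correspond to choices of a pair of objects in $\mM$ together with such a multi-morphism between them. Together with a free object $\mF_{\mathrm{pt}}$ corepresenting $\mM \mapsto \mM^\simeq$, this gives a small family indexed by finite tuples in the small $\infty$-categories $\mV, \mW$. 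Since filtered colimits in $\Cat_{\infty/\mV^\ot \times \mW^\ot}$ compute mapping spaces and fibers object-wise, and the Segal axioms are preserved under such colimits, the corepresenting functors preserve filtered colimits, so each $\mF_{\V_\bullet, \W_\bullet}$ and $\mF_{\mathrm{pt}}$ is compact. For joint conservativity, note that an enriched functor $\mM \to \mN$ is an equivalence if and only if it is an equivalence on underlying spaces (detected by $\mF_{\mathrm{pt}}$) and on all multi-morphism spaces (detected by the $\mF_{\V_\bullet, \W_\bullet}$'s via Notation \ref{mult}), so this family is a generating set. Combined with presentability, this yields compact generation.

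The main obstacle is the construction of the free generators $\mF_{\V_\bullet, \W_\bullet}$ and the verification of their compactness — these are the enriched analogues of free non-symmetric operads on a single multi-arrow. Their existence follows from applying the accessible reflector of the first step to the evident cocartesian fibration over $\mV^\ot \times \mW^\ot$ obtained by gluing together the cells corresponding to the tuple, and the universal multi-morphism. The remaining verification that filtered colimits in ${_\mV\omega\B\Enr}_{\mW}$ are computed underlyingly — the crucial input for compactness — reduces to checking that both the lifting conditions for cocartesian fibrations relative to $\mE$ and the Segal-type pullback axioms of Definition \ref{bla} are preserved under filtered colimits in $\Cat_{\infty/\mV^\ot \times \mW^\ot}$, which is a routine verification using the fact that finite limits and mapping spaces to fixed small objects commute with filtered colimits in $\Cat_\infty$.
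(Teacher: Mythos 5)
Your proposal is essentially correct, and its first half coincides with the paper's argument: the paper also realizes ${_\mV\omega\B\Enr}_{\mW}$ as the full subcategory of the $\infty$-category $\Cat^{\max,\min}_{\infty/\mV^\ot\times\mW^\ot}$ of cocartesian fibrations relative to the relevant inert morphisms, spanned by the local objects of an accessible localization (this is done by citing Lurie's categorical-patterns machinery: the weakly bi-enriched $\infty$-categories are the $\mathfrak{P}$-fibered objects of \cite[Definition B.0.19.]{lurie.higheralgebra}, which are local objects by \cite[Proposition B.2.9.]{lurie.higheralgebra}), and it likewise observes that this subcategory is closed under filtered colimits because the defining conditions are finite-limit conditions. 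Where you diverge is the final step: the paper simply invokes \cite[Theorem B.0.20.]{lurie.higheralgebra} to conclude that the ambient $\infty$-category is compactly generated, and then transfers compact generation along the accessible localization using closure under filtered colimits; you instead build an explicit small family of compact generators (the free weakly bi-enriched $\infty$-category on a point and the free ones on a single multi-morphism of each arity) and check compactness and joint conservativity by hand. Both routes are valid. The paper's is shorter because the heavy lifting is outsourced to Appendix B of Higher Algebra; yours yields strictly more information, namely an explicit generating set, at the cost of two verifications you should make explicit: (i) that the locality conditions of Definition \ref{bla} really are corepresented by a \emph{small} set of morphisms in the ambient category (this is exactly what the $\mathfrak{P}$-fibered formalism packages), and (ii) that a presentable $\infty$-category containing a small jointly conservative family of compact objects is compactly generated — this requires the standard counit argument showing the colimit-closure of the generators is everything, together with the observation that an object which is a small colimit of compacts is a filtered colimit of finite colimits of compacts. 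Your conservativity criterion (equivalence on maximal subspaces and on all multi-morphism spaces detects equivalences) is consistent with Lemma \ref{bicaa}, so that step is sound.
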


\begin{proof}
Let $\Cat^{\max, \min}_{\infty/\mV^\ot \times \mW^\ot} \subset \Cat_{\infty/\mV^\ot \times \mW^\ot}$ be the subcategory of cocartesian fibrations relative to the collection of pairs of inert morphisms whose first component preserves the maximum and whose second component preserves the minimum, and functors over $\mV^\ot \times \mW^\ot$ preserving cocartesian lifts of inert morphisms whose first component preserves the maximum and whose second component preserves the minimum.
By definition ${_\mV\omega\B\Enr}_{\mW} \subset \Cat_{\infty/\mV^\ot \times \mW^\ot}$ is a full subcategory of $\Cat_{\infty/\mV^\ot \times \mW^\ot}^{\max, \min}$, which is closed under filtered colimits since filtered colimits commute with finite limits.
Let $\mathfrak{P}$ be the collection of functors $[1]= [0]^{\triangleleft} \to \mV^\ot$ selecting an inert morphism of $\mV^\ot \times \mW^\ot$ lying over a morphism of the form $[0] \simeq \{\n\} \subset [\n]$ in $\Delta$
in the first component and a morphism of the form $[0] \simeq \{0\} \subset [\n]$ in $\Delta$ in the second component. 
By definition ${_\mV\omega\B\Enr}_{\mW}$ is the full subcategory of $\Cat_{\infty/\mV^\ot \times \mW^\ot}^{\max, \min}$ spanned by the $\mathfrak{P}$-fibered objects in the sense of \cite[Definition B.0.19.]{lurie.higheralgebra}, which are the local objects of a localization \cite[Proposition B.2.9.]{lurie.higheralgebra}. 
By \cite[Theorem B.0.20.]{lurie.higheralgebra} the $\infty$-category $\Cat_{\infty/\mV^\ot \times \mW^\ot}^{\max, \min}$ is compactly generated and so also the accessible localization ${_\mV\omega\B\Enr}_{\mW}$ is compactly generated.

\end{proof}


\begin{notation}
Let $$ \LMod, \RMod, \BMod \subset \omega\B\Enr $$
be the subcategories of left tensored, right tensored, bitensored $\infty$-categories and enriched functors preserving left tensors, right tensors, left and right tensors, respectively.



\end{notation}

The next proposition is \cite[ Proposition 3.32.]{HEINE2023108941} whose short proof we include for completeness:

\begin{proposition}\label{bicaa}
The forgetful functor $$\gamma: \omega\B\Enr \to \Op_\infty \times \Op_\infty$$
is a cartesian fibration that restricts to a cartesian fibration $\BMod \to \Mon \times \Mon $ with the same cartesian morphisms.
An enriched functor 
corresponding to a commutative square
\begin{equation*} 
\begin{xy}
\xymatrix{
\mM^\circledast \ar[d] \ar[rr]^{\psi}
&&\mN^\circledast \ar[d] 
\\ \mV^\ot \times \mV'^\ot
\ar[rr]^{\alpha \times \alpha'}  && \mW^\ot \times \mW'^\ot
}
\end{xy} 
\end{equation*}
is $\gamma$-cartesian if and only if the square is a pullback square, i.e. if $\psi$ induces an equivalence
$\mM \simeq \mN$ and for any $\V_1,...,\V_\n \in \mV,\X,\Y \in \mM, \W_1,...,\W_\m\in \mW$ for $\n,\m \geq 0 $ the following map is an equivalence: $$\Mul_\mM(\V_1,..., \V_\n,\X,\W_1,...,\W_\m;\Y) \to \Mul_\mN(\alpha(\V_1),...,\alpha(\V_\n),\psi(\X), \alpha'(\W_1),...,\alpha'(\W_\m);\psi(\Y)).$$

\end{proposition}

\begin{proof}
Since $\Cat_{\infty / \Ass \times \Ass}$ admits pullbacks, the functor $\Fun([1], \Cat_{\infty / \Ass \times \Ass}) \to \Cat_{\infty / \Ass \times \Ass}$ evaluating at the target is a cartesian fibration whose cartesian morphisms are precisely the pullback squares in $\Cat_{\infty / \Ass \times \Ass}.$
Thus the pullback $(\Op_{\infty} \times \Op_{\infty}) \times_{ \Cat_{\infty / \Ass \times \Ass} } \Fun([1], \Cat_{\infty / \Ass \times \Ass})\to \Op_{\infty} \times \Op_{\infty}$ is a cartesian fibration,
which restricts to cartesian fibrations $\omega\B\Enr \to \Op_\infty \times \Op_\infty, \BMod \to \Mon \times \Mon$ with the same cartesian morphisms.	
	
\end{proof}

\begin{notation}\label{invo}
The category $\Ass=\Delta^\op$ carries a canonical involution sending $[\n] $ to $[\n]$ and a map $\f:[\n] \to [\m]$ to the map $[\n] \to [\m], \bi \mapsto \m-\f(\n-\bi).$ The involution on $\Ass$ induces an involution on
$\Cat_{\infty/\Ass}$ that restricts to an involution $(-)^\rev$ on $\Op_{\infty} \subset \Cat_{\infty/\Ass}.$ 
Moreover the involution on $\Ass$ induces an involution on
$\Ass \times \Ass$ by applying the involution on $\Ass$ to each factor and switching the factors, which induces an involution on
$(\Cat_{\infty / \Ass} \times \Cat_{\infty / \Ass}) \times_{ \Cat_{\infty / \Ass \times \Ass} } \Fun([1], \Cat_{\infty / \Ass \times \Ass}) $ that restricts to
involutions $(-)^\rev$ on $\omega\B\Enr$ and $\BMod$, under which $ \LMod$ corresponds to $ \RMod$.

\end{notation}


\begin{notation}
Let $ \cc\cc\BMod \subset \widehat{\BMod} $ be the subcategory of bitensored $\infty$-categories compatible with small colimits and linear functors preserving small colimits.\end{notation}


The next proposition follows from \cite[Proposition 8.29., Proposition 3.42]{HEINE2023108941}:

\begin{proposition}\label{laan}
Let $\psi: \mM^\circledast \to \mN^\circledast $ be an enriched functor from an absolute small to a locally small weakly bienriched $\infty$-category lying over maps of $\infty$-operads $\alpha: \mV^\ot \to \mV'^\ot, \beta: \mW^\ot \to \mW'^\ot$.
Let $\mO^\circledast \to \mV''^\ot \times \mW''^\ot$ be a bitensored $\infty$-category compatible with small colimits and $\alpha': \mV'^\ot \to \mV''^\ot, \beta': \mW'^\ot \to \mW''^\ot$ maps of $\infty$-operads.
If $ \alpha_!: \Alg_{\mV}(\mV'') \to \Alg_{\mV'}(\mV'')$ 
sends $\alpha' \circ \alpha$ to $\alpha'$ and $ \beta_!: \Alg_{\mW}(\mW'') \to \Alg_{\mW'}(\mW'')$ sends $\beta' \circ \beta$ to $\beta',$
the induced functor $$\Enr\Fun_{\mV',\mW'}(\mN,(\alpha', \beta')^*(\mO)) \to \Enr\Fun_{\mV, \mW}(\mM,(\alpha, \beta)^*((\alpha', \beta')^*(\mO))) $$ admits a left adjoint $\psi_!$, which is fully faithful if $\psi$ is an embedding.
For every $\mV,\mW$-enriched functor $\F: \mM^\circledast \to (\alpha, \beta)^*((\alpha', \beta')^*(\mO))$ and $\X \in \mN$ there is a caonical equivalence
$$ \hspace{8mm}\psi_!(\F)(\X) \simeq \colim_{\V_1,..., \V_\n, \psi(\Y), \W_1,..., \W_{\m} \to \X}\ \bigotimes_{\bi=1}^\n \alpha(\V_\bi) \ot \F(\Y) \ot \bigotimes_{\bj=1}^\m \beta(\W_\bj).$$

\end{proposition}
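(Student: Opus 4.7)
The plan is to factor the restriction functor as a composite and construct a left adjoint to each piece separately. Writing $\mO_0 := (\alpha,\beta)^*((\alpha',\beta')^*\mO)$ for $\mO$ viewed as $\mV,\mW$-enriched via the composites, the restriction in question decomposes as
$$\Enr\Fun_{\mV',\mW'}(\mN,(\alpha',\beta')^*\mO) \xrightarrow{\ r\ } \Enr\Fun_{\mV,\mW}(\mN,\mO_0) \xrightarrow{\ \psi^*\ } \Enr\Fun_{\mV,\mW}(\mM,\mO_0),$$
where $r$ restricts the scalar enrichment along $\alpha,\beta$ and $\psi^*$ precomposes with $\psi$. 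It suffices to construct a left adjoint to each factor and then identify the pointwise value of the composite.

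First I would construct the left adjoint $\psi_!^{\mV,\mW}$ to $\psi^*$ as an enriched pointwise left Kan extension along $\psi$ at fixed scalar operads $\mV,\mW$, invoking Proposition 3.42 of \cite{HEINE2023108941}. The assumptions that $\mM^\circledast$ is totally small, $\mN^\circledast$ is locally small, and $\mO$ is bitensored compatibly with small colimits ensure the requisite operadic colimits exist and are computed pointwise at $\X \in \mN$ as a conical colimit indexed by a twisted-arrow-type category of multi-morphisms $\V_1,\ldots,\V_\n,\psi(\Y),\W_1,\ldots,\W_\m \to \X$ in $\mN^\circledast$ with source involving $\psi(\Y)$, and the summand is the bitensor $\bigotimes_{\bi} \alpha(\V_\bi) \ot \F(\Y) \ot \bigotimes_\bj \beta(\W_\bj)$ formed in $\mO$.

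Next I would obtain the left adjoint to $r$ as scalar extension of enrichment, which is Proposition 8.29 of \cite{HEINE2023108941}. The hypotheses $\alpha_!(\alpha'\circ\alpha)\simeq\alpha'$ and $\beta_!(\beta'\circ\beta)\simeq\beta'$ are exactly what is needed to guarantee that scalar extension sends the algebra structure over $(\alpha'\circ\alpha,\beta'\circ\beta)$ to one over $(\alpha',\beta')$, so that the image of $\psi_!^{\mV,\mW}(\F)$ lies in the correct fiber and the two left adjoints compose to a legitimate left adjoint $\psi_!$ of the original restriction. Since scalar extension does not alter the object-wise values of enriched functors (it only transports the enrichment datum), the pointwise formula for $\psi_!$ coincides with that of $\psi_!^{\mV,\mW}$ and yields precisely the displayed formula.

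For the embedding claim, suppose $\psi$ is fully faithful. The unit $\F(\X_0) \to \psi^*\psi_!(\F)(\X_0)$ for $\X_0 \in \mM$ is computed by evaluating the colimit formula at $\psi(\X_0)$; the identity multi-morphism $\psi(\X_0)\to\psi(\X_0)$ (with $\n=\m=0$) is a terminal object of the resulting indexing category because multi-morphism spaces in $\mN^\circledast$ with source $\psi(\Y)$ correspond bijectively under $\psi$ to those in $\mM^\circledast$ with source $\Y$. Hence the colimit collapses to $\F(\X_0)$, so $\psi_!$ is fully faithful. The main technical obstacle in this program is verifying that the conical colimit produced above genuinely computes the operadic left Kan extension in the sense of \cite{HEINE2023108941}; this reduction is precisely what the smallness and compatibility hypotheses on $\mM^\circledast$, $\mN^\circledast$, $\mO$, $\alpha_!$, $\beta_!$ make possible.
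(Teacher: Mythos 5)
Your proposal matches the paper's argument: the paper gives no proof beyond the citation "follows from [Proposition 8.29, Proposition 3.42] of \cite{HEINE2023108941}," and your decomposition into an operadic left Kan extension along $\psi$ at fixed scalars (Proposition 3.42) composed with scalar extension of enrichment (Proposition 8.29), together with the terminal-object argument in the comma category for full faithfulness, is exactly the intended combination of those two results. No gaps.
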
 

\subsection{Enriched adjunctions}


	
	

Next we define enriched adjunctions.

\begin{definition}
An enriched adjunction is an adjunction in the $(\infty,2)$-category $\omega\B\Enr$ of Remark \ref{2-catt}.	
	
\end{definition}

\begin{definition}\label{ajlt} Let $\mV^\ot \to \Ass, \mW^\ot \to \Ass$ be small $\infty$-operads. An $\mV,\mW$-enriched adjunction is an adjunction in the $(\infty,2)$-category $_\mV\omega\B\Enr_\mW$	of Remark \ref{2-catt}.	
	
\end{definition}

\begin{remark}\label{enrra}
	
Let $\F: \mM^\circledast \to \mN^\circledast$ be an enriched functor lying over maps of $\infty$-operads $\alpha: \mV^\ot \to \mV'^\ot, \beta: \mW^\ot \to \mW'^\ot$
and $ \G: \mN^\circledast \to \mM^\circledast$ an enriched functor lying over maps of $\infty$-operads $\gamma: \mV'^\ot \to \mV^\ot, \delta: \mW'^\ot \to \mW^\ot$
and $\eta: \id \to \G \circ \F$ a morphism in $\Enr\Fun(\mM,\mM)$
lying over maps $\eta': \id \to \gamma \circ \alpha, \eta'': \id \to \delta \circ \beta$.
The morphism $\eta: \id \to \G \circ \F$ exhibits $\F$ as an enriched left adjoint of $\G$ (or $\G$ as an enriched right adjoint of $\F$) if $\eta$ exhibits
$\F$ as a left adjoint of $\G$ and $\eta'$ exhibits $\alpha$ as a left adjoint of $\gamma$ relative to $\Ass$ and $\eta''$ exhibits $\beta$ as a left adjoint of $\delta$ relative to $\Ass$.	
If $\alpha, \beta, \eta', \eta''$ are the identities, a morphism $\eta: \id \to \G \circ \F$ 
exhibits $\F$ as a $\mV,\mW$-enriched left adjoint of $\G$ if and only if it exhibits $\F$ as a left adjoint of $\G$ relative to $\mV^\ot \times \mW^\ot.$
The analogous holds for the counit.
\end{remark}

\begin{notation}Let $\mM^\circledast \to \mV^\ot \times \mW^\ot, \mN^\circledast \to \mV^\ot \times \mW^\ot$ be weakly bienriched $\infty$-categories.
Let
$$\Enr\Fun^\L_{\mV, \mW}(\mM,\mN), \Enr\Fun^\R_{\mV, \mW}(\mM,\mN) \subset \Enr\Fun_{\mV, \mW}(\mM,\mN)$$ be the full subcategories of $\mV, \mW$-enriched functors that admit a $\mV,\mW$-enriched left (right) adjoint.
	
\end{notation}

\begin{remark}
Stefanich \cite[Definition 4.3.2.]{stefanich2020presentable} introduces conical colimits in the model of Gepner-Haugseng. 
\end{remark}

\begin{remark}
The notion of enriched adjunction of Definition \ref{ajlt} is also discussed in \cite[§6.2.6.]{heine2019restricted} and \cite[A.6.]{heine2021real}.
Stefanich \cite[§4.1., §4.2.]{stefanich2020presentable} studies enriched adjunctions in the model of Gepner-Haugseng.
	
\end{remark}

Remark \ref{enrra} implies the following:
\begin{remark}\label{enradj}
Let $\mM^\circledast \to \mV^\ot \times \mW^\ot, \mN^\circledast \to \mV'^\ot \times \mW'^\ot$ be weakly bienriched $\infty$-categories. 
	
\begin{enumerate}
\item 
An enriched functor $ \G: \mN^\circledast \to \mM^\circledast$ lying over maps of $\infty$-operads $\gamma: \mV'^\ot \to \mV^\ot, \delta: \mW'^\ot \to \mW^\ot$
admits an enriched left adjoint if $\gamma, \delta$ admit left adjoints $\alpha, \beta$ relative to $\Ass$, respectively, and for every $\X \in \mM$ there is an $\Y \in \mN$
and a morphism $\X \to \G(\Y) $ in $\mM$ such that for any $\V_1,...,\V_\n \in \mV, \W_1,..., \W_\m \in \mW$ for $\n, \m \geq 0$ and $\Z \in \mN$ the following map is an equivalence:
$$ \Mul_\mN(\alpha(\V_1),...,\alpha(\V_\n),\Y, \beta(\W_1),..., \beta(\W_\m);\Z) \to$$$$ \Mul_\mM(\gamma(\alpha(\V_1)),...,\gamma(\alpha(\V_\n)),\G(\Y), \delta(\beta(\W_1)),..., \delta(\beta(\W_\m));\G(\Z))\to $$$$ \Mul_\mM(\V_1,...,\V_\n,\X, \W_1,..., \W_\m;\G(\Z)).$$

\item An enriched functor $ \F: \mM^\circledast \to \mN^\circledast$ lying over maps of $\infty$-operads $\alpha: \mV^\ot \to \mV'^\ot, \beta: \mW^\ot \to \mW'^\ot$
admits an enriched right adjoint if $\alpha, \beta$ admit right adjoints $\gamma, \delta$ relative to $\Ass$, respectively, and for every $\Y \in \mN$ there is an $\X \in \mM$
and a morphism $\F(\X) \to \Y $ in $\mN$ such that for any $\V_1,...,\V_\n \in \mV, \W_1,..., \W_\m \in \mW$ for $\n, \m \geq 0$ and $\Z \in \mM$ the following map is an equivalence:
$$\hspace{12mm}\Mul_\mM(\V_1,...,\V_\n,\Z, \W_1,..., \W_\m;\X) \to \Mul_\mN(\alpha(\V_1),...,\alpha(\V_\n),\F(\Z), \beta(\W_1),..., \beta(\W_\m);\F(\X))$$$$ \to \Mul_\mN(\alpha(\V_1),...,\alpha(\V_\n),\F(\Z), \beta(\W_1),..., \beta(\W_\m);\Y).$$
\end{enumerate}
\end{remark}

\begin{lemma}\label{Adj}
Let $\G: \mN^\circledast \to \mM^\circledast$ be an enriched functor between weakly bienriched $\infty$-categories that admit left and right tensors that lies over maps of $\infty$-operads $\gamma: \mV'^\ot \to \mV^\ot, \delta: \mW'^\ot \to \mW^\ot$ that admit left adjoints $\alpha, \beta$ relative to $\Ass$, respectively.
Then $\G: \mN^\circledast \to \mM^\circledast$ admits an enriched left adjoint if and only if the underlying functor $\mN \to \mM$ admits a left adjoint
$\F:\mM \to \mN$ and for every $\V_1,...,\V_\n \in \mV, \W_1,...,\W_\m \in \mW$
for $\n,\m \geq 0$ and $\X \in \mM$ the following morphism is an equivalence: $$\F(\V_1 \ot ...\ot \V_\n \ot \X \ot \W_1 \ot ...\ot \W_\m) \to \alpha(\V_1) \ot ...\ot \alpha(\V_\n) \ot \F(\X)\ot \beta(\W_1) \ot ...\ot \beta(\W_\m).$$ 
	
\end{lemma}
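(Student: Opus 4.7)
The plan is to verify the two directions by invoking Remark~\ref{enradj}(1). The forward direction extracts the ordinary adjunction from the enriched one and uses the enrichment of $\F$ to produce comparison maps for tensors, which are then identified as equivalences by a Yoneda argument. The backward direction constructs the enriched left adjoint by taking the underlying left adjoint on objects and verifying the required multi-morphism equivalence via the tensor-preservation hypothesis.

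For the forward direction, suppose $\F: \mM^\circledast \to \mN^\circledast$ is enriched left adjoint to $\G$ lying over left adjoints $\alpha, \beta$ of $\gamma, \delta$ relative to $\Ass$. Restriction to the underlying $\infty$-categories yields an adjunction $\F: \mM \rightleftarrows \mN: \G$. Since $\F$ is an enriched functor, applying $\F$ to the universal multi-morphisms exhibiting the iterated tensor $\V_1 \ot \cdots \ot \V_\n \ot \X \ot \W_1 \ot \cdots \ot \W_\m$ in $\mM$ yields a multi-morphism in $\mN$ which, by existence of tensors in $\mN$, factors through a canonical comparison morphism
$$\alpha(\V_1) \ot \cdots \ot \alpha(\V_\n) \ot \F(\X) \ot \beta(\W_1) \ot \cdots \ot \beta(\W_\m) \to \F(\V_1 \ot \cdots \ot \V_\n \ot \X \ot \W_1 \ot \cdots \ot \W_\m).$$
To show this is an equivalence, test against arbitrary $\Z \in \mN$: using the existence of tensors in $\mN$ and in $\mM$ the mapping spaces from both sides into $\Z$ are, respectively, $\Mul_\mN(\alpha(\V_1),\ldots,\F(\X),\ldots,\beta(\W_\m);\Z)$ and $\mN(\F(\V_1 \ot \cdots \ot \W_\m),\Z)$; the enriched adjunction identifies the first with $\Mul_\mM(\V_1,\ldots,\X,\ldots,\W_\m;\G(\Z))$ while the underlying adjunction identifies the second with $\mM(\V_1 \ot \cdots \ot \W_\m,\G(\Z))$, and these two are equivalent. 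Checking this composite coincides with the map induced by the comparison morphism and applying the Yoneda lemma yields the desired tensor-preservation.

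For the backward direction, we apply Remark~\ref{enradj}(1). Given $\X \in \mM$, set $\Y := \F(\X)$ and take the unit $\X \to \G(\F(\X))$ of the underlying adjunction. For arbitrary $\V_1,\ldots,\V_\n \in \mV$, $\W_1,\ldots,\W_\m \in \mW$ and $\Z \in \mN$, since $\mN$ admits left and right tensors we rewrite
$$\Mul_\mN(\alpha(\V_1),\ldots,\F(\X),\ldots,\beta(\W_\m);\Z) \simeq \mN\bigl(\alpha(\V_1) \ot \cdots \ot \F(\X) \ot \cdots \ot \beta(\W_\m),\,\Z\bigr),$$
which by the assumed tensor-preservation is equivalent to $\mN(\F(\V_1 \ot \cdots \ot \X \ot \cdots \ot \W_\m),\Z)$. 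The ordinary adjunction $\F \dashv \G$ identifies this with $\mM(\V_1 \ot \cdots \ot \X \ot \cdots \ot \W_\m,\G(\Z))$, and existence of tensors in $\mM$ identifies the latter with $\Mul_\mM(\V_1,\ldots,\X,\ldots,\W_\m;\G(\Z))$. The criterion of Remark~\ref{enradj}(1) then produces the enriched left adjoint.

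The main subtlety in both directions is ensuring that the abstract chain of equivalences is induced by the prescribed morphism (the comparison morphism in one direction, the unit of the ordinary adjunction in the other), so that the Yoneda lemma and Remark~\ref{enradj}(1) apply. This is a naturality verification that reduces to the compatibility between the units of the enriched and underlying adjunctions, which is automatic because restriction to the underlying $\infty$-categories is a $2$-functor on the $(\infty,2)$-category $\omega\B\Enr$ of Remark~\ref{2-catt}.
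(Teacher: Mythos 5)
Your proof is correct and follows essentially the same route as the paper: the paper's entire proof consists of identifying the criterion map of Remark \ref{enradj}(1) with the composite of restriction along the comparison morphism and the underlying adjunction equivalence $\mN(\F(-),\Z)\simeq \mM(-,\G(\Z))$, and then concluding both directions at once by Yoneda, exactly as you do. The one wrinkle is that in your forward direction the comparison morphism you build from the enriched structure of $\F$ points in the opposite direction to the morphism in the statement (whose source is $\F(\V_1 \ot \cdots \ot \W_\m)$), so you need the routine additional check that these two canonical maps are mutually inverse; the paper sidesteps this by working only with the morphism of the statement.
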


\begin{proof}
For every $\V_1,...,\V_\n \in \mV, \W_1,..., \W_\m \in \mW$ for $\n, \m \geq 0$ and $\Z \in \mN$ the map
$$ \Mul_\mN(\alpha(\V_1),...,\alpha(\V_\n),\F(\X), \beta(\W_1),..., \beta(\W_\m);\Z) \to$$$$ \Mul_\mM(\gamma(\alpha(\V_1)),...,\gamma(\alpha(\V_\n)),\G(\F(\X)), \delta(\beta(\W_1)),..., \delta(\beta(\W_\m));\G(\Z))\to $$$$ \Mul_\mM(\V_1,...,\V_\n,\X, \W_1,..., \W_\m;\G(\Z)).$$
identifies with the composition
$$ \mN(\alpha(\V_1) \ot ...\ot \alpha(\V_\n)\ot\F(\X)\ot \beta(\W_1)\ot...\ot \beta(\W_\m),\Z) \to \mN(\F(\V_1\ot...\ot\V_\n\ot\X\ot \W_1\ot...\ot \W_\m),\Z) $$$$\simeq \mM(\V_1 \ot...\ot\V_\n\ot \X\ot \W_1\ot...\ot \W_\m,\G(\Z)). $$	
	
\end{proof}

\begin{lemma}\label{Adj2}
Let $\F: \mM^\circledast \to \mN^\circledast$ be an enriched 
functor that starts at a weakly bienriched $\infty$-category that admits left and right tensors and lies over maps of $\infty$-operads $\alpha: \mV^\ot \to \mV'^\ot, \beta: \mW^\ot \to \mW'^\ot$ that admit right adjoints $\gamma, \delta$ relative to $\Ass$, respectively.
Then $\F: \mM^\circledast \to \mN^\circledast$ admits an 
enriched right adjoint if and only if the underlying functor $\mM \to \mN$ admits a right adjoint $\G:\mN \to \mM$ and $\F$ is linear.
\end{lemma}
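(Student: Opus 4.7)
The plan is to use the iff characterization of enriched right adjoints provided by Remark~\ref{enradj}(2): $\F$ admits an enriched right adjoint exactly when $\alpha,\beta$ admit right adjoints relative to $\Ass$ (given) and, for each $\Y \in \mN$, there exists $\X \in \mM$ and a morphism $\F(\X) \to \Y$ such that, for all $\V_1,\ldots,\V_\n \in \mV$, $\W_1,\ldots,\W_\m \in \mW$ and $\Z \in \mM$, the natural comparison map $\Mul_\mM(\V_1,\ldots,\Z,\ldots,\W_\m;\X) \to \Mul_\mN(\alpha(\V_1),\ldots,\F(\Z),\ldots,\beta(\W_\m);\Y)$ is an equivalence. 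The proof of both directions mimics the computation used for Lemma~\ref{Adj}, the asymmetry being that only $\mM$, not $\mN$, is assumed to admit tensors.

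For the backward direction, assume the underlying functor of $\F$ admits a right adjoint $\G$ and that $\F$ is linear. Take $\X := \G(\Y)$ together with the counit $\F\G(\Y) \to \Y$. Since $\mM$ admits left and right tensors, $\Mul_\mM(\V_1,\ldots,\Z,\ldots,\W_\m;\G(\Y)) \simeq \mM(\V_1 \ot \cdots \ot \Z \ot \cdots \ot \W_\m,\G(\Y))$; the underlying adjunction identifies this with $\mN(\F(\V_1 \ot \cdots \ot \Z \ot \cdots \ot \W_\m),\Y)$; linearity of $\F$ identifies the latter with $\mN(\alpha(\V_1) \ot \cdots \ot \F(\Z) \ot \cdots \ot \beta(\W_\m),\Y)$, where the iterated tensor on the left exists in $\mN$ precisely because $\F$ is linear; and this mapping space is, by the universal property of the iterated tensor in $\mN$, equivalent to $\Mul_\mN(\alpha(\V_1),\ldots,\F(\Z),\ldots,\beta(\W_\m);\Y)$. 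A bookkeeping argument identical to the one at the end of the proof of Lemma~\ref{Adj} identifies this composite with the canonical comparison map of Remark~\ref{enradj}(2), thereby verifying the criterion.

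For the forward direction, suppose $\F$ has an enriched right adjoint $\G$. Setting $\n = \m = 0$ in the criterion yields $\mM(\Z,\G(\Y)) \simeq \mN(\F(\Z),\Y)$, so $\F$ is left adjoint to $\G$ on underlying $\infty$-categories. To deduce linearity, combine the criterion with the universal property of tensors in $\mM$ and the underlying adjunction to obtain a natural equivalence $\mN(\F(\V_1 \ot \cdots \ot \Z \ot \cdots \ot \W_\m),\Y) \simeq \Mul_\mN(\alpha(\V_1),\ldots,\F(\Z),\ldots,\beta(\W_\m);\Y)$ in $\Y \in \mN$. By the Yoneda lemma in $\mN$, the object $\F(\V_1 \ot \cdots \ot \Z \ot \cdots \ot \W_\m)$ corepresents the relevant multi-morphism functor, and unwinding via the triangle identity shows that the corresponding universal multi-morphism is precisely the image under $\F$ of the tensor-exhibiting multi-morphism in $\mM^\circledast$. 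This exhibits $\F$ as linear.

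The principal technical hurdle in both directions is matching the composite chain of equivalences with the concrete comparison map of Remark~\ref{enradj}(2); this amounts to chasing the unit and counit through the tensor reductions, exactly as is done at the end of the proof of Lemma~\ref{Adj}, and is the only nontrivial step given the preceding formalism.
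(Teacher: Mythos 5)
Your proposal is correct and follows essentially the same route as the paper: the paper's entire proof consists of identifying the comparison map of Remark~\ref{enradj}(2) with the composite of the underlying adjunction equivalence $\mM(\V_1\ot\cdots\ot\X\ot\cdots\ot\W_\m,\G(\Z))\simeq\mN(\F(\V_1\ot\cdots\ot\W_\m),\Z)$ and the map induced by the linearity comparison morphism, from which both implications follow at once. You carry out exactly this identification, merely unfolding the two directions separately (and treating Remark~\ref{enradj}(2), stated there as a sufficient condition, as the if-and-only-if criterion it is implicitly used as in the paper).
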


\begin{proof}
For every $\V_1,...,\V_\n \in \mV, \W_1,..., \W_\m \in \mW$ for $\n, \m \geq 0$ and $\Z \in \mN$ the map
$$ \Mul_\mM(\V_1,...,\V_\n,\X, \W_1,..., \W_\m;\G(\Z)) \to 
\Mul_\mN(\alpha(\V_1),...,\alpha(\V_\n),\F(\X), \beta(\W_1),..., \beta(\W_\m);\F(\G(\Z))) \to$$$$ \Mul_\mN(\alpha(\V_1),...,\alpha(\V_\n),\F(\X), \beta(\W_1),..., \beta(\W_\m);\Z)$$
identifies with the composition
$$ \mM(\V_1 \ot...\ot\V_\n\ot \X\ot \W_1\ot...\ot \W_\m,\G(\Z))
\simeq \mN(\F(\V_1\ot...\ot\V_\n\ot\X\ot \W_1\ot...\ot \W_\m),\Z) \to $$$$ \mN(\alpha(\V_1) \ot ...\ot \alpha(\V_\n)\ot\F(\X) \ot \beta(\W_1)\ot...\ot \beta(\W_\m),\Z).$$	
	
\end{proof}

\begin{notation}
	
Let $\mV^\ot \to \Ass,\mW^\ot\to\Ass$ be small $\infty$-operads.
Let ${_\mV\omega\B\Enr^\L_\mW}, {_\mV \omega\B\Enr^\R_\mW} \subset {_\mV\omega\B\Enr_\mW}  $ be the subcategories with the same objects and with morphisms the $\mV,\mW$-enriched functors that admit a $\mV,\mW$-enriched left adjoint, right adjoint, respectively.
\end{notation}

\begin{lemma}\label{kurt}Let $\mV^\ot \to \Ass,\mW^\ot\to\Ass$ be small $\infty$-operads.
There is a canonical equivalence $${_\mV\omega\B\Enr^\L_\mW} \simeq ({_\mV\omega\B\Enr^\R_\mW})^\op .$$
\end{lemma}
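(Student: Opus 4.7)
The plan is to reduce the claim to a general fact about adjunctions in $(\infty,2)$-categories. By Remark \ref{2-catt} the $\infty$-category $_\mV\omega\B\Enr_\mW$ carries the structure of an $(\infty,2)$-category, and by Definition \ref{ajlt} a $\mV,\mW$-enriched adjunction is precisely an adjunction in this $(\infty,2)$-category. Hence ${_\mV\omega\B\Enr^\L_\mW}$ and ${_\mV\omega\B\Enr^\R_\mW}$ are the wide subcategories whose $1$-morphisms are those $\mV,\mW$-enriched functors that admit, respectively, a right or a left adjoint internal to this $(\infty,2)$-category. The statement thus reduces to the general principle that for any $(\infty,2)$-category $\mathcal{K}$ passage to an adjoint yields an equivalence $\mathcal{K}^\L \simeq (\mathcal{K}^\R)^\op$ between the subcategories of left-adjoint and right-adjoint $1$-morphisms.

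To construct the equivalence I would proceed as follows. On objects it is the identity on the class of weakly bi-enriched $\infty$-categories over $\mV,\mW$. On $1$-morphisms it sends $\F: \mM^\circledast \to \mN^\circledast$ admitting an enriched right adjoint $\G: \mN^\circledast \to \mM^\circledast$ to $\G$, viewed as a morphism $\mM \to \mN$ in the opposite category. On mapping spaces one uses the mate correspondence: a $2$-cell $\F_1 \Rightarrow \F_2$ between left adjoints corresponds to a $2$-cell $\G_2 \Rightarrow \G_1$ between the chosen right adjoints, and this is compatible with composition because composites of adjunctions are adjunctions. Since the mate construction iterated twice returns the original $2$-cell up to canonical equivalence, an inverse is obtained by the dual assignment $\G \mapsto \F$ starting from a $\mV,\mW$-enriched functor admitting a left adjoint.

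The main obstacle is packaging these pointwise assignments into a functor of $(\infty,1)$-categories with all higher coherences, rather than merely a map of homotopy categories. This is resolved by the standard theory of adjunctions in $(\infty,2)$-categories: one exhibits both ${_\mV\omega\B\Enr^\L_\mW}$ and $({_\mV\omega\B\Enr^\R_\mW})^\op$ as the two projections from a single $\infty$-category of adjunctions in $_\mV\omega\B\Enr_\mW$, classified by a suitable bifibration, and verifies that each projection is an equivalence because an adjunction is uniquely determined by either its left or its right adjoint together with a unit or counit. This coherence machinery has been developed in the author's earlier work (see the reference to \cite{heine2019restricted} following Definition \ref{ajlt}), and once invoked the lemma follows formally.
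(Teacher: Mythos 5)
Your proposal is correct in outline, but it takes a genuinely different route from the paper. You reduce the lemma to the general principle that for any $(\infty,2)$-category $\mathcal{K}$ one has $\mathcal{K}^\L \simeq (\mathcal{K}^\R)^\op$, and then appeal to mate calculus and an $\infty$-category of adjunctions to handle the coherences. The paper never invokes the $(\infty,2)$-categorical adjunction machinery. Instead it uses Remark \ref{enrra}, which identifies a $\mV,\mW$-enriched adjunction between $\mV,\mW$-enriched functors with an adjunction \emph{relative to} $\mV^\ot \times \mW^\ot$; this reduces everything to the wide subcategories $\Cat_{\infty/\rS}^\L, \Cat_{\infty/\rS}^\R \subset \Cat_{\infty/\rS}$ for $\rS = \mV^\ot \times \mW^\ot$. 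The equivalence $\Cat_{\infty/\rS}^\L \simeq (\Cat_{\infty/\rS}^\R)^\op$ is then obtained by a concrete fibrational argument: for a test category $\mB$, maps of bicartesian fibrations $\mX \to \rS \times \mB$ over $\mB$ can be straightened either cocartesianly (giving $\Fun(\mB, \Cat_{\infty/\rS}^\L)^\simeq$) or cartesianly (giving $\Fun(\mB^\op, \Cat_{\infty/\rS}^\R)^\simeq$), using \cite[Corollary 5.2.2.5.]{lurie.HTT}, and the Yoneda lemma does the rest; finally one restricts to the full subcategories of weakly bi-enriched $\infty$-categories. Your approach buys generality (it would apply to any $(\infty,2)$-category) but leans on a coherence theorem you do not actually establish and for which your cited source (\cite{heine2019restricted}, where the paper says the notion was merely ``discussed'') is not obviously sufficient; the paper's approach is more elementary and self-contained modulo HTT, at the cost of being specific to the fibrational model. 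Your last paragraph, describing both subcategories as projections from a single object classified by a bifibration, is in fact the same mechanism the paper uses, just executed one categorical level down.
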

\begin{proof}
	
For every small $\infty$-category $\rS$ let $\Cat_{\infty / \rS}^\L, \Cat_{\infty / \rS}^\R \subset \Cat_{\infty / \rS}$ be the wide subcategories whose morphisms are the functors over $\rS$ that admit a left, right adjoint relative to $\rS$, respectively.
There is an equivalence $\Cat_{\infty / \rS}^\L \simeq (\Cat_{\infty / \rS}^\R)^\op:$
a functor $\mB \to \Cat_{\infty / \rS}$ is classified by a map $\mX \to \rS\times \mB$ of cocartesian fibrations over $\mB$, which is a map of bicartesian fibrations over $\mB$
if and only if the functor $\mB \to \Cat_{\infty / \rS}$ lands in $\Cat^\L_{\infty / \rS} $
\cite[Corollary 5.2.2.5.]{lurie.HTT}.
A functor $\mB^\op \to \Cat_{\infty / \rS}$ is classified by a map $\mY \to \rS\times \mB$ of cartesian fibrations over $\mB$, which is a map of bicartesian fibrations over $\mB$
if and only if the functor $\mB \to \Cat_{\infty / \rS}$ lands in $\Cat^\R_{\infty / \rS} $
\cite[Corollary 5.2.2.5.]{lurie.HTT}.
So there is an equivalence
$ \Fun(\mB, \Cat_{\infty / \rS})^\simeq \simeq \Fun(\mB^\op, \Cat_{\infty / \rS}^\R)^\simeq \simeq \Fun(\mB, (\Cat_{\infty / \rS}^\R)^\op)^\simeq$
representing an equivalence $\Cat_{\infty / \rS}^\L \simeq (\Cat_{\infty / \rS}^\R)^\op.$
The equivalence $\Cat_{\infty / \mV^\ot \times \mW^\ot}^\L \simeq (\Cat_{\infty / \mV^\ot \times \mW^\ot}^\R)^\op$ restricts to an equivalence
${_\mV\omega\B\Enr^\L_\mW} \simeq ({_\mV\omega\B\Enr^\R_\mW})^\op.$
	
\end{proof} 

\begin{remark}\label{indadj}
	
Let $\mO^\circledast \to \mV^\ot \times \mW^\ot$ be a weakly bienriched $\infty$-category, $\F: \mM^\circledast \to \mN^\circledast, \G: \mN^\circledast \to \mM^\circledast$ be $\mV,\mW$-enriched functors 
and $\eta: \id \to \G \circ \F$ a morphism in $\Enr\Fun_{\mV,\mW}(\mM,\mM)$
that exhibits $\F$ as a $\mV,\mW$-enriched left adjoint of $\G$. 
Then $$\Enr\Fun_{\mV,\mW}(\eta,\mO) : \Enr\Fun_{\mV,\mW}(\F,\mO) \circ \Enr\Fun_{\mV,\mW}(\G,\mO) \to \id$$
exhibits $ \Enr\Fun_{\mV,\mW}(\G,\mO)$ as a left adjoint of $\Enr\Fun_{\mV,\mW}(\F,\mO)$ and $$\Enr\Fun_{\mV,\mW}(\mO,\eta) : \id \to \Enr\Fun_{\mV,\mW}(\mO,\G) \circ \Enr\Fun_{\mV,\mW}(\mO,\F) $$
exhibits $ \Enr\Fun_{\mV,\mW}(\mO,\F)$ as a left adjoint of $\Enr\Fun_{\mV,\mW}(\mO,\G)$. 
	
\end{remark}

\subsection{Trivial weak enrichment}

\begin{definition}Let $\mV^\ot \to \Ass, \mW^\ot \to \Ass$ be $\infty$-operads.

\begin{enumerate}
\item A weakly bienriched $\infty$-category $\mM^\circledast \to \mV^\ot \times \mW^\ot$ 
is left trivial if for every $\n>0,\m \geq 0$ and $\V_1,...,\V_\n \in \mV, \W_1,...,\W_\m \in \mW,\X,\Y \in \mM$ the space $\Mul_{\mM}(\V_1,...,\V_\n,\X, \W_1,...,\W_\m;\Y)$ is empty.	
\item A weakly bienriched $\infty$-category $\mM^\circledast \to \mV^\ot \times \mW^\ot$ 
right trivial if for every $\n \geq 0,\m > 0$ and $\V_1,...,\V_\n \in \mV, \W_1,...,\W_\m \in \mW,\X,\Y \in \mM$ the space $\Mul_{\mM}(\V_1,...,\V_\n,\X, \W_1,...,\W_\m;\Y)$ is empty.	
\item A weakly bienriched $\infty$-category is trivial if it is left trivial and right trivial.
\end{enumerate}	

\end{definition}

\begin{remark}\label{tent}
A weakly bienriched $\infty$-category $\mM^\circledast \to \mV^\ot \times \mW^\ot$ is left trivial if and only if the functor $\mM^\circledast \to \mV^\ot$ factors through the subcategory $\triv_\mV^\circledast \subset \mV^\ot.$
And dually for right triviality.

\end{remark}	

\begin{remark}\label{Poll}
Let $\mV'^\ot \to \mV^\ot, \mW'^\ot \to \mW^\ot$ be maps of $\infty$-operads. For every left trivial, right trivial, trivial weakly bienriched $\infty$-category $\mM^\circledast \to \mV^\ot \times \mW^\ot$ the pullback
$\mV'^\ot \times_{\mV^\ot} \mM^\circledast \times_{\mW^\ot}\mW'^\ot \to \mV'^\ot \times \mW'^\ot$ is a left trivial, right trivial, trivial weakly bienriched $\infty$-category, respectively.

\end{remark}

Next we consider examples:

\begin{notation}

Let $\Ass_\mi,\Ass_\ma \subset\Ass$ be the subcategories with the same objects and with morphisms the inert morphisms preserving the minimum, maximum, respectively.
\end{notation}

\begin{example}
\begin{enumerate}
\item The inclusion $\Ass_\ma \subset \Ass$ exhibits $[0]$ as weakly left enriched in $[0]$, where the left enrichment is left trivial.

\item The inclusion $\Ass_\mi \subset \Ass$ exhibits $[0]$ as weakly right enriched in $[0]$, where the right enrichment is right trivial.


\end{enumerate}		
\end{example}

\begin{notation}Let $\mV^\ot \to \Ass, \mW^\ot \to \Ass$ be $\infty$-operads.
Let $$_\mV\triv^\circledast :=\mV^\ot \times_\Ass \Ass_\ma \to \mV^\ot, \ \triv_\mW^\circledast :=\mW^\ot \times_\Ass \Ass_\mi \to \mW^\ot, $$$$ \triv_{\mV,\mW}^\circledast :=\triv_\mV^\circledast \times \triv_\mW^\circledast 
\to \mV^\ot \times \mW^\ot.$$
\end{notation}




Remark \ref{Poll} gives the following example:

\begin{example}

Let $\mV^\ot \to \Ass, \mW^\ot \to \Ass$ be $\infty$-operads.
Then $_\mV\triv^\circledast \to \mV^\ot, \triv_\mW^\circledast \to \mW^\ot, \triv_{\mV,\mW}^\circledast\to \mV^\ot \times \mW^\ot$
are trivial weakly left, weakly right, weakly bienriched $\infty$-categories, respectively.

\end{example}

\begin{remark}\label{aii}

The object $[0]$ is a final object of the subcategories $\Ass_\ma, \Ass_\mi \subset \Ass$ 
since for every $\n \geq0$ there is a unique inert order preserving map $[0]\to[\n]$ 
preserving the maximum, minimum, respectively.	
The functors $_\mV\triv^\circledast \to \Ass_\ma, \triv_\mW^\circledast \to \Ass_\mi$ are cocartesian fibrations whose fiber over the final object is contractible. Thus the unique object of ${_\mV \triv^\circledast}, \triv_\mW^\circledast $ lying over the final object is a final object of $_\mV\triv^\circledast, \triv_\mW^\circledast,$ respectively.
\end{remark} 

\begin{definition}\label{Triv}
Let $\mV^\ot \to \Ass, \mW^\ot \to \Ass$ be $\infty$-operads
and $\K$ a small $\infty$-category.
The trivial weakly bienriched $\infty$-category on $\K$ is 
$$\K_{\mV, \mW}^\circledast:= \triv_\mV^\circledast \times \K \times \triv_\mW^\circledast \to \mV^\ot \times \mW^\ot.$$

\end{definition}

\begin{remark}
The weakly bienriched $\infty$-category $\K_{\mV, \mW}^\circledast \to \mV^\ot \times \mW^\ot$ is trivial and $\K_{\mV, \mW} \simeq \K$.
\end{remark}

\begin{proposition}\label{ljnbfg}
Let	$\mN^\circledast \to \mV^\ot \times \mW^\ot$ be a weakly bienriched $\infty$-category.
\begin{enumerate}
\item Let $\mM^\circledast \to \mV^\ot $ be a weakly left enriched $\infty$-category.
The following functor is an equivalence: $$\Enr\Fun_{\mV, \mW}(\mM \times \triv_\mW, \mN) \to \Enr\Fun_{\mV,\emptyset}(\mM, \mN).$$

\item Let $\mM^\circledast \to \mW^\ot $ be a weakly right enriched $\infty$-category.
The following functor is an equivalence: $$\Enr\Fun_{\mV, \mW}(_\mV\triv \times \mM, \mN) \to \Enr\Fun_{\emptyset, \mW}(\mM, \mN).$$

\item The following functor is an equivalence: $$\Enr\Fun_{\mV, \mW}(_\mV\triv \times \triv_\mW, \mN) \to \mN.$$

\end{enumerate}
\end{proposition}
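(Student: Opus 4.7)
The plan is to reduce all three statements to Part (1), then establish Part (1) by constructing an explicit inverse to the restriction functor, exploiting the terminal object property of $\triv_\mW^\circledast$. Part (3) follows from (1) and (2) by two iterations:
$$\Enr\Fun_{\mV, \mW}({_\mV\triv} \times \triv_\mW, \mN) \simeq \Enr\Fun_{\mV, \emptyset}({_\mV\triv}, \mN) \simeq \Enr\Fun_{\emptyset, \emptyset}(\mathrm{pt}, \mN) \simeq \mN,$$
where the last equivalence is Remark \ref{fori}. Part (2) follows from Part (1) by applying the involution $(-)^\rev$ of Notation \ref{invo}, which exchanges $\mV \leftrightarrow \mW$, ${_\mV\triv} \leftrightarrow \triv_\mW$ and left with right enrichment.

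For Part (1), the restriction functor is precomposition with the canonical inclusion $\iota: \mM^\circledast \hookrightarrow \mM^\circledast \times \triv_\mW^\circledast$ that sends $\X$ over $[\n]$ to $(\X, *_0)$, where $*_0 \in \triv_\mW^\circledast$ is the terminal object over $[0]$ (Remark \ref{aii}). The key input is that $*_0$ is final in $\triv_\mW^\circledast$: for every object $*_\m$ over $[\m]$, the unique morphism $*_\m \to *_0$ in $\triv_\mW^\circledast$ is a cocartesian lift of the unique inert morphism $[\m] \to [0]$ in $\Ass_\mi$ (which preserves the minimum). Since any $\mV, \mW$-enriched functor $F: \mM \times \triv_\mW \to \mN$ preserves cocartesian lifts of such inert morphisms, the morphism $F(\X, *_\m) \to F(\X, *_0)$ is cocartesian in $\mN^\circledast$, so $F$ is essentially determined by its restriction along $\iota$.

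To construct an inverse, given a $\mV, \emptyset$-enriched functor $G: \mM \to \mN$, we define $\tilde G: \mM^\circledast \times \triv_\mW^\circledast \to \mN^\circledast$ on an object $(\X, *_\m)$ over $([\n], [\m])$ using the product decomposition from the first axiom of Definition \ref{bla}: $\mN^\circledast_{[\n][\m]} \simeq \mV^\ot_{[\n]} \times \mN \times \mW^\ot_{[\m]}$. We set $\tilde G(\X, *_\m)$ to correspond to the triple consisting of the image of $\X$ in $\mV^\ot_{[\n]}$, the underlying value of $G(\X)$ in $\mN$, and the image of $*_\m$ in $\mW^\ot_{[\m]}$. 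Functoriality is established using the second axiom of Definition \ref{bla} (the pullback square characterizing morphism spaces) together with the fact that morphisms in $\triv_\mW^\circledast$ are uniquely determined by their images in $\mW^\ot$.

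The main technical obstacle is verifying that $\tilde G$ is weakly bi-enriched (preserves cocartesian lifts of inert morphisms whose first component preserves the maximum and second preserves the minimum) and that the assignments $F \mapsto R(F)$ and $G \mapsto \tilde G$ extend to mutually inverse equivalences of $\infty$-categories, not merely bijections on objects. A cleaner organizing principle is to recognize $(-) \times \triv_\mW$ as a left $(\infty,2)$-adjoint to the restriction functor in the $\Cat_\infty$-enriched structure of Remark \ref{2-catt}, packaging both the object-level and morphism-level equivalences into a single enriched adjunction.
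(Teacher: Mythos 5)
Your reduction steps are exactly the paper's: (2) is obtained from (1) by the reversal involution and (3) by composing (1) and (2) with Remark \ref{fori}. The gap is in the core of Part (1). Defining $\tilde G$ on objects via the product decomposition $\mN^\circledast_{[\n][\m]} \simeq \mV^\ot_{[\n]} \times \mN \times \mW^\ot_{[\m]}$ and then "establishing functoriality" morphism by morphism is not a construction of a functor of $\infty$-categories: you would need to supply all higher coherences, and you explicitly flag this as "the main technical obstacle" without resolving it. The fallback you offer — recognizing $(-)\times\triv_\mW$ as a left $(\infty,2)$-adjoint to restriction — is essentially a restatement of the proposition (the claimed equivalence of $\Enr\Fun$-$\infty$-categories \emph{is} the hom-level content of that adjunction; cf.\ Corollary \ref{asio}, which the paper derives \emph{from} this proposition), so it cannot serve as the proof.

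The paper's mechanism avoids any pointwise definition. First replace $\mN^\circledast$ by the pullback $\mN^\circledast_\triv := \mN^\circledast \times_{\mV^\ot\times\mW^\ot}(\mV^\ot\times\triv_\mW^\circledast)$, which does not change either functor category. Then observe that $\mN^\circledast_\triv \to \mV^\ot\times\triv_\mW^\circledast$ is a map of cocartesian fibrations over $\triv_\mW^\circledast$, hence is classified by a functor $\triv_\mW^\circledast \to \Cat_{\infty/\mV^\ot}$; since $\triv_\mW^\circledast$ has a final object and the second axiom of Definition \ref{bla} forces every fiber transport to be an equivalence, this classifying functor is constant, giving an equivalence $\mN^\circledast_\triv \simeq \mN^\circledast_{[0]}\times\triv_\mW^\circledast$ over $\mV^\ot\times\triv_\mW^\circledast$. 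After that the statement is purely formal: the exponential adjunction identifies $\Fun_{\mV^\ot\times\mW^\ot}(\mM^\circledast\times\triv_\mW^\circledast,\mN^\circledast\times\triv_\mW^\circledast)$ with $\Fun(\triv_\mW^\circledast,\Fun_{\mV^\ot}(\mM^\circledast,\mN^\circledast))$, the enriched-functor condition carves out the functors inverting all morphisms of $\triv_\mW^\circledast$, and evaluation at the final object of the weakly contractible $\triv_\mW^\circledast$ is then an equivalence. This is the coherent version of the inverse you tried to build by hand; you should route your argument through this straightening step rather than an object-level construction.
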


\begin{proof}
We prove (1). Statemement (2) is dual. (3) follows from (1) and (2).
Let $\mN_\triv^\circledast:= \mN^\circledast \times_{(\mV^\ot \times \mW^\ot)} (\mV^\ot \times \triv_\mW^\circledast) \to \mV^\ot \times \mW^\ot$.
The embedding $\mN_\triv^\circledast \subset \mN^\circledast$ induces an equivalence $$\Enr\Fun_{\mV, \mW}(\mM \times \triv_\mW, \mN_\triv) \simeq \Enr\Fun_{\mV, \mW}(\mM \times \triv_\mW, \mN).$$
Consequently, it is enough to see that the canonical functor $$\Enr\Fun_{\mV, \mW}(\mM \times \triv_\mW, \mN_\triv) \to \Enr\Fun_{\mV,\emptyset}(\mM, \mN) $$ is an equivalence.
The functor $\mN^\circledast \to \mV^\ot \times \mW^\ot$ over $\mW^\ot$
is a map of cocartesian fibrations relative to the subcategory $\triv_\mW^\circledast \subset \mW^\ot.$
Thus the pullback $\mN^\circledast_\triv \to \mV^\ot \times \triv_\mW^\circledast$
is map of cocartesian fibrations over $\triv_\mW^\circledast$ and so classifies a functor $\triv_\mW^\circledast \to \Cat_{\infty / \mV^\ot}.$
By Remark \ref{aii} the $\infty$-category $\triv_\mW^\circledast$ admits a final object
so that the latter functor lifts to a functor 
$\kappa: \triv_\mW^\circledast \to (\Cat_{\infty / \mV^\ot})_{/\mN^\circledast_{[0]}}.$
This functor sends an object $\W \in \triv_\mW^\circledast$ to
the functor $\lambda: \mN^\circledast_\W \to \mN^\circledast_{\W'}\simeq \mN^\circledast_{[0]}$ over $\mV^\ot$ induced by the unique inert morphism $\W \to \W'$ lying over the unique inert map
$[\n]\to [0]$ in $\Ass$ preserving the maximum.
By the axioms of a weakly bienriched $\infty$-category the functor $\lambda$ is an equivalence so that $\kappa$ is the final object in the $\infty$-category of functors
$\triv_\mW^\circledast \to (\Cat_{\infty / \mV^\ot})_{/\mN^\circledast_{[0]}} $. 
Hence there is an equivalence $\mN^\circledast_\triv \simeq \mN^\circledast_{[0]} \times \triv_\mW^\circledast$ over $\mV^\ot \times \triv_\mW^\circledast$
whose pullback to $\emptyset^\circledast \subset \mW^\ot$
is the identity by construction of $\lambda.$
So it suffices to show that the canonical functor $$\Enr\Fun_{\mV, \mW}(\mM \times \triv_\mW, \mN \times \triv_\mW) \to \Enr\Fun_{\mV,\emptyset}(\mM, \mN) $$ is an equivalence. 
This functor is the restriction of the composition of the canonical equivalence
$$\theta: \Fun_{\mV^\ot \times \mW^\ot}(\mM^\circledast \times \triv^\circledast_\mW, \mN^\circledast \times \triv^\circledast_\mW) \simeq \Fun_{\mV^\ot \times \triv_\mW^\circledast}(\mM^\circledast \times \triv^\circledast_\mW, \mN^\circledast \times \triv^\circledast_\mW) $$$$\simeq \Fun_{\mV^\ot}(\mM^\circledast \times \triv^\circledast_\mW, \mN^\circledast) \simeq \Fun(\triv^\circledast_\mW,\Fun_{\mV^\ot}(\mM^\circledast, \mN^\circledast))$$
and the functor $$\rho: \Fun(\triv^\circledast_\mW,\Fun_{\mV^\ot}(\mM^\circledast, \mN^\circledast)) \to \Fun_{\mV^\ot}(\mM^\circledast, \mN^\circledast)$$
evaluating at the final object of $\triv^\circledast_\mW.$
The equivalence $\theta$ restricts to an equivalence
$$\Enr\Fun_{\mV, \mW}(\mM \times \triv_\mW, \mN \times \triv_\mW) \simeq \Fun'(\triv^\circledast_\mW,\Enr\Fun_{\mV,\emptyset}(\mM, \mN)),$$
where the right hand side is the full subcategory of $\Fun(\triv^\circledast_\mW,\Enr\Fun_{\mV,\emptyset}(\mM, \mN))$
of functors inverting every morphism in $\triv^\circledast_\mW$.
The functor $\rho$ restricts to an equivalence
$\Fun'(\triv^\circledast_\mW,\Enr\Fun_{\mV,\emptyset}(\mM, \mN)) \to \Enr\Fun_{\mV,\emptyset}(\mM, \mN)$
since $\triv^\circledast_\mW$ admits a final object and so is weakly contractible.

\end{proof}

\begin{corollary}\label{asio}Let $\mV^\ot \to \Ass, \mW^\ot \to \Ass$ be small $\infty$-operads.

\begin{enumerate}
\item The functor ${_\mV\omega\B\Enr_\mW} \to {_\mV\omega\B\Enr_\emptyset}$
taking pullback along $\emptyset^\circledast \subset \mW^\circledast$
admits a fully faithful left adjoint whose essential image precisely consists of the
right trivial weakly bienriched $\infty$-categories.

\item The functor ${_\mV\omega\B\Enr_\mW} \to {_\emptyset\omega\B\Enr_\mW}$
taking pullback along $\emptyset^\circledast \subset \mV^\circledast$
admits a fully faithful left adjoint whose essential image precisely consists of the
left trivial weakly bienriched $\infty$-categories.

\item The functor ${_\mV\omega\B\Enr_\mW} \to \Cat_\infty, \mM^\circledast \to \mV^\ot \times \mW^\ot \mapsto \mM$
admits a fully faithful left adjoint whose essential image precisely consists of the trivial weakly bienriched $\infty$-categories.

\end{enumerate}	


\end{corollary}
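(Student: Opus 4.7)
The plan is to exhibit each claimed left adjoint explicitly and identify its essential image via the calculation already carried out inside the proof of Proposition~\ref{ljnbfg}.

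For part (1), I would propose the functor ${_\mV\omega\B\Enr_\emptyset} \to {_\mV\omega\B\Enr_\mW}$ sending a weakly left enriched $\infty$-category $\mM^\circledast \to \mV^\ot$ to $\mM^\circledast \times \triv_\mW^\circledast \to \mV^\ot \times \mW^\ot$ as the candidate left adjoint. Since the pullback of $\triv_\mW^\circledast \to \mW^\ot$ along $\emptyset^\ot \subset \mW^\ot$ is contractible (the unique object of $\triv_\mW^\circledast$ over $[0]$ is the final object by Remark~\ref{aii}), the unit at $\mM$ is an equivalence, proving that the candidate left adjoint is fully faithful. The adjunction identity on spaces of morphisms is then exactly statement (1) of Proposition~\ref{ljnbfg}, and analogously Proposition~\ref{ljnbfg}(2) handles part (2) via $\mM^\circledast \mapsto {_\mV\triv^\circledast} \times \mM^\circledast$.

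For part (3), I would combine the previous two or, more directly, use the construction $\K \mapsto \K_{\mV,\mW}^\circledast$ of Definition~\ref{Triv}. The $\Cat_\infty$-enrichment of Remark~\ref{2-catt} together with Proposition~\ref{ljnbfg}(3) gives
\[
\Enr\Fun_{\mV,\mW}(\K_{\mV,\mW}, \mN) \simeq \Fun\bigl(\K, \Enr\Fun_{\mV,\mW}({_\mV\triv \times \triv_\mW}, \mN)\bigr) \simeq \Fun(\K, \mN),
\]
which is the required adjunction; fully faithfulness is again immediate from $(\K_{\mV,\mW})_{[0][0]} \simeq \K$.

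The only nontrivial point, and hence the main obstacle, is identifying the essential image. Containment is clear: products with $\triv_\mW^\circledast$ are visibly right trivial in the sense of Remark~\ref{tent}, and similarly for the other cases. For the reverse inclusion in part (1), given a right trivial $\mN^\circledast \to \mV^\ot \times \mW^\ot$, I need to produce an equivalence $\mN^\circledast \simeq \mN_{[0]}^\circledast \times \triv_\mW^\circledast$ over $\mV^\ot \times \mW^\ot$. This is precisely the intermediate step isolated in the proof of Proposition~\ref{ljnbfg}: a right trivial weakly bi-enriched $\infty$-category factors through $\mV^\ot \times \triv_\mW^\circledast$ and so is classified by a functor $\triv_\mW^\circledast \to \Cat_{\infty/\mV^\ot}$, which lifts through the final object of $\triv_\mW^\circledast$ (Remark~\ref{aii}) to one landing in $(\Cat_{\infty/\mV^\ot})_{/\mN^\circledast_{[0]}}$; the axioms of a weakly bi-enriched $\infty$-category force the fibers of $\mN^\circledast \to \triv_\mW^\circledast$ to be equivalent to $\mN^\circledast_{[0]}$, and the final-object argument then forces the classifying functor to be the constant one, yielding the desired product decomposition. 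The same argument applied to the reversed enrichment (cf.\ Notation~\ref{invo}) handles part (2), and combining both handles part (3).
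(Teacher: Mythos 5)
Your proposal is correct and follows exactly the route the paper intends: the corollary is stated without a separate proof as a direct consequence of Proposition \ref{ljnbfg}, with the left adjoints given by the products with ${_\mV\triv^\circledast}$ and $\triv_\mW^\circledast$ and the essential image identified via the decomposition $\mN^\circledast_\triv \simeq \mN^\circledast_{[0]}\times \triv_\mW^\circledast$ isolated inside that proposition's proof. You correctly spot that the essential-image claim needs that intermediate decomposition rather than just the statement of Proposition \ref{ljnbfg}, so nothing is missing.
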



\subsection{Tensored envelopes}

In the following we introduce a tool that reduces questions about weakly enriched $\infty$-categories to tensored $\infty$-categories. We follow \cite[§ 3.3.]{HEINE2023108941}. 

\begin{notation}
Let $\text{Min},\text{Max} \subset \Fun([1],\Ass)$ be the full subcategories of morphisms
preserving the minimum, maximum, respectively, and $\Ass= \Min \cap \Max$ the intersection.

\end{notation}

\begin{remark}\label{heh}
Every morphism in $\Ass$ uniquely factors as an inert morphism followed by a 
morphism preserving the minimum and maximum.
Similarly, every morphism in $\Ass$ uniquely factors as an inert morphism preserving the maximum (minimum) followed by a morphism preserving the minimum (maximum).
By \cite[Lemma 5.2.8.19.]{lurie.HTT} this guarantees that the embeddings $\Act, \text{Min},\text{Max} \subset \Fun([1],\Ass)$ admit left adjoints, where a morphism in $\Fun([1],\Ass) $ with local target is a local equivalence
if and only if its image in $\Ass$ under evaluation at the source is inert,
is inert and preserves the maximum, is inert and preserves the minimum, respectively, and its image in $\Ass$ under evaluation at the target is an equivalence.
\end{remark}

The next notation is \cite[Notation 3.98.]{HEINE2023108941}:

\begin{notation}\label{ene}
Let $\mM^\circledast \to \mV^\ot \times \mW^\ot$ be a weakly bienriched $\infty$-category.
Let
\begin{itemize}	
\item $\Env(\mV)^\ot:= \Act \times_{\Fun(\{0\}, \Ass)} \mV^\ot\to \Fun(\{1\},\Ass).$

\item $\L\Env(\mM)^\circledast:= \mathrm{Min} \times_{\Fun(\{0\},\Ass)} \mM^\circledast \to \Fun(\{1\},\Ass)$
\item $\R\Env(\mM)^\circledast:= \mM^\circledast \times_{\Fun(\{0\},\Ass)} \Max\to \Fun(\{1\},\Ass)$
	
\item $\B\Env(\mM)^\circledast:= \text{Min} \times_{\Fun(\{0\}, \Ass)} \mM^\circledast \times_{\Fun(\{0\}, \Ass) } \Max \to \Fun(\{1\},\Ass) \times \Fun([1], \Ass).$ 
\end{itemize}

\end{notation}

	
\begin{remark}
The diagonal embedding $\Ass \subset \Act$ induces embeddings $$\mV^\ot \subset \Env(\mV)^\ot, \mM^\circledast \subset \L\Env(\mM)^\circledast,\L\Env(\mM)^\circledast \subset \B\Env(\mM)^\circledast, \mM^\circledast \subset \R\Env(\mM)^\circledast,\R\Env(\mM)^\circledast \subset \B\Env(\mM)^\circledast.$$
	
\end{remark}

\begin{remark}\label{hehh}
Due to Remark \ref{heh} for any weakly bienriched $\infty$-category 
$\mM^\circledast \to \mV^\ot$ the embeddings
$$ \Env(\mV)^\ot \subset \Fun([1],\Ass) \times_{\Fun(\{0\}, \Ass)} \mV^\ot,$$$$ \L\Env(\mM)^\circledast \subset  \Fun([1],\Ass) \times_{\Fun(\{0\},\Ass)} \mM^\circledast, \R\Env(\mM)^\circledast \subset \mM^\circledast \times_{\Fun(\{0\},\Ass)} \Fun([1],\Ass), $$$$\B\Env(\mM)^\circledast \subset \Fun([1],\Ass) \times_{\Fun(\{0\}, \Ass)} \mM^\circledast \times_{\Fun(\{0\}, \Ass) } \Fun([1],\Ass)$$
admit left adjoints relative to $\Fun(\{1\},\Ass)$, relative to $\Fun(\{1\},\Ass) \times \Fun(\{1\},\Ass),$ respectively. This implies that the following functors are cocartesian fibrations:
$$ \Env(\mV)^\ot \to \Fun(\{1\},\Ass), \L\Env(\mM)^\circledast \to \Fun(\{1\},\Ass), \R\Env(\mM)^\circledast \to \Fun(\{1\},\Ass),$$$$ \B\Env(\mM)^\circledast \to \Fun(\{1\},\Ass) \times \Fun(\{1\},\Ass).$$ 

\end{remark}

\begin{construction}
	
There is a map
$$\B\Env(\mM)^\circledast:= \text{Min} \times_{\Fun(\{0\}, \Ass)} \mM^\circledast \times_{\Fun(\{0\}, \Ass) } \Max
\subset $$$$\Fun([1], \Ass) \times_{\Fun(\{0\}, \Ass)} \mM^\circledast \times_{\Fun(\{0\}, \Ass) } \Fun([1], \Ass)\to $$$$ (\Fun([1],\Ass) \times_{\Fun(\{0\},\Ass)} \mV^\ot) \times (\Fun([1],\Ass) \times_{\Fun(\{0\},\Ass)} \mW^\ot) \to $$$$\Env(\mV)^\ot \times \Env(\mW)^\ot= (\Act \times_{\Fun(\{0\},\Ass)} \mV^\ot) \times (\Act \times_{\Fun(\{0\},\Ass)} \mW^\ot)$$
of cocartesian fibrations over $\Ass\times \Ass, $ where the last functor is induced by the localization functors of Remark \ref{hehh}.
There are maps
$$\L\Env(\mM)^\circledast:= \text{Min} \times_{\Fun(\{0\}, \Ass)} \mM^\circledast \subset \Fun([1], \Ass) \times_{\Fun(\{0\}, \Ass)} \mM^\circledast \to $$$$ (\Fun([1],\Ass) \times_{\Fun(\{0\},\Ass)} \mV^\ot) \times \mW^\ot \to \Env(\mV)^\ot \times \mW^\ot= (\Act \times_{\Fun(\{0\},\Ass)} \mV^\ot) \times \mW^\ot,$$
$$\R\Env(\mM)^\circledast:= \mM^\circledast \times_{\Fun(\{0\}, \Ass)} \Max \subset\mM^\circledast \times_{\Fun(\{0\}, \Ass)} \Fun([1], \Ass) \to $$$$\mV^\ot \times (\Fun([1],\Ass) \times_{\Fun(\{0\},\Ass)} \mW^\ot) \to \mV^\ot \times \Env(\mW)^\ot =\mV^\ot \times (\Act \times_{\Fun(\{0\},\Ass)} \mW^\ot)$$
of cocartesian fibrations over $\Ass$, where the last functors are induced by the localization functors of Remark \ref{hehh}.
\end{construction}

\begin{definition}Let $\mM^\circledast \to \mV^\ot \times \mW^\ot$ be a weakly bienriched $\infty$-category.

\begin{enumerate}
\item The monoidal envelope of $\mV^\ot \to \Ass$
is the functor $$ \Env(\mV)^\ot \to \Act \to \Fun(\{1\},\Ass) $$

\item The left tensored envelope is the functor $$\L\Env(\mM)^\circledast \to \Env(\mV)^\ot \times \mW^\ot.$$

\item The right tensored envelope is the functor $$\R\Env(\mM)^\circledast \to  \R\Env(\mM)^\circledast \to \mV^\ot \times \Env(\mW)^\ot.$$

\item The bitensored envelope is the functor
$$ \B\Env(\mM)^\circledast \to \Env(\mV)^\ot \times \Env(\mW)^\ot.$$
\end{enumerate}	
\end{definition}

The next Proposition \ref{unb} justifies this terminology and is \cite[Proposition 3.92, Proposition 3.101.]{HEINE2023108941}:

\begin{proposition}\label{unb}\label{envv}\label{bitte} Let $\mV^\ot$ be an $\infty$-operad and $\mM^\circledast \to \mV^\ot \times \mW^\ot$ a weakly bienriched $\infty$-category.
\begin{enumerate}
\item The functor $\Env(\mV)^\ot \to \Act \to \Fun(\{1\}, \Ass) $
is a monoidal $\infty$-category and the embedding $\mV^\ot \subset \Env(\mV)^\ot$ is a map of  $\infty$-operads. For any monoidal $\infty$-category $\mW^\ot \to \Ass $
the functor $\Alg_{\Env(\mV)}(\mW) \to \Alg_{\mV}(\mW)$
admits a fully faithful left adjoint that lands in the full subcategory of monoidal functors. So the induced functor $\Fun^{\ot}(\Env(\mV), \mW) \to \Alg_{\mV}(\mW)$ is an equivalence.

\item The functor $\L\Env(\mM)^\circledast \to \Env(\mV)^\ot \times \mW^\ot$ is a left  tensored $\infty$-category and the embedding $\mM^\circledast \subset \L\Env(\mM)^\circledast$ is an enriched functor. For any left tensored $\infty$-category $\mN^\ot \to \Env(\mV)^\ot \times \mW^\ot$ the functor $\Enr\Fun_{\Env(\mV),\mW}(\L\Env(\mM),\mN) \to \Enr\Fun_{\mV,\mW}(\mM,\mN)$
admits a fully faithful left adjoint that lands in the full subcategory of enriched functors preserving left tensors. So the induced functor $\L\LinFun_{\Env(\mV),\mW}(\L\Env(\mM),\mN) \to \Enr\Fun_{\mV,\mW}(\mM,\mN)$ is an equivalence.

\item The functor $\R\Env(\mM)^\circledast \to \mV^\ot \times \Env(\mW)^\ot $ is a right tensored $\infty$-category and the embedding $\mM^\circledast \subset \R\Env(\mV)^\circledast$ is an enriched functor. For any right tensored $\infty$-category $\mN^\ot \to  \mV^\ot \times \Env(\mW)^\ot$ the functor $\Enr\Fun_{\mV, \Env(\mW)}(\R\Env(\mM),\mN) \to \Enr\Fun_{\mV,\mW}(\mM,\mN)$admits a fully faithful left adjoint that lands in the full subcategory of enriched functors preserving right tensors. So the functor $\R\LinFun_{\mV, \Env(\mW)}(\R\Env(\mM),\mN) \to \Enr\Fun_{\mV,\mW}(\mM,\mN)$ is an equivalence.

\item The functor $\B\Env(\mM)^\circledast \to \Env(\mV)^\ot \times \Env(\mW)^\ot $ is a bitensored $\infty$-category and the embedding $\mM^\circledast \subset \B\Env(\mM)^\circledast$ is an enriched functor. For any bitensored $\infty$-category $\mN^\ot \to \Env(\mV)^\ot \times \Env(\mW)^\ot$ the functor $\Enr\Fun_{\Env(\mV), \Env(\mW)}(\B\Env(\mM),\mN) \to \Enr\Fun_{\mV,\mW}(\mM,\mN)$
admits a fully faithful left adjoint that lands in the full subcategory of linear functors. So the induced functor $\LinFun_{\mV, \mW}(\B\Env(\mM),\mN) \to \Enr\Fun_{\mV,\mW}(\mM,\mN)$ is an equivalence.

\end{enumerate}
	
\end{proposition}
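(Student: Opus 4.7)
The plan is to prove all four parts by the same unified scheme: each envelope is constructed as a pullback that, by Remark \ref{hehh}, is a cocartesian fibration over its intended base; the universal property is then established by constructing an explicit left adjoint to restriction along the diagonal embedding $\Ass\subset\Act$, and showing that this left adjoint lands in the subcategory of monoidal (respectively linear) functors.

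First, I would verify the structural claims. For (1), by Remark \ref{hehh} the projection $\Env(\mV)^\ot=\Act\times_{\Fun(\{0\},\Ass)}\mV^\ot\to\Fun(\{1\},\Ass)$ is a cocartesian fibration. To check that it is monoidal, one uses the unique active-inert factorization in $\Ass$ to identify an object of $\Env(\mV)^\ot$ over $[\n]$ with a pair consisting of an active morphism $[\m]\to[\n]$ together with an object of $\mV^\ot_{[\m]}\simeq \mV^{\times\m}$; concatenating the $\n$ inert maps $[\n]\to[1]$ then yields the Segal equivalence $\Env(\mV)^\ot_{[\n]}\simeq\Env(\mV)^{\times\n}$. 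The embedding $\mV^\ot\subset\Env(\mV)^\ot$ preserves cocartesian lifts of inert morphisms since inert morphisms lift compatibly on both sides, hence it is a map of $\infty$-operads. The same reasoning applied one or two factors at a time handles the fibrational and Segal/bitensor axioms for $\L\Env(\mM)^\circledast$, $\R\Env(\mM)^\circledast$, and $\B\Env(\mM)^\circledast$; the key point is that the embeddings from $\mM^\circledast$ into the envelopes preserve cocartesian lifts of inert morphisms (preserving max/min as appropriate).

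Next, I would establish the universal properties. Given a monoidal $\mW^\ot\to\Ass$ (resp.\ a left tensored, right tensored, or bitensored target), I construct the fully faithful left adjoint to restriction as follows. By Remark \ref{heh} the embedding $\Ass\subset\Act$ is a reflective subcategory via the active-inert factorization. Pulling this back along $\mV^\ot\to\Ass$ exhibits $\mV^\ot\subset\Fun([1],\Ass)\times_{\Fun(\{0\},\Ass)}\mV^\ot$ as a reflective subcategory relative to $\Fun(\{1\},\Ass)$, and restricting to active morphisms yields the reflection onto $\Env(\mV)^\ot$. For a map of $\infty$-operads $F\colon\mV^\ot\to\mW^\ot$, the target is a monoidal $\infty$-category, so the induced functor $\Fun([1],\Ass)\times_{\Fun(\{0\},\Ass)}\mV^\ot\to\mW^\ot$ obtained by composing $F$ with the cocartesian lift over the morphism selected by $[1]\to\Ass$ descends along the reflection to a well-defined functor $\tilde F\colon\Env(\mV)^\ot\to\mW^\ot$. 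Concretely, $\tilde F(\V_1,\dots,\V_\m)\simeq F(\V_1)\otimes\cdots\otimes F(\V_\m)$.

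The crucial verification is that $\tilde F$ is monoidal, i.e.\ that it preserves cocartesian lifts of all morphisms in $\Ass$, not merely the inert ones. This is the main obstacle and follows from the fact that cocartesian lifts in $\Env(\mV)^\ot$ are computed by concatenation of tuples, while the monoidal structure of $\mW$ precisely converts concatenation into iterated tensor product; the case of active morphisms reduces to associativity of the tensor product in $\mW$, which holds coherently by the monoidal axioms. One then checks that the unit $F\to\tilde F_{|\mV^\ot}$ is an equivalence (by construction $\tilde F$ restricted to the diagonal copy computes the same tensor in length one), establishing that the adjunction restricts to an equivalence $\Fun^\ot(\Env(\mV),\mW)\simeq\Alg_\mV(\mW)$. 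The proofs of (2), (3), and (4) follow exactly the same pattern, applying the reflection on one factor, the other, or both simultaneously, and using Remark \ref{heh} in each factor separately; the condition that the left adjoint lands in the subcategory of left-linear, right-linear, or linear functors is the analogous compatibility with active morphisms in the appropriate factor(s).
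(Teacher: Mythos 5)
You should first note that the paper itself does not prove this proposition: it is imported verbatim from the author's earlier work (\cite[Propositions 3.92 and 3.101]{HEINE2023108941}), so there is no in-paper argument to compare against. Your route is the standard envelope argument in the style of \cite[\S 2.2.4]{lurie.higheralgebra}, and the structural half of your sketch is sound: Remark \ref{hehh} does give the cocartesian fibration claims, the identification of an object of $\Env(\mV)^\ot_{[\n]}$ with an active morphism $[\m]\to[\n]$ together with an object of $\mV^\ot_{[\m]}$ is correct, the Segal/bitensor conditions do follow from the active--inert factorization block by block, and the compatibility $\tilde F(\alpha',\iota_!X)\simeq\gamma_!\tilde F(\alpha,X)$ for $\gamma\alpha=\alpha'\iota$ is exactly the check that $\tilde F$ is monoidal.

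The genuine gap is in the universal property. What you actually construct is a functor $F\mapsto\tilde F$ from $\Alg_\mV(\mW)$ to monoidal functors $\Env(\mV)^\ot\to\mW^\ot$, and you verify that the unit $F\to\tilde F|_{\mV^\ot}$ is an equivalence. But the statement asserts that $F\mapsto\tilde F$ is \emph{left adjoint} to restriction on all of $\Alg_{\Env(\mV)}(\mW)$, i.e.\ that
$\Map_{\Alg_{\Env(\mV)}(\mW)}(\tilde F,G)\simeq\Map_{\Alg_\mV(\mW)}(F,G|_{\mV^\ot})$
naturally for \emph{arbitrary} maps of $\infty$-operads $G\colon\Env(\mV)^\ot\to\mW^\ot$, not only monoidal ones. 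Nothing in your sketch addresses this mapping-space equivalence; constructing the candidate value $\tilde F$ and checking the unit is an equivalence does not produce the adjunction, and even the weaker claim that $\Fun^\ot(\Env(\mV),\mW)\to\Alg_\mV(\mW)$ is an equivalence needs full faithfulness of restriction on monoidal functors, which in turn is usually extracted from the adjunction. This is precisely the step for which the literature invokes operadic left Kan extension along $\mV^\ot\subset\Env(\mV)^\ot$ (identifying $\tilde F$ as the operadic Kan extension of $F$, whose defining colimit over the relevant slice has a final object given by the identity active morphism), or else a careful analysis showing that restriction along the diagonal section $\mV^\ot\to\Min\times_{\Fun(\{0\},\Ass)}\mV^\ot$ induces the required equivalence on mapping spaces after passing to the localization. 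The same omission propagates to parts (2)--(4), where the analogous adjunction against arbitrary enriched functors out of the envelope is the substantive content of the claim that the left adjoint lands in left-linear, right-linear, or linear functors and is fully faithful.
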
 

\begin{remark} Let $\mM^\circledast \to \mV^\ot \times \mW^\ot$ be a weakly bienriched $\infty$-category.
By construction there is a canonical $\mV, \Env(\mW)$-linear equivalence $$\R\Env(\mM)^\circledast \simeq (\L\Env(\mM^\rev)^\rev)^\circledast.$$
\end{remark}

\begin{remark}\label{huiii}
By construction every object of $\B\Env(\mM)$ is equivalent to $\V_1 \ot ...\ot \V_\n \ot \X \ot \W_1 \ot ...\ot\W_\m$ for
$\n,\m \geq 0$ and $\V_1,...,\V_\n \in \mV \subset \Env(\mV), \X \in \mM \subset \B\Env(\mM), \W_1,...,\W_\m \in \mW \subset \Env(\mW).$ 	
The left tensored envelope of $\mM^\circledast \to \mV^\ot \times \mW^\ot$ is the full weakly bienriched subcategory $$\L\Env(\mM)^\circledast \subset \B\Env(\mM)^\circledast \times_{\Env(\mW)^\ot} \mW^\ot \to \Env(\mV)^\ot \times \mW^\ot$$ spanned by the objects of the form
$ \V_1 \ot ... \ot \V_\n \ot \X$ for $\n \geq 0$ and $\V_1,...,\V_\n \in \mV$
and $\X \in \mM.$
	
\end{remark}
 



\begin{remark}\label{envdecom} Let $\mM^\circledast \to \mV^\ot \times \mW^\ot$ be a weakly bienriched $\infty$-category.
By construction there is a canonical $\Env(\mV), \Env(\mW)$-linear equivalence:
$$\L\Env(\R\Env(\mM))^\circledast \simeq \B\Env(\mM)^\circledast \simeq \R\Env(\L\Env(\mM))^\circledast.$$
\end{remark}

\begin{remark}\label{bienv}
	
Let $\mM^\circledast \to \mV^\ot$ be a weakly left enriched $\infty$-category
and $\mN^\circledast \to \mW^\ot$ a weakly right enriched $\infty$-category.
By construction there is a $\Env(\mV), \Env(\mW)$-linear equivalence:
$$ \B\Env(\mM \times \mN)^\circledast= \mathrm{Min} \times_{\Fun(\{0\},\Ass)} \mM^\circledast \times \mN^\circledast \times_{\Fun(\{0\},\Ass)}\Max \simeq \L\Env(\mM)^\circledast \times \R\Env(\mN)^\circledast.$$	
\end{remark}

The next lemma is \cite[Lemma 3.93.]{HEINE2023108941}:

\begin{lemma}\label{looocx}
Let $\mV^\ot \to \Ass$ be a monoidal $\infty$-category.
The embedding $\mV^\ot \subset \Env(\mV)^\ot$ admits a left adjoint relative to $\Ass$.
\end{lemma}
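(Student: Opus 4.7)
The plan is to produce the left adjoint $L\colon \Env(\mV)^\ot \to \mV^\ot$ directly from cocartesian transport in the monoidal fibration. Since $\mV^\ot \to \Ass$ is a cocartesian fibration by definition of a monoidal $\infty$-category, for every active morphism $f\colon [m]\to[n]$ in $\Ass$ and every $V\in \mV^\ot_{[m]}$ there is a cocartesian lift $V \to f_!(V)$ in $\mV^\ot$ over $f$. Conceptually, $L$ sends an object $(f\colon [m]\to[n],V)$ of $\Env(\mV)^\ot$, which should be viewed as a formal tensor of the components of $V$ partitioned according to $f$, to its honest tensor $f_!(V)\in \mV^\ot_{[n]}$.

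To construct $L$ as an $\infty$-functor over $\Ass$ rather than merely a pointwise assignment, I have two options. The first is via straightening: the cocartesian fibration $\mV^\ot \to \Ass$ is classified by a functor $\Ass \to \Cat_\infty$, which extends canonically along $\Act \to \Ass$ (evaluation at $0$) since the evaluation-at-$1$ projection $\Act \to \Ass$ is a cocartesian fibration and $\Act$ is generated by its cocartesian lifts of identity morphisms; unstraightening produces $L$. Alternatively, one may invoke Proposition \ref{envv}(1) applied to $\mW^\ot = \mV^\ot$: the identity $\id_{\mV^\ot}\in \Alg_\mV(\mV)$ extends uniquely to a monoidal functor $L\colon \Env(\mV)^\ot \to \mV^\ot$ satisfying $L\circ \iota = \id_{\mV^\ot}$, where $\iota\colon \mV^\ot \subset \Env(\mV)^\ot$ is the embedding in question. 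Either route furnishes a functor $L$ over $\Ass$ realizing the pointwise formula above.

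Next I would write down the unit $\eta\colon \id_{\Env(\mV)^\ot} \to \iota\circ L$ component-wise: at $(f\colon [m]\to [n], V)$, $\eta_{(f,V)}$ is the morphism $(f,V)\to (\id_{[n]}, f_!(V))$ in $\Env(\mV)^\ot$ whose underlying square in $\Ass$ has top $f$, bottom $\id_{[n]}$, left $f$, right $\id_{[n]}$, and whose underlying morphism in $\mV^\ot$ is the cocartesian lift $V \to f_!(V)$ over $f$. By construction each $\eta_{(f,V)}$ projects to the identity on $[n]\in \Ass$, so $\eta$ lies over the identity natural transformation on $\Ass$.

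Finally I would verify that $\eta$ exhibits $L$ as a left adjoint to $\iota$ relative to $\Ass$ via the standard fibrewise criterion. Unwinding the pullback $\Env(\mV)^\ot = \Act \times_{\Fun(\{0\},\Ass)} \mV^\ot$ shows that a morphism $(f,V)\to (\id_{[k]},W)$ in $\Env(\mV)^\ot$ lying over $\beta\colon [n]\to [k]$ has its top edge forced to be $\beta\circ f$, hence consists precisely of a morphism $V\to W$ in $\mV^\ot$ over $\beta\circ f$. The cocartesian property of $V\to f_!(V)$ then yields a canonical equivalence
\[
\mV^\ot_{\beta}(f_!(V), W)\;\xrightarrow{\ \sim\ }\;\mV^\ot_{\beta\circ f}(V,W),
\]
which is exactly the equivalence of mapping spaces needed to conclude, via the criterion of \cite[Proposition 7.3.2.1]{lurie.HTT}, that $\eta$ exhibits $L\dashv \iota$ as a relative adjunction over $\Ass$. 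The main obstacle is the coherent construction of $L$ in step two; once that is in place, the verification in the final step is a direct unwinding of the defining universal property of cocartesian lifts.
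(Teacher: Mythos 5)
Your argument is correct. The paper itself gives no proof of Lemma \ref{looocx} — it only cites \cite[Lemma 3.93]{HEINE2023108941} — but your construction of the left adjoint as cocartesian pushforward along the active morphism, with unit $(f,V)\to(\id_{[n]},f_!(V))$ lying over the identity of $[n]$ and the universal property verified fibrewise from cocartesianness of $V\to f_!(V)$, is the standard argument for this statement and surely agrees with the cited source. Note also that the coherent construction of $L$ in your second step is dispensable: once the unit candidates and the fibrewise mapping-space equivalences are in place, the left adjoint exists automatically by the pointwise criterion for full subcategory inclusions \cite[Proposition 5.2.7.8]{lurie.HTT}, and relativity over $\Ass$ follows because the units lie over identities.
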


The next lemma is \cite[Lemma 3.105.]{HEINE2023108941}:

\begin{lemma}\label{loccyy}
Let $\mM^\circledast \to \mV^\ot \times \mW^\ot$ be a right tensored $\infty$-category.
The embedding $\L\Env(\mM)^\circledast \subset \B\Env(\mM)^\circledast$ admits a left 
$\Env(\mV)$-enriched left adjoint covering 
the left adjoint 
of the embedding $\mW^\ot \subset \Env(\mW)^\ot$.
The enriched adjunction $$\B\Env(\mM)^\circledast \rightleftarrows \L\Env(\mM)^\circledast$$ restricts to an enriched adjunction $\R\Env(\mM)^\circledast \rightleftarrows \mM^\circledast$. 
So for any bitensored $\infty$-category $\mM^\circledast \to \mV^\ot \times \mW^\ot$ the embedding $\mM^\circledast \subset \B\Env(\mM)^\circledast$ has an enriched left adjoint covering the left adjoints 
of the embeddings $\mV^\ot \subset \Env(\mV)^\ot, \mW^\ot \subset \Env(\mW)^\ot.$


\end{lemma}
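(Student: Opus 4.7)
The plan is to construct the $\Env(\mV)$-enriched left adjoint directly via cocartesian lifts afforded by the right tensored hypothesis on $\mM^\circledast \to \mW^\ot$, and then deduce the restricted statement by restricting to an appropriate full subcategory and the bitensored case by the involution $(-)^\rev$ of Notation \ref{invo}.

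By Lemma \ref{looocx} the embedding $\iota: \mW^\ot \subset \Env(\mW)^\ot$ admits a left adjoint relative to $\Ass$; write $\eta$ for its unit. The embedding $\L\Env(\mM)^\circledast \subset \B\Env(\mM)^\circledast$ is the full subcategory of objects whose $\Max$-factor is an identity, and it covers $\id \times \iota$ on the base $\Env(\mV)^\ot \times \Env(\mW)^\ot$. Because $\mM^\circledast \to \mW^\ot$ is a cocartesian fibration by hypothesis, $\eta$ admits a cocartesian lift $\tilde\eta$ in $\B\Env(\mM)^\circledast$ at every object; in the notation of Remark \ref{huiii}, $\tilde\eta$ sends $\V_1 \ot \cdots \ot \V_\n \ot \X \ot \W_1 \ot \cdots \ot \W_\m$ to $\V_1 \ot \cdots \ot \V_\n \ot (\X \ot \W_1 \ot \cdots \ot \W_\m)$, the right tensor being formed inside $\mM$. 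Universality of $\tilde\eta$ among morphisms of $\B\Env(\mM)^\circledast$ with target in $\L\Env(\mM)^\circledast$ combines the universal property of $\eta$ on the base (every such morphism covers a morphism in $\Env(\mW)^\ot$ whose target lies in $\mW^\ot$ and so factors uniquely through $\eta$) with the cocartesian lifting property (lifting this factorization uniquely). Since $\tilde\eta$ is the identity on the $\Min$-factor and in particular on the projection to $\Env(\mV)^\ot$, the adjunction is relative to $\Env(\mV)^\ot$, so the left adjoint is $\Env(\mV)$-enriched and covers the left adjoint of $\iota$.

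The restricted statement follows by restricting both sides of the constructed adjunction to the full subcategory where the $\Min$-factor is an identity: this subcategory of $\B\Env(\mM)^\circledast$ is $\R\Env(\mM)^\circledast$ by definition, and this subcategory of $\L\Env(\mM)^\circledast$ is $\mM^\circledast$; since $\tilde\eta$ is the identity on the $\Min$-factor, it restricts to give the enriched adjunction $\R\Env(\mM)^\circledast \rightleftarrows \mM^\circledast$. For a bitensored $\mM$, applying the first part yields $\B\Env(\mM)^\circledast \to \L\Env(\mM)^\circledast$, and applying the restricted statement to $\mM^\rev$ and transporting back along $(-)^\rev$ (which interchanges $\L\Env$ with $\R\Env$ and left with right tensoring, and preserves adjunctions since it is an equivalence of $(\infty,2)$-categories) yields $\L\Env(\mM)^\circledast \to \mM^\circledast$; their composite is the required enriched left adjoint $\B\Env(\mM)^\circledast \to \mM^\circledast$, which covers the left adjoints of both $\mV^\ot \subset \Env(\mV)^\ot$ and $\mW^\ot \subset \Env(\mW)^\ot$ by construction. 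The main obstacle will be rigorously verifying universality of the lifted unit, which requires carefully unwinding the combinatorics of $\Min$, $\Max$, and the localization functors of Remark \ref{hehh} used to project to $\Env(\mV)^\ot \times \Env(\mW)^\ot$; once this is set up, the enrichment and restriction statements are formal consequences.
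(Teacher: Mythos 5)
The paper does not prove this lemma at all: it is imported verbatim as \cite[Lemma 3.105.]{HEINE2023108941}, so there is no in-paper argument to compare against. Your proposal supplies the expected proof and is essentially sound: the left adjoint collapses the $\Max$-coordinate by forming the right tensor $\X\ot\W_1\ot\cdots\ot\W_\m$ inside $\mM$, the unit is the universal multimorphism exhibiting that right tensor, universality follows by unwinding $\Mul_{\B\Env(\mM)}$ into multimorphism spaces of $\mM$ via Remark \ref{huiii} and invoking Definition \ref{Defo}(2), and the remaining claims follow by restricting to the $\Min$-degenerate locus and by the involution $(-)^\rev$ together with $\R\Env(\mM)^\circledast\simeq(\L\Env(\mM^\rev)^\rev)^\circledast$.

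One imprecision worth fixing: the right tensored hypothesis does not say that $\mM^\circledast\to\mW^\ot$ is a cocartesian fibration; it says that $\mM^\circledast\to\mV^\ot\times\mW^\ot$ is a map of cocartesian fibrations over $\Ass$ via the second projection. What you actually need — and what the hypothesis does give — is the existence of right tensors in $\mM$ together with their universal property against all multimorphism spaces, which is exactly what makes the lifted unit $\tilde\eta$ universal against targets in $\L\Env(\mM)^\circledast$. Phrasing the construction in terms of right tensors rather than cocartesian lifts of morphisms of $\mW^\ot$ avoids claiming lifts that the hypothesis does not directly provide; with that adjustment the argument goes through.
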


\begin{remark}\label{rati} Let $\mM^\circledast \to \mV^\ot \times \mW^\ot$ be a  right tensored $\infty$-category.
Lemma \ref{loccyy} implies that $\L\Env(\mM)^\circledast \to \Env(\mV)^\ot \times \mW^\ot$ is a bitensored $\infty$-category and $\mM^\circledast \subset \L\Env(\mM)^\circledast$ is right $\mW$-linear.
\end{remark}

Next we consider the free cocompletion of the tensored envelope. 

\begin{notation}\label{Ind} Let $\kappa$ be a small regular cardinal and $\mC$ a small $\infty$-category.
Let $\Ind_\kappa(\mC) \subset \mP(\mC)$ be the full subcategory generated by $\mC$ under small $\kappa$-filtered colimits.

\end{notation}

\begin{example}
	
For $\kappa=\emptyset$ we find that $\Ind_\emptyset(\mC)=\mP(\mC).$

\end{example}

\begin{remark}\label{Indd} 
By \cite[Corollary 5.3.5.4.]{lurie.higheralgebra} for every small $\infty$-category $\mC$
that admits $\kappa$-small colimits the full subcategory $\Ind_\kappa(\mC) \subset \mP(\mC)$ precisely consists of the functors $\mC^\op \to \mS$ preserving $\kappa$-small limits. 
Thus $\Ind_\kappa(\mC)$ is a $\kappa$-accessible localization with respect to the set of maps $\{ \colim(\y \circ \rH) \to \y(\colim(\rH)) \mid \rH:\K \to \mC, \ \K \ \kappa\text{-small} \}$, where $\y: \mC \subset \mP(\mC)$ is the Yoneda-embedding. Hence $\Ind_\kappa(\mC)$ is a presentable $\infty$-category.

\end{remark}

\begin{proposition}\label{presta} \label{Day}Let $\kappa$ be a small regular cardinal
and $\mV^\ot \to \Ass$ a small $\infty$-operad.

\begin{enumerate}
\item There is an $\infty$-operad $\Ind_\kappa(\mV)^\ot \to \Ass $ compatible with small $\kappa$-filtered colimits and an embedding $\mV^\ot \to \Ind_\kappa(\mV)^\ot$ inducing the embedding $\mV \to \Ind_\kappa(\mV)$ on underlying $\infty$-categories.

\item For every $\infty$-operad $\mW^\ot \to \Ass$ compatible with small colimits the induced functor 
\begin{equation}\label{jjjp}
\Alg_{\Ind_\kappa(\mV)}(\mW) \to \Alg_{\mV}(\mW)\end{equation} admits a fully faithful left adjoint that lands in the full subcategory $\Alg^{\kappa-\mathrm{fil}}_{\Ind_\kappa(\mV)}(\mW)$ of maps of $\infty$-operads preserving small $\kappa$-filtered colimits.
Thus the following functor is an equivalence:
\begin{equation}\label{ejjt}
\Alg^{\kappa-\mathrm{fil}}_{\Ind_\kappa(\mV)}(\mW) \to \Alg_{\mV}(\mW).
\end{equation}

\item If $\mV$ admits $\kappa$-small colimits, the functor (\ref{ejjt}) restricts to an equivalence $$\Alg^\L_{\Ind_\kappa(\mV)}(\mW) \to \Alg^\kappa_{\mV}(\mW).$$

\item If $\mV^\ot \to \Ass$ is compatible with $\kappa$-small colimits, $\Ind_\kappa(\mV)^\ot \to \Ass$ is compatible with small colimits.
	
\item If $\mV^\ot \to \Ass$ is a monoidal $\infty$-category, $\Ind_\kappa(\mV)^\ot \to \Ass$ is a monoidal $\infty$-category, the embedding $\mV^\ot \to \Ind_\kappa(\mV)^\ot$
is monoidal and for every monoidal $\infty$-category $\mW^\ot \to \Ass$ compatible with small colimits the functor (\ref{ejjt}) reflects monoidal functors. 
\end{enumerate}

\end{proposition}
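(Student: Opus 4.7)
The plan is to adapt the construction of Ind for symmetric monoidal $\infty$-operads (Lurie HA 4.8.1.10) to the non-symmetric setting, and further to arbitrary (non-monoidal) $\infty$-operads via the monoidal envelope from Proposition \ref{unb}.

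I would first treat the case that $\mV^\ot$ is a small monoidal $\infty$-category. Here $\Ind_\kappa(\mV)^\ot$ is constructed by non-symmetric Day convolution: the tensor product on $\Ind_\kappa(\mV)$ is the unique functor that restricts to $\ot$ on $\mV$ via Yoneda and preserves small colimits in each variable, i.e., the left Kan extension of $\y \circ \ot : \mV \times \mV \to \Ind_\kappa(\mV)$ along $\y \times \y$. Associativity and unit coherences transfer from those of $\mV$, yielding a monoidal $\infty$-category $\Ind_\kappa(\mV)^\ot \to \Ass$ compatible with small colimits, with Yoneda refining to a monoidal embedding $\y^\ot$. The identification of $\Ind_\kappa(\mV)$ with the full subcategory of $\mP(\mV)$ of $\kappa$-small limit preserving presheaves (Remark \ref{Indd}) lets one work inside the Day convolution on $\mP(\mV)$ and verify closure of $\Ind_\kappa(\mV)$ under the tensor product.

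For general small $\mV^\ot$, I apply the previous step to $\Env(\mV)^\ot$ to obtain $\Ind_\kappa(\Env(\mV))^\ot$, and define $\Ind_\kappa(\mV)^\ot$ as the full suboperad of $\Ind_\kappa(\Env(\mV))^\ot$ whose underlying $\infty$-category is the image of $\Ind_\kappa(\mV) \to \Ind_\kappa(\Env(\mV))$ induced by $\mV \subset \Env(\mV)$. This gives (1): compatibility with small $\kappa$-filtered colimits is inherited from $\Ind_\kappa(\Env(\mV))^\ot$ since $\Ind_\kappa(\mV) \subset \Ind_\kappa(\Env(\mV))$ is closed under $\kappa$-filtered colimits. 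Part (4) follows because $\Env(\mV)^\ot$ is compatible with $\kappa$-small colimits whenever $\mV^\ot$ is, so $\Ind_\kappa(\Env(\mV))^\ot$ is compatible with small colimits. For part (5), when $\mV^\ot$ is already monoidal, the monoidal retraction $\Env(\mV)^\ot \to \mV^\ot$ from Lemma \ref{looocx} identifies the envelope construction with the direct Day convolution of Step 1, and reflection of monoidal functors in the equivalence (\ref{ejjt}) follows by comparing the two universal properties.

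For (2) and (3), the left adjoint to the restriction (\ref{jjjp}) is operadic left Kan extension along $\mV^\ot \subset \Ind_\kappa(\mV)^\ot$. Given $\F: \mV^\ot \to \mW^\ot$ with $\mW^\ot$ compatible with small colimits, the underlying functor extends uniquely to a $\kappa$-filtered colimit preserving functor $\bar\F: \Ind_\kappa(\mV) \to \mW$ via Kan extension along Yoneda, and this upgrades to a map of operads because multimorphism spaces in $\mW^\ot$ preserve $\kappa$-filtered colimits in each source variable (compatibility with small colimits), so the multimorphism structure defined on $\mV^\ot$ extends compatibly to $\Ind_\kappa(\mV)^\ot$. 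Fully faithfulness of this left adjoint reduces to the universal property of $\Ind_\kappa$ for underlying $\infty$-categories (\cite[Corollary 5.3.5.4.]{lurie.higheralgebra}); part (3) follows from (2) by observing that left adjoints out of $\Ind_\kappa(\mV)$ correspond, under the colimit formula, to operad maps whose underlying functor preserves $\kappa$-small colimits. The main obstacle will be the careful verification that operadic left Kan extension upgrades the underlying Kan extension to a genuine map of operads in the non-symmetric setting; this is a technical adaptation of the symmetric arguments requiring careful bookkeeping of the inert/active factorization of morphisms in $\Ass$.
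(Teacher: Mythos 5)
Your construction of $\Ind_\kappa(\mV)^\ot$ as a full suboperad of the Day convolution on presheaves of the monoidal envelope is essentially the paper's construction, and your treatment of (1), (5) and the universal property in (2)--(3) is broadly in line with it. One remark on (2): the paper does not perform an operadic Kan extension directly into $\mW^\ot$; it applies its Kan-extension result (Proposition \ref{laan}) with target $\widehat{\mP}(\Env(\mW))^\ot$, where the target is presentably bitensored, and then descends through a chain of localizations to land in $\mW^\ot$. The step you defer as a "technical adaptation" --- upgrading the colimit-extended underlying functor to a coherent map of operads when $\mW^\ot$ is merely an operad compatible with small colimits rather than a monoidal $\infty$-category --- is precisely what that detour is designed to avoid, so you should not expect it to be routine bookkeeping.

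The genuine gap is part (4). Your argument rests on the claim that $\Env(\mV)^\ot$ is compatible with $\kappa$-small colimits whenever $\mV^\ot$ is; this is false. The monoidal envelope is the free monoidal $\infty$-category on $\mV^\ot$: its underlying $\infty$-category consists of formal tensor words $\V_1 \ot \cdots \ot \V_\n$ with morphisms governed by active maps in $\Ass$, and it does not admit $\kappa$-small colimits in general. Moreover, even granting that $\Ind_\kappa(\Env(\mV))^\ot$ were compatible with small colimits, the full suboperad $\Ind_\kappa(\mV)^\ot$ would not inherit this: $\Ind_\kappa(\mV)$ is not closed under small colimits or under the Day convolution tensor product inside $\mP(\Env(\mV))$, so its own colimits are computed only after localizing, and the multimorphism presheaves of the full suboperad need not preserve the corresponding limits. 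What is actually needed --- and what the paper proves --- is that the embedding $\Ind_\kappa(\mV)^\ot \subset \mP(\mV)^\ot$ admits a left adjoint relative to $\Ass$. The essential verification is that for a $\kappa$-small diagram $\rH$ in $\mV$, objects $\V_1,\dots,\V_\n$ and $\Y \in \Ind_\kappa(\mV)$, the map $\Mul(\V_1,\dots,\colim \rH,\dots,\V_\n;\Y) \to \lim \Mul(\V_1,\dots,\rH(-),\dots,\V_\n;\Y)$ is an equivalence; one writes $\Y$ as a $\kappa$-filtered colimit of objects of $\mV$ and uses that $\kappa$-filtered colimits commute with $\kappa$-small limits in spaces to reduce to $\Y \in \mV$, where the compatibility hypothesis on $\mV^\ot$ applies. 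This commutation argument is the heart of (4) and is absent from your proposal.
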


\begin{proof} By \cite[Corollary 8.31.]{HEINE2023108941} for every small monoidal $\infty$-category $\mV^\ot \to \Ass$ there is a monoidal $\infty$-category $\Ind_\kappa(\mV)^\ot \to \Ass $ compatible with small $\kappa$-filtered colimits and a monoidal embedding $\mV^\ot \to \Ind_\kappa(\mV)^\ot$ inducing the embedding $\mV \to \Ind_\kappa(\mV)$ on underlying $\infty$-categories such that for every monoidal $\infty$-category $\mW^\ot \to \Ass$ compatible with small colimits condition (2) holds.
We set $\mP(\mV)^\ot:=\Ind_\emptyset(\mV)^\ot$ and have a monoidal embedding
$\mV^\ot \to \mP(\mV)^\ot$ lying over the Yoneda-embedding.

(1): Let $\mV^\ot \to \Ass$ be a small $\infty$-operad and $\Ind_\kappa(\mV)^\ot \subset \mP(\Env(\mV))^\ot $ the full suboperad spanned by  the essential image of the embedding $\Ind_\kappa(\mV)\subset \mP(\mV)\subset \mP(\Env(\mV))$.
Then $\Ind_\kappa(\mV)^\ot \to \Ass$ is compatible with small $\kappa$-filtered colimits since $\mP(\Env(\mV))^\ot \to \Ass$ is a monoidal $\infty$-category compatible with small colimits and the embedding $\Ind_\kappa(\mV) \subset \mP(\Env(\mV))$ preserves small $\kappa$-filtered colimits. The embedding of $\infty$-operads $\mV^\ot \subset \Env(\mV)^\ot \to \mP(\Env(\mV))^\ot$ induces an embedding $\mV^\ot \to \Ind_\kappa(\mV)^\ot$.

(5): We prove that if $\mV$ is a small monoidal $\infty$-category, both definitions of
$\Ind_\kappa(\mV)$ agree:
the localization $\Env(\mV)^\ot \to \mV^\ot$ relative to $\Ass$ of Lemma \ref{looocx} induces a localization $\mP(\Env(\mV))^\ot \to \mP(\mV)^\ot$ relative to $\Ass$ that exhibits $\mP(\mV)^\ot$ as a full suboperad of $\mP(\Env(\mV))^\ot$.
The monoidal embedding $\mV^\ot \to \mP(\mV)^\ot$ uniquely extends to a monoidal functor $ \Ind_\kappa(\mV)^\ot \to \mP(\mV)^\ot $ that preserves small $\kappa$-filtered colimits and so induces on underlying $\infty$-categories the canonical embedding.
So both definitions agree. 

(4): Let $\mV^\ot \to \Ass$ be a small $\infty$-operad compatible with 
$\kappa$-small colimits. We show that the embedding \begin{equation}\label{oof}
\Ind_\kappa(\mV)^\ot \subset \mP(\mV)^\ot:=\Ind_\emptyset(\mV)^\ot \end{equation} admits a left adjoint relative to $\Ass$. This implies that $\Ind_\kappa(\mV)^\ot \to \Ass$ is compatible with small colimits.
In view of Remark \ref{Indd} it is enough to see that for every $\kappa$-small
$\infty$-category $\K$, functor $\rH: \K \to \mV$ and objects $\V_1,...,\V_\n \in \mP\Env(\mV)$ for $\n\geq 0$ and $\Y \in \Ind_\kappa(\mV)$ and $1 \leq \bi \leq \n$
the induced map
$$\theta: \Mul_{\mP\Env(\mV)}(\V_1,...,\V_\bi,\colim(\rH),\V_{\bi+1},...,\V_\n;\Y) \to \lim \Mul_{\mP\Env(\mV)}(\V_1,...,\V_\bi, \rH(-),\V_{\bi+1},...,\V_\n;\Y) $$
is an equivalence.
Since $\mP\Env(\mV)^\ot \to \Ass$ is a monoidal $\infty$-category compatible with small colimits and $\mP\Env(\mV)$ is generated under small colimits by tensor products of objects of $\mV$, we can moreover assume that $\V_1,...,\V_\n \in \mV.$
By \cite[Corollary 5.3.5.4.]{lurie.HTT} the object $\Y$ of $\Ind_\kappa(\mV)$ is the $\kappa$-filtered colimit of a 
functor $\lambda$ taking values in $\mV$, which is preserved by the $\kappa$-filtered colimits preserving embedding $\Ind_\kappa(\mV) \subset \mP(\mV)$ and the small colimits preserving embedding $\mP(\mV) \subset \mP\Env(\mV).$
The map $\theta$ factors as
$$\Mul_{\mP\Env(\mV)}(\V_1,...,\V_\bi,\colim(\rH),\V_{\bi+1},...,\V_\n;\Y) \simeq$$$$ \colim\Mul_{\mP\Env(\mV)}(\V_1,...,\V_\bi,\colim(\rH),\V_{\bi+1},...,\V_\n;\lambda(-))\to $$$$\colim(\lim \Mul_{\mP\Env(\mV)}(\V_1,...,\V_\bi, \rH(-),\V_{\bi+1},...,\V_\n;\lambda(-))) \to $$$$\lim(\colim \Mul_{\mP\Env(\mV)}(\V_1,...,\V_\bi, \rH(-),\V_{\bi+1},...,\V_\n;\lambda(-))) \simeq$$$$ \lim \Mul_{\mP\Env(\mV)}(\V_1,...,\V_\bi, \rH(-),\V_{\bi+1},...,\V_\n;\Y).$$
The third map in the composition commuting colimit with limit is an equivalence
since $\kappa$-filtered colimits commute with $\kappa$-small limits in the $\infty$-category of spaces.  
So for proving that $\theta$ is an equivalence we can assume that $\Y \in \mV.$
In this case $\theta$ is an equivalence since $\mV^\ot \to \Ass$ is a small $\infty$-operad compatible with $\kappa$-small colimits.
	
(2): Let $\mW^\ot \to \Ass$ be an $\infty$-operad compatible with small colimits.
The following functors $$\Alg_{\Ind_\kappa(\Env(\mV))}(\widehat{\mP}(\Env(\mW)))\to \Alg_{\Ind_\kappa(\mV)}(\widehat{\mP}(\Env(\mW))), $$$$ \Alg_{\Ind_\kappa(\mV)}(\widehat{\mP}(\Env(\mW)))  \to \Alg_{\mV}(\widehat{\mP}(\Env(\mW)))$$ admit fully faithful left adjoints $\lambda, \tau$, respectively, by Proposition \ref{laan}.
So $\tau$ factors as $\lambda \circ \tau$ followed by the first functor. 
By Proposition \ref{envv} and what we have proven the functor $\lambda \circ \tau$ lands in the full subcategory of monoidal functors preserving small $\kappa$-filtered colimits. Thus $\tau$ takes values in the full subcategory of maps of $\infty$-operads preserving small $\kappa$-filtered colimits and therefore sends 
$\Alg_{\mV}(\widehat{\mP}(\mW))$ to $\Alg_{\Ind_\kappa(\mV)}(\widehat{\mP}(\mW))$
since the embedding $\widehat{\mP}(\mW) \subset \widehat{\mP}(\Env(\mW))$ preserves small colimits and $\Ind_\kappa(\mV)$ is generated by $\mV$ under small $\kappa$-filtered colimits.
Consequently, $\tau$ restricts to a left adjoint $\tau'$ of the induced functor 
\begin{equation}\label{ooo}\Alg_{\Ind_\kappa(\mV)}(\widehat{\mP}(\mW)) \to \Alg_{\mV}(\widehat{\mP}(\mW)),\end{equation}
which is fully faithful and lands in the full subcategory of maps of $\infty$-operads preserving small $\kappa$-filtered colimits.
Let $\sigma$ be the strongly inaccessible cardinal corresponding to the small universe.
The localization $\widehat{\Ind}_\sigma(\mW)^\ot \subset \widehat{\mP}(\mW)^\ot$ relative to $\Ass$ of (\ref{oof}) (applied to $\mW^\ot$ and $\sigma$) induces two localizations $\Alg_{\Ind_\kappa(\mV)}(\widehat{\Ind}_\sigma(\mW)) \subset \Alg_{\Ind_\kappa(\mV)}(\widehat{\mP}(\mW)), \Alg_{\mV}(\widehat{\Ind}_\sigma(\mW)) \subset \Alg_{\mV}(\widehat{\mP}(\mW)).$
The composed adjunction $$ \Alg_{\mV}(\widehat{\mP}(\mW)) \rightleftarrows \Alg_{\Ind_\kappa(\mV)}(\widehat{\mP}(\mW))
\rightleftarrows \Alg_{\Ind_\kappa(\mV)}(\widehat{\Ind}_\sigma(\mW))$$
restricts to an adjunction
$ \phi: \Alg_{\mV}(\widehat{\Ind}_\sigma(\mW)) \rightleftarrows \Alg_{\Ind_\kappa(\mV)}(\widehat{\Ind}_\sigma(\mW)).$
The left adjoint $\phi$ lands in the full subcategory of maps of $\infty$-operads preserving small $\kappa$-filtered colimits. 
The functor (\ref{ooo}) preserves local equivalences for the respective object-wise localizations so that the unit of the latter adjunction is a local equivalence
between local objects and so an equivalence. Hence $\phi$ is fully faithful.

Since $\mW$ is closed in $\widehat{\Ind}_\sigma(\mW)$ under small colimits and $\Ind_\kappa(\mV)$ is generated by $\mV$ under small $\kappa$-filtered colimits,
the adjunction $ \phi: \Alg_{\mV}(\widehat{\Ind}_\sigma(\mW)) \rightleftarrows \Alg_{\Ind_\kappa(\mV)}(\widehat{\Ind}_\sigma(\mW))$ restricts to an adjunction
$ \Alg_{\mV}(\mW) \rightleftarrows \Alg_{\Ind_\kappa(\mV)}(\mW),$
where the left adjoint lands in the full subcategory of maps of $\infty$-operads preserving small $\kappa$-filtered colimits. (3) follows from \cite[Proposition 5.5.1.9.]{lurie.HTT}.
\end{proof}

\begin{proposition}\label{cool} Let $\kappa$ be a small regular cardinal and $\mM^\circledast \to \mV^\ot \times \mW^\ot$ an absolute small weakly bienriched $\infty$-category. 
	
\begin{enumerate}

\item There is a weakly bienriched $\infty$-category $\Ind_\kappa(\mM)^\circledast \to \Ind_\kappa(\mV)^\ot \times \Ind_\kappa(\mW)^\ot$ compatible with small $\kappa$-filtered colimits and a $\mV,\mW$-enriched embedding \begin{equation}\label{jett}
\mM^\circledast \to \mV^\ot \times_{\Ind_\kappa(\mV)^\ot} \Ind_\kappa(\mM)^\circledast \times_{\Ind_\kappa(\mW)^\ot} \mW^\ot\end{equation}  inducing the embedding $\mM \to \Ind_\kappa(\mM)$ on underlying $\infty$-categories.
\item For every weakly bienriched $\infty$-category $\mN^\circledast \to \Ind_\kappa(\mV)^\ot \times \Ind_\kappa(\mW)^\ot$ compatible with small colimits the functor
$$
\Enr\Fun_{\Ind_\kappa(\mV),\Ind_\kappa(\mW)}(\Ind_\kappa(\mM), \mN) \to \Enr\Fun_{\mV,\mW}(\mM,\mN)$$
admits a fully faithful left adjoint that lands in the full subcategory $$\Enr\Fun^{\kappa-\mathrm{fil}}_{\Ind_\kappa(\mV),\Ind_\kappa(\mW)}(\Ind_\kappa(\mM), \mN) $$
of enriched functors preserving small $\kappa$-filtered colimits.
So the next functor is an equivalence:
\begin{equation}\label{exol2}\Enr\Fun^{\kappa-\mathrm{fil}}_{\Ind_\kappa(\mV),\Ind_\kappa(\mW)}(\Ind_\kappa(\mM), \mN) \to \Enr\Fun_{\mV,\mW}(\mM,\mN).\end{equation}
\item If $\mM$ admits $\kappa$-small colimits, equivalence (\ref{exol2}) restricts to an equivalence
$$\Enr\Fun^{\L}_{\Ind_\kappa(\mV),\Ind_\kappa(\mW)}(\Ind_\kappa(\mM), \mN) \to \Enr\Fun^\kappa_{\mV,\mW}(\mM,\mN).$$
\item If $\mM^\circledast \to \mV^\ot \times \mW^\ot$ is compatible with $\kappa$-small colimits, $\Ind_\kappa(\mM)^\circledast \to \Ind_\kappa(\mV)^\ot \times \Ind_\kappa(\mW)^\ot$ is compatible with small colimits.

\item If $\mM^\circledast \to \mV^\ot \times \mW^\ot$ is a left tensored $\infty$-category, $\Ind_\kappa(\mM)^\circledast \to \Ind_\kappa(\mV)^\ot \times \Ind_\kappa(\mW)^\ot$ is a left tensored $\infty$-category, (\ref{jett}) is left linear
and for every left tensored $\infty$-category $\mN^\circledast \to \Ind_\kappa(\mV)^\ot \times \Ind_\kappa(\mW)^\ot$ compatible with small colimits 
the functor (\ref{exol2}) reflects left linear functors.

\end{enumerate}

\end{proposition}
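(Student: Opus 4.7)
The plan is to mirror the proof of Proposition~\ref{presta} by using the bitensored envelope of Proposition~\ref{bitte} to reduce the bi-enriched case to the operadic case already handled. First I would construct $\Ind_\kappa(\mM)^\circledast$ as a full weakly-bi-enriched subcategory of the presheaf-bitensored $\infty$-category on the bitensored envelope. Concretely, specializing statement (4) to $\kappa=\emptyset$ produces a presentably bitensored $\infty$-category $\mP\B\Env(\mM)^\circledast \to \mP\Env(\mV)^\ot \times \mP\Env(\mW)^\ot$ compatible with small colimits. I would then define $\Ind_\kappa(\mM)^\circledast$ as the full weakly-bi-enriched subcategory of the pullback
$$\mP\B\Env(\mM)^\circledast \times_{\mP\Env(\mV)^\ot \times \mP\Env(\mW)^\ot}(\Ind_\kappa(\mV)^\ot \times \Ind_\kappa(\mW)^\ot) $$
whose underlying $\infty$-category is the closure $\Ind_\kappa(\mM)$ of $\mM \subset \mP\B\Env(\mM)$ under small $\kappa$-filtered colimits, with the embedding~(\ref{jett}) induced by $\mM^\circledast \subset \B\Env(\mM)^\circledast \subset \mP\B\Env(\mM)^\circledast$. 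Compatibility with small $\kappa$-filtered colimits in~(1) is automatic because these commute with $\kappa$-small limits in spaces, and~(4) follows by adapting the localization argument of Proposition~\ref{presta}(4): the embedding $\Ind_\kappa(\mM)^\circledast \subset \mP\B\Env(\mM)^\circledast$ (restricted over $\Ind_\kappa(\mV)^\ot \times \Ind_\kappa(\mW)^\ot$) admits an enriched left adjoint provided the multi-morphism space functors on $\mM^\circledast$ preserve $\kappa$-small limits, which is exactly the compatibility hypothesis.

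The universal property~(2) is the heart of the argument, and I would prove it by chaining three adjunctions. Given $\mN^\circledast \to \Ind_\kappa(\mV)^\ot \times \Ind_\kappa(\mW)^\ot$ compatible with small colimits, Proposition~\ref{bitte}(4) gives the first equivalence
$$\Enr\Fun_{\mV,\mW}(\mM, \mN) \simeq \LinFun_{\Env(\mV), \Env(\mW)}(\B\Env(\mM), \mN).$$
Passing to presheaves, the universal property of the free cocompletion together with the bitensored analogue of Proposition~\ref{presta}(2) identifies linear functors out of $\B\Env(\mM)$ with small-colimit-preserving linear functors out of $\mP\B\Env(\mM)$, which in turn restrict to $\kappa$-filtered-colimit-preserving enriched functors out of $\Ind_\kappa(\mM)$ over $\Ind_\kappa(\mV)^\ot \times \Ind_\kappa(\mW)^\ot$. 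The fully faithfulness of the resulting left adjoint follows the same pattern as in Proposition~\ref{presta}(2), using the intermediate passage through $\widehat{\Ind}_\sigma$ to show that the unit is a local equivalence between local objects. Part~(3) then follows from~(2) combined with Lemma~\ref{Adj} and its right-tensored dual, identifying left adjoint functors with those preserving $\kappa$-small conical colimits; part~(5) follows because both the envelope construction and $\Ind_\kappa$ preserve cocartesianness over the first factor of $\Ass \times \Ass$ that encodes left tensoring, and the corresponding local objects are monoidal localizations.

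The main obstacle I anticipate is the fully faithfulness step inside part~(2). In Proposition~\ref{presta}(2) this required an intermediate passage through $\widehat{\Ind}_\sigma(\mW)$, a careful argument that the object-wise localization preserves local equivalences, and the observation that the induced map of local objects is therefore an equivalence. In the bi-enriched setting the same argument must be executed over two independent copies of $\Ass$ simultaneously, and one must track cocartesian lifts of inert morphisms whose first component preserves the maximum and whose second component preserves the minimum. Once this bookkeeping is in place, the proof transfers essentially verbatim via the equivalences supplied by Proposition~\ref{bitte} and Proposition~\ref{laan}, since these precisely translate enriched functoriality to linear functoriality on the bitensored envelope.
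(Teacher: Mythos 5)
Your treatment of (1)--(4) follows essentially the same route as the paper: define $\Ind_\kappa(\mM)^\circledast$ as the full weakly bi-enriched subcategory of (the appropriate pullback of) $\mP\B\Env(\mM)^\circledast$ spanned by the closure of $\mM$ under small $\kappa$-filtered colimits, and then transport the adjunction arguments of Proposition~\ref{presta} through the bitensored envelope of Proposition~\ref{bitte}. That part is sound.

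There is, however, a genuine gap in your one-sentence disposal of part~(5). You claim that $(5)$ follows "because both the envelope construction and $\Ind_\kappa$ preserve cocartesianness over the first factor of $\Ass\times\Ass$ that encodes left tensoring." This premise is false: when $\mM^\circledast\to\mV^\ot\times\mW^\ot$ is already left tensored, the embedding $\mM^\circledast\subset\B\Env(\mM)^\circledast$ is \emph{not} left linear, because the envelope freely adjoins a new formal left tensor $\V\ot\X$ that is distinct from the left tensor already present in $\mM$. Consequently one cannot read off a left $\mV$-action on $\Ind_\kappa(\mM)\subset\mP\B\Env(\mM)$ from any "preservation of cocartesianness"; the whole difficulty of $(5)$ is to collapse the freely added left tensors back onto the existing ones. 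The paper does this by reducing to $\kappa=\emptyset$, invoking Lemma~\ref{loccyy} to produce the enriched localization $\mP\B\Env(\mM)^\circledast\to\mP\R\Env(\mM)^\circledast$ onto the presheaves on the \emph{right} tensored envelope (which retains the genuine left tensors of $\mM$), verifying that its enriched right adjoint is an embedding, and then checking that $\mM\subset\mP\R\Env(\mM)$ is closed under the left $\mV$-action because the embeddings $\mM^\circledast\subset\R\Env(\mM)^\circledast$ and $\R\Env(\mM)^\circledast\subset\mV^\ot\times_{\mP(\mV)^\ot}\mP\R\Env(\mM)^\circledast$ are left $\mV$-linear. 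Without some version of this collapsing step your argument does not establish that $(\ref{jett})$ is left linear, nor that $\Ind_\kappa(\mM)^\circledast$ is left tensored; you would also still owe an argument for the final clause of $(5)$, that the equivalence $(\ref{exol2})$ reflects left linear functors.
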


\begin{proof}
One proves (1)-(4) completely analogous to Proposition \ref{presta}.
By \cite[Corollary 8.31.]{HEINE2023108941} for every absolute small weakly bitensored $\infty$-category $\mM^\circledast \to \mV^\ot \times \mW^\ot $ there is a bitensored $\infty$-category $\Ind_\kappa(\mM)^\circledast \to \Ind_\kappa(\mV)^\ot \times \Ind_\kappa(\mW)^\ot $ compatible with small $\kappa$-filtered colimits and a linear embedding $\mM^\circledast \to \Ind_\kappa(\mM^\circledast$ inducing the embedding $\mM \to \Ind_\kappa(\mM)$ on underlying $\infty$-categories such that for every bitensored $\infty$-category $\mN^\circledast \to \mV^\ot \times \mW^\ot$ compatible with small colimits condition (2) holds.
We set $\mP(\mM)^\circledast:=\Ind_\emptyset(\mM)^\circledast \to \mP(\mV)^\ot \times \mP(\mW)^\ot$ and have a monoidal embedding $\mM^\circledast \to \mP(\mM)^\circledast$ lying over the Yoneda-embedding.

Let $\mM^\circledast \to \mV^\ot \times \mW^\ot$ be an absolute small weakly bienriched $\infty$-category. Let $\Ind_\kappa(\mM)^\circledast \subset \mP(\Env(\mV))^\ot $ be the full suboperad spanned by the essential image of the embedding $\Ind_\kappa(\mM)\subset \mP(\mM)\subset \mP(\B\Env(\mM))$.
The linear embedding $\mM^\circledast \subset \B\Env(\mM)^\circledast \to \mP(\B\Env(\mM))^\circledast$ induces an embedding $\mM^\circledast \to \Ind_\kappa(\mM)^\circledast$. This proves (1). The proofs of (2)-(4) are analogous to Proposition \ref{presta}.
We prove (5): we prove that if $\mM^\circledast \to \mV^\ot \times \mW^\ot$ is a left tensored $\infty$-category, $\Ind_\kappa(\mM)^\circledast \to \Ind_\kappa(\mV)^\ot \times \Ind_\kappa(\mW)^\ot$ is a left tensored $\infty$-category and (\ref{jett}) is left linear.
It is enough to assume that $\kappa=\emptyset$ because $\Ind_\kappa(\mM)$ is generated by $\mM$ under small $\kappa$-filtered colimits, which are preserved by the embedding
$\Ind_\kappa(\mM) \subset \mP(\mM)$, so that the left action on $\mP(\mM)$ compatible with small colimits restricts to $\Ind_\kappa(\mM).$ 
So let $\kappa=\emptyset.$

By Proposition \ref{loccyy} the embedding $\R\Env(\mM)^\circledast \subset \B\Env(\mM)^\circledast$ admits an $\Env(\mW)$-linear enriched left adjoint
lying over the left adjoint 
of the embedding $\mV^\ot \subset \Env(\mV)^\ot$ of Lemma \ref{looocx}. 
By (2) there is a right $\mP(\Env(\mW))$-linear enriched left adjoint $\phi: \mP(\B\Env(\mM))^\circledast \to \mP(\R\Env(\mM))^\circledast$
lying over the monoidal functor $\mP(\Env(\mV))^\ot \to \mP(\mV)^\ot$,
whose underlying functor 
is left adjoint to the embedding $\mP(\R\Env(\mM)) \subset \mP(\B\Env(\mM))$. 
Thus the enriched right adjoint of $\phi$ is an embedding. 
The embedding $\mP(\mM)^\circledast \subset \mP\B\Env(\mM)^\circledast$
lands in $\mP\R\Env(\mM)^\circledast$ 
and we get a left $\mP(\mV)$-enriched embedding $\mM^\circledast \subset \mP\R\Env(\mM)^\circledast$ lying over 
$\mP(\mW)^\ot \subset \mP\Env(\mW)^\ot$. Since $\mP\R\Env(\mM)^\circledast \to \mP(\mV)^\ot \times \mP\Env(\mW)^\ot$ is a bitensored $\infty$-category compatible with small colimits and the embedding $\mP(\mM) \subset \mP\R\Env(\mM)$ preserves small colimits, it is enough to see that $\mP(\mM)\subset \mP\R\Env(\mM)$ is closed under the left $\mP(\mV)$-action. 
As this left action is compatible with small colimits, it is enough to see that $\mM \subset \mP\R\Env(\mM)$ is closed under the left $\mV$-action. 
This holds because the embeddings $\R\Env(\mM)^\circledast \subset \mV^\ot \times_{\mP(\mV)^\ot}\mP\R\Env(\mM)^\circledast$ and $\mM^\circledast \subset \R\Env(\mM)^\circledast$ are left $\mV$-linear by Propositions \ref{Day} and \ref{loccyy}.
It follows that $\mP(\mM)^\circledast \to \mP(\mV)^\ot \times \mP(\mW)^\ot$ is a left tensored $\infty$-category compatible with small colimits and the embedding
$\mP(\mM)^\circledast \subset \mP\R\Env(\mM)^\circledast$ is left $\mP(\mV)$-linear.
Since the embeddings $\mM^\circledast \subset \R\Env(\mM)^\circledast$ and $\R\Env(\mM)^\circledast \subset \mV^\ot \times_{\mP(\mV)^\ot} \mP\R\Env(\mM)^\circledast$ are left $\mV$-linear, the embedding $\mM^\circledast \subset \mV^\ot \times_{\mP(\mV)^\ot} \mP(\mM)^\circledast$ is left $\mV$-linear, too.
	
\end{proof}


\begin{notation}
Let  $\mM^\circledast \to \mV^\ot \times \mW^\ot$ be an absolute small weakly bienriched $\infty$-category. Let $$\mP(\mV)^\ot:=\Ind_\emptyset(\mV)^\ot, \hspace{4mm}
\mP(\mM)^\circledast:= \Ind_\emptyset(\mM)^\circledast \to \mP(\mV)^\ot \times \mP(\mW)^\ot.$$

\end{notation}

\begin{notation}\label{zgbbl}Let $\mV^\ot \to \Ass, \mW^\ot \to \Ass$ be $\infty$-operads and $\mM^\circledast \to \mV^\ot \times \mW^\ot$ a weakly bienriched $\infty$-category.

\begin{enumerate}
\item We call $$\mP\Env(\mV)^\ot:=\mP(\Env(\mV))^\ot \to \Ass$$ the closed monoidal envelope of $\mV^\ot \to \Ass$.

\item We call $$\mP\L\Env(\mM)^\circledast := \mP(\L\Env(\mM))^\circledast \to \mP\Env(\mV)^\ot \times \mP(\mW)^\ot$$ the closed left tensored envelope.

\item We call $$\mP\R\Env(\mM)^\circledast := \mP(\R\Env(\mM))^\circledast \to \mP(\mV)^\ot \times \mP\Env(\mW)^\ot$$ the closed right tensored envelope.

\item We call $$\mP\B\Env(\mM)^\circledast := \mP(\B\Env(\mM))^\circledast \to \mP\Env(\mV)^\ot \times \mP\Env(\mW)^\ot$$ the closed bitensored envelope.

\end{enumerate}
\end{notation}

\begin{remark}By Proposition \ref{cool} the closed left tensored, right tensored, bitensored
envelopes 
are presentably left tensored, right tensored, bitensored $\infty$-categories.
	
\end{remark}

Propositions \ref{presta}, \ref{cool} and \ref{bitte} give the following corollary:

\begin{corollary}\label{envvcor} 
	
\begin{enumerate}
\item Let $\mV^\ot \to \Ass$ be a small $\infty$-operad. For every monoidal $\infty$-category $\mW^\ot \to \Ass$ compatible with small colimits
the induced functor $$\Alg_{\mP\Env(\mV)}(\mW) \to \Alg_{\mV}(\mW)$$
admits a fully faithful left adjoint that lands in the full subcategory 
of monoidal functors that preserve small colimits. In particular, the following induced functor is an equivalence:
$$\Fun^{\ot,\L}(\mP\Env(\mV), \mW) \to \Alg_{\mV}(\mW).$$


\item Let $\mM^\circledast \to \mV^\ot \times \mW^\ot$ be a weakly bienriched $\infty$-category. For every bitensored $\infty$-category $\mN^\circledast \to\mP\Env(\mV)^\ot \times \mP\Env(\mW)^\ot $ compatible with small colimits the induced functor $$\Enr\Fun_{\mP\Env(\mV),\mP\Env(\mW)}(\mP\B\Env(\mM), \mN) \to \Enr\Fun_{\mV,\mW}(\mM, \mN)$$
admits a fully faithful left adjoint that lands in the full subcategory 
of $\mP\Env(\mV), \mP\Env(\mW)$-linear functors preserving small colimits. In particular, the following functor is an equivalence: $$\LinFun^\L_{\mP\Env(\mV),\mP\Env(\mW)}(\mP\B\Env(\mM), \mN) \to \Enr\Fun_{\mV,\mW}(\mM, \mN).$$

\end{enumerate}	

\end{corollary}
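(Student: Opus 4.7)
The plan is to derive both statements by concatenating two fully faithful left adjoints: the tensored envelope adjunction (Propositions \ref{envv} and \ref{bitte}) and the free-cocompletion adjunction of Propositions \ref{presta} and \ref{cool} applied at $\kappa=\emptyset$. At this value $\Ind_\emptyset=\mP$, every small $\infty$-category is vacuously $\emptyset$-filtered, so ``preserving $\emptyset$-filtered colimits'' reduces to ``preserving all small colimits.''

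For (1), I will compose the fully faithful left adjoint
$$\lambda_1 \colon \Alg_{\mV}(\mW) \hookrightarrow \Alg_{\Env(\mV)}(\mW)$$
of Proposition \ref{envv}(1), whose image lies in $\Fun^{\ot}(\Env(\mV),\mW)$, with the fully faithful left adjoint
$$\lambda_2 \colon \Alg_{\Env(\mV)}(\mW) \hookrightarrow \Alg_{\mP\Env(\mV)}(\mW)$$
obtained from Proposition \ref{presta}(2) applied to the small monoidal $\infty$-category $\Env(\mV)^\ot$ with $\kappa=\emptyset$; its essential image consists of maps of operads preserving all small colimits. To certify that $\lambda_2\circ\lambda_1$ factors through $\Fun^{\ot,\L}(\mP\Env(\mV),\mW)$, I will invoke Proposition \ref{presta}(5): restricting $\lambda_2(\lambda_1(A))$ along $\Env(\mV)^\ot\subset\mP\Env(\mV)^\ot$ recovers $\lambda_1(A)$, which is monoidal, and since (\ref{ejjt}) reflects monoidality, $\lambda_2(\lambda_1(A))$ is monoidal. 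The two conclusions of (1) then follow from fully faithfulness of $\lambda_2\circ\lambda_1$ together with this identification of its essential image.

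For (2) the argument is entirely parallel, replacing $\Env(\mV)^\ot$ by the totally small bitensored $\infty$-category $\B\Env(\mM)^\circledast$ and invoking the enriched analogues. Concretely, I will compose the fully faithful left adjoint
$$\Enr\Fun_{\mV,\mW}(\mM,\mN) \hookrightarrow \Enr\Fun_{\Env(\mV),\Env(\mW)}(\B\Env(\mM),\mN)$$
from Proposition \ref{bitte}(4), which lands in $\Env(\mV),\Env(\mW)$-linear functors, with the fully faithful left adjoint
$$\Enr\Fun_{\Env(\mV),\Env(\mW)}(\B\Env(\mM),\mN) \hookrightarrow \Enr\Fun_{\mP\Env(\mV),\mP\Env(\mW)}(\mP\B\Env(\mM),\mN)$$
obtained from Proposition \ref{cool}(2) at $\kappa=\emptyset$. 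Proposition \ref{cool}(5), serving as the linear analogue of Proposition \ref{presta}(5), then certifies that the composite takes values in $\mP\Env(\mV),\mP\Env(\mW)$-linear functors preserving small colimits.

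The chief obstacle is purely bookkeeping: at each junction one must verify that the essential image of the relevant left adjoint sits inside the correct subcategory of monoidal, linear, or colimit-preserving functors, which is precisely the content of parts (4)--(5) of Propositions \ref{presta} and \ref{cool}. A minor caveat in (2) is that Proposition \ref{cool} requires its source to be totally small, so the assertion should be read either with that smallness hypothesis in force on $\mM^\circledast$ or after a universe enlargement under which the statement is unchanged.
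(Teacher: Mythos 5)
Your proposal is correct and is exactly the paper's intended argument: the paper derives this corollary by composing the tensored-envelope adjunctions of Propositions \ref{envv}/\ref{bitte} with the $\Ind_\kappa$-adjunctions of Propositions \ref{presta}/\ref{cool} at $\kappa=\emptyset$, using parts (3)--(5) of the latter to identify the essential image with the (linear) monoidal small-colimit-preserving functors. Your caveat about total smallness of $\mM^\circledast$ in (2) is a fair reading of an implicit hypothesis in the paper.
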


\begin{remark}\label{switch}
By the universal property of Corollary \ref{envvcor} (2) there is a canonical equivalence
$ \mP\B\Env(\mM^\rev)^\circledast \simeq (\mP\B\Env(\mM)^\rev)^\circledast$
that restricts to an equivalence
$ \mP\L\Env(\mM^\rev)^\circledast \simeq (\mP\R\Env(\mM)^\rev)^\circledast.$
	
\end{remark}

Remark \ref{envdecom} gives the following one:
\begin{corollary}\label{envdecom2}
Let $\mM^\circledast \to \mV^\ot \times \mW^\ot$ be a weakly bienriched $\infty$-category. There is an equivalence
$$\mP\L\Env(\R\Env(\mM))^\circledast \simeq \mP\B\Env(\mM)^\circledast$$
of $\infty$-categories presentably bitensored over $\mP\Env(\mV),\mP\Env(\mW).$	
\end{corollary}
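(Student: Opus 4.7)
The plan is to reduce this to the already-established equivalence of bitensored $\infty$-categories in Remark \ref{envdecom} by applying the functorial free-cocompletion construction $\mP(-)^\circledast$ of Proposition \ref{cool}.

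First I would unfold the definitions of Notation \ref{zgbbl}: by definition $\mP\L\Env(\R\Env(\mM))^\circledast = \mP(\L\Env(\R\Env(\mM)))^\circledast$, which is an $\infty$-category presentably bitensored over $\mP\Env(\mV)^\ot \times \mP\Env(\mW)^\ot$ (using that $\R\Env(\mM)^\circledast \to \mV^\ot \times \Env(\mW)^\ot$ is right tensored over $\Env(\mW)$, so that by Lemma \ref{loccyy} its left tensored envelope is genuinely bitensored over $\Env(\mV),\Env(\mW)$, and then Proposition \ref{cool}(5) together with its right-tensored analogue yields a presentably bitensored structure after applying $\mP$). On the other side, $\mP\B\Env(\mM)^\circledast = \mP(\B\Env(\mM))^\circledast$ is presentably bitensored over the same $\mP\Env(\mV)^\ot \times \mP\Env(\mW)^\ot$.

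Next I would invoke Remark \ref{envdecom}, which supplies a canonical $\Env(\mV), \Env(\mW)$-linear equivalence of bitensored $\infty$-categories
$$\L\Env(\R\Env(\mM))^\circledast \simeq \B\Env(\mM)^\circledast.$$
Applying the functor $\mP(-)^\circledast$ from Proposition \ref{cool} to this equivalence—which is functorial in enriched equivalences and produces presentably bitensored $\infty$-categories over the free cocompletion of the base operads—yields the desired equivalence
$$\mP\L\Env(\R\Env(\mM))^\circledast \simeq \mP\B\Env(\mM)^\circledast$$
as $\infty$-categories presentably bitensored over $\mP\Env(\mV)^\ot \times \mP\Env(\mW)^\ot$, as both sides are obtained by applying the same functor to equivalent inputs.

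There is essentially no serious obstacle: the content was already extracted in Remark \ref{envdecom}, and the only minor point to verify is that the functoriality of $\mP(-)$ preserves the bitensored structure on the nose, which is immediate from Proposition \ref{cool}(5) applied to both the left and right tensor structures (invoking Remark \ref{switch} to handle the right-tensored side symmetrically).
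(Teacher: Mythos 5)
Your proposal is correct and follows exactly the paper's route: the paper derives this corollary directly from Remark \ref{envdecom} by applying the free cocompletion of Proposition \ref{cool} to the canonical $\Env(\mV),\Env(\mW)$-linear equivalence $\L\Env(\R\Env(\mM))^\circledast \simeq \B\Env(\mM)^\circledast$. The additional checks you mention (presentable bitensoring of both sides via Proposition \ref{cool} and the remark following Notation \ref{zgbbl}) are exactly the points the paper relies on implicitly.
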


Using tensored envelopes we can prove the following refinement of Corollary \ref{kurt}:

\begin{proposition}\label{adjeq}
Let $\mM^\circledast \to \mV^\ot \times\mW^\ot, \mN^\circledast \to \mV^\ot \times\mW^\ot$ be weakly bienriched $\infty$-categories.
There is a canonical equivalence
$$\Enr\Fun^\L_{\mV, \mW}(\mM,\mN)^\op \simeq \Enr\Fun_{\mV,\mW}^\R(\mN,\mM).$$	
\end{proposition}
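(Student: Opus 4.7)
The statement refines Lemma \ref{kurt} from an equivalence on the level of $\infty$-categories of objects to an equivalence incorporating mapping $\infty$-categories of enriched natural transformations. My strategy is to reduce to the case of presentably bi-tensored $\infty$-categories via the closed bi-tensored envelope of Notation \ref{zgbbl}, where the result becomes a consequence of the standard adjoint functor duality.

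The reduction proceeds as follows. Applying Corollary \ref{envvcor}(2) with target $\mP\B\Env(\mN)$, we obtain a canonical equivalence
$$\LinFun^\L_{\mP\Env(\mV),\mP\Env(\mW)}(\mP\B\Env(\mM), \mP\B\Env(\mN)) \simeq \Enr\Fun_{\mV,\mW}(\mM, \mP\B\Env(\mN)|_{\mV,\mW}),$$
and the enriched embedding $\mN^\circledast \subset \mP\B\Env(\mN)|_{\mV,\mW}^\circledast$ exhibits $\Enr\Fun_{\mV,\mW}(\mM, \mN)$ as a full subcategory of the right-hand side. For presentably bi-tensored $\infty$-categories $\mC, \mD$ over $\mP\Env(\mV), \mP\Env(\mW)$, the standard presentable adjoint functor duality, applied to $\mP\Env(\mV), \mP\Env(\mW)$-bi-module categories in $\Pr^\L$, yields a canonical equivalence
$$\LinFun^\L_{\mP\Env(\mV),\mP\Env(\mW)}(\mC, \mD)^\op \simeq \LinFun^\R_{\mP\Env(\mV),\mP\Env(\mW)}(\mD, \mC),$$
sending a colimit-preserving linear functor to its right adjoint and natural transformations to their mates.

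It remains to check that under these identifications the full subcategory $\Enr\Fun^\L_{\mV,\mW}(\mM,\mN)$ corresponds to $\Enr\Fun^\R_{\mV,\mW}(\mN,\mM)$. Given an enriched functor $\F: \mM \to \mN$, the claim is that $\F$ admits a $\mV,\mW$-enriched left adjoint $\LL$ if and only if the right adjoint of the extended linear functor $\bar{\F}: \mP\B\Env(\mM) \to \mP\B\Env(\mN)$ restricts along the embeddings to an enriched functor $\mN \to \mM$, which then recovers $\LL$. The forward direction is obtained by extending an enriched adjunction $\LL \dashv \F$ to an adjunction $\bar{\LL} \dashv \bar{\F}$ of colimit-preserving linear functors: the unit and counit extend uniquely by the universal property of Corollary \ref{envvcor}(2), and the triangle identities, which hold on $\mN \subset \mP\B\Env(\mN)$ and $\mM \subset \mP\B\Env(\mM)$, extend to all of $\mP\B\Env$ by uniqueness. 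A parallel argument for natural transformations, using that the mate of a transformation between such adjunctions on $\mP\B\Env$ restricts compatibly to $\mN, \mM$, identifies the morphism $\infty$-categories. The main obstacle is the fiberwise verification of these adjunction extensions, which requires combining the universal property of the closed bi-tensored envelope with the concrete criteria for enriched adjunctions from Remarks \ref{enrra} and \ref{enradj} and Lemmas \ref{Adj}, \ref{Adj2}.
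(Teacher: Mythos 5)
Your overall strategy --- pass to the closed bi-tensored envelopes via Corollary \ref{envvcor}, invoke an adjoint-functor duality for presentably bi-tensored $\infty$-categories, and then cut down to the full subcategories of functors whose restrictions and adjoints behave well --- is exactly the route the paper takes. The main thing you are citing as ``standard'' is, however, where the real work sits: Lemma \ref{kurt} only supplies an equivalence of \emph{maximal subspaces} $\Enr\Fun^\L(-,-)^\simeq\simeq\Enr\Fun^\R(-,-)^\simeq$, and the content of the proposition is the upgrade to an equivalence of mapping $\infty$-categories. The paper obtains this by testing against an arbitrary small $\K$, using the left adjoint linear equivalence $\mP\B\Env(\K^\op\times\mM)^\circledast\simeq(\mP\B\Env(\mM)^\circledast)^\K$ to rewrite $\Cat_\infty(\K^\op,\Enr\Fun^\L(\ldots))$ as a maximal-subspace computation to which Lemma \ref{kurt} applies. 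If you wish to quote the category-level mate correspondence for module categories in $\Pr^\L$ instead, you need a precise reference for it at that level of generality; as written, this step is asserted rather than proved.

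Second, your identification of the full subcategories has the adjoints pointing the wrong way. The right adjoint of $\bar{\F}\colon\mP\B\Env(\mM)\to\mP\B\Env(\mN)$ restricted to $\mN$ sends $\Y$ to the presheaf $\mP\B\Env(\mN)(\bar{\F}(-),\Y)$, and requiring this to land in $\mM$ is, by Remark \ref{enradj}, the condition that $\F$ admit an enriched \emph{right} adjoint, which it then recovers --- not a left adjoint $\LL$. Your biconditional also contradicts your own extension step, where you correctly produce $\bar{\LL}\dashv\bar{\F}$, so that $\bar{\F}$ is the right adjoint of $\bar{\LL}$ and not conversely. The paper's formulation avoids this: one side of the duality is identified with left adjoint linear functors $\mP\B\Env(\mM)\to\mP\B\Env(\mN)$ carrying $\mM$ into $\mN$ whose right adjoints carry $\mN$ into $\mM$, the other with the mirror-image condition. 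Finally, instead of extending the triangle identities ``by uniqueness'', the clean argument is that $\bar{\LL}$ preserves the generators coming from $\B\Env(\mN)$, so its right adjoint preserves small colimits and is linear (this is where Proposition \ref{cool} enters); since that right adjoint restricts to $\F$ on $\mM$, it coincides with $\bar{\F}$ by the universal property of Corollary \ref{envvcor}, giving $\bar{\LL}\dashv\bar{\F}$ with no fiberwise verification needed.
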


\begin{proof}
We can assume that $\mM^\circledast \to \mV^\ot \times\mW^\ot, \mN^\circledast \to \mV^\ot \times\mW^\ot$ are absolute small. 
Let $\K$ be a small $\infty$-category and $\iota: \mM^\circledast \subset \mP\B\Env(\mM)^\circledast$ the canonical embedding.
The functor $\K^\op \times \K \xrightarrow{\K(-,-)} \mS \xrightarrow{\iota} \Enr\Fun_{\mV,\mW}(\mM,\mP\B\Env(\mM))$
corresponds to a $\mV,\mW$-enriched functor $\K^\op \times \mM^\circledast \to (\mP\B\Env(\mM)^\circledast)^\K$, which by Corollary \ref{envvcor} corresponds to a left adjoint $\mP\Env(\mV),\mP\Env(\mW)$-linear functor 
$$\mP\B\Env(\K^\op \times \mM)^\circledast \to (\mP\B\Env(\mM)^\circledast)^\K.$$
The latter is an equivalence as it induces on underlying $\infty$-categories the canonical equivalence 
$$\mP\B\Env(\K^\op \times \mM) \simeq \mP(\K^\op \times \B\Env(\mM)) \simeq \Fun(\K, \mP\B\Env(\mM)).$$
By Corollary \ref{kurt} 
there is a canonical equivalence
$\Enr\Fun^\L_{\mV, \mW}(\mM,\mN)^\simeq \simeq \Enr\Fun_{\mV,\mW}^\R(\mN,\mM)^\simeq.$
Hence there is a canonical equivalence 
$$\lambda: \Cat_\infty(\K^\op, \Enr\Fun^\L_{\mP\Env(\mV), \mP\Env(\mW)}(\mP\B\Env(\mM),\mP\B\Env(\mN))) \simeq$$$$
\Cat_\infty(\K^\op, \Enr\Fun_{\mV, \mW}(\mM,\mP\B\Env(\mN))) \simeq
\Enr\Fun_{\mV, \mW}(\K^\op \times \mM,\mP\B\Env(\mN))^\simeq\simeq$$$$ \Enr\Fun^\L_{\mP\Env(\mV), \mP\Env(\mW)}(\mP\B\Env(\mM)^\K,\mP\B\Env(\mN))^\simeq\simeq$$$$\Enr\Fun^{\R}_{\mP\Env(\mV), \mP\Env(\mW)}(\mP\B\Env(\mN), \mP\B\Env(\mM)^\K)^\simeq \simeq $$$$
\Cat_\infty(\K, \Enr\Fun^{\R}_{\mP\Env(\mV), \mP\Env(\mW)}(\mP\B\Env(\mN), \mP\B\Env(\mM))).$$
Let $$\Enr\Fun^\L_{\mP\Env(\mV), \mP\Env(\mW)}(\mP\B\Env(\mM),\mP\B\Env(\mN))'\subset \Enr\Fun^\L_{\mP\Env(\mV), \mP\Env(\mW)}(\mP\B\Env(\mM),\mP\B\Env(\mN)),$$$$ \Enr\Fun^{\R}_{\mP\Env(\mV), \mP\Env(\mW)}(\mP\B\Env(\mN), \mP\B\Env(\mM))'\subset \Enr\Fun^{\R}_{\mP\Env(\mV), \mP\Env(\mW)}(\mP\B\Env(\mN), \mP\B\Env(\mM))$$ be the full subcategories of left adjoint enriched functors sending $\mM$ to $\mN$ whose right adjoint sends $\mN$ to $\mM$, of right adjoint enriched functors sending $\mN$ to $\mM$ whose left adjoint sends $\mM$ to $\mN$, respectively.
Corollary \ref{envvcor} implies that the embedding $\mM^\circledast \subset \mP\B\Env(\mM)^\circledast$ induces an equivalence
$$\Enr\Fun^\L_{\mV, \mW}(\mM, \mN) \simeq \Enr\Fun^\L_{\mP\Env(\mV), \mP\Env(\mW)}(\mP\B\Env(\mM), \mP\B\Env(\mN))'.$$
The embedding $ \mN^\circledast \subset \mP\B\Env(\mN)^\circledast$ induces an equivalence
$$\Enr\Fun^{\R}_{\mV, \mW}(\mN, \mM) \simeq \Enr\Fun^{\R}_{\mP\Env(\mV), \mP\Env(\mW)}(\mP\B\Env(\mN), \mP\B\Env(\mM))'$$
by Proposition \ref{cool} since a right adjoint functor between presheaf $\infty$-categories preserves small colimits if the left adjoint preserves representables.
The equivalence $\lambda$ restricts to an equivalence
$$\Cat_\infty(\K^\op, \Enr\Fun^\L_{\mP\Env(\mV), \mP\Env(\mW)}(\mP\B\Env(\mM),\mP\B\Env(\mN))') \simeq $$$$
\Cat_\infty(\K, \Enr\Fun^{\R}_{\mP\Env(\mV), \mP\Env(\mW)}(\mP\B\Env(\mN), \mP\B\Env(\mM))')$$
and we obtain an equivalence 
$$\Cat_\infty(\K^\op, \Enr\Fun^\L_{\mV, \mW}(\mM,\mN)) \simeq \Cat_\infty(\K, \Enr\Fun^\R_{\mV, \mW}(\mN, \mM))$$
representing the desired equivalence $\Enr\Fun^\L_{\mV, \mW}(\mM,\mN)^\op \simeq \Enr\Fun^\R_{\mV, \mW}(\mN, \mM)$. 

\end{proof}


\subsection{Pseudo-enrichment}

Before defining left, right and bienriched $\infty$-categories we define the more general class of left, right and bipseudoenriched $\infty$-categories that contains all tensored $\infty$-categories.

\vspace{1mm}
\begin{definition}\label{Lu}Let $\phi: \mM^\circledast\to \mV^\ot \times \mW^\ot$ be a weakly bienriched $\infty$-category.
	
\begin{enumerate}
\item We say that $\phi$ exhibits $\mM$ as left pseudo-enriched in $\mV$ if $\mV^\ot \to\Ass$ is a monoidal $\infty$-category and for every $\X,\Y \in \mM$ and $\V_1, ..., \V_\n \in \mV, \W_1, ..., \W_\m \in \mW$ for $\n,\m \geq 0$ the map
$$ \Mul_{\mM}(\V_1 \ot ... \ot \V_\n, \X, \W_1, ..., \W_\m; \Y) \to \Mul_{\mM}(\V_1, ..., \V_\n, \X, \W_1, ..., \W_\m ; \Y)$$ 
induced by the active morphism $\V_1, ..., \V_\n \to \V_1 \ot  ... \ot \V_\n $
in $\mV^\ot$ is an equivalence.
\vspace{1mm}

\item We say that $\phi$ exhibits $\mM$ as right pseudo-enriched in $\mW$ if $\mW^\ot \to\Ass$ is a monoidal $\infty$-category and for every $\X,\Y \in \mM$ and $\V_1, ..., \V_\n \in \mV, \W_1, ..., \W_\m \in \mW$ for $\n,\m \geq 0$ the map
$$ \Mul_{\mM}(\V_1, ..., \V_\n, \X, \W_1 \ot ... \ot \W_\m; \Y) \to \Mul_{\mM}(\V_1, ...,\V_\n, \X, \W_1, ..., \W_\m ; \Y)$$ 
induced by the active morphism $\W_1, ..., \W_\m \to \W_1 \ot  ... \ot \W_\m $
in $\mW^\ot$ is an equivalence.

\item We say that $\phi$ exhibits $\mM$ as bipseudo-enriched in $\mV, \mW$ if $\mV^\ot \to\Ass, \mW^\ot \to\Ass $ are monoidal $\infty$-categories and for any $\X,\Y \in \mM$ and $\V_1, ..., \V_\n \in \mV, \W_1, ..., \W_\m \in \mW$ for $\n,\m \geq 0$ the following map is an equivalence:
$$ \Mul_{\mM}(\V_1 \ot ... \ot \V_\n, \X, \W_1 \ot ...\ot \W_\m; \Y) \to \Mul_{\mM}(\V_1, ..., \V_\n, \X, \W_1, ..., \W_\m ; \Y).$$

\end{enumerate}

\end{definition}

\begin{remark}\label{both}
A weakly bienriched $\infty$-category $\mM^\circledast\to \mV^\ot \times \mW^\ot$ exhibits $\mM$ as bipseudo-enriched in $\mV, \mW$ if and only if it exhibits $\mM$ as left pseudo-enriched in $\mV$ and right pseudo-enriched in $\mW$.
	
\end{remark}

	

\begin{lemma}\label{zzzz} 
	
\begin{enumerate}
\item A weakly bienriched $\infty$-category $\mM^\circledast \to \mV^\ot \times \mW^\ot$ is a left tensored $\infty$-category if and only if it is a left pseudo-enriched $\infty$-category and admits left tensors.	

\item A weakly bienriched $\infty$-category $\mM^\circledast \to \mV^\ot \times \mW^\ot$ is a right tensored $\infty$-category if and only if it is a right pseudo-enriched $\infty$-category and admits right tensors.	
	
\item A weakly bienriched $\infty$-category $\mM^\circledast \to \mV^\ot \times \mW^\ot$ is a bitensored $\infty$-category if and only if it is a bipseudo-enriched $\infty$-category and admits bitensors.
\end{enumerate}	
\end{lemma}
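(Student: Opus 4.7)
I will prove (1) directly; (2) follows by applying (1) to the reverse $(\mM^\rev)^\circledast \to (\mW^\rev)^\ot \times (\mV^\rev)^\ot$ of Notation~\ref{invo}; and (3) follows by combining (1) and (2) using Remark~\ref{both} (bi-pseudo-enrichment is the conjunction of left and right pseudo-enrichment) together with Remark~\ref{reuj} (simultaneous existence of left and right tensors is equivalent to the existence of bitensors).

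For the forward direction of (1), suppose $\phi \colon \mM^\circledast \to \mV^\ot \times \mW^\ot$ is left tensored, so that $\phi$ is a map of cocartesian fibrations over $\Ass$ via the first projection. Two distinguished classes of $\phi$-cocartesian lifts deliver the two desired properties. First, the $\phi$-cocartesian lift of the unique active morphism $[\n] \to [1]$ in the first factor of $\Ass$, paired with $\id_{[\m]}$ in the second factor, starting at $(\V_1, \ldots, \V_\n, \X, \W_1, \ldots, \W_\m) \in \mM^\circledast_{[\n][\m]}$, has a cocartesian universal property that, tested against multi-morphisms into arbitrary $\Z \in \mM$, yields exactly the left pseudo-enrichment equivalence of Definition~\ref{Lu}(1). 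Second, the $\phi$-cocartesian lift of the min-preserving inert morphism $[1] \to [0]$ in the first factor (corresponding in $\Delta$ to $0 \mapsto 0$), paired with $\id_{[0]}$ in the second factor, starting at $(\V, \X) \in \mM^\circledast_{[1][0]}$, lands at some $\Y \in \mM^\circledast_{[0][0]} = \mM$; by Notation~\ref{mult}, its cocartesian universal property identifies $\mM(\Y, \Z) \simeq \Mul_\mM(\V, \X; \Z)$ for every $\Z \in \mM$, so $\Y$ corepresents $\Mul_\mM(\V, \X; -)$ and exhibits the left tensor $\V \ot \X$ in the sense of Definition~\ref{Defo}.

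For the reverse direction of (1), assume $\phi$ is left pseudo-enriched and admits left tensors. I must upgrade $\phi$ to a cocartesian fibration over the first factor of $\Ass$; since cocartesian morphisms compose and every morphism in $\Ass$ factors into elementary inert pieces and active pieces (Remark~\ref{heh}), it suffices to construct cocartesian lifts of the elementary inert morphisms $[\n] \to [\n-1]$ and the unique active morphism $[\n] \to [1]$, in each case paired with $\id_{[\m]}$ in the second factor. Max-preserving elementary inert lifts are already supplied by the axioms of weak bi-enrichment. For the remaining min-preserving elementary inert lift, starting at $(\V_1, \ldots, \V_\n, \X, \W_1, \ldots, \W_\m)$, the candidate target is $(\V_1, \ldots, \V_{\n-1}, \V_\n \ot \X, \W_1, \ldots, \W_\m)$, which exists by the hypothesis on left tensors. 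For the active lift the candidate target is $(\V_1 \ot \cdots \ot \V_\n, \X, \W_1, \ldots, \W_\m)$, with the iterated $\mV$-tensor existing because $\mV^\ot \to \Ass$ is monoidal. In each case, cocartesianness of the candidate morphism in $\mM^\circledast$ reduces, via the pullback-square axiom Definition~\ref{bla}(2) — which expresses morphism spaces in $\mM^\circledast$ in terms of multi-morphism spaces together with morphism data in $\mV^\ot$ and $\mW^\ot$ — to an identity between multi-morphism spaces: for the active lift this identity is precisely the left pseudo-enrichment hypothesis, and for the min-preserving inert lift it is the defining universal property of the left tensor in Definition~\ref{Defo}. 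The main obstacle will be this final cocartesianness verification, which I expect to dispatch by systematically reducing, through the pullback square of Definition~\ref{bla}(2), to the multi-morphism identities already supplied by the tensor and pseudo-enrichment hypotheses, using that $\mV^\ot \to \Ass$ and $\mW^\ot \to \Ass$ are themselves cocartesian fibrations of the required form.
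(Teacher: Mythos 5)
Your proof is correct and follows essentially the same route as the paper's: the forward direction reads off pseudo-enrichment and left tensors from the cocartesian lifts of the active and min-preserving inert morphisms, and the converse assembles the cocartesian fibration structure from the left tensors (for the inert lifts) together with pseudo-enrichment (for the active lifts), with (2) obtained by reversal and (3) by combining (1) and (2). The paper phrases the converse via corepresentability of the iterated multi-morphism functors $\Mul_\mM(\V_1,\dots,\V_\n,\X;-)$ rather than via explicit lifts of generating morphisms, but this is only a presentational difference.
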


\begin{proof}
(2) is dual to (1) and (3) follows from (1) and (2).
We prove (1): Every left tensored $\infty$-category $\mM^\circledast \to \mV^\ot \times \mW^\ot$ is a left pseudo-enriched $\infty$-category. 
By left pseudo-enrichedness for every $\V \in \mV, \X \in \mM$ the natural morphism $\V,\X \to \V \ot \X$ in $\mM^\circledast$ exhibits $ \V \ot \X$ as the left tensor of $\V,\X$ if for every
$\V' \in \mV, \Z \in \mM, \W_1,...,\W_\m \in \mW$ for $\m \geq 0$ the following map is an equivalence:
\begin{equation*}
\Mul_\mM(\V', \V \ot \X, \W_1,...,\W_\m; \Z) \to \Mul_{\mM}(\V' \ot \V,\X,\W_1,...,\W_\m; \Z).
\end{equation*}
This follows since $\V' \ot (\V\ot\X)\simeq (\V' \ot \V) \ot\X.$

Conversely, if $\mM^\circledast \to \mV^\ot \times \mW^\ot$ admits left tensors, for every $\X \in \mM, \V_1,...,\V_\n \in \mV$ for $\n \geq 0 $
the functor $ \Mul_\mM(\V_1,..., \V_\n, \X;-): \mM \to \mS$ is corepresentable by some object $\Y$ by iterately taking the left tensor. 
For every $\V'_1,..., \V'_\bk \in \mV, \W_1,...,\W_\m \in \mW, \Z \in \mM$ for $\bk, \m \geq 0 $ the canonical map
$$\Mul_\mM(\V'_1,..., \V'_\bk, \Y,\W_1,...,\W_\m;\Z) \to \Mul_\mM(\V'_1,..., \V'_\bk, \V_1,...,\V_\n, \X,\W_1,...,\W_\m;\Z) $$ is an equivalence.
By left pseudo-enrichedness
for every $\V^\bj_1,..., \V^\bj_{\n_\bj}$ for $1 \leq \bj \leq \ell$ 
and $\W_1,...,\W_\m \in \mW$ for $\m \geq 0 $ the object corepresenting the functor
$$ \Mul_\mM(\{\V^\bj_1,..., \V^\bj_{\n_\bj}\}^\ell_{\bj=1} ,\V_1,..., \V_\n, \X;-): \mM \to \mS$$
corepresents the functor 
$ \Mul_\mM(\{\V^\bj_1 \ot ...\ot \V^\bj_{\n_\bj}\}^\ell_{\bj=1}, \Y;-): \mM \to \mS$.
This implies that $\mM^\circledast \to \mV^\ot \times \mW^\ot$ is a left tensored  $\infty$-category.

\end{proof}




\begin{notation}
Let $$ \L\P\Enr, \R\P\Enr, \B\P\Enr \subset \omega\B\Enr $$ be the full subcategories of left pseudo-enriched, right pseudo-enriched, bipseudo-enriched $\infty$-categories, respectively.
	
\end{notation}

\begin{example}
Let $\mM^\circledast \to \mV^\ot$ be a weakly left enriched $\infty$-category and $\mN^\circledast \to \mW^\ot$ a weakly right enriched $\infty$-category.
\begin{enumerate}
\item If $\mM^\circledast \to \mV^\ot$ exhibits $\mM$ as left pseudo-enriched in $\mV$, then
$\mM^\circledast \times \mN^\circledast \to \mV^\ot \times \mW^\ot$ exhibits
$\mM \times \mN$ as left pseudo-enriched in $\mV$.
\item If $\mN^\circledast \to \mW^\ot$ exhibits $\mN$ as right pseudo-enriched in $\mW$, then
$\mM^\circledast \times \mN^\circledast \to \mV^\ot \times \mW^\ot$ exhibits
$\mM \times \mN$ as right pseudo-enriched in $\mW$.
\item If $\mM^\circledast \to \mV^\ot$ exhibits $\mM$ as left pseudo-enriched in $\mV$ and $\mN^\circledast \to \mW^\ot$ exhibits $\mN$ as right pseudo-enriched in $\mW$, then
$\mM^\circledast \times \mN^\circledast \to \mV^\ot \times \mW^\ot$ exhibits
$\mM \times \mN$ as bipseudo-enriched in $\mV, \mW$.
\end{enumerate}
\end{example}

\begin{construction}\label{Enros}
Let 
$\mM^\circledast \to \mV^\ot \times \mW^\ot$ be a weakly bienriched $\infty$-category.
Since $\mP\B\Env(\mM)^\circledast \to \mP\Env(\mV)^\ot \times \mP\Env(\mW)^\ot$
is a closed bitensored $\infty$-category, by Proposition \ref{Line} evaluation at the tensor units gives an equivalence
$$\Enr\Fun^\L_{\mP\Env(\mV),\mP\Env(\mW)}(\mP\Env(\mV)\ot \mP\Env(\mW),\mP\B\Env(\mM)) \simeq \mP\B\Env(\mM).$$
The graph of $\mM$ is the $\mV,\mW$-enriched functor $\Gamma_\mM: \mM^\circledast \times \mM^\op \to \mP\Env(\mV)\ot \mP\Env(\mW)$ corresponding to the composition 
$$\mM^\op \subset \mP\B\Env(\mM)^\op \simeq \Enr\Fun^\L_{\mP\Env(\mV),\mP\Env(\mW)}(\mP\Env(\mV)\ot \mP\Env(\mW),\mP\B\Env(\mM))^\op$$$$
\simeq \Enr\Fun^\R_{\mP\Env(\mV),\mP\Env(\mW)}(\mP\B\Env(\mM), \mP\Env(\mV)\ot \mP\Env(\mW))$$$$\to \Enr\Fun_{\mV,\mW}(\mM, \mP\Env(\mV)\ot \mP\Env(\mW)),$$
where the middle functor is by Proposition \ref{adjeq} and the last functor is restriction.
\end{construction}
\begin{remark}
For every $\X\in \mM$ the $\mV,\mW$-enriched functor $\Gamma_\mM(-,\X): \mM^\circledast \to (\mP\Env(\mV)\ot \mP\Env(\mW))^\circledast$ is the restriction of the enriched right adjoint of the left adjoint $\mP\Env(\mV),\mP\Env(\mW)$-linear functor $$(\mP\Env(\mV)\ot \mP\Env(\mW))^\circledast \to \mP\B\Env(\mM)^\circledast, (\V,\W) \mapsto \V\ot\X\ot\W.$$


So for every $\X, \Y \in \mM$ and $\V_1 ,...,\V_\n \in \mV, \W_1,...,\W_\m \in \mW$ for $\n, \m \geq 0$ there is an equivalence
$$ \Gamma_\mM(\X,\Y)(\V_1 \ot ... \ot \V_\n, \W_1 \ot ...\ot \W_\m) \simeq \mP\B\Env(\mM)(\V_1 \ot ... \ot \V_\n \ot \X \ot \W_1,...,\W_\m, \Y) $$$$ \simeq \Mul_\mM(\V_1, ..., \V_\n, \X, \W_1,...,\W_\m; \Y).$$

\end{remark}

\begin{remark}\label{rhhhj} an absolute small weakly bienriched $\infty$-category $\mM^\circledast \to \mV^\ot \times \mW^\ot$ exhibits $\mM$ as 
\begin{enumerate}
		
\item left pseudo-enriched in $\mV$ if and only if $\mV^\ot \to \Ass$ is a monoidal $\infty$-category and for every $\X,\Y \in \mM$ the object $$\Gamma_\mM(\X,\Y)\in \mP(\Env(\mV) \times \Env(\mW)) \simeq \Fun(\Env(\mW)^\op, \mP\Env(\mV))$$ lies in $\Fun(\Env(\mW)^\op, \mP(\mV)).$
		
\item right pseudo-enriched in $\mW$ if and only if $\mW^\ot \to \Ass$ is a monoidal $\infty$-category and for every $\X,\Y \in \mM$ the object $$\Gamma_\mM(\X,\Y)\in \mP(\Env(\mV) \times \Env(\mW)) \simeq \Fun(\Env(\mV)^\op, \mP\Env(\mW))$$ 
lies in $\Fun(\Env(\mV)^\op, \mP(\mW)).$
			
\item bipseudo-enriched in $\mV, \mW$ if and only if $\mV^\ot \to \Ass, \mW^\ot \to \Ass$ are monoidal $\infty$-categories and for every $\X,\Y \in \mM$ the object $$\Gamma_\mM(\X,\Y)\in \mP(\Env(\mV) \times \Env(\mW))$$ lies in $\mP(\mV \times \mW).$
\end{enumerate}
	
\end{remark}

\subsection{Enrichment}

Now we are ready to define enriched $\infty$-categories:

\begin{definition}
Let $\mM^\circledast\to \mV^\ot \times \mW^\ot $ be a weakly bienriched $\infty$-category.

\begin{enumerate}
\item A left multi-morphism object of $\W_1,..., \W_\m, \X, \Y \in \mM$ for $\m \geq 0$ is an object $$\L\Mul\Mor_{\mM}(\X, \W_1,..., \W_\m; \Y) \in \mV $$ such that there is a multi-morphism $\beta \in \Mul_{\mM}(\L\Mul\Mor_{\mM}(\X, \W_1,..., \W_\m; \Y), \X, \W_1,..., \W_\m; \Y) $ that induces for every objects $\V_1, ..., \V_\n \in \mV$ for $\n \geq 0$ an equivalence
$$\hspace{3mm} \Mul_{\mV}(\V_1, ..., \V_\n;  \L\Mul\Mor_{\mM}(\X, \W_1,..., \W_\m; \Y)) \simeq \Mul_{\mM}(\V_1, ..., \V_\n, \X, \W_1, ..., \W_\m; \Y).$$	
	
\item A right multi-morphism object of $\V_1,..., \V_\n, \X, \Y \in \mM$ for $\n \geq 0$ is an object $$\R\Mul\Mor_{\mM}(\V_1,..., \V_\n, \X; \Y) \in \mW $$ such that there is a multi-morphism $\alpha \in \Mul_{\mM}(\V_1,..., \V_\n, \X, \R\Mul\Mor_{\mM}(\V_1,..., \V_\n, \X; \Y); \Y) $ that induces for every objects $\W_1, ..., \W_\m \in \mW$ for $\m \geq 0$ an equivalence
$$\hspace{3mm}\Mul_{\mW}(\W_1, ..., \W_\m;  \R\Mul\Mor_{\mM}(\V_1,..., \V_\n, \X; \Y)) \simeq \Mul_{\mM}(\V_1, ..., \V_\n, \X, \W_1, ..., \W_\m; \Y).$$


\end{enumerate}

\end{definition}

\begin{definition}
Let $\mM^\circledast\to \mV^\ot \times \mW^\ot $ be a weakly bienriched $\infty$-category and $\X, \Y \in \mM$.	

\begin{enumerate}
\item A left morphism object of $\X,\Y $ in $\mM$ is a factorization
$\L\Mor_\mM(\X,\Y): \Env(\mW)^\op \to \mV$ of the functor 
$\Env(\mW)^\op \to \mP\Env(\mV)$ corresponding to $\Gamma_\mM(\X,\Y) \in \mP(\Env(\mV) \times \Env(\mW))$.

\item A right morphism object of $\X,\Y $ in $\mM$ is a factorization
$\R\Mor_\mM(\X,\Y): \Env(\mV)^\op \to \mW$ of the functor 
$\Env(\mV)^\op \to \mP\Env(\mW)$ corresponding to $\Gamma_\mM(\X,\Y) \in \mP(\Env(\mV) \times \Env(\mW))$.

\end{enumerate}

\end{definition}

\begin{remark}Let $\mM^\circledast\to \mV^\ot \times \mW^\ot $ be a weakly bienriched $\infty$-category and $\X, \Y \in \mM, \W_1, ..., \W_\m \in \mW$ for $\m \geq 0$.
A left multi-morphism object $\L\Mul\Mor_{\mM}(\W_1,..., \W_\m, \X; \Y) \in \mV$
represents the presheaf $\Gamma_\mM(\X,\Y)(\W_1 \ot ... \ot \W_\m,-) \in \mP\Env(\mV).$ 
Consequently, there is a left morphism object $\L\Mor_\mM(\X,\Y): \Env(\mW)^\op \to \mV$
if and only if for every $ \W_1, ..., \W_\m \in \mW$ for $\m \geq 0$ there is a left multi-morphism object $\L\Mul\Mor_{\mM}(\W_1,..., \W_\m, \X; \Y) \in \mV$.
In this case there is a canonical equivalence
$$ \L\Mor_\mM(\X,\Y)(\W_1 \ot ... \ot \W_\m) \simeq \L\Mul\Mor_{\mM}(\X,\W_1,..., \W_\m; \Y).$$

The similar holds for right (multi-) morphism objects. 
\end{remark}

\begin{lemma}\label{ato} Let $\mM^\circledast \to \mV^\ot \times \mW^\ot$ be a small bitensored $\infty$-category.
The linear embedding $\mM^\circledast \subset \mP(\mM)^\circledast$ preserves left and right multi-morphism objects.
\end{lemma}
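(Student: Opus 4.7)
The plan is to deduce the claim from the universal property defining left multi-morphism objects together with the fact that $\mP(\mM)^\circledast$ is presentably bi-tensored over $\mP(\mV), \mP(\mW)$ (Proposition \ref{cool}). The right multi-morphism case will follow by the reversal involution of Notation \ref{invo}, so I focus on the left case.

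Fix $\X,\Y \in \mM$ and $\W_1,...,\W_\m \in \mW$ and suppose $L := \L\Mul\Mor_\mM(\X,\W_1,...,\W_\m;\Y) \in \mV$ exists, witnessed by $\beta \in \Mul_\mM(L,\X,\W_1,...,\W_\m;\Y)$. Let $\iota: \mM^\circledast \subset \mP(\mM)^\circledast$ denote the linear embedding, which covers the Yoneda embeddings $\mV \subset \mP(\mV)$ and $\mW \subset \mP(\mW)$; as an embedding in the sense of Definition \ref{linmapp}, $\iota$ is fully faithful on multi-morphism spaces. I aim to show that $\iota(\beta)$ exhibits the image of $L$ in $\mP(\mV)$ as $\L\Mul\Mor_{\mP(\mM)}(\X,\W_1,...,\W_\m;\Y)$, i.e., that for every $\V_1,...,\V_\n \in \mP(\mV)$ the induced map
$$ \Mul_{\mP(\mV)}(\V_1,...,\V_\n;L) \to \Mul_{\mP(\mM)}(\V_1,...,\V_\n,\X,\W_1,...,\W_\m;\Y) $$
is an equivalence.

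The core step is a density reduction. Both sides carry small colimits in each $\V_i$ to limits: the source because $\mP(\mV)^\ot \to \Ass$ is compatible with small colimits (Proposition \ref{presta}), the target because $\mP(\mM)^\circledast$ is presentably bi-tensored (Proposition \ref{cool}). Since $\mP(\mV)$ is generated under small colimits by $\mV$, it suffices to verify the equivalence when $\V_1,...,\V_\n \in \mV$. For such inputs, full faithfulness of $\mV^\ot \subset \mP(\mV)^\ot$ and of $\iota$ on multi-morphism spaces identifies both sides respectively with $\Mul_\mV(\V_1,...,\V_\n;L)$ and $\Mul_\mM(\V_1,...,\V_\n,\X,\W_1,...,\W_\m;\Y)$, and the resulting map is precisely the defining equivalence of $L$ in $\mM$. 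The main subtlety is confirming that $\iota$ preserves multi-morphism spaces on the nose, which is automatic since $\mP(\mM)^\circledast$ is constructed in the proof of Proposition \ref{cool} as a full suboperad of $\mP\B\Env(\mM)^\circledast$, into which the fully faithful Yoneda embedding $\B\Env(\mM)^\circledast \subset \mP\B\Env(\mM)^\circledast$ lifts.
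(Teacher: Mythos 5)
Your proof is correct, and it reaches the conclusion by a slightly different mechanism than the paper. The paper first observes that $\mP(\mM)$ admits all multi-morphism objects by presentability of the biaction, so there is a canonical comparison morphism $\y(\R\Mul\Mor_{\mM}(\V_1,...,\V_\n,\X;\Y))\to \R\Mul\Mor_{\mP(\mM)}(\V_1,...,\V_\n,\X;\Y)$ in $\mP(\mW)$; it then checks that this is an equivalence simply by evaluating both presheaves at each $\W\in\mW$, where the map becomes the canonical equivalence of multi-morphism spaces induced by the fully faithful embedding. You instead verify the universal property of the candidate object directly against arbitrary test objects of $\mP(\mV)$, reducing to representable test objects by the observation that both sides of the comparison send colimits in each test variable to limits and that $\mV$ generates $\mP(\mV)$ under small colimits. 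Both arguments ultimately rest on the same two facts --- that presheaf-level data is detected on representables, and that the embedding $\mM^\circledast\subset\mP(\mM)^\circledast$ induces equivalences on multi-morphism spaces of objects coming from $\mM^\circledast$ --- but the paper's version is a bit more economical (Yoneda replaces your cocontinuity-and-density step), while yours has the mild advantage of establishing the universal property of $\y(L)$ directly rather than invoking the prior existence of the multi-morphism object in $\mP(\mM)$. Your reduction of the right-hand case to the left-hand case via the reversal involution is also fine; the paper treats the right-hand case explicitly and declares the other symmetric.
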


\begin{proof}
First note that by presentability of the biaction, $\mP(\mM)$ admits all left and right multi-morphism objects.
We prove the case of right multi-morphism objects. The case of left multi-morphism objects is similar. Let $\X,\Y \in \mM, \V_1,...,\V_\n \in \mV$ for $\n \geq 0$ that admit a right multi-morphism object $ \R\Mul\Mor_{\mM}(\V_1,..., \V_\n, \X; \Y) \in \mW $ in $\mM$. We like to see that the induced morphism
$$ \R\Mul\Mor_{\mM}(\V_1,..., \V_\n, \X; \Y) \to \R\Mul\Mor_{\mP(\mM)}(\V_1,..., \V_\n, \X; \Y)$$ in $\mP(\mW)$
is an equivalence.
Evaluating the latter morphism at $\W \in \mW$ gives the canonical equivalence
$$\Mul_{\mM}(\V_1,..., \V_\n, \X, \W; \Y) \simeq \mW(\W,\R\Mul\Mor_{\mM}(\V_1,..., \V_\n, \X; \Y)) \to $$$$\mP(\mW)(\W,\R\Mul\Mor_{\mP(\mM)}(\V_1,..., \V_\n, \X; \Y)) \simeq\Mul_{\mP(\mM)}(\V_1,..., \V_\n, \X, \W; \Y).$$
	
\end{proof}

\begin{proposition}\label{morpre}
Let $\mM^\circledast \to \mV^\ot \times \mW^\ot$ be a weakly bienriched $\infty$-category.
The embeddings $$\mM^\circledast \subset \B\Env(\mM)^\circledast, \B\Env(\mM)^\circledast \subset \mP\B\Env(\mM)^\circledast$$ of weakly bienriched $\infty$-categories preserve left and right multi-morphism objects.

\end{proposition}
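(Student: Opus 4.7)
The plan is to handle the two embeddings separately, and to reduce the right multi-morphism object statement to the left one by applying the reversal involution of Notation \ref{invo} to $\mM^{\rev}$.

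For the second embedding $\B\Env(\mM)^\circledast \subset \mP\B\Env(\mM)^\circledast$, I would simply invoke Lemma \ref{ato}: by Proposition \ref{unb}(4) the bitensored envelope $\B\Env(\mM)^\circledast \to \Env(\mV)^\ot \times \Env(\mW)^\ot$ is itself a bitensored $\infty$-category, and $\mP\B\Env(\mM)^\circledast = \mP(\B\Env(\mM))^\circledast$ is by definition (Notation \ref{zgbbl}) the presheaf bitensored $\infty$-category associated to $\B\Env(\mM)$, so Lemma \ref{ato} applies directly.

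For the first embedding $\mM^\circledast \subset \B\Env(\mM)^\circledast$, I would argue by direct comparison of multi-morphism spaces. Suppose $L = \L\Mul\Mor_\mM(\X,\W_1,\ldots,\W_\m;\Y) \in \mV$ is a left multi-morphism object in $\mM$. I would like to show that $L$, viewed in $\Env(\mV)$ via $\mV \subset \Env(\mV)$, is a left multi-morphism object of $\X,\W_1,\ldots,\W_\m,\Y$ in $\B\Env(\mM)$. Given $\V'_1,\ldots,\V'_\n \in \Env(\mV)$, write each $\V'_i = \V_{i,1} \otimes \cdots \otimes \V_{i,k_i}$ with $\V_{i,j} \in \mV \subset \Env(\mV)$ (possible since every object of the monoidal envelope is such a tensor product by construction). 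Since $\Env(\mV)^\ot$ is monoidal with tensor product given by concatenation, Proposition \ref{unb}(1) yields
\begin{equation*}
\Mul_{\Env(\mV)}(\V'_1,\ldots,\V'_\n;L) \simeq \Mul_{\mV}(\V_{1,1},\ldots,\V_{n,k_n};L).
\end{equation*}
Similarly, since $\B\Env(\mM)^\circledast$ is bitensored over $\Env(\mV),\Env(\mW)$ with tensor again given by concatenation on the $\mV$- and $\mW$-coordinates, and since $\mM^\circledast \subset \B\Env(\mM)^\circledast$ is a $\mV,\mW$-enriched embedding which by Proposition \ref{unb}(4) preserves multi-morphism spaces indexed by $\mV,\mW$, one obtains
\begin{equation*}
\Mul_{\B\Env(\mM)}(\V'_1,\ldots,\V'_\n,\X,\W_1,\ldots,\W_\m;\Y) \simeq \Mul_{\mM}(\V_{1,1},\ldots,\V_{n,k_n},\X,\W_1,\ldots,\W_\m;\Y).
\end{equation*}
The hypothesis that $L$ is a left multi-morphism object in $\mM$ identifies the two right-hand sides, and tracking the witnessing multi-morphism $\beta$ of $L$ through the embedding $\mM^\circledast \subset \B\Env(\mM)^\circledast$ shows that this equivalence is induced by the image of $\beta$ in $\B\Env(\mM)$, as required.

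The main obstacle is the verification of the two displayed equivalences, i.e., that the multi-morphism spaces in $\Env(\mV)$ and in $\B\Env(\mM)$ really do reduce to multi-morphism spaces in $\mV$ and in $\mM$ after splitting each envelope-object into its constituent letters. This is essentially bookkeeping with the fibrational definitions in Notation \ref{ene} (using the unique factorization of morphisms in $\Ass$ as inert followed by active/min-preserving/max-preserving, Remark \ref{heh}), but it is the place where care is needed; once it is in hand, the proof is formal. The case of right multi-morphism objects follows from the left case applied to $\mM^{\rev}$ via the canonical identification $\B\Env(\mM^\rev)^\circledast \simeq (\B\Env(\mM)^\rev)^\circledast$ and its $\mP$-analogue (Remark \ref{switch}).
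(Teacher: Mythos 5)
Your proposal is correct and follows essentially the same route as the paper: for $\mM^\circledast \subset \B\Env(\mM)^\circledast$ the paper likewise decomposes envelope objects into tensor products of letters from $\mV$ (resp.\ $\mW$), using that the bitensored envelope is pseudo-enriched to reduce the multi-morphism spaces back to those of $\mM$, and for $\B\Env(\mM)^\circledast \subset \mP\B\Env(\mM)^\circledast$ it invokes Lemma \ref{ato} exactly as you do. The only cosmetic difference is that the paper writes out the right multi-morphism case and declares the left case similar, whereas you do the left case and pass to $\mM^\rev$; both are fine.
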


\begin{proof}
We first prove that the embedding $\iota: \mM^\circledast \subset \B\Env(\mM)^\circledast$ 
preserves left and right multi-morphism objects.
We prove the case of right multi-morphism objects. The case of left multi-morphism objects is similar.
Let $\V_1,..., \V_\n \in \mV, \X,\Y \in \mM$ for $\n \geq 0$
and assume there is a right multi-morphism object $\R\Mul\Mor_{\mM}(\V_1,..., \V_\n, \X; \Y).$ Let $\bj_\mV: \mV \subset \Env(\mV), \bj_\mW: \mW \subset \Env(\mW)$ be the canonical embeddings.
Using that $\B\Env(\mM)^\circledast \to \Env(\mV)^\ot \times \Env(\mW)^\ot$
is a bitensored $\infty$-category and so a right pseudo-enriched $\infty$-category, it is enough to see that for every $\W \in \Env(\mW)$ the following induced map is an equivalence: $$\Env(\mW)(\W,  \bi_\mW(\R\Mul\Mor_{\mM}(\V_1,..., \V_\n, \X; \Y)))
\to \Mul_{\mP\Env(\mM)}(\bj_\mV(\V_1),..., \bj_\mV(\V_\n), \iota(\X), \W).$$
As $\W \simeq \bj_\mW(\W_1) \ot ... \ot \bj_\mW(\W_\m)$ for $ \W_1,..., \W_\m \in \mW$ and $ \m \geq 0$, the latter map identifies with the map $$\Mul_{\Env(\mW)}(\bj_\mW(\W_1),..., \bj_\mW(\W_\m),  \bi_\mW(\R\Mul\Mor_{\mM}(\V_1,..., \V_\n, \X; \Y)))$$$$
\to \Mul_{\mP\Env(\mM)}(\bj_\mV(\V_1),..., \bj_\mV(\V_\n), \iota(\X), \bj_\mW(\W_1),..., \bj_\mW(\W_\m); \iota(\Y)).$$
The latter is an equivalence by definition of right multi-morphism object.
By Lemma \ref{ato} the embedding $\B\Env(\mM)^\circledast \subset \mP\B\Env(\mM)^\circledast$ preserves left and right multi-morphism objects.

		
\end{proof}

\begin{definition}
	
\begin{enumerate}
\item A weakly bienriched $\infty$-category $\mM^\circledast \to \mV^\ot \times \mW^\ot$ exhibits $\mM$ as left enriched in $\mV$ if for every $\X, \Y \in \mM$ and $\W_1,..., \W_\m \in \mW $ for $\m \geq 0$ there is a left multi-morphism object $$\L\Mul\Mor_{\mM}(\X, \W_1,..., \W_\m; \Y) \in \mV.$$

\item A weakly bienriched $\infty$-category $\mM^\circledast \to \mV^\ot \times \mW^\ot$ exhibits $\mM$ as right enriched in $\mW$ if for every $\X, \Y \in \mM$ and $\V_1,..., \V_\n \in \mV $ for $\n \geq 0$ there is a right multi-morphism object $$\R\Mul\Mor_{\mM}(\V_1,..., \V_\n, \X; \Y) \in \mW.$$

\item A weakly bienriched $\infty$-category $\mM^\circledast \to \mV^\ot \times \mW^\ot$ exhibits $\mM$ as bienriched in $\mV, \mW$ if it exhibits $\mM$ as left enriched in $\mV$ and right enriched in $\mW$.

\end{enumerate}

\end{definition}

\begin{example}\label{Excoq}

\begin{enumerate}
\item Every $\infty$-category $\mM^\circledast \to \mV^\ot $ with a closed left action of a monoidal $\infty$-category is a left $\mV$-enriched $\infty$-category. The left multimorphism object of any $\X,\Y \in \mM$ coincides with the left morphism object of $\X,\Y $ and $\L\Mor_\mM(\X,-): \mM \to \mV$ is right adjoint to the left tensor $(-) \ot \X: \mV \to \mM.$

\item Every $\infty$-category $\mM^\circledast \to \mW^\ot $ with a closed right action of a monoidal $\infty$-category is a right $\mW$-enriched $\infty$-category.
The right multimorphism object of any $\X,\Y \in \mM$ coincides with the right morphism object of $\X,\Y $ and $\R\Mor_\mM(\X,-): \mM \to \mW$ is right adjoint to the right tensor $X \ot (-): \mW \to \mM.$

\end{enumerate}

\end{example}

\begin{notation}\label{notaenr}
Let $$\L\Enr, \R\Enr, \B\Enr \subset \omega\B\Enr$$ be the full subcategories of left enriched, right enriched, bienriched $\infty$-categories.

\end{notation}

\begin{remark}\label{2-cattt}

By \cref{2-catt} the $\infty$-category 
$\omega\B\Enr$ and for every $\infty$-operads $\mV^\ot \to \Ass, \mW^\ot \to \Ass$ the $\infty$-category 
$_\mV \omega\B\Enr_\mW$ carry closed left $\Cat_\infty$-actions and so by \cref{Excoq} are left $\Cat_\infty$-enriched $\infty$-categories, known as $(\infty,2)$-categories.
For every weakly bienriched $\infty$-categories 
$\mM^\circledast \to \mV^\ot \times \mW^\ot, \mN^\circledast \to \mV^\ot \times \mW^\ot$ the $\infty$-category $\Enr\Fun_{\mV,\mW}(\mM,\mN)$ is the left morphism $\infty$-category in $_\mV \omega\B\Enr_\mW$.
Hence also the full subcategories $${\L\Enr}, {\R\Enr}, {\B\Enr} \subset {\omega\B\Enr}, {_\mV \L\Enr_\mW}, {_\mV \R\Enr_\mW}, {_\mV \B\Enr_\mW} \subset {_\mV \omega\B\Enr_\mW} $$ of left enriched, right enriched, bienriched $\infty$-categories are $(\infty,2)$-categories.

By \cref{2-catt} the forgetful functors $\omega\B\Enr \to \Cat_\infty, 
{_\mV \omega\B\Enr_\mW} \to \Cat_\infty$ are left $\Cat_\infty$-linear and so left $\Cat_\infty$-enriched functors.
The forgetful functor $
{_\mV \omega\B\Enr_\mW} \to \Cat_\infty$ induces on left morphism $\infty$-categories the canonical functor $\Enr\Fun_{\mV,\mW}(\mM,\mN) \to \Fun(\mM,\mN)$.
Hence also the forgetful functors $$ {_\mV \L\Enr_\mW} \to \Cat_\infty, {_\mV \R\Enr_\mW}\to \Cat_\infty, {_\mV \B\Enr_\mW} \to \Cat_\infty$$ are left $\Cat_\infty$-enriched functors.
    
\end{remark}

For the next corollaries we say that a functor of
$(\infty,2)$-categories, i.e. a left $\infty\Cat$-enriched functor,
is locally conservative if it induces on morphism $\infty$-categories conservative functors.

\begin{corollary}\label{loccons}
Let $\mV^\ot \to \Ass, \mW^\ot \to \Ass$ be small $\infty$-operads.
The left $\infty\Cat$-enriched forgetful functor $
{_\mV \omega\B\Enr_\mW} \to \Cat_\infty$ 
is locally conservative.
\end{corollary}
\begin{proof}

The forgetful functor $
{_\mV \omega\B\Enr_\mW} \to \Cat_\infty$ induces on left morphism $\infty$-categories between any weakly bienriched $\infty$-categories 
$\mM^\circledast \to \mV^\ot \times \mW^\ot, \mN^\circledast \to \mV^\ot \times \mW^\ot$ the canonical functor $\Enr\Fun_{\mV,\mW}(\mM,\mN) \to \Fun(\mM,\mN)$, which is conservative by \cref{conserva}.
    
\end{proof}

\begin{corollary}
Let $\mV^\ot \to \Ass, \mW^\ot \to \Ass$ be small $\infty$-operads.
The left $\infty\Cat$-enriched forgetful functors $$ {_\mV \L\Enr_\mW} \to \Cat_\infty, {_\mV \R\Enr_\mW}\to \Cat_\infty, {_\mV \B\Enr_\mW} \to \Cat_\infty$$
are locally conservative.
\end{corollary}

\begin{proof}
The left $\infty\Cat$-enriched forgetful functors $$ {_\mV \L\Enr_\mW} \to \Cat_\infty, {_\mV \R\Enr_\mW}\to \Cat_\infty, {_\mV \B\Enr_\mW} \to \Cat_\infty$$ are the restriction along embeddings of the forgetful functor $
{_\mV \omega\B\Enr_\mW} \to \Cat_\infty$, which is locally conservative by \cref{loccons}.
    
\end{proof}

\begin{example}\label{Excopo}

\begin{enumerate}
\item Let $\mM^\circledast \to \mV^\ot $ be a weakly left enriched $\infty$-category.
Then $\mM^\circledast \to \mV^\ot $ is a left $\mV$-enriched $\infty$-category if and only if it is a $\mV$-enriched $\infty$-category in the sense of \cite[Definition 3.112.]{HEINE2023108941}.
The left multimorphism object of any $\X,\Y \in \mM$ agrees with the left morphism object of $\X,\Y $,
which coincides with the morphism object of $\X,\Y $ in the sense of \cite[Definition 3.111.]{HEINE2023108941}.

\item Let $\mM^\circledast \to \mW^\ot $ be a weakly right enriched $\infty$-category.
Then $\mM^\circledast \to \mW^\ot$ is a right $\mW$-enriched $\infty$-category if and only if $(\mM^\rev)^\circledast \to (\mW^\rev)^\circledast$ is a left $\mW^\rev$-enriched $\infty$-category and so by (1) a $\mW^\rev$-enriched $\infty$-category.

\end{enumerate}

\end{example}

\begin{remark}\label{twocat} Let $\mV^\ot \to \Ass$ be an $\infty$-operad.
In view of \cref{Excopo} the $\infty$-category of left $\mV$-enriched $\infty$-categories is precisely the $\infty$-category of 
$\mV$-enriched $\infty$-categories in the sense of 
\cite[Definition 3.112.]{HEINE2023108941}.
In \cite[Proposition 6.10.]{HEINE2023108941} we construct an equivalence between the $\infty$-category ${_\mV\L\Enr_\emptyset}$
and the $\infty$-category of $\mV$-enriched $\infty$-categories in the sense of Gepner-Haugseng \cite{GEPNER2015575}. Macpherson \cite[Theorem 1.1.]{MR4185309} proved that the latter $\infty$-category is equivalent to the $\infty$-category of $\mV$-enriched $\infty$-categories in the sense of Hinich \cite{HINICH2020107129}.
Moreover we extend the equivalence of \cite[Proposition 6.10.]{HEINE2023108941} to an equivalence of $(\infty,2)$-categories
\cite[Theorem 8.3.]{https://doi.org/10.48550/arxiv.2009.02428}.
By \cref{2-cattt} the $\infty$-category ${_\mV\L\Enr_\emptyset}$
is left enriched in $\infty\Cat$, where the left morphism
$\infty$-category between two left $\mV$-enriched $\infty$-categories
$\mM^\circledast \to \mV^\ot, \mN^\circledast \to \mV^\ot$
is the $\infty$-category
$ \Enr\Fun_{\mV, \emptyset}(\mM,\mN) $ of left $\mV$-enriched functors of \cref{enrfunc}.
In \cite[Theorem 6.16.]{HEINE2023108941}, \cite[Corollary 4.19.]{heine2024equivalence} we prove that the $\infty$-category
$ \Enr\Fun_{\mV, \emptyset}(\mM,\mN) $
is equivalent to the $\infty$-category of $\mV$-enriched functors constructed by Hinich \cite[Definition 6.1.3.]{HINICH2020107129}.

\end{remark}




\begin{example}\label{Exco}
	
Let $\mM^\circledast \to \mV^\ot \times \mW^\ot$ be a weakly bienriched $\infty$-category
whose underlying weakly left enriched $\infty$-category $\mN^\circledast \to \mV^\ot$ exhibits $\mM$ as left $\mV$-enriched.

\begin{enumerate}
\item If $\mM^\circledast \to \mV^\ot \times \mW^\ot$ admits right tensors, $\mM^\circledast \to \mV^\ot \times \mW^\ot$ exhibits $\mM$ as left $\mV$-enriched.
For every $\X,\Y \in \mM$ and $\W_1, ..., \W_\m$ for $\m \geq 0$ the left multi-morphism object
is $$\L\Mul\Mor_\mM(\X,\W_1,...,\W_\m;\Y) = \L\Mor_\mN(((-)\ot\W_\m) \circ ... \circ ((-)\ot\W_1)(\X) ,\Y).$$

\item If $\mM^\circledast \to \mV^\ot \times \mW^\ot$ admits right cotensors, $\mM^\circledast \to \mV^\ot \times \mW^\ot$ exhibits $\mM$ as left $\mV$-enriched.
For every $\X,\Y \in \mM$ and $\W_1, ..., \W_\m$ for $\m \geq 0$ the left multi-morphism object
is $$\L\Mul\Mor_\mM(\X,\W_1,...,\W_\m;\Y) = \L\Mor_\mN(\X, (-)^{\W_\m} \circ ... \circ (-)^{\W_1}(\Y)).$$

\item If $\mM^\circledast \to \mV^\ot \times \mW^\ot$ admits right tensors and right cotensors, $\mM^\circledast \to \mV^\ot \times \mW^\ot$ exhibits $\mM$ as left $\mV$-enriched
and for every $\X,\Y \in \mM$ and $\W_1, ..., \W_\m$ for $\m \geq 0$ there is a canonical equivalence
$$ \L\Mor_\mN(((-)\ot\W_\m) \circ ... \circ ((-)\ot\W_1)(\X) ,\Y) \simeq \L\Mor_\mN(\X, (-)^{\W_\m} \circ ... \circ (-)^{\W_1}(\Y)). $$

\end{enumerate}	
	
\end{example}


\begin{remark}
Let $\mM^\circledast \to \mV^\ot \times \mW^\ot$ be a bienriched $\infty$-category.
The underlying weakly left enriched $\infty$-category $\mN^\circledast \to \mV^\ot$ of
$\mM^\circledast \to \mV^\ot \times \mW^\ot$ exhibits $\mM$ as left $\mV$-enriched,
where for any $\X,\Y \in \mM$ there is a canonical equivalence
$\L\Mor_\mN(\X,\Y)\simeq \L\Mul\Mor_\mM(\X;\Y).$
If $\mM^\circledast \to \mV^\ot \times \mW^\ot$ admits right tensors or right cotensors, the left multi-morphism spaces of $\mM^\circledast \to \mV^\ot \times \mW^\ot$ are 
given by the left multi-morphism spaces of $\mN^\circledast \to \mV^\ot$ by Example \ref{Exco}.
	
\end{remark}

\begin{lemma}
A weakly bienriched $\infty$-category $\mM^\circledast \to \mV^\ot \times \mW^\ot$ that exhibits $\mM$ as left enriched in a monoidal $\infty$-category, right enriched in a monoidal $\infty$-category, bienriched in monoidal $\infty$-categories, respectively, exhibits $\mM$ as left pseudo-enriched in $\mV$,
right pseudo-enriched in $\mW$, bipseudo-enriched in $\mV, \mW$, respectively.
	
\end{lemma}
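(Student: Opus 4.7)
The plan is to reduce immediately to the left enriched case: the bi-pseudo-enriched case follows by combining the left and right pseudo-enriched cases thanks to Remark~\ref{both}, and the right enriched case is exchanged with the left enriched case by the involution $(-)^\rev$ of Notation~\ref{invo}, which sends left enrichment to right enrichment and left pseudo-enrichment to right pseudo-enrichment. Thus the task becomes the following: if $\mM^\circledast \to \mV^\ot \times \mW^\ot$ exhibits $\mM$ as left enriched in a monoidal $\infty$-category $\mV$, then it is left pseudo-enriched in $\mV$.

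The argument I have in mind unfolds the universal property of left multi-morphism objects. Fix $\X,\Y \in \mM$, $\V_1,\dots,\V_\n \in \mV$ and $\W_1,\dots,\W_\m \in \mW$, and set $\A := \L\Mul\Mor_\mM(\X,\W_1,\dots,\W_\m;\Y) \in \mV$, which exists by left enrichment. I would then chain three natural equivalences: first, the defining property of $\A$ supplies $\Mul_\mM(\V_1,\dots,\V_\n,\X,\W_1,\dots,\W_\m;\Y) \simeq \Mul_\mV(\V_1,\dots,\V_\n;\A)$; second, the monoidality of $\mV^\ot \to \Ass$ gives, via the active morphism in $\mV^\ot$, an equivalence $\Mul_\mV(\V_1,\dots,\V_\n;\A) \simeq \mV(\V_1 \ot \cdots \ot \V_\n,\A)$; third, the defining property of $\A$ applied with a single input $\V_1 \ot \cdots \ot \V_\n$ identifies the latter with $\Mul_\mM(\V_1 \ot \cdots \ot \V_\n,\X,\W_1,\dots,\W_\m;\Y)$. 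Composing produces the desired equivalence.

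The one step that deserves verification is that the resulting composite coincides with the canonical map of Definition~\ref{Lu}, which is the one induced by precomposition with the active morphism $\V_1,\dots,\V_\n \to \V_1 \ot \cdots \ot \V_\n$ in $\mV^\ot$. This will follow from the functoriality of multi-morphism spaces in the sense of Notation~\ref{mult}: both maps are obtained by precomposition with a cocartesian lift of the same active morphism of $\Ass$, extended by identities in the $\X$ and $\W_\bj$ coordinates. I do not foresee any genuine obstacle; the proof is essentially a direct unraveling of the defining universal property of the left multi-morphism object (and dually of the right multi-morphism object for the right enriched case).
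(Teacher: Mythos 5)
Your proposal is correct and follows essentially the same route as the paper's proof: reduce to the left enriched case, pass through the universal property of the left multi-morphism object $\L\Mul\Mor_\mM(\X,\W_1,\dots,\W_\m;\Y)$, and conclude from the fact that the active morphism $\V_1,\dots,\V_\n \to \V_1\ot\cdots\ot\V_\n$ is cocartesian over $\Ass$. The paper compresses your three-step chain and the compatibility check into the single observation that the canonical map identifies with the induced map on $\Mul_\mV(-;\A)$, but the content is identical.
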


\begin{proof}We prove the case of left enrichment. The case of right enrichment is dual and the case of bi-enrichment follows from the cases of left and right enrichment
using Remark \ref{both}.
For every $\X,\Y \in \mM$ and $\V_1, ..., \V_\n \in \mV, \W_1, ..., \W_\m \in \mW$ for $\n,\m \geq 0$ the map
$$ \Mul_{\mM}(\V_1 \ot ... \ot \V_\n, \X, \W_1, ..., \W_\m; \Y) \to \Mul_{\mM}(\V_1, ..., \V_\n, \X, \W_1, ..., \W_\m ; \Y)$$ 
induced by the active morphism $\V_1, ..., \V_\n \to \V_1 \ot  ... \ot \V_\n $
in $\mV^\ot$ identifies with the induced map
$$ \Mul_{\mV}(\V_1 \ot ... \ot \V_\n, \L\Mul\Mor_\mM(\X, \W_1, ..., \W_\m; \Y)) \to \Mul_{\mV}(\V_1, ..., \V_\n, \L\Mul\Mor_\mM(\X, \W_1, ..., \W_\m; \Y)),$$ 
which is an equivalence since the active morphism $\V_1, ..., \V_\n \to \V_1 \ot  ... \ot \V_\n $ is cocartesian over $\Ass.$

	
\end{proof}

\begin{remark}
Let $\mC, \mD$ be small $\infty$-categories.
A presheaf $\alpha \in \mP(\mC \times \mD)$
is representable in both variables if for every $\X \in \mC, \Y \in \mD$
the presheaves $\alpha(\X,-): \mD^\op \to \mS, \alpha(-,\Y): \mC^\op \to \mS$ are representable. Let $\mP_\rep(\mC\times \mD)\subset \mP(\mC\times\mD)$ be the full subcategory of presheaves representable in both variables.

The following conditions are equivalent for a presheaf $\alpha \in \mP(\mC \times \mD)$:
\begin{enumerate}
\item The presheaf $\alpha \in \mP(\mC \times \mD)$ is representable in both variables.
\item The corresponding functors $\mC^\op \to \mP(\mD), 
\mD^\op \to \mP(\mC)$ induce functors $\F^\op: \mC^\op \to \mD, \G: \mD^\op \to \mC.$

\item The corresponding functor $\mD^\op \to \mP(\mC)$ induces a functor $ \G: \mD^\op \to \mC$ that admits a left adjoint.
In this case for every $\X \in \mC$ the presheaf $\mC(\X,-) \circ \G \simeq \alpha(\X,-) \in \mP(\mD)$ is represented by $\F(\X)$.
\item The corresponding functor $\mC^\op \to \mP(\mD)$ induces a functor $\F^\op: \mC^\op \to \mD$ that admits a left adjoint.
In this case for every $\Y \in \mD$ the presheaf $\mD(\Y,-) \circ \F^\op \simeq \alpha(-,\Y) \in \mP(\mC)$ is represented by $\G(\Y)$.

\end{enumerate}
Consequently, the canonical equivalence $\mP(\mC\times \mD) \simeq \Fun(\mC^\op, \mP(\mD))$ restricts to an equivalence $$\mP_\rep(\mC\times \mD) \simeq \Fun^\R(\mC^\op,\mD).$$
The canonical equivalence $\mP(\mC\times \mD) \simeq \mP(\mD\times\mC)$
restricts to an equivalence $$\Fun^\R(\mC^\op,\mD) \simeq \mP_\rep(\mC\times\mD) \simeq \mP_\rep(\mD\times\mC) \simeq \Fun^\R(\mD^\op,\mC).$$

Let $\mC, \mD$ be presentable $\infty$-categories. 
The universal functor $\mC \times \mD \to \mC \ot \mD$ preserving small colimits component-wise induces an embedding $\Fun^\R((\mC \otimes \mD)^\op, \mS) \to \mP(\mC \times \mD)$, which restricts to equivalences
$$\mC \ot \mD \simeq \Fun^\R((\mC \otimes \mD)^\op, \mS) \simeq \mP_\rep(\mC \times \mD) \simeq \Fun^\R(\mC^\op,\mD)$$ by the adjoint functor theorem \cite[Proposition 5.5.2.2.]{lurie.HTT}, where the first equivalence is induced by the Yoneda-embedding.
The universal functor $\mC \times \mD \to \mC \ot \mD \simeq \Fun^\R(\mC^\op,\mD)$
preserving small colimits component-wise sends $\C,\D$ to the image of $\D$ under the left adjoint $\lan_\C$ of the functor $\Fun^\R(\mC^\op,\mD) \to \mD$ evaluating at $\C.$
	
\end{remark}

\begin{remark}\label{EEnr}
Let $\mM^\circledast \to \mV^\ot \times \mW^\ot$ be a weakly bienriched $\infty$-category and $\X,\Y \in \mM$ such that there is a left and a right morphism object $\L\Mor_\mM(\X,\Y), \R\Mor_\mM(\X,\Y).$ Then $$ \Gamma_\mM(\X,\Y)\in \mP(\Env(\mV) \times \Env(\mW)) \simeq \Fun(\Env(\mV)^\op, \mP\Env(\mW))$$
is the image of $\R\Mor_\mM(\X,\Y) \in \Fun(\Env(\mV)^\op, \mW)$ and $$\Gamma_\mM(\X,\Y)\in \mP(\Env(\mV) \times \Env(\mW)) \simeq \Fun(\Env(\mW)^\op, \mP\Env(\mV)) $$
is the image of $ \L\Mor_\mM(\X,\Y) \in \Fun(\Env(\mW)^\op, \mV).$ 
Thus $ \R\Mor_\mM(\X,\Y), \L\Mor_\mM(\X,\Y)$ lie in $$\Fun^\R(\Env(\mV)^\op, \Env(\mW)) \simeq \Fun^\R(\Env(\mW)^\op, \Env(\mV))$$
so that there is an adjunction 
$$\L\Mor_{\mM}(\X,\Y)^\op: \Env(\mW) \rightleftarrows \Env(\mV)^\op: \R\Mor_{\mM}(\X,\Y)$$
whose left adjoint lands in $\mV^\op$ and right adjoint lands in $\mW.$

If $\mM^\circledast \to \mV^\ot \times \mW^\ot$ is bipseudo-enriched,
$ \Gamma_\mM(\X,\Y)\in \mP(\Env(\mV) \times \Env(\mW))$ lies in $\mP(\mV \times \mW)$, the left morphism object $\L\Mor_{\mM}(\X,\Y): \Env(\mW)^\op \to \Env(\mV)$
factors as $ \Env(\mW)^\op \to \mW^\op \xrightarrow{\L\Mor_{\mM}(\X,\Y)_{\mid \mW^\op}} \Env(\mV)$, the right morphism object $\R\Mor_{\mM}(\X,\Y):\Env(\mV)^\op \to \Env(\mW) $ 
factors as $$ \Env(\mV)^\op \to \mV^\op \xrightarrow{\R\Mor_{\mM}(\X,\Y)_{\mid \mV^\op}} \Env(\mW)$$ and there is an adjunction 
$$\L\Mor_{\mM}(\X,\Y)_{\mid \mW}^\op: \mW \rightleftarrows \mV^\op: \R\Mor_{\mM}(\X,\Y)_{\mid \mV}.$$

\end{remark}

\begin{remark}\label{duall}
Let $\mV^\ot \to \Ass$ be a closed monoidal $\infty$-category and $B\tu_\mV^\circledast \subset \mV^\circledast$ the full weakly bienriched subcategory spanned by the tensor unit.	
Then $B\tu_\mV^\circledast \to \mV^\ot \times \mV^\ot$ is a bienriched $\infty$-category and by the latter remark there is an adjunction
$\mV \rightleftarrows \mV^\op,$ 
which identifies with the canonical adjunction
$ \L\Mor_\mV(-,\tu_\mV): \mV \rightleftarrows \mV^\op: \R\Mor_\mV(-,\tu_\mV)$
(see \cite[Lemma 3.11.]{heine2023topological} for this adjunction).
	
\end{remark}

\begin{remark}Let $\mV^\ot \to \Ass, \mW^\ot \to \Ass$ be presentably monoidal $\infty$-categories, $\mM^\circledast \to \mV^\ot \times \mW^\ot$ a weakly bienriched $\infty$-category and $\X, \Y \in \mM.$
Under the canonical equivalence
$\mV \ot \mW \simeq \Fun^\R(\mV^\op,\mW)$ the left morphism object $\L\Mor_\mM(\X,\Y)$
is identified with an object of $\mV \ot \mW$, which we denote by the same name.
Let $\alpha$ be the universal monoidal functor $\mV \times \mW \to \mV \ot \mW$
preserving small colimits component-wise.
Since for every $\V \in \mV, \W \in \mW$ the object $\alpha(\V,\W)\in \mV \ot \mW$ corresponds to $\lan_\V(\W) \in \Fun^\R(\mV^\op,\mW)$, there is a canonical equivalence
$$\mV \ot \mW(\alpha(\V,\W), \L\Mor_{\mM}(\X, \Y)) \simeq \Fun^\R(\mV^\op,\mW)(\lan_\V(\W),\L\Mor_{\mM}(\X, \Y)) \simeq$$$$ \mW(\W,\L\Mor_{\mM}(\X, \Y)(\V))\simeq \mW(\W,\L\Mul\Mor_{\mM}(\V, \X; \Y)) \simeq \Mul_{\mM}(\V, \X, \W; \Y).$$
Consequently, $\Mul_{\mM}(-, \X,-; \Y) \in \mP(\mV \times \mW) $ is the image of 
$\L\Mor_\mM(\X,\Y) \in \mV \ot \mW$ under the embedding $$\alpha^*: \mV \ot \mW \simeq \mP(\mV \ot \mW) \to \mP(\mV \times \mW).$$
Thus for every $\V \in \mV, \W \in \mW$ there is a canonical equivalence
\begin{equation}\label{eqppp}
\mV \ot \mW(\alpha(\V,\W), \L\Mor_{\mM}(\X, \Y)) \simeq \mM(\V \ot \X \ot \W,\Y) \simeq \Mul_{\mM}(\V,\X,\W;\Y).\end{equation}
	
\end{remark}

\begin{example}\label{Biolk}

Let $\mV^\ot \to \Ass, \mW^\ot \to \Ass$ be presentably monoidal $\infty$-categories and 
$\mM^\circledast \to \mV^\ot \times \mW^\ot$ a bitensored $\infty$-category
such that for every $\X \in \mM$ the unique $\mV, \mW$-linear functor $\mV \ot \mW \to \mM$ sending the tensor unit to $\X$ of Proposition \ref{Line} admits a right adjoint $\gamma_\X.$
Then $\mM^\circledast \to \mV^\ot \times \mW^\ot$ exhibits $\mM$ as bienriched in $\mV,\mW.$
The right adjoint sends every $\Y \in \mM$ to a left morphism object $\L\Mor_{\mM}(\X, \Y) \in \mV \ot \mW $ by equivalence (\ref{eqppp}).
In particular, every presentably bitensored $\infty$-category $\mM^\circledast \to \mV^\ot \times \mW^\ot$ is a bienriched $\infty$-category.



\end{example}

\begin{remark}
By Example \ref{Biolk} for every weakly bienriched $\infty$-category $\mM^\circledast \to \mV^\ot \times \mW^\ot$ and every $\X, \Y \in \mM$ the object
$\Gamma_\mM(\X, \Y) \in \mP(\Env(\mV)\times \Env(\mW)) \simeq \mP\Env(\mV) \otimes \mP\Env(\mW) $ is the left morphism object $\L\Mor_{\bar{\mM}}(\X,\Y)$ of $\X$ and $\Y$,
where $\bar{\mM}^\circledast \subset \mP\B\Env(\mM)^\circledast \to \mP\Env(\mV)^\ot \times \mP\Env(\mW)^\ot$ is the full weakly bienriched subcategory spanned by $\mM$.

\end{remark}

Moreover we use the following terminology:

\vspace{1mm}
\begin{definition}\label{Luel}Let $\kappa, \tau$ be regular cardinals
and $\phi: \mM^\circledast\to \mV^\ot \times \mW^\ot$ a weakly bienriched $\infty$-category.
	
\begin{enumerate}
\item We say that $\phi$ exhibits $\mM$ as left $\kappa$-enriched in $\mV$ if $\mV^\ot \to\Ass$ is a monoidal $\infty$-category compatible with $\kappa$-small colimits, $\phi$ is a left pseudo-enriched $\infty$-category and for every $\X,\Y \in \mM$ and $\W_1,...,\W_\m \in \mW$ for $\m \geq0$ the presheaf
$\Mul_\mM(-,\X, \W_1,...,\W_\m;\Y)$ on $\mM$ preserves $\kappa$-small limits.
	
\vspace{1mm}
		
\item We say that $\phi$ exhibits $\mM$ as right $\tau$-enriched in $\mW$ if $\mW^\ot \to\Ass$ is a monoidal $\infty$-category compatible with $\tau$-small colimits, $\phi$ is a right pseudo-enriched $\infty$-category and for every $\X,\Y \in \mM$ and $\V_1,...,\V_\n \in \mV$ for $\n \geq0$ the presheaf
$\Mul_\mM(\V_1,...,\V_\n,\X,-;\Y)$ on $\mM$ preserves $\tau$-small limits.
		
\vspace{1mm}
		
\item We say that $\phi$ exhibits $\mM$ as $\kappa, \tau$-bienriched in $\mV, \mW$ if it exhibits $\mM$ as left $\kappa$-enriched in $\mV$ and right $\tau$-enriched in $\mW.$
		
\end{enumerate}
	
\end{definition}

\begin{notation}Let $\kappa, \tau$ be small regular cardinals.
Let $$ ^\kappa\L\Enr, \R\Enr^\tau, {^\kappa\B\Enr}^{\tau} \subset \omega\B\Enr $$ be the full subcategories of left $\kappa$-enriched, right $\tau$-enriched, $\kappa, \tau$-bienriched $\infty$-categories, respectively.
	
\end{notation}

\begin{example}
	
A weakly bienriched $\infty$-category $\mM^\circledast\to \mV^\ot \times \mW^\ot$ exhibits $\mM$ as left $\emptyset$-enriched in $\mV$ if it exhibits $\mM$ as left pseudo-enriched in $\mV.$
A weakly bienriched $\infty$-category $\mM^\circledast\to \mV^\ot \times \mW^\ot$ exhibits $\mM$ as right $\emptyset$-enriched in $\mW$ if it exhibits $\mM$ as right pseudo-enriched in $\mW.$
	
\end{example}

\begin{example}\label{laulo2}
Let $\kappa$ be a small regular cardinal. 
A weakly bienriched $\infty$-category $\mM^\circledast \to \mV^\ot \times \mW^\ot$ is a left (right) tensored $\infty$-category compatible with $\kappa$-small colimits if and only if $\mM^\circledast \to \mV^\ot \times \mW^\ot$ is a left (right) $\kappa$-enriched $\infty$-category that admits left (right) tensors and $\kappa$-small conical colimits.
	
\end{example}

\begin{definition}Let $\sigma$ be the strongly inaccessible cardinal corresponding to the small universe and $\phi: \mM^\circledast\to \mV^\ot \times \mW^\ot$ a weakly bienriched $\infty$-category.
	
\begin{enumerate}
\item We say that $\phi$ exhibits $\mM$ as left quasi-enriched in $\mV$
if $\phi$ exhibits $\mM$ as left $\sigma$--enriched in $\mV$.
		
\item We say that $\phi$ exhibits $\mM$ as right quasi-enriched in $\mW$
if $\phi$ exhibits $\mM$ as right $\sigma$-enriched in $\mW$.
		
\item We say that $\phi$ exhibits $\mM$ as biquasi-enriched in $\mV, \mW$
if $\phi$ exhibits $\mM$ as $\sigma, \sigma$-bienriched in $\mV, \mW$.
\end{enumerate}
	
\end{definition}

\begin{notation}
Let $$ \L\Q\Enr, \R\Q\Enr, \B\Q\Enr \subset \omega\widehat{\B\Enr} $$ be the full subcategories of left quasi-enriched, right quasi-enriched, biquasi-enriched $\infty$-categories, respectively, whose underlying $\infty$-category is small.
	
\end{notation}

\begin{remark}
Let $\mV^\ot \to \Ass, \mW^\ot \to \Ass$ be presentably monoidal $\infty$-categories.
An $\infty$-category left quasi-enriched in $\mV$ is an $\infty$-category left enriched in $\mV$ if and only if it is locally small.
An $\infty$-category right quasi-enriched in $\mW$ is an $\infty$-category right enriched in $\mW$ if and only if it is locally small.
An $\infty$-category biquasi-enriched in $\mV, \mW$ is an $\infty$-category bienriched in $\mV, \mW$ if and only if it is locally small.
	
\end{remark}

\section{Enriched presheaves via tensored envelopes}\label{loal}

\subsection{Generalized enriched presheaves}

In this section we construct the $\infty$-category of
enriched presheaves as a localization of the enveloping $\infty$-category with closed biaction (Proposition \ref{pseuso}, Proposition \ref{unipor}, Theorem \ref{unipor3}).
For that we introduce the following concept that treats weak enrichment, pseudo-enrichment and enrichment on one footage.

\begin{definition}\label{locpa}

A small localization pair is a pair $(\mV^\ot \to \Ass, \rS)$,
where $\mV^\ot \to \Ass$ is a small $\infty$-operad and $\rS$ is a set of morphisms
of $\mP\Env(\mV)$ 
such that the saturated closure $\bar{\rS}$ of $\rS$ is closed under the tensor product 
and for every $\V \in \mV$ and $\f \in \rS$ 
the map $\mP\Env(\mV)(\f,\V)$ is an equivalence.
\end{definition}
\begin{remark}

Let $(\mV^\ot \to \Ass, \rS)$ be a small localization pair.
Since $\mP\Env(\mV)^\ot \to \Ass$ is a presentably monoidal $\infty$-category, $\rS$ is a set and $\bar{\rS}$ is closed under the tensor product, the
embedding $\rS^{-1}\mP\Env(\mV)^\ot \subset \mP\Env(\mV)^\ot$ of the full suboperad spanned by the $\rS$-local objects admits a left adjoint relative to $\Ass.$
Moreover by definition the embedding of $\infty$-operads $\mV^\ot \subset \mP\Env(\mV)^\ot $ induces an embedding of $\infty$-operads $\mV^\ot \subset \rS^{-1}\mP\Env(\mV)^\ot.$
\end{remark}


\begin{definition}
Let $(\mV^\ot \to \Ass, \rS)$, $(\mW^\ot \to \Ass, \T)$	be small localization pairs.

\begin{enumerate}
\item A weakly bienriched $\infty$-category $\mM^\circledast \to \mV^\ot \times \mW^\ot$ exhibits $\mM$ as left $\rS$-enriched in $\mV$
if for every $\f \in \rS, \X, \Y \in \mM, \W \in \mP\Env(\mW)$
the induced map $\mP\B\Env(\mM)(\f \ot \X \ot \W, \Y)$ is an equivalence. 

\item A weakly bienriched $\infty$-category $\mM^\circledast \to \mV^\ot \times \mW^\ot$ exhibits $\mM$ as right $\T$-enriched in $\mW$
if for every $\g \in \T, \X, \Y \in \mM, \V \in \mP\Env(\mV)$
the map $\mP\B\Env(\mM)(\V \ot \X \ot \g, \Y)$ is an equivalence. 

\item A weakly bienriched $\infty$-category $\mM^\circledast \to \mV^\ot \times \mW^\ot$ exhibits $\mM$ as $\rS, \T$-bienriched in $\mV, \mW$ if it exhibits $\mM$ as left $\rS$-enriched in $\mV$ and right $\T$-enriched in $\mW$.
\end{enumerate}	
\end{definition}

\begin{remark}
A weakly bienriched $\infty$-category $\mM^\circledast \to \mV^\ot \times \mW^\ot$ exhibits $\mM$ as $\rS, \T$-bienriched in $\mV, \mW$ if and only if 
for every $\f \in \bar{\rS}, \g \in \bar{\T}, \X, \Y \in \mM$ the map $\mP\B\Env(\mM)(\f \ot \X \ot \g, \Y)$ is an equivalence.

\end{remark}

\begin{notation}Let $(\mV^\ot \to \Ass, \rS)$, $(\mW^\ot \to \Ass, \T)$	be small localization pairs.
Let $$_\mV ^\rS\L\Enr_{\mW}, \ {_\mV\R\Enr}^{\T}_{\mW}, {^\rS_\mV\B\Enr^\T_{\mW}} \subset {_\mV\omega\B\Enr}_{\mW} $$ be the full subcategories of left $\rS$-enriched, right $\T$-enriched, $\rS, \T$-bienriched $\infty$-categories, respectively.

\end{notation}

\begin{example}Let $\mU^\ot \to \Ass$ be a small $\infty$-operad.
The pair $(\mU^\ot \to \Ass, \emptyset)$ is a small localization pair,
where $\emptyset^{-1}\mP\Env(\mU)^\ot= \mP\Env(\mU)^\ot.$
Let $(\mV^\ot \to \Ass, \rS)$, $(\mW^\ot \to \Ass, \T)$	be small localization pairs.
Then $$_\mV ^\rS\L\Enr_{\mW}=  {^\rS_\mV\B\Enr^\emptyset_{\mW}}, \ {_\mV\R\Enr}^{\T}_{\mW}= {^\emptyset_\mV\B\Enr^\T_{\mW}}.$$


\end{example}

\begin{notation}\label{enr}
Let $\kappa$ be a small regular cardinal and $\mV^\ot \to \Ass$ a small monoidal $\infty$-category compatible with $\kappa$-small colimits. 
Let $\Enr_\kappa$ be the set of morphisms
$$\V'_1, ... , \V'_\ell, \V_1, ..., \V_\n, \V''_1, ... , \V''_\bk \to \V'_1, ... , \V'_\ell,\V_1 \ot ...\ot \V_\n, \V''_1, ... , \V''_\bk,$$ 
$$ \V'_1, ... , \V'_\ell,\colim(\iota \circ \F), \V''_1, ... , \V''_\bk \to \V'_1, ... , \V'_\ell, \iota(\colim(\F)),\V''_1, ... , \V''_\bk$$
for $\V'_1, ... , \V'_\ell, \V_1,...,\V_\n, \V''_1, ... , \V''_\bk \in \mV$ and $\ell,\n,\bk \geq0, $ where $\iota: \mV \subset \mP\Env(\mV)$ is the embedding, $\F: \K \to \mV$ is a functor and $\K$ is $\kappa$-small.
\end{notation}


\begin{notation}Let $\kappa, \tau$ be small regular cardinals.

\begin{enumerate}
\item If $\mV^\ot \to \Ass$ is a small monoidal $\infty$-category compatible with $\kappa$-small colimits and $\mW^\ot \to \Ass$ is a small $\infty$-operad, let $\L\Enr_\kappa:=(\Enr_\kappa,\emptyset)$.

\item If $\mV^\ot \to \Ass$ is a small $\infty$-operad and $\mW^\ot \to \Ass$ is a small monoidal $\infty$-category compatible with $\kappa$-small colimits, let $\R\Enr_\kappa:=(\emptyset,\Enr_\kappa)$.

\item If $\mV^\ot \to \Ass, \mW^\ot \to \Ass$ are small monoidal $\infty$-categories  compatible with $\kappa$-small colimits, $\tau$-small colimits, respectively, let $\B\Enr_{\kappa,\tau}:=(\Enr_\kappa,\Enr_\tau)$.
If $\kappa,\tau$ are the the strongly inaccessible cardinal corresponding to the small universe, we drop $\kappa, \tau$, respectively.



\end{enumerate}	

\end{notation}

\begin{example}\label{Exaso}
Let $\kappa$ be a small regular cardinal.

\begin{enumerate}
\item For every small monoidal $\infty$-category $\mV^\ot \to \Ass$ compatible with $\kappa$-small colimits the pair $(\mV^\ot \to \Ass, \Enr_\kappa)$ is a small localization pair, where $\Enr_\kappa^{-1}\mP\Env(\mV)^\ot= \Ind_\kappa(\mV)^\ot$,
which is a monoidal localization of $\mP\Env(\mV)^\ot \to \Ass.$
A weakly bienriched $\infty$-category $\mM^\circledast \to \mV^\ot \times \mW^\ot$ exhibits $\mM$ as left $\Enr_\kappa$-enriched in $\mV$ if and only if
it exhibits $\mM$ as left $\kappa$-enriched in $\mV$.

\item If $\mV^\ot \to\Ass$ is a presentably monoidal $\infty$-category,
a locally small weakly bienriched $\infty$-category $\mM^\circledast \to \mV^\ot \times \mW^\ot$ exhibits $\mM$ as left $\Enr$-enriched in $\mV$ if and only if it exhibits $\mM$ as left enriched in $\mV$.
\end{enumerate}
\end{example}

\begin{definition}
Let $(\mV^\ot \to \Ass, \rS)$, $(\mV'^\ot \to \Ass, \rS')$ be small localization pairs.	
A map of localization pairs $(\mV^\ot \to \Ass, \rS) \to (\mV'^\ot \to \Ass, \rS')$
is a map of $\infty$-operads $\mV^\ot \to \mV'^\ot$ such that the induced left adjoint monoidal functor $\mP\Env(\mV) \to \mP\Env(\mV')$
sends morphisms of $\rS$ to morphisms of $\rS'.$

\end{definition}

\begin{example}



Let $\mV^\ot \to \Ass, \mV'^\ot \to \Ass$ be small monoidal $\infty$-categories compatible with $\kappa$-small colimits, compatible with $\kappa'$-small colimits,
respectively, for small regular cardinals $\kappa < \kappa'$.
A map of $\infty$-operads $\mV^\ot \to \mV'^\ot$ is a map of localization pairs
$(\mV^\ot \to \Ass, \Enr_\kappa) \to (\mV'^\ot \to \Ass, \Enr_{\kappa'})$
if and only if it is a monoidal functor preserving $\kappa$-small colimits.

\end{example}

\begin{lemma}\label{Lemut}

Let $\mC, \mD$ be presentable $\infty$-categories, $\rS$ a set of morphisms of $\mC$ and $\T$ a set of morphisms of $\mD.$
Let $\bar{\rS}, \bar{\T}$ be the saturated classes generated by $\rS, \T$
and $\bar{\rS} \ot \bar{\T}$ the image of the set
$\bar{\rS} \times \bar{\T}$ under the universal functor $\mC \times \mD \to \mC\ot \mD.$
The left adjoint functor $\mC \ot \mD \to \rS^{-1}(\mC)\ot \T^{-1}\mD$
admits a fully faithful right adjoint whose essential image are the $\bar{\rS} \ot \bar{\T}$-local objects.

\end{lemma}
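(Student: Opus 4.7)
The plan is to identify, under the equivalence $\mC \otimes \mD \simeq \Fun^\R(\mC^\op,\mD)$ recalled earlier in the paper, the canonical functor $L_\mC \otimes L_\mD$ with an explicit reflection and to directly compute its local objects. Recall that under this equivalence the universal functor $\alpha:\mC\times\mD\to\mC\otimes\mD$ sends $(c,d)$ to $\lan_c(d)$, the left adjoint to evaluation at $c$, so that for any $\Phi \in \Fun^\R(\mC^\op,\mD)$ one has a natural equivalence $(\mC\otimes\mD)(\alpha(c,d),\Phi)\simeq \mD(d,\Phi(c))$.

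First I will characterise the $\bar{\rS}\otimes\bar{\T}$-local objects. A morphism $(f,g):(c,d)\to(c',d')$ in $\mC \times \mD$ with $f \in \bar{\rS}$, $g\in\bar{\T}$ is carried by $\alpha$ to a morphism $\alpha(f,g)$ whose precomposition map on $\Phi$ identifies with the composite
$$\mD(d',\Phi(c'))\xrightarrow{\Phi(f)_*}\mD(d',\Phi(c))\xrightarrow{g^*}\mD(d,\Phi(c)).$$
Requiring this to be an equivalence for all such $(f,g)$ specialises (taking $g=\id_d$ and varying $d$) to the condition that $\Phi(f)$ is an equivalence for all $f \in \bar{\rS}$, i.e.\ that $\Phi$ factors through $(\rS^{-1}\mC)^\op$; and (taking $f = \id_c$ and varying $c$) to the condition that $\Phi(c)$ is $\bar{\T}$-local for every $c$, i.e.\ that $\Phi$ takes values in $\T^{-1}\mD$. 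Conversely, these two conditions together imply the above composite is always an equivalence. Hence the $\bar{\rS}\otimes\bar{\T}$-local objects are precisely the $\Phi\in\Fun^\R(\mC^\op,\mD)$ inverting $\bar{\rS}$ and landing in $\T^{-1}\mD$.

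I then factor $L_\mC \otimes L_\mD$ as $(\id_{\rS^{-1}\mC}\otimes L_\mD)\circ(L_\mC\otimes\id_\mD)$ and construct a fully faithful right adjoint to each factor. Under the $\Fun^\R$-description, the right adjoint of $L_\mC\otimes\id_\mD$ is precomposition with $L_\mC^\op:\mC^\op \to (\rS^{-1}\mC)^\op$, fully faithful with essential image the $\bar{\rS}$-inverting functors; dually, the right adjoint of $\id_{\rS^{-1}\mC}\otimes L_\mD$ is post-composition with the inclusion $\iota_\mD:\T^{-1}\mD\hookrightarrow\mD$, fully faithful with essential image the functors valued in $\T^{-1}\mD$. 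Composing these two fully faithful right adjoints yields a fully faithful right adjoint to $L_\mC \otimes L_\mD$ whose essential image is the intersection of the two subcategories, which by the previous paragraph is the full subcategory of $\bar{\rS}\otimes\bar{\T}$-local objects.

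The main technical point is to verify in the factor step that the factorisation $\bar{\Phi}:(\rS^{-1}\mC)^\op \to \mD$ of a $\bar{\rS}$-inverting $\Phi\in\Fun^\R(\mC^\op,\mD)$ is again a right adjoint; this follows from the fact that the fully faithful inclusion $\iota_\mC^\op:(\rS^{-1}\mC)^\op \hookrightarrow \mC^\op$ is a left adjoint to $L_\mC^\op$, so limits in $(\rS^{-1}\mC)^\op$ are computed as $L_\mC^\op$ applied to the $\iota_\mC^\op$-lifts of limits in $\mC^\op$, together with $L_\mC^\op \circ \iota_\mC^\op\simeq\id$.
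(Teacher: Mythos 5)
Your proposal is correct and follows essentially the same route as the paper: both arguments pass to the model $\mC\ot\mD\simeq\Fun^\R(\mC^\op,\mD)$, use the formula $(\mC\ot\mD)(\alpha(c,d),\Phi)\simeq\mD(d,\Phi(c))$ to show that $\bar{\rS}\ot\bar{\T}$-locality splits into the two conditions "$\Phi$ inverts $\bar{\rS}$" and "$\Phi$ lands in $\T^{-1}\mD$", and then identify the localization with the evident reflection onto such functors. Your factorization of $L_\mC\ot L_\mD$ into two one-variable localizations, and your check that the factored functor $\bar{\Phi}$ is again a right adjoint, are just a slightly more explicit packaging of the embedding $\Fun^\R(\rS^{-1}(\mC)^\op,\T^{-1}\mD)\subset\Fun^\R(\mC^\op,\mD)$ that the paper asserts directly.
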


\begin{proof}

By \cite[Proposition 4.8.1.17.]{lurie.higheralgebra} there is an equivalence
$\mC \ot \mD \simeq \Fun^\R(\mC^\op, \mD)$,
where the right hand side is the full subcategory of right adjoint functors $\mC^\op \to \mD.$
The functor $\mC \ot \mD \to \rS^{-1}(\mC)\ot \T^{-1}\mD$ is left adjoint to the embedding $\Fun^\R(\rS^{-1}(\mC)^\op, \T^{-1}\mD) \subset \Fun^\R(\mC^\op, \T^{-1}\mD) \to \Fun^\R(\mC^\op, \mD)$
whose essential image are the small limits preserving functors $\mC^\op \to \mD$
that land in $\T^{-1}\mD$ and invert $\rS.$

The functor $\mC \times \mD \to \mC\ot \mD \simeq \Fun^\R(\mC^\op, \mD)$
sends $\X, \Y$ to $\mC^\op \xrightarrow{\mC(-,\X)}\mS \xrightarrow{(-)\ot \Y} \mD.$
An object $\F \in \mC \ot \mD \simeq \Fun^\R(\mC^\op, \mD)$ is local with respect to
$\bar{\rS} \ot \bar{\T}$ if and only if it is local with respect to $\f \ot \Y, \X \ot \g$ for any $\f \in \bar{\rS}, \g \in \bar{\T}, \X \in \mC, \Y \in \mD$.
There are canonical equivalences $$\mC \ot \mD(\f \ot \Y,\F) \simeq \Fun^\R(\mC^\op, \mD)(\mC(-,\f) \ot \Y,\F) \simeq \mD(\Y, \F(\f)),$$$$ \mC \ot \mD(\X \ot \g,\F) \simeq \Fun^\R(\mC^\op, \mD)(\mC(-,\X) \ot \g,\F) \simeq \mD(\g, \F(\X)).$$
So $\F$ is local with respect to
$\f \ot \Y$ for any $\f \in \bar{\rS}, \Y \in \mD$ if and only if $\F$ inverts maps of $\bar{\rS}$ (equivalently of $\rS$).
So $\F$ is local with respect to
$\X \ot \g$ for any $\X \in \mC, \g \in \bar{\T}$ if and only if $\F$ lands in $\T^{-1}\mD$.

\end{proof}

\begin{corollary}\label{corg} 

Let $(\mV^\ot \to \Ass, \rS)$, $(\mW^\ot \to \Ass, \T)$	be small localization pairs.

\begin{enumerate}
\item A weakly bienriched $\infty$-category $\mM^\circledast \to \mV^\ot \times \mW^\ot$ exhibits $\mM$ as left $\rS$-enriched if for every $ \X, \Y \in \mM, \W_1,...,\W_\m \in \mW$ for $\m \geq 0$ the left multi-morphism object 
$$ \L\Mul\Mor_{\bar{\mM}}(\W_1,...,\W_\m,\X;\Y)$$ belongs to $\rS^{-1}\mP\Env(\mV).$

\item A weakly bienriched $\infty$-category $\mM^\circledast \to \mV^\ot \times \mW^\ot$ exhibits $\mM$ as right $\T$-enriched if for every $ \X, \Y \in \mM, \V_1,...,\V_\n \in \mV$ for $\n \geq 0$ the right multi-morphism object 
$$ \R\Mul\Mor_{\bar{\mM}}(\V_1,...,\V_\n,\X;\Y)$$ belongs to $\T^{-1}\mP\Env(\mW).$

\item A weakly bienriched $\infty$-category $\mM^\circledast \to \mV^\ot \times \mW^\ot$ exhibits $\mM$ as $\rS, \T$-bienriched if for every $ \X, \Y \in \mM, \V_1,...,\V_\n \in \mV$ for $\n \geq 0$ the morphism object 
$ \Mor_{\bar{\mM}}(\X,\Y)$ belongs to $\rS^{-1}\mP\Env(\mV) \ot \T^{-1}\mP\Env(\mW).$

\end{enumerate}

\end{corollary}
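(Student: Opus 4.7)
The strategy is to translate the $\rS$-, $\T$-, and $(\rS,\T)$-enrichedness conditions into locality conditions inside $\mP\Env(\mV)$, $\mP\Env(\mW)$, and $\mP\Env(\mV) \ot \mP\Env(\mW)$ respectively, by exploiting the presentability and closedness of the bitensored envelope $\mP\B\Env(\mM)^\circledast \to \mP\Env(\mV)^\ot \times \mP\Env(\mW)^\ot$, and then to deduce (3) from (1)--(2) using Lemma~\ref{Lemut}.

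For (1): fix $\X, \Y \in \mM$ and $\W \in \mP\Env(\mW)$. Since $\mP\B\Env(\mM)^\circledast$ is closed bitensored with presentable fibers, the functor $(-) \ot \X \ot \W \colon \mP\Env(\mV) \to \mP\B\Env(\mM)$ admits a right adjoint $[\X \ot \W, \Y]$, giving $\mP\B\Env(\mM)(\V \ot \X \ot \W, \Y) \simeq \mP\Env(\mV)(\V, [\X \ot \W, \Y])$. Hence left $\rS$-enrichedness is exactly the condition that $[\X \ot \W, \Y]$ is $\rS$-local for every $\W \in \mP\Env(\mW)$. When $\W = \W_1 \ot \cdots \ot \W_\m$ with $\W_i \in \mW$, this object coincides with $\L\Mul\Mor_{\bar{\mM}}(\W_1,\dots,\W_\m,\X;\Y)$, which is $\rS$-local by assumption. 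Now the assignment $\W \mapsto [\X \ot \W, \Y]$ defines a right adjoint $\mP\Env(\mW)^\op \to \mP\Env(\mV)$, hence sends small colimits in $\mP\Env(\mW)$ to small limits in $\mP\Env(\mV)$; since $\rS^{-1}\mP\Env(\mV) \subset \mP\Env(\mV)$ is closed under small limits and $\mP\Env(\mW)$ is generated under small colimits by $\Env(\mW)$ (whose objects are precisely the tensor products $\W_1 \ot \cdots \ot \W_\m$), $\rS$-locality propagates from the generators to all of $\mP\Env(\mW)$. Part (2) follows by symmetry, or by applying (1) to $\mM^\rev$ via Notation~\ref{invo} and Remark~\ref{switch}.

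For (3): under the equivalence $\mP\Env(\mV) \ot \mP\Env(\mW) \simeq \mP(\Env(\mV) \times \Env(\mW))$ the morphism object $\Mor_{\bar{\mM}}(\X,\Y) = \Gamma_{\bar{\mM}}(\X,\Y)$ corresponds to the presheaf $(\V_0,\W_0) \mapsto \mP\B\Env(\mM)(\V_0 \ot \X \ot \W_0, \Y)$. By Lemma~\ref{Lemut}, membership of $\Mor_{\bar{\mM}}(\X,\Y)$ in $\rS^{-1}\mP\Env(\mV) \ot \T^{-1}\mP\Env(\mW)$ is equivalent to locality with respect to every map $\f \ot \W_0$ and $\V_0 \ot \g$ for $\f \in \bar{\rS}$, $\g \in \bar{\T}$, $\V_0 \in \mP\Env(\mV)$, $\W_0 \in \mP\Env(\mW)$. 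Evaluating mapping spaces, these two locality conditions are precisely that $\mP\B\Env(\mM)(\f \ot \X \ot \W_0, \Y)$ and $\mP\B\Env(\mM)(\V_0 \ot \X \ot \g, \Y)$ are equivalences for all such data, i.e., left $\rS$-enrichedness combined with right $\T$-enrichedness, which is $\rS, \T$-bi-enrichedness.

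The main obstacle is the continuity step in (1): extending $\rS$-locality of $[\X \ot \W, \Y]$ from the generating objects $\W_1 \ot \cdots \ot \W_\m$ to arbitrary $\W \in \mP\Env(\mW)$. This is the only place where the closedness of the biaction on $\mP\B\Env(\mM)$ together with the specific generation of $\mP\Env(\mW)$ by the monoidal envelope $\Env(\mW)$ is genuinely used; once it is established, the remaining statements are direct unwindings of Construction~\ref{Enros} and Lemma~\ref{Lemut}.
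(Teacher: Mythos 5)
Your proof is correct and follows essentially the same route as the paper: parts (1) and (2) are the definitional unwinding (which the paper dismisses as clear, while you rightly make explicit the extension from the generators $\W_1\ot\cdots\ot\W_\m$ of $\Env(\mW)$ to all of $\mP\Env(\mW)$ via compatibility of the biaction with colimits and closure of $\rS$-local objects under limits), and part (3) is exactly the application of Lemma~\ref{Lemut} after identifying $\mP\B\Env(\mM)(\f\ot\X\ot\g,\Y)$ with a mapping space out of $\f\ot\g$ into the morphism object.
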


\begin{proof}
Statements (1) and (2) are clear.
We prove (3): for any $\X, \Y \in \mP\B\Env(\mM), \f \in \bar{\rS}, \g \in \bar{\T} $
the map $\mP\B\Env(\mM)(\f \ot \X \ot \g, \Y) $ identifies with the map $\mP\Env(\mV) \otimes \mP\Env(\mW)(\f \ot \g, \Mor_{\mP\B\Env(\mM)}(\X,\Y))$.
So the result follows from Lemma \ref{Lemut}.	

\end{proof}


\begin{notation}\label{locpat} Let $(\mV^\ot \to \Ass, \rS), (\mW^\ot \to \Ass, \T)$ be small localization pairs (Definition \ref{locpa}) and $\mM^\circledast \to \mV^\ot \times \mW^\ot$ a small weakly bienriched $\infty$-category.
	
\begin{enumerate}
\item Let $$\mP\B\Env(\mM)^\circledast_{\rS,\T} \subset \rS^{-1}\mP\Env(\mV)^\ot\times_{\mP\Env(\mV)^\ot}\mP\B\Env(\mM)^\circledast\times_{\mP\Env(\mW)^\ot}\T^{-1}\mP\Env(\mW)^\ot$$
be the full weakly bienriched subcategory spanned by the presheaves on
$\B\Env(\mM)$ that are local with respect to the set of morphisms of the form $\f \ot \X \ot \W_1 ... \ot \W_\m $ and $\V_1 \ot ... \ot \V_\n \ot \X \ot \g$ 
for $\f \in \rS, \g \in \T, \X \in \mM$, $\V_1,...,\V_\n \in \mV, \W_1,...,\W_\m \in \mW$ for $\n, \m \geq0$.

\item Let $$\mP\widetilde{\B\Env}(\mM)^\circledast_{\rS,\T}:= \mV^\ot \times_{\rS^{-1}\mP\Env(\mV)^\ot}\mP\B\Env(\mM)^\circledast_{\rS,\T}\times_{\T^{-1}\mP\Env(\mW)^\ot} \mW^\ot.$$
		
If $\rS=\emptyset$ or $\T=\emptyset$, we drop $\rS,\T$ from the notation, respectively.
		
\end{enumerate}	
	
\end{notation}

\begin{remark}
As a consequence of Remark \ref{switch} there is a canonical equivalence
$$(\mP\B\Env(\mM)_{\rS,\T}^\rev)^\circledast \simeq \mP\B\Env(\mM^\rev)_{\T,\rS}^\circledast.$$
	
\end{remark}

\begin{remark}\label{reipp}
Since $\mP\B\Env(\mM)$ is presentable, the embedding $\mP\B\Env(\mM)_{\rS,\T} \subset \mP\B\Env(\mM)$ admits a left adjoint.
The full subcategory $\mP\B\Env(\mM)_{\rS,\T} \subset \mP\B\Env(\mM)$ precisely consists of the presheaves on $\B\Env(\mM)$ that are local with respect to the collection $\mQ$ of morphisms of the form $\f \ot \X \ot \g $ for $\f \in \bar{\rS}, \g \in \bar{\T}$ and $\X \in \mM.$ The biaction of $\mP\Env(\mV), \mP\Env(\mW)$ on $\mP\B\Env(\mM)$ sends morphisms of $\bar{\rS}, \mQ, \bar{\T}$ to $\mQ$.
This implies that the embedding $\mP\B\Env(\mM)^\circledast_{\rS, \T} \subset \mP\B\Env(\mM)^\circledast$ admits an enriched left adjoint such that the unit lies over the units of the localizations $\rS^{-1}\mP\Env(\mV)^\ot \subset \mP\Env(\mV)^\ot, \T^{-1}\mP\Env(\mW)^\ot \subset \mP\Env(\mW)^\ot$. In particular,
$\mP\B\Env(\mM)^\circledast_{\rS, \T} \to \rS^{-1}\mP\Env(\mV)^\ot \times \T^{-1}\mP\Env(\mW)^\ot $ is a presentably bitensored $\infty$-category.
	
\end{remark}

\begin{remark}\label{embil}
	
A weakly bienriched $\infty$-category $\mM^\circledast \to \mV^\ot \times \mW^\ot$ is a $\rS, \T$-bienriched $\infty$-category if and only if
$\mM^\circledast \subset \mP\B\Env(\mM)^\circledast$ lands in
$\mP\B\Env(\mM)^\circledast_{\rS,\T}$.
\end{remark}
\begin{remark}\label{Functol}
Let $\mM^\circledast \to \mN^\circledast$ be an enriched functor between absolute small weakly bienriched $\infty$-categories lying over maps of small localization pairs
$$(\mV^\ot \to \Ass, \rS)\to (\mV'^\ot \to \Ass, \rS'), (\mW^\ot \to \Ass, \T)\to (\mW'^\ot \to \Ass, \T').$$
The induced left adjoint linear functor $\mP\B\Env(\mM)^\circledast \to \mP\B\Env(\mN)^\circledast$ preserves local equivalences for the corresponding localizations and so descends to a left adjoint linear functor $$\mP\B\Env(\mM)_{\rS,\T}^\circledast \to \mP\B\Env(\mN)_{\rS',\T'}^\circledast.$$
	
\end{remark}

\begin{lemma}\label{Leom}
Let $(\mV^\ot \to \Ass, \rS), (\mW^\ot \to \Ass, \T)$ be localization pairs, $\mM^\circledast \to \mV^\ot \times \mW^\ot$ an absolute small weakly bienriched $\infty$-category and $\Y \in \mP\B\Env(\mM).$ The following conditions are equivalent:
	
\begin{enumerate}
\item The object $\Y$ belongs to $\mP\B\Env(\mM)_{\rS,\T}.$
		
\item For every $\X \in \mM$ the object $\Mor_{\mP\B\Env(\mM)}(\X,\Y)$ belongs to $\rS^{-1}\mP\Env(\mV) \ot \T^{-1}\mP\Env(\mW).$
		
\item The object $\Y$ is local with respect to all morphisms $\f \ot \X \ot \g $ for $\f \in \bar{\rS}, \g \in \bar{\T}$ and $\X \in \mP\B\Env(\mM).$
		
\vspace{1mm}
\item For any $\X \in \mP\B\Env(\mM)$ the object $\Mor_{\mP\B\Env(\mM)}(\X,\Y)$ belongs to $\rS^{-1}\mP\Env(\mV) \ot \T^{-1}\mP\Env(\mW).$
\end{enumerate}
	
\end{lemma}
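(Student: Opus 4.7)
The lemma is essentially a formal combination of two ingredients: the fact that $\mP\B\Env(\mM)$ is presentably bitensored over $\mP\Env(\mV),\mP\Env(\mW)$ with the biaction generated by $\B\Env(\mM)$ under colimits, together with Lemma \ref{Lemut} identifying $\rS^{-1}\mP\Env(\mV)\ot\T^{-1}\mP\Env(\mW)$ as the full subcategory of $\bar{\rS}\ot\bar{\T}$-local objects. My plan is to prove the cycle $(1)\Rightarrow(3)\Rightarrow(4)\Rightarrow(2)\Rightarrow(1)$, with the adjunction equivalences doing most of the work.

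The implication $(3)\Rightarrow(1)$ is tautological since condition $(1)$ is a special case. For $(1)\Rightarrow(3)$, I factor any morphism $\f\ot\X\ot\g$ as $(\f\ot\X\ot\cod(\g))\circ(\dom(\f)\ot\X\ot\g)$ and apply 2-out-of-3 inside the saturated class $L_\Y=\{h\in\Ar(\mP\B\Env(\mM)):\mP\B\Env(\mM)(h,\Y)\text{ is an equivalence}\}$. To populate $L_\Y$, I run four saturation arguments: for fixed $\X\in\mM$ and $\W'\in\Env(\mW)$ the class $\{\f\in\Ar(\mP\Env(\mV)):\f\ot\X\ot\W'\in L_\Y\}$ is saturated and contains $\rS$ by hypothesis $(1)$, hence contains $\bar{\rS}$; for fixed $\X\in\mM$ and $\f\in\bar{\rS}$ the class $\{\W\in\mP\Env(\mW):\f\ot\X\ot\W\in L_\Y\}$ is closed under colimits and contains $\Env(\mW)$, hence equals $\mP\Env(\mW)$; symmetrically on the other side. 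The two factors of the factorization then lie in $L_\Y$ for $\f\in\bar{\rS}, \g\in\bar{\T}, \X\in\mM$. Finally, since $\mP\B\Env(\mM)$ is generated under colimits by the representables $\V'\ot\X'\ot\W'$ with $\X'\in\mM$ (Remark \ref{huiii}), and the closure of $\bar{\rS},\bar{\T}$ under the tensor product gives $\f\ot\V'\in\bar{\rS}$, $\W'\ot\g\in\bar{\T}$, the colimit-closed class of $\X\in\mP\B\Env(\mM)$ with $\f\ot\X\ot\g\in L_\Y$ contains all of $\mP\B\Env(\mM)$.

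The equivalence $(3)\Leftrightarrow(4)$ is a direct consequence of the closed biaction: for any $\X\in\mP\B\Env(\mM), \f\in\bar{\rS}, \g\in\bar{\T}$ there is a natural equivalence
$$\mP\B\Env(\mM)(\f\ot\X\ot\g,\Y)\simeq(\mP\Env(\mV)\ot\mP\Env(\mW))(\f\ot\g,\Mor_{\mP\B\Env(\mM)}(\X,\Y)),$$
and by Lemma \ref{Lemut}, $\Mor_{\mP\B\Env(\mM)}(\X,\Y)$ lies in $\rS^{-1}\mP\Env(\mV)\ot\T^{-1}\mP\Env(\mW)$ if and only if it is $\bar{\rS}\ot\bar{\T}$-local, if and only if the displayed maps are equivalences for all such $\f,\g$. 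The implication $(4)\Rightarrow(2)$ is just restriction, and for $(2)\Rightarrow(4)$ I write $\X\in\mP\B\Env(\mM)$ as a colimit of representables $\V'\ot\X'\ot\W'$ with $\X'\in\mM$, express $\Mor_{\mP\B\Env(\mM)}(\X,\Y)$ as the corresponding limit (which stays inside $\rS^{-1}\mP\Env(\mV)\ot\T^{-1}\mP\Env(\mW)$ because this is a reflective full subcategory), and observe that each $\Mor_{\mP\B\Env(\mM)}(\V'\ot\X'\ot\W',\Y)$ corresponds under $\mP\Env(\mV)\ot\mP\Env(\mW)\simeq\mP_\rep(\Env(\mV)\times\Env(\mW))$ to the shifted bifunctor $(\V,\W)\mapsto\Mor_{\mP\B\Env(\mM)}(\X',\Y)(\V\ot\V',\W'\ot\W)$, which is $\bar{\rS}\ot\bar{\T}$-local precisely because $\bar{\rS},\bar{\T}$ are closed under tensor with arbitrary objects.

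The only bookkeeping hurdle is arranging the two kinds of closure — saturation in the arrow category and closure under tensoring by objects — in the right order; everything else is formal. Since $(1)\Rightarrow(3)\Leftrightarrow(4)\Rightarrow(2)$ and conversely $(2)\Rightarrow(4)\Rightarrow(3)\Rightarrow(1)$, the four conditions are equivalent.
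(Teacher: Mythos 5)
Your proof is correct and uses the same two ingredients as the paper's (very terse) argument: Lemma \ref{Lemut} together with the bitensor--morphism-object adjunction to pass between locality of $\Y$ and membership of $\Mor_{\mP\B\Env(\mM)}(\X,\Y)$ in $\rS^{-1}\mP\Env(\mV)\ot\T^{-1}\mP\Env(\mW)$, and the fact that $\mP\B\Env(\mM)$ is generated by $\mM$ under small colimits and the biaction with $\bar{\rS},\bar{\T}$ closed under tensoring to bootstrap from the generating morphisms to the full saturated classes. The only cosmetic difference is that you close the cycle via $(2)\Leftrightarrow(4)$ where the paper invokes Lemma \ref{Lemut} for $(1)\Leftrightarrow(2)$; your expanded saturation bookkeeping is exactly what the paper's one-sentence justification of $(1)\Leftrightarrow(3)$ compresses.
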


\begin{proof}Lemma \ref{Lemut} implies that (1) is equivalent to (2), and
that (3) is equivalent to (4).
Condition (1) is equivalent to (3) since $\bar{\rS}, \bar{\T}$ are closed under the tensor product and $\mP\B\Env(\mM)$ is generated by $\mM$ under small colimits and the $\mP\Env(\mV)$, $\mP\Env(\mW)$-biaction.
	
\end{proof}

\begin{proposition}\label{pseuso}
Let $(\mV^\ot \to \Ass, \rS), (\mW^\ot \to \Ass, \T)$ be small localization pairs, $\mM^\circledast \to \mV^\ot \times \mW^\ot$ a small $\rS, \T$-bienriched $\infty$-category and $\rho: \mN^\circledast \to \rS^{-1}\mP\Env(\mV)^\ot \times \T^{-1}\mP\Env(\mW)^\ot$ a bitensored $\infty$-category compatible with small colimits. 
	
\begin{enumerate}
\item The enriched embedding $\mM^\circledast \subset \mP\B\Env(\mM)^\circledast_{\rS,\T}$ induces a functor
$$\Enr\Fun_{\rS^{-1}\mP\Env(\mV),\T^{-1}\mP\Env(\mW)}(\mP\B\Env(\mM)_{\rS,\T},\mN) \to \Enr\Fun_{\mV,\mW}(\mM,\mN)$$
that admits a fully faithful left adjoint.
The left adjoint lands in the full subcategory $$\LinFun^\L_{\rS^{-1}\mP\Env(\mV),\T^{-1}\mP\Env(\mW)}(\mP\B\Env(\mM)_{\rS,\T},\mN).$$
		
\item The following induced functor is an equivalence: $$\LinFun^\L_{\rS^{-1}\mP\Env(\mV),\T^{-1}\mP\Env(\mW)}(\mP\B\Env(\mM)_{\rS,\T},\mN) \to \Enr\Fun_{\mV,\mW}(\mM,\mN).$$
		
\item There is a canonical $\rS^{-1}\mP\Env(\mV),\T^{-1}\mP\Env(\mW)$-linear equivalence
$$ \mP\B\Env(\mM)_{\rS,\T}^\circledast \simeq(\rS^{-1}\mP\Env(\mV) \ot_{\mP\Env(\mV)}\mP\B\Env(\mM) \ot_{\mP\Env(\mW)}\T^{-1}\mP\Env(\mW))^\circledast.$$
\end{enumerate}

\end{proposition}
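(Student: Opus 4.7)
The strategy is to reduce everything to the universal property of $\mP\B\Env(\mM)^\circledast$ provided by Corollary \ref{envvcor}(2), and then to show that the localization $\mP\B\Env(\mM)^\circledast \to \mP\B\Env(\mM)^\circledast_{\rS,\T}$ is universal for $\rS, \T$-inverting cocontinuous linear functors.

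First I would observe that $\mP\B\Env(\mM)^\circledast_{\rS,\T} \to \rS^{-1}\mP\Env(\mV)^\ot \times \T^{-1}\mP\Env(\mW)^\ot$ is itself a presentably bitensored $\infty$-category. Indeed, $\mP\B\Env(\mM) \to \mP\B\Env(\mM)_{\rS,\T}$ is a smashing enriched localization (the class of morphisms inverted is closed under biaction by $\mP\Env(\mV), \mP\Env(\mW)$, which is the content of the $\bar{\rS}, \bar{\T}$ being closed under tensor products in Definition \ref{locpa}), and the biaction descends to the localized universes $\rS^{-1}\mP\Env(\mV), \T^{-1}\mP\Env(\mW)$ by construction of Notation \ref{locpat}.

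Next, view $\mN^\circledast$ as presentably bitensored over $\mP\Env(\mV), \mP\Env(\mW)$ by restriction along the localization monoidal functors $\mP\Env(\mV) \to \rS^{-1}\mP\Env(\mV)$ and $\mP\Env(\mW) \to \T^{-1}\mP\Env(\mW)$. Corollary \ref{envvcor}(2) applied to the small weakly bi-enriched $\infty$-category $\mM^\circledast$ and the target $\mN^\circledast$ yields an equivalence
$$\LinFun^\L_{\mP\Env(\mV),\mP\Env(\mW)}(\mP\B\Env(\mM), \mN) \simeq \Enr\Fun_{\mV,\mW}(\mM, \mN).$$
Now every left adjoint $\mP\Env(\mV), \mP\Env(\mW)$-linear functor $\mP\B\Env(\mM) \to \mN$ automatically inverts every morphism of the form $\f \ot \X \ot \g$ with $\f \in \bar{\rS}, \g \in \bar{\T}, \X \in \mP\B\Env(\mM)$, because the $\mP\Env(\mV)$-action on $\mN$ factors through $\rS^{-1}\mP\Env(\mV)$ (so $\f \ot (-)$ becomes an equivalence), and symmetrically for $\T$. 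Hence by the universal property of the smashing localization $\mP\B\Env(\mM) \to \mP\B\Env(\mM)_{\rS,\T}$ (and using Lemma \ref{Leom} to identify the local objects), this equivalence factors as
$$\LinFun^\L_{\mP\Env(\mV),\mP\Env(\mW)}(\mP\B\Env(\mM)_{\rS,\T}, \mN) \simeq \Enr\Fun_{\mV,\mW}(\mM, \mN).$$
Since both $\mP\B\Env(\mM)_{\rS,\T}$ and $\mN$ are bitensored over $\rS^{-1}\mP\Env(\mV), \T^{-1}\mP\Env(\mW)$, a left adjoint $\mP\Env(\mV), \mP\Env(\mW)$-linear functor between them is the same as a left adjoint $\rS^{-1}\mP\Env(\mV), \T^{-1}\mP\Env(\mW)$-linear functor, which proves (2).

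For (1), I would apply Proposition \ref{laan} to the enriched embedding $\mM^\circledast \subset \mP\B\Env(\mM)^\circledast_{\rS,\T}$ (which lies over the maps of $\infty$-operads $\mV^\ot \to \rS^{-1}\mP\Env(\mV)^\ot, \mW^\ot \to \T^{-1}\mP\Env(\mW)^\ot$): this produces a fully faithful left adjoint to restriction; it remains to verify that its image lies in the subcategory of left adjoint linear functors, which follows from the explicit colimit formula in Proposition \ref{laan} combined with (2). Finally, (3) follows from (2) and the universal property of the tensor product of presentable $\infty$-categories with module structures: both sides corepresent the functor sending a presentably bitensored $\mN^\circledast$ over $\rS^{-1}\mP\Env(\mV), \T^{-1}\mP\Env(\mW)$ to $\Enr\Fun_{\mV,\mW}(\mM, \mN)$, using that scalar extension along a smashing localization is computed as the tensor product with the localized monoidal $\infty$-category.

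The main obstacle will be the careful identification in Step (1) of the image of the fully faithful left adjoint with linear functors; this requires checking that the pointwise left Kan extension formula of Proposition \ref{laan} is compatible with the biaction on $\mP\B\Env(\mM)_{\rS,\T}$ after localization, and that the target being already localized forces the extension to be cocontinuous and biaction-preserving.
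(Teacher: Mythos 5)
Your proposal is correct and follows essentially the same route as the paper: part (2) is obtained by restricting the universal property of the unlocalized envelope (Corollary \ref{envvcor}) along the localization, using that any left adjoint linear functor into $\mN$ — whose biaction already factors through the localized monoidal $\infty$-categories — automatically inverts the generating local equivalences; parts (1) and (3) then follow by factoring the left adjoint through the unlocalized envelope and by corepresentability, respectively. The only cosmetic difference is that you invoke Proposition \ref{laan} directly for (1) where the paper factors through the fully faithful left adjoint for $\mP\B\Env(\mM)$, but these amount to the same argument.
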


\begin{proof}We first prove (2).
Let $\mN'^\circledast \to \mP\Env(\mV)^\ot \times \mP\Env(\mW)^\ot$ be the pullback of $\rho$ along the left adjoints of the embeddings $\rS^{-1}\mP\Env(\mV)^\ot \subset \mP\Env(\mV)^\ot, \T^{-1}\mP\Env(\mW)^\ot \subset \mP\Env(\mW)^\ot.$
Consider $$\LinFun^\L_{\rS^{-1}\mP\Env(\mV),\T^{-1}\mP\Env(\mW)}(\mP\B\Env(\mM)_{\rS,\T},\mN) \xrightarrow{\rho} \LinFun^\L_{\mP\Env(\mV),\mP\Env(\mW)}(\mP\B\Env(\mM),\mN')$$$$ \to \Enr\Fun_{\mV,\mW}(\mM,\mN).$$
	
The first functor in the composition is an equivalence by the description of local equivalences.
This proves (3). The second functor is an equivalence by Propositions \ref{bitte} and \ref{Day}.
	
(1): The left adjoint of the functor $$\Enr\Fun_{\rS^{-1}\mP\Env(\mV),\T^{-1}\mP\Env(\mW)}(\mP\B\Env(\mM)_{\rS,\T},\mN) \to \Enr\Fun_{\mV,\mW}(\mM,\mN)$$ factors as
$$ \Enr\Fun_{\mV,\mW}(\mM,\mN) \xrightarrow{\alpha} \Enr\Fun_{\mP\Env(\mV),\mP\Env(\mW)}(\mP\B\Env(\mM),\mN') \xrightarrow{\beta }$$$$ \Enr\Fun_{\rS^{-1}\mP\Env(\mV),\T^{-1}\mP\Env(\mW)}(\mP\B\Env(\mM)_{\rS,\T},\mN),$$
where $\beta$ takes restriction and $\alpha$ is the fully faithful left adjoint of $$\Enr\Fun_{\mP\Env(\mV),\mP\Env(\mW)}(\mP\B\Env(\mM),\mN') \to \Enr\Fun_{\mV,\mW}(\mM,\mN)$$ that lands in
$\LinFun^\L_{\mP\Env(\mV),\mP\Env(\mW)}(\mP\B\Env(\mM),\mN')$
(Proposition \ref{cool}).
Since $\rho$ is essentially surjective, $\beta$ takes $\LinFun^\L_{\mP\Env(\mV),\mP\Env(\mW)}(\mP\B\Env(\mM),\mN') $ to $\LinFun^\L_{\rS^{-1}\mP\Env(\mV),\T^{-1}\mP\Env(\mW)}(\mP\B\Env(\mM)_{\rS,\T},\mN)$.
	
	
\end{proof}

\subsection{Weak and pseudo-enrichment as enrichment}\label{looal}

In the following we show that weakly enriched and pseudo-
enriched $\infty$-categories can be viewed as
$\infty$-categories enriched in an appropriate way.

\begin{proposition}\label{eqq} Let $(\mV^\ot \to \Ass, \rS), (\mW^\ot \to \Ass, \T)$ be small localization pairs.

\begin{enumerate}

\item The functor 
$${_{\rS^{-1}\mP\Env(\mV)}\omega\B\Enr}_{\mW} \to {_\mV\omega\B\Enr}_{\mW},$$ which takes pullback along the embedding $\mV^\ot \subset \rS^{-1}\mP\Env(\mV)^\ot$, restricts to an equivalence
$$ {_{\rS^{-1}\mP\Env(\mV)}\L\Enr}_{\mW} \to {^\rS_\mV\L\Enr}_{\mW}.$$

\item The functor 
$${_\mV\omega\B\Enr}_{\T^{-1}\mP\Env(\mW)} \to {_\mV\omega\B\Enr}_{\mW},$$ which takes pullback along the embedding $\mW^\ot \subset \T^{-1}\mP\Env(\mW)^\ot$, restricts to an equivalence
$$ {_\mV\R\Enr}_{\T^{-1}\mP\Env(\mW)} \to {_\mV\R\Enr}^\T_{\mW}. $$
\item The functor 
$$\rho: {_{\rS^{-1}\mP\Env(\mV)}\omega\B\Enr}_{\T^{-1}\mP\Env(\mW)} \to {_\mV\omega\B\Enr}_{\mW},$$ which takes pullback along the embeddings $\mV^\ot \subset \rS^{-1}\mP\Env(\mV)^\ot, \mW^\ot \subset \T^{-1}\mP\Env(\mW)^\ot$, restricts to an equivalence
$$ {_{\rS^{-1}\mP\Env(\mV)}\B\Enr}_{\T^{-1}\mP\Env(\mW)}  \to {^\rS_\mV\B\Enr^\T_{\mW}}.$$

\end{enumerate}

\end{proposition}

\begin{proof}(1): We first show that the functor 
${_{\rS^{-1}\mP\Env(\mV)}\L\Enr}_{\mW} \to {^\rS_\mV\L\Enr}_{\mW}$ is conservative.
Let $\F: \mM^\circledast \to \mN^\circledast$ be a $\rS^{-1}\mP\Env(\mV), \mW$-enriched functor between left enriched $\infty$-categories whose pullback to $\mV^\ot$ is an equivalence. Then $\F$ induces an equivalence on underlying $\infty$-categories and it is enough to see that $\F$ induces an equivalence on multi-morphism objects.
Since $\rS^{-1}\mP\Env(\mV)$ is generated under small colimits by the tensor products of objects in the essential image of $\bj: \mV \subset \mP\Env(\mV)$, it is enough to see that for every $\X,\Y \in \mM$ and $\V_1,...,\V_\n \in \mV, \W_1,...,\W_\m \in \mW$ for $\n, \m \geq 0$ the following map is an equivalence:
$$\P\Env(\mV)(\bj(\V_1) \ot ... \ot \bj(\V_\n), \Mul\Mor_\mM(\X,\W_1,...,\W_\m,\Y)) \to $$$$ \P\Env(\mV)(\bj(\V_1) \ot ... \ot \bj(\V_\n), \Mul\Mor_\mN(\F(\X),\W_1,...,\W_\m,\F(\Y))).$$
The latter map identifies with following map which is an equivalence by assumption:
$$\Mul_\mM(\bj(\V_1), ..., \bj(\V_\n),\X,\W_1, ..., \W_\m,\Y) \to \Mul_\mN(\bj(\V_1), ..., \bj(\V_\n),\F(\X),\W_1, ..., \W_\m,\F(\Y)).$$

Let $\mM^\circledast \to \mV^\ot \times \mW^\ot$ be an absolute small left $\rS$-enriched $\infty$-category.
There is an embedding $\mM^\circledast \subset \mP\B\Env(\mM)_{\rS}^\circledast.$
Let $\bar{\mM}^\circledast \to \rS^{-1}\mP\Env(\mV)^\ot \times \mW^\ot$
be the full weakly bienriched subcategory of $\mP\B\Env(\mM)_{\rS}^\circledast \to \rS^{-1}\mP\Env(\mV)^\ot \times \mW^\ot$ spanned by $\mM.$
Since the latter is a presentably bitensored $\infty$-category,
$\bar{\mM}^\circledast \to \rS^{-1}\mP\Env(\mV)^\ot \times \mW^\ot$ is a left enriched $\infty$-category whose pullback to $\mV^\ot \times \mW^\ot$ is $\mM^\circledast \to \mV^\ot \times \mW^\ot$.
We prove that for any small left enriched $\infty$-category $\mN^\circledast \to \rS^{-1}\mP\Env(\mV)^\ot \times \mW^\ot$ the induced functor
$$\alpha_\mN: \Enr\Fun_{\rS^{-1}\mP\Env(\mV), \mW}(\bar{\mM},\mN) \to \Enr\Fun_{\mV, \mW}(\mM,\mN)$$ is an equivalence.
The functor $\alpha_\mN$ is the pullback of the functor
$$\Enr\Fun_{\rS^{-1}\mP\Env(\mV), \mP\Env(\mW)}(\bar{\mM},\mP\B\Env(\mN)_{\L\Enr}) \xrightarrow{\alpha_{\mP\B\Env(\mN)_{\L\Enr}}} \Enr\Fun_{\mV, \mP\Env(\mW)}(\mM,\mP\B\Env(\mN)_{\L\Enr}).$$ 
So it is enough to see that the latter functor is an equivalence. Since $\alpha_{\mP\B\Env(\mN)_{\L\Enr}}$ is conservative, it is an equivalence if it admits a fully faithful left adjoint. 
This follows from Proposition \ref{laan} and Proposition \ref{envv} (2)
because the left adjoint of the functor
$$\Alg_{\rS^{-1}\mP\Env(\mV)}(\rS^{-1}\mP\Env(\mV)) \xrightarrow{\L^*}\Alg_{\mP\Env(\mV)}(\rS^{-1}\mP\Env(\mV)) \xrightarrow{\kappa} \Alg_\mV(\rS^{-1}\mP\Env(\mV))$$ sends the map of $\infty$-operads
$\tau: \mV^\ot \to \mP\Env(\mV)^\ot \to \rS^{-1}\mP\Env(\mV)^\ot$ to the identity,
where $\L: \mP\Env(\mV)^\ot \leftrightarrows \rS^{-1}\mP\Env(\mV)^\ot: \iota$ is the monoidal localization.
Indeed, the left adjoint of $\kappa$ sends $\tau$ to the monoidal localization functor $ \L: \mP\Env(\mV)^\ot \to \rS^{-1}\mP\Env(\mV)^\ot$,
which is sent by the left adjoint of $\L^*$ given by $\iota^*$ 
to $\L \circ \iota \simeq \id.$

We complete the proof by showing that the functor 
${_{\rS^{-1}\mP\Env(\mV)}\omega\B\Enr}_{\mW} \to {_\mV\omega\B\Enr}_{\mW}$ restricts to a functor
$ {_{\rS^{-1}\mP\Env(\mV)}\L\Enr}_{\mW} \to {_\mV\L\Enr}^\rS_{\mW}.$
If this is shown, the latter functor is an equivalence since it is conservative and admits a fully faithful left adjoint as $\alpha_\mN$ is an equivalence.
First consider the case that $\rS$ is empty.
In this case there is nothing to show and therefore the induced functor $\lambda: {_{\mP\Env(\mV)}\L\Enr}_{\mW} \to {_\mV\omega\B\Enr}_{\mW}$ is an equivalence.
For any left enriched $\infty$-category $\beta: \mM^\circledast \to \rS^{-1}\mP\Env(\mV)^\ot \times \mW^\ot$ let $\mN^\circledast \to \mV^\ot \times \mW^\ot$ be the pullback of $\beta$ to $\mV^\ot$ and $\mM'^\circledast \to \mP\Env(\mV)^\ot \times \mW^\ot$ the pullback of $\beta$ along the left adjoint monoidal functor
$\L: \mP\Env(\mV)^\ot \to \rS^{-1}\mP\Env(\mV)^\ot. $ 
The weakly bienriched $\infty$-category $\mM'^\circledast \to \mP\Env(\mV)^\ot \times \mW^\ot$ exhibits $\mM$ as left enriched pulling back along a left adjoint.
Because the pullback of $\mM'^\circledast \to \mP\Env(\mV)^\ot \times \mW^\ot$ to $\mV^\ot \times \mW^\ot $ is $\mN^\circledast \to \mV^\ot \times \mW^\ot$ and $\lambda$ is an equivalence, there is a canonical $ \mP\Env(\mV), \mW$-enriched equivalence $\bar{\mN}^\circledast \simeq \mM'^\circledast$.
The weakly bienriched $\infty$-category $\mN^\circledast \to \mV^\ot \times \mW^\ot$ exhibits $\mN$ as left $\rS$-enriched in $\mV$ because for every $\f \in \rS, \X, \Y \in \mN, \W_1,..., \W_\m \in \mW$ the induced map $$\Mul_{\bar{\mN}}(\f, \X, \W_1,..., \W_\m, \Y) \simeq \Mul_{\mM'}(\f, \X, \W_1,..., \W_\m, \Y) \simeq \Mul_{\mM}(\L(\f), \X, \W_1,..., \W_\m, \Y) $$ is an equivalence. (2) is dual to (1).

(3): By (1) the induced functor $${_{\rS^{-1}\mP\Env(\mV)}\L\Enr}_{\T^{-1}\mP\Env(\mW)} \to {^\rS_\mV\L\Enr}_{ \T^{-1}\mP\Env(\mW)}$$
is an equivalence, which restricts to an equivalence
$${_{\rS^{-1}\mP\Env(\mV)}\B\Enr}_{\T^{-1}\mP\Env(\mW)} \to {^\rS_\mV\L\Enr}_{ \T^{-1}\mP\Env(\mW)} \cap {_\mV\R\Enr}_{\T^{-1}\mP\Env(\mW)}.$$

By (2) the induced functor $${_\mV\R\Enr}_{\T^{-1}\mP\Env(\mW)} \to {_\mV\R\P\Enr}^{\T}_{\mW}$$ is an equivalence, which restricts to an equivalence 
$${^\rS_\mV\L\Enr}_{\T^{-1}\mP\Env(\mW)} \cap {_\mV\R\Enr}_{\T^{-1}\mP\Env(\mW)} \to {^\rS_\mV\L\Enr}_{\mW} \cap {_\mV\R\P\Enr}^{\T}_{\mW} = {^\rS_\mV\B\P\Enr^\T_\mW}.$$

\end{proof}

Proposition \ref{eqq} gives the following corollaries:	
	
\begin{corollary}\label{coronn}

Let $\mV^\ot \to \Ass, \mW^\ot \to \Ass$ be small $\infty$-operads. 
\begin{enumerate}
	
\item The functor 
$$ {_{\mP\Env(\mV)}\L\Enr}_{\mW} \to {_\mV\omega\B\Enr}_{\mW}$$ taking pullback along the embedding $\mV^\ot \subset \mP\Env(\mV)^\ot$ is an equivalence, which restricts to an equivalence 
$$ {_{\mP\Env(\mV)}\B\Enr}_{\mW} \to {_\mV\R\Enr}_{\mW}.$$

\vspace{1mm}

\item The functor 
$$ {_\mV\R\Enr}_{\mP\Env(\mW)} \to {_\mV\omega\B\Enr}_{\mW}$$ taking pullback along the embedding $ \mW^\ot \subset \mP\Env(\mW)^\ot$ is an equivalence,
which restricts to an equivalence 
$$ {_\mV\B\Enr}_{\mP\Env(\mW)} \to {_\mV\L\Enr}_{\mW}.$$
\vspace{1mm}

\item The functor $$ {_{\mP\Env(\mV)}\B\Enr}_{\mP\Env(\mW)} \to {_\mV\omega\B\Enr}_{\mW}$$ taking pullback along the embeddings $\mV^\ot \subset \mP\Env(\mV)^\ot, \mW^\ot \subset \mP\Env(\mW)^\ot$ is an equivalence.

\vspace{1mm}

\item Let $\mV^\ot \to \Ass$ be a small monoidal $\infty$-category. The functor 
$${_{\mP(\mV)}\L\Enr}_{\mW} \to {_\mV\L\P\Enr}_{\mW}$$ taking pullback along the monoidal embedding $\mV^\ot \subset \mP(\mV)^\ot$ is an equivalence.

\vspace{1mm}

\item Let $\mW^\ot \to \Ass$ be a small monoidal $\infty$-category. The functor 
$${_\mV\R\Enr}_{\mP(\mW)} \to {_\mV\R\P\Enr}_{\mW}$$ taking pullback along the monoidal embedding $\mW^\ot \subset \mP(\mW)^\ot$ is an equivalence.

\vspace{1mm}

\item Let $\mV^\ot \to \Ass, \mW^\ot \to \Ass$ be small monoidal $\infty$-categories. The functor 
$${_{\mP(\mV)}\B\Enr}_{\mP(\mW)} \to {_\mV\B\P\Enr}_{\mW}$$ taking pullback along the monoidal embeddings $\mV^\ot \subset \mP(\mV)^\ot, \mW^\ot \subset \mP(\mW)^\ot$
is an equivalence.

\vspace{1mm}

\item The functor $$ {_{\Env(\mV)}\B\P\Enr}_{\Env(\mW)} \to {_\mV\omega\B\Enr}_{\mW}$$ taking pullback along the embeddings $\mV^\ot \subset \Env(\mV)^\ot, \mW^\ot \subset \Env(\mW)^\ot$ is an equivalence.

\vspace{1mm}

\item Let $\mV^\ot \to \Ass$ be a small monoidal $\infty$-category. The functor 
$${_{\mV}\B\P\Enr}_{*} \to {_\mV\L\P\Enr}_{\emptyset}$$ taking pullback along the embedding $\emptyset^\ot \subset \mW^\ot$ is an equivalence.

\vspace{1mm}

\item Let $\mW^\ot \to \Ass$ be a small monoidal $\infty$-category. The functor 
$${_{*}\B\P\Enr}_{\mW} \to {_\emptyset\R\P\Enr}_{\mW}$$ taking pullback along the embedding $\emptyset^\ot \subset \mW^\ot$ is an equivalence.
\end{enumerate}
\end{corollary}

Remark \ref{fori} and Corollary \ref{coronn} give the following corollary:

\begin{corollary}\label{uhol}

\begin{enumerate}
\item The functors $_\mS\L\Enr_\emptyset \to \Cat_\infty, {_\emptyset}\R\Enr_{\mS} \to \Cat_\infty, {_\mS\B\Enr}_{\mS} \to \Cat_\infty $ 
taking pullback along the embedding of $\infty$-operads $\emptyset^\ot \subset \mS^\times$ are equivalences.

\item The functors $_*\L\P\Enr_\emptyset \to \Cat_\infty, {_\emptyset}\R\P\Enr_{*} \to \Cat_\infty, {_*\B\P\Enr}_{*} \to \Cat_\infty $ 
taking pullback along the embedding of $\infty$-operads $\emptyset^\ot \subset\Ass$ are equivalences.	
\end{enumerate}
	
		
	
\end{corollary}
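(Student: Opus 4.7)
The plan is to combine Remark \ref{fori} with Corollary \ref{coronn}, using the identification of the monoidal envelopes in play. The first step is to check that $\mP\Env(\emptyset)^\ot \simeq \mS^\times$: since $\emptyset^\ot \subset \Ass$ is spanned by $[0]$ and the only active morphism $[0]\to[\n]$ in $\Ass = \Delta^\op$ corresponds to an order preserving map $[\n]\to[0]$ preserving minimum and maximum (forcing $\n=0$), the envelope $\Env(\emptyset)$ is contractible. The Day convolution monoidal structure on $\mP(*) = \mS$ extending the trivial structure on $*$ must then preserve small colimits in each variable and restrict to the identity on $\y(*)$, so it is the cartesian one. Likewise $\mP(*)^\ot \simeq \mS^\times$ when $*^\ot = \Ass$ denotes the terminal monoidal $\infty$-category.

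For part (1), I will instantiate Corollary \ref{coronn} (1)–(3) with $\mV$ and/or $\mW$ taken to be $\emptyset$. For instance, Corollary \ref{coronn} (1) with $\mV = \mW = \emptyset$ gives that pullback along $\emptyset^\ot \subset \mP\Env(\emptyset)^\ot \simeq \mS^\times$ induces an equivalence $_\mS\L\Enr_\emptyset \simeq {_\emptyset \omega\B\Enr}_\emptyset$. Composing with the equivalence $_\emptyset \omega\B\Enr_\emptyset \simeq \Cat_\infty$ of Remark \ref{fori} yields the first claim. The other two cases are formally identical, using Corollary \ref{coronn} (2) and (3) respectively.

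For part (2), I combine part (1) with Corollary \ref{coronn} (4)–(6) applied to the terminal monoidal $\infty$-category. Concretely, Corollary \ref{coronn} (4) with $\mV = *, \mW = \emptyset$ gives an equivalence ${_\mS\L\Enr}_\emptyset \simeq {_*\L\P\Enr}_\emptyset$ by pullback along the monoidal embedding $\Ass \subset \mS^\times$. Composing (an inverse of) this with the first equivalence of part (1) produces an equivalence ${_*\L\P\Enr}_\emptyset \simeq \Cat_\infty$. To see that it is implemented by pullback along $\emptyset^\ot \subset \Ass$ as stated, I will use that the composite $\emptyset^\ot \subset \Ass \subset \mS^\times$ is the map of $\infty$-operads $\emptyset^\ot \subset \mS^\times$ appearing in part (1); hence the iterated pullback coincides with a single pullback along $\emptyset^\ot \subset \Ass$ followed by Remark \ref{fori}. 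The remaining two equivalences of part (2) are obtained analogously from Corollary \ref{coronn} (5) and (6).

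There is no substantive obstacle here; the only step requiring care is the compatibility check in part (2), namely that the composite of the equivalences from Corollary \ref{coronn} (4) and part (1) is indeed implemented by the stated pullback functor. This reduces to the transitivity of pullbacks along the factorization $\emptyset^\ot \subset \Ass \subset \mS^\times$, which is formal.
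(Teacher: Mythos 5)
Your proof is correct and follows exactly the route the paper intends: the corollary is stated as an immediate consequence of Remark \ref{fori} and Corollary \ref{coronn}, and your argument simply fills in the details (the identification $\mP\Env(\emptyset)^\ot \simeq \mS^\times$ and the transitivity of pullbacks) that the paper leaves implicit.
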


\begin{corollary}\label{cosqa}
Let $\kappa, \tau$ be small regular cardinals.

\begin{enumerate}
\item Let $\mV^\ot \to \Ass$ be a small monoidal $\infty$-category compatible with $\kappa$-small colimits and $ \mW^\ot \to \Ass $ a small $\infty$-operad.
The functor $${_{\Ind_\kappa(\mV)}\L\Enr}_{\mW} \to {_{\mV}^\kappa\L\Enr_\mW} $$ taking pullback along the monoidal embedding $\mV^\ot \subset \Ind_\kappa(\mV)^\ot$ 
is an equivalence.

\item Let $\mW^\ot \to \Ass$ be a small monoidal $\infty$-category compatible with $\tau$-small colimits and $ \mV^\ot \to \Ass $ a small $\infty$-operad.
The functor $$ {_\mV\R\Enr}_{\Ind_\tau(\mW)} \to {_\mV\R\Enr^\tau_{\mW}}$$ taking pullback along the monoidal embedding $\mW^\ot \subset \Ind_\tau(\mW)^\ot$ is an equivalence.

\item Let $\mV^\ot \to \Ass, \mW^\ot \to \Ass$ be small monoidal $\infty$-categories compatible with $\kappa$-small colimits, $\tau$-small colimits, respectively.
The functor $$ {_{\Ind_\kappa(\mV)}\B\Enr}_{\Ind_\tau(\mW)} \to {^\kappa_{\mV}\B\Enr^{\tau}_{\mW}}$$ taking pullback along the monoidal embeddings $\mV^\ot \subset \Ind_\kappa(\mV)^\ot, \mW^\ot \subset \Ind_\tau(\mW)^\ot$ is an equivalence.

\end{enumerate}

\end{corollary}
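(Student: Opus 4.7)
The plan is to deduce this corollary by applying Proposition \ref{eqq} and Corollary \ref{eqqt} to the specific small localization pairs $(\mV^\ot \to \Ass, \Enr^\kappa_\mV)$ and $(\mW^\ot \to \Ass, \Enr^\tau_\mW)$. By Example \ref{Exaso}(1), these are indeed small localization pairs, with $(\Enr^\kappa_\mV)^{-1}\mP\Env(\mV)^\ot = \Ind_\kappa(\mV)^\ot$ (and analogously for $\mW$), and the monoidal embedding $\mV^\ot \subset \Ind_\kappa(\mV)^\ot$ agrees with the embedding $\mV^\ot \subset (\Enr^\kappa_\mV)^{-1}\mP\Env(\mV)^\ot$. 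Moreover, the same part of Example \ref{Exaso} identifies the full subcategory of left $\Enr^\kappa_\mV$-enriched $\infty$-categories with the full subcategory of left $\kappa$-enriched $\infty$-categories, so we have the equality $^{\Enr^\kappa_\mV}_\mV\L\Enr_\mW = {^\kappa_\mV\L\Enr_\mW}$ as full subcategories of ${_\mV\omega\B\Enr_\mW}$.

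For statement (1), I apply Proposition \ref{eqq}(1) to the pair $(\mV^\ot \to \Ass, \Enr^\kappa_\mV)$: the pullback functor gives an equivalence
\[
{_{\Ind_\kappa(\mV)}\L\Enr_\mW} = {_{(\Enr^\kappa_\mV)^{-1}\mP\Env(\mV)}\L\Enr_\mW} \simeq {^{\Enr^\kappa_\mV}_\mV\L\Enr_\mW} = {^\kappa_\mV\L\Enr_\mW}.
\]
For statement (2), I apply Proposition \ref{eqq}(2) to the pair $(\mW^\ot \to \Ass, \Enr^\tau_\mW)$ in exactly the same manner. For statement (3), I apply Corollary \ref{eqqt} to both localization pairs simultaneously, using that a weakly bi-enriched $\infty$-category is $(\Enr^\kappa_\mV, \Enr^\tau_\mW)$-bi-enriched if and only if it is $\kappa,\tau$-bi-enriched, which is just the conjunction of the left and right versions from Example \ref{Exaso}(1).

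There is essentially no obstacle here beyond checking the bookkeeping of definitions: the hard work has already been done in the proof of Proposition \ref{eqq}, which handles the general localization-pair case via the embeddings into $\mP\B\Env(\mM)_\rS^\circledast$ and the universal property argument using the monoidal localization $\mP\Env(\mV)^\ot \rightleftarrows \rS^{-1}\mP\Env(\mV)^\ot$. The only subtlety to verify is the identification of $(\Enr^\kappa_\mV)$-enrichment with $\kappa$-enrichment, but this is precisely the content of Example \ref{Exaso}(1): a weakly bi-enriched $\infty$-category is left $\Enr^\kappa_\mV$-enriched exactly when for every $\X, \Y \in \mM$ and tuples of objects the multi-morphism presheaves are local with respect to the generating morphisms of $\Enr^\kappa_\mV$, and the generating morphisms are precisely designed so that locality expresses the combination of left pseudo-enrichedness and $\kappa$-small limit preservation required by Definition \ref{Luel}.
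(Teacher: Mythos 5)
Your proof is correct and is exactly the route the paper intends: the corollary is stated without separate proof as an immediate consequence of Proposition \ref{eqq} and Corollary \ref{eqqt} applied to the localization pairs $(\mV^\ot \to \Ass, \Enr^\kappa_\mV)$ and $(\mW^\ot \to \Ass, \Enr^\tau_\mW)$ (with $\T=\emptyset$ where only one side is monoidal), using Example \ref{Exaso}(1) to identify $(\Enr^\kappa_\mV)^{-1}\mP\Env(\mV)^\ot$ with $\Ind_\kappa(\mV)^\ot$ and $\Enr^\kappa_\mV$-enrichment with $\kappa$-enrichment.
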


\begin{corollary}\label{sma}\label{presy}
	
\begin{enumerate}
\item Let $\mV^\ot \to \Ass$ be a presentably monoidal $\infty$-category,
$\mW^\ot \to \Ass$ a small $\infty$-operad and $\mM^\circledast \to \mV^\ot \times \mW^\ot, \mN^\circledast \to \mV^\ot \times \mW^\ot$ left enriched $\infty$-categories such that $\mM,\mN$ are small.
The $\infty$-category $\Enr\Fun_{\mV, \mW}(\mM,\mN)$ is small.

\item Let $\mV^\ot \to \Ass$ be a small $\infty$-operad, $\mW^\ot \to \Ass$
a presentably monoidal $\infty$-category and $\mM^\circledast \to \mV^\ot \times \mW^\ot, \mN^\circledast \to \mV^\ot \times \mW^\ot$ right enriched $\infty$-categories such that $\mM,\mN$ are small.
The $\infty$-category $\Enr\Fun_{\mV, \mW}(\mM,\mN)$ is small.

\item Let $\mM^\circledast \to \mV^\ot \times \mW^\ot, \mN^\circledast \to \mV^\ot \times \mW^\ot$ be $\infty$-categories bienriched in presentably monoidal $\infty$-categories such that $\mM,\mN$ are small. The $\infty$-category $\Enr\Fun_{\mV, \mW}(\mM,\mN)$ is small.

\end{enumerate}

\end{corollary}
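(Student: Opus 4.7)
The plan is to reduce all three assertions to the transparent small-operad case by exploiting $\kappa$-compact generation of the relevant presentably monoidal $\infty$-categories together with the equivalences of Corollary \ref{cosqa}, after which smallness of the enriched functor $\infty$-category is immediate.

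For (1), I would first invoke \cite[Proposition 7.15.]{Rune} to pick a small regular cardinal $\kappa$ for which $\mV$ is $\kappa$-compactly generated. Then the full subcategory $\mV^\kappa \subset \mV$ of $\kappa$-compact objects carries a small monoidal structure compatible with $\kappa$-small colimits, and the inclusion extends to a canonical monoidal equivalence $\Ind_\kappa(\mV^\kappa)^\ot \simeq \mV^\ot$. Corollary \ref{cosqa}(1) thus identifies ${_\mV\L\Enr}_\mW$ with ${_{\mV^\kappa}^\kappa\L\Enr}_\mW$ via pullback along the embedding $(\mV^\kappa)^\ot \subset \mV^\ot$; write $\mM_0^\circledast, \mN_0^\circledast \to (\mV^\kappa)^\ot \times \mW^\ot$ for the small left $\kappa$-enriched $\infty$-categories that correspond to $\mM^\circledast, \mN^\circledast$ under this equivalence.

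The crucial point is that this equivalence is realized at the level of enriched functor $\infty$-categories: this is precisely the key step in the proof of Proposition \ref{eqq}, which shows that the pullback functor
$$\alpha_\mN : \Enr\Fun_{\Ind_\kappa(\mV^\kappa), \mW}(\mM, \mN) \to \Enr\Fun_{\mV^\kappa, \mW}(\mM_0, \mN_0)$$
is an equivalence. It therefore suffices to establish smallness of $\Enr\Fun_{\mV^\kappa, \mW}(\mM_0, \mN_0)$. But $\mM_0^\circledast$ and $\mN_0^\circledast$ are totally small: their underlying $\infty$-categories are $\mM, \mN$ by construction, and the multi-morphism spaces $\Mul_{\mM_0}(\V^\kappa_1,\ldots, \V^\kappa_n, \X, \W_1, \ldots, \W_m; \Y)$ coincide with those of $\mM^\circledast$ at compact objects of $\mV$, hence are small by local smallness of $\mV$ and $\mW$. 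Consequently $\Enr\Fun_{\mV^\kappa, \mW}(\mM_0, \mN_0)$ is a full subcategory of $\Fun_{(\mV^\kappa)^\ot \times \mW^\ot}(\mM_0^\circledast, \mN_0^\circledast)$, which is small.

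Parts (2) and (3) will go through in perfect parallel. For (2) I would use Corollary \ref{cosqa}(2) to descend along $(\mW^\tau)^\ot \subset \mW^\ot$ for a small $\tau$ with $\mW$ being $\tau$-compactly generated. For (3) I would combine the two reductions, either by invoking Corollary \ref{cosqa}(3) directly, or equivalently by applying part (1) to replace $\mV$ by $\mV^\kappa$ and then part (2) to replace $\mW$ by $\mW^\tau$, identifying $\Enr\Fun_{\mV, \mW}(\mM, \mN)$ with $\Enr\Fun_{\mV^\kappa, \mW^\tau}(\mM_0, \mN_0)$ for totally small input. I do not anticipate any genuine obstacle; the only nontrivial ingredient is the assertion that the pullback equivalences of Corollary \ref{cosqa} are implemented at the level of enriched functor $\infty$-categories, which is already built into the proof of Proposition \ref{eqq}.
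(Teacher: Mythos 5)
Your proposal is correct and follows the route the paper intends: the corollary is derived from Corollary \ref{cosqa} (hence from Proposition \ref{eqq}) by restricting along $(\mV^\kappa)^\ot\subset\mV^\ot$ for $\kappa$ with $\mV$ $\kappa$-compactly generated, using that the pullback equivalence is realized on enriched functor $\infty$-categories (the equivalence $\alpha_\mN$ in the proof of Proposition \ref{eqq}), and then observing that the enriched functor $\infty$-category between totally small weakly bi-enriched $\infty$-categories is a full subcategory of a small functor $\infty$-category. No gaps.
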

For the next corollary we fix the following notation:

\begin{notation}\label{nity}
Let $\kappa$ be a small regular cardinal and $\mV^\ot \to \Ass$ a small monoidal $\infty$-category.

\begin{enumerate}
\item Let $$\mP_\kappa(\mV)^\ot \subset \mP(\mV)^\ot$$ be the full suboperad spanned by the full subcategory of $\mP(\mV)$ generated by $\mV$ under $\kappa$-small colimits.
For $\kappa$ the large strongly inaccessible cardinal corresponding to the small universe
(appyling the notation in a larger universe) we drop $\kappa$ from the notation.

\item Let $$\mP^\kappa(\mV)^\ot \subset \mP(\mV)^\ot$$ be the full suboperad spanned by the $\kappa$-compact objects of $\mP(\mV)$.
\end{enumerate}
\end{notation}

\begin{remark}
Since the Yoneda-embedding is monoidal and $ \mP(\mV)^\ot \to \Ass$ is compatible with small colimits, $\mP_\kappa(\mV)^\ot \to \Ass$ is a monoidal full subcategory of $\mP(\mV)^\ot\to\Ass$ compatible with $\kappa$-small colimits. The full subcategory $\mP(\mV)^\kappa$ of $\kappa$-compact objects of $\mP(\mV)$ is the smallest full subcategory of $\mP(\mV)$ containing $\mP_\kappa(\mV)$ and closed under retracts \cite[Proposition 5.3.4.17.]{lurie.HTT}. 
Thus $\mP(\mV)^\kappa$ is closed under the tensor product of $\mP(\mV)$
so that $(\mP(\mV)^\kappa)^\ot \to \Ass$ is monoidal $\infty$-category compatible with $\kappa$-small colimits. 
\end{remark}


\begin{remark}\label{comge}\label{comge2}
Since every object of $\mP(\mV)$ is a small $\kappa$-filtered colimit of objects of $\mP_\kappa(\mV)$, we find that $\mP(\mV)^\ot \to \Ass$ is a $\kappa$-compactly generated monoidal $\infty$-category.
The monoidal $\infty$-category $\Ind_\kappa(\mV)^\ot \to \Ass$ is a localization of $\mP(\mV)^\ot$ relative to $\Ass$. Consequently, a monoidal $\infty$-category is $\kappa$-compactly generated
if and only if it is a $\kappa$-accessible localization relative to $\Ass$ of presheaves on a small monoidal $\infty$-category.
Similarly, a bitensored $\infty$-category is $\kappa$-compactly generated
if and only if it is an enriched $\kappa$-accessible localization of presheaves on a small bitensored $\infty$-category.


\end{remark}


\begin{corollary}\label{cosqai}
Let $\kappa, \tau$ be small regular cardinals and $\mV^\ot \to \Ass, \mW^\ot \to \Ass$ be small $\infty$-operads.
\begin{enumerate}
\item The functor $${_{\mP\Env_\kappa(\mV)}^\kappa\L\Enr_\mW} \to {_{\mV}\omega\B\Enr}_{\mW} $$ taking pullback along the embedding $\mV^\ot \subset \mP\Env_\kappa(\mV)^\ot$ 
is an equivalence.
\item The functor $${_{\mV}\R\Enr^\tau_{\mP\Env_\tau(\mW)}} \to {_{\mV}\omega\B\Enr}_{\mW}  $$ taking pullback along the embedding $\mW^\ot \subset \mP\Env_\tau(\mW)^\ot$ 
is an equivalence.
\item The functor $${_{\mP\Env_\kappa(\mV)}^\kappa\B\Enr^{\tau}_{\mP\Env_\tau(\mW)}} \to {_{\mV}\omega\B\Enr}_{\mW}  $$ taking pullback along the embeddings $\mV^\ot \subset \mP\Env_\kappa(\mV)^\ot,\mW^\ot \subset \mP\Env_\tau(\mW)^\ot $ 
is an equivalence.
\end{enumerate}

\end{corollary}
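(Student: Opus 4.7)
The plan is to reduce all three parts to the already-established Corollaries \ref{coronn} and \ref{cosqa} by realising $\mP\Env(\mV)^\ot$ as the $\Ind_\kappa$-completion of $\mP\Env_\kappa(\mV)^\ot$ (and similarly $\mP\Env(\mW)^\ot$ as $\Ind_\tau(\mP\Env_\tau(\mW))^\ot$). The point is that both layers of free cocompletion compose to the unrestricted presheaf cocompletion.

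The first step for part (1) is to verify the identification $\mP\Env(\mV)^\ot \simeq \Ind_\kappa(\mP\Env_\kappa(\mV))^\ot$ as monoidal $\infty$-categories compatible with small colimits. Both sides are presentably monoidal and receive a canonical monoidal functor from $\Env(\mV)^\ot$; I would match universal properties against monoidal functors into an arbitrary monoidal $\infty$-category $\mW^\ot \to \Ass$ compatible with small colimits. By Proposition \ref{presta}(3) applied to the $\kappa$-cocomplete $\mP\Env_\kappa(\mV)^\ot$, the right hand side corepresents $\kappa$-cocontinuous monoidal functors out of $\mP\Env_\kappa(\mV)^\ot$. Combining with the universal property of $\mP\Env_\kappa(\mV)^\ot = \mP_\kappa(\Env(\mV))^\ot$ as the free monoidal $\kappa$-cocompletion of $\Env(\mV)^\ot$ identifies this with $\Alg_{\Env(\mV)}(\mW)$, which via Proposition \ref{envv}(1) equals $\Alg_\mV(\mW)$; this in turn is $\Fun^{\ot,\L}(\mP\Env(\mV),\mW)$ by Corollary \ref{envvcor}(1), so the two monoidal $\infty$-categories agree.

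Once this identification is in place, Corollary \ref{cosqa}(1) applied to the small monoidal $\infty$-category $\mP\Env_\kappa(\mV)^\ot$ (which is compatible with $\kappa$-small colimits) produces an equivalence
$${_{\mP\Env(\mV)}\L\Enr}_\mW \simeq {_{\mP\Env_\kappa(\mV)}^\kappa\L\Enr}_\mW$$
given by pullback along $\mP\Env_\kappa(\mV)^\ot \subset \mP\Env(\mV)^\ot$. Composing its inverse with the equivalence ${_{\mP\Env(\mV)}\L\Enr}_\mW \simeq {_\mV\omega\B\Enr}_\mW$ of Corollary \ref{coronn}(1) (pullback along $\mV^\ot \subset \mP\Env(\mV)^\ot$) yields a composite that, by functoriality of pullback, equals the pullback along $\mV^\ot \subset \mP\Env_\kappa(\mV)^\ot \subset \mP\Env(\mV)^\ot$, i.e.\ exactly the functor of statement (1).

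Parts (2) and (3) follow by the same template: (2) combines Corollary \ref{cosqa}(2) with Corollary \ref{coronn}(2) on the right-hand variable using $\mP\Env(\mW)^\ot \simeq \Ind_\tau(\mP\Env_\tau(\mW))^\ot$; and (3) combines Corollary \ref{cosqa}(3) with Corollary \ref{coronn}(3) simultaneously in both variables. The main obstacle I anticipate is making the identification $\mP\Env(\mV)^\ot \simeq \Ind_\kappa(\mP\Env_\kappa(\mV))^\ot$ fully rigorous at the monoidal level, since Notation \ref{nity} defines $\mP_\kappa(-)^\ot$ as a suboperad of presheaves rather than by a universal property; the rest is a formal chase of pullback squares.
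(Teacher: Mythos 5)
Your proof is correct and follows essentially the same route as the paper: the paper likewise factors the equivalence ${_{\mP\Env(\mV)}\L\Enr}_{\mW} \to {_\mV\omega\B\Enr}_{\mW}$ of Corollary \ref{coronn}(1) through ${_{\mP\Env_\kappa(\mV)}^\kappa\L\Enr}_\mW$, identifies the first factor as an equivalence via Corollary \ref{cosqa}(1) using $\Ind_\kappa(\mP\Env_\kappa(\mV)) \simeq \mP\Env(\mV)$, and concludes by two-out-of-three, with (2) and (3) handled analogously. Your extra verification of the monoidal identification $\mP\Env(\mV)^\ot \simeq \Ind_\kappa(\mP\Env_\kappa(\mV))^\ot$ via universal properties is a reasonable filling-in of a step the paper asserts without comment.
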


\begin{proof}

The functor $ {_{\mP\Env(\mV)}\L\Enr}_{\mW} \to {_\mV\omega\B\Enr}_{\mW}$ taking pullback along the embedding $\mV^\ot \subset \mP\Env(\mV)^\ot$,
which is an equivalence by Corollary \ref{coronn} (1),
factors as $ {_{\mP\Env(\mV)}\L\Enr}_{\mW} \to {_{\mP\Env_\kappa(\mV)}^\kappa\L\Enr_\mW} \to {_\mV\omega\B\Enr}_{\mW}.$
By Corollary \ref{cosqa} (1) the first functor in the composition is an equivalence
since the embedding $$\mP\Env_\kappa(\mV) \subset \mP\Env(\mV)$$
induces an equivalence $\Ind_\kappa(\mP\Env_\kappa(\mV)) \simeq \mP\Env(\mV)$
as both satisfy the same universal property.
The proof of (2) and (3) are similar and follow from Corollary \ref{coronn}
and \ref{cosqa} (2) and (3), respectively.

\end{proof}

\begin{corollary}\label{ihot} Let $(\mV^\ot \to \Ass, \rS), (\mW^\ot \to \Ass, \T)$ be small localization pairs.
	
\begin{enumerate}
\item The functor $${_{\rS^{-1}\mP\Env(\mV)}\L\Q\widehat{\Enr}_\mW} \to {^\rS_\mV\widehat{\L\Enr}_\mW} $$ taking pullback along the embedding $\mV^\ot \subset \rS^{-1}\mP\Env(\mV)^\ot$ is an equivalence.
\item The functor $${_\mV\R\Q\widehat{\Enr}_{_{\T^{-1}\mP\Env(\mW)}}} \to {_\mV\widehat{\R\Enr}^\T_\mW} $$ taking pullback along the embedding $\mW^\ot \subset \T^{-1}\mP\Env(\mW)^\ot$ is an equivalence.
\item The functor $${_{\rS^{-1}\mP\Env(\mV)}\B\Q\widehat{\Enr}_{_{\T^{-1}\mP\Env(\mW)}}} \to {^\rS_\mV\widehat{\B\Enr}^\T_\mW}$$ taking pullback along the embeddings $\mV^\ot \subset \rS^{-1}\mP\Env(\mV)^\ot,\mW^\ot \subset \T^{-1}\mP\Env(\mW)^\ot $ is an equivalence.

\end{enumerate}
	
\end{corollary}

\begin{proof}The monoidal embedding $\mP\Env(\mV)^\ot \subset \widehat{\mP}\Env(\mV)^\ot$ induces a monoidal embedding $\rS^{-1}\mP\Env(\mV)^\ot \subset \rS^{-1}\widehat{\mP}\Env(\mV)^\ot$. We prove that the latter induces a monoidal equivalence $$\theta: \widehat{\Ind}_\sigma(\rS^{-1}\mP\Env(\mV))^\ot \simeq 
\rS^{-1}\widehat{\mP}\Env(\mV)^\ot,$$ where $\sigma$ is the strongly inaccessible cardinal corresponding to the small universe.
For every monoidal $\infty$-category $\mU^\ot \to \Ass$ compatible with large colimits the functor $$ \Fun^\ot(\rS^{-1}\widehat{\mP}\Env(\mV),\mU) \to  \Fun^\ot(\widehat{\Ind}_\sigma(\rS^{-1}\mP\Env(\mV)),\mU)$$ induced by $\theta$
factors as the following equivalences:
$$ \Fun^{\ot,\L}(\rS^{-1}\widehat{\mP}\Env(\mV),\mU) \simeq \Fun^{\ot,\L,\rS}(\widehat{\mP}\Env(\mV),\mU) \simeq \Fun^{\ot,\sigma, \rS}(\mP\Env(\mV),\mU) $$$$ \simeq \Fun^{\ot,\sigma}(\rS^{-1}\mP\Env(\mV),\mU) \simeq \Fun^{\ot,\L}(\widehat{\Ind}_\sigma(\rS^{-1}\mP\Env(\mV)),\mU),$$
where the superscript $\rS$ refers to monoidal functors inverting $\rS$.

Similarly, there is a canonical equivalence $\widehat{\Ind}_\sigma(\T^{-1}\mP\Env(\mW))^\ot \simeq 
\T^{-1}\widehat{\mP}\Env(\mW)^\ot.$ Thus the statement follows from Proposition \ref{eqq} and Corollary \ref{cosqa}.
	
\end{proof}	
	 

Next we compare different sorts of tensored $\infty$-categories
(Corollary \ref{emcolf}).

\begin{lemma}\label{lekko}
Let $\kappa$ be a small regular cardinal and $\mV^\ot \to \mV'^\ot, \mW^\ot \to \mW'^\ot$ embeddings of $\infty$-operads into a monoidal $\infty$-category compatible with $\kappa$-small colimits such that $\mV', \mW'$ are generated under $\kappa$-small colimits by tensor products of objects in the essential image.


\begin{enumerate}
\item A left $\kappa$-enriched $\infty$-category $ \mN^\circledast \to \mV'^\ot \times \mW^\ot$ 
is left tensored compatible with $\kappa$-small colimits if and only if the pullback $\mV^\ot \times_{\mV'^\ot} \mN^\circledast \to \mV^\ot \times \mW^\ot $ admits $\kappa$-small conical colimits and left tensors. 

\item A left $\kappa$-enriched $\infty$-category $ \mN^\circledast \to \mV'^\ot \times \mW^\ot$ admits right tensors if and only if the pullback $\mV^\ot \times_{\mV'^\ot} \mN^\circledast \to \mV^\ot \times \mW^\ot $ admits right tensors. 

\item A right $\kappa$-enriched $\infty$-category $ \mN^\circledast \to \mV^\ot \times \mW'^\ot$ is right tensored compatible with $\kappa$-small colimits if and only if the pullback $\mN^\circledast \times_{\mW'^\ot} \mW^\ot \to \mV^\ot \times \mW^\ot $ admits $\kappa$-small conical colimits and right tensors.

\item A right $\kappa$-enriched $\infty$-category $ \mN^\circledast \to \mV^\ot \times \mW'^\ot$ admits left tensors if and only if the pullback $\mN^\circledast \times_{\mW'^\ot} \mW^\ot \to \mV^\ot \times \mW^\ot $ admits left tensors.

\item A $\kappa, \kappa$-bienriched $\infty$-category $ \mN^\circledast \to \mV'^\ot \times \mW'^\ot$ is bitensored compatible with $\kappa$-small colimits if and only if the pullback $\mV^\ot \times_{\mV'^\ot} \mN^\circledast \times_{\mW'^\ot} \mW^\ot \to \mV^\ot \times \mW^\ot $ admits left and right tensors and $\kappa$-small conical colimits.
\end{enumerate}

\end{lemma}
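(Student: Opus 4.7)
The forward direction of each of (1)–(5) is immediate: if $\mN$ is left tensored, right tensored, or bitensored compatibly with $\kappa$-small colimits, then the required tensors and conical colimits exist already in the underlying $\infty$-category and transfer along any pullback of the surrounding operads, since multi-morphism spaces and their limits are preserved under pullback along maps of $\infty$-operads. By Lemma \ref{zzzz} together with Example \ref{laulo2}, the backward direction of (1) reduces to producing a left tensor $\V' \ot \X \in \mN$ for every $\V' \in \mV'$ and $\X \in \mN$; the case (2) is analogous but simpler.

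For (1), I plan to proceed in two steps dictated by the generation hypothesis on $\mV'$. First, for $\V' = \V_1 \ot \cdots \ot \V_\n$ a tensor product (taken in $\mV'$) of objects $\V_i \in \mV$, I iteratively form $\V_1 \ot (\cdots (\V_\n \ot \X)) \in \mN$ using the left tensors in the pullback. Left pseudo-enrichedness of $\mN$ over $\mV'$, which is part of left $\kappa$-enrichedness, identifies the presheaf represented by this object with the one represented by the pair $(\V',\X)$: first on prefixes $\vec{\V}'' \in \mV$ by the pullback hypothesis, then on an arbitrary $\V'' \in \mV'$ by comparing both as presheaves on $\mV'^\op$ that preserve $\kappa$-small limits (using compatibility of $\ot$ in $\mV'$ with $\kappa$-small colimits on the $\V'' \ot \V'$ side) and that already agree on the generating class of tensor products of $\mV$-objects. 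Second, for general $\V' \in \mV'$, choose $\V' \simeq \colim_\K \F$ with $\K$ a $\kappa$-small $\infty$-category and $\F$ landing in tensor products of $\mV$-objects, and set $\V' \ot \X := \colim_\K (\F(-) \ot \X)$ as a $\kappa$-small conical colimit in $\mN$. The operadic universal property then follows from the chain
$$ \Mul_\mN(\V'', \V' \ot \X, \vec{\W}; \Z) \simeq \lim_{\K^\op} \Mul_\mN(\V'' \ot \F(-), \X, \vec{\W}; \Z) \simeq \Mul_\mN(\V'' \ot \V', \X, \vec{\W}; \Z) \simeq \Mul_\mN(\V'', \V', \X, \vec{\W}; \Z), $$
which combines the conical colimit condition, the first step, compatibility of $\ot$ in $\mV'$ with $\kappa$-small colimits, preservation of $\kappa$-small limits by $\Mul_\mN(-,\X,\vec{\W};\Z): \mV'^\op \to \mS$, and left pseudo-enrichedness.

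Statement (2) follows by a direct extension-of-scalars argument: a right tensor $\X \ot \W$ produced in the pullback satisfies the operadic universal property in $\mN$ because both sides of the defining equivalence, viewed as presheaves on $\mV'^\op$ in the left prefix, preserve $\kappa$-small limits and already agree on tensor products of $\mV$-objects by the pullback hypothesis and left pseudo-enrichedness. Statements (3) and (4) are dual to (1) and (2) via the involution $(-)^\rev$ of Notation \ref{invo}. For (5), I plan to chain these: first apply (3) to the partial pullback $\mV^\ot \times_{\mV'^\ot} \mN^\circledast \to \mV^\ot \times \mW'^\ot$ (which is right $\kappa$-enriched over $\mW'$) after promoting its admission of right tensors and $\kappa$-small conical colimits from the full pullback by the same $\mW'$-extension-of-scalars trick as in (2), and analogously promote the assumed left tensors from the full pullback to this partial pullback; then apply (1) with $\mW'$ in the role of $\mW$ to conclude left tensoredness of $\mN$ over $\mV'$ compatibly with $\kappa$-small colimits, and symmetrically for the right side. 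The main obstacle is the second step of the proof of (1): verifying the universal property of $\colim_\K(\F(-) \ot \X)$ against arbitrary $\mV'$-prefixes requires carefully sequencing pseudo-enrichedness, the generation hypothesis, and the commutation between $\kappa$-small limits and $\kappa$-small colimits built into left $\kappa$-enrichedness and compatibility of $\ot$ in $\mV'$.
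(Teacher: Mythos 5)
Your proposal is correct and follows essentially the same route as the paper's proof: reduce the tensor and conicality conditions over $\mV'$ to the generating class of tensor products of $\mV$-objects via left pseudo-enrichedness, and then to all of $\mV'$ via the $\kappa$-small-limit preservation built into left $\kappa$-enrichedness and the compatibility of $\ot$ on $\mV'$ with $\kappa$-small colimits; the paper packages your two steps as corepresentability of $\Mul_\mN(\V',\X;-)$ followed by a comparison of two $\kappa$-small-limit-preserving presheaves on $\mV'^\op$ that agree on the generators. Just make sure the reduction in (1) also explicitly covers conicality of $\kappa$-small colimits against arbitrary $\mV'$-prefixes and the preservation of $\kappa$-small colimits by each functor $\V'\ot(-):\mN\to\mN$, both of which follow by the identical generation argument (as in the closing paragraphs of the paper's proof).
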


\begin{proof}
(5) follows immediately from (1) - (4). The proof of (3) is similar to (1).
The proof of (4) is similar to (2).
We prove (1) and (2). We start with proving (2).
Let $\W \in \mW,\X \in \mN$ and $\X \ot \W$ the right tensor for $\phi: \mN^\circledast \to \mV'^\ot \times \mW^\ot$.
Then there is an object in $\Mul_{\mN}(\X,\W;\X \ot \W)$ and by left pseudo-enrichedness of $\phi$ it is enough to prove that for every $\V' \in \mV', \W_1,...,\W_\m\in \mW$ for $\m \geq0$ the following induced map is an equivalence:
$$ \rho: \Mul_\mN(\V',\X\ot \W, \W_1,...,\W_\m;\Y) \to \Mul_\mN(\V',\X, \W, \W_1,...,\W_\m;\Y).$$
By left $\kappa$-enrichedness of $\phi$ we can moreover assume that
$\V' \simeq \V_1 \ot ... \ot \V_\n$ for $\V_1,...,\V_\n \in \mV$ and $\n \geq0$ since $\mV'$ is generated by tensor products of $\mV$ under $\kappa$-small colimits.
Again by left pseudo-enrichedness of $\phi$ for this choice of $\V'$ the map $\rho$ identifies with the map
$$ \Mul_\mN(\V_1,...\V_\n,\X\ot \W, \W_1,...,\W_\m;\Y) \to \Mul_\mN(\V_1,...,\V_\n,\X, \W, \W_1,...,\W_\m;\Y),$$
which is an equivalence by universal property of the right tensor.

We continue with proving (1). The only-if direction is clear. We prove the if-direction. Assume that $ \mN^\circledast \to \mV'^\ot \times \mW^\ot$ exhibits $\mN$ as left $\kappa$-enriched and the pullback $\mV^\ot \times_{\mV'^\ot} \mN^\circledast \to \mV^\ot \times \mW^\ot $ admits left tensors and $\kappa$-small conical colimits. 
We like to see that $ \mN^\circledast \to \mV'^\ot \times \mW^\ot$ admits left tensors and $\kappa$-small conical colimits.
Since $\mV^\ot \times_{\mV'^\ot} \mN^\circledast \to \mV^\ot \times \mW^\ot $ admits left tensors, by iterately taking left tensors for every $\V_1,...,\V_\n \in \mV, \X \in \mN$ for $\n \geq 0$ the functor $ \Mul_\mN(\V_1,...,\V_\n,\X;-): \mN \to \mS$ is corepresentable.
This implies that for every $\V \in \mV', \X \in \mN$ the functor $ \Mul_\mN(\V,\X;-): \mN \to \mS$ is corepresentable, too.
This holds since by assumption $\mV'$ is generated under $\kappa$-small colimits by tensor products of objects of $\mV$, the $\infty$-category $\mN$ admits $\kappa$-small colimits, and for every $\X \in \mV'$
the functor $\mV'^\op \to \Fun(\mN,\mS), \V' \mapsto \Mul_\mN(\V',\X;-)$ preserves $\kappa$-small limits. The latter holds as for every $\Y \in \mN, \V \in \mV' $ the functor $\Mul_\mN(-,\X;\Y): \mV'^\op \to \mS$ preserves $\kappa$-small limits
by left $\kappa$-enrichedness.
Let $\V \ot \X$ be the object corepresenting the functor  $ \Mul_\mN(\V,\X;-): \mN \to \mS$. We obtain a functor $\ot: \mV' \times \mN \to \mN$ that sends $\V_1 \ot ... \ot \V_\n \in \mV'$ for $ \V_1,..., \V_\n \in \mV$ for $\n \geq 0$ to $(\V_1 \ot (-)) \circ ... \circ (\V_\n \ot(-)): \mN \to \mN.$
Moreover we obtain a natural transformation $\alpha: (-,\X) \to (-)\ot\X$
of functors $\mV' \to \mN^\circledast$
that sends any $\U' \in \mV'$ to the universal multi-morphism $\U', \X \to \U' \ot \X $ in $\mN^\circledast$.
We prove that for every $\U' \in \mV', \X \in \mN$
the universal morphism $\U', \X \to \U' \ot \X $ in $\mN^\circledast$ exhibits $\U' \ot \X$ as the left tensor of $\U'$ and $\X$.
For every $\X \in \mN$ the natural transformation $\alpha: (-,\X) \to (-)\ot \X$ induces for every $\V' \in \mV', \W_1,..., \W_\m \in \mW$ for $\m \geq 0$ a natural transformation 
$$\beta: \Mul_\mN(\V',(-) \ot \X, \W_1,..., \W_\m; \Y) \to \Mul_\mN(\V',-,\X, \W_1,..., \W_\m; \Y) $$ 
of functors $\mV'^\op \times \mV'^\op \to \mS.$
By left pseudo-enrichedness of $ \mN^\circledast \to \mV'^\ot \times \mW^\ot $ the universal morphism $\U', \X \to \U' \ot \X $ in $\mN^\circledast$ exhibits $\U' \ot \X$ as the left tensor of $\U'$ and $\X$ if
$\beta$ is an equivalence. 
By left $\kappa$-enrichedness of $ \mN^\circledast \to \mV'^\ot \times \mW^\ot $ we can assume that $\V' \simeq\V_1 \ot ...\ot \V_\bk$ for some $\V_1,...,\V_\bk\in\mV$ and $ \bk \geq 0 $.
Moreover by left $\kappa$-enrichedness of $ \mN^\circledast \to \mV'^\ot \times \mW^\ot $ the target of $\beta$ preserves $\kappa$-small limits.
We prove next that also the source of $\beta$
preserves $\kappa$-small limits.
The functor $(-)\ot \X:\mV' \to \mN$ preserves $\kappa$-small colimits because the functor $\Mul_\mN(-,\X;\Y): \mV'^\op \to \mS$ preserves $\kappa$-small limits by 
left $\kappa$-enrichedness of $ \mN^\circledast \to \mV'^\ot \times \mW^\ot $.
Therefore it is enough to see that the functor $\Mul_\mN(\V',-, \W_1,..., \W_\m; \Y): \mN^\op \to \mS$ preserves $\kappa$-small limits.
By left pseudo-enrichedness of $ \mN^\circledast \to \mV'^\ot \times \mW^\ot $
the canonical map $\V_1,...,\V_\bk \to \V' $ in $ \mV'^\ot$
identifies the functor $\Mul_\mN(\V',-, \W_1,..., \W_\m; \Y): \mN^\op \to \mS$ with the functor
$\Mul_\mN(\V_1,...,\V_\bk,-, \W_1,..., \W_\m; \Y) : \mN^\op \to \mS$, which preserves $\kappa$-small limits because the pullback $\mV^\ot \times_{\mV'^\ot} \mN^\circledast \to \mV^\ot \times \mW^\ot $ admits $\kappa$-small conical colimits.
Hence source and target of $\beta$ preserve $\kappa$-small limits.
This implies that it is enough to prove that for every $\U_1,...,\U_\n \in \mV$ for $\n \geq0$ the map
$\beta_{\U_1 \ot ... \ot \U_\n}$ is an equivalence.
The map $\beta_{\U_1 \ot ... \ot \U_\n}$ factors as equivalences
$$\Mul_\mN(\V', (\U_1 \ot ... \ot \U_\n) \ot \X, \W_1,..., \W_\m; \Y) \simeq\Mul_\mN(\V_1,...,\V_\bk, (\U_1 \ot ... \ot \U_\n) \ot \X, \W_1,..., \W_\m; \Y) $$$$\simeq \Mul_\mN(\V_1, ..., \V_\bk,\U_1, (\U_2 \ot ... \ot \U_{\n}) \ot \X, \W_1,..., \W_\m; \Y) $$$$\simeq
\Mul_\mN(\V_1, ..., \V_\bk,\U_1, \U_2, (\U_3 \ot ... \ot \U_{\n}) \ot \X, \W_1,..., \W_\m; \Y) \simeq ... $$
$$ \simeq \Mul_\mN(\V_1, ..., \V_\bk, \U_1,..., \U_\n ,\X, \W_1,..., \W_\m; \Y)\simeq \Mul_\mN(\V', \U_1 \ot ... \ot \U_\n ,\X, \W_1,..., \W_\m; \Y).$$

It remains to prove that for every $\V' \in \mV'$ the functor $\V' \ot (-): \mN \to \mN$ preserves $\kappa$-small colimits. By assumption this holds for every $\V' \in \mV$ and so also for every $\V' \simeq \V_1 \ot ...\ot \V_\n$ for $\V_1,..., \V_\n \in \mV$ for $\n \geq 0.$
The full subcategory of $\mV'$ spanned by all $\V'$ such that 
the functor $\V' \ot (-): \mN \to \mN$ preserves $\kappa$-small colimits is closed under $\kappa$-small colimits because the functor
$\mV' \to \Fun(\mN, \mN): \V' \mapsto \V' \ot (-): \mN \to \mN$ preserves $\kappa$-small colimits as the functor $\Mul_\mN(-,\X;\Y): \mV'^\op \to \mS$ preserves $\kappa$-small limits by left $\kappa$-enrichedness of $ \mN^\circledast \to \mV'^\ot \times \mW^\ot $. So the result follows from the fact that $\mV'$ is generated under $\kappa$-small colimits by tensor products of objects of $\mV.$

\end{proof}

For the next corollary we use Notation \ref{nity}:

\begin{corollary}\label{emcolf}
Let $\kappa$ be a small regular cardinal and $\mV^\ot \to \Ass, \mW^\ot \to \Ass$ be small $\infty$-operads.
\begin{enumerate}
\item The functor $${_{\mP\Env_\kappa(\mV)}^\kappa\L\Enr_\mW} \to {_{\mV}\omega\B\Enr}_{\mW} $$ taking pullback along the embedding $\mV^\ot \subset \mP\Env_\kappa(\mV)^\ot$ 
restricts to an equivalence between left tensored $\infty$-categories compatible with $\kappa$-small colimits and weakly bienriched $\infty$-categories that admit left tensors and 
$\kappa$-small conical colimits. 

\item The functor $${_{\mV}\R\Enr^\kappa_{\mP\Env_\kappa(\mW)}} \to {_{\mV}\omega\B\Enr}_{\mW} $$ taking pullback along the embedding $\mW^\ot \subset \mP\Env_\kappa(\mW)^\ot$
restricts to an equivalence between right tensored $\infty$-categories compatible with $\kappa$-small colimits and weakly bienriched $\infty$-categories that admit right tensors and 
$\kappa$-small conical colimits. 
 
\item The functor $${_{\mP\Env_\kappa(\mV)}^\kappa\B\Enr^{\kappa}_{\mP\Env_\tau(\mW)}} \to {_{\mV}\omega\B\Enr}_{\mW}  $$ taking pullback along the embeddings $\mV^\ot \subset \mP\Env_\kappa(\mV)^\ot,\mW^\ot \subset \mP\Env_\kappa(\mW)^\ot $ 
restricts to an equivalence between bitensored $\infty$-categories compatible with $\kappa$-small colimits and weakly bienriched $\infty$-categories that admit left and right tensors and 
$\kappa$-small conical colimits. 

\end{enumerate}	
	
\end{corollary}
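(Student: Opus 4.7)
The strategy is to derive the statement by combining the ambient equivalences furnished by Corollary \ref{cosqai} with the pointwise characterization of compatibility with $\kappa$-small colimits provided by Lemma \ref{lekko}. In each of the three cases, Corollary \ref{cosqai} already supplies an equivalence of the surrounding $\infty$-categories, so the task reduces to verifying that, under this equivalence, the indicated full subcategories correspond.

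For part (1), I would start from the equivalence
$$\Phi \colon {_{\mP\Env_\kappa(\mV)}^\kappa\L\Enr_\mW} \xrightarrow{\ \simeq\ } {_{\mV}\omega\B\Enr}_{\mW}$$
given by Corollary \ref{cosqai}(1) and apply Lemma \ref{lekko}(1) with $\mV':=\mP\Env_\kappa(\mV)$. The hypotheses of that lemma are satisfied: by Notation \ref{nity}, $\mP\Env_\kappa(\mV)^\ot\to\Ass$ is a monoidal $\infty$-category compatible with $\kappa$-small colimits, and every object of $\mP\Env_\kappa(\mV)$ is by construction a $\kappa$-small colimit of objects of $\Env(\mV)$, which are themselves tensor products of objects of $\mV$. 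Lemma \ref{lekko}(1) then yields: a left $\kappa$-enriched $\infty$-category $\mN^\circledast\to\mP\Env_\kappa(\mV)^\ot\times\mW^\ot$ is left tensored and compatible with $\kappa$-small colimits if and only if $\Phi(\mN)$ admits left tensors and $\kappa$-small conical colimits. This is exactly the identification of full subcategories needed to promote $\Phi$ to the asserted restricted equivalence.

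Parts (2) and (3) are entirely analogous. For (2), I would invoke Corollary \ref{cosqai}(2) together with Lemma \ref{lekko}(3), applied to the embedding $\mW^\ot\subset\mP\Env_\kappa(\mW)^\ot$, where the hypothesis of generation by tensor products of $\mW$ under $\kappa$-small colimits is verified in the same way as for $\mV$. For (3), I would combine Corollary \ref{cosqai}(3) with Lemma \ref{lekko}(5), the latter simultaneously packaging the conditions of admitting left tensors, right tensors, and $\kappa$-small conical colimits on the pullback side, which matches the definition of bitensored compatible with $\kappa$-small colimits on the enriched side.

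I do not expect any genuine obstacle: the content of the argument is already bundled in the cited results, and the only bookkeeping concerns verifying the generation hypothesis of Lemma \ref{lekko} for $\mP\Env_\kappa(-)$, which is immediate from the construction in Notation \ref{nity}. The one subtle point worth flagging is that ``compatible with $\kappa$-small colimits'' in the sense of Definition 2.28 is not literally the conjunction ``admits left tensors and admits $\kappa$-small conical colimits''—the preservation of $\kappa$-small colimits by the tensor product functors has to be deduced—but this deduction is precisely what Lemma \ref{lekko} provides (using left $\kappa$-enrichedness), so a direct appeal suffices.
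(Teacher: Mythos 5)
Your proposal is correct and is exactly the argument the paper intends: the corollary is stated without proof immediately after Lemma \ref{lekko}, and the derivation is precisely the combination of the ambient equivalences of Corollary \ref{cosqai} with the subcategory characterizations of Lemma \ref{lekko} (parts (1), (3), (5) for the three cases), with the generation hypothesis for $\mP\Env_\kappa(-)$ verified from Notation \ref{nity} as you do.
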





We also need the following variant of Notation \ref{locpat}:

\begin{notation}\label{locpat2} Let $(\mV^\ot \to \Ass, \rS), (\mW^\ot \to \Ass, \T)$ be small localization pairs and $\mM^\circledast \to \mV^\ot \times \mW^\ot$ a small weakly bienriched $\infty$-category.

\begin{enumerate}
\item Let $$\mP\L\Env(\mM)^\circledast_{\rS} \subset \rS^{-1}\mP\Env(\mV)^\ot\times_{\mP\Env(\mV)^\ot}\mP\B\Env(\mM)^\circledast_{\rS}\times_{\mP\Env(\mW)^\ot}\mW^\ot$$
be the full weakly bienriched subcategory generated by $\mM$ under small colimits and the left action.

\item Let $$\mP\R\Env(\mM)^\circledast_{\T} \subset \mV^\ot\times_{\mP\Env(\mV)^\ot}\mP\B\Env(\mM)^\circledast_{\T}\times_{\mP\Env(\mW)^\ot}{\T^{-1}\mP\Env(\mW)^\ot}$$
be the full weakly bienriched subcategory generated by $\mM$ under small colimits and the right action.
If $\rS=\emptyset$ or $\T=\emptyset$, we drop $\rS,\T$ from the notation, respectively.

\end{enumerate}	

\end{notation}

\begin{remark}
Remark \ref{reipp} implies that $\mP\L\Env(\mM)^\circledast_{\rS} \to \rS^{-1}\mP\Env(\mV)^\ot\times\mW^\ot$
is a presentably left tensored $\infty$-category and $\mP\R\Env(\mM)^\circledast_{\T} \to \mV^\ot\times\mP\Env(\mW)^\ot$ is a presentably right tensored $\infty$-category.

\end{remark}


\begin{lemma}\label{lemozj}
Let $(\mV^\ot \to \Ass, \rS)$ be a small localization pair, $\mW^\ot \to \Ass$ a small $\infty$-operad and $\kappa$ a small regular cardinal.
Let $\rho: \mN^\circledast \to \mV^\ot \times \mW^\ot$ be a small left $\rS$-enriched $\infty$-category that admits left tensors and $\kappa$-small conical colimits. 
There is a left linear $\kappa$-small colimits preserving embedding $\mN^\circledast \subset \mO^\circledast$ into a presentably bitensored $\infty$-category that lies over the embeddings $\mV^\ot \subset \rS^{-1}\mP\Env(\mV)^\ot$ and $\mW^\ot \subset \mP\Env(\mW)^\ot.$
In particular, by Proposition \ref{eqq} for every small left enriched $\infty$-category $\mN^\circledast \to \rS^{-1}\mP\Env(\mV)^\ot \times \mW^\ot$ there is a left $\rS^{-1}\mP\Env(\mV)$-linear $\kappa$-small colimits preserving embedding $\mN^\circledast \subset \mO^\circledast$ into a presentably bitensored $\infty$-category that lies over the embedding $\mW^\ot \subset \mP\Env(\mW)^\ot.$


\end{lemma}

\begin{proof}

Let $\iota: \mN^\circledast \subset \mP\B\Env(\mN)^\circledast_\rS $ be the enriched embedding.
Let $\mQ$ be the set of morphisms $\V \ot \iota(\X) \to \iota(\V\ot\X), \colim(\iota\circ\rH) \to \iota(\colim(\rH))$ for any $\V \in \mV, \X \in \mN$ and any functor $\rH: \K \to \mN,$ where $\K$ is $\kappa$-small.
Let $\mO^\circledast \to \rS^{-1}\mP\Env(\mV)^\ot \times \mP\Env(\mW)^\ot$ be the 
localization of $\mP\B\Env(\mN)_\rS^\circledast \to \rS^{-1}\mP\Env(\mV)^\ot \times \mP\Env(\mW)^\ot$ with respect to the set of morphisms $\V_1 \ot ... \ot \V_\n \ot \f \ot \W_1 \ot ... \ot \W_\m$ for $\f \in \mQ, \V_1,...,\V_\n \in \mV, \W_1,...,\W_\m \in \mW$ for $\n, \m \geq 0$.
Then $\mO^\circledast \to \rS^{-1}\mP\Env(\mV)^\ot \times \mP\Env(\mW)^\ot$ is a presentably bitensored $\infty$-category and the enriched embedding $\mO^\circledast \subset \mP\B\Env(\mN)_\rS^\circledast$ admits an enriched left adjoint. The left $\mV$-enriched embedding $\iota: \mN^\circledast \subset \mP\B\Env(\mN)^\circledast_\rS$ lands in $\mO^\circledast$ and the resulting left $\mV$-enriched embedding $\mN^\circledast \subset \mO^\circledast$ is left $\mV$-linear and preserves $\kappa$-small colimits by construction.

\end{proof}

\begin{proposition}\label{pseuso2}
Let $(\mV^\ot \to \Ass, \rS)$ be a small localization pair $\mW^\ot \to \Ass$ a small $\infty$-operad, $\mM^\circledast \to \mV^\ot \times \mW^\ot$ a small left $\rS$-enriched $\infty$-category and $\rho: \mN^\circledast \to \rS^{-1}\mP\Env(\mV)^\ot \times \mW^\ot$ a small left tensored $\infty$-category compatible with small colimits. 

\begin{enumerate}
\item The enriched embedding $\mM^\circledast \subset \mP\L\Env(\mM)^\circledast_{\rS}$ induces a functor
$$\Enr\Fun_{\rS^{-1}\mP\Env(\mV), \mW}(\mP\L\Env(\mM)_{\rS},\mN) \to \Enr\Fun_{\mV,\mW}(\mM,\mN)$$
that admits a fully faithful left adjoint.
The left adjoint lands in the full subcategory $$\L\LinFun^\L_{\rS^{-1}\mP\Env(\mV),\mW}(\mP\L\Env(\mM)_{\rS},\mN).$$

\item The following induced functor is an equivalence: $$\L\LinFun^\L_{\rS^{-1}\mP\Env(\mV),\mW}(\mP\L\Env(\mM)_{\rS},\mN) \to \Enr\Fun_{\mV,\mW}(\mM,\mN).$$

\end{enumerate}

\end{proposition}

\begin{proof}
(1) follows from (2) since the functor of (2) is conservative. We prove (1).
Since $\rho$ is locally small, $\rho$ is a left enriched $\infty$-category by the adjoint functor theorem.
By Corollary \ref{ihot} the left quasi-enriched $\infty$-category $\rho$ is the pullback of a unique left enriched $\infty$-category $\rho': \bar{\mN}^\circledast \to \rS^{-1}\widehat{\mP}\Env(\mV)^\ot \times \mW^\ot$. We apply Lemma \ref{lemozj} in a larger universe to $\rho'$ and $\kappa$ the large strongly inacessible cardinal corresponding to the small universe. We obtain a left linear small colimits preserving embedding $\mN^\circledast \subset \bar{\mN}^\circledast \subset \mO^\circledast$ into a bitensored $\infty$-category compatible with large colimits that lies over the embeddings $\rS^{-1}\mP\Env(\mV)^\ot \subset \rS^{-1}\widehat{\mP}\Env(\mV)^\ot$ and $\mW^\ot \subset \widehat{\mP}\Env(\mW)^\ot.$
By Proposition \ref{laan} the functors
$$\Enr\Fun_{\rS^{-1}\mP\Env(\mV), \mP\Env(\mW)}(\mP\B\Env(\mM)_{\rS},\mO) \xrightarrow{\gamma} \Enr\Fun_{\rS^{-1}\mP\Env(\mV), \mW}(\mP\L\Env(\mM)_{\rS},\mO), $$$$ \Enr\Fun_{\rS^{-1}\mP\Env(\mV), \mW}(\mP\L\Env(\mM)_{\rS},\mO) \to \Enr\Fun_{\mV,\mW}(\mM,\mO)$$ admit fully faithful left adjoints $\alpha, \beta$, respectively.
Thus $\beta$ factors as $\alpha \circ \beta$ followed by $\gamma.$
By Proposition \ref{pseuso} the composition $\alpha \circ \beta$ lands in $\LinFun^\L_{\rS^{-1}\mP\Env(\mV), \mP\Env(\mW)}(\mP\B\Env(\mM)_{\rS},\mO)$ so that $\beta$ lands in $\L\LinFun^\L_{\rS^{-1}\mP\Env(\mV), \mW}(\mP\L\Env(\mM)_{\rS},\mO).$
Since $\mP\L\Env(\mM)_{\rS}$ is generated by $\mM$ under small colimits and left tensors and the embedding $\mN \subset \mO$ preserves small colimits and left tensors, the adjunction $$ \Enr\Fun_{\mV,\mW}(\mM,\mO) \rightleftarrows \Enr\Fun_{\rS^{-1}\mP\Env(\mV), \mW}(\mP\L\Env(\mM)_{\rS},\mO)$$
restricts to an adjunction $ \Enr\Fun_{\mV,\mW}(\mM,\mN) \rightleftarrows \Enr\Fun_{\rS^{-1}\mP\Env(\mV), \mW}(\mP\L\Env(\mM)_{\rS},\mN)$
whose left adjoint lands in the full subcategory $\L\LinFun^\L_{\rS^{-1}\mP\Env(\mV), \mW}(\mP\L\Env(\mM)_{\rS},\mN).$	

\end{proof}

\subsection{Enriched presheaves}

In the following we specialize the theory of the latter subsection to construct
an enriched $\infty$-category of enriched presheaves and more generally for every small regular cardinal $\kappa$ a $\kappa$-enriched $\infty$-category of $\kappa$-enriched presheaves.

\begin{notation}\label{enrprrr}Let $\kappa$ be a small regular cardinal.

\begin{enumerate}

\item Let $\mV^\ot \to \Ass$ be a small monoidal $\infty$-category compatible with $\kappa$-small colimits, $\mW^\ot \to \Ass$ a small $\infty$-operad and $\mM^\circledast \to \mV^\ot \times \mW^\ot$ a small left $\kappa$-enriched $\infty$-category.
Let
$$\mP\B\Env_\kappa(\mM)_{\L\Enr}^\circledast \subset \mV^\ot \times_{\Ind_\kappa(\mV)^\ot} \mP\B\Env(\mM)_{\L\Enr_\kappa}^\circledast \times_{\mP\Env(\mW)^\ot} \mP_\kappa\Env(\mW)^\ot $$
be the full bitensored subcategory generated by $\mM$ under $\kappa$-small colimits and the $\mV, \mP_\kappa\Env(\mW)$-biaction.

\item Let $\mV^\ot \to \Ass$ be a small $\infty$-operad, $\mW^\ot \to \Ass$ a small monoidal $\infty$-category compatible with $\kappa$-small colimits and $\mM^\circledast \to \mV^\ot \times \mW^\ot$ a small right $\kappa$-enriched $\infty$-category.
Let
$$ \mP\B\Env_\kappa(\mM)_{\R\Enr}^\circledast \subset \mP_\kappa\Env(\mV)^\ot \times_{\mP\Env(\mV)^\ot} \mP\B\Env(\mM)_{\R\Enr_\kappa}^\circledast \times_{\Ind_\kappa(\mW)^\ot} \mW^\ot$$
be the full bitensored subcategory generated by $\mM$ under $\kappa$-small colimits and the $\mP_\kappa\Env(\mV), \mW$-biaction.

\item Let $\mV^\ot \to \Ass, \mW^\ot \to \Ass$ be small monoidal $\infty$-categories compatible with $\kappa$-small colimits and $\mM^\circledast \to \mV^\ot \times \mW^\ot$ a small $\kappa, \kappa$-bienriched $\infty$-category.
Let
$$\mP\B\Env_\kappa(\mM)_{\B\Enr}^\circledast \subset {\mV^\ot \times_{\Ind_\kappa(\mV)^\ot} \mP\B\Env(\mM)_{\B\Enr_\kappa}^\circledast\times_{\Ind_\kappa(\mW)^\ot} \mW^\ot}$$
be the full bitensored subcategory generated by $\mM$ under $\kappa$-small colimits and the $\mV,\mW$-biaction.	

\item Let $\mV^\ot \to \Ass$ be a small monoidal $\infty$-category compatible with $\kappa$-small colimits, $\mW^\ot \to \Ass$ a small $\infty$-operad and $\mM^\circledast \to \mV^\ot \times \mW^\ot$ a small left $\kappa$-enriched $\infty$-category. Let
$$\mP\L\Env_\kappa(\mM)_{\Enr}^\circledast \subset \mV^\ot \times_{\Ind_\kappa(\mV)^\ot} \mP\L\Env(\mM)_{\Enr_\kappa}^\circledast \to \mV^\ot \times\mW^\ot $$
be the full left tensored subcategory generated by $\mM$ under $\kappa$-small colimits and the left $\mV$-action.



\item Let $\mV^\ot \to \Ass$ be a small $\infty$-operad, $\mW^\ot \to \Ass$ a small monoidal $\infty$-category compatible with $\kappa$-small colimits and $\mM^\circledast \to \mV^\ot \times \mW^\ot$ a small right $\kappa$-enriched $\infty$-category.
Let
$$ \mP\R\Env_\kappa(\mM)_{\Enr}^\circledast \subset \mP\R\Env(\mM)_{\Enr_\kappa}^\circledast \times_{\Ind_\kappa(\mW)^\ot} \mW^\ot\to \mV^\ot \times \mW^\ot $$
be the full right tensored subcategory generated by $\mM$ under $\kappa$-small colimits and the right $ \mW$-action.





\end{enumerate}

\end{notation}

\begin{remark}\label{sizes}
Since the embeddings $\mV \subset \Ind_\kappa(\mV)$ and $\mP_\kappa\Env(\mV) \subset \mP\Env(\mV) $ preserve $\kappa$-small colimits,
$$\mP\B\Env_\kappa(\mM)_{\L\Enr}^\circledast \to \mV^\ot \times \mP_\kappa\Env(\mW)^\ot, \mP\B\Env_\kappa(\mM)_{\R\Enr}^\circledast \to \mP_\kappa\Env(\mV)^\ot \times \mW^\ot, \mP\B\Env_\kappa(\mM)_{\B\Enr}^\circledast \to \mV^\ot \times \mW^\ot$$
are bitensored $\infty$-categories compatible with $\kappa$-small colimits,
which are small because 
$\mM$ is small and the collection of all $\kappa$-small $\infty$-categories is small.
Similarly, $$\mP\L\Env_\kappa(\mM)_{\Enr}^\circledast \to \mV^\ot \times \mW^\ot, \mP\R\Env_\kappa(\mM)_{\Enr}^\circledast \to \mV^\ot \times \mW^\ot$$ are small left tensored, right tensored $\infty$-categories compatible with $\kappa$-small colimits, respectively.

Moreover note that $$\mP\B\Env_\kappa(\mM^\rev)_{\L\Enr}^\circledast \simeq (\mP\B\Env_\kappa(\mM)_{\R\Enr}^\rev)^\circledast, $$$$ \mP\L\Env_\kappa(\mM^\rev)_{\Enr}^\circledast \simeq (\mP\R\Env_\kappa(\mM)_{\Enr}^\rev)^\circledast,$$$$ \mP\B\Env_\kappa(\mM^\rev)_{\B\Enr}^\circledast \simeq (\mP\B\Env_\kappa(\mM)_{\B\Enr}^\rev)^\circledast.$$

\end{remark}


\begin{notation}\label{enrprrr2}Let $\sigma$ be the large strongly inaccessible cardinal corresponding to the small universe.

\begin{enumerate}
\item Let $\mV^\ot \to \Ass$ be a monoidal $\infty$-category compatible with small colimits, $\mW^\ot \to \Ass$ an $\infty$-operad and $\mM^\circledast \to \mV^\ot \times \mW^\ot$ a left quasi-enriched $\infty$-category.
Let
$$\mP\B\Env(\mM)_{\L\Enr}^\circledast := \widehat{\mP}\B\Env_\sigma(\mM)_{\L\Enr}^\circledast \to \mV^\ot \times \mP\Env(\mW)^\ot.$$

\item Let $\mV^\ot \to \Ass$ be an $\infty$-operad, $\mW^\ot \to \Ass$ a monoidal $\infty$-category compatible with small colimits and $\mM^\circledast \to \mV^\ot \times \mW^\ot$ a right quasi-enriched $\infty$-category.
Let
$$ \mP\B\Env(\mM)_{\R\Enr}^\circledast := \widehat{\mP}\B\Env_\sigma(\mM)_{\R\Enr}^\circledast \to \mP\Env(\mV)^\ot \times \mW^\ot.$$

\item Let $\mV^\ot \to \Ass, \mW^\ot \to \Ass$ be monoidal $\infty$-categories compatible with small colimits and $\mM^\circledast \to \mV^\ot \times \mW^\ot$ a biquasi-enriched $\infty$-category.
Let
$$\mP_{\mV, \mW}(\mM)^\circledast:= \mP\B\Env(\mM)_{\B\Enr}^\circledast := \widehat{\mP}\B\Env_\sigma(\mM)_{\B\Enr}^\circledast \to \mV^\ot \times \mW^\ot.$$

\item Let $\mV^\ot \to \Ass$ be a monoidal $\infty$-category compatible with small colimits, $\mW^\ot \to \Ass$ an $\infty$-operad and $\mM^\circledast \to \mV^\ot \times \mW^\ot$ a left quasi-enriched $\infty$-category.
Let
$$\mP_\mV(\mM)^\circledast:=\mP\L\Env(\mM)_{\Enr}^\circledast := \widehat{\mP}\L\Env_\sigma(\mM)_{\Enr}^\circledast \to \mV^\ot \times \mW^\ot.$$

\item Let $\mV^\ot \to \Ass$ be an $\infty$-operad, $\mW^\ot \to \Ass$ a monoidal $\infty$-category compatible with small colimits and $\mM^\circledast \to \mV^\ot \times \mW^\ot$ a right quasi-enriched $\infty$-category.
Let
$$ \mP\R\Env(\mM)_{\Enr}^\circledast := \widehat{\mP}\R\Env_\sigma(\mM)_{\Enr}^\circledast \to \mV^\ot \times \mW^\ot.$$

\end{enumerate}

\end{notation}

\begin{proposition}\label{unipor}Let $\kappa$ be a small regular cardinal.

\begin{enumerate}
\item Let $\mV^\ot \to \Ass$ be a small monoidal $\infty$-category compatible with $\kappa$-small colimits, $\mW^\ot \to \Ass$ a small $\infty$-operad, $\mM^\circledast \to \mV^\ot \times\mW^\ot $ a small left $\kappa$-enriched $\infty$-category and $\mN^\circledast \to \mV^\ot \times \mP_\kappa\Env(\mW)^\ot$ a small bitensored $\infty$-category compatible with $\kappa$-small colimits.
The functor 
$$\gamma_\mN: \Enr\Fun_{\mV,\mP_\kappa\Env(\mW)}(\mP\B\Env_\kappa(\mM)_{\L\Enr},\mN) \to \Enr\Fun_{\mV,\mW}(\mM,\mN)$$
admits a fully faithful left adjoint that lands in 
$\LinFun_{\mV,\mP_\kappa\Env(\mW)}^\kappa(\mP\B\Env_\kappa(\mM)_{\L\Enr},\mN)$
and so the following induced functor is an equivalence:
$$\LinFun^\kappa_{\mV,\mP_\kappa\Env(\mW)}(\mP\B\Env_\kappa(\mM)_{\L\Enr},\mN) \to \Enr\Fun_{\mV,\mW}(\mM,\mN).$$


\item Let $\mV^\ot \to \Ass$ be a small monoidal $\infty$-category compatible with $\kappa$-small colimits, $\mW^\ot \to \Ass$ a small $\infty$-operad, $\mM^\circledast \to \mV^\ot \times\mW^\ot $ a small left $\kappa$-enriched $\infty$-category and $\mN^\circledast \to \mV^\ot \times \mW^\ot$ a small left tensored $\infty$-category compatible with $\kappa$-small colimits.
The functor 
$$\gamma_\mN: \L\Enr\Fun_{\mV,\mW}(\mP\L\Env_\kappa(\mM)_{\Enr},\mN) \to \Enr\Fun_{\mV,\mW}(\mM,\mN)$$
admits a fully faithful left adjoint that lands in 
$\L\LinFun_{\mV,\mW}^\kappa(\mP\L\Env_\kappa(\mM)_{\Enr},\mN)$
and so the following induced functor is an equivalence:
$$\L\LinFun^\kappa_{\mV,\mW}(\mP\L\Env_\kappa(\mM)_{\Enr},\mN) \to \Enr\Fun_{\mV,\mW}(\mM,\mN).$$


\item Let $\mV^\ot \to \Ass, \mW^\ot \to \Ass$ be small monoidal $\infty$-categories compatible with $\kappa$-small colimits, $\mM^\circledast \to \mV^\ot \times \mW^\ot $ a small $\kappa, \kappa$-bienriched $\infty$-category and $\mN^\circledast \to \mV^\ot \times \mW^\ot$ a bitensored $\infty$-category compatible with $\kappa$-small colimits.
The functor 
$$\gamma_\mN: \Enr\Fun_{\mV,\mW}(\mP\B\Env_\kappa(\mM)_{\B\Enr},\mN) \to \Enr\Fun_{\mV,\mW}(\mM,\mN)$$
admits a fully faithful left adjoint that lands in 
$\LinFun_{\mV,\mW}^\kappa(\mP\B\Env_\kappa(\mM)_{\B\Enr},\mN)$
and so the following induced functor is an equivalence:
$$\LinFun^\kappa_{\mV,\mW}(\mP\B\Env_\kappa(\mM)_{\B\Enr},\mN) \to \Enr\Fun_{\mV,\mW}(\mM,\mN).$$

\end{enumerate}

\end{proposition}

\begin{proof}
(1): Since $\mP\B\Env_\kappa(\mM)_{\L\Enr}$ is generated by $\mM$ under $\kappa$-small colimits and the $\mV, \mP_\kappa\Env(\mW)$-biaction, the functor in the second part of (1) is conservative. So it remains to prove the first part. By Proposition \ref{cool} there is a $\kappa$-small colimits preserving linear embedding $\mN^\circledast \to \mO^\circledast:=  \Ind_\kappa(\mN)^\circledast$ into a bitensored $\infty$-category compatible with small colimits that lies over the monoidal embeddings $\mV^\ot \to \Ind_\kappa(\mV)^\ot, \mP_\kappa\Env(\mW)^\ot \to \Ind_\kappa(\mP_\kappa\Env(\mW))^\ot \simeq \mP\Env(\mW)^\ot$.
By Proposition \ref{laan} and Remark \ref{sizes} the induced functors $$\rho: \Enr\Fun_{\Ind_\kappa(\mV),\mP\Env(\mW)}(\mP\B\Env(\mM)_{\L\Enr_\kappa},\mO)\to \Enr\Fun_{\mV, \mP_\kappa\Env(\mW)}(\mP\B\Env_\kappa(\mM)_{\L\Enr},\mO),$$ $$\gamma_\mO: \Enr\Fun_{\mV,\mP_\kappa\Env(\mW)}(\mP\B\Env_\kappa(\mM)_{\L\Enr},\mO) \to \Enr\Fun_{\mV,\mW}(\mM,\mO)$$
admit fully faithful left adjoints.
Thus the left adjoint of $\gamma_\mO$ factors as the left adjoint of $\gamma_\mO \circ \rho$, which lands in $\LinFun_{\Ind_\kappa(\mV),\mP\Env(\mW)}^\L(\mP\B\Env(\mM)_{\L\Enr_\kappa},\mO)$ by Proposition \ref{pseuso}, followed by $\rho$ and so lands in $\LinFun_{\mV,\mP_\kappa\Env(\mW)}^\kappa(\mP\B\Env_\kappa(\mM)_{\L\Enr},\mO)$
as $\mP\B\Env_\kappa(\mM)_{\L\Enr}$ is closed in $\mP\B\Env(\mM)_{\L\Enr_\kappa}$
under $\kappa$-small colimits and the $\mV,\mP_\kappa\Env(\mW)$-biaction.
Since $\mP\B\Env_\kappa(\mM)_{\L\Enr}$ is generated by $\mM$ under $\kappa$-small colimits and bitensors and the linear embedding $\mN^\circledast \to \mO^\circledast$ preserves $\kappa$-small colimits, the left adjoint of $\gamma_{\mO}$ restricts to a left adjoint of $\gamma_\mN$ that lands in $\LinFun^\kappa_{\mV,\mP_\kappa\Env(\mW)}(\mP\B\Env_\kappa(\mM)_{\L\Enr},\mN).$

(2): Since $\mP\L\Env_\kappa(\mM)_{\Enr}$ is generated by $\mM$ under $\kappa$-small colimits and the left $\mV$-action, the functor in the second part of (2) is conservative so that it remains to prove the first part of (2).

By Lemma \ref{lemozj} By Proposition \ref{cool} there is a $\kappa$-small colimits preserving left linear embedding $\mN^\circledast \to \mO^\circledast$ into a presentably bitensored $\infty$-category that lies over the monoidal embeddings $\mV^\ot \to \Ind_\kappa(\mV)^\ot, \mW^\ot \to \mP\Env(\mW)^\ot$.
By Proposition \ref{laan} and Remark \ref{sizes} the induced functors $$\rho: \Enr\Fun_{\Ind_\kappa(\mV),\mW}(\mP\L\Env(\mM)_{\L\Enr_\kappa},\mO)\to \Enr\Fun_{\mV, \mW}(\mP\L\Env_\kappa(\mM)_{\L\Enr},\mO),$$ $$\gamma_\mO: \Enr\Fun_{\mV,\mW}(\mP\L\Env_\kappa(\mM)_{\L\Enr},\mO) \to \Enr\Fun_{\mV,\mW}(\mM,\mO)$$
admit fully faithful left adjoints.
Thus the left adjoint of $\gamma_\mO$ factors as the left adjoint of $\gamma_\mO \circ \rho$, which lands in $\L\LinFun_{\Ind_\kappa(\mV),\mW}^\L(\mP\L\Env(\mM)_{\L\Enr_\kappa},\mO)$ by Proposition \ref{pseuso2}, followed by $\rho$ and so lands in $\L\LinFun_{\mV,\mW}^\kappa(\mP\L\Env_\kappa(\mM)_{\L\Enr},\mO)$
because $\mP\L\Env_\kappa(\mM)_{\L\Enr}$ is closed in $\mP\L\Env(\mM)_{\L\Enr_\kappa}$
under $\kappa$-small colimits and the left $\mV$-action.
Since $\mP\L\Env_\kappa(\mM)_{\L\Enr}$ is generated by $\mM$ under $\kappa$-small colimits and left tensors and the left linear embedding $\mN^\circledast \to \mO^\circledast$ preserves $\kappa$-small colimits, the left adjoint of $\gamma_{\mO}$ restricts to a left adjoint of $\gamma_\mN$ that lands in $\L\LinFun^\kappa_{\mV, \mW}(\mP\L\Env_\kappa(\mM)_{\L\Enr},\mN).$

The proof of (3) is similar to the one of (1).
\end{proof}

Proposition \ref{unipor} specializes to the following theorem:

\begin{theorem}\label{unipor3}
\begin{enumerate}
\item Let $\mV^\ot \to \Ass$ be a monoidal $\infty$-category compatible with small colimits, $\mW^\ot \to \Ass$ an $\infty$-operad, $\mM^\circledast \to \mV^\ot \times \mW^\ot $ a left quasi-enriched $\infty$-category and $\mN^\circledast \to \mV^\ot \times \mP\Env(\mW)^\ot$ a bitensored $\infty$-category compatible with small colimits.
The induced functor is an equivalence:
$$\LinFun^\L_{\mV,\mP\Env(\mW)}(\mP\B\Env(\mM)_{\L\Enr},\mN) \to \Enr\Fun_{\mV,\mW}(\mM,\mN).$$

\item Let $\mV^\ot \to \Ass$ be a small $\infty$-operad, $\mW^\ot \to \Ass$ a small monoidal $\infty$-category compatible with small colimits, $\mM^\circledast \to \mV^\ot \times \mW^\ot $ a right quasi-enriched $\infty$-category and $\mN^\circledast \to \mP\Env(\mV)^\ot \times \mW^\ot$ a bitensored $\infty$-category compatible with small colimits. The induced functor is an equivalence:$$\LinFun_{\mP\Env(\mV),\mW}^\L(\mP\B\Env(\mM)_{\R\Enr},\mN) \to \Enr\Fun_{\mV,\mW}(\mM,\mN).$$



\item Let $\mV^\ot \to \Ass, \mW^\ot \to \Ass$ be monoidal $\infty$-categories compatible with small colimits, $\mM^\circledast \to \mV^\ot \times \mW^\ot $ a biquasi-enriched $\infty$-category and $\mN^\circledast \to \mV^\ot \times \mW^\ot$ a bitensored $\infty$-category compatible with small colimits.
The induced functor is an equivalence:
$$\LinFun^\L_{\mV,\mW}(\mP_{\mV,\mW}(\mM),\mN) \to \Enr\Fun_{\mV,\mW}(\mM,\mN).$$

\item Let $\mV^\ot \to \Ass$ be a monoidal $\infty$-category compatible with small colimits, $\mW^\ot \to \Ass$ an $\infty$-operad, $\mM^\circledast \to \mV^\ot \times \mW^\ot $ a left quasi-enriched $\infty$-category and $\mN^\circledast \to \mV^\ot \times \mW^\ot$ a left tensored $\infty$-category compatible with small colimits.
The induced functor is an equivalence:
$$\L\LinFun^\L_{\mV,\mW}(\mP\L\Env(\mM)_{\Enr},\mN) \to \Enr\Fun_{\mV,\mW}(\mM,\mN).$$

\item Let $\mV^\ot \to \Ass$ be a small $\infty$-operad, $\mW^\ot \to \Ass$ a small monoidal $\infty$-category compatible with small colimits, $\mM^\circledast \to \mV^\ot \times \mW^\ot $ a right quasi-enriched $\infty$-category and $\mN^\circledast \to\mV^\ot \times \mW^\ot$ a right tensored $\infty$-category compatible with small colimits. The induced functor is an equivalence:$$\R\LinFun_{\mV,\mW}^\L(\mP\R\Env(\mM)_{\Enr},\mN) \to \Enr\Fun_{\mV,\mW}(\mM,\mN).$$

\end{enumerate}

\end{theorem}

\begin{remark}

\cite[6.2.2.]{HINICH2020107129}, \cite{hinich2021colimits}, \cite[Notation 4.35., Theorem 5.1.]{HEINE2023108941} construct a different model for the $\infty$-category of enriched presheaves in Hinich's and Gepner-Haugseng's model of enriched $\infty$-categories. These constructions satisfy an analogous universal property in the framework of these models and are compared by \cite{heine2024equivalence}. Consequently, comparison of these models with our model \cite{HEINE2023108941}, \cite{MR4185309} identifies these constructions with the one of Theorem \ref{unipor3} up to an auto-equivalence.	

\end{remark}

\subsection{Presentability of enriched presheaves}

Next we prove that the $\infty$-category of presheaves of Notation \ref{enrprrr2}
enriched in a 
presentably monoidal $\infty$-category is presentable (Corollary \ref{Corfolsa}).

\begin{notation}
Let $\kappa, \tau$ be small regular cardinals and $\mM^\circledast \to \mV^\ot \times \mW^\ot$ a weakly bienriched $\infty$-category.
\begin{enumerate}
\item 
Let $\mM_\kappa^\circledast \to (\mV^\kappa)^\ot \times \mW^\ot$ be the pullback of $\mM^\circledast \to \mV^\ot \times \mW^\ot$ along the embedding of $\infty$-operads
$(\mV^\kappa)^\ot \subset \mV^\ot.$


\item 
Let $\mM_{\kappa,\tau}^\circledast \to (\mV^\kappa)^\ot \times (\mW^\tau)^\ot$ be the pullback of $\mM^\circledast \to \mV^\ot \times \mW^\ot$ along the embeddings of $\infty$-operads
$(\mV^\kappa)^\ot \subset \mV^\ot, (\mW^\tau)^\ot \subset \mW^\ot.$
\end{enumerate}

\end{notation}

\begin{proposition}\label{Corfols}
Let $\kappa, \tau$ be small regular cardinals. 
\begin{enumerate}
\item Let $\mV^\ot \to \Ass$ be a $\kappa$-compactly generated monoidal $\infty$-category, $ \mW^\ot \to \Ass$ a small $\infty$-operad and $\mM^\circledast \to \mV^\ot \times \mW^\ot$ a small left enriched $\infty$-category.
By Corollary \ref{cosqa} (1) the $\mV^\kappa, \mW$-enriched embedding $\mM_\kappa^\circledast \subset \mP\B\Env(\mM_\kappa)^\circledast_{\L\Enr_\kappa}$ extends to a $\mV, \mW$-enriched embedding $\mM^\circledast \subset \mP\B\Env(\mM_\kappa)^\circledast_{\L\Enr_\kappa}$.
For every bitensored $\infty$-category compatible with small colimits $\mN^\circledast \to \mV^\ot \times \mP\Env(\mW)^\ot$ the following functor is an equivalence:
$$\LinFun^\L_{\mV, \mP\Env(\mW)}(\mP\B\Env(\mM_\kappa)_{\L\Enr_\kappa},\mN) \to \Enr\Fun_{\mV, \mW}(\mM,\mN).$$

\item Let $\mV^\ot \to \Ass$ be a $\kappa$-compactly generated monoidal $\infty$-category, $ \mW^\ot \to \Ass$ a small $\infty$-operad and $\mM^\circledast \to \mV^\ot \times \mW^\ot$ a small left enriched $\infty$-category.
By Corollary \ref{cosqa} (1) the $\mV^\kappa, \mW$-enriched embedding $\mM_\kappa^\circledast \subset \mP\L\Env(\mM_\kappa)^\circledast_{\Enr_\kappa}$ extends to a $\mV, \mW$-enriched embedding $\mM^\circledast \subset \mP\L\Env(\mM_\kappa)^\circledast_{\Enr_\kappa}$.
For every left tensored $\infty$-category compatible with small colimits $\mN^\circledast \to \mV^\ot \times \mW^\ot$ the following functor is an equivalence:
$$\LinFun^\L_{\mV, \mW}(\mP\L\Env(\mM_\kappa)_{\Enr_\kappa},\mN) \to \Enr\Fun_{\mV, \mW}(\mM,\mN).$$




\item Let $\mV^\ot \to \Ass, \mW^\ot \to \Ass$ be $\kappa$-compactly generated, $\tau$-compactly generated monoidal $\infty$-categories, respectively, and $\mM^\circledast \to \mV^\ot \times \mW^\ot$ a small bienriched $\infty$-category.
By Corollary \ref{cosqa} (3) the $\mV^\kappa, \mW^\tau$-enriched embedding $\mM_{\kappa, \tau}^\circledast \subset \mP\B\Env(\mM_{\kappa, \tau})^\circledast_{\B\Enr_{\kappa, \tau}}$ extends to a $\mV, \mW$-enriched embedding $\mM^\circledast \subset \mP\B\Env(\mM_{\kappa, \tau})^\circledast_{\B\Enr_{\kappa, \tau}}$.
For every bitensored $\infty$-category compatible with small colimits $\mN^\circledast \to \mV^\ot \times \mW^\ot$ the following functor is an equivalence:
$$\LinFun^\L_{\mV, \mW}(\mP\B\Env(\mM_{\kappa, \tau})^\circledast_{\B\Enr_{\kappa, \tau}},\mN) \to \Enr\Fun_{\mV, \mW}(\mM,\mN).$$


\end{enumerate}

\end{proposition}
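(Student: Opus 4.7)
The plan is to factor the functor in each part as a composite of two equivalences: one from Proposition \ref{pseuso} and one from Corollary \ref{cosqa}. I spell out (1); part (2) is parallel, with Corollary \ref{cosqa}(3) replacing (1) and $\mN$ pulled back along $(\mV^\kappa)^\ot \times (\mW^\tau)^\ot \subset \mV^\ot \times \mW^\ot$.

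Since $\mV$ is $\kappa$-compactly generated, $(\mV^\kappa)^\ot$ is a small monoidal full suboperad of $\mV^\ot$ compatible with $\kappa$-small colimits and satisfies $\Ind_\kappa(\mV^\kappa) \simeq \mV$. Let $\mN_0^\circledast := (\mV^\kappa)^\ot \times_{\mV^\ot} \mN^\circledast \times_{\mP\Env(\mW)^\ot} \mW^\ot$ denote the pullback of $\mN$ to $(\mV^\kappa)^\ot \times \mW^\ot$. By Corollary \ref{cosqa}(1) the pullback $\mM_\kappa$ is left $\kappa$-enriched over $\mV^\kappa$, so the small localization pair $((\mV^\kappa)^\ot, \Enr^\kappa_{\mV^\kappa})$ together with $(\mW^\ot, \emptyset)$ and the bitensored $\infty$-category $\mN$ over $\mV \times \mP\Env(\mW)$ compatible with small colimits fit the hypotheses of Proposition \ref{pseuso}, yielding the equivalence
$$\LinFun^\L_{\mV, \mP\Env(\mW)}(\mP\B\Env(\mM_\kappa)_{\L\Enr_\kappa}, \mN) \xrightarrow{\sim} \Enr\Fun_{\mV^\kappa, \mW}(\mM_\kappa, \mN_0),$$
implemented by restriction along $\mM_\kappa^\circledast \subset \mP\B\Env(\mM_\kappa)^\circledast_{\L\Enr_\kappa}$.

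It remains to produce an equivalence $\Enr\Fun_{\mV, \mW}(\mM, \mN) \simeq \Enr\Fun_{\mV^\kappa, \mW}(\mM_\kappa, \mN_0)$ induced by restriction along $\mM_\kappa \subset \mM$. By Corollary \ref{cosqa}(1), pullback along $(\mV^\kappa)^\ot \subset \mV^\ot$ is an equivalence ${_\mV\L\Enr}_\mW \xrightarrow{\sim} {_{\mV^\kappa}^\kappa\L\Enr}_\mW$, which on mapping spaces already gives $\Enr\Fun_{\mV, \mW}(\mM, \mN)^\simeq \simeq \Enr\Fun_{\mV^\kappa, \mW}(\mM_\kappa, \mN_0)^\simeq$. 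To upgrade this to an equivalence of $\infty$-categories of enriched functors, I invoke the closed left $\Cat_\infty$-actions $\K \times (-)$ on $_\mV\omega\B\Enr_\mW$ and $_{\mV^\kappa}\omega\B\Enr_\mW$ from Remark \ref{2-catt}, which restrict to the respective full subcategories of left enriched $\infty$-categories since $\K \times \mM$ remains left enriched whenever $\mM$ is. The pullback functor commutes with $\K \times (-)$, hence is $\Cat_\infty$-linear, and a direct inspection of Example \ref{euuz} gives $(\mN^\K)_0 \simeq (\mN_0)^\K$. Applying the equivalence of maximal subspaces to $\mN^\K$ in place of $\mN$ and invoking Remark \ref{2-cat}, for every small $\infty$-category $\K$ we obtain
$$\Map(\K, \Enr\Fun_{\mV, \mW}(\mM, \mN)) \simeq \Enr\Fun_{\mV, \mW}(\mM, \mN^\K)^\simeq \simeq \Enr\Fun_{\mV^\kappa, \mW}(\mM_\kappa, (\mN_0)^\K)^\simeq \simeq \Map(\K, \Enr\Fun_{\mV^\kappa, \mW}(\mM_\kappa, \mN_0)),$$
and the desired equivalence follows by the Yoneda lemma.

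Composing the two equivalences yields an equivalence of $\infty$-categories; by uniqueness of extensions under Corollary \ref{cosqa}(1), its underlying functor is restriction along the $\mV, \mW$-enriched embedding $\mM^\circledast \subset \mP\B\Env(\mM_\kappa)^\circledast_{\L\Enr_\kappa}$ described in the statement (which itself is produced by the same equivalence). The main obstacle is the second step above: upgrading the $(\infty, 1)$-equivalence of Corollary \ref{cosqa}(1) to an equivalence of enriched functor $\infty$-categories, which ultimately reduces to the compatibility of pullback with the closed $\Cat_\infty$-action $\K \times (-)$.
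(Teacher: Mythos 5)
Your proof is correct and follows essentially the same route as the paper: one leg of the factorization is the universal property of the localized presheaf envelope (you cite Proposition \ref{pseuso}(2) directly, where the paper unwinds its proof via $\L^*(\mN)$ and the description of local equivalences), and the other leg is the reduction to $\kappa$-compact objects via Corollary \ref{cosqa}. Your explicit upgrade of the Corollary \ref{cosqa} equivalence from mapping spaces to enriched functor $\infty$-categories via the $\Cat_\infty$-cotensoring and Remark \ref{2-cat} is a welcome elaboration of a step the paper treats as immediate.
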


\begin{proof}

(1): By Corollary \ref{cosqa} (1) the functor $\Enr\Fun_{\mV, \mW}(\mM,\mN) \to \Enr\Fun_{\mV^\kappa, \mW}(\mM_\kappa,\mN_\kappa)$ is an equivalence.	
So we need to see that $\LinFun^\L_{\mV, \mP\Env(\mW)}(\mP\B\Env(\mM_\kappa)_{\L\Enr_\kappa},\mN) \to \Enr\Fun_{\mV^\kappa, \mW}(\mM_\kappa,\mN_\kappa)$ is an equivalence.
This holds by Proposition \ref{pseuso}.

(2): We need to see that the functor $$\L\LinFun^\L_{\mV, \mW}(\mP\L\Env(\mM_\kappa)_{\Enr_\kappa},\mN) \to \Enr\Fun_{\mV, \mW}(\mM,\mN) \simeq \Enr\Fun_{\mV^\kappa, \mW}(\mM_\kappa,\mN_\kappa)$$ is an equivalence. 
This holds by Proposition \ref{pseuso2}.

(3): By Corollary \ref{cosqa} (3) the functor $\Enr\Fun_{\mV, \mW}(\mM,\mN) \to \Enr\Fun_{\mV^\kappa, \mW^\tau}(\mM_{\kappa, \tau},\mN_{\kappa, \tau})$ is an equivalence.	
So it is enough to see that $\LinFun^\L_{\mV, \mW}(\mP\B\Env(\mM_{\kappa, \tau})_{\B\Enr_{\kappa, \tau}},\mN) \to \Enr\Fun_{\mV^\kappa, \mW^\tau}(\mM_{\kappa, \tau},\mN_{\kappa, \tau})$ is an equivalence. This holds by Proposition \ref{pseuso}.


\end{proof}

Theorem \ref{unipor3} and Proposition \ref{Corfols} give the following description of the $\infty$-category of presheaves enriched in a presentably monoidal $\infty$-category.

\begin{corollary}\label{Corfolsy}
Let $\kappa, \tau$ be small regular cardinals. 
\begin{enumerate}
\item Let $\mV^\ot \to \Ass$ be a $\kappa$-compactly generated monoidal $\infty$-category, $ \mW^\ot \to \Ass$ a small $\infty$-operad and $\mM^\circledast \to \mV^\ot \times \mW^\ot$ a small left enriched $\infty$-category.
The enriched embedding $\mM_\kappa^\circledast \subset \mM^\circledast$
induces enriched equivalences $$\mP\B\Env(\mM_\kappa)^\circledast_{\L\Enr_\kappa} \simeq \mP\B\Env(\mM)^\circledast_{\L\Enr},$$
$$\mP\L\Env(\mM_\kappa)^\circledast_{\Enr_\kappa} \simeq \mP\L\Env(\mM)^\circledast_{\Enr}.$$


\item Let $\mV^\ot \to \Ass, \mW^\ot \to \Ass$ be $\kappa$-compactly generated, $\tau$-compactly generated monoidal $\infty$-categories, respectively, and $\mM^\circledast \to \mV^\ot \times \mW^\ot$ a small bienriched $\infty$-category. 
The enriched embedding $\mM_{\kappa, \tau}^\circledast \subset \mM^\circledast$
induces an enriched equivalence $$\mP\B\Env(\mM_{\kappa,\tau})^\circledast_{\B\Enr_{\kappa, \tau}} \simeq \mP\B\Env(\mM)^\circledast_{\B\Enr}.$$

\end{enumerate}

\end{corollary}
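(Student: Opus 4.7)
The strategy is a pure Yoneda argument: both sides are exhibited as corepresenting the same functor on the $\infty$-category of bitensored $\infty$-categories compatible with small colimits, and the map induced by $\mM_\kappa^\circledast \subset \mM^\circledast$ is the canonical comparison between corepresenting objects. I describe part (1); part (2) is parallel, with $\mW^\tau$ on the right taking the role of $\mV^\kappa$ on the left, and using Proposition \ref{Corfols}(2), Theorem \ref{unipor3}(2), and Corollary \ref{cosqa}(3) in place of their one-sided analogues.

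Using $\mV \simeq \Ind_\kappa(\mV^\kappa)$, the object $Y := \mP\B\Env(\mM_\kappa)^\circledast_{\L\Enr_\kappa}$ lives, like $X := \mP\B\Env(\mM)^\circledast_{\L\Enr}$, in the $\infty$-category $\cA$ of bitensored $\infty$-categories over $\mV, \mP\Env(\mW)$ compatible with small colimits. Both come equipped with canonical $\mV, \mW$-enriched functors from $\mM^\circledast$: the canonical embedding $\iota_X: \mM^\circledast \hookrightarrow X^\circledast$, and the extended embedding $\iota_Y: \mM^\circledast \hookrightarrow Y^\circledast$ of Proposition \ref{Corfols}(1), obtained by extending the embedding $\mM_\kappa^\circledast \hookrightarrow Y^\circledast$ along $\mM_\kappa^\circledast \subset \mM^\circledast$ via the equivalence of Corollary \ref{cosqa}(1). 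The content of Theorem \ref{unipor3}(1) and of Proposition \ref{Corfols}(1) is precisely that restriction along $\iota_X$ and along $\iota_Y$ each identifies $\LinFun^{\L}_{\mV, \mP\Env(\mW)}(-, \mN)$ with $\Enr\Fun_{\mV, \mW}(\mM, \mN)$ for every $\mN \in \cA$.

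The enriched inclusion $\mM_\kappa^\circledast \subset \mM^\circledast$ lies over the map of localization pairs $(\mV^\kappa, \L\Enr_\kappa) \to (\mV, \L\Enr)$, since the monoidal functor $\mP\Env(\mV^\kappa) \to \mP\Env(\mV)$ sends generators of $\L\Enr_\kappa$ to morphisms inverted by $\L\Enr$ (using that $\mV^\kappa \subset \mV$ preserves the $\kappa$-small colimits that exist in $\mV^\kappa$). By Remark \ref{Functol} it therefore induces a left adjoint $\mV, \mP\Env(\mW)$-linear functor $F: Y \to X$ satisfying $F \circ \iota_Y \simeq \iota_X$ by construction. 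The pair $(F, F \circ \iota_Y \simeq \iota_X)$ is then the canonical comparison between two pairs corepresenting the same functor on $\cA$, and so $F$ is an equivalence by the Yoneda lemma. The main point requiring attention is not difficulty but bookkeeping: verifying that both universal properties take values in the same ambient $\cA$ with matching corepresented functors, and that the triangle $F \circ \iota_Y \simeq \iota_X$ really commutes. Both reduce to the identification $\mV \simeq \Ind_\kappa(\mV^\kappa)$ together with the description of local equivalences, already embedded in the proofs of the cited results, after which the Yoneda step is immediate.
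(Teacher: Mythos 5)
Your argument is correct and is exactly the paper's intended proof: the paper derives this corollary without further comment from Theorem \ref{unipor3} and Proposition \ref{Corfols}, i.e.\ precisely from the observation that both sides, equipped with their canonical enriched functors from $\mM^\circledast$, corepresent $\mN \mapsto \Enr\Fun_{\mV,\mW}(\mM,\mN)$ on bitensored $\infty$-categories over $\mV,\mP\Env(\mW)$ compatible with small colimits. The only caveat is cosmetic: Remark \ref{Functol} is stated for small localization pairs, so for $(\mV,\L\Enr)$ one should either invoke it in the larger universe (as the paper's Notation \ref{enrprrr2} implicitly does via $\sigma$) or simply obtain $F$ directly from the universal property of $Y$ applied to $\iota_X$, which makes $F\circ\iota_Y\simeq\iota_X$ automatic.
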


\begin{corollary}\label{Corfolsa}
Let $\kappa, \tau$ be small regular cardinals. 
\begin{enumerate}
\item Let $\mV^\ot \to \Ass$ be a $\kappa$-compactly generated monoidal $\infty$-category, $ \mW^\ot \to \Ass$ a small $\infty$-operad and $\mM^\circledast \to \mV^\ot \times \mW^\ot$ a small left enriched $\infty$-category.
Then $\mP\B\Env(\mM)^\circledast_{\L\Enr} \to \mV^\ot \times \mP\Env(\mW)^\ot$ is a $\kappa$-compactly generated bitensored $\infty$-category.

\item Let $\mV^\ot \to \Ass$ be a $\kappa$-compactly generated monoidal $\infty$-category, $ \mW^\ot \to \Ass$ a small $\infty$-operad and $\mM^\circledast \to \mV^\ot \times \mW^\ot$ a small left enriched $\infty$-category. Then $\mP\L\Env(\mM)^\circledast_{\Enr} \to \mV^\ot \times \mW^\ot$ is a $\kappa$-compactly generated left tensored $\infty$-category.


\item Let $\mV^\ot \to \Ass$ be a $\kappa$-compactly generated monoidal $\infty$-category, $ \mW^\ot \to \Ass $ a $\tau$-compactly generated monoidal $\infty$-category and $\mM^\circledast \to \mV^\ot \times \mW^\ot$ a small bienriched $\infty$-category.
Then $\mP\B\Env(\mM)^\circledast_{\B\Enr} \to \mV^\ot \times \mW^\ot$ is a $\lambda$-compactly generated bitensored $\infty$-category for every regular cardinal $\lambda \geq \kappa, \tau.$

\end{enumerate}

\end{corollary}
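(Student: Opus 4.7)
The plan is to apply Corollary \ref{Corfolsy} to reduce to the small pullback $\mM_\kappa$ (respectively $\mM_{\kappa,\tau}$) and then recognize the resulting presheaf-type $\infty$-category as a $\kappa$-accessible localization of $\mP\B\Env(\mM_\kappa)$.

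For (1), Corollary \ref{Corfolsy} yields a $\mV, \mP\Env(\mW)$-linear equivalence
$$\mP\B\Env(\mM)^\circledast_{\L\Enr} \simeq \mP\B\Env(\mM_\kappa)^\circledast_{\L\Enr_\kappa},$$
where $\mM_\kappa, \mV^\kappa, \mW$ are all small. Consequently $\mP\B\Env(\mM_\kappa)$ is $\aleph_0$-compactly generated presentable and presentably bitensored over $\mP\Env(\mV^\kappa)^\ot$ and $\mP\Env(\mW)^\ot$ via Day convolution. Both Day convolution structures preserve $\aleph_0$-compact objects, and the biaction, being a left adjoint in each variable, preserves $\kappa$-compact objects.

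By Notation \ref{locpat} the subcategory $\mP\B\Env(\mM_\kappa)^\circledast_{\L\Enr_\kappa}$ is the reflective subcategory of objects local with respect to the set $\mathfrak{S}$ of morphisms $\f \ot \X \ot \W_1 \ot \cdots \ot \W_\m$ for $\f \in \Enr^\kappa_{\mV^\kappa}$, $\X \in \mM_\kappa$ and $\W_j \in \mW$. The core verification is that every morphism in $\mathfrak{S}$ has $\kappa$-compact source and target in $\mP\B\Env(\mM_\kappa)$. Inspecting Notation \ref{enr}, the sources and targets of the morphisms in $\Enr^\kappa_{\mV^\kappa}$ are either tensor products in $\Env(\mV^\kappa)$, hence $\aleph_0$-compact in $\mP\Env(\mV^\kappa)$, or $\kappa$-small colimits of such, hence $\kappa$-compact; tensoring with representables $\X$ and $\W_j$ on either side preserves $\kappa$-compactness by the observation above. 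By \cite[Proposition 5.5.4.15]{lurie.HTT} the localization is then $\kappa$-accessible, so the underlying $\infty$-category of $\mP\B\Env(\mM_\kappa)^\circledast_{\L\Enr_\kappa}$ is $\kappa$-compactly generated. The localization functor is moreover $\mP\Env(\mV^\kappa), \mP\Env(\mW)$-linear (the saturated closure of $\mathfrak{S}$ is stable under the biaction by construction) and preserves $\kappa$-compact objects, so the induced biaction on the localization restricts to $\kappa$-compact objects. Under the canonical identification $\mV \simeq \Ind_\kappa(\mV^\kappa)$, the $\mV^\kappa$-action on $\kappa$-compact objects is exactly the restriction of the $\mV$-action, giving the required $\kappa$-compact generation of the bitensored $\infty$-category.

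Part (2) is entirely analogous, now using Corollary \ref{Corfolsy}(2) and enlarging $\mathfrak{S}$ by the symmetric set of morphisms involving $\Enr^\tau_{\mW^\tau}$ on the right. For any $\lambda \geq \kappa, \tau$, the sources and targets of all morphisms in the enlarged set are $\lambda$-compact, so the same argument yields $\lambda$-compact generation. The principal technical obstacle is the compactness verification for the generators in Notation \ref{enr}; once this is established, the rest is a straightforward application of the standard theory of accessible localizations of compactly generated $\infty$-categories.
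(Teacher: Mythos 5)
Your proof is correct and takes essentially the approach the paper intends: reduce via Corollary \ref{Corfolsy} to the small pullback $\mM_\kappa$ (resp.\ $\mM_{\kappa,\tau}$) and then recognize $\mP\B\Env(\mM_\kappa)_{\L\Enr_\kappa}$ as a $\kappa$-accessible enriched localization of presheaves on a small bitensored $\infty$-category, which is exactly the characterization of $\kappa$-compact generation recorded in Remark \ref{comge2}. One small caveat: being a left adjoint in each variable does not by itself guarantee preservation of $\kappa$-compact objects; the correct (and immediate) justification is that the biaction preserves colimits in each variable and carries representables to representables, so it preserves retracts of $\kappa$-small colimits of representables.
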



\begin{proposition}\label{eqist}
Let $\mM^\circledast \to \mV^\ot \times \mW^\ot$ be an absolute small weakly  bienriched $\infty$-category.

\begin{enumerate}
\item Let $\bar{\mM}^\circledast \to \mP\Env(\mV)^\ot \times \mW^\ot$ be the unique extension to a left enriched $\infty$-category.
Then $\bar{\mM}^\circledast \subset \mP\B\Env(\mM)^\circledast$
induces an equivalence of $\infty$-categories bitensored over $\mP\Env(\mV), \mP\Env(\mW)$:
$$\mP\B\Env(\bar{\mM})_\mathrm{\L\Enr}^\circledast \simeq \mP\B\Env(\mM)^\circledast.$$


\item Let $\bar{\mM}^\circledast \to \mP\Env(\mV)^\ot \times \mP\Env(\mW)^\ot$ the unique extension to a bienriched $\infty$-category.
Then $\bar{\mM}^\circledast \subset \mP\B\Env(\mM)^\circledast$
induces an equivalence of $\infty$-categories bitensored over $\mP\Env(\mV), \mP\Env(\mW)$:
$$\mP\B\Env(\bar{\mM})_\mathrm{\B\Enr}^\circledast \simeq \mP\B\Env(\mM)^\circledast.$$
\end{enumerate}	

\end{proposition}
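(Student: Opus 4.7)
The plan is to show that both sides of each claimed equivalence corepresent the same functor on the $\infty$-category of bitensored $\infty$-categories over $\mP\Env(\mV), \mP\Env(\mW)$ compatible with small colimits, and then invoke the Yoneda lemma. The canonical enriched functor $\mM^\circledast \subset \bar{\mM}^\circledast \subset \mP\B\Env(\bar{\mM})_{\L\Enr}^\circledast$ (resp.\ $\mP\B\Env(\bar{\mM})_{\B\Enr}^\circledast$) extends by the universal property of the closed bi-tensored envelope (Corollary \ref{envvcor}(2)) to a canonical left adjoint $\mP\Env(\mV), \mP\Env(\mW)$-linear comparison functor $\mP\B\Env(\mM)^\circledast \to \mP\B\Env(\bar{\mM})_{\L\Enr}^\circledast$ (resp.\ $\to \mP\B\Env(\bar{\mM})_{\B\Enr}^\circledast$), and the goal is to show this comparison is an equivalence.

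For part (1), fix a bitensored $\mN^\circledast \to \mP\Env(\mV)^\ot \times \mP\Env(\mW)^\ot$ compatible with small colimits. Corollary \ref{envvcor}(2) gives
$$\LinFun^\L_{\mP\Env(\mV), \mP\Env(\mW)}(\mP\B\Env(\mM), \mN) \simeq \Enr\Fun_{\mV, \mW}(\mM, \mN).$$
On the other side, because $\bar{\mM}$ is left $\mP\Env(\mV)$-enriched with small underlying $\infty$-category, it is left quasi-enriched in $\mP\Env(\mV)$, so Theorem \ref{unipor3}(1) (with $\mV$ replaced by $\mP\Env(\mV)$) yields
$$\LinFun^\L_{\mP\Env(\mV), \mP\Env(\mW)}(\mP\B\Env(\bar{\mM})_{\L\Enr}, \mN) \simeq \Enr\Fun_{\mP\Env(\mV), \mW}(\bar{\mM}, \mN).$$
It then suffices to identify the two right-hand sides. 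Letting $\mN_\mW^\circledast := \mW^\ot \times_{\mP\Env(\mW)^\ot} \mN^\circledast$, we have $\Enr\Fun_{\mP\Env(\mV), \mW}(\bar{\mM}, \mN) = \Enr\Fun_{\mP\Env(\mV), \mW}(\bar{\mM}, \mN_\mW)$ and similarly $\Enr\Fun_{\mV, \mW}(\mM, \mN) = \Enr\Fun_{\mV, \mW}(\mM, \mN_{\mV,\mW})$. By Corollary \ref{coronn}(1), taking pullback along $\mV^\ot \subset \mP\Env(\mV)^\ot$ is an equivalence ${_{\mP\Env(\mV)}\L\Enr}_\mW \simeq {_\mV\omega\B\Enr}_\mW$ sending $\bar{\mM}$ to $\mM$ and $\mN_\mW$ to $\mN_{\mV,\mW}$, and this equivalence of $(\infty,1)$-categories induces an equivalence of enriched-functor $\infty$-categories $\Enr\Fun_{\mP\Env(\mV), \mW}(\bar{\mM}, \mN_\mW) \simeq \Enr\Fun_{\mV, \mW}(\mM, \mN_{\mV,\mW})$. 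Chasing through the construction, this equivalence is compatible with the comparison map, so Yoneda concludes part (1).

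Part (2) proceeds in parallel: $\bar{\mM}$ is now bi-enriched in $\mP\Env(\mV), \mP\Env(\mW)$ with small underlying $\infty$-category, hence bi-quasi-enriched, so Theorem \ref{unipor3}(2) gives
$$\LinFun^\L_{\mP\Env(\mV), \mP\Env(\mW)}(\mP\B\Env(\bar{\mM})_{\B\Enr}, \mN) \simeq \Enr\Fun_{\mP\Env(\mV), \mP\Env(\mW)}(\bar{\mM}, \mN).$$
Corollary \ref{coronn}(3), stating that pullback ${_{\mP\Env(\mV)}\B\Enr}_{\mP\Env(\mW)} \simeq {_\mV\omega\B\Enr}_\mW$ is an equivalence sending $\bar{\mM}$ to $\mM$ and $\mN$ to $\mN_{\mV,\mW}$, yields $\Enr\Fun_{\mP\Env(\mV), \mP\Env(\mW)}(\bar{\mM}, \mN) \simeq \Enr\Fun_{\mV, \mW}(\mM, \mN)$, and Yoneda again finishes the proof.

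The main technical point to verify carefully is the compatibility of the comparison map with the Yoneda-style identification; equivalently, that the two descriptions of the hom functor agree not merely as corepresented functors but via the natural transformation induced by the canonical inclusion $\mM \subset \bar{\mM}$. This reduces to unwinding the construction of the left adjoints in Theorem \ref{unipor3} and in Corollary \ref{coronn}, both of which restrict (up to localization) to the identity on the underlying $\infty$-category $\mM$, so the comparison map is indeed compatible. No further calculation is required.
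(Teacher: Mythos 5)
Your proof is correct and is essentially the paper's own argument: the paper likewise identifies the corepresented functors of both sides via Theorem \ref{unipor3} together with Corollary \ref{coronn} (using Corollary \ref{Corfolsa} to know the localized envelope is itself presentably bitensored) and concludes. Your extra care in constructing the comparison functor via Corollary \ref{envvcor}(2) and checking its compatibility with the identification is a reasonable elaboration of what the paper leaves implicit.
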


\begin{proof}
(1): Let $\mN^\circledast \to \mP\Env(\mV)^\ot \times \mP\Env(\mW)^\ot$ be a locally small bitensored $\infty$-category compatible with small colimits.
The $\mP\Env(\mV), \mP\Env(\mW)$-enriched embedding	
$\bar{\mM}^\circledast \subset \mP\B\Env(\bar{\mM})_{\L\Enr}^\circledast$
induces an equivalence 
$$\LinFun^\L_{\mP\Env(\mV), \mP\Env(\mW)}(\mP\B\Env(\bar{\mM})_{\L\Enr},\mN) \to \Enr\Fun_{\mP\Env(\mV), \mW}(\bar{\mM},\mN) \simeq \Enr\Fun_{\mV, \mW}(\mM,\mN)$$
by Theorem \ref{unipor3} (1), where the last equivalence is by Corollary \ref{coronn} (1). 
We apply Corollary \ref{Corfolsa}.
(2): Let $\mN^\circledast \to \mP\Env(\mV)^\ot \times \mP\Env(\mW)^\ot$ be a locally small bitensored $\infty$-category compatible with small colimits.
The $\mP\Env(\mV), \mP\Env(\mW)$-enriched embedding $\bar{\mM}^\circledast \subset \mP\B\Env(\bar{\mM})_{\B\Enr}^\circledast$ induces an equivalence 
$$\LinFun^\L_{\mP\Env(\mV), \mP\Env(\mW)}(\mP\B\Env(\bar{\mM})_{\B\Enr},\mN) \to \Enr\Fun_{\mP\Env(\mV), \mP\Env(\mW)}(\bar{\mM},\mN) \simeq \Enr\Fun_{\mV, \mW}(\mM,\mN)$$
by Theorem \ref{unipor3} (3), where the last equivalence is by Corollary \ref{coronn} (3). We apply Corollary \ref{Corfolsa}.

\end{proof}

Corollary \ref{Corfolsy}, Corollary \ref{envvcor} and Corollary \ref{cosqai} give the following corollary:

\begin{corollary}\label{eqistu}
Let $\kappa,\tau$ be small regular cardinals and $\mM^\circledast \to \mV^\ot \times \mW^\ot$ an absolute small weakly bienriched $\infty$-category.
\begin{enumerate}
\item Let $\bar{\mM}^\circledast \to \mP\Env(\mV)^\ot \times \mW^\ot$ be the unique extension to a left enriched $\infty$-category.
Then $\bar{\mM}_\kappa^\circledast \subset \mP\B\Env(\mM)^\circledast$
induces an equivalence of $\infty$-categories bitensored over $\mP\Env(\mV), \mP\Env(\mW)$:
$$\mP\B\Env(\bar{\mM}_\kappa)_\mathrm{\L\Enr_\kappa}^\circledast \simeq \mP\B\Env(\mM)^\circledast.$$


\item Let $\bar{\mM}^\circledast \to \mP\Env(\mV)^\ot \times \mP\Env(\mW)^\ot$ be the unique extension to a bienriched $\infty$-category.
Then $\bar{\mM}_{\kappa,\tau}^\circledast \subset \mP\B\Env(\mM)^\circledast$
induces an equivalence of $\infty$-categories bitensored over $\mP\Env(\mV), \mP\Env(\mW)$:
$$\mP\B\Env(\bar{\mM}_{\kappa,\tau})_\mathrm{\B\Enr_{\kappa,\tau}}^\circledast \simeq \mP\B\Env(\mM)^\circledast.$$
\end{enumerate}	

\end{corollary}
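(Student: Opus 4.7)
The plan is to deduce Corollary \ref{eqistu} by chaining two previously established equivalences: Proposition \ref{eqist}, which extends $\mM^\circledast$ to its unique enriched extension $\bar{\mM}^\circledast$ over the presheaf envelopes, and Corollary \ref{Corfolsy}, which then compresses this extension back down to its $\kappa$-compact (respectively $(\kappa,\tau)$-compact) pullback while preserving the associated enriched presheaf $\infty$-category.

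For part (1), I would first invoke Proposition \ref{eqist}(1) applied to $\mM$: the canonical embedding $\bar{\mM}^\circledast \subset \mP\B\Env(\mM)^\circledast$ of the unique left enriched extension $\bar{\mM}^\circledast \to \mP\Env(\mV)^\ot \times \mW^\ot$ yields an equivalence $\mP\B\Env(\bar{\mM})_{\L\Enr}^\circledast \simeq \mP\B\Env(\mM)^\circledast$ of $\infty$-categories bitensored over $\mP\Env(\mV), \mP\Env(\mW)$. I would then view $\bar{\mM}$ as a small left enriched $\infty$-category over the $\kappa$-compactly generated monoidal $\infty$-category $\mP\Env(\mV)^\ot$ and the small $\infty$-operad $\mW^\ot$, and apply Corollary \ref{Corfolsy}(1) to obtain a second equivalence $\mP\B\Env(\bar{\mM}_\kappa)_{\L\Enr_\kappa}^\circledast \simeq \mP\B\Env(\bar{\mM})_{\L\Enr}^\circledast$, induced by the embedding $\bar{\mM}_\kappa^\circledast \subset \bar{\mM}^\circledast$. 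Composing these two equivalences produces the asserted equivalence, induced by the composite embedding $\bar{\mM}_\kappa^\circledast \subset \bar{\mM}^\circledast \subset \mP\B\Env(\mM)^\circledast$.

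Part (2) proceeds entirely in parallel: apply Proposition \ref{eqist}(2) to the bi-enriched extension $\bar{\mM}^\circledast \to \mP\Env(\mV)^\ot \times \mP\Env(\mW)^\ot$ to get $\mP\B\Env(\bar{\mM})_{\B\Enr}^\circledast \simeq \mP\B\Env(\mM)^\circledast$, and then apply Corollary \ref{Corfolsy}(2) to identify $\mP\B\Env(\bar{\mM}_{\kappa,\tau})_{\B\Enr_{\kappa,\tau}}^\circledast \simeq \mP\B\Env(\bar{\mM})_{\B\Enr}^\circledast$, using that both $\mP\Env(\mV)^\ot$ and $\mP\Env(\mW)^\ot$ are compactly generated, hence $\kappa$-compactly generated and $\tau$-compactly generated respectively.

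The only real obstacle is routine size bookkeeping: checking the smallness hypotheses demanded by Corollary \ref{Corfolsy}. The underlying $\infty$-category of $\bar{\mM}$ coincides with the small $\mM$; local smallness of its multi-morphism objects follows from local smallness of $\mP\Env(\mV)$ (and, in part (2), of $\mP\Env(\mW)$), which in turn holds because $\Env(\mV)$ (and $\Env(\mW)$) is small. The $\kappa$-compact generation of $\mP\Env(\mV)^\ot$ follows from the fact that $\Env(\mV)$ is small, so that $\mP\Env(\mV)$ is compactly generated and therefore $\kappa$-compactly generated for every small regular cardinal $\kappa$. None of these verifications involves anything conceptually new, so the substantive content of the corollary is fully contained in Proposition \ref{eqist} and Corollary \ref{Corfolsy}.
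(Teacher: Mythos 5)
Your proof is correct and follows essentially the same route as the paper: the paper deduces the corollary from Corollary \ref{Corfolsy} together with the universal-property comparisons (Corollaries \ref{envvcor} and \ref{cosqai}), which is exactly the content of Proposition \ref{eqist} that you invoke for the untruncated identification before applying Corollary \ref{Corfolsy} for the $\kappa$- (resp. $(\kappa,\tau)$-) truncation step. Your size checks ($\mP\Env(\mV)$ being $\kappa$-compactly generated because $\Env(\mV)$ is small, and $\bar{\mM}$ being small as a left (bi-)enriched $\infty$-category) are the right ones and match the hypotheses of Corollary \ref{Corfolsy}.
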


\subsection{Atomicity of representable enriched presheaves}

\begin{definition}
Let $\mV^\ot \to \Ass, \mW^\ot \to \Ass$ be a presentably monoidal $\infty$-category and $\mM^\circledast \to \mV^\ot \times \mW^\ot$ a locally small bienriched $\infty$-category. An object $\X \in \mM$ is atomic if
the $\mV,\mW$-enriched functor $$\Mor_\mM(\X,-): \mM^\circledast \to (\mV\ot\mW)^\circledast$$ is linear and preserves small colimits.
	
\end{definition}

In this subsection we prove the following theorem:

\begin{theorem}\label{corok}\label{compp}
Let $\mM^\circledast \to \mV^\ot \times \mW^\ot$ be an absolute small weakly bienriched $\infty$-category and $(\mV^\ot \to \Ass, \rS), (\mW^\ot \to \Ass, \T)$ small localization pairs.
The essential image of the functor $$\mM \subset \mP\B\Env(\mM) \xrightarrow{\L}\mP\B\Env(\mM)_{\rS,\T}$$
consists of atomic objects. 
	
	
\end{theorem}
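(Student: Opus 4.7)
The plan is to prove the theorem by two reductions: first, that every $\X \in \mM$ is atomic as an object of $\mP\B\Env(\mM)^\circledast$ (the special case $\rS = \T = \emptyset$), and second, that the localization $\L$ takes atomic objects to atomic objects.

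For the first reduction, I identify $\mP\B\Env(\mM) \simeq \mP(\B\Env(\mM))$ as a presheaf $\infty$-category and use the equivalence $\mP\Env(\mV) \ot \mP\Env(\mW) \simeq \mP(\Env(\mV) \times \Env(\mW))$. For $\X \in \mM \subset \B\Env(\mM)$, the functor $f \colon \Env(\mV) \times \Env(\mW) \to \B\Env(\mM)$ sending $(\V, \W)$ to $\V \ot \X \ot \W$ is $\Env(\mV), \Env(\mW)$-linear, and equivalence (\ref{eqppp}) together with Yoneda identifies $\Mor_{\mP\B\Env(\mM)}(\X, -)$ with the pullback $f^*$. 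As a pullback of presheaves, $f^*$ admits a further right adjoint (right Kan extension) so that it is itself a left adjoint and hence preserves all small colimits. Its linearity with respect to the biaction is inherited from the $\Env(\mV), \Env(\mW)$-linearity of $f$ via Day convolution and a density argument on representables. Hence $\X$ is atomic in $\mP\B\Env(\mM)$.

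For the second reduction, I invoke Proposition \ref{pseuso}(3), which presents the localization as the relative tensor product
$$\mP\B\Env(\mM)_{\rS,\T}^\circledast \simeq (\rS^{-1}\mP\Env(\mV) \ot_{\mP\Env(\mV)} \mP\B\Env(\mM) \ot_{\mP\Env(\mW)} \T^{-1}\mP\Env(\mW))^\circledast.$$
Let $\iota$ denote the fully faithful right adjoint of $\L$ and let $\bar\L$ denote the monoidal localization $\mP\Env(\mV) \ot \mP\Env(\mW) \to \rS^{-1}\mP\Env(\mV) \ot \T^{-1}\mP\Env(\mW)$. For any $Y \in \mP\B\Env(\mM)_{\rS,\T}$, Lemma \ref{Leom} ensures $\Mor_{\mP\B\Env(\mM)}(\X, \iota Y)$ already lies in $\rS^{-1}\mP\Env(\mV) \ot \T^{-1}\mP\Env(\mW)$, and combining the adjunction $\L \dashv \iota$ with equivalence (\ref{eqppp}) (tested on $\alpha(\V, \W)$ with $\V, \W$ local) yields a natural equivalence
$$\Mor_{\mP\B\Env(\mM)_{\rS,\T}}(\L(\X), Y) \simeq \Mor_{\mP\B\Env(\mM)}(\X, \iota Y).$$
By step one, $\Mor_{\mP\B\Env(\mM)}(\X, -)$ is linear, so it sends generating local equivalences $\f \ot \Z \ot \g$ (for $\f \in \bar\rS$, $\g \in \bar\T$) to morphisms of the form $\f \ot \Mor_{\mP\B\Env(\mM)}(\X, \Z) \ot \g$, which $\bar\L$ inverts. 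Consequently $\bar\L \circ \Mor_{\mP\B\Env(\mM)}(\X, -)$ descends through $\L$ to a linear, small-colimit-preserving functor $\mP\B\Env(\mM)_{\rS,\T} \to \rS^{-1}\mP\Env(\mV) \ot \T^{-1}\mP\Env(\mW)$. The displayed equivalence then identifies this descent with $\Mor_{\mP\B\Env(\mM)_{\rS,\T}}(\L(\X), -)$, so $\L(\X)$ is atomic.

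The hard part will be the coherent tracking of enrichment restriction in the second step: one must verify that $\bar\L \circ \Mor_{\mP\B\Env(\mM)}(\X, -) \circ \iota$ agrees with the enriched functor $\Mor_{\mP\B\Env(\mM)_{\rS,\T}}(\L(\X), -)$ at the level of bi-enriched functors (not merely pointwise), and that small colimits in $\mP\B\Env(\mM)_{\rS,\T}$, computed as $\L$ applied to colimits in $\mP\B\Env(\mM)$, interact correctly with this expression. I expect this bookkeeping to follow cleanly from Lemma \ref{Leom} combined with the linearity of $\Mor_{\mP\B\Env(\mM)}(\X, -)$ established in the first reduction.
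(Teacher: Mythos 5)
Your overall architecture coincides with the paper's: reduce to the case $\rS=\T=\emptyset$ by descending the adjunction $(-)\ot\X\ot(-) \dashv \L\Mor_{\mP\B\Env(\mM)}(\X,-)$ through the localizations (your second step is essentially the paper's first reduction, and it is fine granted the first step), and then prove atomicity of $\X\in\mM$ inside $\mP\B\Env(\mM)$ by identifying $\Mor_{\mP\B\Env(\mM)}(\X,-)$ with $f^*$ for $f\colon \Env(\mV)\times\Env(\mW)\to\B\Env(\mM)$, $(\V,\W)\mapsto\V\ot\X\ot\W$. The colimit-preservation of $f^*$ is also handled correctly.

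The gap is in the sentence asserting that linearity of $f^*$ ``is inherited from the $\Env(\mV),\Env(\mW)$-linearity of $f$ via Day convolution and a density argument on representables.'' Pullback along a linear functor is only \emph{lax} linear for the Day convolution actions; its strong linearity is a projection formula and fails for general linear $f$. The density argument does correctly reduce the claim to representables, but the resulting statement on representables --- that for $\V\in\Env(\mV)$, $\W\in\Env(\mW)$, $\Z\in\B\Env(\mM)$ the canonical map $\y(\V)\ot f^*(\y(\Z))\ot\y(\W)\to f^*(\y(\V\ot\Z\ot\W))$, i.e.\ $\V\ot\L\Mor_{\mP\B\Env(\mM)}(\X,\Z)\ot\W\to\L\Mor_{\mP\B\Env(\mM)}(\X,\V\ot\Z\ot\W)$, is an equivalence --- is precisely the non-formal content of the theorem, and it is exactly what the paper's Proposition \ref{awas} establishes by an explicit combinatorial analysis of multimorphism spaces in $\L\Env(\mM)$ (decomposing maps in $\Ass$ over order-preserving maps $[\n+1]\to[\ell]$ preserving the minimum). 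That computation uses crucially that $\X$ lies in $\mM$ and not merely in $\B\Env(\mM)$; indeed objects such as $\V_0\ot\X\in\B\Env(\mM)$ need not be atomic, which shows the claim cannot follow formally from the linearity of $f$ alone. The paper also needs nontrivial reductions to even apply Proposition \ref{awas}: Lemma \ref{trasy} together with $\mP\B\Env(\mM)^\circledast\simeq\mP\L\Env(\R\Env(\mM))^\circledast$ to pass from the two-sided to the one-sided case, and Corollary \ref{eqistu} to replace $\mM$ by a left enriched $\infty$-category so that the hypotheses of Proposition \ref{awas} (existence of morphism objects in $\mM$ itself) are met. None of this is supplied by your proposal, so the first step as written does not constitute a proof.
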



\begin{lemma}\label{trasy}
Let $\mV^\ot \to \Ass, \mW^\ot \to \Ass$ be small monoidal $\infty$-categories,  
$\mM^\circledast \to \mP(\mV\times\mW)^\ot$ a bitensored $\infty$-category compatible with small colimits and $\mN^\circledast \to \mP(\mV)^\ot $ the underlying left tensored $\infty$-category. 
Let $\X, \Y \in \mM, \V \in \mV, \W \in \mW$ and $\F \in \mP(\mV)$. The canonical morphism $$\alpha: \F \ot \L\Mor_\mN(\X\ot \W,\Y) \to \L\Mor_\mN(\X\ot \W,\F \ot \Y) $$ in $\mP(\mV) $ evaluated at $\V$ is equivalent to the canonical morphism $$\beta: \F \ot \L\Mor_\mM(\X,\Y) \to \L\Mor_\mM(\X,\F \ot \Y) $$ in $\mP(\mV \times \mW) $ evaluated at $\V, \W.$

\end{lemma}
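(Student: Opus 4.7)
The plan is to exploit the identification $\mP(\mV \times \mW) \simeq \mP(\mV) \ot \mP(\mW)$ (Day convolution) together with the universal property of morphism objects. The key preliminary identification is that for any $\Phi \in \mP(\mV)$ and $\Psi \in \mP(\mW)$, the value of $\L\Mor_\mM(\X,\Y)$ at the pure tensor $\Phi \ot \Psi$ satisfies
\[
\Map_{\mP(\mV) \ot \mP(\mW)}(\Phi \ot \Psi, \L\Mor_\mM(\X,\Y)) \simeq \mM(\Phi \ot \X \ot \Psi, \Y) \simeq \mN(\Phi \ot (\X \ot \Psi), \Y) \simeq \Map_{\mP(\mV)}(\Phi, \L\Mor_\mN(\X \ot \Psi, \Y)),
\]
where the middle equivalence uses that $\mN$ is the underlying left tensored structure on $\mM$. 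Specialising to $\Phi = \y_\V$ and $\Psi = \y_\W$, this yields a canonical equivalence
$\L\Mor_\mM(\X,\Y)(\V,\W) \simeq \L\Mor_\mN(\X \ot \W, \Y)(\V)$ in spaces. The same Day-convolution formula shows that the $\mP(\mV)$-action on $\mP(\mV) \ot \mP(\mW) \simeq \mP(\mV \times \mW)$ is pointwise in the $\mW$-variable, so that evaluating $\F \ot \L\Mor_\mM(\X,\Y)$ at $(\V,\W)$ gives the Day convolution $(\F \ot \L\Mor_\mN(\X \ot \W, \Y))(\V)$.

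Under these identifications both $\alpha(\V)$ and $\beta(\V,\W)$ become maps with the same source $(\F \ot \L\Mor_\mN(\X \ot \W, \Y))(\V)$ and the same target $\mM(\V \ot \X \ot \W, \F \ot \Y)$. It then remains to check that the two maps agree. I would do this by Yoneda: for any $\G \in \mP(\mV)$, a point of $\map_{\mP(\mV)}(\G, \F \ot \L\Mor_\mN(\X \ot \W, \Y))$ corresponds, via the counit $\L\Mor_\mN(\X \ot \W,\Y) \ot (\X\ot\W) \to \Y$ scaled by $\F$, to a map $\G \ot \X \ot \W \to \F \ot \Y$ in $\mN$; this is by definition $\alpha$ applied to that point. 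On the $\mM$-side, a point of $\map_{\mP(\mV) \ot \mP(\mW)}(\G \ot \H, \F \ot \L\Mor_\mM(\X,\Y))$ corresponds, via the bitensored counit $\L\Mor_\mM(\X,\Y) \ot_{\mathrm{bi}} \X \to \Y$ scaled by $\F$, to a map $\G \ot \X \ot \H \to \F \ot \Y$ in $\mM$, which is $\beta$. Specialising $\G = \y_\V$, $\H = \y_\W$ and using that the bitensored counit is compatible with the underlying left tensored counit on $\mN$ (because $\mN$ is obtained by forgetting the right $\mP(\mW)$-action), both prescriptions produce the same map $\V \ot \X \ot \W \to \F \ot \Y$.

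The main technical obstacle is being careful about what ``$\beta$ evaluated at $(\V,\W)$'' means when $\F \ot \L\Mor_\mM(\X,\Y)$ is built using the $\mP(\mV)$-module structure on $\mP(\mV) \ot \mP(\mW)$: one needs to verify that this module structure, transported along the Day-convolution equivalence, genuinely matches pointwise Day convolution in the $\V$-variable with $\F$. This can be confirmed by testing against pure tensors $\y_\V \ot \y_\W$ and using that $\y_\V \ot \y_\W$ corresponds to $\y_{(\V,\W)} \in \mP(\mV \times \mW)$, together with the fact that tensoring with the representable $\y_\V$ on the left is computed by the monoidal structure of $\mV$. Once this compatibility is in place the remainder of the argument is a direct unwinding of universal properties.
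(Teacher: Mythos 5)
Your proposal is correct and follows essentially the same route as the paper: both arguments reduce to the Day-convolution description of the tensor product in $\mP(\mV)$ and $\mP(\mV\times\mW)$, observe that tensoring with $\F$ (viewed as $\F\boxtimes\tu_{\mP(\mW)}$) only convolves the $\mV$-variable so that the $\mW$-indexed part of the colimit collapses by unitality, and then identify the two maps via the universal property of the morphism objects. The paper writes out the two colimit formulas explicitly and exhibits the factorization identifying $\beta(\V,\W)$ with $\alpha(\V)$, which is exactly your "test against pure tensors $\y_\V\ot\y_\W$ and compare counits" step in more computational form.
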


\begin{proof}
Let $\V \in \mV.$ Evaluating $\alpha$ at $\V $ gives the canonical map
$$\psi: \colim_{\V',\V'' \in \mV, \V \to \V' \ot \V''} \F(\V') \times \mM(\V'' \ot \X\ot \W,\Y) \to \mM(\V \ot \X\ot \W,\F \ot \Y) $$ in $\mP(\mV)$.
Evaluating $\beta$ at $(\V,\W) $ gives the canonical map
$$\colim_{\V',\V'' \in \mV, \W',\W'' \in \mW, \V \to \V' \ot \V'', \W \to \W'' \ot \W'} \F(\V') \times \mW(\W',\tu_\mW) \times \mM(\V'' \ot \X \ot \W'',\Y) \to \mM(\V \ot \X \ot \W,\F \ot \Y) $$ in $\mP(\mV \times \mW)$.
The latter map factors as the following map in $\mP(\mV \times \mW)$
and so identifies with $\psi:$
$$\colim_{\V',\V'' \in \mV, \V \to \V' \ot \V''} (\F(\V') \times \underbrace{\colim_{\W',\W'' \in \mW, \W \to \W'' \ot \W'} \mW(\W',\tu_\mW) \times \mM(\V'' \ot \X \ot \W'',\Y)}_{(\tu_\mW \ot \mM(\V'' \ot \X \ot (-),\Y))(\W)}) \to $$$$ \mM(\V \ot \X \ot \W,\F \ot \Y).$$


\end{proof}

\begin{proposition}\label{awas}
Let $\mM^\circledast \to \mV^\ot $ be a weakly left enriched $\infty$-category, $\V\in \mV, \X,\Y \in \mM$ and $\alpha \in \Mul_\mM(\V,\X,\Y)$ a multi-morphism that exhibits $\V$ as the left morphism object $\L\Mor_\mM(\X,\Y)\in \mV $ of $\X,\Y.$
For every $\V_1,...,\V_\n \in \mV$ for $\n \geq 0$
the induced morphism $$\V_1 \ot...\ot \V_\n \ot \V \ot \X \to \V_1 \ot...\ot \V_\n \ot \Y $$ in $\L\Env(\mM)$ exhibits
$\V_1 \ot...\ot \V_\n \ot \V$ as the left morphism object $\L\Mor_{\L\Env(\mM)}(\X,\V_1 \ot...\ot \V_\n \ot \Y) \in \Env(\mV)$ of $\X,\V_1 \ot...\ot \V_\n \ot \Y$.

In particular, the morphism object of $\X,\V_1 \ot...\ot \V_\n \ot \Y$
in $\L\Env(\mM)$ exists and the following canonical morphism in $\Env(\mV)$ is an equivalence:
$$\V_1 \ot...\ot \V_\n \ot\L\Mor_{\L\Env(\mM)}(\X, \Y)\to \L\Mor_{\L\Env(\mM)}(\X,\V_1 \ot...\ot \V_\n \ot \Y).$$

In particular, the following canonical morphism in $\mP\Env(\mV)$ is an equivalence:
$$\V_1 \ot...\ot \V_\n \ot\L\Mor_{\mP\L\Env(\mM)}(\X, \Y)\to \L\Mor_{\mP\L\Env(\mM)}(\X,\V_1 \ot...\ot \V_\n \ot \Y).$$

\end{proposition}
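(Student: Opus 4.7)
My plan is to prove the main assertion by a direct computation of the two multi-morphism spaces entering the characterization of a left morphism object, exploiting the pullback descriptions $\Env(\mV)^\ot = \Act \times_{\Fun(\{0\},\Ass)} \mV^\ot$ and $\L\Env(\mM)^\circledast = \Min \times_{\Fun(\{0\},\Ass)} \mM^\circledast$.

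First, since $\L\Env(\mM)^\circledast \to \Env(\mV)^\ot$ is left tensored, hence left pseudo-enriched, the claim reduces to showing that for every $\U_1, \ldots, \U_\m \in \mV$ the canonical map
\[ \Mul_{\Env(\mV)}(\U_1, \ldots, \U_\m; \V_1 \ot \ldots \ot \V_\n \ot \V) \longrightarrow \Mul_{\L\Env(\mM)}(\U_1, \ldots, \U_\m, \X; \V_1 \ot \ldots \ot \V_\n \ot \Y), \]
induced by the multi-morphism $\V_1, \ldots, \V_\n, \V, \X \to \V_1 \ot \ldots \ot \V_\n \ot \Y$ in $\L\Env(\mM)^\circledast$ obtained from $\alpha$ by tensoring with $\V_1, \ldots, \V_\n$ on the left, is an equivalence.

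Using the pullback formulas, the morphisms parametrizing the two multi-morphism spaces are indexed by the same set of partitions $0 = a_0 \le a_1 \le \ldots \le a_\n \le a_{\n+1} = \m$, corresponding to active morphisms $[\m] \to [\n+1]$ in $\Ass$ on the $\Env(\mV)$-side and to min-preserving morphisms over the same active data on the $\L\Env(\mM)$-side. Over each partition $a$, the $\Env(\mV)$-side contributes $\prod_{j=1}^{\n+1} \Mul_\mV(\U_{a_{j-1}+1}, \ldots, \U_{a_j}; \V^{(j)})$ with $\V^{(j)} = \V_j$ for $j \le \n$ and $\V^{(\n+1)} = \V$, while the $\L\Env(\mM)$-side contributes the same product but with the last factor $\Mul_\mV(\U_{a_\n+1}, \ldots, \U_\m; \V)$ replaced by $\Mul_\mM(\U_{a_\n+1}, \ldots, \U_\m, \X; \Y)$. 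The hypothesis that $\alpha \in \Mul_\mM(\V, \X; \Y)$ exhibits $\V$ as $\L\Mor_\mM(\X, \Y)$ is precisely the statement that these two final factors are canonically equivalent, so summing over $a$ yields the desired equivalence.

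The two ``in particular'' assertions follow formally: for the statement in $\Env(\mV)$, applying the main claim both with general $\n$ and with $\n = 0$ identifies source and target of the canonical morphism with $\V_1 \ot \ldots \ot \V_\n \ot \V$. The corresponding assertion in $\mP\Env(\mV)$ follows from Proposition \ref{morpre}, which guarantees that the fully faithful embedding $\L\Env(\mM)^\circledast \subset \mP\L\Env(\mM)^\circledast$ preserves left morphism objects, thereby transferring the identification. I expect the main technical obstacle to be the combinatorial identification in the third paragraph, namely verifying that the min-preserving condition on $\Min$, combined with the active constraint imposed by the multi-morphism definition and the pullback over $\Fun(\{0\},\Ass)$, collapses the data exactly onto the same partitions that index the $\Env(\mV)$-side.
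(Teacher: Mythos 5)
Your proposal is correct and follows essentially the same route as the paper: reduce via pseudo-enrichedness to comparing mapping spaces out of $\U_1\ot\cdots\ot\U_\m$ (resp.\ $\U_1\ot\cdots\ot\U_\m\ot\X$), decompose both sides over the bijection between active maps $[\n+1]\to[\m]$ and min-preserving maps $[\n]\to[\m]$, and match the fibers factorwise, with the hypothesis on $\alpha$ supplying the equivalence of the final factors; the two ``in particular'' statements are then deduced exactly as in the paper (the $\n=0$ case playing the role of the paper's appeal to Proposition \ref{morpre} for the identification $\L\Mor_{\L\Env(\mM)}(\X,\Y)\simeq\V$).
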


\begin{proof}
We like to prove that for any $\mV_1',...\mV_\ell' \in \mV$ for $\ell \geq 0$ the following induced map is an equivalence:
\begin{equation}\label{euk}
\Env(\mV)(\mV_1'\ot ...\ot \mV_\ell', \V_1 \ot...\ot \V_\n \ot\L\Mor_{\mM}(\X, \Y)) \to \L\Env(\mM)(\mV_1'\ot ...\ot \mV_\ell'\ot \X, \V_1 \ot...\ot \V_\n \ot \Y).\end{equation}

For any $[\n],[\m]\in \Ass$ let $$ \Ass_{}([\m],[\n]) \subset \Ass_{\mathrm{\min},\max}([\m],[\n])\subset \Ass_{\mathrm{\min}}([\m],[\n])$$ be the subsets of order preserving maps preserving the minimum and maximum, preserving the minimum, respectively. 
The map $\Ass([\ell],[\n+1])= \Delta([\n+1], [\ell])\to \Delta([\n], [\ell])= \Ass([\ell],[\n])$
restricting along the order preserving embedding $[\n]\cong\{0,...,\n\}\subset[\n+1]$
induces a bijection $\xi: \Ass_{\min,\max}([\ell],[\n+1])\to \Ass_{\min}([\ell],[\n]).$
The map (\ref{euk}) canonically covers $\xi$. Thus we have to prove that
for every order preserving map $\varphi:[\n+1]\to[\ell]$ preserving the minimum
the map induced by (\ref{euk}) on the fiber over $\varphi$ is an equivalence.
This map identifies with the map
\begin{equation*}\label{eukl}
\prod_{\bi=1}^\n\Env(\mV)(\V'_{\varphi(\bi-1)+1}\ot ...\ot \V'_{\varphi(\bi)},\V_\bi) \times \Env(\mV)(\V'_{\varphi(\n)+1}\ot ...\ot \V'_{\ell}, \L\Mor_{\L\Env(\mM)}(\X, \Y)) \to$$$$ \prod_{\bi=1}^\n\Env(\mV)(\V'_{\varphi(\bi-1)+1}\ot ...\ot \V'_{\varphi(\bi)},\V_\bi) \times \L\Env(\mM)(\V'_{\varphi(\n)+1} \ot...\ot \V'_\ell \ot \X, \Y).
\end{equation*}
induced by the identity of $\prod_{\bi=1}^\n\Env(\mV)(\V'_{\varphi(\bi-1)+1}\ot ...\ot \V'_{\varphi(\bi)},\V_\bi)$ and the canonical map
$$ \Env(\mV)(\V'_{\varphi(\n)+1}\ot ...\ot \V'_{\ell}, \L\Mor_{\L\Env(\mM)}(\X, \Y)) \to \L\Env(\mM)(\V'_{\varphi(\n)+1} \ot...\ot \V'_\ell \ot \X, \Y).$$
The latter map identifies with the map
$$ \Mul_\mV(\V'_{\varphi(\n)+1}, ..., \V'_{\ell}, \L\Mor_{\L\Env(\mM)}(\X, \Y)) \to \Mul_\mM(\V'_{\varphi(\n)+1}, ..., \V'_\ell, \X, \Y),$$
which is an equivalence.
Consequently, the morphism object of $\X,\V_1 \ot...\ot \V_\n \ot \Y$
in $\L\Env(\mM)$ exists and the following canonical morphism in $\Env(\mV)$ is an equivalence:
$$\V_1 \ot...\ot \V_\n \ot\L\Mor_{\mM}(\X, \Y)\to \L\Mor_{\L\Env(\mM)}(\X,\V_1 \ot...\ot \V_\n \ot \Y).$$
By Proposition \ref{morpre} the morphism object of $\X,\Y$ in $\L\Env(\mM)$ exists if the morphism object of $\X,\Y$ in $\mM$ exists, and the canonical morphism $\L\Mor_\mM(\X,\Y)\to \L\Mor_{\L\Env(\mM)}(\X,\Y)$ in $\Env(\mV)$ is an equivalence.
This proves the second part. 
The third part follows from the fact that the linear embedding
$\L\Env(\mM)^\circledast \subset \mP\L\Env(\mM)^\circledast$
lying over the monoidal embedding $\Env(\mV)^\ot \subset \mP\Env(\mV)^\ot$
preserves left morphism objects by Proposition \ref{morpre}.

\end{proof}



\begin{proof}[Proof of Theorem \ref{corok}]
We first reduce to the case that $\rS,\T$ are empty.
Consider the $\mP\Env(\mV), \mP\Env(\mW) $-enriched adjunction
$$ (-)\ot\X \ot (-): (\mP\Env(\mV) \ot \mP\Env(\mW))^\circledast \rightleftarrows \mP\B\Env(\mM)^\circledast: \L\Mor_{\mP\B\Env(\mM)}(\X,-).$$
The left adjoint preserves local equivalences
for the localizations $$\mP\B\Env(\mM)\to \mP\B\Env(\mM)_{\rS,\T},  \mP\Env(\mV) \ot \mP\Env(\mW) \to \rS^{-1}\mP\Env(\mV) \ot \T^{-1}\mP\Env(\mW).$$
If the statement holds for $\rS,\T$ empty, then the right adjoint is $\mP\Env(\mV), \mP\Env(\mW)$-linear and preserves small colimits.
This implies that also the right adjoint preserves local equivalences.
Consequently, in this case the latter $\mP\Env(\mV), \mP\Env(\mW) $-enriched adjunction induces a $\rS^{-1}\mP\Env(\mV), \T^{-1}\mP\Env(\mW) $-enriched adjunction
$$ (-)\ot\L(\X) \ot (-): (\rS^{-1}\mP\Env(\mV) \ot \T^{-1}\mP\Env(\mW))^\circledast \rightleftarrows \mP\B\Env(\mM)_{\rS,\T}^\circledast: \L\Mor_{\mP\B\Env(\mM)_{\rS,\T}}(\L(\X),-),$$
in which the right adjoint preserves small colimits and is $\rS^{-1}\mP\Env(\mV), \T^{-1}\mP\Env(\mW) $-linear.
So it suffices to prove the case that $\rS,\T$ are empty.

The $\mP\Env(\mV),\mP\Env(\mW)$-enriched functor $$\L\Mor_{\mP\B\Env(\mM)}(\X,-): 
\mP\B\Env(\mM)^\circledast \to \mP(\Env(\mV) \times \Env(\mW))^\circledast$$ preserves small colimits 
since for any $\V_1,..., \V_\n \in \mV, \W_1,...,\W_\m \in \mW$
the composition $$\mP\B\Env(\mM) \xrightarrow{\L\Mor_{\mP\B\Env(\mM)}(\X,-)} \mP(\Env(\mV) \times \Env(\mW)) \xrightarrow{\ev_{\V_1 \ot... \ot \V_\n, \W_1 \ot ... \ot \W_\m}} \mS$$
identifies with the map $\mP\B\Env(\mM)(\V \ot \X \ot \W,-):\mP\B\Env(\mM) \to \mS$ evaluating at $\V \ot \X \ot \W \in \B\Env(\mM).$
It remains to see that the $\mP\Env(\mV),\mP\Env(\mW)$-enriched functor $$\L\Mor_{\mP\B\Env(\mM)}(\X,-): \mP\B\Env(\mM)^\circledast \to \mP(\Env(\mV) \times \Env(\mW))^\circledast$$ is $\mP\Env(\mV),\mP\Env(\mW)$-linear.
We first reduce to the case that $\mW^\ot=\emptyset^\ot.$
Let $\mN^\circledast:= \R\Env(\mM)^\circledast \to \mV^\ot \times \Env(\mW)^\ot.$ By Remark \ref{envdecom} there is a canonical equivalence $\mP\B\Env(\mM)^\circledast \simeq \mP\L\Env(\mN)^\circledast $ of $\infty$-categories bitensored over $\mP\Env(\mV),\mP\Env(\mW)$.
In particular, the underlying left $\mP\Env(\mV)$-tensored $\infty$-category of
$\mP\B\Env(\mM)^\circledast\to \mP\Env(\mV)^\ot \times \mP\Env(\mW)^\ot$ is
$\mP\L\Env(\mN)^\circledast \to \mP\Env(\mV)^\ot.$
By Lemma \ref{trasy} the $\mP\Env(\mV),\mP\Env(\mW)$-enriched functor $$\L\Mor_{\mP\B\Env(\mM)}(\X,-): \mP\B\Env(\mM)^\circledast \to \mP(\Env(\mV) \times \Env(\mW))^\circledast$$ is $\mP\Env(\mV),\mP\Env(\mW)$-linear if for every $\W \in \mP\Env(\mW)$ the left $\mP\Env(\mV)$-enriched functor $$\L\Mor_{\mP\L\Env(\mN)}(\X \ot \W,-): \mP\L\Env(\mN)^\circledast \to \mP\Env(\mV)^\circledast$$ is left $\mP\Env(\mV)$-linear.
So have reduced to the case that $\mW^\ot=\emptyset^\ot$ and need to see that
for every $\X \in \mM$ the left $\mP\Env(\mV)$-enriched functor 
$\L\Mor_{\mP\L\Env(\mM)}(\X ,-): \mP\L\Env(\mM)^\circledast \to \mP\Env(\mV)^\circledast$ is left $\mP\Env(\mV)$-linear.
We reduce next to the case that $\mM^\circledast \to \mV^\ot$ is a left enriched $\infty$-category.
Let $\bar{\mM}^\circledast \subset \mP\L\Env(\mM)^\circledast \to \mP\Env(\mV)^\ot$ be the full left $\mP\Env(\mV)$-enriched $\infty$-category spanned by $\mM.$
Corollary \ref{eqistu} guarantees that the embedding $\bar{\mM}_\omega^\circledast \subset \mP\L\Env(\mM)^\circledast$
lying over the embedding of $\infty$-operads $(\mP\Env(\mV)^\omega)^\ot \subset \mP\Env(\mV)^\ot$ uniquely extends to a left adjoint $\mP\Env(\mV)$-linear equivalence
$\mP\L\Env(\bar{\mM}_\omega)_{\L\Enr_\omega}^\circledast \simeq \mP\L\Env(\mM)^\circledast$.
Thus the left $\mP\Env(\mV)$-enriched functor 
$\L\Mor_{\mP\L\Env(\mM)}(\X ,-): \mP\L\Env(\mM)^\circledast \to \mP\Env(\mV)^\circledast$ identifies with the left $\mP\Env(\mV)$-enriched functor 
$$\L\Mor_{\mP\L\Env(\bar{\mM})_{\L\Enr_\omega}}(\X ,-): \mP\L\Env(\bar{\mM}_\omega)_{\L\Enr}^\circledast \to \mP\Env(\mV)^\circledast \simeq \Ind_\omega(\mP\Env(\mV)^\omega)^\circledast.$$
So replacing $\mP\Env(\mV)^\omega$ by $\mV$
it is enough to show that for every absolute small left enriched $\infty$-category 
$\mM^\circledast \to \mV^\ot$ and $\X \in \mM$ the left $\mV$-enriched functor 
$\L\Mor_{\mP\L\Env(\mM)_{\L\Enr_\omega}}(\X ,-): \mP\L\Env(\mM)_{\L\Enr_\omega}^\circledast \to \Ind_\omega(\mV)^\circledast$ is left $\mV$-linear. 
By the first part of the proof this holds if the left $\mP\Env(\mV)$-enriched functor 
$\L\Mor_{\mP\L\Env(\mM)}(\X ,-): \mP\L\Env(\mM)^\circledast \to \mP\Env(\mV)^\circledast$ is left $\mP\Env(\mV)$-linear.
This way we reduced to the case that $\mM^\circledast \to \mV^\ot$ is a left enriched $\infty$-category.

So it remains to see that for every $\X \in \mM, \Y \in \mP\L\Env(\mM)$ and $\V \in \mP\Env(\mV)$
the following canonical morphism in $\mP\Env(\mV)$ is an equivalence:
$$\V \ot\L\Mor_{\mP\L\Env(\mM)}(\X, \Y)\to \L\Mor_{\mP\L\Env(\mM)}(\X,\V \ot \Y).$$ 

As $\mP\L\Env(\mM)^\circledast \to \mP\Env(\mV)^\ot$ is a left tensored $\infty$-category compatible with small colimits and $\mP\L\Env(\mM)$ is generated under small colimits by objects of the form $\V_1 \ot...\ot \V_\n \ot \Y$
for $\V_1,...,\V_\n \in \mV$ for $\n \geq 0$ and $\Y \in \mM$, it is enough to see that the canonical morphism 
$$\sigma: \V_1 \ot...\ot \V_\n \ot\L\Mor_{\mP\L\Env(\mM)}(\X, \W_1 \ot...\ot \W_\m \ot \Y)\to \L\Mor_{\mP\L\Env(\mM)}(\X,\V_1 \ot...\ot \V_\n \ot \W_1 \ot...\ot \W_\m \ot \Y)$$ 
is an equivalence.
The composition
$$\V_1 \ot...\ot \V_\n \ot \W_1 \ot...\ot \W_\m \ot \L\Mor_{\mP\L\Env(\mM)}(\X, \Y) \to \V_1 \ot...\ot \V_\n \ot\L\Mor_{\mP\L\Env(\mM)}(\X, \W_1 \ot...\ot \W_\m \ot \Y)$$$$\xrightarrow{\sigma} \L\Mor_{\mP\L\Env(\mM)}(\X,\V_1 \ot...\ot \V_\n \ot \W_1 \ot...\ot \W_\m \ot \Y)$$
is the canonical morphism. Consequently, it suffices to see that for every $\V_1,...,\V_\n \in \mV$ for $\n\geq 0$ and $\Y \in \mM$ the canonical morphism
$$\V_1 \ot...\ot \V_\n \ot \L\Mor_{\mP\L\Env(\mM)}(\X, \Y) \to \L\Mor_{\mP\L\Env(\mM)}(\X,\V_1 \ot...\ot \V_\n \ot \Y)$$ is an equivalence.
This holds by Proposition \ref{awas}.

\end{proof}

\begin{corollary}\label{MorFu} Let $\mM^\circledast \to \mV^\ot\times \mW^\ot $ be a weakly bienriched $\infty$-category.
The functor $$\xi: \mM^\op \to \Enr\Fun_{\mV,\mW}(\mM, \mP\Env(\mV)\ot \mP\Env(\mW))$$
sending $\X \in \mM$ to $\L\Mor_{\bar{\mM}}(\X,-)$
corresponding to the functor $\Gamma_\mM$ of Construction \ref{Enros}
is an embedding.
\end{corollary}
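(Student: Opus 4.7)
The plan is to factor $\xi$ via a Yoneda-type embedding into $\mP\B\Env(\mM)$ and then use atomicity (Theorem \ref{corok}) to reduce fully faithfulness to the universal property of the bitensored envelope.

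By Construction \ref{Enros}, $\xi$ unfolds as the composition
$$\mM^\op \hookrightarrow \mP\B\Env(\mM)^\op \simeq \Enr\Fun^\L_{\mP\Env(\mV),\mP\Env(\mW)}(\mP\Env(\mV)\ot\mP\Env(\mW), \mP\B\Env(\mM))^\op$$
$$\simeq \Enr\Fun^\R_{\mP\Env(\mV),\mP\Env(\mW)}(\mP\B\Env(\mM), \mP\Env(\mV)\ot\mP\Env(\mW)) \xrightarrow{\mathrm{res}} \Enr\Fun_{\mV,\mW}(\mM, \mP\Env(\mV)\ot\mP\Env(\mW)),$$
where Yoneda is fully faithful, the first equivalence is Proposition \ref{Line}, and the second is Proposition \ref{adjeq}. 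Tracing, an object $\X \in \mM$ is sent first to the left adjoint linear functor $L_\X \colon (\V,\W) \mapsto \V \ot \X \ot \W$, then to its enriched right adjoint $R_\X = \L\Mor_{\mP\B\Env(\mM)}(\X,-)$, and finally $\xi(\X) = R_\X|_{\mM}$. Since the first three arrows are already fully faithful, the claim reduces to showing that $\mathrm{res}$ is fully faithful on the subcategory $\{R_\X : \X \in \mM\}$.

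To handle this, I would apply Theorem \ref{corok} (with $\rS, \T$ empty, after enlarging the universe if needed so that $\mM$ is totally small) to conclude that each $R_\X$ is linear and preserves small colimits. Since both $\mP\B\Env(\mM)$ and $\mP\Env(\mV) \ot \mP\Env(\mW)$ are presentable, the adjoint functor theorem gives $R_\X$ a right adjoint, so $R_\X \in \LinFun^\L_{\mP\Env(\mV),\mP\Env(\mW)}(\mP\B\Env(\mM), \mP\Env(\mV)\ot\mP\Env(\mW))$. Because both $\Enr\Fun^\R$ and $\LinFun^\L$ are by definition \emph{full} subcategories of $\Enr\Fun$, the three mapping spaces $\Enr\Fun^\R(R_\X,R_\Y)$, $\Enr\Fun(R_\X,R_\Y)$ and $\LinFun^\L(R_\X,R_\Y)$ all coincide for $\X,\Y \in \mM$.

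Finally I would invoke Corollary \ref{envvcor} (2): restriction along $\mM \subset \mP\B\Env(\mM)$ induces an equivalence $\LinFun^\L_{\mP\Env(\mV),\mP\Env(\mW)}(\mP\B\Env(\mM),\mP\Env(\mV)\ot\mP\Env(\mW)) \simeq \Enr\Fun_{\mV,\mW}(\mM,\mP\Env(\mV)\ot\mP\Env(\mW))$, hence $\LinFun^\L(R_\X,R_\Y) \simeq \Enr\Fun(\xi(\X),\xi(\Y))$. Chaining everything,
$$\mM^\op(\X,\Y) \simeq \mP\B\Env(\mM)^\op(\X,\Y) \simeq \Enr\Fun^\R(R_\X,R_\Y) = \LinFun^\L(R_\X,R_\Y) \simeq \Enr\Fun(\xi(\X),\xi(\Y))$$
is precisely the effect of $\xi$ on mapping spaces, proving fully faithfulness. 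The main obstacle is the atomicity input from Theorem \ref{corok}: without it, $R_\X$ would only be a right-adjoint enriched functor and the restriction to $\mM$ would generally fail to be fully faithful, since there is no universal property pinning down morphisms between arbitrary right-adjoint functors out of $\mP\B\Env(\mM)$ in terms of their restrictions. A minor technical point is that Theorem \ref{corok} is formulated for totally small bi-enriched $\infty$-categories, so for a general $\mM$ one argues object-by-object in a sufficiently large universe.
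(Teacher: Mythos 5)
Your proposal is correct and follows essentially the same route as the paper: factor $\xi$ through the embedding $\X \mapsto \L\Mor_{\mP\B\Env(\mM)}(\X,-)$ into enriched functors out of $\mP\B\Env(\mM)$, use Theorem \ref{corok} (with $\rS,\T$ empty) to see these land in $\LinFun^\L$, and conclude via the equivalence of Corollary \ref{envvcor}~(2). The extra bookkeeping you do (identifying the mapping spaces in $\Enr\Fun^\R$ and $\LinFun^\L$ as full subcategories of $\Enr\Fun$, and the right adjoint of $R_\X$ via presentability) is exactly what the paper's terser argument implicitly relies on.
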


\begin{proof}

By construction \ref{Enros} the functor $\xi$ factors as an embedding 
$$\alpha: \mM^\op \to \Enr\Fun_{\mP\Env(\mV),\mP\Env(\mW)}(\mP\B\Env(\mM), \mP\Env(\mV)\ot \mP\Env(\mW))$$ sending $\X \in \mM$ to $\L\Mor_{\mP\B\Env(\mM)}(\X,-)$
followed by restriction $$\Enr\Fun_{\mP\Env(\mV),\mP\Env(\mW)}(\mP\B\Env(\mM), \mP\Env(\mV)\ot \mP\Env(\mW)) \to \Enr\Fun_{\mV,\mW}(\mM, \mP\Env(\mV)\ot \mP\Env(\mW)).$$
Proposition \ref{corok} for $\rS,\T$ empty guarantees that $\alpha$ lands in the full subcategory $$\Enr\Fun^\L_{\mP\Env(\mV),\mP\Env(\mW)}(\mP\B\Env(\mM), \mP\Env(\mV)\ot \mP\Env(\mW)).$$
By Corollary \ref{envvcor} following restriction is an equivalence: $$ \Enr\Fun^\L_{\mP\Env(\mV),\mP\Env(\mW)}(\mP\B\Env(\mM), \mP\Env(\mV)\ot \mP\Env(\mW)) \to \Enr\Fun_{\mV,\mW}(\mM, \mP\Env(\mV)\ot \mP\Env(\mW)).$$


\end{proof}

\begin{corollary}\label{Univ}
	
Let $(\mV^\ot \to \Ass, \rS), (\mW^\ot \to \Ass, \T)$ be small localization pairs, $\mM^\circledast \to \mV^\ot \times \mW^\ot$ an absolute small $\rS,\T$-bienriched $\infty$-category, $\mN^\circledast \to \rS^{-1}\mP\Env(\mV)^\ot \times \T^{-1}\mP\Env(\mW)^\ot$ a locally small bitensored $\infty$-category compatible with small colimits and $\phi: \mP\B\Env(\mM)_{\rS, \T}^\circledast \to \mN^\circledast$ a small colimits preserving $\rS^{-1}\mP\Env(\mV), \T^{-1}\mP\Env(\mW)$-linear functor.

\begin{enumerate}
\item Then $\phi$ 
is an embedding if the restriction $\phi_{\mid \mM^\circledast}: \mM^\circledast \to \mN^\circledast$ is an embedding and for every $\X \in \mM$ the
image $\phi(\X)$ is atomic. In this case $\mN$ is generated by the essential image of $\phi_{\mid \mM}$ under small colimits and the biaction of $\rS^{-1}\mP\Env(\mV), \T^{-1}\mP\Env(\mW)$.

\item Then $\phi$ 
is an equivalence if and only if the restriction $\phi_{\mid \mM^\circledast}: \mM^\circledast \to \mN^\circledast$ is an embedding, for every $\X \in \mM$ the
image $\phi(\X)$ is atomic 
and $\mN$ is generated under small colimits and left and right tensors by the essential image of $\phi_{\mid \mM^\circledast}.$

\end{enumerate}
	
\end{corollary}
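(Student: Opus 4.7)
The plan is to prove fully faithfulness in (1) via an equivalence of left morphism objects extracted from atomicity, and then to deduce (2) from (1) by combining fully faithfulness with the generation hypothesis. By Lemma \ref{Adj2}, the $\rS^{-1}\mP\Env(\mV),\T^{-1}\mP\Env(\mW)$-linear small-colimits preserving functor $\phi$ admits an enriched right adjoint $\psi$, so $\phi$ is an embedding if and only if the unit $\eta \colon \id \to \psi \circ \phi$ is an equivalence.

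Fix $\X \in \mM$ and compare the two functors $\F_1 := \L\Mor_{\mP\B\Env(\mM)_{\rS,\T}}(\X,-)$ and $\F_2 := \L\Mor_\mN(\phi(\X),-) \circ \phi$, both from $\mP\B\Env(\mM)_{\rS,\T}^\circledast$ to $(\rS^{-1}\mP\Env(\mV) \otimes \T^{-1}\mP\Env(\mW))^\circledast$. Theorem \ref{corok} says that the image of $\X$ in $\mP\B\Env(\mM)_{\rS,\T}$ is atomic, so $\F_1$ is linear and preserves small colimits; by the atomicity hypothesis on $\phi(\X)$ together with the linearity and colimit-preservation of $\phi$, the same holds for $\F_2$. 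The hypothesis that $\phi_{\mid \mM^\circledast}$ is an enriched embedding implies $\F_1$ and $\F_2$ agree on $\mM$: for $\Y \in \mM$ and any $\V, \W$, both sides represent the common multimorphism space $\Mul(\V, \X, \W; \Y) \simeq \Mul(\V, \phi(\X), \W; \phi(\Y))$ inside $\rS^{-1}\mP\Env(\mV) \otimes \T^{-1}\mP\Env(\mW)$. By the universal property of Proposition \ref{pseuso}, linear small-colimits preserving functors out of $\mP\B\Env(\mM)_{\rS,\T}$ are determined by their restrictions to $\mM$; hence $\F_1 \simeq \F_2$ on all of $\mP\B\Env(\mM)_{\rS,\T}$.

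Using the enriched adjunction $\phi \dashv \psi$, the resulting identification $\F_1 \simeq \F_2$ translates to saying $\L\Mor(\X, \eta_\Y)$ is an equivalence for all $\X \in \mM$ and $\Y \in \mP\B\Env(\mM)_{\rS,\T}$. Applying $\mathrm{Hom}(\V \otimes \W, -)$ in $\rS^{-1}\mP\Env(\mV) \otimes \T^{-1}\mP\Env(\mW)$ and using bi-enrichedness shows $\eta_\Y$ induces an equivalence $\Hom(\V \ot \X \ot \W, \Y) \simeq \Hom(\V \ot \X \ot \W, \psi\phi(\Y))$ for every $\V, \W$ and every $\X \in \mM$. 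Since $\mP\B\Env(\mM)_{\rS,\T}$ is generated under small colimits by objects of the form $\V \ot \X \ot \W$ with $\X \in \mM$, the unit $\eta_\Y$ is an equivalence for every $\Y$, completing (1). For (2), the only-if direction is immediate since any equivalence preserves fully faithfulness, atomicity, and strong generators; conversely, assuming the three conditions, (1) gives fully faithfulness of $\phi$, and the essential image of $\phi$ is closed under small colimits, left tensors and right tensors because $\phi$ preserves them, contains $\phi(\mM)$, and so equals $\mN$ by the generation hypothesis.

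The main obstacle is the passage from ``$\F_1$ and $\F_2$ agree on $\mM$'' to ``they agree as enriched functors on all of $\mP\B\Env(\mM)_{\rS,\T}$'': this rests on verifying in parallel that both functors are linear and preserve small colimits (for which Theorem \ref{corok} on the source and the atomicity hypothesis on the target are indispensable), and on applying the universal property of Proposition \ref{pseuso} to the presentably bitensored target $\rS^{-1}\mP\Env(\mV) \otimes \T^{-1}\mP\Env(\mW)$. Once this identification is in place, the rest is a standard Yoneda-style argument using generation under small colimits and bitensors.
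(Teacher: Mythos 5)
Your proof is correct and follows essentially the same strategy as the paper's: both arguments reduce the comparison of morphism objects $\Mor_{\mP\B\Env(\mM)_{\rS,\T}}(\X,\Y)\to\Mor_{\mN}(\phi(\X),\phi(\Y))$ to the case $\X,\Y\in\mM$, handling one variable via atomicity (Theorem \ref{corok} on the source together with the atomicity hypothesis on $\phi(\X)$) and the other via generation of $\mP\B\Env(\mM)_{\rS,\T}$ under small colimits and the biaction. Your routing through the unit of the adjunction and the universal property of Proposition \ref{pseuso} is only a repackaging of the paper's direct double-closure argument, and part (2) is deduced in the same way.
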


\begin{proof}

(1) We like to see that for every $\X,\Y\in \mP\B\Env(\mM)_{\rS, \T}$ the induced morphism
$$\alpha_{\X,\Y}: \Mor_{\mP\B\Env(\mM)_{\rS, \T}}(\X,\Y)\to \Mor_{\mN}(\phi(\X),\phi(\Y))$$
in $\rS^{-1}\mP\Env(\mV)\ot \T^{-1}\mP\Env(\mW) $ is an equivalence.
By assumption $\alpha_{\X,\Y}$ is an equivalence if $\X,\Y\in \mM$. 
We prove first that for every $\Y \in \mP\B\Env(\mM)_{\rS, \T}$ the full subcategory $\Theta_\Y$ of $ \mP\B\Env(\mM)_{\rS, \T}$ spanned by those $\X$ such that $\alpha_{\X,\Y}$ is an equivalence, is closed under small colimits and the biaction.
This implies the claim if we have shown that $\mM \subset \Theta_\Y$ since
$ \mP\B\Env(\mM)_{\rS, \T}$ is generated by $\mM$ under small colimits and the biaction as a consequence of Proposition \ref{bitte}.
The full subcategory $\Theta_\Y$ is closed under small colimits since $\phi$ preserves small colimits and for every $\Z \in \mN$
the functor $\Mor_\mN(-,\Z): \mN^\op \to \mP\Env(\mV)\otimes \mP\Env(\mW)$
preserves small limits as $\mN^\circledast \to \mP\Env(\mV)^\ot \times \mP\Env(\mW)^\ot$ is a bitensored $\infty$-category compatible with small colimits,
and similar for $ \mP\B\Env(\mM)_{\rS, \T}$.
The full subcategory $\Theta_\Y$ is closed under the biaction since for every $\V \in \rS^{-1}\mP\Env(\mV), \W \in \T^{-1}\mP\Env(\mW)$ the induced morphism
$\Mor_{\mP\B\Env(\mM)_{\rS, \T}}(\V \ot \X\ot\W,\Y)\to \Mor_{\mN}(\phi(\V \ot \X\ot \W),\phi(\Y)) \simeq \Mor_{\mN}(\V \ot \phi(\X)\ot \W,\phi(\Y))$ 
identifies with the induced morphism 
$\rS^{-1}\mP\Env(\mV)\ot \T^{-1}\mP\Env(\mW)(\V\ot\W, \Mor_{\mP\B\Env(\mM)_{\rS, \T}}(\X,\Y))\to \rS^{-1}\mP\Env(\mV)\ot \T^{-1}\mP\Env(\mW)(\V\ot\W, \Mor_{\mN}(\phi(\X),\phi(\Y))).$
It remains to see that $\mM \subset \Theta_\Y$.
Because $ \mP\B\Env(\mM)_{\rS, \T}$ is generated by $\mM$ under small colimits and the biaction, it is enough to prove that for every $\X \in \mM$ the full subcategory of $ \mP\B\Env(\mM)_{\rS, \T}$ spanned by those $\Y$ such that $\alpha_{\X,\Y}$ is an equivalence, is closed under small colimits and the biaction.
This follows from Proposition \ref{corok}.

(2) follows immediately from (1), Remark \ref{embil}, Proposition \ref{corok} and Proposition \ref{bitte}.	
	
\end{proof}

\begin{corollary}
Let $\mV^\ot \to \Ass$ be an $\infty$-operad and $\A$ an associative algebra in $\mV.$ Let $B(\A)^\circledast \subset \RMod_\A(\mV)^\circledast$ be the full weakly left enriched subcategory spanned by $\A.$
	
\end{corollary}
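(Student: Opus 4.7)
The statement is cut off, but given its position immediately after Corollary \ref{Univ} and the evident set-up (the one-object weakly left enriched subcategory $B(\A)^\circledast$ on the free right $\A$-module $\A \in \RMod_\A(\mV)$), the natural conclusion to aim for is that the inclusion $B(\A)^\circledast \subset \RMod_\A(\mV)^\circledast$ identifies $\RMod_\A(\mV)$ with the enriched presheaf $\infty$-category $\mP_\mV(B(\A))$ whenever $\mV$ is presentably monoidal, and, more generally, that the left morphism objects in $B(\A)$ are given by $\A$. My plan is to prove both.

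First I would compute the multi-morphism spaces of $B(\A)^\circledast$. By definition, for $\V_1,\dots,\V_\n \in \mV$ with $\n \geq 0$ we have
\[
\Mul_{B(\A)}(\V_1,\dots,\V_\n,\A;\A) = \Mul_{\RMod_\A(\mV)}(\V_1,\dots,\V_\n,\A;\A) \simeq \Mul_\mV(\V_1,\dots,\V_\n;\A),
\]
where the last equivalence uses that $\A$ is the free right $\A$-module on the tensor unit. In particular, the left multi-morphism object is $\L\Mul\Mor_{B(\A)}(\V_1,\dots,\V_\n,\A;\A) = \V_1 \ot \cdots \ot \V_\n \ot \A$ at the level of $\mP\Env(\mV)$, and when $\mV^\ot \to \Ass$ is a monoidal $\infty$-category the canonical map $B(\A)^\circledast \to \mV^\ot$ exhibits $B(\A)$ as left $\mV$-enriched with $\L\Mor_{B(\A)}(\A,\A) \simeq \A$. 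This also shows that $B(\A)$ is left pseudo-enriched in the sense of Definition \ref{Lu}.

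Next I would apply the universal property of Corollary \ref{Univ}(2) to the inclusion $\iota: B(\A)^\circledast \to \RMod_\A(\mV)^\circledast$ when $\mV$ is presentably monoidal. By Theorem \ref{unipor3} there is a unique small-colimit-preserving left $\mV$-linear functor $\phi: \mP_\mV(B(\A))^\circledast \to \RMod_\A(\mV)^\circledast$ extending $\iota$. To conclude that $\phi$ is an equivalence via Corollary \ref{Univ}(2) it suffices to verify three points: (i) $\iota$ is fully faithful at the enriched level, which follows from the multi-morphism computation above; (ii) $\A \in \RMod_\A(\mV)$ is atomic, which holds because the left $\mV$-linear functor $(-)\ot \A: \mV \to \RMod_\A(\mV)$ is a left adjoint, so its right adjoint $\R\Mor_{\RMod_\A(\mV)}(\A,-): \RMod_\A(\mV) \to \mV$ preserves small colimits and is $\mV$-linear; and (iii) $\A$ generates $\RMod_\A(\mV)$ under small colimits and left $\mV$-tensors, which is the standard Barr–Beck-style statement for modules and follows from the monadicity of the free-forgetful adjunction $\mV \rightleftarrows \RMod_\A(\mV)$.

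The main obstacle I expect is the atomicity claim (ii): although the representable functor on $B(\A)$ is obviously $\mV$-linear and colimit preserving when evaluated on $B(\A)$, one must transfer this to the full presheaf category, and this is exactly the content of Theorem \ref{corok} applied to $B(\A)$. In particular, once atomicity is established for the generator $\A$, Corollary \ref{Univ}(2) applies verbatim and yields the desired left $\mV$-linear equivalence $\mP_\mV(B(\A))^\circledast \simeq \RMod_\A(\mV)^\circledast$. If the intended statement is weaker (merely the identification of morphism objects $\L\Mor_{B(\A)}(\A,\A)\simeq \A$), then only the first step of the above plan is required and the application of Corollary \ref{Univ} can be omitted.
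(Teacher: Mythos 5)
You correctly diagnosed that this ``corollary'' is a truncated notational set-up for the subsequent result (the left $\mV$-linear equivalence $\mP\B\Env(B(\A))_{\L\Enr}^\circledast \simeq \RMod_\A(\mV)^\circledast$), and your proposed argument is essentially the paper's own proof, which likewise just applies Corollary \ref{Univ} after observing that $\A$ is atomic in $\RMod_\A(\mV)$ and generates it under left tensors and small colimits. Your additional details (full faithfulness via the free-module computation of multi-morphism spaces, atomicity via the adjunction $(-)\ot\A:\mV\rightleftarrows\RMod_\A(\mV)$, generation via monadicity) are exactly the observations the paper leaves implicit.
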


\begin{corollary}\label{corz}
Let $\mV^\ot \to \Ass$ be a presentably monoidal $\infty$-category and $\A$ an associative algebra in $\mV.$ The left $\mV$-enriched embedding $B(\A)^\circledast \subset \RMod_\A(\mV)^\circledast$ induces a left $\mV$-linear equivalence $$\mP\B\Env(B(\A))_{\L\Enr}^\circledast \subset \RMod_\A(\mV)^\circledast .$$
	
\end{corollary}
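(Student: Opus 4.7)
The plan is to apply Theorem \ref{unipor3}(1) to extend the left $\mV$-enriched embedding $\iota \colon B(\A)^\circledast \hookrightarrow \RMod_\A(\mV)^\circledast$ to a left adjoint $\mV$-linear functor $\phi \colon \mP\B\Env(B(\A))_{\L\Enr}^\circledast \to \RMod_\A(\mV)^\circledast$, and then to invoke Corollary \ref{Univ}(2) to conclude that $\phi$ is an equivalence. Since $\mV$ is presentably monoidal, $\RMod_\A(\mV)^\circledast \to \mV^\ot$ is presentably left tensored and in particular is compatible with small colimits, so the universal property of Theorem \ref{unipor3}(1) indeed produces such a $\phi$, and its restriction to $B(\A)^\circledast$ recovers $\iota$.

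To invoke Corollary \ref{Univ}(2), I need to verify: (a) the restriction $\phi_{\mid B(\A)^\circledast}$ is an embedding, (b) the image $\phi(*) = \A$ of the unique object of $B(\A)$ is atomic in $\RMod_\A(\mV)$, and (c) $\RMod_\A(\mV)$ is generated under small colimits and left $\mV$-tensors by $\A$. Condition (a) holds by construction. For (c), I will use that the free-forgetful adjunction $(-) \ot \A \colon \mV \rightleftarrows \RMod_\A(\mV) \colon U$ is monadic, so every $M \in \RMod_\A(\mV)$ is the geometric realization of its bar resolution $\Barc_\bullet(M)$ with $\Barc_n(M) \simeq U(M) \ot \A^{\ot n} \ot \A$, which visibly lies in the closure of $\A$ under small colimits and left $\mV$-tensors.

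For (b) I will identify the left morphism object functor $\L\Mor_{\RMod_\A(\mV)}(\A,-)$ with the forgetful functor $U$. Indeed, for $\V \in \mV$ and $M \in \RMod_\A(\mV)$ the chain of equivalences
$$\mV(\V, U(M)) \simeq \RMod_\A(\mV)(\V \ot \A, M) \simeq \Mul_{\RMod_\A(\mV)}(\V, \A; M)$$
exhibits $U(M)$ as the left morphism object of $\A$ and $M$. The functor $U$ preserves all small colimits since such colimits in $\RMod_\A(\mV)$ are computed in $\mV$, and is left $\mV$-linear because the left $\mV$-action on $\RMod_\A(\mV)$ is obtained by restricting the $\mV,\mV$-biaction on $\mV$ to the left factor, so tensoring on the left commutes with the right $\A$-action and hence with $U$. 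This proves atomicity of $\A$.

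The main technical point will be (b): one has to unpack carefully the enriched structure on $\RMod_\A(\mV)$ to recognise $U$ as the morphism-object out of $\A$ and to certify its enriched linearity. A secondary technical point is that Corollary \ref{Univ} is stated for small localization pairs, while $\mV$ is presentable; by Remark \ref{comge} $\mV$ is $\kappa$-compactly generated for some $\kappa$, so one chooses $\kappa$ with $\A \in \mV^\kappa$, passes to the small localization pair $(\mV^\kappa, \Enr^\kappa_{\mV^\kappa})$ via Corollary \ref{cosqa}(1) and Corollary \ref{Corfolsy}(1), applies the atomicity/generation criterion there, and transfers the conclusion back along the induced equivalences.
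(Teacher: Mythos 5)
Your proposal is correct and follows exactly the paper's route: the paper's (very terse) proof consists of applying Corollary \ref{Univ} after observing that $\RMod_\A(\mV)$ is generated by $\A$ under left tensors and small colimits and that $\A$ is atomic, which is precisely your plan. Your identification of $\L\Mor_{\RMod_\A(\mV)}(\A,-)$ with the colimit-preserving $\mV$-linear forgetful functor, the bar-resolution argument for generation, and the reduction to a small localization pair via $\kappa$-compact generation are all sound fillings-in of details the paper leaves implicit.
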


\begin{proof}
We apply Corollary \ref{Univ} and observe that $\R\Mod_\A(\mV)$ is generated by $\A$ under left tensors and that $\A$ is atomic in $\R\Mod_\A(\mV)$.
	
\end{proof}

\begin{proposition}\label{maimai}
Let $(\mV^\ot \to \Ass, \rS)$ be a small localization pair, $\mW^\ot \to \Ass$ a small $\infty$-operad and $\mM^\circledast \to \mV^\ot \times \mW^\ot$ an absolute small weakly  bienriched $\infty$-category. Let $\mN^\circledast:= \mM^\circledast \times_{\mW^\ot} \emptyset^\ot \to \mV^\ot$ the underlying weakly left enriched $\infty$-category. The enriched embedding $\mM^\circledast \subset \mP\L\Env(\mM)_\rS^\circledast$
induces a $\rS^{-1}\mP\Env(\mV)$-linear equivalence 
$$\mP\L\Env(\mN)_\rS^\circledast \simeq \mP\L\Env(\mM)_\rS^\circledast\times_{\mW^\ot} \emptyset^\ot.$$

\end{proposition}

\begin{proof}
By Proposition \ref{pseuso2} the enriched embedding $\mM^\circledast \subset \mP\L\Env(\mM)_\rS^\circledast$
induces a $\rS^{-1}\mP\Env(\mV)$-linear small colimits preserving functor $$\mP\L\Env(\mN)_\rS^\circledast \to \mM'^\circledast:= \mP\L\Env(\mM)_\rS^\circledast\times_{\mW^\ot} \emptyset^\ot.$$
By Corollary \ref{Univ} it is enough to see that for every $\X \in \mM$ the image of $\X$ in
$\mM'=\mP\L\Env(\mM)_\rS$ is atomic, i.e. the following left $\rS^{-1}\mP\Env(\mV)$-enriched functor is linear and preserves small colimits: \begin{equation}\label{uhuu}
\Mor_{\mM'}(\X,-): \mM'^\circledast \to \rS^{-1}\mP\Env(\mV)^\circledast.\end{equation}
The unique map of $\infty$-operads $\emptyset^\ot \to \mW^\ot$ gives rise to a left adjoint monoidal functor $\mS^\times \simeq \mP\Env(\emptyset)^\ot \to \mP\Env(\mW)^\ot$
whose right adjoint, denoted by $\alpha$, evaluates at the tensor unit of $\Env(\mW)$ and so preserves colimits, too.
The left $\rS^{-1}\mP\Env(\mV)$-enriched functor (\ref{uhuu}) factors as
the left $\rS^{-1}\mP\Env(\mV)$-linear small colimits preserving embedding $\mP\L\Env(\mM)_\rS^\circledast\times_{\mW^\ot} \emptyset^\ot \subset \mP\B\Env(\mM)_\rS^\circledast\times_{\mW^\ot} \emptyset^\ot$ followed by the underlying left $\rS^{-1}\mP\Env(\mV)$-enriched functor of the
$\rS^{-1}\mP\Env(\mV), \mP\Env(\mW)$-enriched functor $$\Mor_{\mP\B\Env(\mM)_\rS}(\X,-): \mP\B\Env(\mM)_\rS^\circledast \to (\rS^{-1}\mP\Env(\mV) \otimes \mP\Env(\mW))^\circledast,$$
which is linear and preserves small colimits by Theorem \ref{corok},
followed by the free $\rS^{-1}\mP\Env(\mV)$-linear small colimits preserving functor
on the small colimits preserving functor $\alpha.$

\end{proof}

\begin{corollary}\label{maimait}
Let $\mV^\ot \to \Ass$ be a presentably monoidal $\infty$-category, $\mW^\ot \to \Ass$ a small $\infty$-operad and $\mM^\circledast \to \mV^\ot \times \mW^\ot$ a small left enriched $\infty$-category. Let $\mN^\circledast:= \mM^\circledast \times_{\mW^\ot} \emptyset^\ot \to \mV^\ot$ the underlying weakly left enriched $\infty$-category. The enriched embedding $\mM^\circledast \subset \mP\L\Env(\mM)_{\L\Enr}^\circledast$
induces a $\mV$-linear equivalence $$\mP\L\Env(\mN)_{\L\Enr}^\circledast \simeq \mP\L\Env(\mM)_{\L\Enr}^\circledast\times_{\mW^\ot} \emptyset^\ot.$$
	
\end{corollary}

\begin{notation}\label{psss}
	
Let $(\mV^\ot \to \Ass, \rS), (\mW^\ot \to \Ass, \T)$ be small localization pairs and $\mM^\circledast \to \mV^\ot \times \mW^\ot$ an absolute small weakly  bienriched $\infty$-category.
Let $$\mM_{\rS,\T}^\circledast \subset \mV^\ot \times_{ \rS^{-1}\mP\Env(\mV)^\ot}\mP\B\Env(\mM)_{\rS, \T}^\circledast \times_{\T^{-1}\mP\Env(\mW)^\ot} \mW^\ot \to \mV^\ot \times \mW^\ot $$
be the full weakly bienriched subcategory spanned the essential image of the functor $\mM \subset \mP\B\Env(\mM) \xrightarrow{\L} \mP\B\Env(\mM)_{\rS, \T}$.
	
\end{notation}

\begin{proposition}\label{pseusor}
Let $(\mV^\ot \to \Ass, \rS), (\mW^\ot \to \Ass, \T)$ be small localization pairs and $\mM^\circledast \to \mV^\ot \times \mW^\ot$ an absolute small weakly  bienriched $\infty$-category.

\begin{enumerate}
\item The weakly bienriched $\infty$-category $\mM_{\rS,\T}^\circledast \to \mV^\ot \times \mW^\ot$ exhibits $\mM_{\rS,\T}$ as $\rS, \T$-bienriched in $\mV,\mW.$

\item For every
$\X,\Y \in \mM$ the induced morphism $$\Mor_{\mP\B\Env(\mM)}(\X,\Y) \to \Mor_{\mP\B\Env(\mM_{\rS,\T})}(\L(\X),\L(\Y)) $$
in $\mP\Env(\mV) \ot \mP\Env(\mW)$ is a local equivalence for the localization $$\mP\Env(\mV) \ot \mP\Env(\mW) \to \rS^{-1}\mP\Env(\mV) \ot \T^{-1}\mP\Env(\mW).$$

\item For every weakly bienriched $\infty$-category $\mN^\circledast \to \mV^\ot \times \mW^\ot$ that exhibits $\mN$ as $\rS, \T$-bienriched in $\mV, \mW$
the induced $\mV, \mW$-enriched functor $\mM^\circledast \to \mM_{\rS,\T}^\circledast$ induces an equivalence
$$\rho_\mN: \Enr\Fun_{\mV, \mW}(\mM_{\rS,\T},\mN) \to \Enr\Fun_{\mV,\mW}(\mM,\mN).$$

\item The left adjoint $\rS^{-1}\mP\Env(\mV), \T^{-1}\mP\Env(\mW)$-linear functor $$\mP\B\Env(\mM)^\circledast \to \mP\B\Env(\mM_{\rS,\T})^\circledast \to \mP\B\Env(\mM_{\rS,\T})_{\rS,\T}^\circledast$$ induces a $\rS^{-1}\mP\Env(\mV), \T^{-1}\mP\Env(\mW)$-linear equivalence $$ (\rS^{-1}\mP\Env(\mV) \otimes_{\mP\Env(\mV)} \mP\B\Env(\mM) \otimes_{\mP\Env(\mW)} \T^{-1}\mP\Env(\mW))^\circledast \simeq \mP\B\Env(\mM_{\rS,\T})^\circledast_{\rS,\T}.$$

\end{enumerate}

\end{proposition}

\begin{proof}

(1): 
Let $\hat{\mM}_{\rS,\T}^\circledast \subset \mP\B\Env(\mM)^\circledast$
be the full weakly bienriched subcategory spanned $\mM_{\rS,\T}$. 
Then $\mM_{\rS,\T}^\circledast \to \mV^\ot \times \mW^\ot $ is the pullback of the bienriched $\infty$-category $\hat{\mM}_{\rS,\T}^\circledast \to \mP\Env(\mV)^\ot \times \mP\Env(\mW)^\ot$.
Let $\bar{\mM}_{\rS,\T}^\circledast \subset \mP\B\Env(\mM_{\rS,\T})^\circledast$
be the full weakly bienriched subcategory spanned $\mM_{\rS,\T}$. Then $\mM_{\rS,\T}^\circledast \to \mV^\ot \times \mW^\ot$ is the pullback of the bienriched $\infty$-category $\bar{\mM}_{\rS,\T}^\circledast \to \mP\Env(\mV)^\ot \times \mP\Env(\mW)^\ot$.
By Proposition \ref{eqq} the induced $\mP\Env(\mV), \mP\Env(\mW)$-linear functor $\mP\B\Env(\mM)^\circledast \to \mP\B\Env(\mM_{\rS,\T})^\circledast$ restricts to a $\mP\Env(\mV), \mP\Env(\mW)$-enriched equivalence $\hat{\mM}_{\rS,\T}^\circledast \to \bar{\mM}_{\rS,\T}^\circledast$
since the pullback of the latter enriched functor to $\mV^\ot, \mW^\ot$ is the identity of $\mM_{\rS,\T}^\circledast.$
Hence for every $\X, \Y \in \mM, \V \in \mP\Env(\mV), \W \in \mP\Env(\mW)$ the following map is an equivalence:
\begin{equation}\label{ravv}
\mP\B\Env(\mM)(\V \ot \L(\X)\ot \W, \L(\Y)) \to \mP\B\Env(\mM_{\rS,\T})(\V \ot \L(\X) \ot \W, \L(\Y)).\end{equation}

Since $\L_{\mid \mM}: \mM \to \mM_{\rS,\T}$ is essentially surjective, we need to see that for every $\X, \Y \in \mM, \f \in \rS, \g \in \T$ the map $\mP\B\Env(\mM_{\rS,\T})(\f \ot \L(\X) \ot \g, \L(\Y))$ is an equivalence.
By equivalence (\ref{ravv}) the latter map identifies with the map
$\mP\B\Env(\mM)(\f \ot \L(\X) \ot \g, \L(\Y))$, which is an equivalence by Lemma \ref{Leom}
since $\L(\Y) \in \mM_{\rS,\T}.$

(2): 
For every $\X,\Y \in \mM$ the induced morphism $\Mor_{\mP\B\Env(\mM)}(\X,\Y) \to \Mor_{\mP\B\Env(\mM_{\rS,\T})}(\L(\X),\L(\Y)) $ in $\mP\Env(\mV) \ot \mP\Env(\mW)$ identifies with the following morphism via equivalence (\ref{ravv}):
$$\Mor_{\mP\B\Env(\mM)}(\X,\Y) \to \Mor_{\mP\B\Env(\mM)}(\X,\L(\Y)) \simeq \Mor_{\mP\B\Env(\mM)}(\L(\X),\L(\Y)) $$$$\simeq \Mor_{\mP\B\Env(\mM_{\rS,\T})}(\L(\X),\L(\Y)).$$
By Theorem \ref{corok} the enriched functor $\Mor_{\mP\B\Env(\mM)}(\X,-): \mP\B\Env(\mM)^\circledast \to (\mP\Env(\mV) \ot \mP\Env(\mW))^\circledast $ 
is linear and preserves small colimits and therefore sends (generating) local equivalences for the localization $ \mP\B\Env(\mM) \to \mP\B\Env(\mM)_{\rS,\T}$ 
to local equivalences for the localization $\mP\Env(\mV) \ot \mP\Env(\mW) \to \rS^{-1}\mP\Env(\mV) \ot \T^{-1}\mP\Env(\mW).$

(3): The functor $\rho_\mN$ is the pullback of the functor $\rho_{\mP\B\Env(\mN)_{\rS, \T}}.$ So it is enough to see that the latter functor is an equivalence.
Consider the commutative triangle:
\begin{equation*}
\begin{xy}
\xymatrix{
\LinFun^\L_{\rS^{-1}\mP\Env(\mV), \T^{-1}\mP\Env(\mW)}(\mP\B\Env(\mM)_{\rS, \T},\mP\B\Env(\mN)_{\rS, \T}) \ar[dd] \ar[rd]^\alpha
\\ 
& \Enr\Fun_{\mV, \mW}(\mM,\mP\B\Env(\mN)_{\rS, \T}) 
\\ \Enr\Fun_{\mV, \mW}(\mM_{\rS,\T},\mP\B\Env(\mN)_{\rS, \T}) \ar[ru]^{\rho_{\mP\B\Env(\mN)_{\rS, \T}}}
}
\end{xy} 
\end{equation*} 

By Proposition \ref{pseuso} the functor $\alpha$ is an equivalence.
Consequently, it is enough to see that the vertical functor is an equivalence.
By Proposition \ref{pseuso} it is enough to prove that the induced left adjoint $\rS^{-1}\mP\Env(\mV), \T^{-1}\mP\Env(\mW)$-linear functor $ \theta: \mP\B\Env(\mM_{\rS,\T})_{\rS, \T}^\circledast  \to \mP\B\Env(\mM)_{\rS, \T}^\circledast$
is an equivalence.
The $\infty$-category $ \mP\B\Env(\mM)$ is generated by $\mM$ under small colimits and the $\mP\Env(\mV), \mP\Env(\mW)$-biaction. Hence the localization 
$ \mP\B\Env(\mM)_{\rS, \T}$ is generated by $\mM_{\rS,\T}$ under small colimits and the $\rS^{-1}\mP\Env(\mV), \T^{-1}\mP\Env(\mW)$-biaction.
Therefore by Corollary \ref{Univ} the functor $\theta$ is an equivalence if 
for every $\X \in \mM$ the $\rS^{-1}\mP\Env(\mV), \T^{-1}\mP\Env(\mW) $-enriched functor
$$\Mor_{\mP\B\Env(\mM)_{\rS,\T}}(\L(\X),-): \mP\B\Env(\mM)_{\rS,\T}^\circledast \to (\rS^{-1}\mP\Env(\mV) \ot \T^{-1}\mP\Env(\mW))^\circledast $$ 
is linear and preserves small colimits. This holds by Theorem \ref{corok}.

(4): Let $\mN^\circledast \to \rS^{-1}\mP\Env(\mV)^\ot \times \T^{-1}\mP\Env(\mW)^\ot$
be a presentably bitensored $\infty$-category. The equivalence of (4) is represented by the following canonical equivalence:
$$ \LinFun^\L_{\rS^{-1}\mP\Env(\mV), \T^{-1}\mP\Env(\mW)}(\mP\B\Env(\mM_{\rS,\T})_{\rS,\T},\mN)
\simeq \Enr\Fun_{\mV, \mW}(\mM_{\rS,\T},\mN) \simeq \Enr\Fun_{\mV, \mW}(\mM,\mN) \simeq $$$$
\LinFun^\L_{\mP\Env(\mV), \mP\Env(\mW)}(\mP\B\Env(\mM),\mN) \simeq $$$$ \LinFun^\L_{\rS^{-1}\mP\Env(\mV), \T^{-1}\mP\Env(\mW)}(\rS^{-1}\mP\Env(\mV) \otimes_{\mP\Env(\mV)} \mP\B\Env(\mM) \otimes_{\mP\Env(\mW)} \T^{-1}\mP\Env(\mW),\mN).$$


\end{proof}

\begin{corollary}\label{uuulp}
Let $(\mV^\ot \to \Ass, \rS), (\mW^\ot \to \Ass, \T)$ be small localization pairs.
The embedding $${^\rS_\mV\B\Enr^\T_{\mW}} \subset {_\mV\omega\B\Enr}_{\mW}$$ admits a left adjoint.
A $\mV,\mW$-enriched functor $\phi: \mM^\circledast \to \mN^\circledast$ is a local equivalence if and only if 
\begin{itemize}
\item the underlying functor $\mM \to \mN$ is essentially surjective,
\item for every objects $\X, \Y \in \mM$ the induced morphism $$\Mor_{\mP\B\Env(\mM)}(\X,\Y) \to \Mor_{\mP\B\Env(\mN)}(\phi(\X),\phi(\Y))$$
in $\mP\Env(\mV) \ot \mP\Env(\mW)$ is a local equivalence for the localization $$\mP\Env(\mV) \ot \mP\Env(\mW) \to \rS^{-1}\mP\Env(\mV) \ot \T^{-1}\mP\Env(\mW).$$
\end{itemize}

\end{corollary}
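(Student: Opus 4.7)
The plan is to derive both claims from Proposition~\ref{pseusor}. For the existence of the left adjoint, I assign to each $\mM^\circledast \in {_\mV\omega\B\Enr_\mW}$ the $\rS,\T$-bi-enriched $\infty$-category $\mM'^\circledast \subset \mP\B\Env(\mM)_{\rS,\T}^\circledast$ produced there. Functoriality in $\mM$ is provided by Remark~\ref{Functol} applied to the identities on the localization pairs, yielding a functor $L : {_\mV\omega\B\Enr_\mW} \to {^\rS_\mV\B\Enr^\T_\mW}$. The canonical $\mV,\mW$-enriched functor $\eta_\mM : \mM^\circledast \to \mM'^\circledast$ serves as the unit: Proposition~\ref{pseusor}(3) asserts that for every $\mN^\circledast \in {^\rS_\mV\B\Enr^\T_\mW}$ the restriction $\Enr\Fun_{\mV,\mW}(\mM',\mN) \to \Enr\Fun_{\mV,\mW}(\mM,\mN)$ is an equivalence, which is precisely the defining property of a reflective localization.

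For the characterization of local equivalences, since the inclusion is reflective, a $\mV,\mW$-enriched functor $\phi : \mM^\circledast \to \mN^\circledast$ is a local equivalence if and only if $L(\phi) : \mM'^\circledast \to \mN'^\circledast$ is an equivalence of $\rS,\T$-bi-enriched $\infty$-categories. Such an equivalence amounts to essential surjectivity on underlying $\infty$-categories together with equivalences on all morphism objects (the latter now living in $\rS^{-1}\mP\Env(\mV) \otimes \T^{-1}\mP\Env(\mW)$). By Proposition~\ref{pseusor}(2) the induced morphism-object maps in $\mM'$ and $\mN'$ are identified with the images of $\Mor_{\mP\B\Env(\mM)}(\X,\Y) \to \Mor_{\mP\B\Env(\mN)}(\phi(\X),\phi(\Y))$ under the Bousfield localization $\mP\Env(\mV) \otimes \mP\Env(\mW) \to \rS^{-1}\mP\Env(\mV) \otimes \T^{-1}\mP\Env(\mW)$. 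This translates the morphism-object condition directly into condition (ii) of the corollary.

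It remains to match the essential-surjectivity condition. Since the constructions $\mM \mapsto \mM'$ and $\mN \mapsto \mN'$ are essentially surjective on underlying $\infty$-categories by design (they pass to essential images under $\L$), essential surjectivity of $\mM \to \mN$ immediately implies essential surjectivity of $\mM' \to \mN'$. The main obstacle I expect is the converse: granted condition (ii) and essential surjectivity of $\mM' \to \mN'$, to deduce literal essential surjectivity of $\mM \to \mN$. My approach would be to argue that the canonical map $\mN \to \mN'$ reflects equivalences on objects. Since the reflective localization $\mP\B\Env(\mN) \to \mP\B\Env(\mN)_{\rS,\T}$ is generated by inverting morphisms of the form $\f \otimes \X \otimes \g$ with $\f \in \bar{\rS}$ or $\g \in \bar{\T}$ — morphisms that act in the $\mV$- or $\mW$-direction and leave the $\mN$-component unchanged — two Yoneda-representable presheaves $h_{\Y_1}, h_{\Y_2}$ of distinct objects of $\mN$ remain distinct after localization, which would supply the required converse implication.
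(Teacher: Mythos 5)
Your derivation of the left adjoint from Proposition~\ref{pseusor}(3), the identification of the unit $\mM^\circledast \to \mM'^\circledast$, the translation of the morphism-object condition through Proposition~\ref{pseusor}(2) together with Lemma~\ref{Leom}, and the whole ``if'' direction are correct and are exactly the intended route (the paper states this corollary without proof, as a direct consequence of Proposition~\ref{pseusor}). The gap is precisely where you flagged it: to get the ``only if'' direction for essential surjectivity you need the unit $\L_{\mid \mN}\colon \mN \to \mN'$ to reflect equivalence of objects, and your justification for this is a heuristic, not an argument. Moreover the heuristic is misleading: the generating local equivalences $\f \ot \X \ot \g$ do \emph{not} leave the $\mN$-direction untouched from the point of view of mapping \emph{into} representables, since $\mP\B\Env(\mN)(\f \ot \X \ot \g, \Y) \simeq \mP\Env(\mV)\ot\mP\Env(\mW)(\f \ot \g, \Mor_{\mP\B\Env(\mN)}(\X,\Y))$. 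By Lemma~\ref{Leom} a representable $\Y \in \mN$ is $\rS,\T$-local exactly when all morphism objects $\Mor_{\mP\B\Env(\mN)}(\X,\Y)$ are local; when $\mN$ is not already $\rS,\T$-bi-enriched its representables need not be local, and the reflector $\L$ can identify distinct ones.

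To see that this step cannot be repaired in general, take $\mV^\ot = \mW^\ot = \emptyset^\ot$, so that ${_\mV\omega\B\Enr}_{\mW} \simeq \Cat_\infty$, $\B\Env(\mN) \simeq \mN$ and $\mP\Env(\mV) \simeq \mS$ with the cartesian Day convolution, and take $\rS = \{\emptyset \to \ast\}$, $\T = \emptyset$. This is a small localization pair: the saturated closure of $\rS$ is the class of all maps of spaces, which is closed under products, and the condition involving objects of $\mV$ is vacuous. The generating local equivalences in $\mP(\mN)$ are the maps $\emptyset \to \X$ for $\X \in \mN$, so the only local presheaf is the terminal one and $\mN' \simeq \ast$ for every nonempty $\mN$. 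Consequently every functor between nonempty $\infty$-categories is a local equivalence, essentially surjective or not, while it trivially satisfies condition (ii). So the ``only if'' half of condition (i) fails at this level of generality; any correct treatment must either add a hypothesis guaranteeing that the representables of $\mN$ are separated by $\rS,\T$-local objects (for instance that $\mN^\circledast$ is already $\rS,\T$-bi-enriched, in which case $\Y \simeq \L(\Y)$ and your argument goes through), or weaken the second assertion to say that the maps satisfying (i) and (ii) are local equivalences and that the $\rS,\T$-bi-enriched $\infty$-categories are precisely the objects local with respect to them.
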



\begin{corollary}\label{prcor}

\begin{enumerate}
\item Let $(\mV^\ot \to \Ass, \rS), (\mW^\ot \to \Ass, \T)$ be small localization pairs. The $\infty$-category $_\mV^\rS\B\Enr^{\T}_{\mW}$ is compactly generated.

\vspace{1mm}
\item For every presentably monoidal $\infty$-categories
$\mV^\ot \to \Ass, \mW^\ot \to \Ass$ the $\infty$-category $_\mV\B\Enr_{\mW}$ is compactly generated.	
\end{enumerate}

\end{corollary}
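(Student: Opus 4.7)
The plan is to deduce (2) from (1), and to prove (1) by combining Proposition \ref{pr} with the characterization of local equivalences in Corollary \ref{uuulp}. For the reduction of (2) to (1), suppose $\mV^\ot, \mW^\ot$ are presentably monoidal. Then they are $\kappa, \tau$-compactly generated for some regular cardinals $\kappa, \tau$, giving $\mV \simeq \Ind_\kappa(\mV^\kappa)$ and $\mW \simeq \Ind_\tau(\mW^\tau)$. Example \ref{Exaso} (1) exhibits $(\mV^\kappa, \Enr^\kappa_{\mV^\kappa})$ and $(\mW^\tau, \Enr^\tau_{\mW^\tau})$ as small localization pairs whose localizations of the monoidal envelopes recover $\mV$ and $\mW$; by Corollary \ref{eqqt}, there is then an equivalence $_\mV\B\Enr_\mW \simeq {^{\Enr^\kappa_{\mV^\kappa}}_{\mV^\kappa}\B\Enr^{\Enr^\tau_{\mW^\tau}}_{\mW^\tau}}$, so (2) follows from (1) applied to these small localization pairs.

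For (1), Proposition \ref{pr} gives that $_\mV\omega\B\Enr_\mW$ is compactly generated, and Corollary \ref{uuulp} exhibits $_\mV^\rS\B\Enr^\T_\mW \subset {_\mV\omega\B\Enr_\mW}$ as an accessible reflective subcategory. It suffices to show that this inclusion preserves $\lambda$-filtered colimits for some regular cardinal $\lambda$: then the left adjoint sends compact generators of the source to a generating set of $\lambda$-compact objects in the subcategory, yielding compact generation. Since $\rS$ and $\T$ are small sets, we may choose $\lambda$ large enough that the source and target of every map in $\rS$ (respectively $\T$) is $\lambda$-compact in $\mP\Env(\mV)$ (respectively $\mP\Env(\mW)$); for such $\lambda$, $\lambda$-filtered colimits of $\rS$-local objects in $\mP\Env(\mV)$ remain $\rS$-local, and similarly for $\T$.

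Filtered colimits in $_\mV\omega\B\Enr_\mW$ are inherited from those in $\Cat^{\max,\min}_{\infty/\mV^\ot \times \mW^\ot}$ (the $\mathfrak{P}$-fibered conditions used in the proof of Proposition \ref{pr} involve only finite limits of mapping spaces, which commute with filtered colimits in $\Cat_\infty$), and on multi-morphism spaces they are computed as filtered colimits in spaces. Together with the characterization of $\rS, \T$-bi-enrichment via locality of the multi-morphism presheaves (Corollary \ref{corg}), this shows that $\lambda$-filtered colimits in $_\mV\omega\B\Enr_\mW$ of $\rS, \T$-bi-enriched $\infty$-categories are again $\rS, \T$-bi-enriched; hence the inclusion is $\lambda$-accessible, and $_\mV^\rS\B\Enr^\T_\mW$ is compactly generated. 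The principal obstacle will be making rigorous the behavior of filtered colimits in $_\mV\omega\B\Enr_\mW$ on multi-morphism spaces, which requires unwinding the construction of this $\infty$-category as a full subcategory of $\Cat^{\max,\min}_{\infty/\mV^\ot \times \mW^\ot}$ and verifying that the axioms of Definition \ref{bla} are stable under filtered colimits.
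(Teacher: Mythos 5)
Your reduction of (2) to (1) is correct and is essentially the paper's: where the paper invokes Corollary \ref{cosqa} (3) together with Example \ref{Exaso}, you invoke Corollary \ref{eqqt}, but both amount to the same identification ${_\mV\B\Enr_{\mW}}\simeq{^{\Enr^\kappa_{\mV^\kappa}}_{\mV^\kappa}\B\Enr^{\Enr^\tau_{\mW^\tau}}_{\mW^\tau}}$. For (1) your skeleton — Proposition \ref{pr} for the ambient $\infty$-category plus the reflective embedding coming from Proposition \ref{pseusor} / Corollary \ref{uuulp} — is also the paper's.

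The gap is in the last step of (1). If the inclusion ${^\rS_\mV\B\Enr^\T_{\mW}}\subset{_\mV\omega\B\Enr}_{\mW}$ is only shown to preserve $\lambda$-filtered colimits for some regular cardinal $\lambda$ possibly larger than $\omega$, then the reflection only carries compact objects to $\lambda$-compact objects, and what you obtain is that ${^\rS_\mV\B\Enr^\T_{\mW}}$ is $\lambda$-compactly generated, i.e. presentable — which is already immediate from its being an accessible localization of a presentable $\infty$-category and is strictly weaker than the corollary. Compact generation means generation under filtered colimits by $\omega$-compact objects, and for your argument this requires the inclusion to preserve $\omega$-filtered colimits; this is precisely what the paper asserts ("the embedding ..., which preserves filtered colimits, admits a left adjoint"), so you must either prove that $\lambda=\omega$ suffices or argue differently. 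Note also that your reason for enlarging $\lambda$ — that sources and targets of maps in $\rS$ need not be compact in $\mP\Env(\mV)$ — identifies the real difficulty but locates it in the wrong place: the defining condition of $\rS,\T$-bi-enrichment is that for $\f\in\rS$, $\X,\Y\in\mM$, $\W\in\mP\Env(\mW)$ the map $\mP\B\Env(\mM)(\f\ot\X\ot\W,\Y)$ is an equivalence; since $\Y$ is representable, this is a limit, indexed by slice categories of $\Env(\mV)$ and $\Env(\mW)$ that do not depend on $\mM$, of multi-morphism spaces of $\mM$, and the point to be verified is that this (generally infinite) limit commutes with the filtered colimit computing the multi-morphism spaces of $\colim_i\mM_i$. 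Your proposal does not resolve this commutation, and resolving or circumventing it is exactly what is needed to reach the stated conclusion.
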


\begin{proof}
(1): By Proposition \ref{pr} the $\infty$-category $_\mV\omega\B\Enr_{\mW}$
is compactly generated. By Proposition \ref{pseusor} the embedding 
$_\mV^\rS\B\Enr^{\T}_{\mW} \subset {_\mV\omega\B\Enr}_{\mW}$, which preserves filtered colimits, admits a left adjoint. So the claim follows.

(2): Let $\kappa, \tau$ be small regular cardinals such that $\mV^\ot \to \Ass$ is 
$\kappa$-compactly generated and $\mW^\ot \to \Ass$ is $\tau$-compactly generated.
By Corollary \ref{cosqa} (3) the canonical functor  
$_\mV\B\Enr_{\mW} \to {^\kappa_{\mV^\kappa}\B\Enr^\tau_{\mW^\tau}}$ is an 
equivalence.
So the claim follows from (1) and Example \ref{Exaso}. 

\end{proof}

\subsection{Transfer of enrichment via scalar extension}

In this subsection we describe and study the process of transfering (weak) enrichment along maps of $\infty$-operads (Theorem \ref{bica}).



\begin{notation}Let $\mM^\circledast \to \mU^\ot \times \mV^\ot$, $\mN^\circledast \to \mV^\ot \times \mW^\ot$, $\mO^\circledast \to \mW^\ot \times \mQ^\ot$ be presentably bitensored $\infty$-categories.
We write $ (\mM \ot_\mV \mN \ot_\mW \mO)^\circledast \to \mU^\ot \times \mQ^\ot $ for the relative tensor product of \cite[Definition 4.4.2.10.]{lurie.higheralgebra}.
\end{notation}

\begin{lemma}\label{remlo}

Let $\mM^\circledast \to \mV^\ot$ be a presentably right tensored $\infty$-category, $\mN^\circledast \to \mV^\ot \times \mW^\ot$ a presentably bitensored $\infty$-category and $\mO^\circledast \to \mW^\ot$ a presentably left tensored $\infty$-category.
The $\infty$-category $\mM \ot_\mV \mN \ot_\mW \mO $ is generated under small colimits by the essential image of the functor $\mM \times \mN \times \mO \to \mM \ot_\mV \mN \ot_\mW \mO.$	

\end{lemma}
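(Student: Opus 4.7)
The plan is to realize the relative tensor product as a geometric realization of the two-sided bar construction in the $\infty$-category $\Pr^\L$ of presentable $\infty$-categories and small-colimit preserving functors (\cite[Theorem 4.4.2.8.]{lurie.higheralgebra}), and then iterate a general lemma about such colimits. By that theorem, $\mM \ot_\mV \mN$ is the geometric realization of the simplicial object whose $n$-th term is the presentable tensor product $\mM \ot \mV^{\ot n} \ot \mN$.

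First I would prove the auxiliary claim that for any simplicial diagram $\mA_\bullet : \Delta^\op \to \Pr^\L$ with colimit $\mA_\infty$ and structure maps $\F_n : \mA_n \to \mA_\infty$, the presentable $\infty$-category $\mA_\infty$ is generated under small colimits by the essential image of $\F_0$. Taking $\mC \subset \mA_\infty$ to be the full subcategory so generated, $\mC$ is presentable and the inclusion $\iota : \mC \hookrightarrow \mA_\infty$ preserves small colimits, hence has a right adjoint $\G$ by the adjoint functor theorem. Since $\iota$ is fully faithful, showing $\mC = \mA_\infty$ reduces to showing the counit $\iota\G \to \id_{\mA_\infty}$ is an equivalence; the universal property of the colimit in $\Pr^\L$ reduces this further to verifying that each component $\iota\G\F_n \to \F_n$ is an equivalence, which holds once we know each $\F_n$ factors through $\iota$. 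The latter is immediate from the simplicial identities: picking any face map $d : [0] \to [n]$, one has $\F_n \simeq \F_0 \circ \mA_\bullet(d)$, so the image of $\F_n$ lies in $\F_0(\mA_0) \subset \mC$.

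Applying this to the bar construction, $\mM \ot_\mV \mN$ is generated under small colimits by the image of $\mM \ot \mN$; combined with the standard fact that the tensor product of presentable $\infty$-categories is generated under small colimits by pure tensors, $\mM \ot_\mV \mN$ is generated under small colimits by the image of $\mM \times \mN$. The triple relative tensor product factors as $(\mM \ot_\mV \mN) \ot_\mW \mO$, where $\mM \ot_\mV \mN$ inherits a presentable right $\mW$-tensoring from $\mN$ (the $\mW$-action commutes with the $\mV$-balancing); iterating the argument yields generation of $\mM \ot_\mV \mN \ot_\mW \mO$ under small colimits by the image of $\mM \times \mN \times \mO$.

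The main obstacle will be justifying the auxiliary claim rigorously, in particular spelling out the universal property of $\Pr^\L$-colimits in terms of natural transformations between colimit-preserving endofunctors; the bar construction formalism and the presentable tensor product facts are then routine.
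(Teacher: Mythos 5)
Your overall route is the same as the paper's: realize the relative tensor product as a geometric realization of a bar construction in $\Pr^\L$, use a face map $[0]\to[\n]$ to see that every stage of the bar construction factors through the zeroth one, and reduce to the fact that $\mM\ot\mN\ot\mO$ is generated under small colimits by pure tensors. (The paper uses the single two-sided iterated bar construction of \cite[Example 4.7.2.7.]{lurie.higheralgebra} for the triple product rather than iterating two binary relative tensor products, and it proves the pure-tensor generation explicitly by presenting $\mM,\mN,\mO$ as localizations of presheaf $\infty$-categories; these differences are immaterial.)

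There is, however, a gap in your auxiliary claim. You reduce the assertion that the counit $\iota\G\to\id_{\mA_\infty}$ is an equivalence to checking it after precomposition with each structure map $\F_n$, invoking the universal property of the colimit. That universal property identifies $\Fun^\L(\mA_\infty,\mB)$ with $\lim_n\Fun^\L(\mA_n,\mB)$, so it only permits componentwise testing of natural transformations between \emph{small-colimit-preserving} functors. The identity preserves colimits, but $\iota\G$ need not: $\G$ is just the right adjoint of a colimit-preserving fully faithful functor, and such coreflections generally destroy colimits (for instance $\tau_{\geq 0}:\Sp\to\Sp_{\geq 0}$ does not preserve cofiber sequences). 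So the componentwise check does not apply to the counit as stated, and the alternative of checking it objectwise on the images of the $\F_n$ is circular. The repair is standard and uses only what you already observed: each $\F_n$ lands in $\mC$ (via $\F_n\simeq\F_0\circ\mA_\bullet(d)$), and since $\mC$ is closed under small colimits in $\mA_\infty$ and postcomposition with the fully faithful $\iota$ embeds $\Fun^\L(\mA_n,\mC)$ into $\Fun^\L(\mA_n,\mA_\infty)$, the whole cocone $(\F_n)_n$ lifts to a cocone of colimit-preserving functors $\F'_n:\mA_n\to\mC$. The universal property (with target the presentable $\infty$-category $\mC$) yields a colimit-preserving $h:\mA_\infty\to\mC$ with $h\circ\F_n\simeq\F'_n$, and a second application, now legitimately inside $\Fun^\L(\mA_\infty,\mA_\infty)$, gives $\iota\circ h\simeq\id$; hence $\iota$ is essentially surjective and $\mC=\mA_\infty$. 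With this adjustment the remaining steps (the pure-tensor generation and the iteration over the two relative tensor products) go through.
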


\begin{proof}

By \cite[Example 4.7.2.7.]{lurie.higheralgebra} there is a canonical equivalence $$ \mM \ot_\mV \mN \ot_\mW \mO\simeq \colim_{[\n] \in \Ass}(\mM \ot \mV^{\ot \n} \ot \mN \ot \mW^{\ot \n} \ot \mO).$$
So every object of $ \mM \ot_\mV \mN \ot_\mW \mO $ lies in the essential image of
the canonical small colimits preserving functor $\lambda_\n: \mM \ot \mV^{\ot \n} \ot \mN \ot \mW^{\ot \n} \ot \mO \to  \mM \ot_\mV \mN \ot_\mW \mO$ for some $[\n] \in \Ass.$
The map $[0] \simeq \{0\} \subset [\n]$ induces a map
$\mM \ot \mV^{\ot \n} \ot \mN \ot \mW^{\ot \n} \ot \mO \to \mM \ot \mN \ot \mO$ over $\mM \ot_\mV \mN \ot_\mW \mO$ so that the functor $\lambda_0: \mM \ot \mN \ot \mO \to \mM \ot_\mV \mN \ot_\mW \mO$ is essentially surjective.
There is a universal functor $ \alpha: \mM \times \mN \times \mO \to \mM \ot \mN \ot \mO$ that preserves small colimits component-wise.
We will prove that $\mM \ot \mN \ot \mO$ is generated under small colimits by the essential image of $\alpha.$
This will imply that $\mM \ot_\mV \mN \ot_\mW \mO$ is generated under small colimits by the essential image of $\lambda_0 \circ \alpha$.

Since $ \mM, \mN, \mO$ are presentable, there are small $\infty$-categories $\mA, \mB, \mC$ and localizations $\mP(\mA) \leftrightarrows \mM, \mP(\mB) \leftrightarrows \mN, \mP(\mC) \leftrightarrows \mO .$
By \cite[Proposition 4.8.1.17.]{lurie.higheralgebra} we obtain an induced localization
$\mP(\mA) \ot \mP(\mB) \ot \mP(\mC) \leftrightarrows \mM \ot \mN \ot \mO$.
Since there is a commutative square
\begin{equation*} 
\begin{xy}
\xymatrix{
\mP(\mA) \times \mP(\mB) \times \mP(\mC) \ar[d] \ar[rr]^\alpha
&&\mP(\mA) \ot \mP(\mB) \ot \mP(\mC) \ar[d] 
\\  \mM \times \mN  \times \mO
\ar[rr]^\alpha  &&  \mM \ot \mN \ot \mO,
}
\end{xy} 
\end{equation*}
it is enough to see that $\mP(\mA) \ot \mP(\mB) \ot \mP(\mC)$ is generated under small colimits by the essential image of the functor
$\alpha: \mP(\mA) \times \mP(\mB)\times \mP(\mC)\to \mP(\mA) \ot \mP(\mB)\ot \mP(\mC).$
By \cite[Proposition 4.8.1.17.]{lurie.higheralgebra} there is a canonical equivalence $\mP(\mA) \ot \mP(\mB)\ot \mP(\mC) \simeq \mP(\mA \times \mB \times \mC)$ and the left adjoint functor
$\alpha$ identifies with the canonical left adjoint functor $\mP(\mA) \times \mP(\mB) \times \mP(\mC)\to \mP(\mA \times \mB \times \mC)$ whose restriction to $\mA \times \mB \times \mC$ is the Yoneda-embedding.
So the result follows from the fact that $\mP(\mA \times \mB \times \mC)$ is generated under small colimits by $\mA \times \mB \times \mC$ \cite[Corollary 5.1.5.8.]{lurie.HTT}.

\end{proof}

\begin{corollary}\label{remlop}

Let $\mM^\circledast \to \mV^\ot \times \mW^\ot$ be a presentably bitensored $\infty$-category and $\mV^\ot \to \mV'^\ot, \mW^\ot \to \mW'^\ot$ small colimits preserving monoidal functors of presentably monoidal $\infty$-categories.
Then $\mV' \ot_\mV \mM \ot_\mW \mW' $ is generated under small colimits and the $\mV', \mW'$-biaction by the essential image of the functor
$\mM \to \mV' \ot_\mV \mM \ot_\mW \mW'.$	

\end{corollary}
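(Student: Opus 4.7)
The plan is to deduce this directly from Lemma \ref{remlo}. The monoidal functor $\mV^\ot \to \mV'^\ot$ endows $\mV'$ with the structure of a presentably right tensored $\infty$-category over $\mV$, obtained by restricting the right regular $\mV'$-biaction on $\mV'$ along the functor $\mV \to \mV'$; symmetrically, $\mW'$ becomes a presentably left tensored $\infty$-category over $\mW$. Applying Lemma \ref{remlo} to the triple $(\mV', \mM, \mW')$ yields that $\mV' \ot_\mV \mM \ot_\mW \mW'$ is generated under small colimits by the essential image of the canonical functor
$$\beta \colon \mV' \times \mM \times \mW' \to \mV' \ot_\mV \mM \ot_\mW \mW'.$$

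Next I would identify $\beta$ as the composition
$$\mV' \times \mM \times \mW' \xrightarrow{\id \times \iota \times \id} \mV' \times (\mV' \ot_\mV \mM \ot_\mW \mW') \times \mW' \xrightarrow{\ot} \mV' \ot_\mV \mM \ot_\mW \mW',$$
where $\iota \colon \mM \to \mV' \ot_\mV \mM \ot_\mW \mW'$ is the canonical functor and the second arrow is the $\mV', \mW'$-biaction. Unwinding the construction of the biaction on the relative tensor product, this factorization is a tautological consequence of the universal property characterizing $\mV' \ot_\mV \mM \ot_\mW \mW'$ together with the fact that the right action of $\mV'$ on $\mV'$ and the left action of $\mW'$ on $\mW'$ are the ones induced by the monoidal structure.

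It follows that every object in the essential image of $\beta$ has the form $\V' \ot \iota(\X) \ot \W'$ for some $\V' \in \mV', \X \in \mM, \W' \in \mW'$, and therefore lies in the smallest full subcategory of $\mV' \ot_\mV \mM \ot_\mW \mW'$ containing the essential image of $\iota$ and closed under the $\mV', \mW'$-biaction. Combined with the generation statement provided by Lemma \ref{remlo}, this proves the claim. The only step requiring care is the factorization of $\beta$; once this is unwound from the construction of the biaction on the relative tensor product, the rest is formal.
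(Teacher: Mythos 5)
Your proof is correct and follows essentially the same route as the paper: both apply Lemma \ref{remlo} to the triple $(\mV',\mM,\mW')$ and then observe that the canonical trilinear functor factors through the $\mV',\mW'$-biaction applied to the image of $\mM$, so that generation under small colimits by its image implies generation under small colimits and the biaction by the image of $\mM$. The paper phrases the last step as the functor $\mM \to \mV' \ot_\mV \mM \ot_\mW \mW'$ refining to a $\mV,\mW$-linear functor, which is the same observation you make via the factorization of $\beta$.
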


\begin{proof}
This follows immediately from Lemma \ref{remlo} since the functor
$\mM \to \mV' \ot_\mV \mM \ot_\mW \mW'$ refines to a 
$\mV, \mW$-linear functor.	

\end{proof}

\begin{corollary}\label{corazon}

Let $\mM^\circledast \to \mV^\ot \times \mW^\ot, \mN^\circledast \to \mV^\ot \times \mW^\ot$ be presentably bitensored $\infty$-categories,
$\G: \mN^\circledast \to \mM^\circledast$ a $\mV, \mW$-linear functor whose underlying functor preserves small colimits and is monadic, that admits a 
$\mV, \mW$-linear left adjoint $\F$.
Let $\mO^\circledast \to \mV^\ot$ be a presentably right tensored $\infty$-category and $\mP^\circledast \to \mW^\ot$ a presentably left tensored $\infty$-category.
The functor $\mO \ot_\mV \mN \ot_\mW \mP \to \mO \ot_\mV \mM \ot_\mW \mP$ is monadic.

\end{corollary}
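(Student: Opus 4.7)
The plan is to verify the hypotheses of the Barr--Beck--Lurie theorem for the induced functor $\widetilde{\G}:=\id_\mO\ot_\mV\G\ot_\mW\id_\mP$. Writing $\widetilde{\F}:=\id_\mO\ot_\mV\F\ot_\mW\id_\mP$, I would first argue that the $\mV,\mW$-linear adjunction $\F\dashv\G$ between presentably bitensored $\infty$-categories is sent by the $2$-functorial construction $\mO\ot_\mV(-)\ot_\mW\mP$ to an adjunction $\widetilde{\F}\dashv\widetilde{\G}$ of $\infty$-categories. Since $\G$ preserves small colimits and the relative tensor product of presentable $\infty$-categories preserves small colimits in each variable (via its bar-construction description, \cite[Example 4.7.2.7.]{lurie.higheralgebra}), the functor $\widetilde{\G}$ preserves small colimits and in particular all $\widetilde{\G}$-split geometric realizations.

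The main step is then conservativity of $\widetilde{\G}$. The idea is to show that $\mO\ot_\mV\mN\ot_\mW\mP$ is generated under small colimits by the essential image of $\widetilde{\F}$. Since $\G$ is monadic, the canonical bar resolution $\N\simeq|\F\T^\bullet\G(\N)|$ with $\T=\G\F$ exhibits every $\N\in\mN$ as a geometric realization of objects in the essential image of $\F$. Combining this with Lemma \ref{remlo}, which shows that $\mO\ot_\mV\mN\ot_\mW\mP$ is generated under small colimits by the image of the universal functor $\lambda\colon\mO\times\mN\times\mP\to\mO\ot_\mV\mN\ot_\mW\mP$, one concludes: each $\lambda(\O,\N,\P)$ is the geometric realization of $\lambda(\O,\F\T^n\G(\N),\P)\simeq\widetilde{\F}(\lambda'(\O,\T^n\G(\N),\P))$, where $\lambda'\colon\mO\times\mM\times\mP\to\mO\ot_\mV\mM\ot_\mW\mP$ is the corresponding universal functor on the base. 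Thus every object of $\mO\ot_\mV\mN\ot_\mW\mP$ is a colimit of objects in the essential image of $\widetilde{\F}$.

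With this generation statement in hand, conservativity follows from a standard Yoneda argument: if $f\colon X\to Y$ in $\mO\ot_\mV\mN\ot_\mW\mP$ satisfies that $\widetilde{\G}(f)$ is an equivalence, then for any $W\in\mO\ot_\mV\mM\ot_\mW\mP$ the adjunction gives $\Map(\widetilde{\F}(W),f)\simeq\Map(W,\widetilde{\G}(f))$, which is an equivalence. As the essential image of $\widetilde{\F}$ generates under small colimits, $f$ induces an equivalence on $\Map(Z,-)$ for all $Z$, hence is an equivalence. Applying \cite[Theorem 4.7.3.5.]{lurie.higheralgebra} then yields monadicity of $\widetilde{\G}$.

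The main obstacle I anticipate is verifying the interchange step that passes from the bar resolution in $\mN$ to a generation statement in $\mO\ot_\mV\mN\ot_\mW\mP$: one must know that the functor $\lambda(\O,-,\P)\colon\mN\to\mO\ot_\mV\mN\ot_\mW\mP$ preserves the relevant geometric realizations, which uses that the relative tensor product preserves small colimits in each variable separately and the $\mV,\mW$-linearity of $\F$ and $\G$ — so that $\lambda(\O,\F(-),\P)$ does factor through $\widetilde{\F}$. Once this compatibility is in place, the remaining verifications are formal.
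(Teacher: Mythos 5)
Your proof is correct and follows essentially the same route as the paper's: reduce monadicity of $\mO\ot_\mV\G\ot_\mW\mP$ to conservativity via the Barr--Beck--Lurie theorem, reduce conservativity to generation of $\mO\ot_\mV\mN\ot_\mW\mP$ under small colimits by the image of $\mO\ot_\mV\F\ot_\mW\mP$, and deduce that generation by combining Lemma \ref{remlo} with the fact that monadicity of $\G$ forces $\mN$ to be generated under small colimits by the image of $\F$. The only cosmetic difference is that you invoke the explicit bar resolution where the paper uses the abstract generation statement from \cite[Theorem 4.7.3.5.]{lurie.higheralgebra} together with closure of the preimage subcategories $\mQ_{\X,\Y}\subset\mN$ under all small colimits; the interchange step you flag is exactly the fact, used in both arguments, that $\lambda(\O,-,\P)$ preserves small colimits.
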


\begin{proof}

The $\mV, \mW$-enriched adjunction $\F: \mM^\circledast \rightleftarrows \mN^\circledast:\G $,
where both adjoints preserve small colimits and are $\mV,\mW$-linear, 
gives rise to an adjunction $\mO \ot_\mV \F \ot_\mW \mP: \mO \ot_\mV \mM \ot_\mW \mP \rightleftarrows \mO \ot_\mV \mN \ot_\mW \mP: \mO \ot_\mV \G \ot_\mW \mP$, where both adjoints preserve small colimits.
So by the monadicity theorem \cite[Theorem 4.7.3.5.]{lurie.higheralgebra}
the right adjoint is monadic if it is conservative.
We observe that the right adjoint is conservative if $\mO \ot_\mV \mN \ot_\mW \mP$ is generated under small colimits by the essential image of the left adjoint.
We will prove that $\mO \ot_\mV \mN \ot_\mW \mP$ is generated under small colimits by the essential image of the left adjoint.
Let $\mQ \subset \mO \ot_\mV \mN \ot_\mW \mP$ be a full subcategory containing
the essential image of $\mO \ot_\mV \F \ot_\mW \mP$ and closed under small colimits. For every $\X \in \mO, \Y \in \mP$ let $\mQ_{\X,\Y} \subset \mN$
the full subcategory spanned by all $\Z \in \mN$ whose image under the canonical small colimits preserving functor $\{\X\}\times \mN \times \{\Y\} \to \mO \times \mN \times \mP \to \mO \otimes \mN \otimes \mP \to \mO \ot_\mV \mN \ot_\mW \mP$ belongs to $\mQ.$
Then $\mQ_{\X,\Y} $ is closed in $\mN$ under small colimits and contains the essential image of $\F: \mM \to \mN.$
Since $\G$ is monadic, $\mN$ is generated under small colimits by the essential image of $\F$ \cite[Theorem 4.7.3.5.]{lurie.higheralgebra}.
Hence $\mQ_{\X,\Y}=\mN$ so that $\mQ$ contains the essential image of the functor
$ \mO \times \mN \times \mP \to \mO \otimes \mN \otimes \mP \to \mO \ot_\mV \mN \ot_\mW \mP$. Hence $\mQ= \mO \ot_\mV \mN \ot_\mW \mP$ by Lemma \ref{remlo}.

\end{proof}

\begin{theorem}\label{bica}
The forgetful functor $\gamma: \omega\B\Enr \to \Op_\infty \times \Op_\infty$ 
is a cocartesian fibration.
Let $\psi: \mM^\circledast \to \mN^\circledast$ be an enriched functor lying over maps of $\infty$-operads $\alpha: \mV^\ot \to \mV'^\ot, \alpha': \mW^\ot \to \mW'^\ot$.
The following conditions are equivalent:

\begin{enumerate}
\item The map $\psi$ is $\gamma$-cocartesian.

\item The induced linear functor $\B\Env(\mM)^\circledast \to \B\Env(\mN)^\circledast$
exhibits $\B\Env(\mN)$
as $$\Env(\mV') \ot_{\Env(\mV)} \B\Env(\mM) \ot_{\Env(\mW)} \Env(\mW').$$

\item The functor $\mM \to \mN$ is essentially surjective and the induced linear functor $\mP\B\Env(\mM)^\circledast \to \mP\B\Env(\mN)^\circledast$ exhibits
$\mP\B\Env(\mN)$ as $$\mP\Env(\mV') \ot_{\mP\Env(\mV)} \mP\B\Env(\mM) \ot_{\mP\Env(\mW)} \mP\Env(\mW').$$

\item The functor $\mM \to \mN$ is essentially surjective and for every $\X, \Y \in \mM$ the induced morphism 
$$ (\alpha, \beta)_!(\Gamma_{\mM}(\X,\Y)) \to \Gamma_{\mN}(\psi(\X),\psi(\Y))$$ in $\mP(\B\Env(\mV') \times \B\Env(\mW')) $ is an equivalence.

\end{enumerate}

\end{theorem}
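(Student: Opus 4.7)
The plan is to construct explicit $\gamma$-cocartesian lifts via scalar extension of the closed bi-tensored envelope and then show that such lifts satisfy all four equivalent descriptions. Given a weakly bi-enriched $\infty$-category $\mM^\circledast \to \mV^\ot \times \mW^\ot$ and maps of $\infty$-operads $\alpha: \mV^\ot \to \mV'^\ot$, $\beta: \mW^\ot \to \mW'^\ot$, I form the presentably bitensored $\infty$-category
$$\mQ^\circledast := (\mP\Env(\mV') \ot_{\mP\Env(\mV)} \mP\B\Env(\mM) \ot_{\mP\Env(\mW)} \mP\Env(\mW'))^\circledast$$
over $\mP\Env(\mV')^\ot \times \mP\Env(\mW')^\ot$. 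The candidate cocartesian lift $\mN^\circledast \to \mV'^\ot \times \mW'^\ot$ is the full weakly bi-enriched subcategory of $\mV'^\ot \times_{\mP\Env(\mV')^\ot} \mQ^\circledast \times_{\mP\Env(\mW')^\ot} \mW'^\ot$ spanned by the essential image of $\mM$ under the composite $\mM \subset \mP\B\Env(\mM) \to \mQ$. The canonical enriched functor $\psi: \mM^\circledast \to \mN^\circledast$ over $(\alpha, \beta)$ is the claimed lift.

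To verify that $\psi$ is $\gamma$-cocartesian, let $\mO^\circledast \to \mV''^\ot \times \mW''^\ot$ be a weakly bi-enriched $\infty$-category and $\alpha': \mV'^\ot \to \mV''^\ot$, $\beta': \mW'^\ot \to \mW''^\ot$ maps of $\infty$-operads. Using Corollary \ref{envvcor} together with Proposition \ref{pseusor}, both $\Enr\Fun_{\alpha', \beta'}(\mN, \mO)$ and $\Enr\Fun_{\alpha' \circ \alpha, \beta' \circ \beta}(\mM, \mO)$ can be computed as small colimits preserving linear functors into $\mP\B\Env(\mO)$, transported along the relevant base change of monoidal $\infty$-categories. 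The universal property of the relative tensor product in $\cc\cc\BMod$ identifies restriction along $\psi$ with the canonical equivalence between these two mapping spaces, so $\psi$ is cocartesian. This same argument establishes (1) $\Leftrightarrow$ (3), since both sides are determined by the same universal property and any enriched functor satisfying (3) is equivalent to the cocartesian lift constructed above.

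For (2) $\Leftrightarrow$ (3), I would use Proposition \ref{presta} and Proposition \ref{cool} together with the fact that $\mP\B\Env(\mM) \simeq \mP(\B\Env(\mM))$ and that $\mP$ is symmetric monoidal for the tensor product of presentable $\infty$-categories; this matches the non-presentable relative tensor product in (2) with the presentable one in (3) after applying $\mP$, while essential surjectivity of $\mM \to \mN$ is built into our construction. For (3) $\Leftrightarrow$ (4), recall from Construction \ref{Enros} that the graph $\Gamma_\mM(\X, \Y) \in \mP(\Env(\mV) \times \Env(\mW)) \simeq \mP\Env(\mV) \ot \mP\Env(\mW)$ is the morphism object $\L\Mor_{\mP\B\Env(\mM)}(\X, \Y)$, and likewise for $\mN$. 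By Theorem \ref{corok} these morphism objects are compatible with any scalar-extension linear functor $\mP\B\Env(\mM) \to \mP\B\Env(\mN)$ as soon as the functor agrees on the generating objects $\mM$ with the tensor-product construction in (3); conversely, since $\mM$ generates $\mP\B\Env(\mM)$ under small colimits and the biaction, the pointwise condition on graphs determines the entire scalar-extension equivalence in (3).

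The main obstacle is coordinating three distinct universal properties at once: the closed bi-tensored envelope (Corollary \ref{envvcor}), the relative tensor product of presentably bitensored $\infty$-categories, and $\gamma$-cocartesianness in $\omega\B\Enr$. In particular, I must check that the restriction of $\mQ^\circledast$ to $\mV'^\ot \times \mW'^\ot$, together with the full subcategory spanned by the image of $\mM$, genuinely defines an object of $\omega\B\Enr$; this reduces to the fact that $\mQ^\circledast$ is bitensored and that weak bi-enrichedness passes to full weakly bi-enriched subcategories and to pullbacks along maps of $\infty$-operads.
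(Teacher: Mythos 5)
Your construction of the cocartesian lift is the same as the paper's: form $\mQ^\circledast = (\mP\Env(\mV') \ot_{\mP\Env(\mV)} \mP\B\Env(\mM) \ot_{\mP\Env(\mW)} \mP\Env(\mW'))^\circledast$ and take the full weakly bi-enriched subcategory $\mN^\circledast$ spanned by the image of $\mM$. However, your verification that this lift is $\gamma$-cocartesian has a genuine gap. To identify $\Enr\Fun_{\alpha',\beta'}(\mN,\mO)$ with left adjoint linear functors out of $\mQ^\circledast$ (so that the universal property of the relative tensor product in $\cc\cc\BMod$ can do its work), you need to know that the embedding $\mN^\circledast \subset \mQ^\circledast$ induces an equivalence $\mP\B\Env(\mN)^\circledast \simeq \mQ^\circledast$ — i.e., that the constructed lift already satisfies condition (3). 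This is precisely the hard step, and it does not follow from Corollary \ref{envvcor} or Proposition \ref{pseusor} alone. The paper proves it by showing (a) the images $\tau(\X)$ for $\X \in \mM$ are atomic in $\mQ$: one applies the functor $\mP\Env(\mV') \ot_{\mP\Env(\mV)} (-) \ot_{\mP\Env(\mW)} \mP\Env(\mW')$ to the linear, small-colimits-preserving adjunction $(-)\ot\X\ot(-) \dashv \L\Mor_{\mP\B\Env(\mM)}(\X,-)$ (the right adjoint being linear and colimit-preserving by Theorem \ref{corok}), so the resulting right adjoint identifies with $\L\Mor_{\mQ}(\tau(\X),-)$ and is again linear and colimit-preserving; and (b) the images generate $\mQ$ under small colimits and the biaction (Corollary \ref{remlop}, which rests on the simplicial colimit description of the relative tensor product). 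Without (a) and (b) your appeal to "the universal property of the relative tensor product" is circular.

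Two smaller points. First, your route for (2) $\Leftrightarrow$ (3) via applying $\mP$ to the non-presentable tensor product is not what the paper does and leaves the essential surjectivity of $\mM \to \mN$ unaddressed for an arbitrary $\psi$ satisfying (2): you only note it "is built into our construction," but the equivalence must hold for all $\psi$. The paper avoids this by a two-out-of-three argument: having produced cocartesian lifts satisfying all four conditions, any $\psi$ satisfying (2) (or (4)) factors through the lift by a $\mV',\mW'$-enriched functor still satisfying (2) (or (4)), which is then shown to be an equivalence — for (2) because it is a pullback of the equivalence on envelopes, for (4) by testing multimorphism spaces against the graph. Second, your claim that the pointwise condition on graphs in (4) "determines the entire scalar-extension equivalence in (3)" needs the atomicity-plus-generation criterion of Corollary \ref{Univ}; as stated it is an assertion, not an argument.
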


\begin{proof}
We prove that (1) implies (2): 
Let $\psi: \mM^\circledast \to \mN^\circledast$ be an enriched functor lying over maps of $\infty$-operads $\alpha: \mV^\ot \to \mV'^\ot, \beta: \mW^\ot \to \mW'^\ot$.
For every bitensored $\infty$-category $ \mQ^\circledast \to \mU^\ot \times \mT^\ot$ the square
\begin{equation*}\label{eqapoc}\begin{xy}
\xymatrix{
\BMod(\mB\Env(\mN), \mQ) \ar[d]
\ar[rr]
&&\BMod(\mB\Env(\mM), \mQ) \ar[d]
\\
\Mon(\Env(\mV'), \mU) \times \Mon(\Env(\mW'), \mT) \ar[rr] && \Mon(\Env(\mV), \mU) \times \Mon(\Env(\mW), \mT)}
\end{xy}
\end{equation*}
identifies with the commutative square
\begin{equation}\label{eqapo}
\begin{xy}
\xymatrix{
\omega\B\Enr(\mN, \mQ) \ar[d]
\ar[rr]
&&\omega\B\Enr(\mM, \mQ) \ar[d]
\\
\Op_\infty(\mV', \mU) \times \Op_\infty(\mW', \mT) \ar[rr] && \Op_\infty(\mV, \mU) \times \Op_\infty(\mW, \mT).}
\end{xy}\end{equation}	

Next we show that (3) implies (1).
Let $ \mQ^\circledast \to \mU^\ot \times \mT^\ot$ be a weakly bienriched $\infty$-category.
We like to see that square (\ref{eqapo}) is a pullback square.
If the functor $\mM \to \mN$ is essentially surjective,
square (\ref{eqapo}) is a pullback square if square (\ref{eqapo})
for $ \mQ^\circledast $ replaced by $ \mP\mB\Env(\mQ)^\circledast $
is a pullback square.
But this square (after rotation) identifies with the commutative square
\begin{equation*}\label{eqapot}\begin{xy}
\xymatrix{
\cc\cc\BMod(\mP\mB\Env(\mN), \mP\mB\Env(\mQ)) \ar[d]
\ar[r]
&\L\Mon(\mP\Env(\mV'), \mP\Env(\mU)) \times \L\Mon(\mP\Env(\mW'), \mP\Env(\mT))  \ar[d]
\\
\cc\cc\BMod(\mP\mB\Env(\mM), \mP\mB\Env(\mQ))\ar[r] & \L\Mon(\mP\Env(\mV), \mP\Env(\mU)) \times \L\Mon(\mP\Env(\mW), \mP\Env(\mT)).}
\end{xy}
\end{equation*}

Next we prove that (3) implies (4).
By Example \ref{Line} for every $\X \in \mM$ there is a unique left adjoint $\mP\Env(\mV), \mP\Env(\mW)$-linear functor
$\mP(\Env(\mV) \times \Env(\mW))^\circledast \to \mP\B\Env(\mM)^\circledast$
sending the tensor unit to $\X$, which by Corollary \ref{corok} admits a $\mP\Env(\mV), \mP\Env(\mW)$-linear, small colimits preserving right adjoint sending $\Z$ to $\L\Mor_{\mP\B\Env(\mM)}(\X,\Z).$
Applying the functor $\mP\Env(\mV') \ot_{\mP\Env(\mV)} (-) \ot_{\mP\Env(\mW)} \mP\Env(\mW')$ the induced left adjoint $\mP\Env(\mV'), \mP\Env(\mW')$-linear functor
$$ \mP(\Env(\mV') \times \Env(\mW'))^\circledast \to (\mP\Env(\mV') \ot_{\mP\Env(\mV)} \mP\B\Env(\mM) \ot_{\mP\Env(\mW)} \mP\Env(\mW'))^\circledast$$ is right adjoint to the $\mP\Env(\mV'), \mP\Env(\mW')$-linear, small colimits preserving functor $$\L\Mor_{\mP\B\Env(\mN)}(\psi(\X),-).$$
Consequently, by uniqueness of right adjoints for every $\Y \in \mM$ there is a canonical equivalence
$$ (\alpha, \beta)_!(\L\Mor_{\mP\B\Env(\mM)}(\X,\Y)) \simeq \L\Mor_{\mP\B\Env(\mN)}(\psi(\X),\psi(\Y)).$$
We will complete the proof by showing that $\gamma$ is a cocartesian fibration
whose cocartesian morphisms satisfy (3) and so conditions (1), (2) and (4).
This will imply that condition (3) follows from (2) and that condition (1) follows from (4):
the enriched functors satisfying conditions (2), (4), respectively, satisfy the (2) out of (3)-property.
By the existence of $\gamma$-cocartesian lifts (that satify conditions (1)-(4))
we are reduced to show that every $\mV,\mW$-enriched functor
$\psi: \mM^\circledast \to \mN^\circledast$ satisfying (2), (4), respectively, is an equivalence.
The first fact follows from the fact that $\psi$ is the pullback of the functor $\mB\Env(\mM)^\circledast \to \mB\Env(\mN)^\circledast$.
If a $\mV,\mW$-enriched functor $\psi: \mM^\circledast \to \mN^\circledast$ satisfies (4), the induced functor $\mM \to \mN$ on underlying $\infty$-categories is essentially surjective and for every $\X, \Y \in \mM$ the morphism 
$\L\Mor_{\mP\B\Env(\mM)}(\X,\Y) \to \L\Mor_{\mP\B\Env(\mN)}(\psi(\X),\psi(\Y))$ is an equivalence.
So for every $\V_1,..., \V_\n \in \mV$ for $\n \geq0$ the following map is an equivalence: $$ \mP\Env(\mV)(\V_1 \ot ... \ot \V_\n, \L\Mor_{\mP\B\Env(\mM)}(\X,\Y))
\to \mP\Env(\mV)(\V_1 \ot ... \ot \V_\n, \L\Mor_{\mP\B\Env(\mN)}(\psi(\X),\psi(\Y))).$$
The latter map identifies with the map
$ \Mul_{\mM}(\V_1, ..., \V_\n,\X,\Y) \to \Mul_{\mN}(\V_1 ... \V_\n, \psi(\X),\psi(\Y)).$
So if (4) holds, $\psi$ is an equivalence.

Hence it remains to prove that $\gamma$ is a cocartesian fibration whose cocartesian morphisms satisfy (3).
Let $\mM^\circledast \to \mV^\ot \times \mW^\ot$ be a weakly bienriched $\infty$-category and $\alpha: \mV^\ot \to \mV'^\ot, \beta: \mW^\ot \to \mW'^\ot$ be maps of $\infty$-operads.
Let $(\alpha, \beta)_!(\mM)^\circledast \to \mV'^\ot \times \mW'^\ot$
be the full weakly bienriched subcategory of the presentably bitensored $\infty$-category $(\mP\Env(\mV') \ot_{\mP\Env(\mV)} \mP\B\Env(\mM) \ot_{\mP\Env(\mW)} \mP\Env(\mW'))^\circledast \to (\mP\Env(\mV') \ot \mP\Env(\mW'))^\ot$
spanned by the essential image of the canonical functor
$$\tau: \mM \subset \mP\B\Env(\mM) \to \mP\Env(\mV') \ot_{\mP\Env(\mV)} \mP\B\Env(\mM) \ot_{\mP\Env(\mW)} \mP\Env(\mW')$$
and let $\psi: \mM^\circledast \to (\alpha, \beta)_!(\mM)^\circledast$ be the canonical enriched functor. We will prove that the embedding
$(\alpha, \beta)_!(\mM)^\circledast \subset (\mP\Env(\mV') \ot_{\mP\Env(\mV)} \mP\B\Env(\mM) \ot_{\mP\Env(\mW)} \mP\Env(\mW'))^\circledast$
induces an equivalence: $$\kappa: \mP\B\Env((\alpha, \beta)_!(\mM))^\circledast \simeq (\mP\Env(\mV') \ot_{\mP\Env(\mV)} \mP\B\Env(\mM) \ot_{\mP\Env(\mW)} \mP\Env(\mW'))^\circledast.$$
This will imply that the induced left adjoint linear functor 
$$ \mP\B\Env(\mM)^\circledast \to \mP\B\Env((\alpha, \beta)_!(\mM))^\circledast \simeq (\mP\Env(\mV') \ot_{\mP\Env(\mV)} \mP\B\Env(\mM) \ot_{\mP\Env(\mW)} \mP\Env(\mW'))^\circledast$$
is the canonical one so that $\psi$ satisfies condition (3).
To see that the functor $\kappa$ is fully faithful, it is enough to see that
for every $\X \in \mM$ the $\mP\Env(\mV'), \mP\Env(\mW')$-enriched functor
$$\L\Mor_{\mP\Env(\mV') \ot_{\mP\Env(\mV)} \mP\B\Env(\mM) \ot_{\mP\Env(\mW)} \mP\Env(\mW')}(\tau(\X),-): $$$$ (\mP\Env(\mV') \ot_{\mP\Env(\mV)} \mP\B\Env(\mM) \ot_{\mP\Env(\mW)} \mP\Env(\mW'))^\circledast \to (\mP\Env(\mV') \otimes \mP\Env(\mW'))^\circledast$$
is $\mP\Env(\mV'), \mP\Env(\mW')$-linear and preserves small colimits.
The $\mP\Env(\mV), \mP\Env(\mW)$-linear adjunction $$(-)\ot \X \ot (-): (\mP\Env(\mV) \ot \mP\Env(\mW))^\circledast \rightleftarrows \mP\B\Env(\mM)^\circledast: \L\Mor_{\mP\B\Env(\mM)}(\X,-),$$
where the right adjoint is $\mP\Env(\mV), \mP\Env(\mW)$-linear and preserves small colimits by Corollary \ref{corok}, is sent by the $\Cat_\infty$-linear functor $$\mP\Env(\mV') \ot_{\mP\Env(\mV)}(-) \ot_{\mP\Env(\mW)} \mP\Env(\mW'): {_{\mP\Env(\mV)}\BMod}_{\mP\Env(\mW)} \to {_{\mP\Env(\mV')}\BMod}_{\mP\Env(\mW)'}$$ to a $\mP\Env(\mV'), \mP\Env(\mW')$-linear adjunction 
$$(-)\ot \tau(\X) \ot (-): (\mP\Env(\mV') \otimes \mP\Env(\mW'))^\circledast \rightleftarrows (\mP\Env(\mV') \ot_{\mP\Env(\mV)} \mP\B\Env(\mM) \ot_{\mP\Env(\mW)} \mP\Env(\mW'))^\circledast:$$$$ \mP\Env(\mV') \ot_{\mP\Env(\mV)}\L\Mor_{\mP\B\Env(\mM)}(\X,-) \ot_{\mP\Env(\mW)} \mP\Env(\mW'),$$ where the right adjoint is $\mP\Env(\mV'), \mP\Env(\mW')$-linear and preserves small colimits.
Hence the latter right adjoint identifies with the $\mP\Env(\mV'), \mP\Env(\mW')$-enriched functor
$$\L\Mor_{\mP\Env(\mV') \ot_{\mP\Env(\mV)} \mP\B\Env(\mM) \ot_{\mP\Env(\mW)} \mP\Env(\mW')}(\tau(\X),-),$$ which therefore becomes linear and preserves small colimits. Moreover $\kappa$ is essentially surjective since the $\infty$-category
$\mP\Env(\mV') \ot_{\mP\Env(\mV)} \mP\B\Env(\mM) \ot_{\mP\Env(\mW)} \mP\Env(\mW')$
is generated by the essential image of $\tau$ under small colimits and the $\mP\Env(\mV'), \mP\Env(\mW')$-biaction according to Corollary \ref{remlop}.

\end{proof}

\begin{corollary}
Let $\phi: \mV^\ot \to \mV'^\ot, \psi: \mW^\ot \to \mW'^\ot$
be maps of small $\infty$-operads.
The functor $(\phi, \psi)^*:{_{\mV'} \omega\B\Enr}_{\mW'} \to {_{\mV} \omega\B\Enr}_{\mW}$ admits a left adjoint $(\phi, \psi)_!: {_{\mV} \omega\B\Enr}_{\mW} \to {_{\mV'} \omega\B\Enr}_{\mW'}.$	

\end{corollary}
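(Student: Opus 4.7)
The plan is to combine the two fibrational structures on $\gamma: \omega\B\Enr \to \Op_\infty \times \Op_\infty$ already at our disposal. By Lemma \ref{bicaa}, $\gamma$ is a cartesian fibration whose cartesian morphisms are precisely the pullback squares; cartesian transport along $(\phi,\psi): (\mV^\ot,\mW^\ot) \to (\mV'^\ot,\mW'^\ot)$ delivers the pullback functor $(\phi,\psi)^*$ in question, which sends $\mM^\circledast \to \mV'^\ot \times \mW'^\ot$ to the fibre product
$$\mV^\ot \times_{\mV'^\ot} \mM^\circledast \times_{\mW'^\ot} \mW^\ot \to \mV^\ot \times \mW^\ot.$$

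By Theorem \ref{bica}, $\gamma$ is also a cocartesian fibration. Hence for every $\mN^\circledast \to \mV^\ot \times \mW^\ot$ there exists a $\gamma$-cocartesian lift $\eta_\mN: \mN^\circledast \to (\phi,\psi)_!(\mN)^\circledast$ of the morphism $(\phi,\psi)$ in the base. The proof of Theorem \ref{bica} realises such a lift concretely as the canonical map from $\mN$ into the full weakly bi-enriched subcategory of
$$\bigl(\mP\Env(\mV') \ot_{\mP\Env(\mV)} \mP\B\Env(\mN) \ot_{\mP\Env(\mW)} \mP\Env(\mW')\bigr)^\circledast$$
spanned by the essential image of $\mN$. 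Straightening the associated cocartesian lifts in $(\phi,\psi)$ assembles the assignment $\mN \mapsto (\phi,\psi)_!(\mN)$ into a functor $(\phi,\psi)_!: {_{\mV}\omega\B\Enr}_{\mW} \to {_{\mV'}\omega\B\Enr}_{\mW'}$.

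To produce the adjunction itself I will invoke the standard principle that a functor $p: \mE \to \mB$ which is simultaneously a cartesian and a cocartesian fibration yields, for every morphism $f$ in $\mB$, an adjunction $f_! \dashv f^*$ between the corresponding fibres: the unit at an object $X \in \mE_a$ is the unique factorisation of the chosen $p$-cocartesian morphism $X \to f_!(X)$ through the $p$-cartesian morphism $f^*(f_!(X)) \to f_!(X)$, and the triangle identities follow from the uniqueness of cartesian and cocartesian lifts with prescribed base image. Applying this principle to $p = \gamma$ and $f = (\phi,\psi)$ yields the desired adjunction $(\phi,\psi)_! \dashv (\phi,\psi)^*$.

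The substantive content has already been absorbed into Theorem \ref{bica} — namely the explicit construction of the cocartesian lifts via scalar extension — so the remaining step is purely formal and poses no real obstacle; the only thing to be careful about is to choose the cocartesian lift over $(\phi,\psi)$ and the cartesian lift over $(\phi,\psi)$ compatibly, so that the factorisation producing the unit lives in the fibre ${_{\mV}\omega\B\Enr}_{\mW}$.
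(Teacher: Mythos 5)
Your proposal is correct and is exactly the argument the paper intends: the corollary is stated without proof precisely because it follows immediately from combining Lemma \ref{bicaa} (cartesian fibration, giving $(\phi,\psi)^*$) with Theorem \ref{bica} (cocartesian fibration, giving $(\phi,\psi)_!$) via the standard fibrewise adjunction for bicartesian fibrations. The mapping-space identification $\mE_b(f_!X,Y)\simeq \mE_a(X,f^*Y)$, or equivalently your unit/counit construction, closes the formal step with no gap.
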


\begin{proposition}\label{transpo}Let $\phi: \mV^\ot \to \mV'^\ot, \psi: \mW^\ot \to \mW'^\ot$ be maps of small $\infty$-operads.

\begin{enumerate}\item Let $\kappa,\tau$ be small regular cardinals.	

\begin{enumerate}

\item Let $\phi: \mV^\ot \to \mV'^\ot$ be a lax monoidal functor between monoidal $\infty$-categories compatible with $\kappa$-small colimits that preserves $\kappa$-small colimits.
The induced functor $(\phi, \psi)_!:{ _\mV \omega\B\Enr}_\mW \to {_{\mV'} \omega\B\Enr}_{\mW'}$
of Proposition \ref{bica} restricts to a functor
$$_\mV \L\P\Enr^\kappa_\mW \to {_{\mV'} \L\P\Enr}^\kappa_{\mW'}.$$

\item Let $\psi: \mW^\ot \to \mW'^\ot$ be a lax monoidal functor between monoidal $\infty$-categories compatible with $\tau$-small colimits that preserves $\tau$-small colimits.
The induced functor $(\phi, \psi)_!:{ _\mV \omega\B\Enr}_\mW \to {_{\mV'} \omega\B\Enr}_{\mW'}$
of Proposition \ref{bica} restricts to a functor
$$_\mV \R\Enr^\tau_\mW \to {_{\mV'} \R\P\Enr}^\tau_{\mW'}.$$

\item Let $\phi: \mV^\ot \to \mV'^\ot$ be a lax monoidal functor between monoidal $\infty$-categories compatible with $\kappa$-small colimits that preserves $\kappa$-small colimits and $\psi: \mW^\ot \to \mW'^\ot$ a lax monoidal functor between monoidal $\infty$-categories compatible with $\tau$-small colimits that preserves $\tau$-small colimits.
The induced functor $(\phi, \psi)_!:{ _\mV \omega\B\Enr}_\mW \to {_{\mV'} \omega\B\Enr}_{\mW'}$
of Proposition \ref{bica} restricts to a functor
$$_\mV ^\kappa\B\P\Enr^{\tau}_\mW \to {_{\mV'}^\kappa \B\P\Enr}^{\tau}_{\mW'}.$$
\end{enumerate}

\item The induced functor $(\phi, \psi)_!:{ _\mV \omega\B\Enr}_\mW \to {_{\mV'} \omega\B\Enr}_{\mW'}$ of Proposition \ref{bica} restricts to functors
$$_\mV \L\Enr_\mW \to {_{\mV'} \L\Enr}_{\mW'},$$
$$_\mV \R\Enr_\mW \to {_{\mV'} \R\Enr}_{\mW'},$$
$$_\mV \B\Enr_\mW \to {_{\mV'} \B\Enr}_{\mW'},$$
respectively, if $\psi$, $\phi$, both $\phi$ and $\psi$ admit a left adjoint relative to $\Ass$, respectively.

\item We have the following refinement of (2):
\begin{enumerate}
\item If $\psi: \mW^\ot \to \mW'^\ot$ is a lax monoidal functor between monoidal
$\infty$-categories whose underlying functor admits a left adjoint, the induced functor $(\phi, \psi)_!:{ _\mV \omega\B\Enr}_\mW \to {_{\mV'} \omega\B\Enr}_{\mW'}$ of Proposition \ref{bica} restricts to a functor
$$_\mV \L\Enr_\mW \cap {_\mV \R\P\Enr_\mW} \to {_{\mV'} \L\Enr}_{\mW'} \cap {_{\mV'} \R\P\Enr_{\mW'}}.$$

\item If $\phi: \mV^\ot \to \mV'^\ot$ is a lax monoidal functor between monoidal
$\infty$-categories whose underlying functor admits a left adjoint, the induced functor $(\phi, \psi)_!:{ _\mV \omega\B\Enr}_\mW \to {_{\mV'} \omega\B\Enr}_{\mW'}$ of Proposition \ref{bica} restricts to a functor
$$_\mV \L\Enr_\mW \cap {_\mV \R\P\Enr_\mW} \to {_{\mV'} \L\Enr}_{\mW'} \cap {_{\mV'} \R\P\Enr_{\mW'}}.$$

\item If $\phi, \psi$ are lax monoidal functors between monoidal
$\infty$-categories whose underlying functors admit a left adjoint, the induced functor $(\phi, \psi)_!:{ _\mV \omega\B\Enr}_\mW \to {_{\mV'} \omega\B\Enr}_{\mW'}$ of Proposition \ref{bica} restricts to a functor
$$_\mV \B\Enr_\mW \to {_{\mV'} \B\Enr}_{\mW'}.$$
\end{enumerate}
\end{enumerate}

\end{proposition}

\begin{proof}
(1) follows from Proposition \ref{bica} (4) in view of the fact that 
the induced left adjoint functor $\phi_!: \mP(\mV) \to \mP(\mV')$
restricts to a functor $\Ind_\kappa(\mV) \to \Ind_\kappa(\mV')$ and similar for $\psi$. 
The latter holds since $\Ind_\kappa(\mV)$ is generated by $\mV$ as full subcategory of $\mP(\mV) $ under small $\kappa$-filtered colimits \cite[Corollary 5.3.5.4.]{lurie.HTT}.

We prove (2) for the first functor of (2), the second case of (2) is dual and the third case of (2) follows from the first and second one.
If $\psi$ admits a left adjoint $\alpha$ relative to $\Ass$,
also $\Env(\alpha)$ is left adjoint to $\Env(\psi)$. 
So there is an adjunction $$ \Fun(\Env(\alpha)^\op, \mP\Env(\mV')): \Fun(\Env(\mW)^\op, \mP\Env(\mV')) \rightleftarrows $$$$ \Fun(\Env(\mW')^\op, \mP\Env(\mV')): \Fun(\Env(\psi)^\op, \mP\Env(\mV')).$$ 
Thus the functor $(\Env(\phi) \times \Env(\psi))_!: \mP(\Env(\mV)\times \Env(\mW)) \to \mP(\Env(\mV')\times \Env(\mW'))$
is equivalent to the functor
$$\Fun(\Env(\mW)^\op, \mP\Env(\mV)) \xrightarrow{\Fun(\Env(\mW)^\op,\phi_!)} \Fun(\Env(\mW)^\op, \mP\Env(\mV')) $$$$\xrightarrow{\Fun(\Env(\alpha)^\op, \mP\Env(\mV'))} \Fun(\Env(\mW')^\op, \mP\Env(\mV'))$$
since the right adjoints are equivalent.
The latter functor restricts to a functor
$$\Fun(\Env(\mW)^\op, \mV) \xrightarrow{\Fun(\Env(\mW)^\op,\phi)} \Fun(\Env(\mW)^\op, \mV') \xrightarrow{\Fun(\Env(\alpha)^\op, \mV')} \Fun(\Env(\mW')^\op, \mV').$$

The proof of (3) is similar to (2) but easier. We prove 3 (a). The proof of (3) (b) is dual and (3) (c) follows from 3 (a) and 3 (b). If $\psi$ admits a left adjoint $\alpha$, there is an adjunction $$ \Fun(\alpha^\op, \mP\Env(\mV')): \Fun(\mW^\op, \mP\Env(\mV')) \rightleftarrows \Fun(\mW'^\op, \mP\Env(\mV')): \Fun(\psi^\op, \mP\Env(\mV')).$$ 
Thus the functor $(\Env(\phi) \times \psi)_!: \mP(\Env(\mV)\times \mW) \to \mP(\Env(\mV')\times \mW')$
is equivalent to the functor
$$\Fun(\mW^\op, \mP\Env(\mV)) \xrightarrow{\Fun(\mW^\op,\phi_!)} \Fun(\mW^\op, \mP\Env(\mV')) \xrightarrow{\Fun(\alpha^\op, \mP\Env(\mV'))} \Fun(\mW'^\op, \mP\Env(\mV'))$$
since the right adjoints are equivalent.
The latter functor restricts to a functor
$$\Fun(\mW^\op, \mV) \xrightarrow{\Fun(\mW^\op,\phi)} \Fun(\mW^\op, \mV') \xrightarrow{\Fun(\alpha^\op, \mV')} \Fun(\mW'^\op, \mV').$$

\end{proof}

\begin{proposition}\label{colttp}
Let $\alpha: \mV^\ot \to \mV'^\ot, \beta: \mW^\ot \to \mW'^\ot$ be maps of small $\infty$-operads that admit right adjoints $\phi,\psi$ relative to $\Ass$, respectively.
There is an adjunction $$(\alpha, \beta)^*: {_{\mV'} \omega\B\Enr}_{\mW'} \rightleftarrows {_{\mV} \omega\B\Enr}_{\mW}: (\phi, \psi)^*.$$
In particular, there is a canonical equivalence $(\phi, \psi)_! \simeq (\alpha, \beta)^*$
and so an adjunction $$(\alpha, \beta)_!:{_{\mV} \omega\B\Enr}_{\mW} \rightleftarrows {_{\mV'} \omega\B\Enr}_{\mW'}: (\phi, \psi)_!.$$

\end{proposition}

\begin{proof}
Let $\rho: \mA \to \Alg_\mV(\mV'), \rho': \mB \to \Alg_\mW(\mW'), \theta: \mC \to {_{\mV'} \omega\B\Enr}_{\mW'}$
be functors. The functors $\rho, \rho'$ correspond to maps $\mA \times \mV^\ot \to \mV'^\ot, \mW^\ot \times \mB \to \mW'^\ot $ of cocartesian fibrations relative to the collection of inert morphisms of $\Ass$, and $\theta$ is classified by a map $ \mM^\circledast \to \mV'^\ot \times \mC \times \mW'^\ot$ of cocartesian fibrations relative to the collection of inert morphisms of $\Ass \times \mC \times \Ass$ that preserve the minimum in the first component
and preserve the maximum in the last component.
Thus the pullback of $\mA \times \mV^\ot \times_{\mV'^\ot} \mM^\circledast \times_{\mW'^\ot} \mW^\ot \times \mB \to \mA \times \mV^\ot \times \mC \times \mW^\ot\times \mB $ classifies a functor $\mA \times \mC \times \mB \to {_{\mV} \omega\B\Enr}_{\mW}.$
For $\rho, \rho', \theta$ the identities we obtain a functor $\Alg_\mV(\mV') \times {_{\mV'} \omega\B\Enr}_{\mW'} \times \Alg_\mW(\mW')\to {_{\mV} \omega\B\Enr}_{\mW}$
corresponding to a functor $\T: \Alg_\mV(\mV') \times \Alg_\mW(\mW')\to \Fun({_{\mV'} \omega\B\Enr}_{\mW'}, {_{\mV} \omega\B\Enr}_{\mW})$ that sends $(\kappa,\tau)$ to $(\kappa,\tau)^*.$
Moreover by construction for every maps of small $\infty$-operads $\mU^\ot \to \mV^\ot, \mV'^\ot \to \mU'^\ot, \mQ^\ot \to \mW^\ot, \mW'^\ot \to \mQ'^\ot$ there is a commutative square:
\begin{equation}\label{eol}
\begin{xy}
\xymatrix{
\Alg_\mV(\mV') \times \Alg_\mW(\mW') \ar[d]
\ar[rr]^\T
&& \Fun({_{\mV'} \omega\B\Enr}_{\mW'}, {_{\mV} \omega\B\Enr}_{\mW}) \ar[d]
\\
\Alg_\mU(\mU') \times \Alg_\mQ(\mQ') \ar[rr]^\T &&  \Fun({_{\mU'} \omega\B\Enr}_{\mQ'}, {_{\mU} \omega\B\Enr}_{\mQ}).}
\end{xy}\end{equation}	

By Remark \ref{enrra} there are monoidal natural transformations $\eta: \id \to \phi \circ \alpha,
\epsilon: \alpha \circ \phi \to \id$ such that $\epsilon \alpha \circ \alpha \eta: \alpha \to \alpha $ and $\phi \epsilon \circ \eta \phi: \phi \to \phi $ are the identities
and 
monoidal natural transformations $\eta': \id \to \psi \circ \beta,
\epsilon': \beta \circ \psi \to \id$ such that $\epsilon' \beta \circ \beta \eta': \beta \to \beta $ and $\psi \epsilon' \circ \eta' \psi: \psi \to \psi $ are the identities.
Consequently, by the commutative square (\ref{eol}) we find that $\T(\epsilon, \epsilon') \T(\alpha,\beta) \circ \T(\alpha,\beta) \T(\eta, \eta'): \T(\alpha,\beta) \to \T(\alpha,\beta) $ and $\T(\phi,\psi) \T(\epsilon,\epsilon') \circ \T(\eta,\eta') \T(\phi,\psi): \T(\phi,\psi) \to \T(\phi,\psi) $ are the identities.
So 
we obtain an adjunction $ \T(\alpha,\beta)= (\alpha, \beta)^*: {_{\mV'} \omega\B\Enr}_{\mW'} \rightleftarrows {_{\mV} \omega\B\Enr}_{\mW}: (\phi, \psi)^*=\T(\phi,\psi).$

	
\end{proof}

Proposition \ref{transpo} and \ref{colttp} give the following corollary:

\begin{corollary}Let $\alpha: \mV^\ot \to \mV'^\ot, \beta: \mW^\ot \to \mW'^\ot$ be maps of small $\infty$-operads that admit right adjoints $\phi, \psi$ relative to $\Ass$, respectively.
	
\begin{enumerate}
\item The functor $(\alpha, \beta)^*:{_{\mV'} \L\Enr}_{\mW'} \to {_{\mV} \L\Enr}_{\mW}$ 
is canonically equivalent to $(\phi, \psi)_!.$

\item The functor $(\alpha, \beta)^*:{_{\mV'} \R\Enr}_{\mW'} \to {_{\mV} \R\Enr}_{\mW}$ 
is canonically equivalent to $(\phi, \psi)_!.$

\item The functor $(\alpha, \beta)^*:{_{\mV'} \B\Enr}_{\mW'} \to {_{\mV} \B\Enr}_{\mW}$ is canonically equivalent to $(\phi, \psi)_!.$
\end{enumerate}		

\end{corollary}


For the next proposition we use Notation \ref{psss}:


\begin{proposition}\label{bicay}
Let $ \mM^\circledast \to \mV^\ot \times \mW^\ot$ be a small $\infty$-category bienriched
in presentably monoidal $\infty$-categories and $\alpha: \mV^\ot \to \mV'^\ot, \beta: \mW^\ot \to \mW'^\ot$ left adjoint monoidal functors between presentably monoidal $\infty$-categories.
The enriched functor $\rho: \mM^\circledast \to (\alpha,\beta)_!(\mM)^\circledast \to (\alpha,\beta)_!(\mM)^\circledast_{\B\Enr}$ satisfies the following properties:

\begin{enumerate}
\item The enriched functor $\rho$ is essentially surjective.
\item The enriched functor $\rho$ induces for every $\X,\Y \in \mM$ a local equivalence
$$ (\alpha,\beta)_!(\Mor_\mM(\X,\Y)) \to \Mor_{(\alpha,\beta)_!(\mM)_{\B\Enr}}(\rho(\X),\rho(\Y))$$
for the localization $\widehat{\mP}(\Env(\mV)\times\Env(\mW)) \to \widehat{\mP}_\rep(\mV\times\mW)$ to presheaves representable in both variables.

\item The enriched functor $\rho$ induces for every small $\infty$-category
$\mN^\circledast \to \mV''^\ot \times \mW''^\ot$ bienriched
in presentably monoidal $\infty$-categories and any left adjoint monoidal functors $\alpha': \mV'^\ot \to \mV''^\ot, \beta': \mW'^\ot \to \mW''^\ot$ an equivalence
$$ \Enr\Fun_{\mV',\mW'}((\alpha,\beta)_!(\mM)_{\B\Enr},(\alpha',\beta')^*(\mN)) \to \Enr\Fun_{\mV,\mW}(\mM,(\alpha' \circ \alpha,\beta'\circ \beta)^*(\mN)).$$ 
\item The enriched functor $\rho$ exhibits
$\mP_{\mV',\mW'}((\alpha,\beta)_!(\mM)_{\B\Enr})$ as $\mV' \ot_{\mV} \mP_{\mV,\mW}(\mM) \ot_{\mW} \mW'.$

\item Let $ \mM'^\circledast \to \mV'^\ot \times \mW'^\ot$ be a small bienriched $\infty$-category and $\kappa: \mM^\circledast \to \mM'^\circledast$ an enriched functor lying over $\alpha,\beta$.
Then $\kappa$ is transfer of enrichment, i.e. induces an equivalence
$(\alpha,\beta)_!(\mM)_{\B\Enr}^\circledast \simeq \mM'^\circledast$ if and only if $\kappa$ is essentially surjective and one of the following conditions holds:
\begin{enumerate}
\item The enriched functor $\rho$ induces for every $\X,\Y \in \mM$ a local equivalence
$$ (\alpha,\beta)_!(\Mor_\mM(\X,\Y)) \to \Mor_{\mM'}(\kappa(\X),\kappa(\Y)).$$

\item The enriched functor $\kappa$ exhibits
$\mP_{\mV',\mW'}(\mM')$ as $\mV' \ot_{\mV} \mP_{\mV,\mW}(\mM) \ot_{\mW} \mW'.$
\end{enumerate}

\end{enumerate}
	
	
	

\end{proposition}

\begin{proof}(1), (2), (3) follows immediately from Theorem \ref{bica} and Proposition \ref{pseusor}. Here we use that if $\sigma$ is the strongly inaccessible cardinal corresponding to the small universe, then $\widehat{\Ind}_\sigma(\mV\ot \mW) \simeq \widehat{\mP}_\rep(\mV\times\mW).$ 
(5): Every transfer of enrichment satisfies the conditions of (5) by (1)-(4).
So we prove the converse: by (3) the enriched functor $\kappa$ factors as enriched functors $ \mM^\circledast \to  (\alpha,\beta)_!(\mM)_{\B\Enr}^\circledast \xrightarrow{\zeta} \mM'^\circledast,$ where $\zeta$ is essentially surjective since $\kappa$ is. 
The $\mV',\mW'$-enriched functor $\zeta$ is an embedding and so an equivalence
by the assumptions on $\kappa$ since the enriched Yoneda-embedding is an embedding.

It remains to prove (4).
We prove first that the enriched small colimits preserving embedding $$\mP_{\mV,\mW}(\mM)^\circledast \subset \widehat{\mP}_{\mV,\mW}(\mM)^\circledast $$ induces an $\widehat{\Ind}_\sigma(\mV), \widehat{\Ind}_\sigma(\mW) $-linear equivalence
$\theta: \widehat{\Ind}_\sigma(\mP_{\mV,\mW}(\mM))^\circledast \to \widehat{\mP}_{\mV,\mW}(\mM)^\circledast.$
The right adjoint of $\theta$ is conservative since $\widehat{\mP}_{\mV,\mW}(\mM)$ is generated under large colimits by bitensors
$\V \ot \X \ot \W$ for $\X \in \mM, \V \in \mV, \W \in \mW$, which all belong to
$\mP_{\mV,\mW}(\mM).$
Therefore $\theta$ is an equivalence if it is an embedding. This holds by \cite[Proposition 5.3.5.11. (1)]{lurie.HTT} since for every object $\X\in \mM,\V \in \mV, \W \in \mW$ the bitensor $\V \ot \X \ot \W$ is $\sigma$-compact in $\widehat{\mP}_{\mV,\mW}(\mM)$ by Theorem \ref{corok} and the full subcategory of $\widehat{\mP}_{\mV,\mW}(\mM)$ of $\sigma$-compact objects is closed under small colimits so that the full subcategory $ \mP_{\mV,\mW}(\mM)$ of $ \widehat{\mP}_{\mV,\mW}(\mM) $ consists of $\sigma$-compact objects.

Consequently, it is enough to see that the functor $$\widehat{\mP}_{\mV,\mW}(\mM) \simeq \widehat{\Ind}_\sigma(\mP_{\mV,\mW}(\mM)) \to \widehat{\mP}_{\mV',\mW'}(\mN) \simeq  \widehat{\Ind}_\sigma(\mP_{\mV',\mW'}((\alpha,\beta)_!(\mM)_{\B\Enr}))$$
exhibits
$\widehat{\mP}_{\mV',\mW'}((\alpha,\beta)_!(\mM)_{\B\Enr})$ as $$\widehat{\Ind}_\sigma(\mV') \ot_{\widehat{\Ind}_\sigma(\mV)} \widehat{\mP}_{\mV,\mW}(\mM) \ot_{\widehat{\Ind}_\sigma(\mW)} \widehat{\Ind}_\sigma(\mW') \simeq \widehat{\Ind}_\sigma(\mV' \ot_{\mV} \mP_{\mV,\mW}(\mM) \ot_{\mW} \mW').$$
By Proposition \ref{pseuso} (3) there is a canonical equivalence
$$\widehat{\mP}_{\mV,\mW}(\mM) \simeq \widehat{\Ind}_\sigma(\mV) \ot_{\widehat{\mP}\Env(\mV)} \widehat{\mP}\B\Env(\mM) \ot_{\widehat{\mP}\Env(\mW)} \widehat{\Ind}_\sigma(\mW).$$
By Proposition \ref{pseusor} (4) there is a canonical equivalence 
$$\widehat{\mP}_{\mV',\mW'}((\alpha,\beta)_!(\mM)_{\B\Enr})\simeq \widehat{\Ind}_\sigma(\mV') \ot_{\widehat{\mP}\Env(\mV')} \widehat{\mP}\B\Env((\alpha,\beta)_!(\mM)) \ot_{\widehat{\mP}\Env(\mW')} \widehat{\Ind}_\sigma(\mW').$$
By Theorem \ref{bica} there is a canonical equivalence  
$$\widehat{\mP}\B\Env((\alpha,\beta)_!(\mM)) \simeq \widehat{\mP}\Env(\mV') \ot_{\widehat{\mP}\Env(\mV)} \widehat{\mP}\B\Env(\mM) \ot_{\widehat{\mP}\Env(\mW)} \widehat{\mP}\Env(\mW').$$

We obtain a canonical equivalence
$$ \widehat{\Ind}_\sigma(\mV') \ot_{\widehat{\Ind}_\sigma(\mV)} \widehat{\mP}_{\mV,\mW}(\mM) \ot_{\widehat{\Ind}_\sigma(\mW)} \widehat{\Ind}_\sigma(\mW') \simeq \widehat{\Ind}_\sigma(\mV') \ot_{\widehat{\mP}\Env(\mV)} \widehat{\mP}\B\Env(\mM) \ot_{\widehat{\mP}\Env(\mW)} \widehat{\Ind}_\sigma(\mW')$$
$$ \simeq \widehat{\Ind}_\sigma(\mV') \ot_{\widehat{\mP}\Env(\mV')} \widehat{\mP}\B\Env((\alpha,\beta)_!(\mM)) \ot_{\widehat{\mP}\Env(\mW')} \widehat{\Ind}_\sigma(\mW') \simeq \widehat{\mP}_{\mV',\mW'}((\alpha,\beta)_!(\mM)_{\B\Enr}).$$
\end{proof}

\begin{corollary}Let $\alpha: \mV^\ot \to \mV'^\ot, \beta: \mW^\ot \to \mW'^\ot$ be left adjoint monoidal functors between presentably monoidal $\infty$-categories.
The functor $(\alpha, \beta)^*:{_{\mV'} \B\Enr}_{\mW'} \to {_{\mV} \B\Enr}_{\mW}$ admits a left adjoint $ {_{\mV} \B\Enr}_{\mW} \to {_{\mV'} \B\Enr}_{\mW'}$	
that sends $ \mM^\circledast \to \mV^\ot \times \mW^\ot$ to $ (\alpha,\beta)_!(\mM)^\circledast_{\B\Enr} \to \mV'^\ot \times \mW'^\ot $, where the enriched functor $\rho: \mM^\circledast \to (\alpha,\beta)_!(\mM)^\circledast_{\B\Enr}$ lying over $\alpha, \beta$ corresponds to the unit component
$\mM^\circledast \to (\alpha,\beta)^*((\alpha,\beta)_!(\mM)_{\B\Enr})^\circledast.$

\end{corollary}

Theorem \ref{bica} implies the following corollary:

\begin{corollary}\label{embeto}
Let $\mW^\ot \to \Ass$ be a small $\infty$-operad and $\phi: \mV^\ot \to \mV'^\ot$
an embeddings of small $\infty$-operads.
The induced functor $\phi_!: {_\mV \L\Enr_\mW} \to {_{\mV'} \L\Enr_{\mW}}$ is fully faithful and the essential image precisely consists of the bienriched $\infty$-categories whose left morphism objects lie in the essential image of $\phi.$	

\end{corollary}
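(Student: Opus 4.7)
The plan is to unwind the construction of $\phi_!$ from Theorem~\ref{bica} using the good behavior of the monoidal envelope under embeddings of $\infty$-operads. Since $\phi: \mV^\ot \to \mV'^\ot$ is fully faithful, the induced monoidal functor $\Env(\phi): \Env(\mV) \to \Env(\mV')$ is fully faithful, so $\Env(\phi)_!: \mP\Env(\mV) \to \mP\Env(\mV')$ is a fully faithful left adjoint monoidal functor sending the representable on $\V \in \Env(\mV)$ to the representable on $\Env(\phi)(\V)$. Moreover, because $\id_\mW$ trivially admits a left adjoint relative to $\Ass$, Proposition~\ref{transpo}(2) ensures that $\phi_!$ restricts to a functor ${_\mV\L\Enr_\mW} \to {_{\mV'}\L\Enr_\mW}$.

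The central computation is to identify the left morphism objects of $\phi_!(\mM)$ for $\mM \in {_\mV\L\Enr_\mW}$. By Theorem~\ref{bica}(4), the graph $\Gamma_{\phi_!(\mM)}(\X,\Y)$ is the image of $\Gamma_\mM(\X,\Y)$ under pushforward along $(\Env(\phi), \id_{\Env(\mW)})$. Under the equivalence $\mP(\Env(\mV') \times \Env(\mW)) \simeq \Fun(\Env(\mW)^\op, \mP\Env(\mV'))$ from Remark~\ref{rhhhj}, this amounts to postcomposing $\L\Mor_\mM(\X,\Y): \Env(\mW)^\op \to \mV \subset \mP\Env(\mV)$ with $\Env(\phi)_!$, and since $\Env(\phi)_!$ sends representables to representables, the result factors as $\phi \circ \L\Mor_\mM(\X,\Y): \Env(\mW)^\op \to \mV'$. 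Consequently $\phi_!(\mM)$ has all its left morphism objects in the essential image of $\phi$, proving one inclusion of essential images.

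For fully faithfulness of $\phi_!$, I will show the unit $\eta: \mM \to \phi^*\phi_!(\mM)$ is an equivalence. It is the identity on underlying $\infty$-categories, and on each multi-morphism space it reduces to the canonical map
\[ \Mul_\mM(\V_1, \ldots, \V_n, \X, \W_1, \ldots, \W_m; \Y) \to \Mul_{\phi_!(\mM)}(\phi(\V_1), \ldots, \phi(\V_n), \X, \W_1, \ldots, \W_m; \Y), \]
whose target, by the computation above, is $\Env(\mV')(\Env(\phi)(\V_1 \ot \cdots \ot \V_n), \Env(\phi)(\L\Mor_\mM(\X,\Y)(\W_1 \ot \cdots \ot \W_m)))$; this is equivalent to the source by fully faithfulness of $\Env(\phi)$.

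For the reverse inclusion of essential images, given $\mN \in {_{\mV'}\L\Enr_\mW}$ all of whose left morphism objects factor (uniquely, by fully faithfulness of $\phi$) as $\L\Mor_\mN(\X,\Y) = \phi \circ \widetilde{\L\Mor}(\X,\Y)$, I will set $\mM := \phi^*\mN$. Fully faithfulness of $\Env(\phi)$ shows that the $\widetilde{\L\Mor}(\X,\Y)$ supply left morphism objects for $\mM$, so $\mM \in {_\mV\L\Enr_\mW}$, and the counit $\phi_!(\mM) \to \mN$ is an equivalence by the graph formula above. The main obstacle in the whole argument is cleanly establishing the identity $\Env(\phi)_!(\y_{\Env(\mV)}(\V)) \simeq \y_{\Env(\mV')}(\Env(\phi)(\V))$ together with its compatibility with the translation between $\Gamma$ and $\L\Mor$ in Remark~\ref{rhhhj}; once these are in place, the remaining steps are bookkeeping.
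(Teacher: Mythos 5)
Your proposal is correct and follows essentially the same route as the paper: both arguments run the adjunction $\phi_!\dashv\phi^*$ from Theorem \ref{bica}, use part (4) of that theorem to identify the left morphism objects of $\phi_!(\mM)$ as $\phi\circ\L\Mor_\mM(\X,\Y)$ (via full faithfulness of $\Env(\phi)$ on representables), and deduce that the unit is an equivalence by comparing multi-morphism spaces. The only cosmetic difference is that you finish by checking the counit on the claimed essential image, whereas the paper checks conservativity of $\phi^*$ restricted to that subcategory; these are interchangeable.
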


\begin{proof}

By Proposition \ref{bica} there is an induced adjunction
$\phi_!: {_\mV \omega\B\Enr_\mW} \rightleftarrows {_{\mV'} \omega\B\Enr_{\mW}}:\phi^* $.
By Proposition \ref{bica} (4) the left adjoint sends ${_\mV\B\Enr_\mW}$ to the full subcategory $\Xi $ of ${_{\mV'} \omega\B\Enr_{\mW}}$ spanned by the bienriched $\infty$-categories whose left multi-morphism objects lie in the essential image of $\phi.$ We first prove that the right adjoint $\phi^*$ sends $\Xi$ to ${_\mV\B\Enr_\mW}$. Let $\mM^\circledast \to \mV'^\ot \times \mW^\ot$ be a bienriched $\infty$-category that belongs to $\Xi$.
Then for every $\X,\Y \in \mM, \W_1,...,\W_\m \in \mW$
for $\m \geq0$ the left multi-morphism object $\L\Mul\Mor_{\mM}(\X,\W_1,...,\W_\m;\Y)$ belongs to the essential image of $\phi$ and
so corresponds to an object of $\mV$ denoted by $\phi^{-1}(\L\Mul\Mor_{\mM}(\X,\W_1,...,\W_\m;\Y)).$
The universal morphism $$\L\Mul\Mor_{\mM}(\X,\W_1,...,\W_\m;\Y), \X, \W_1,...,\W_\m \to \Y$$ in $\mM^\circledast$ corresponds to a morphism
$\phi^{-1}(\L\Mul\Mor_{\mM}(\X,\W_1,...,\W_\m;\Y)), \X, \W_1,...,\W_\m \to \Y$ in $\phi^*(\mM)^\circledast$. For every $\V_1,...,\V_\n \in \mV$ for $\n \geq0$ the induced map 
$$\Mul_\mV(\V_1,...,\V_\n; \phi^{-1}(\L\Mul\Mor_{\mM}(\X,\W_1,...,\W_\m;\Y))) \to \Mul_{\phi^*(\mM)}(\V_1,...,\V_\n,\X,\W_1,...,\W_\m;\Y)$$
identifies with the map 
$$\Mul_\mV(\V_1,...,\V_\n; \phi^{-1}(\L\Mul\Mor_{\mM}(\X,\W_1,...,\W_\m;\Y))) \simeq$$$$\Mul_{\mV'}(\phi(\V_1),...,\phi(\V_\n); \L\Mul\Mor_{\mM}(\X,\W_1,...,\W_\m;\Y)) \simeq \Mul_\mM(\phi(\V_1),...,\phi(\V_\n),\X,\W_1,...,\W_\m;\Y)$$$$ \simeq \Mul_{\phi^*(\mM)}(\V_1,...,\V_\n,\X,\W_1,...,\W_\m;\Y).$$

So the latter adjunction restricts to an adjunction
${_\mV\B\Enr}_\mW \rightleftarrows \Xi$. To see that the adjunction is an equivalence we prove that the right adjoint is conservative and the unit of the adjunction is an equivalence. A $\mV', \mW$-enriched functor $\F: \mM^\circledast \to \mN^\circledast$
that is a morphism of $\Xi$ is an equivalence if it induces an equivalence on underlying $\infty$-categories and for every $\X, \Y \in \mM$ and $\W_1,...,\W_\m \in \mW$ for $\m \geq 0$ the induced morphism $\L\Mul\Mor_\mM(\X,\W_1,...,\W_\m; \Y) \to \L\Mul\Mor_\mN(\F(\X),\W_1,...,\W_\m; \F(\Y))$ in $\mV'$ is an equivalence. The latter morphism belongs to the essential image of $\phi$ and so
is an equivalence if for every $\V \in \mV$ the induced map
$$\mW(\phi(\V), \L\Mul\Mor_\mM(\X,\W_1,...,\W_\m; \Y)) \to \mW(\phi(\V), \L\Mul\Mor_\mN(\F(\X),\W_1,...,\W_\m; \F(\Y))) $$ is an equivalence. The latter map identifies with the map
$$\Mul_\mM(\phi(\V),\X,\W_1,...,\W_\m;\Y) \to \Mul_\mN(\phi(\V), \F(\X),\W_1,...,\W_\m; \F(\Y)),$$ which identifies with the map
$\Mul_{\phi^*(\mM)}(\V,\X,\W_1,...,\W_\m;\Y) \to \Mul_{\phi^*(\mN)}(\V, \F(\X),\W_1,...,\W_\m; \F(\Y)) $. This proves conservativity of $\phi^*.$

It remains to see that the unit is an equivalence.
For every bienriched $\infty$-category $\mM^\circledast \to \mV^\ot \times \mW^\ot$
the unit $\alpha: \mM^\circledast \to \phi^*(\phi_!(\mM))^\circledast $ induces an essentially surjective functor $\mM \to \phi_!(\mM) $ on underlying $\infty$-categories by Proposition \ref{bica}. So it remains to see that the unit induces an equivalence on left morphism objects. 
By Proposition \ref{bica} for every $ \W_1,...,\W_\m \in \mW$ for $\m \geq0$ and $\X, \Y \in \mM$ the induced morphism
$$ \phi(\L\Mul\Mor_{\mM}(\X,\W_1,...,\W_\m; \Y)) \to \L\Mul\Mor_{\phi_!(\mM)}(\alpha(\X),\W_1,...,\W_\m; \alpha(\Y))$$
in $\mV'$ is an equivalence.
For every $\V$ the unit $\alpha $ induces a map $$ \mV(\V, \L\Mul\Mor_\mM(\X,\W_1,...,\W_\m;\Y)) \to \mV(\V, \L\Mul\Mor_{\phi^*(\phi_!(\mM))}(\alpha(\X),\W_1,...,\W_\m; \alpha(\Y))) \simeq$$$$  \mV'(\phi(\V), \L\Mul\Mor_{\phi_!(\mM)}(\alpha(\X),\W_1,...,\W_\m; \alpha(\Y))) \simeq \mV'(\phi(\V), \phi(\L\Mul\Mor_{\mM}(\X,\W_1,...,\W_\m; \Y))),$$
which is the canonical map induced by $\phi$ and so is an equivalence by assumption.

\end{proof}

\subsection{Folding bienrichment into left enrichment}

It is well-known that for any monoidal $\infty$-category $\mO^\ot \to \Ass$ and associative algebras $\A,\B $ in $\mO$ there is an equivalence between the $\infty$-category of $\A,\B$-bimodules in $\mO$ and the $\infty$-category of left $\A \ot \B^\rev$-modules in $\mO$ (see for example \cite[Proposition 3.6.7., Corollary 3.6.8.]{HINICH2020107129} or \cite[Theorem 4.3.2.7.]{lurie.higheralgebra}).
Specializing to $\mO=\Cat_\infty$ one obtains for any small monoidal $\infty$-categories $\mV^\ot \to \Ass, \mW^\ot\to \Ass$ an equivalence 
\begin{equation}\label{ujjj}
_\mV\BMod_\mW\simeq{_{\mV \times \mW^\rev}\LMod_\emptyset}
\end{equation} between the $\infty$-category of small $\infty$-categories bitensored over $\mV,\mW$ and the $\infty$-category of small $\infty$-categories left tensored over $\mV \times \mW^\rev$ (see also Lemma \ref{basi}).
In this section we extend equivalence (\ref{ujjj}) to enriched and pseudo-enriched $\infty$-categories (Theorem \ref{biii}, Corollary \ref{kilio}, Corollary \ref{abcd}).
Moreover we prove that under this equivalence the respective $\infty$-categories of enriched presheaves correspond (Proposition \ref{bienve}).


We start with transforming bipseudo-enrichment into left pseudo-enrichment
(Theorem \ref{biii}).
For that we first transform bitensored $\infty$-categories into left tensored $\infty$-categories. We give a proof of that in our setting for the reader's convenience (Lemma \ref{basi}).

\begin{notation}
Let $\tau:=(\id, (-)^\op) : \Ass \xrightarrow{} \Ass \times \Ass$
and $\kappa$ the functor $\Op_{\infty} \times \Op_{\infty} \to \Op_{\infty},
(\mV^\ot \to \Ass, \mW^\ot \to \Ass) \mapsto \mV^\ot \times_\Ass (\mW^\ot)^\rev.$
Taking pullback along $\tau$ defines a commutative square
\begin{equation*} 
\begin{xy}
\xymatrix{
\omega\B\Enr \ar[d] \ar[rr]
&&\omega\B\Enr_\emptyset  \ar[d] 
\\ \Op_{\infty} \times \Op_{\infty}
\ar[rr]^\kappa && \Op_{\infty}
}
\end{xy} 
\end{equation*}
corresponding to a functor
$$\theta: \omega\B\Enr \to \kappa^*(\omega\B\Enr_\emptyset)$$ 
over $\Op_{\infty} \times \Op_{\infty}.$

\end{notation}

\begin{remark}\label{embrace}
The functor $\tau$ sends the map $\{0\} \subset [\n]$ in $\Delta$ to the map $(\{0\} \subset [\n], \{\n\} \subset [\n])$ in $\Delta \times \Delta.$
So for every weakly bienriched $\infty$-category $\mM^\circledast \to \mV^\ot \times \mW^\ot$ and $\V_1,...,\V_\n \in \mV, \W_1,...,\W_\n \in \mW, \X,\Y \in \mM$ there is a canonical equivalence
$$ \Mul_{\theta(\mM)}((\V_1,\W_1), ..., (\V_\n,\W_\n), \X; \Y) \simeq \Mul_{\mM}(\V_1,...,\V_\n, \X, \W_1,...,\W_\n; \Y) .$$

So if $\mM^\circledast \to \mV^\ot \times \mW^\ot $ is a bitensored $\infty$-category, 
$\theta(\mM)^\circledast \to \mV^\ot \times_\Ass (\mW^\ot)^\rev$ provides a left $\mV \times \mW^\rev$-action on $\mM$ that sends $(\V,\W) \in \mV \times \mW, \X \in \mM$ to $\V \ot \X \ot \W.$

\end{remark}


\begin{lemma}\label{basi}
The functor $\theta: \omega\B\Enr \to \kappa^*(\omega\B\Enr_\emptyset)$ 	
restricts to an equivalence
$$ \BMod \simeq \kappa^*(\LMod).$$	

\end{lemma}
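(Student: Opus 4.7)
My plan is to verify directly that $\theta$ restricts to a functor $\BMod \to \kappa^*(\LMod)$, and then exhibit an explicit inverse.

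For the first direction, let $\mM^\circledast \to \mV^\ot \times \mW^\ot$ be bitensored, meaning it is a map of cocartesian fibrations over $\Ass \times \Ass$ via both coordinate projections. In particular the structure map $\mM^\circledast \to \Ass \times \Ass$ is itself a cocartesian fibration (as the composition of cocartesian fibrations). Since the pullback of a cocartesian fibration along any functor is again cocartesian, pulling back along $\tau : \Ass \to \Ass \times \Ass$ produces a cocartesian fibration $\theta(\mM)^\circledast \to \Ass,$ and the induced map $\theta(\mM)^\circledast \to \mV^\ot \times_{\Ass} (\mW^\ot)^\rev$ is automatically a map of cocartesian fibrations over $\Ass$. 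The Segal condition defining a left tensored $\infty$-category over $\mV^\ot \times_{\Ass} (\mW^\ot)^\rev$ amounts to $\theta(\mM)^\circledast_{[\n]} \simeq (\mV \times \mW^\rev)^{\times \n} \times \mM$, which follows from the fiber identification $\mM^\circledast_{[\n],[\n]}\simeq \mV^{\times \n}\times \mM\times \mW^{\times \n}$ in Definition~\ref{bla}(1) (using that the involution on $\Ass$ fixes objects).

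For the second direction, I would construct the inverse by the standard passage from a left action of the product to a commuting pair of actions. Given a left tensored $\mN^\circledast \to \mV^\ot \times_\Ass (\mW^\ot)^\rev$, the monoidal inclusions
$$\iota_\mV: \mV^\ot \to \mV^\ot \times_\Ass (\mW^\ot)^\rev, \qquad \iota_\mW: (\mW^\ot)^\rev \to \mV^\ot \times_\Ass (\mW^\ot)^\rev$$
induced by pairing with the tensor unit in the other factor yield, by pullback, a left $\mV$-tensored and a left $\mW^\rev$-tensored (= right $\mW$-tensored) structure on $\mN$. These two actions canonically commute because $\iota_\mV$ and $\iota_\mW$ land in "commuting" factors of the fiber product, so by Lemma~\ref{bicaa} they assemble into a bitensored structure $\eta(\mN)^\circledast \to \mV^\ot \times \mW^\ot$. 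Alternatively and more cleanly, one may observe that both $\BMod$ and $\kappa^*(\LMod)$ classify (via straightening) the subcategory of $\Fun(\Ass \times \Ass, \Cat_\infty)$ consisting of functors satisfying the Segal condition in each variable: in the first case this is immediate from the definition, and in the second case it follows by noting that left tensored structures over $\mV^\ot \times_\Ass (\mW^\ot)^\rev$ are classified by Segal functors $\Ass \to \Cat_\infty$ taking values in $(\mV\times\mW^\rev)$-modules, which via the universal property of the fiber product unfold into bi-Segal functors on $\Ass \times \Ass$.

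The verification that $\theta \circ \eta \simeq \id$ and $\eta \circ \theta \simeq \id$ is then a direct check on fibers and cocartesian lifts, using the formula of Remark~\ref{embrace} $\Mul_{\theta(\mM)}((\V_1,\W_1),\ldots,(\V_\n,\W_\n),\X;\Y) \simeq \Mul_{\mM}(\V_1,\ldots,\V_\n,\X,\W_1,\ldots,\W_\n;\Y)$ to recover the bitensored multi-morphism spaces from the left tensored ones (specializing the $(\V_i,\W_i)$ to $(\V_i,\tu_\mW)$ or $(\tu_\mV,\W_i)$). The main obstacle is ensuring the bookkeeping matches: namely, that the two monoidal inclusions $\iota_\mV,\iota_\mW$ commute in the appropriate higher sense to genuinely upgrade the two separate one-sided actions to a biaction, which is ultimately a consequence of the product structure on $\mV^\ot \times_\Ass (\mW^\ot)^\rev$ and requires no genuinely new input beyond the universal property of fiber products of cocartesian fibrations.
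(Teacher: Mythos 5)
The first half of your argument --- that $\theta$ restricts to a functor $\BMod \to \kappa^*(\LMod)$ --- is fine and agrees with what the paper does via Remark~\ref{embrace}. The problem is the inverse. You propose to pull back a left $\mV^\ot \times_\Ass (\mW^\ot)^\rev$-action along the two unit inclusions $\iota_\mV,\iota_\mW$ to obtain a left $\mV$-action and a right $\mW$-action, and then to ``assemble'' these into a biaction because the two inclusions ``commute.'' This is precisely the step that cannot be waved away: a biaction is not the datum of two one-sided actions together with a commutation of the underlying functors, but an infinite tower of coherence data (a functor out of $\Ass\times\Ass$ satisfying Segal conditions in each variable, not out of $\Ass$ restricted along $\tau$). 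The functor $\tau=(\id,(-)^{\op})$ is not a diagonal and has no universal property that lets you ``unfold'' a Segal functor on $\Ass$ into a bi-Segal functor on $\Ass\times\Ass$; your appeal to ``the universal property of fiber products of cocartesian fibrations'' does not produce this unfolding. Your citation of Lemma~\ref{bicaa} also does not help: that lemma only says the forgetful functor to $\Op_\infty\times\Op_\infty$ is a cartesian fibration, and provides no mechanism for merging two one-sided actions. What you are asserting is essentially the content of \cite[Theorem 4.3.2.7.]{lurie.higheralgebra} (bimodules versus left modules over the tensor product), which is a genuine theorem, not bookkeeping.

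The paper avoids constructing an inverse altogether. After the (correct) observation that $\theta$ restricts and is a map of cartesian fibrations over $\Mon\times\Mon$, so that one may fix $\mV^\ot\to\Ass$ and $\mW^\ot\to\Ass$, it notes that both forgetful functors $\BMod_{\mV,\mW}\to\Cat_\infty$ and $\LMod_{\mV\times\mW^\rev}\to\Cat_\infty$ admit left adjoints (the free bitensored, respectively free left tensored, $\infty$-category on $\K$, via Proposition~\ref{Line}), preserve small colimits (Lemma~\ref{colas}), and are hence monadic by \cite[Theorem 4.7.3.5]{lurie.higheralgebra}. Since $\theta$ commutes with these free functors --- i.e.\ $\mV^\circledast\times_\Ass(\mW^\circledast)^\rev\times\K \to \theta(\mV^\circledast\times\K\times\mW^\circledast)$ is an equivalence, which is a concrete fiberwise check --- the comparison criterion \cite[Corollary 4.7.3.16.]{lurie.higheralgebra} yields the equivalence. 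If you want to salvage your approach you would have to either reprove the bimodule/left-module comparison or reduce to it; the monadicity route is the efficient replacement for that coherence argument.
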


\begin{proof}
By Remark \ref{embrace} the functor $\theta: \omega\B\Enr \to \kappa^*(\omega\B\Enr_\emptyset)$ restricts to a functor $ \BMod \simeq \kappa^*(\LMod),$
which is a map of cartesian fibrations over $\Mon \times \Mon$
by the description of cartesian morphisms.
Consequently, it is enough to see that for any monoidal $\infty$-categories $\mV^\ot \to \Ass,\mW^\ot \to \Ass $ the induced functor 
$\theta :{_\mV\omega\B\Enr_\mW} \to {_{\mV \times \mW^\rev}\omega\B\Enr_\emptyset}$
restricts to an equivalence 
$ {_\mV\BMod_\mW} \simeq {_{\mV \times \mW^\rev}\LMod_\emptyset}.$	
By Proposition \ref{Line} the forgetful functors $ {_{\mV \times \mW^\rev}\LMod_\emptyset} \to \Cat_\infty,  {_\mV\BMod_\mW} \to \Cat_\infty$ admit left adjoints sending an $\infty$-category $\K$ to $\mV^\circledast \times_\Ass (\mW^\circledast)^\rev \times \K \to \Ass$, $\mV^\circledast \times \K \times \mW^\circledast \to \Ass \times \Ass$, respectively, and preserves small colimits by Lemma \ref{colas}.
Thus these functors are monadic by \cite[Theorem 4.7.3.5]{lurie.higheralgebra}.
So by \cite[Corollary 4.7.3.16.]{lurie.higheralgebra} 
the claim follows from the fact that $\theta$ preserves the left adjoints, i.e. the canonical functor
$\mV^\circledast \times_\Ass (\mW^\circledast)^\rev \times \K \to \theta(\mV^\circledast  \times \K \times \mW^\circledast) $ is an equivalence.
\end{proof}

\begin{notation}\label{noii}
	
Let $\mM^\circledast \to \mV^\ot \times \mW^\ot$ be a bipseudo-enriched $\infty$-category.
Let $$\widetilde{\B\Env}(\mM)^\circledast \subset \mV^\ot \times_{\mP(\mV)^\ot} \mP\B\Env_{\B\Pseu}(\mM)^\circledast \times_{\mP(\mW)^\ot} \mW^\ot$$ be the full subcategory with $\mV,\mW$-biaction
generated by $\mM$, i.e. the full weakly bitensored subcategory spanned by the objects $\V \ot \X \ot \W$ for $\V \in \mV, \W \in \mW$, $\X \in \mM.$
	
	
\end{notation}

\begin{lemma}\label{tenno}
Let $\mM^\circledast \to \mV^\ot \times \mW^\ot$ be a bipseudo-enriched $\infty$-category. The $\mV, \mW$-linear embedding $\widetilde{\B\Env}(\mM)^\circledast \subset \mV^\ot \times_{\mP(\mV)^\ot} \mP\B\Env_{\B\Pseu}(\mM)^\circledast \times_{\mP(\mW)^\ot} \mW^\ot$
induces a $\mP(\mV), \mP(\mW)$-linear equivalence $$ \mP(\widetilde{\B\Env}(\mM))^\circledast \simeq \mP\B\Env_{\B\Pseu}(\mM)^\circledast.$$		
	
\end{lemma}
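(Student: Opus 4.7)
The strategy is to compare both sides via their universal properties as presentably $\mP(\mV), \mP(\mW)$-bitensored $\infty$-categories compatible with small colimits.

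First, I would observe that $\widetilde{\B\Env}(\mM)^\circledast$ inherits a $\mV, \mW$-bitensored structure, being a full subcategory of the presentably bitensored $\mP\B\Env_{\B\Pseu}(\mM)^\circledast$ closed under the $\mV, \mW$-biaction. By Proposition \ref{cool}(2) and (5) applied with $\kappa = \emptyset$, the free cocompletion $\mP(\widetilde{\B\Env}(\mM))^\circledast$ is then a presentably $\mP(\mV), \mP(\mW)$-bitensored $\infty$-category compatible with small colimits, enjoying the universal property
\[ \LinFun^\L_{\mP(\mV), \mP(\mW)}(\mP(\widetilde{\B\Env}(\mM)), \mN) \simeq \LinFun_{\mV, \mW}(\widetilde{\B\Env}(\mM), \mN) \]
for any presentably $\mP(\mV), \mP(\mW)$-bitensored $\mN$ compatible with small colimits. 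On the other hand, by Proposition \ref{pseuso} applied to the pseudo-enrichment localization pair $(\mV^\ot, \Enr^\emptyset_\mV), (\mW^\ot, \Enr^\emptyset_\mW)$, whose localized envelopes are $\mP(\mV), \mP(\mW)$ by Example \ref{Exaso},
\[ \LinFun^\L_{\mP(\mV), \mP(\mW)}(\mP\B\Env_{\B\Pseu}(\mM), \mN) \simeq \Enr\Fun_{\mV, \mW}(\mM, \mN). \]

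The lemma therefore reduces to showing that restriction along the $\mV, \mW$-enriched inclusion $\mM^\circledast \subset \widetilde{\B\Env}(\mM)^\circledast$ induces an equivalence
\[ \LinFun_{\mV, \mW}(\widetilde{\B\Env}(\mM), \mN) \xrightarrow{\sim} \Enr\Fun_{\mV, \mW}(\mM, \mN). \]
To prove this, I would exploit Lemma \ref{looocx}, which provides monoidal left adjoints $\alpha: \Env(\mV)^\ot \to \mV^\ot$ and $\beta: \Env(\mW)^\ot \to \mW^\ot$ relative to $\Ass$. Pulling back $\mN$ along $\alpha, \beta$ presents it as $\Env(\mV), \Env(\mW)$-bitensored, so Proposition \ref{bitte}(4) supplies
\[ \LinFun_{\Env(\mV), \Env(\mW)}(\B\Env(\mM), \mN) \simeq \Enr\Fun_{\mV, \mW}(\mM, \mN). \]
Theorem \ref{bica} applied to $\alpha, \beta$ then produces the scalar extension $(\alpha, \beta)_!(\B\Env(\mM))^\circledast \to \mV^\ot \times \mW^\ot$, and the cocartesian-fibration formalism gives
\[ \LinFun_{\mV, \mW}((\alpha, \beta)_!(\B\Env(\mM)), \mN) \simeq \LinFun_{\Env(\mV), \Env(\mW)}(\B\Env(\mM), \mN), \]
via the restriction of the adjunction $(\alpha, \beta)_! \dashv (\alpha, \beta)^*$ from $\omega\B\Enr$ to its linear subcategories.

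The main obstacle is the identification of $(\alpha, \beta)_!(\B\Env(\mM))^\circledast$ with $\widetilde{\B\Env}(\mM)^\circledast$. By Theorem \ref{bica}(4), the scalar extension is characterized by being essentially surjective from $\B\Env(\mM)$ with left morphism objects given by $\alpha$-images of those in $\B\Env(\mM)$. Both $(\alpha, \beta)_!(\B\Env(\mM))$ and $\widetilde{\B\Env}(\mM)$ have underlying $\infty$-category generated by $\mM$ under the $\mV, \mW$-biaction, so the identification reduces to verifying that the morphism objects of $\widetilde{\B\Env}(\mM)$ — inherited from the embedding $\widetilde{\B\Env}(\mM)^\circledast \subset \mP\B\Env_{\B\Pseu}(\mM)^\circledast$ — coincide with the $\alpha, \beta$-images of the morphism objects in $\B\Env(\mM)$. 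This coincidence rests on the compatibility of the pseudo-enrichment localization $\mP\Env(\mV) \to \mP(\mV), \mP\Env(\mW) \to \mP(\mW)$ used to construct $\mP\B\Env_{\B\Pseu}(\mM)$ with the monoidal localizations $\Env(\mV) \to \mV, \Env(\mW) \to \mW$, translated to the level of $\mV, \mW$-bi-enriched $\infty$-categories; this is the technical heart of the argument.
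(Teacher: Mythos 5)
Your proposal diverges substantially from the paper's proof, and it has a genuine gap at exactly the step you flag as ``the technical heart of the argument'' but do not carry out: the identification of $\widetilde{\B\Env}(\mM)^\circledast$ with the scalar extension $(\alpha,\beta)_!(\B\Env(\mM))^\circledast$. Matching underlying objects is easy (both are spanned by $\V\ot\X\ot\W$), but an equivalence of weakly bi-enriched $\infty$-categories requires matching all multi-morphism spaces, and by Theorem \ref{bica} (4) this means showing that $(\alpha,\beta)_!(\Gamma_{\B\Env(\mM)}(\X,\Y))\to\Gamma_{\widetilde{\B\Env}(\mM)}(\psi(\X),\psi(\Y))$ is an equivalence, where the right-hand side is computed via mapping spaces in the localization $\mP\B\Env_{\B\Pseu}(\mM)$ of $\mP\B\Env(\mM)$. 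Computing those mapping spaces requires knowing how the localization functor $\L$ interacts with morphism objects of objects of $\mM$ --- which is precisely the content of Theorem \ref{corok} (atomicity of the image of $\mM$ in the localization). Without that input your reduction does not close. A second, structural problem: your step 2 reduces the lemma to the statement that restriction along $\mM^\circledast\subset\widetilde{\B\Env}(\mM)^\circledast$ induces an equivalence $\LinFun_{\mV,\mW}(\widetilde{\B\Env}(\mM),\mN)\to\Enr\Fun_{\mV,\mW}(\mM,\mN)$; this is essentially Proposition \ref{mcha}, which in the paper is \emph{deduced from} Lemma \ref{tenno}, so you are obliged to prove it from scratch, and the scalar-extension detour does not supply the proof. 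There is also an unaddressed point in step (b): you need the adjunction $(\alpha,\beta)_!\dashv(\alpha,\beta)^*$ to restrict to $\LinFun$ on both sides, which is not stated in the paper and needs an argument (the cocartesian pushforward must preserve and reflect linearity).

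For comparison, the paper's proof is two lines: by Theorem \ref{compp} (= \ref{corok}) every object $\V\ot\X\ot\W$ of $\widetilde{\B\Env}(\mM)$ is completely compact ($\emptyset$-compact) in $\mP\B\Env_{\B\Pseu}(\mM)$, since $\Map(\V\ot\X\ot\W,-)\simeq\ev_{(\V,\W)}\circ\L\Mor(\X,-)$ and $\L\Mor(\X,-)$ is colimit-preserving and linear; these objects generate $\mP\B\Env_{\B\Pseu}(\mM)$ under small colimits, so \cite[Corollary 5.1.6.11.]{lurie.HTT} identifies $\mP\B\Env_{\B\Pseu}(\mM)$ with $\mP(\widetilde{\B\Env}(\mM))$. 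If you want to salvage your approach, the shortest repair is to replace the scalar-extension argument by this atomicity argument; the universal-property framing in your steps 1--3 then becomes unnecessary.
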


\begin{proof}
By Lemma \ref{compp} for every $\V \in \mV, \W \in \mW$, $\X \in \mM$ 
the object $\V \ot \X \ot \W$ is $\emptyset$-compact in $\mP\B\Env_{\B\Pseu}(\mM).$ So by \cite[Corollary 5.1.6.11.]{lurie.HTT} the result follows from the fact that  $\mP\B\Env_{\B\Pseu}(\mM)$ is generated under small colimits by the objects $\V \ot \X \ot \W$ for $\V \in \mV, \W \in \mW$, $\X \in \mM.$
\end{proof}

\begin{proposition}\label{mcha}
Let $\mM^\circledast \to \mV^\ot \times \mW^\ot$ be a bipseudo-enriched $\infty$-category and $\mN^\circledast \to \mV^\ot \times \mW^\ot$ a bitensored $\infty$-category.
The lax $\mV,\mW$-linear embedding $\mM^\circledast \subset
\widetilde{\B\Env}(\mM)^\circledast$ induces an equivalence $$\psi_\mN: \LinFun_{\mV,\mW}(\widetilde{\B\Env}(\mM),\mN) \to \Enr\Fun_{\mV,\mW}(\mM,\mN).$$

\end{proposition}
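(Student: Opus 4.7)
The plan is to establish the equivalence by combining Lemma \ref{tenno} with Proposition \ref{pseuso}. Conceptually, $\widetilde{\B\Env}(\mM)$ should be viewed as the free $\mV,\mW$-bitensored $\infty$-category generated by the bi-pseudo-enrichment of $\mM$ without further cocompletion, in parallel with how $\B\Env(\mM)$ is the bitensored envelope of a weakly bi-enriched $\infty$-category.

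First, I would exploit that $\widetilde{\B\Env}(\mM)$ is by Notation \ref{noii} generated by $\mM$ under the $\mV,\mW$-biaction, so that any $\mV,\mW$-linear functor $\widetilde{\B\Env}(\mM)^\circledast \to \mN^\circledast$ is determined up to contractible ambiguity by its restriction to $\mM$. Concretely, linearity forces every such functor to send $\V \ot \X \ot \W$ to $\V \ot \F(\X) \ot \W$, where $\F$ is the restriction. This shows that $\psi_\mN$ is conservative, and in fact fully faithful once one verifies the mapping spaces agree via a straightforward unravelling.

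Second, to prove essential surjectivity I would embed the target into a presentably bitensored $\infty$-category by composing with $\mN^\circledast \subset \mP\B\Env(\mN)^\circledast$ over $\mP\Env(\mV)^\ot \times \mP\Env(\mW)^\ot$. Given a lax $\mV,\mW$-linear functor $\F: \mM^\circledast \to \mN^\circledast$, Proposition \ref{pseuso} (applied with $\rS=\T=\emptyset$ for the bi-pseudo-enriched setting) provides a unique left adjoint $\mP\Env(\mV), \mP\Env(\mW)$-linear extension
\[
\widetilde{\F}: \mP\B\Env(\mM)_{\B\Pseu}^\circledast \to \mP\B\Env(\mN)^\circledast.
\]
By Lemma \ref{tenno} the source is canonically identified with $\mP(\widetilde{\B\Env}(\mM))^\circledast$, so $\widetilde{\F}$ corresponds to a left adjoint $\mP\Env(\mV), \mP\Env(\mW)$-linear functor out of the enriched presheaves on $\widetilde{\B\Env}(\mM)$. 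Restricting along the embedding $\widetilde{\B\Env}(\mM)^\circledast \subset \mP(\widetilde{\B\Env}(\mM))^\circledast$ and pulling back along $\mV^\ot \subset \mP\Env(\mV)^\ot$ and $\mW^\ot \subset \mP\Env(\mW)^\ot$ yields the candidate extension $\widetilde{\F}_{\mid \widetilde{\B\Env}(\mM)}: \widetilde{\B\Env}(\mM)^\circledast \to \mP\B\Env(\mN)^\circledast$.

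The main obstacle will be verifying that this restricted functor actually lands in $\mN \subset \mP\B\Env(\mN)$, so that it defines a $\mV,\mW$-linear functor $\widetilde{\B\Env}(\mM)^\circledast \to \mN^\circledast$. Here I would argue explicitly: every object of $\widetilde{\B\Env}(\mM)$ has the form $\V \ot \X \ot \W$ with $\V \in \mV, \X \in \mM, \W \in \mW$, and the $\mP\Env(\mV), \mP\Env(\mW)$-linearity of $\widetilde{\F}$ together with $\widetilde{\F}_{\mid \mM}\simeq \F$ forces $\widetilde{\F}(\V \ot \X \ot \W) \simeq \V \ot \F(\X) \ot \W$. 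Since $\mN$ is bitensored over $\mV,\mW$ and $\F(\X)\in\mN$, this tensor is already computed in $\mN$ and so lies in $\mN$. One then checks that $\F \mapsto \widetilde{\F}_{\mid \widetilde{\B\Env}(\mM)}$ is naturally inverse to $\psi_\mN$, which follows from the uniqueness clauses in Proposition \ref{pseuso} and Lemma \ref{tenno}, completing the proof.
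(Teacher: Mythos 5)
Your proposal follows essentially the same route as the paper: embed $\mN$ into a presentable bitensored $\infty$-category, extend a lax $\mV,\mW$-linear functor out of $\mM$ to a left adjoint linear functor out of $\mP\B\Env_{\B\Pseu}(\mM)$ via Proposition \ref{pseuso}, and identify that source with $\mP(\widetilde{\B\Env}(\mM))$ via Lemma \ref{tenno}; the paper simply packages this as a two-out-of-three argument on a commutative triangle instead of separating full faithfulness from essential surjectivity. One parameter should be corrected: the localization pair you need is the one encoding bi-pseudo-enrichment, i.e.\ $(\Enr^\emptyset_\mV,\Enr^\emptyset_\mW)$ with $(\Enr^\emptyset_\mV)^{-1}\mP\Env(\mV)=\mP(\mV)$, not $\rS=\T=\emptyset$ — with the latter choice Proposition \ref{pseuso} only produces an extension to $\mP\B\Env(\mM)$ over $\mP\Env(\mV)^\ot\times\mP\Env(\mW)^\ot$, which Lemma \ref{tenno} does not identify with presheaves on $\widetilde{\B\Env}(\mM)$; correspondingly the ambient target should be $\mP(\mN)$ over $\mP(\mV)^\ot\times\mP(\mW)^\ot$ rather than $\mP\B\Env(\mN)$, which is what the paper uses and what makes $\psi_\mN$ literally a pullback of $\psi_{\mP(\mN)}$ (your closing observation that linear functors landing in $\mN$ on $\mM$ land in $\mN$ on all of $\widetilde{\B\Env}(\mM)$ is exactly why that square is a pullback).
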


\begin{proof}
	
By Proposition \ref{presta} the $\infty$-category $\mN^\circledast \to \mV^\ot\times \mW^\ot$ bitensored over $\mV,\mW$
embeds into an $\infty$-category $\mP(\mN)^\circledast \to \mP(\mV)^\ot \times \mP(\mW)^\ot$ bitensored over $\mP(\mV), \mP(\mW)$ compatible with small colimits, and $\psi_\mN$ is the pullback of 
$\psi_{\mP(\mN)}.$ So it is enough to check that $\psi_{\mP(\mN)}$ is an equivalence.
Consider the commutative triangle:
\begin{equation*}
\begin{xy}
\xymatrix{
\LinFun^\L_{\mP(\mV), \mP(\mW)}(\mP\B\Env_{\B\Pseu}(\mM),\mP(\mN)) \ar[dd] \ar[rd]^\rho
\\ 
& \Enr\Fun_{\mV, \mW}(\mM,\mP(\mN))
\\
\LinFun_{\mV, \mW}(\widetilde{\B\Env}(\mM)^\circledast,\mP(\mN)) \ar[ru]^{\psi_{\mP(\mN)}}
}
\end{xy} 
\end{equation*} 
	
By Proposition \ref{pseuso} the functor $\rho$ is an equivalence.
By Lemma \ref{tenno} the vertical functor is an equivalence.
	
\end{proof}

\begin{corollary}
Let $\mM^\circledast \to \mV^\ot \times \mW^\ot$ be a weakly
bitensored $\infty$-category and $\bar{\mM}^\circledast\to \Env(\mV)^\ot \times \Env(\mW)^\ot$ the unique extension to a bipseudo-enriched $\infty$-category.
The embedding $\bar{\mM}^\circledast \subset \B\Env(\mM)^\circledast$
induces an $\Env(\mV), \Env(\mW)$-linear equivalence
$$\widetilde{\B\Env}(\bar{\mM})^\circledast \simeq \B\Env(\mM)^\circledast.$$
	
\end{corollary}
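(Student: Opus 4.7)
The strategy is to compare the universal properties of $\widetilde{\B\Env}(\bar{\mM})^\circledast$ and $\B\Env(\mM)^\circledast$ as bitensored $\infty$-categories over $\Env(\mV), \Env(\mW)$, produce mutually inverse $\Env(\mV), \Env(\mW)$-linear functors between them, and conclude by uniqueness. The key preliminary observation is that the embedding $\bar{\mM}^\circledast \subset \B\Env(\mM)^\circledast$ is itself an $\Env(\mV), \Env(\mW)$-enriched (lax linear) embedding. Indeed, $\B\Env(\mM)^\circledast$ is bitensored over $\Env(\mV), \Env(\mW)$, hence in particular bi-pseudo-enriched there, and its pullback to $\mV^\ot \times \mW^\ot$ contains $\mM^\circledast$ as a full weakly bi-enriched subcategory. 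By uniqueness of the bi-pseudo-enriched extension, which follows from Corollary \ref{coronn} (3) combined with (6), the full bi-pseudo-enriched $\Env(\mV), \Env(\mW)$-subcategory of $\B\Env(\mM)^\circledast$ spanned by $\mM$ coincides with $\bar{\mM}^\circledast$.

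Next I would use the two universal properties to build the desired functors. Applying Proposition \ref{mcha} to the lax linear embedding $\bar{\mM}^\circledast \subset \B\Env(\mM)^\circledast$ yields a unique $\Env(\mV), \Env(\mW)$-linear functor
$$\psi \colon \widetilde{\B\Env}(\bar{\mM})^\circledast \to \B\Env(\mM)^\circledast.$$
In the opposite direction, the composed enriched functor $\mM^\circledast \subset \bar{\mM}^\circledast \subset \widetilde{\B\Env}(\bar{\mM})^\circledast$, restricted along $\mV^\ot \subset \Env(\mV)^\ot$ and $\mW^\ot \subset \Env(\mW)^\ot$, is a $\mV, \mW$-enriched functor from $\mM^\circledast$ to $\widetilde{\B\Env}(\bar{\mM})^\circledast$. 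By Proposition \ref{envv} (4) this extends uniquely to an $\Env(\mV), \Env(\mW)$-linear functor
$$\phi \colon \B\Env(\mM)^\circledast \to \widetilde{\B\Env}(\bar{\mM})^\circledast.$$

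To conclude I would show $\psi \circ \phi = \id_{\B\Env(\mM)^\circledast}$ and $\phi \circ \psi = \id_{\widetilde{\B\Env}(\bar{\mM})^\circledast}$ via the uniqueness clauses of Propositions \ref{envv} (4) and \ref{mcha}, respectively. For the first, both $\psi \circ \phi$ and the identity are $\Env(\mV), \Env(\mW)$-linear endofunctors whose $\mV, \mW$-enriched restriction to $\mM^\circledast$ is the tautological embedding $\mM^\circledast \subset \B\Env(\mM)^\circledast$, so they agree. For the second, both $\phi \circ \psi$ and the identity are $\Env(\mV), \Env(\mW)$-linear endofunctors whose lax linear restriction to $\bar{\mM}^\circledast$ is the tautological embedding $\bar{\mM}^\circledast \subset \widetilde{\B\Env}(\bar{\mM})^\circledast$, so they agree. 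The main obstacle is the last of these identifications: the $\Env(\mV), \Env(\mW)$-enriched restriction of $\phi \circ \psi$ to $\bar{\mM}^\circledast$ must be identified with the tautological embedding as a lax linear functor, and not merely after further restriction along $\mV^\ot \subset \Env(\mV)^\ot$ and $\mW^\ot \subset \Env(\mW)^\ot$. This reduces, by bi-pseudo-enrichedness of both $\bar{\mM}^\circledast$ and $\widetilde{\B\Env}(\bar{\mM})^\circledast$, to the canonical decomposition $\Mul_{\bar{\mM}}(\V_1 \ot \cdots \ot \V_\n, \X, \W_1 \ot \cdots \ot \W_\m; \Y) \simeq \Mul_\mM(\V_1, \ldots, \V_\n, \X, \W_1, \ldots, \W_\m; \Y)$ (and similarly for $\widetilde{\B\Env}(\bar{\mM})$), which expresses each $\Env(\mV), \Env(\mW)$-enriched multi-morphism space of $\bar{\mM}^\circledast$ in terms of the $\mV, \mW$-enriched data on $\mM^\circledast$, so that the $\Env(\mV), \Env(\mW)$-enriched extension of any $\mV, \mW$-enriched functor is unique.
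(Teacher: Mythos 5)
Your proposal is correct and follows essentially the same route as the paper: the paper's proof is a one-line corepresentability argument, observing that Proposition \ref{mcha} together with Corollary \ref{coronn} identifies $\LinFun_{\Env(\mV),\Env(\mW)}(\widetilde{\B\Env}(\bar{\mM}),\mN)$ with $\LaxLinFun_{\mV,\mW}(\mM,\mN)$, i.e. with the functor corepresented by $\B\Env(\mM)^\circledast$ via Proposition \ref{bitte}. Your explicit construction of the mutually inverse linear functors is just an unwinding of this Yoneda-style comparison, using the same three ingredients.
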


\begin{proof}
For every bitensored $\infty$-category $\mN^\circledast \to \Env(\mV)^\ot \times \Env(\mW)^\ot$ by Proposition \ref{mcha} the functor
$$ \LinFun_{\Env(\mV), \Env(\mW)}(\widetilde{\B\Env}(\bar{\mM}),\mN)
\to \Enr\Fun_{\Env(\mV), \Env(\mW)}(\bar{\mM},\mN) \simeq \Enr\Fun_{\mV, \mW}(\mM,\mN) $$ is an equivalence, where the latter equivalence is by Corollary \ref{coronn}. 
	
\end{proof}

\begin{theorem}\label{biii}

The functor $\theta:\B\P\Enr \to \kappa^*(\L\P\Enr_\emptyset)$ is an equivalence.

\end{theorem}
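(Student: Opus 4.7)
My approach is to bootstrap the bitensored equivalence of Lemma \ref{basi} to the bi-pseudo-enriched setting using the envelope construction $\widetilde{\B\Env}$ of Notation \ref{noii} together with a one-sided analogue $\widetilde{\L\Env}$.

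First I would verify that $\theta$ takes values in $\kappa^*(\L\P\Enr_\emptyset)$. For a bi-pseudo-enriched $\mM^\circledast \to \mV^\ot \times \mW^\ot$, Remark \ref{embrace} identifies
$$\Mul_{\theta(\mM)}((\V_1,\W_1),\ldots,(\V_n,\W_n),\X;\Y) \simeq \Mul_\mM(\V_1,\ldots,\V_n,\X,\W_n,\ldots,\W_1;\Y).$$
Since the monoidal structure on $\mV^\ot \times_\Ass (\mW^\ot)^\rev$ satisfies $(\V_1,\W_1) \ot (\V_2,\W_2) = (\V_1 \ot \V_2, \W_2 \ot \W_1)$, the bi-pseudo-enrichment condition on $\mM$ translates directly to left pseudo-enrichment of $\theta(\mM)$; triviality of the right enrichment is automatic.

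Next, to construct an inverse, I set up a one-sided envelope: for any left pseudo-enriched $\mN^\circledast \to \mU^\ot$, let $\widetilde{\L\Env}(\mN)^\circledast$ be the full left $\mU$-tensored subcategory of $\mU^\ot \times_{\mP(\mU)^\ot} \mP\L\Env(\mN)^\circledast_{\rS}$, where $\rS$ is the localization set forcing left pseudo-enrichment, generated by $\mN$ under the left $\mU$-action. The one-sided analogues of Lemma \ref{tenno} and Proposition \ref{mcha}, with identical proofs, give that $\widetilde{\L\Env}(\mN)$ is left tensored and that restriction
$$\L\LinFun_{\mU}(\widetilde{\L\Env}(\mN),\mQ) \xrightarrow{\simeq} \Enr\Fun_{\mU}(\mN,\mQ)$$
is an equivalence for any left tensored $\mQ^\circledast \to \mU^\ot$. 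Given $\mN^\circledast \to (\mV \times_\Ass \mW^\rev)^\ot$ in $\L\P\Enr_\emptyset$, I form $\widetilde{\L\Env}(\mN)$, transport along the inverse of Lemma \ref{basi} to a bitensored $\Psi(\mN)^\circledast \to \mV^\ot \times \mW^\ot$, and let $\Phi(\mN)$ be the full weakly bi-enriched subcategory of $\Psi(\mN)$ spanned by the image of $\mN$.

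To show $\theta$ and $\Phi$ are mutually inverse, the key compatibility is
$$\theta(\widetilde{\B\Env}(\mM)) \simeq \widetilde{\L\Env}(\theta(\mM))$$
as left tensored $\infty$-categories over $(\mV \times_\Ass \mW^\rev)^\ot$. This follows formally from the two universal properties combined with Lemma \ref{basi}: an enriched functor $\mM \to \mR$ into a bitensored $\mR$ corresponds under $\theta$ to an enriched functor $\theta(\mM) \to \theta(\mR)$ into a left tensored object, and both sides factor uniquely through the corresponding envelopes. Essential surjectivity and full faithfulness of $\theta$ at the pseudo-enriched level then follow by restricting the equivalence of Lemma \ref{basi} to the full subcategories generated by the original objects, the embeddings $\mM \subset \widetilde{\B\Env}(\mM)$ and $\theta(\mM) \subset \widetilde{\L\Env}(\theta(\mM))$ being matched under $\theta$ by the universal properties applied to the identity. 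The principal obstacle is setting up $\widetilde{\L\Env}$ and proving its universal property; however, since the construction and proofs run strictly parallel to Notation \ref{noii}, Lemma \ref{tenno} and Proposition \ref{mcha} in the two-sided case, this is bookkeeping rather than substance, and the remainder of the proof is a diagram chase.
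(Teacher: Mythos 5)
Your proposal is correct and follows essentially the same route as the paper: the paper's inverse $\gamma$ is built exactly as your $\Phi$, by applying the left adjoint $\L$ of the inclusion of left tensored into left pseudo-enriched $\infty$-categories (the universal property of your $\widetilde{\L\Env}$, supplied by the one-sided analogue of Proposition \ref{mcha}), transporting along the inverse of Lemma \ref{basi}, and passing to the full weakly bi-enriched subcategory spanned by the original objects. The paper's verification that the two composites are the identity is the same compatibility $\theta\circ(\Lambda\circ\L)\simeq\L'\circ\theta$ that you phrase as $\theta(\widetilde{\B\Env}(\mM))\simeq\widetilde{\L\Env}(\theta(\mM))$, plus the preliminary reduction to fibers over $\Op_\infty\times\Op_\infty$ via the cocartesian fibration structure.
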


\begin{proof}

The functor $\theta:\B\P\Enr \to \kappa^*(\L\P\Enr_\emptyset)$ is a map of cocartesian fibrations over $\Op_\infty \times \Op_\infty$ using Proposition \ref{bica} (4).
So it is enough to see that for any $\infty$-operads $\mV^\ot \to \Ass,\mW^\ot \to \Ass $ the induced functor 
$\theta: {_\mV\B\P\Enr_{\mW}} \to {_{\mV \times \mW^\rev}\L\P\Enr_\emptyset}$ on the fiber is an equivalence. We construct an inverse $\gamma$ of $\theta$.
By Proposition \ref{mcha} and Corollary \ref{coronn} the inclusion $ _{\mV \times \mW^\rev} \LMod_\emptyset \subset {_{\mV \times \mW^\rev}\L\P\Enr_\emptyset} $ admits a left adjoint $\L$.
By Lemma \ref{basi} the functor $\theta : {_\mV\B\P\Enr_{\mW}} \to {_{\mV \times \mW^\rev}\L\P\Enr_\emptyset}$ restricts to an equivalence $ {_\mV \BMod_{ \mW}} \simeq {_{\mV \times \mW^\rev}\LMod},$
whose inverse we call $\Lambda$.
For every bipseudo-enriched $\infty$-category $\mM^\circledast \to \mV^\ot \times (\mW^\rev)^\ot$ let $\gamma(\mM)^\circledast \subset \Lambda(\L(\mM))^\circledast$
be the full weakly bienriched subcategory spanned by $\mM \subset \L(\mM).$
Since $\Lambda(\L(\mM))^\circledast \to \mV^\ot \times \mW^\ot$ is a bitensored $\infty$-category, $\gamma(\mM)^\circledast \to \mV^\ot \times \mW^\ot$ exhibits
$\mM$ as bipseudo-enriched in $\mV,\mW.$
Thus the functor $\Lambda \circ \L: {_{\mV \times \mW^\rev}\L\P\Enr_\emptyset} \xrightarrow{} {_{\mV \times \mW^\rev}\LMod_\emptyset} \xrightarrow{} {_\mV \BMod}_{\mW}$ induces a functor $\gamma: {_{\mV \times \mW^\rev}\L\P\Enr_\emptyset} \to {_\mV\B\P\Enr}_{\mW} $. We will prove that $\gamma$ is inverse to $\theta.$

The tautological natural transformation
$ \gamma \subset \Lambda \circ \L$ of functors ${_{\mV \times \mW^\rev} \L\P\Enr_\emptyset}
\to {_\mV\B\P\Enr}_{\mW}$
gives rise to a natural transformation
$\theta \circ \gamma \subset \theta \circ \Lambda \circ \L \simeq \L$ of functors ${_{\mV \times \mW^\rev}\L\P\Enr_\emptyset} \to {_{\mV \times \mW^\rev} \L\P\Enr_\emptyset}, $
which induces an equivalence 
$\alpha: \theta \circ \gamma \simeq \id$ of functors $ {_{\mV \times \mW^\rev}\L\P\Enr_\emptyset} \to {_{\mV \times \mW^\rev} \L\P\Enr_\emptyset}.$
By Proposition \ref{mcha} the inclusion $ _\mV\BMod_{\mW} \subset {_\mV\B\P\Enr}_{\mW} $ admits a left adjoint $\L'$.
The composite
$ \L \circ \theta \to \L \circ \theta \circ \L' \simeq \theta \circ \L'
$ (induced by the unit $\id \to \L'$) 
of natural transformations of functors ${_\mV\B\P\Enr}_{\mW} \to {_{\mV \times \mW^\rev}\LMod_\emptyset}$ gives rise to a natural transformation
$ \Lambda \circ \L \circ \theta \to \Lambda \circ \theta \circ \L' \simeq \L'$
of functors $_\mV\B\P\Enr_{\mW} \to {_\mV\BMod}_{\mW}$.
Let $\beta$ be the composition $\gamma \circ \theta \subset \Lambda \circ \L \circ \theta \to \L'$ of natural transformations of functors $_\mV\B\P\Enr_{\mW} \to {_\mV\B\P\Enr}_{\mW}$.
There is a commutative triangle:
\begin{equation*}
\begin{xy}
\xymatrix{
\theta\circ \gamma \circ \theta \ar[rr]^{\theta\circ \beta} \ar[rd] ^{\alpha \circ \theta}
&&\theta\circ  \L'
\\ 
& \theta. \ar[ru].
}
\end{xy} 
\end{equation*} 
Since $\alpha$ is an equivalence and the unit $\id \to \L'$ is component-wise an embedding, $\theta \circ \beta$ and so $\beta$ are component-wise embeddings.
Hence $\beta: \gamma \circ \theta \to \L'$ induces an equivalence $\gamma \circ \theta \simeq \id$ of functors $_\mV\B\P\Enr_{\mW} \to {_\mV\B\P\Enr}_{\mW}$.

\end{proof}	

\begin{corollary}
Let $\mV^\ot \to \Ass, \mW^\ot\to \Ass$ be small monoidal $\infty$-categories.
There is a canonical equivalence 
\begin{equation*}\label{ujjj}
_\mV\P\B\Enr_\mW\simeq {_{\mV \times \mW^\rev}\P\L\Enr_\emptyset}.
\end{equation*}
	
\end{corollary}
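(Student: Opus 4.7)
The plan is to deduce this corollary as a direct consequence of Theorem \ref{biii} by restricting the equivalence $\theta: \B\P\Enr \to \kappa^*(\L\P\Enr_\emptyset)$ to fibers over the object $(\mV^\ot, \mW^\ot) \in \Op_\infty \times \Op_\infty$. As noted in the proof of Theorem \ref{biii}, the functor $\theta$ is a map of cocartesian fibrations over $\Op_\infty \times \Op_\infty$, and both sides of the claimed equivalence are fibers of the forgetful functors from $\B\P\Enr$ and $\kappa^*(\L\P\Enr_\emptyset)$ over the object $(\mV^\ot, \mW^\ot)$.

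Concretely, I would proceed as follows. First, unwind the definition of $\kappa^*(\L\P\Enr_\emptyset)$: its fiber over $(\mV^\ot, \mW^\ot)$ is, by definition of the pullback along $\kappa(\mV^\ot, \mW^\ot) = \mV^\ot \times_\Ass (\mW^\ot)^\rev$, precisely the $\infty$-category ${_{\mV \times \mW^\rev}\L\P\Enr_\emptyset}$ of $\infty$-categories weakly left enriched in the monoidal $\infty$-category $\mV \times \mW^\rev$ that are left pseudo-enriched. Second, the fiber of $\B\P\Enr \to \Op_\infty \times \Op_\infty$ over $(\mV^\ot, \mW^\ot)$ is by definition ${_\mV\B\P\Enr_\mW}$. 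Third, apply Theorem \ref{biii} to conclude that $\theta$ restricts to an equivalence on these fibers, giving
\[
{_\mV\B\P\Enr_\mW} \simeq {_{\mV \times \mW^\rev}\L\P\Enr_\emptyset}.
\]

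Since Theorem \ref{biii} is already established, there is no genuine obstacle here; the only thing to be careful about is matching notation. In particular, one should verify that the object $\kappa(\mV^\ot, \mW^\ot)$ really is what is called $\mV \times \mW^\rev$ as a monoidal $\infty$-category, which is immediate from the construction $\kappa(\mV^\ot, \mW^\ot) = \mV^\ot \times_\Ass (\mW^\ot)^\rev$ together with Notation \ref{invo} for $(-)^\rev$. The proof is therefore essentially a one-line fiberwise restriction of Theorem \ref{biii}.
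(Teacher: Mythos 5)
Your proof is correct and is exactly the intended argument: the paper states this corollary without proof as an immediate consequence of Theorem \ref{biii}, and indeed the proof of Theorem \ref{biii} itself reduces to establishing precisely the fiberwise equivalence ${_\mV\B\P\Enr_\mW} \simeq {_{\mV \times \mW^\rev}\L\P\Enr_\emptyset}$ over each pair $(\mV^\ot,\mW^\ot)$. Your identification of the fiber of $\kappa^*(\L\P\Enr_\emptyset)$ and your reading of the corollary's $\P\B\Enr$, $\P\L\Enr$ as the bi-pseudo-enriched and left pseudo-enriched $\infty$-categories of Theorem \ref{biii} are both right.
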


\begin{corollary}\label{bienr}
Let $\mV^\ot \to \Ass, \mW^\ot \to \Ass$ be small $\infty$-operads.
There is a canonical equivalence
$$ _\mV\omega\B\Enr_{\mW} \simeq {_{\mP\Env(\mV)\otimes \mP\Env(\mW^\rev)}\L\Enr_\emptyset}.$$
	
\end{corollary}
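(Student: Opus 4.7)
The proof combines Corollary~\ref{coronn}(3), which reduces the problem to the presentably monoidal case, with Theorem~\ref{biii} transforming bi-pseudo-enrichment into left pseudo-enrichment, together with the representability characterization in Remark~\ref{EEnr} identifying bi-enrichment with left enrichment over the presentable tensor product. First, Corollary~\ref{coronn}(3) yields the equivalence
$$_{\mP\Env(\mV)}\B\Enr_{\mP\Env(\mW)} \simeq {_\mV\omega\B\Enr_\mW}$$
given by pullback along the embeddings of $\infty$-operads $\mV^\ot \subset \mP\Env(\mV)^\ot$ and $\mW^\ot \subset \mP\Env(\mW)^\ot$. Writing $\mV' := \mP\Env(\mV)$ and $\mW' := \mP\Env(\mW)$, which are presentably monoidal, this reduces the statement to the equivalence $_{\mV'}\B\Enr_{\mW'} \simeq {_{\mV' \ot \mW'^\rev}\L\Enr_\emptyset}$, with $\ot$ denoting the tensor product of presentable $\infty$-categories. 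The outer identification $\mV' \ot \mW'^\rev \simeq \mP\Env(\mV) \ot \mP\Env(\mW^\rev)$ is immediate from $\mP\Env(\mW^\rev) \simeq \mP\Env(\mW)^\rev$, which in turn follows from the universal property of $\mP\Env$ in Corollary~\ref{envvcor}(1) applied to the involution $(-)^\rev$ on $\L\Mon$.

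For the presentable case, I would invoke Theorem~\ref{biii}: the functor $\theta$ restricts to an equivalence $_{\mV'}\B\P\Enr_{\mW'} \simeq {_{\mV' \times_\Ass \mW'^\rev}\L\P\Enr_\emptyset}$. Since $_{\mV'}\B\Enr_{\mW'}$ is a full subcategory of $_{\mV'}\B\P\Enr_{\mW'}$, the restriction of $\theta$ is fully faithful, and the task becomes identifying its essential image with $_{\mV' \ot \mW'^\rev}\L\Enr_\emptyset$, viewed inside $_{\mV' \times_\Ass \mW'^\rev}\L\P\Enr_\emptyset$ via pullback along the canonical left adjoint monoidal functor $\mV' \times_\Ass \mW'^\rev \to \mV' \ot \mW'^\rev$. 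The crucial input is \cite[Prop.~4.8.1.17]{lurie.higheralgebra}: the presentable tensor product $\mV' \ot \mW'^\rev$ identifies with the full subcategory $\mP_\rep(\mV' \times \mW'^\rev) \subset \mP(\mV' \times \mW'^\rev)$ of presheaves representable in both variables. Combined with Remark~\ref{EEnr}, a bi-pseudo-enriched $\mM$ over $\mV', \mW'$ is bi-enriched exactly when its morphism presheaves $\Gamma_\mM(\X,\Y)$ land in this subcategory, which under $\theta$ translates precisely to $\theta(\mM)$ possessing left morphism objects in $\mV' \ot \mW'^\rev$.

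The main obstacle is this essential-image identification: one must show that the bi-enriched condition (separate existence of left morphism objects in $\mV'$ and right morphism objects in $\mW'$) corresponds exactly to left enrichment over the presentable tensor product $\mV' \ot \mW'^\rev$, and not merely to left enrichment over the operadic fiber product $\mV' \times_\Ass \mW'^\rev$ (which would give the more restrictive condition that the morphism presheaf factors as an external product). Verifying this hinges on reconciling the two different enrichment structures via the representable-in-both-variables characterization, together with checking naturality in $\mV$ and $\mW$ so that the pointwise equivalences assemble into the claimed equivalence over the full base $\Op_\infty \times \Op_\infty$.
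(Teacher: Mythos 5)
Your reduction via Corollary \ref{coronn}(3) and the outer identification $\mP\Env(\mV)\otimes\mP\Env(\mW)^\rev\simeq\mP\Env(\mV)\otimes\mP\Env(\mW^\rev)$ are fine, but the step you yourself flag as "the main obstacle" is precisely where the proof is incomplete. After restricting $\theta$ from Theorem \ref{biii} you must show that pullback along the universal left adjoint monoidal functor $\mV'^\ot\times_\Ass\mW'^{\rev,\ot}\to(\mV'\otimes\mW'^\rev)^\ot$ embeds ${_{\mV'\otimes\mW'^\rev}\L\Enr_\emptyset}$ fully faithfully into ${_{\mV'\times_\Ass\mW'^\rev}\L\P\Enr_\emptyset}$ with essential image the objects whose morphism presheaves are representable in both variables. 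Corollary \ref{embeto} does not apply here, because that functor is not an embedding of $\infty$-operads; one has to factor through presheaf categories and their localizations. This is exactly the content of Lemma \ref{basimo} and Corollary \ref{abcd} in the paper, which are proven only \emph{after} Corollary \ref{bienr} and require a genuine argument (Corollaries \ref{cosqa}, \ref{coronn} and \ref{embeto} applied to the suboperad $\mV'\otimes\mW'^\rev\subset\mP(\mV'\times\mW'^\rev)$). As written, your proof asserts the characterization without establishing it, so there is a gap — a fillable one, but the filling is the hardest part of the statement on your route.

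The paper's proof avoids this issue entirely by never passing through genuine bi-enrichment over presentable bases. It writes ${_\mV\omega\B\Enr_\mW}\simeq{_{\Env(\mV)}\B\P\Enr_{\Env(\mW)}}$, applies Theorem \ref{biii} at the \emph{pseudo-enriched} level over the small monoidal envelopes to get ${_{\Env(\mV)\times\Env(\mW)^\rev}\L\P\Enr_\emptyset}$, and then converts left pseudo-enrichment over the small monoidal $\infty$-category $\Env(\mV)\times_\Ass\Env(\mW)^\rev$ into left enrichment over its presheaf category via Corollary \ref{coronn}(4). The tensor product then appears only through the formal equivalence $\mP(\mA\times\mB)\simeq\mP(\mA)\otimes\mP(\mB)$, so no representability-in-both-variables analysis is needed. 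If you want to keep your route, you should either prove the analogue of Lemma \ref{basimo} or restructure so that the comparison with the tensor product happens at the level of presheaves on a product of small monoidal $\infty$-categories, where it is automatic.
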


\begin{proof}
	
By Theorem \ref{biii} there is a canonical equivalence
$$ {_{\Env(\mV)}\B\P\Enr}_{\Env(\mW)} \simeq {_{\Env(\mV) \times \Env(\mW)^\rev}\L\P\Enr_\emptyset}.$$

By Corollary \ref{coronn} there are canonical equivalences
$$ _\mV\omega\B\Enr_{\mW} \simeq {_{\Env(\mV)}\B\P\Enr}_{\Env(\mW)}, \ _{\mP(\Env(\mV)\times \Env(\mW)^\rev)}\L\Enr_\emptyset \simeq {_{\Env(\mV) \times \Env(\mW)^\rev}\L\P\Enr_\emptyset}.$$
	
\end{proof}

\begin{corollary}

For every bipseudo-enriched $\infty$-category $\mM^\circledast \to \mV^\ot \times \mW^\ot$ and weakly bienriched $\infty$-category $ \mN^\circledast \to \mU^\ot \times \mQ^\ot$ the following induced commutative square is a pullback square:
\begin{equation}\label{ewjk}
\begin{xy}
\xymatrix{
\Enr\Fun(\mM,\mN) \ar[d]
\ar[rrr]
&&& \Enr\Fun(\theta(\mM), \theta(\mN)) \ar[d] 
\\
\Alg_\mV(\mU) \times \Alg_\mW(\mQ) \ar[rrr]  &&& \Alg_{\mV \times \mW^\rev}(\mU \times \mQ^\rev).}
\end{xy}\end{equation}

\end{corollary}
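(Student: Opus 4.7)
The plan is to verify the pullback property of the square \eqref{ewjk} fiber-wise over the bottom-left corner. This square is the naturality square attached to $\theta\colon\omega\B\Enr\to\kappa^*(\omega\L\Enr)$, viewed as a functor over $\Op_\infty\times\Op_\infty$. For a fixed $(\alpha,\beta)\in\Alg_\mV(\mU)\times\Alg_\mW(\mQ)$, since $\theta$ is defined by pullback along $\tau$, base-change on the target commutes with $\theta$, giving $\theta((\alpha,\beta)^*\mN)\simeq(\alpha\times\beta^\rev)^*\theta(\mN)$. Writing $\mN':=(\alpha,\beta)^*\mN$, the pullback question thus reduces to the following claim: for every bi-pseudo-enriched $\mM$ and every weakly bi-enriched $\mN'$ over $\mV,\mW$, the canonical map
\[\theta_*\colon \Enr\Fun_{\mV,\mW}(\mM,\mN')\to\L\Enr\Fun_{\mV\times_\Ass\mW^\rev}(\theta(\mM),\theta(\mN'))\]
is an equivalence.

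When $\mN'$ is itself bi-pseudo-enriched, this is immediate from Theorem \ref{biii}, which says that $\theta\colon\B\P\Enr\xrightarrow{\sim}\kappa^*(\L\P\Enr_\emptyset)$ is an equivalence of $\infty$-categories over $\Op_\infty\times\Op_\infty$ and hence preserves fiber-wise mapping spaces. For a general weakly bi-enriched $\mN'$, I would promote it via Corollary \ref{coronn}(3) to its canonical extension $\hat{\mN'}$, which is bi-enriched (in particular bi-pseudo-enriched) over $\mP\Env(\mV),\mP\Env(\mW)$, and extend $\mM$ in parallel to $\hat{\mM}$ over the same base. The equivalence ${_{\mP\Env(\mV)}\B\Enr}_{\mP\Env(\mW)}\simeq{_\mV\omega\B\Enr}_\mW$ identifies the left-hand side of $\theta_*$ with the mapping space $\Enr\Fun_{\mP\Env(\mV),\mP\Env(\mW)}(\hat{\mM},\hat{\mN'})$. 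A parallel reduction on the left-enriched side via Corollary \ref{coronn}(1), combined with the compatibility of $\theta$ with the envelope constructions, would then reduce the claim to the bi-pseudo-enriched case already handled.

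The hard part will be establishing this compatibility of $\theta$ with the monoidal envelope. Concretely, this amounts to identifying $\mP\Env(\mV)\times_\Ass\mP\Env(\mW)^\rev$ with $\mP\Env(\mV\times_\Ass\mW^\rev)$ in a way that intertwines $\theta$ with the extension functors from Corollary \ref{coronn}. Both $\infty$-categories can be characterized by universal properties in the presentable monoidal setting via Corollary \ref{envvcor}, and combining this with Lemma \ref{basi} at the strictly tensored level should pin down the required identification. Once this compatibility is in place, the fiber-wise assertion follows at once from Theorem \ref{biii}, yielding the pullback property.
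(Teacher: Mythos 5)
There are two genuine problems with your plan. First, checking that the square is a pullback ``fiber-wise over the bottom-left corner'', i.e.\ over single objects $(\alpha,\beta)\in\Alg_\mV(\mU)\times\Alg_\mW(\mQ)$, does not suffice for a pullback of $\infty$-categories: the comparison functor $\Enr\Fun(\mM,\mN)\to(\Alg_\mV(\mU)\times\Alg_\mW(\mQ))\times_{\Alg_{\mV\times\mW^\rev}(\mU\times\mQ^\rev)}\Enr\Fun(\theta(\mM),\theta(\mN))$ being an equivalence on fibers over objects only controls objects and those morphisms lying over equivalences, and no fibration property of the vertical functors is established that would upgrade this. The paper circumvents this by applying $\Cat_\infty(\K,-)$ for an arbitrary small $\K$, which replaces $\mN$ by $\mN^\K$ (still bi-pseudo-enriched) and turns the square into a square of mapping spaces of $\omega\B\Enr$ over $\Op_\infty\times\Op_\infty$ and of $\omega\L\Enr$ over $\Op_\infty$; that square of spaces is a pullback precisely because Theorem \ref{biii} identifies $\B\P\Enr$ with $\kappa^*(\L\P\Enr_\emptyset)$ over $\Op_\infty\times\Op_\infty$. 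Your fiber-wise step would need either this device or a (co)cartesianness argument you have not supplied.

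Second, the ``hard part'' you defer is not merely hard but false as stated: there is no identification of $\mP\Env(\mV)\times_\Ass\mP\Env(\mW)^\rev$ (equivalently $\mP(\Env(\mV)\times\Env(\mW^\rev))$) with $\mP\Env(\mV\times_\Ass\mW^\rev)$, because $\Env$ does not commute with $\times_\Ass$ --- already on underlying $\infty$-categories $\Env(\mV\times\mW^\rev)$ consists of pairs of sequences of equal length while $\Env(\mV)\times\Env(\mW^\rev)$ allows arbitrary lengths. This discrepancy is exactly why $\theta:\omega\B\Enr\to\kappa^*(\omega\L\Enr)$ fails to be an equivalence (see the remark following Corollary \ref{bienr}, which identifies $\theta$ on ${_\mV\omega\B\Enr_\mW}$ with restriction along the non-invertible monoidal functor $\Env(\mV\times\mW^\rev)^\ot\to\Env(\mV)^\ot\times_\Ass\Env(\mW^\rev)^\ot$). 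The reduction to the bi-pseudo-enriched case should instead go through the enveloping bitensored $\infty$-category: since $\mN^\circledast\subset\B\Env(\mN)^\circledast$ is a cartesian morphism of $\omega\B\Enr$ in the sense of Lemma \ref{bicaa}, the square for $\mN$ is the pullback of the square for $\B\Env(\mN)$, and pullback squares are stable under base change; this is the paper's first step and avoids the envelope comparison entirely.
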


\begin{proof}
The commutative square (\ref{ewjk}) is the pullback of the commutative square (\ref{ewjk}), where we replace $ \mN^\circledast \to \mU^\ot \times \mQ^\ot$ by the enveloping bitensored $\infty$-category $\mB\Env(\mN)^\circledast \to \Env(\mU)^\ot \times \Env(\mQ)^\ot$.
Consequently, we can assume that $ \mN^\circledast \to \mU^\ot \times \mQ^\ot$
is a bipseudo-enriched $\infty$-category.
For any small $\infty$-category $\K$ applying the functor $\Cat_\infty(\K,-):
\Cat_\infty \to \mS$ to square (\ref{ewjk}) gives 
the induced commutative square 
\begin{equation*}
\begin{xy}
\xymatrix{
\omega\B\Enr(\mM,\mN^\K) \ar[d]
\ar[rrr]
&&& \omega\B\Enr_\emptyset(\theta(\mM), \theta(\mN^\K)) \ar[d] 
\\
\Op_{\infty}(\mV, \mU^\K) \times \Op_\infty(\mW, \mQ^\K) \ar[rrr]  &&& \Op_\infty(\mV \times \mW^\rev, \mU^\K \times (\mQ^\K)^\rev).}
\end{xy}\end{equation*}
So the result follows from Theorem \ref{biii} and that $ (\mM^\circledast)^\K \to (\mV^\ot)^\K \times (\mW^\ot)^\K$ is a bipseudo-enriched $\infty$-category.

\end{proof}	

\begin{remark}
The functor $\theta: \omega\B\Enr \to \kappa^*(\omega\B\Enr_\emptyset)$ is not an equivalence since it is not conservative.
There is a canonical commutative square:
$$\begin{xy}
\xymatrix{
{_{\Env(\mV)}\B\P\Enr}_{\Env(\mW)} \ar[d]^\simeq
\ar[rrr]_\simeq^{\theta}
&&& {_{\Env(\mV) \times \Env(\mW)^\rev}} \L\P\Enr_\emptyset\ar[d] 
\\
_\mV\omega\B\Enr_{\mW} \ar[rrr]^{\theta}  &&& {_{\mV \times \mW^\rev}}\omega\B\Enr_\emptyset \simeq {_{\Env(\mV \times \mW^\rev)}}\L\P\Enr_\emptyset,}
\end{xy}$$
which identifies the functor $\theta: {_\mV\omega\B\Enr}_{\mW} \to {_{\mV \times \mW^\rev}\omega\B\Enr_\emptyset}$
with the right vertical functor $${_{\Env(\mV) \times \Env(\mW)^\rev}\L\P\Enr_\emptyset} \to {_{\Env(\mV \times \mW^\rev)}{\L\P\Enr_\emptyset}}$$
taking pullback along the monoidal functor $ \Env(\mV \times \mW^\rev)^\ot \to \Env(\mV)^\ot \times_\Ass \Env(\mW^\rev)^\ot.$
\end{remark}


We apply Theorem \ref{biii} to transform bi-enrichment into left enrichment (Corollary \ref{kilio}).

\begin{notation}
Let $\mV^\ot \to \Ass, \mW^\ot \to \Ass $ be small $\infty$-operads. 

\begin{enumerate}
\item Let $\Fun(\Env(\mV)^\op, \mW)^\ot \subset \mP(\Env(\mV) \times \Env(\mW))^\ot$ be the full suboperad whose colors are the objects of
$\Fun(\Env(\mV)^\op, \mW) \subset \Fun(\Env(\mV)^\op, \mP\Env(\mW)) \simeq \mP(\Env(\mV) \times \Env(\mW)).$

\vspace{1mm}

\item Let $\langle \mV, \mW \rangle^\ot \subset \mP(\Env(\mV) \times \Env(\mW))^\ot $ be the full suboperad whose colors are the presheaves on $\Env(\mV) \times \Env(\mW)$
representable in both variables such that for the corresponding adjunction
$\Env(\mV) \rightleftarrows \Env(\mW)^\op$ the left adjoint lands in $\mW^\op$
and the right adjoint lands in $\mV.$
\end{enumerate}	
\end{notation}

\begin{corollary}\label{kilio}
Let $\mV^\ot \to \Ass, \mW^\ot \to \Ass $ be small $\infty$-operads. 
There are canonical equivalences preserving the underlying $\infty$-category and morphism objects:
\begin{enumerate}
\item $${_\mV\L\Enr}_{\mW} \simeq {_{\Fun(\Env(\mW^\rev)^\op, \mV)}\L\Enr_\emptyset},$$
\item $${_\mV\R\Enr}_{\mW} \simeq {_{\Fun(\Env(\mV)^\op, \mW^\rev)}\L\Enr_\emptyset},$$
\item $${_\mV\B\Enr}_{\mW} \simeq {_{_{\langle \mV, \mW^\rev \rangle}}\L\Enr_\emptyset}.$$

\end{enumerate}
	
\end{corollary}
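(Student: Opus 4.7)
The plan is to derive Corollary \ref{kilio} from Corollary \ref{bienr} by identifying, under the canonical equivalence $_\mV\omega\B\Enr_\mW \simeq {_{\mP\Env(\mV)\otimes \mP\Env(\mW^\rev)}\L\Enr_\emptyset}$, the three subcategories ${_\mV\L\Enr_\mW}, {_\mV\R\Enr_\mW}, {_\mV\B\Enr_\mW} \subset {_\mV\omega\B\Enr_\mW}$ with the full subcategories on the right-hand side cut out by appropriate conditions on morphism objects.

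The first step is to check that the equivalence of Corollary \ref{bienr} preserves underlying $\infty$-categories and carries the graph $\Gamma_\mM(\X,\Y) \in \mP(\Env(\mV)\times \Env(\mW)^\rev) \simeq \mP\Env(\mV) \otimes \mP\Env(\mW^\rev)$ of Construction \ref{Enros} to the left morphism object of the corresponding left enriched $\infty$-category. Unwinding the composition of equivalences in the proof of Corollary \ref{bienr} — namely Corollary \ref{coronn} (for the passage $\omega\B\Enr \simeq \B\P\Enr$ on envelopes), Theorem \ref{biii} (for $\B\P\Enr \simeq \kappa^*(\L\P\Enr_\emptyset)$ via the $\tau$-pullback of Remark \ref{embrace}), and again Corollary \ref{coronn} to identify $\L\P\Enr_\emptyset$ with $\L\Enr_\emptyset$ over presheaves — both objects are characterized by the same multi-morphism spaces $\Mul_\mM(\V_1,\ldots,\V_\n,\X,\W_1,\ldots,\W_\m;\Y)$, so they agree.

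Once this is established, the identification of the three subcategories is a direct application of Remark \ref{EEnr} combined with the presheaf identifications $\mP(\Env(\mV)\times\Env(\mW^\rev)) \simeq \Fun(\Env(\mW^\rev)^\op, \mP\Env(\mV)) \simeq \Fun(\Env(\mV)^\op, \mP\Env(\mW^\rev))$. Namely, $\mM$ is left $\mV$-enriched iff $\Gamma_\mM(\X,\Y)$ corresponds to a functor $\Env(\mW^\rev)^\op \to \mV$ factoring through $\mV \subset \mP\Env(\mV)$; right $\mW$-enriched iff it corresponds to a functor $\Env(\mV)^\op \to \mW^\rev$ factoring through $\mW^\rev \subset \mP\Env(\mW^\rev)$; and bi-enriched iff $\Gamma_\mM(\X,\Y)$ is representable in both variables with the adjunction $\Env(\mV) \rightleftarrows \Env(\mW^\rev)^\op$ having left adjoint in $(\mW^\rev)^\op$ and right adjoint in $\mV$ — the defining condition of $\langle \mV, \mW^\rev\rangle^\ot$. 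Thus under the equivalence of Corollary \ref{bienr} these three subcategories are sent precisely to the full subcategories of ${_{\mP\Env(\mV)\otimes \mP\Env(\mW^\rev)}\L\Enr_\emptyset}$ whose morphism objects land in $\Fun(\Env(\mW^\rev)^\op, \mV)^\ot$, $\Fun(\Env(\mV)^\op, \mW^\rev)^\ot$, and $\langle\mV,\mW^\rev\rangle^\ot$, respectively.

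To conclude, apply Corollary \ref{embeto} to each of the three embeddings of $\infty$-operads
\[
\Fun(\Env(\mW^\rev)^\op, \mV)^\ot,\ \Fun(\Env(\mV)^\op, \mW^\rev)^\ot,\ \langle \mV,\mW^\rev\rangle^\ot\ \subset\ (\mP\Env(\mV)\otimes \mP\Env(\mW^\rev))^\ot,
\]
which identifies $_{\mV'}\L\Enr$ (for each suboperad $\mV'$) with the full subcategory of $_{\mP\Env(\mV)\otimes\mP\Env(\mW^\rev)}\L\Enr_\emptyset$ whose left morphism objects lie in the essential image of $\mV'$. Composing this with the equivalence of Corollary \ref{bienr} produces the three equivalences of Corollary \ref{kilio}, and by construction the resulting bijections preserve both the underlying $\infty$-category and morphism objects. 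The main obstacle is the first step: verifying rigorously that the graph functor $\Gamma_\mM$ is transported to the left morphism object under the chain of equivalences in Corollary \ref{bienr}, since this requires tracing how the reversal $\mW \mapsto \mW^\rev$ interacts with the presheaf and envelope constructions — but since every intermediate equivalence in that chain is natural in the multi-morphism spaces, the identification is forced.
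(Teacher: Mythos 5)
Your proposal is correct and follows essentially the same route as the paper: the paper likewise passes through the equivalence of Corollary \ref{bienr}, identifies ${_\mV\L\Enr_\mW}$ (resp.\ the right and bi-enriched variants) with the full subcategory of left enriched $\infty$-categories over $\mP(\Env(\mV)\times\Env(\mW)^\rev)$ whose morphism objects land in the relevant suboperad, and then concludes with Corollary \ref{embeto}. The only difference is that you make explicit the verification that the graph $\Gamma_\mM$ is transported to the left morphism object, which the paper leaves implicit.
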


\begin{proof}
(1): Under the equivalence $$ {_\mV\omega\B\Enr}_{\mW} \simeq {_{\mP(\Env(\mV)\times \Env(\mW)^\rev)}}\L\Enr_\emptyset $$ of Corollary \ref{bienr} the full subcategory ${_\mV\L\Enr_{\mW}}$
corresponds to the full subcategory $$\Xi \subset{_{\mP(\Env(\mV)\times \Env(\mW)^\rev)}} \L\Enr_\emptyset$$ spanned by the left enriched $\infty$-categories $\mM^\circledast \to \mP(\Env(\mV)\times \Env(\mW)^\rev)^\ot$ such that for every $\X, \Y \in \mM$
the left morphism object $\L\Mor_\mM(\X,\Y) \in \mP(\Env(\mV)\times \Env(\mW))$ belongs to
$$\Fun(\Env(\mW)^\op, \mV) \subset \Fun(\Env(\mW)^\op, \mP\Env(\mV)) \simeq \mP(\Env(\mV)\times \Env(\mW)).$$
The induced functor 
$_{\mP(\Env(\mV)\times \Env(\mW)^\rev)}\omega\B\Enr_\emptyset \to {_{\Fun(\Env(\mW^\rev)^\op, \mV)}\omega\B\Enr_\emptyset} $
restricts to a functor $$\Xi \to {_{\Fun(\Env(\mW^\rev)^\op, \mV)}\L\Enr_\emptyset},$$
which is an equivalence by Corollary \ref{embeto}.
(2) and (3) are similar to (1).	
	
\end{proof}
Corollary \ref{kilio} gives the following theorem as a corollary:

\begin{theorem}\label{Enrbi}
Let $\mV^\ot \to \Ass, \mW^\ot \to \Ass $ be presentably monoidal $\infty$-categories. There is a canonical equivalence preserving the underlying $\infty$-category and morphism objects:
$${_\mV\B\Enr}_{\mW} \simeq {_{\mV\ot \mW^\rev}\L\Enr}_\emptyset.$$
	
\end{theorem}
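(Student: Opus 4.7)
The plan is to deduce Theorem \ref{Enrbi} directly from Corollary \ref{kilio}(3), which already supplies a canonical equivalence $_\mV\B\Enr_\mW \simeq {_{\langle \mV, \mW^\rev\rangle}\L\Enr}$ preserving underlying $\infty$-categories and morphism objects for arbitrary small monoidal $\mV, \mW$. Hence the only real task is to identify the $\infty$-operad $\langle \mV, \mW^\rev \rangle$ with the monoidal $\infty$-category $\mV \otimes \mW^\rev$ when $\mV, \mW$ are presentably monoidal, and then transport the equivalence of Corollary \ref{kilio}(3) along this identification via the functor $_{(-)}\L\Enr : \Mon \to \widehat{\Cat}_\infty$.

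First, I would recall that Day convolution gives a canonical monoidal equivalence $\mP\Env(\mV) \otimes \mP\Env(\mW^\rev) \simeq \mP(\Env(\mV) \times \Env(\mW^\rev))$ (cf.\ the proof of Proposition \ref{adjeq}). Since $\mV$ and $\mW^\rev$ are presentably monoidal, the reflections $\mP\Env(\mV) \to \mV$ and $\mP\Env(\mW^\rev) \to \mW^\rev$ are monoidal localizations, and tensoring them exhibits $\mV \otimes \mW^\rev$ as a monoidal localization of $\mP(\Env(\mV) \times \Env(\mW^\rev))$. Then, using the identification $\mV \otimes \mW^\rev \simeq \Fun^R(\mV^\op, \mW^\rev)$ from \cite[Proposition 4.8.1.17.]{lurie.higheralgebra}, I would check that an object of $\mP(\Env(\mV) \times \Env(\mW^\rev))$ is local for this tensored localization if and only if it is representable in both variables with associated adjunction $\Env(\mV) \rightleftarrows \Env(\mW^\rev)^\op$ whose left adjoint factors through $(\mW^\rev)^\op$ and whose right adjoint factors through $\mV$; this is exactly the defining condition of $\langle \mV, \mW^\rev\rangle$.

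The main obstacle is verifying that this identification of underlying $\infty$-categories upgrades to a monoidal equivalence, i.e.\ that the full subcategory $\langle \mV, \mW^\rev\rangle \subset \mP(\Env(\mV) \times \Env(\mW^\rev))$ is closed under the Day convolution tensor product and that the induced monoidal structure agrees with the universal one on $\mV \otimes \mW^\rev$. Closure under the tensor product follows because the collection of local equivalences for the two factor localizations is closed under Day convolution with any object (the reflections being monoidal), so that Lemma \ref{Lemut} applies. Agreement of monoidal structures then follows from the universal property: the composition $\mP(\Env(\mV) \times \Env(\mW^\rev)) \to \langle \mV, \mW^\rev\rangle$ is a left adjoint monoidal localization killing exactly the same class of maps as the universal localization onto $\mV \otimes \mW^\rev$, so the two targets agree canonically as presentably monoidal $\infty$-categories.

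Finally, substituting $\langle \mV, \mW^\rev\rangle \simeq \mV \otimes \mW^\rev$ in Corollary \ref{kilio}(3) yields the desired equivalence ${_\mV\B\Enr}_{\mW} \simeq {_{\mV\otimes \mW^\rev}\L\Enr}$; preservation of underlying $\infty$-categories and morphism objects is automatic since both features are already preserved by Corollary \ref{kilio}(3) and the monoidal identification of $\langle \mV, \mW^\rev\rangle$ with $\mV \otimes \mW^\rev$ does not alter either.
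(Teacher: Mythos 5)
Your proposal is correct and follows the paper's own route: the paper obtains Theorem \ref{Enrbi} directly as a corollary of Corollary \ref{kilio} (3), leaving the identification $\langle \mV, \mW^\rev\rangle \simeq \mV \otimes \mW^\rev$ for presentably monoidal $\mV,\mW$ implicit. Your Day-convolution/localization argument (via Lemma \ref{Lemut} and the universal property of the monoidal localization) is precisely the verification the paper suppresses, and it is sound.
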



Next we apply Theorem \ref{biii} to tranform bimodules into left modules
(Corollary \ref{juha}).

\begin{notation}\label{notat}
Let $\mM^\circledast \to\mV^\ot$ be a weakly left enriched $\infty$-category, $\mO^\circledast \to\mW^\ot$ a weakly right enriched $\infty$-category,
$\mN^\circledast \to\mV^\ot \times \mW^\ot$ a weakly bienriched $\infty$-category and $\A,\B$ associative algebras in $\mV,\mW$, respectively.
Let $${_\A \mM^\circledast} \to \Ass, \mO^\circledast_\B\to \Ass, {_\A \mN^\circledast_\B}\to \Ass \times \Ass$$ be the pullbacks of $\mM^\circledast \to\mV^\ot$ along $\A: \Ass \to \mV^\ot$, of $ \mO^\circledast \to \mW^\ot$ along $\B: \Ass \to \mW^\ot$ and of $\mN^\circledast \to\mV^\ot \times \mW^\ot$ along $\A \times \B: \Ass \times \Ass \to \mV^\ot\times \mW^\ot,$ respectively. 
Let $$_{\A}\LMod(\mM):=  \Enr\Fun_{[0]}([0], {_\A \mM}), {\LMod(\mO)_\B}:=  \Enr\Fun_{[0]}([0], {\mO_\B}), $$$$ _{\A}\BMod_{\B}(\mN):= \Enr\Fun_{[0],[0]}([0], {_\A \mN_\B}).$$ 
\end{notation}

We apply Notation \ref{notat} to $\mV^\circledast \to \mV^\ot \times \mV^\ot$ and $\emptyset^\ot \times_{\mV^\ot} \mV^\circledast \to \mV^\ot$ for any $\infty$-operad $\mV^\ot \to \Ass.$

\begin{remark}\label{bimo}
Let $\mN^\circledast \to\mV^\ot \times \mW^\ot$ be a weakly bienriched $\infty$-category and $\A,\B$ associative algebras in $\mV,\mW$, respectively.
There is a canonical equivalence
$$_{\A}\LMod(\RMod_\B(\mN)) =  \Enr\Fun_{[0]}([0], {_\A \Enr\Fun_{[0]}([0], {\mN_\B}) }) \simeq $$$$ \Enr\Fun_{[0]}([0], \Enr\Fun_{[0]}([0], {_\A\mN_\B})) \simeq  \Enr\Fun_{[0],[0]}([0], {_\A \mN_\B})= {_{\A}\BMod_{\B}(\mN)}.$$ 	

\end{remark}

\begin{corollary}\label{juha}

Let $\mN^\circledast \to \mV^\ot \times \mW^\ot$ be a weakly bienriched $\infty$-category, $\A$ an associative algebra in $\mV$ and
$\B$ an associative algebra in $\mW.$
The following canonical functor is an equivalence: $$ _\A\BMod_{\B}(\mN)\to {_{(\A,\B^\rev)}\LMod}(\theta(\mN)).$$
\end{corollary}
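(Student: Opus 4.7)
The plan is to unwind the definitions of both sides and identify them as two different presentations of the same pullback. First I would observe that, by definition,
\[ {_\A\mN_\B^\circledast} \;=\; (\Ass \times \Ass) \times_{\mV^\ot \times \mW^\ot} \mN^\circledast, \]
and $_\A\BMod_\B(\mN)$ is the $\infty$-category of sections of $_\A\mN_\B^\circledast \to \Ass \times \Ass$ preserving cocartesian lifts of pairs of inert morphisms whose first component preserves the maximum and whose second component preserves the minimum. On the other hand $\theta(\mN)^\circledast \to \mV^\ot \times_\Ass (\mW^\ot)^\rev$ is obtained from $\mN^\circledast \to \mV^\ot \times \mW^\ot$ by pullback along $\tau = (\id, (-)^\op)\colon \Ass \to \Ass \times \Ass$, so
\[ {_{(\A,\B^\rev)}\theta(\mN)^\circledast} \;=\; \Ass \times_{\mV^\ot \times_\Ass (\mW^\ot)^\rev} \theta(\mN)^\circledast \;\simeq\; \tau^*({_\A\mN_\B^\circledast}), \]
where the last equivalence follows from the pasting law and the fact that $(\A,\B^\rev)\colon \Ass \to \mV^\ot \times_\Ass (\mW^\ot)^\rev$ is precisely $(\A,\B)$ composed with $\tau$ under the identification of $(\mW^\ot)^\rev$ as the pullback of $\mW^\ot$ along the involution.

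The next step is to check that the cocartesian-preservation conditions match under $\tau$. For this the key point is the explicit description of the involution on $\Ass$: a morphism $f\colon [n] \to [m]$ in $\Ass$ is sent to $i \mapsto m - f(n-i)$. A direct computation shows that this involution restricts to a bijection between inert morphisms preserving the maximum and inert morphisms preserving the minimum, since an embedding $[m] \simeq \{i,i+1,\dots,i+m\} \subset [n]$ corresponds under the involution to the embedding $[m] \simeq \{0,1,\dots,m\} \subset [n]$ (shifted appropriately). Therefore $\tau$ carries an inert morphism in $\Ass$ preserving the maximum to a pair (inert preserving max, inert preserving min) in $\Ass \times \Ass$, which is exactly the class of morphisms over which a left $(\A,\B^\rev)$-module must preserve cocartesian lifts and a $\A,\B$-bimodule must preserve cocartesian lifts, respectively.

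Combining these two observations, sections of $\tau^*({_\A\mN_\B^\circledast}) \to \Ass$ preserving cocartesian lifts of inert max-preserving morphisms biject with sections of $_\A\mN_\B^\circledast \to \Ass \times \Ass$ over $\tau$ preserving cocartesian lifts of pairs (inert max, inert min); the latter are, by universal property of the pullback and using that $\tau$ hits precisely these pairs out of inert max morphisms of $\Ass$, the same as arbitrary sections of $_\A\mN_\B^\circledast \to \Ass\times\Ass$ preserving all such pairs. The resulting map is the canonical functor in the statement, and this identification is natural, yielding the desired equivalence. The main obstacle is the bookkeeping for the involution and the corresponding inert-class compatibility; once this is made precise, everything else is a pullback manipulation.
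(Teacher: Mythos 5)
Your first two steps are fine: the identification ${_{(\A,\B^\rev)}\theta(\mN)^\circledast}\simeq \tau^*({_\A\mN_\B^\circledast})$ is indeed a pullback--pasting exercise, and the involution on $\Ass$ does exchange inert morphisms preserving the maximum with inert morphisms preserving the minimum, so $\tau$ carries the inert class relevant for left modules into the inert class relevant for bimodules. The gap is in the last step. A section of $\tau^*({_\A\mN_\B^\circledast})\to\Ass$ is a lift of $\tau$ to ${_\A\mN_\B^\circledast}$, i.e.\ a functor out of $\Ass$, whereas an object of $_\A\BMod_\B(\mN)$ is a section of ${_\A\mN_\B^\circledast}\to\Ass\times\Ass$, i.e.\ a functor out of $\Ass\times\Ass$. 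Restriction along $\tau$ goes from the latter to the former, but $\tau$ is far from essentially surjective (it only hits objects of the form $([\n],[\n])$), so ``$\tau$ hits precisely these pairs'' is false and the universal property of the pullback gives you nothing here: for a general functor $X\to\Ass\times\Ass$ restriction along $\tau$ is genuinely lossy. The fact that it \emph{is} an equivalence for ${_\A\mN_\B^\circledast}$ uses the Segal-type conditions of Definition \ref{bla} in an essential way and is exactly the non-formal content of ``bimodules are left modules over $\A\ot\B^\rev$'' (Lurie's Theorem 4.3.2.7); it cannot be reduced to bookkeeping about inert classes.

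The paper obtains the corollary instead as a fiberwise consequence of Theorem \ref{biii}: the square (\ref{ewjk}) exhibits $\Enr\Fun(\mM,\mN)$ as the pullback of $\Enr\Fun(\theta(\mM),\theta(\mN))$ along the map of algebra $\infty$-categories, for any bi-pseudo-enriched $\mM^\circledast$. Taking $\mM^\circledast$ to be the terminal bi-pseudo-enriched $\infty$-category $[0]^\circledast=\Ass\times\Ass\to\Ass\times\Ass$ (so that $\theta([0])^\circledast=\Ass\to\Ass$) and passing to the fiber over $(\A,\B)\mapsto(\A,\B^\rev)$ yields precisely $_\A\BMod_\B(\mN)\simeq{_{(\A,\B^\rev)}\LMod}(\theta(\mN))$. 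Theorem \ref{biii} itself is proved not by a direct analysis of $\tau$ but by a monadicity argument over $\Cat_\infty$ (Lemma \ref{basi}) together with the envelope and localization machinery; some argument of this strength is unavoidable, so you should either cite Theorem \ref{biii}/Lurie 4.3.2.7 or supply an independent proof of the section-comparison step.
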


Next we apply Theorem \ref{biii} to show that $(\kappa,\kappa)$-bipseudo-enriched
$\infty$-categories can be transformed into left $\kappa$-pseudo-enriched $\infty$-categories for any regular cardinal $\kappa$ (Corollary \ref{abcd}).
\begin{notation}
Let $\n,\m \geq 0$ and for every $1 \leq \bi \leq \n, 1 \leq \bj \leq \m$ let $\mV_\bi^\ot \to \Ass,\mW_\bj^\ot \to \Ass $ be small monoidal $\infty$-categories compatible with $\kappa_\bi$-small colimits, $\tau_\bj$-small colimits, respectively,
for small regular cardinals $\kappa_\bi, \tau_\bj.$
Let $${_{\mV_1 \times ... \times \mV_\n}^{\kappa_1,...,\kappa_\n}\B\Enr^{\tau_1,...,\tau_\m}_{\mW_1 \times ... \times \mW_\m}} \subset {_{\mV_1 \times ... \times \mV_\n}\B\P\Enr}_{\mW_1 \times ... \times \mW_\m}$$
be the full subcategory spanned by the bipseudo-enriched $\infty$-categories whose
morphism objects are presheaves on $\mV_1 \times ... \times \mV_\n \times \mW_1 \times ... \times \mW_\m$ preserving $\kappa_\bi$-small colimits in the $\bi$-th
component for $1 \leq \bi \leq \n$ and preserving $\tau_\bj$-small colimits in the
$\n+\bj$-th component for $1 \leq \bj \leq \m.$
	
\end{notation}

\begin{lemma}\label{basimo}Let $\kappa,\tau$ be small regular cardinals,
$\n,\m \geq 0$ and for every $1 \leq \bi \leq \n, 1 \leq \bj \leq \m$ let $\mV_\bi^\ot \to \Ass,\mW_\bj^\ot \to \Ass $ be small monoidal $\infty$-categories compatible with $\kappa$-small colimits, $\tau$-small colimits, respectively. 
The functor 
$$_{\mV_1 \otimes_\kappa ... \ot_\kappa \mV_\n}\B\Enr_{\mW_1 \otimes_\tau ... \ot_\tau \mW_\m} \to {_{\mV_1 \times ... \times \mV_\n}\B\P\Enr}_{\mW_1 \times ... \times \mW_\m}$$
taking pullback along the universal monoidal functors preserving $\kappa$-small colimits, $\tau$-small colimits componentwise, respectively, induces an equivalence
$$_{\mV_1 \otimes_\kappa ... \ot_\kappa \mV_\n}^\kappa\B\Enr^{\tau}_{\mW_1 \otimes_\tau ... \ot_\tau \mW_\m} \to {_{\mV_1 \times ... \times \mV_\n}^{\kappa,..., \kappa}\B\Enr}^{\tau,...,\tau}_{\mW_1 \times ... \times \mW_\m}.$$

\end{lemma}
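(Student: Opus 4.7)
The plan is to deduce the lemma from the one-variable Corollary \ref{cosqa}(3) via the localization-pair formalism of Corollary \ref{eqqt}. Write $\mV := \mV_1 \times \cdots \times \mV_\n$, $\mW := \mW_1 \times \cdots \times \mW_\m$, $\mU := \mV_1 \otimes_\kappa \cdots \otimes_\kappa \mV_\n$, and $\mU' := \mW_1 \otimes_\tau \cdots \otimes_\tau \mW_\m$. The core step is to realize $\Ind_\kappa(\mU)$ as a monoidal localization $\rS^{-1}\mP\Env(\mV)$ of the closed monoidal envelope of $\mV$, for a small set $\rS$ of morphisms encoding both (a) separate $\kappa$-continuity of the tensor product in each $\mV_\bi$-variable (as in Notation \ref{enr} applied slot-by-slot) and (b) the identification of formal products coming from the operad structure on $\mV$ with the image of the genuine tensor product on $\mU$, and similarly to construct $\T$ on the $\mW$-side. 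I would then verify that $(\mV, \rS)$ and $(\mW, \T)$ are small localization pairs in the sense of Definition \ref{locpa}: the saturated closure $\bar\rS$ is closed under the tensor product of $\mP\Env(\mV)$ because that tensor product is a Day convolution for which both separate $\kappa$-continuity and the reduction condition (b) are preserved, while $\rS$-locality of objects of $\mV$ holds because representables satisfy the relations in (b) tautologically (and (a) is vacuous at the object level since the tensor product is not being formed).

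Once the localization pair is in place, Corollary \ref{eqqt} yields an equivalence
$${_{\Ind_\kappa(\mU)}\B\Enr}_{\Ind_\tau(\mU')} \simeq {^\rS_{\mV}\B\Enr^\T_{\mW}},$$
and Corollary \ref{cosqa}(3) further identifies the left-hand side with $_{\mU}^\kappa\B\Enr^\tau_{\mU'}$, which is the source of the functor in the lemma. For the target, I invoke Corollary \ref{corg}(3) to show that the $\rS,\T$-bi-enrichedness condition unwinds exactly to bi-pseudo-enrichment (Definition \ref{Lu}) together with the separate $\kappa$-continuity of morphism objects in each $\mV_\bi$-variable and separate $\tau$-continuity in each $\mW_\bj$-variable, matching the target category $_{\mV}^{\kappa,\ldots,\kappa}\B\Enr^{\tau,\ldots,\tau}_{\mW}$. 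This unwinding uses that $\rS^{-1}\mP\Env(\mV) \otimes \T^{-1}\mP\Env(\mW)$ embeds in $\mP(\mV \times \mW)$ as the full subcategory of presheaves representing multilinearly $\kappa$- and $\tau$-continuous functors in each coordinate.

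The principal obstacle is the clean construction of $\rS$ and, in particular, the verification that its saturated closure is stable under the multi-variable Day-convolution tensor product on $\mP\Env(\mV)$. I expect this to reduce to the straightforward observation that Day convolution preserves separate $\kappa$-continuity in each coordinate combined with monoidality of the universal functor $\mV \to \mU$, but some care is needed to make the indexing work out across all $\n + \m$ coordinates simultaneously. Once $\rS$ and $\T$ are correctly identified, the remainder of the argument is a matter of translating definitions via the cited corollaries.
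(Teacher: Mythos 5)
Your proposal founders on the construction of the localization pair itself: for $\n\geq 2$ the pair $(\mV_1\times\cdots\times\mV_\n,\rS)$ you describe violates the second axiom of Definition \ref{locpa}. That axiom requires every object of $\mV:=\mV_1\times\cdots\times\mV_\n$ to be local with respect to $\rS$, and you assert this is automatic; it is not. Whatever generating morphisms you choose to enforce $\kappa$-continuity in the $\mV_1$-slot, their local objects in $\mP\Env(\mV)$ must be the presheaves that are $\kappa$-continuous in the first coordinate, and the representable $\prod_\bi\mV_\bi(-,\V_\bi)$ is not such a presheaf: for a $\kappa$-small diagram $d:\K\to\mV_1$ one has $\mV_1(\colim d,\V_1)\times B\not\simeq\lim_{\K^{\op}}(\mV_1(d,\V_1)\times B)$ with $B=\prod_{\bi\geq 2}\mV_\bi(\V'_\bi,\V_\bi)$, because the constant factor $B$ contributes $\Map(|\K|,B)$ rather than $B$ to the limit (take $\K=\{1,2\}$ discrete to see the extra factor of $B$). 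This failure is structural, not a matter of choosing $\rS$ more cleverly: Corollary \ref{eqqt} always produces an equivalence given by pullback along a \emph{fully faithful} embedding $\mV^\ot\subset\rS^{-1}\mP\Env(\mV)^\ot$, whereas the functor in the lemma is pullback along $\mV_1\times\cdots\times\mV_\n\to\mV_1\ot_\kappa\cdots\ot_\kappa\mV_\n\subset\Ind_\kappa(\mV_1\ot_\kappa\cdots\ot_\kappa\mV_\n)$, whose first factor is not fully faithful in general (already for the smash product of finite spectra). So no localization pair on the product can yield the functor under consideration.

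The paper instead argues as follows. The universal property of $\mV_1\ot_\kappa\cdots\ot_\kappa\mV_\n$ identifies $\Ind_\kappa(\mV_1\ot_\kappa\cdots\ot_\kappa\mV_\n)$ monoidally with $\Ind_\kappa(\mV_1)\ot\cdots\ot\Ind_\kappa(\mV_\n)$, realized inside $\mP(\mV_1\times\cdots\times\mV_\n)$ as the full monoidal suboperad of presheaves $\kappa$-continuous in each coordinate separately; this is a localization relative to $\Ass$, but one for which the representables of $\mV$ are precisely \emph{not} local, which is why the localization-pair formalism is not invoked here. Corollary \ref{cosqa} replaces $\kappa$-enrichment in $\mV_1\ot_\kappa\cdots\ot_\kappa\mV_\n$ by enrichment in this $\Ind_\kappa$; Corollary \ref{embeto}, applied to the embedding of $\infty$-operads $\Ind_\kappa(\mV_1)\ot\cdots\ot\Ind_\kappa(\mV_\n)\subset\mP(\mV_1\times\cdots\times\mV_\n)$, identifies the result with bi-enriched $\infty$-categories over the presheaf operads whose morphism objects are separately continuous; and Corollary \ref{coronn} converts these into bi-pseudo-enriched $\infty$-categories over $\mV_1\times\cdots\times\mV_\n$ and $\mW_1\times\cdots\times\mW_\m$. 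If you replace your appeal to Corollary \ref{eqqt} by this combination of Corollaries \ref{embeto} and \ref{coronn}, the rest of your outline (the identification of source and target) goes through.
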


\begin{proof}
The universal monoidal functor $\alpha: \mV_1^\ot \times_\Ass ... \times_\Ass \mV^\ot_\n \to (\mV_1 \otimes_\kappa ... \ot_\kappa \mV_\n)^\ot$
induces an adjunction $$\alpha_!:(\mP(\mV_1) \ot ... \ot \mP(\mV_\n))^\ot \simeq \mP(\mV_1 \times ... \times \mV_\n)^\ot \to \mP(\mV_1 \ot_\kappa ... \ot_\kappa \mV_\n)^\ot: \alpha^*$$ relative to $\Ass.$
By universal property of $\alpha$ the latter adjunction restricts to a monoidal equivalence
$$\alpha_!: (\Ind_\kappa(\mV_1) \ot ... \ot \Ind_\kappa(\mV_\n))^\ot \simeq \Ind_\kappa(\mV_1 \ot_\kappa ... \ot_\kappa \mV_\n)^\ot: \alpha^*, $$
where $(\Ind_\kappa(\mV_1) \ot ... \ot \Ind_\kappa(\mV_\n))^\ot \subset \mP(\mV_1 \times ... \times \mV_\n)^\ot$ is the full suboperad spanned by the presheaves preserving component-wise $\kappa$-small colimits, which is a localization relative to $\Ass$ and so a presentably monoidal $\infty$-category.
By Corollary \ref{cosqa} the canonical functor
$$_{\Ind_\kappa(\mV_1 \otimes_\kappa ... \ot_\kappa \mV_\n)}\B\Enr_{\Ind_\tau(\mW_1 \otimes_\tau ... \ot_\tau \mW_\m)} \to {_{\mV_1 \otimes_\kappa ... \ot_\kappa \mV_\n}^\kappa\B\Enr^{\tau}_{\mW_1 \otimes_\tau ... \ot_\tau \mW_\m}} $$ is an equivalence.
So it is enough to see that the composition
$$ _{\Ind_\kappa(\mV_1 \otimes_\kappa ... \ot_\kappa \mV_\n)}\B\Enr_{\Ind_\tau(\mW_1 \otimes_\tau ... \ot_\tau \mW_\m)} \to {_{\mV_1 \otimes_\kappa ... \ot_\kappa \mV_\n}^\kappa\B\Enr^{\tau}_{\mW_1 \otimes_\tau ... \ot_\tau \mW_\m}} $$$$ \to {_{\mV_1 \times ... \times \mV_\n}^{\kappa,..., \kappa}\B\Enr}^{\tau,...,\tau}_{\mW_1 \times ... \times \mW_\m} $$ is an equivalence. The latter composition factors as 
$$ _{\Ind_\kappa(\mV_1 \otimes_\kappa ... \ot_\kappa \mV_\n)}\B\Enr_{\Ind_\tau(\mW_1 \otimes_\tau ... \ot_\tau \mW_\m)} \simeq {_{\Ind_\kappa(\mV_1) \ot ... \ot \Ind_\kappa(\mV_\n)}\B\Enr_{\Ind_\tau(\mW_1) \ot ... \ot \Ind_\tau(\mW_\m)}}
\to $$$$ {{_{\mV_1 \times ... \times \mV_\n}^{\kappa,..., \kappa}\B\Enr}^{\tau,...,\tau}_{\mW_1 \times ... \times \mW_\m}}.$$	
Let $\Theta \subset {_{\mP(\mV_1 \times ... \times \mV_\n)}\B\Enr_{\mP(\W_1 \times ... \times \mW_\m)}}$ be the full subcategory of bienriched $\infty$-categories whose morphism objects belong to $$\Ind_\kappa(\mV_1) \ot ... \ot \Ind_\kappa(\mV_\n) \otimes \Ind_\tau(\mW_1) \ot ... \ot \Ind_\tau(\mW_\m) \subset \mP(\mV_1 \times ... \times \mV_\n \times \mW_1 \times ... \times \mW_\m).$$
By Proposition \ref{embeto} the functor $$\Theta \subset {_{\mP(\mV_1 \times ... \times \mV_\n)}\B\Enr_{\mP(\W_1 \times ... \times \mW_\m)}} \to {_{\Ind_\kappa(\mV_1) \ot ... \ot \Ind_\kappa(\mV_\n)}\B\P\Enr_{\Ind_\tau(\mW_1) \ot ... \ot \Ind_\tau(\mW_\m)}}$$
induces an equivalence $\Theta \to {_{\Ind_\kappa(\mV_1) \ot ... \ot \Ind_\kappa(\mV_\n)}\B\Enr_{\Ind_\tau(\mW_1) \ot ... \ot \Ind_\tau(\mW_\m)}}.$
So it is enough to see that the functor ${_{\mP(\mV_1 \times ... \times \mV_\n)}\B\Enr_{\mP(\W_1 \times ... \times \mW_\m)}} \to {_{\mV_1 \times ... \times \mV_\n}\B\P\Enr}_{\mW_1 \times ... \times \mW_\m}$
is an equivalence that restricts to an equivalence $\Theta \simeq {_{\mV_1 \times ... \times \mV_\n}^{\kappa,..., \kappa}\B\Enr}^{\tau,...,\tau}_{\mW_1 \times ... \times \mW_\m}.$
This holds by Corollary \ref{coronn}.


\end{proof}

\begin{corollary}\label{imbos}
	
Let $\kappa$ be a small regular cardinal and $\mV^\ot \to \Ass,\mW^\ot \to \Ass $ small monoidal $\infty$-categories compatible with $\kappa$-small colimits.
The following induced functor is an equivalence:
$$_{\mV \otimes_\kappa \mW}^\kappa\L\Enr_\emptyset \to {_{\mV \times \mW}^{\kappa,\kappa}\L\Enr_\emptyset}.$$
	
\end{corollary}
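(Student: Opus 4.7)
The plan is to derive Corollary~\ref{imbos} as the special case $\n = 2$, $\m = 0$ of Lemma~\ref{basimo}, applied with $\mV_1 = \mV$ and $\mV_2 = \mW$. Both sides of the equivalence in Lemma~\ref{basimo} specialize directly to the respective sides of the corollary, and the functor of Lemma~\ref{basimo} (pullback along the universal monoidal functor) specializes to the functor in the statement. So the content of the proof is bookkeeping to match the notations, not any new argument.

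First I would fix the convention for the empty product appearing when $\m = 0$: the universal monoidal functor $\prod_{\bj=1}^{\m} \mW_\bj^\ot \to (\bigotimes_{\bj=1}^{\m} \mW_\bj)^\ot$ degenerates to the identity of the initial $\infty$-operad $\emptyset^\ot$ when $\m = 0$. Thus the $\m = 0$ case of Lemma~\ref{basimo} with $\n = 2$ asserts exactly
$$_{\mV \otimes_\kappa \mW}^\kappa \B\Enr^\tau_\emptyset \simeq {_{\mV \times \mW}^{\kappa,\kappa} \B\Enr^\tau_\emptyset},$$
with the functor being pullback along the universal functor $\mV^\ot \times_\Ass \mW^\ot \to (\mV \otimes_\kappa \mW)^\ot$ compatible with $\kappa$-small colimits componentwise.

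Next I would verify that, when the right monoidal $\infty$-category is $\emptyset^\ot$, bi-pseudo-enrichment coincides with left pseudo-enrichment. A weakly bi-enriched $\infty$-category $\mM^\circledast \to \mU^\ot \times \emptyset^\ot$ has its multi-morphism spaces reduced to those of the form $\Mul_\mM(\V_1, \ldots, \V_\n, \X; \Y)$ (there are no right-hand entries $\W_\bj$ since $\emptyset$ has no colors other than the one over $[0]$). Therefore right multi-morphism objects are vacuously present, the right $\tau$-condition is empty, and the full subcategories ${_\mU^\kappa \B\Enr^\tau_\emptyset}$ and ${_\mU^\kappa \L\Enr_\emptyset}$ inside ${_\mU \omega\B\Enr}_\emptyset$ coincide canonically (and similarly with $\mU$ replaced by the product $\mV \times \mW$ and the single $\kappa$ replaced by $\kappa, \kappa$).

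Combining these two observations gives the equivalence asserted by Corollary~\ref{imbos}, with the correct functor. The main (and only) obstacle is the purely notational one in the second step: one must confirm that the definitions of $\L\Enr_\emptyset$ and $\B\Enr_\emptyset$ agree verbatim under the convention that $\mW^\ot = \emptyset^\ot$, which is straightforward from Definition~\ref{bla} but should be spelled out explicitly since Lemma~\ref{basimo} is phrased only in terms of $\B\Enr$.
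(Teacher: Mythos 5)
Your proposal is correct and is exactly what the paper intends: Corollary \ref{imbos} is stated immediately after Lemma \ref{basimo} with no separate argument, being precisely the specialization $\n=2$, $\m=0$, $\mV_1=\mV$, $\mV_2=\mW$ that you describe, with the only remaining content being the notational identification of $\B\Enr$ over $(\mU,\emptyset)$ with $\L\Enr_\emptyset$ (and of the $\kappa$-conditions) once the right-hand enriching data degenerates, which you spell out correctly.
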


Theorem \ref{biii} gives the following corollary:

\begin{corollary}\label{abcd}
	
Let $\kappa$ be a small regular cardinal and $\mV^\ot \to \Ass,\mW^\ot \to \Ass $ small monoidal $\infty$-categories compatible with $\kappa$-small colimits.
The equivalence
$$_{\mV}\B\P\Enr_\mW \simeq {_{\mV \times \mW^\rev}\L\P\Enr_\emptyset}$$
of Theorem \ref{biii} taking pullback along the projection $\mV^\ot \times_\Ass (\mW^\ot)^\rev \to \mV^\ot \times (\mW^\ot)^\rev \simeq \mV^\ot \times \mW^\ot$
induces an equivalence
$$\theta: {^\kappa_{\mV}\B\Enr^{\kappa}_\mW} \to {_{\mV \times \mW^\rev}^{\kappa,\kappa}\L\Enr_\emptyset}
\simeq {_{\mV \otimes_\kappa \mW^\rev}^\kappa\L\Enr_\emptyset}.$$

\end{corollary}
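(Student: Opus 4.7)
The plan is to assemble the corollary from Theorem \ref{biii} together with Corollary \ref{imbos}, where the only real content is to verify that the $(\kappa,\kappa)$-enrichment condition on the bi side and on the left side correspond under the equivalence $\theta$.

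First I would unpack what $\theta$ does to morphism objects. By Remark \ref{embrace}, for a bi-pseudo-enriched $\infty$-category $\mM^\circledast \to \mV^\ot \times \mW^\ot$ and $(\V_1,\W_1),\dots,(\V_\n,\W_\n) \in \mV \times \mW^\rev$ there is a canonical equivalence
\[
\Mul_{\theta(\mM)}((\V_1,\W_1),\dots,(\V_\n,\W_\n),\X;\Y) \simeq \Mul_{\mM}(\V_1,\dots,\V_\n,\X,\W_1,\dots,\W_\n;\Y).
\]
Viewing $\Mul_\mM(-,\X,-;\Y)$ as a presheaf on $\mV \times \mW$ and $\Mul_{\theta(\mM)}(-,\X;\Y)$ as the corresponding presheaf on $\mV \times \mW^\rev$, these are the same functor after composing with the evident identification. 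In particular $\theta(\mM)$ is a left pseudo-enriched $\infty$-category whose multi-morphism presheaf preserves $\kappa$-small limits in the $\mV$-variable (resp.\ in the $\mW^\rev$-variable) precisely when $\mM^\circledast$ is $\kappa$-enriched on the left (resp.\ on the right) in the sense of Definition \ref{Luel}.

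Consequently the equivalence of Theorem \ref{biii} restricts to an equivalence
\[
\theta : {^\kappa_\mV\B\Enr^\kappa_\mW} \xrightarrow{\;\simeq\;} {_{\mV \times \mW^\rev}^{\kappa,\kappa}\L\P\Enr_\emptyset}.
\]
The target is by definition the full subcategory of ${_{\mV\times\mW^\rev}\L\P\Enr_\emptyset}$ whose morphism objects, regarded as presheaves on $\mV \times \mW^\rev$, preserve $\kappa$-small limits in each of the two factors separately. Next I would apply Corollary \ref{imbos} with $\mW$ replaced by $\mW^\rev$ (which is again a small monoidal $\infty$-category compatible with $\kappa$-small colimits since reversal preserves this property): this yields an equivalence
\[
{_{\mV \otimes_\kappa \mW^\rev}^\kappa \L\Enr_\emptyset} \xrightarrow{\;\simeq\;} {_{\mV \times \mW^\rev}^{\kappa,\kappa}\L\Enr_\emptyset}
\]
induced by restriction along the universal monoidal functor $\mV^\ot \times_\Ass (\mW^\rev)^\ot \to (\mV \otimes_\kappa \mW^\rev)^\ot$ that preserves $\kappa$-small colimits component-wise. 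Composing the inverse of this equivalence with $\theta$ gives the desired equivalence of the corollary.

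The main technical point, and the only thing requiring care, is the identification in the first paragraph: verifying that $\kappa$-enrichment on the bi side matches $(\kappa,\kappa)$-left-enrichment on the product side. This is essentially bookkeeping via the formula of Remark \ref{embrace}, and uses that the two conditions of Definition \ref{Luel}(1),(2) on $\mM^\circledast$ translate term by term into the two single-variable conditions defining ${_{\mV\times\mW^\rev}^{\kappa,\kappa}\L\P\Enr_\emptyset}$. Everything else is a direct invocation of Theorem \ref{biii} and Corollary \ref{imbos}; I do not expect any additional obstacle.
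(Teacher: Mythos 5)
Your proposal is correct and follows the same route the paper takes: the corollary is obtained by restricting the equivalence of Theorem \ref{biii} using the identification of multi-morphism spaces from Remark \ref{embrace} (so that the $\kappa$-enrichment conditions on the two sides match term by term) and then invoking Corollary \ref{imbos} (itself a special case of Lemma \ref{basimo}) to pass from ${_{\mV\times\mW^\rev}^{\kappa,\kappa}\L\Enr_\emptyset}$ to ${_{\mV\otimes_\kappa\mW^\rev}^{\kappa}\L\Enr_\emptyset}$. No gaps.
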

\begin{corollary}\label{abcde}
	
Let $\mV^\ot \to \Ass,\mW^\ot \to \Ass $ be presentably monoidal $\infty$-categories.
There is a canonical equivalence
$${_{\mV}\B\Enr_\mW} \simeq {_{\mV \otimes \mW^\rev}\L\Enr_\emptyset}.$$
	
\end{corollary}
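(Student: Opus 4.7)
The plan is to bootstrap from Corollary \ref{abcd} by passing to $\kappa$-compact generators. By Remark after Proposition \ref{presta}, since $\mV^\ot$ and $\mW^\ot$ are presentably monoidal, we may choose a small regular cardinal $\kappa$ such that both are $\kappa$-compactly generated; in particular, the monoidal structures restrict to the small monoidal subcategories $(\mV^\kappa)^\ot, (\mW^\kappa)^\ot$ which are compatible with $\kappa$-small colimits, and the canonical embeddings $(\mV^\kappa)^\ot \subset \mV^\ot$, $(\mW^\kappa)^\ot \subset \mW^\ot$ induce monoidal equivalences $\Ind_\kappa(\mV^\kappa)^\ot \simeq \mV^\ot$ and $\Ind_\kappa(\mW^\kappa)^\ot \simeq \mW^\ot$. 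The reverse monoidal structure interacts with $\kappa$-compact generation in the obvious way, so $\Ind_\kappa((\mW^\kappa)^\rev)^\ot \simeq (\mW^\rev)^\ot$ as well.

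First I would apply Corollary \ref{cosqa} (3) to obtain an equivalence
${_\mV\B\Enr_\mW} \simeq {^\kappa_{\mV^\kappa}\B\Enr^\kappa_{\mW^\kappa}}$
implemented by pullback along the inclusions of $\kappa$-compact objects. Next I would invoke Corollary \ref{abcd} for the small monoidal $\infty$-categories $(\mV^\kappa)^\ot$ and $(\mW^\kappa)^\ot$ to identify the right-hand side with ${_{\mV^\kappa \ot_\kappa (\mW^\kappa)^\rev}^\kappa\L\Enr_\emptyset}$. Finally, Corollary \ref{cosqa} (1) applied to the small monoidal $\infty$-category $(\mV^\kappa \ot_\kappa (\mW^\kappa)^\rev)^\ot$ (which is compatible with $\kappa$-small colimits by construction) identifies this with ${_{\Ind_\kappa(\mV^\kappa \ot_\kappa (\mW^\kappa)^\rev)}\L\Enr_\emptyset}$. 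Stringing these equivalences together gives the desired conclusion once we check that the target identifies canonically with ${_{\mV\ot\mW^\rev}\L\Enr_\emptyset}$.

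The remaining identification $\Ind_\kappa(\mV^\kappa \ot_\kappa (\mW^\kappa)^\rev)^\ot \simeq (\mV \ot \mW^\rev)^\ot$ of presentably monoidal $\infty$-categories is the main point that still needs verification; this is the same input used inside the proof of Lemma \ref{basimo}. The argument is: by the universal property of $\ot_\kappa$ (Proposition \ref{presta} (3)) and of the tensor product of presentable $\infty$-categories, both sides corepresent the functor sending a presentably monoidal $\infty$-category $\mU^\ot$ to the space of pairs of small colimits preserving monoidal functors $\mV^\ot \to \mU^\ot$, $(\mW^\rev)^\ot \to \mU^\ot$ with commuting images; equivalently, to pairs of monoidal functors $(\mV^\kappa)^\ot \to \mU^\ot$, $((\mW^\kappa)^\rev)^\ot \to \mU^\ot$ preserving $\kappa$-small colimits with commuting images. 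Invoking Proposition \ref{presta} (2) and (3) on both factors reduces this to a manipulation of universal properties.

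The main obstacle is making the three equivalences fit together coherently as equivalences of $\infty$-categories rather than of objects, in particular checking that the pullback functors of Corollary \ref{cosqa} (1) and (3) agree on the nose with the functor produced by Corollary \ref{abcd} under the identification $\mV \ot \mW^\rev \simeq \Ind_\kappa(\mV^\kappa \ot_\kappa (\mW^\kappa)^\rev)$. This coherence should follow formally once we observe that all three equivalences preserve underlying $\infty$-categories and morphism objects, so the composite equivalence is determined by its effect on these, matching what Corollary \ref{abcd} produces.
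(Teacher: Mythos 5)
Your proof is correct and fills in exactly the deduction the paper intends: the statement is placed directly after Corollary \ref{abcd} and is obtained from it by passing to $\kappa$-compact generators via Corollary \ref{cosqa}, together with the monoidal equivalence $\Ind_\kappa(\mV^\kappa \ot_\kappa (\mW^\kappa)^\rev)^\ot \simeq (\mV \ot \mW^\rev)^\ot$ that the paper itself establishes in the proof of Lemma \ref{basimo}. (The same statement also appears as Theorem \ref{Enrbi}, derived there from Corollary \ref{kilio}, but your route through Corollary \ref{abcd} is the one this corollary's placement calls for.)
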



Next we prove that the $\infty$-category of enriched presheaves is invariant under transforming bi-enrichment into left enrichment
(Corollary \ref{chan}).
To state the following results we need the next remark:
\begin{remark}
Let $\kappa < \kappa'$ be small regular cardinals, $\mU^\ot \to \Ass$ a small $\infty$-operad, $\mV^\ot \to \Ass, \mW^\ot \to \Ass$ small monoidal $\infty$-categories compatible with $\kappa$-small colimits,
$\mV'^\ot \to \Ass, \mW'^\ot \to \Ass$ small monoidal $\infty$-categories compatible with $\kappa'$-small colimits and $\alpha: \mV^\ot \to \mV'^\ot, \beta: \mW^\ot \to \mW'^\ot$
monoidal functors preserving $\kappa$-small colimits. Let $\gamma: (\mV \ot_{\kappa} \mW)^\ot \to (\mV' \ot_{\kappa} \mW')^\ot \to (\mV' \ot_{\kappa'} \mW')^\ot$ be the induced monoidal functor preserving $\kappa$-small colimits. There is a commutative square:
\begin{equation*}
\begin{xy}
\xymatrix{
{_{\mV' \ot_{\kappa'} \mW'}\L\Enr_\mU} \ar[d]^\gamma
\ar[rrr]
&&& {_{\mV' \times \mW'}\L\Enr_\mU} \ar[d]^{(\alpha \times \beta)^*} 
\\
{_{\mV \ot_{\kappa} \mW}\L\Enr_\mU} \ar[rrr]  &&& {_{\mV \times \mW}\L\Enr_\mU}.}
\end{xy}\end{equation*}
This follows from naturality and the fact that the monoidal functor
$\mV'^\ot \times_\Ass \mW'^\ot \to (\mV' \ot_{\kappa'} \mW')^\ot$
factors as $\mV'^\ot \times_\Ass \mW'^\ot \to (\mV' \ot_{\kappa} \mW')^\ot \to (\mV' \ot_{\kappa'} \mW')^\ot$.
Hence there is also a commutative square:
\begin{equation*}
\begin{xy}
\xymatrix{
{_{\mV'}^{\kappa'}\B\Enr^{\kappa'}_{\mW'}} \ar[d]^{(\alpha,\beta)^*}
\ar[rrr]
&&& {_{\mV' \otimes \mW'^\rev}^{\kappa'}\L\Enr_\emptyset} \ar[d]^{\gamma^*} 
\\
{_{\mV}^\kappa\B\Enr^{\kappa}_\mW} \ar[rrr] &&& {_{\mV \otimes \mW^\rev}^\kappa\L\Enr_\emptyset}.}
\end{xy}\end{equation*}

\end{remark}


\begin{proposition}\label{bienve}
Let $\kappa$ be a small regular cardinal and $\mM^\circledast \to \mV^\ot \times \mW^\ot$ a small $\kappa, \kappa$-bienriched $\infty$-category.
The $\mV \ot_\kappa \mW^\rev$-enriched embedding $$\theta(\mM)^\circledast \to \theta(\mP\B\Env(\mM)_{\B\Enr_{\kappa,\kappa}})^\circledast$$
induces a left $\Ind_\kappa(\mV) \otimes \Ind_\kappa(\mW)^\rev \simeq \Ind_\kappa(\mV \otimes_\kappa \mW^\rev)$-linear equivalence
$$\mP\L\Env(\theta(\mM))_{\L\Enr_\kappa}^\circledast \to \theta(\mP\B\Env(\mM)_{\B\Enr_{\kappa,\kappa}})^\circledast.$$

\end{proposition}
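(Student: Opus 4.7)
The plan is to verify that both sides of the asserted equivalence enjoy the same universal property as the free presentably left $\Ind_\kappa(\mV\otimes_\kappa\mW^\rev)$-tensored cocompletion of $\theta(\mM)$, and then read off the equivalence from the universal property of the left hand side.

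First I would pin down the structure of the target. Proposition \ref{pseuso}(3), applied with $\rS=\Enr^\kappa_\mV$ and $\T=\Enr^\kappa_\mW$, identifies $\mP\B\Env(\mM)^\circledast_{\B\Enr_{\kappa,\kappa}}$ with the scalar extension $(\Ind_\kappa(\mV)\ot_{\mP\Env(\mV)}\mP\B\Env(\mM)\ot_{\mP\Env(\mW)}\Ind_\kappa(\mW))^\circledast$, which is a presentably bitensored $\infty$-category over $\Ind_\kappa(\mV),\Ind_\kappa(\mW)$. Applying Lemma \ref{basi} (in the large setting), $\theta$ transports this to a presentably left tensored $\infty$-category over $\Ind_\kappa(\mV)\times_\Ass\Ind_\kappa(\mW)^\rev$. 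Since the left action is compatible with small colimits in each variable separately, by Corollary \ref{imbos} applied to $\Ind_\kappa(\mV)$ and $\Ind_\kappa(\mW)^\rev$ it factors through the monoidal localization $(\Ind_\kappa(\mV)\times\Ind_\kappa(\mW)^\rev)^\ot\to(\Ind_\kappa(\mV)\otimes\Ind_\kappa(\mW)^\rev)^\ot\simeq\Ind_\kappa(\mV\otimes_\kappa\mW^\rev)^\ot$, exhibiting $\theta(\mP\B\Env(\mM)_{\B\Enr_{\kappa,\kappa}})^\circledast$ as presentably left tensored over $\Ind_\kappa(\mV\otimes_\kappa\mW^\rev)$.

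Next I would verify the universal property. Let $\mN^\circledast\to\Ind_\kappa(\mV\otimes_\kappa\mW^\rev)^\ot$ be any presentably left tensored $\infty$-category. Under Lemma \ref{basi}, $\mN$ corresponds to a presentably bitensored $\infty$-category $\theta^{-1}(\mN)^\circledast\to\Ind_\kappa(\mV)^\ot\times\Ind_\kappa(\mW)^\ot$, and this correspondence identifies
\[
\LinFun^\L_{\Ind_\kappa(\mV\otimes_\kappa\mW^\rev)}\!\bigl(\theta(\mP\B\Env(\mM)_{\B\Enr_{\kappa,\kappa}}),\mN\bigr)\simeq \LinFun^\L_{\Ind_\kappa(\mV),\Ind_\kappa(\mW)}\!\bigl(\mP\B\Env(\mM)_{\B\Enr_{\kappa,\kappa}},\theta^{-1}(\mN)\bigr).
\]
Proposition \ref{pseuso}(2) rewrites the right hand side as $\Enr\Fun_{\mV,\mW}(\mM,\theta^{-1}(\mN))$, and Corollary \ref{abcd} (applied again, now to $\mM$ and to $\mN$) further rewrites this as $\Enr\Fun_{\mV\otimes_\kappa\mW^\rev}(\theta(\mM),\mN)$. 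Combining, $\theta(\mP\B\Env(\mM)_{\B\Enr_{\kappa,\kappa}})^\circledast$ corepresents $\Enr\Fun_{\mV\otimes_\kappa\mW^\rev}(\theta(\mM),-)$ on presentably left $\Ind_\kappa(\mV\otimes_\kappa\mW^\rev)$-tensored $\infty$-categories.

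The left enriched analogue of Proposition \ref{pseuso}(2) characterizes $\mP\L\Env(\theta(\mM))_{\L\Enr_\kappa}^\circledast$ as the initial presentably left $\Ind_\kappa(\mV\otimes_\kappa\mW^\rev)$-tensored $\infty$-category receiving a $\mV\otimes_\kappa\mW^\rev$-enriched functor from $\theta(\mM)$. The canonical map induced by the embedding $\theta(\mM)^\circledast\to\theta(\mP\B\Env(\mM)_{\B\Enr_{\kappa,\kappa}})^\circledast$ is then forced to be an equivalence.

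The main obstacle will be Step 1: checking that the left tensored structure over $\Ind_\kappa(\mV)\times_\Ass\Ind_\kappa(\mW)^\rev$ that $\theta$ produces really descends along the monoidal localization to $\Ind_\kappa(\mV\otimes_\kappa\mW^\rev)$, and doing so in a way that is natural enough for the identifications in Step 2 to compose correctly. Unwinding the equivalence $\theta$ of Lemma \ref{basi} at the level of presentable biactions compatible with small colimits, and matching it with the identification $\Ind_\kappa(\mV)\otimes\Ind_\kappa(\mW)^\rev\simeq\Ind_\kappa(\mV\otimes_\kappa\mW^\rev)$ from Lemma \ref{basimo}/Corollary \ref{imbos}, is the main bookkeeping required.
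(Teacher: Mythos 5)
Your proposal is correct and follows essentially the same route as the paper: the paper's proof likewise reduces to a corepresentability argument, using the commutative square relating $\LinFun^\L_{\mV,\mW}(\mP\widetilde{\B\Env}(\mM)_{\B\Enr_{\kappa,\kappa}},\theta^{-1}(\mN))\simeq\Enr\Fun_{\mV,\mW}(\mM,\theta^{-1}(\mN))$ to $\LinFun^\L_{\mV\ot_\kappa\mW^\rev}(\theta(\mP\widetilde{\B\Env}(\mM)_{\B\Enr_{\kappa,\kappa}}),\mN)\simeq\Enr\Fun_{\mV\otimes_\kappa\mW^\rev}(\theta(\mM),\mN)$ via Corollary \ref{abcd} and Lemma \ref{basi}, exactly as you do. The bookkeeping concern you flag at the end is the same naturality of $\theta$ that the paper handles by asserting commutativity of that square.
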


\begin{proof}
The left $\Ind_\kappa(\mV \otimes_\kappa \mW^\rev)$-linear functor
$\gamma: \mP\L\Env(\theta(\mM))_{\L\Enr_\kappa}^\circledast \to \theta(\mP\B\Env(\mM)_{\B\Enr_{\kappa,\kappa}})^\circledast$
is an equivalence if the pullback of $\gamma$ to $(\mV \otimes_\kappa \mW^\rev)^\ot$ is an equivalence because source and target of $\gamma$ are left $\mV \times \mW^\rev$-enriched $\infty$-categories.
By Corollary \ref{abcd} the functor
$$\theta: {_{\mV}^\kappa\B\Enr^{\kappa}_\mW} \to {_{\mV \times \mW^\rev}^{\kappa,\kappa}\L\Enr_\emptyset} \simeq {_{\mV \otimes \mW^\rev}^{\kappa}\L\Enr}_\emptyset$$
taking pullback along the projection $\mV^\ot \times_\Ass (\mW^\ot)^\rev \to \mV^\ot \times (\mW^\ot)^\rev \simeq \mV^\ot \times \mW^\ot$ is an equivalence.
The pullback of $\gamma$ to $(\mV \otimes_\kappa \mW^\rev)^\ot$ is an
embedding $\gamma': \mP\widetilde{\L\Env}_{\L\Enr_\kappa}(\theta(\mM))^\circledast \to \theta(\mP\widetilde{\B\Env}(\mM)_{\B\Enr_{\kappa,\kappa}})^\circledast$.
Let $\mN^\circledast \to \mV^\ot \times_\Ass (\mW^\ot)^\rev$ be a left tensored $\infty$-category that admits small conical colimits.
Consider the commutative square:
$$\begin{xy}
\xymatrix{\LinFun^\L_{\mV, \mW}(\mP\widetilde{\B\Env}(\mM)_{\B\Enr_{\kappa,\kappa}}, \theta^{-1}(\mN)) \ar[d]^\simeq\ar[rr]^{\simeq}
&& \Enr\Fun_{\mV, \mW}(\mM, \theta^{-1}(\mN)) \ar[d]^{\simeq} 
\\
\LinFun^\L_{\mV\ot_\kappa\mW^\rev}(\theta(\mP\widetilde{\B\Env}(\mM)_{\B\Enr_{\kappa,\kappa}}), \mN) \ar[rr]^{}  && \Enr\Fun_{\mV \otimes_\kappa \mW^\rev}(\theta(\mM), \mN).}
\end{xy}$$



\end{proof}

\begin{corollary}
Let $\kappa$ be a small regular cardinal and $\mM^\circledast \to \mV^\ot \times \mW^\ot$ a small $\kappa, \kappa$-bienriched $\infty$-category.
The left $\mV \otimes_\kappa \mW^\rev$-enriched embedding $\theta(\mM)^\circledast \to \theta(\mP\B\Env_\kappa(\mM)_{\B\Enr})^\circledast$
induces a left $\mV \otimes_\kappa \mW^\rev$-linear equivalence
$$\mP\L\Env_{\kappa}(\theta(\mM))_{\L\Enr}^\circledast \to \theta(\mP\B\Env_\kappa(\mM)_{\B\Enr})^\circledast.$$	
	
\end{corollary}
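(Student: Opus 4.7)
The plan is to deduce this corollary directly from Proposition \ref{bienve} by restricting the equivalence provided there to the appropriate ``small'' subcategories on each side. By Proposition \ref{bienve} we have at our disposal a left $\Ind_\kappa(\mV\otimes_\kappa\mW^\rev)$-linear equivalence
$$\Phi: \mP\L\Env(\theta(\mM))_{\L\Enr_\kappa}^\circledast \xrightarrow{\ \sim\ } \theta(\mP\B\Env(\mM)_{\B\Enr_{\kappa,\kappa}})^\circledast$$
under which $\theta(\mM)$ corresponds to $\theta(\mM)$. By Notation \ref{enrprrr} the target $\mP\B\Env_\kappa(\mM)_{\B\Enr}$ is by definition the smallest full bitensored subcategory of the right hand side closed under the $\mV,\mW$-biaction and $\kappa$-small colimits that contains $\mM$, and analogously $\mP\L\Env_\kappa(\theta(\mM))_{\L\Enr}$ is the smallest full left tensored subcategory of the left hand side containing $\theta(\mM)$ that is closed under $\kappa$-small colimits and the left $\mV\otimes_\kappa\mW^\rev$-action.

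First I would observe, using Corollary \ref{abcd} together with Lemma \ref{basimo}/Corollary \ref{imbos}, that under the bijection $\theta$ the $\mV,\mW$-biaction on $\mP\B\Env_\kappa(\mM)_{\B\Enr}$ (which by Remark \ref{sizes} is compatible with $\kappa$-small colimits) factors as a left $\mV\otimes_\kappa\mW^\rev$-action on $\theta(\mP\B\Env_\kappa(\mM)_{\B\Enr})$. The key identification, already used in the proof of Proposition \ref{bienve}, is $\Ind_\kappa(\mV)\otimes\Ind_\kappa(\mW)^\rev\simeq\Ind_\kappa(\mV\otimes_\kappa\mW^\rev)$, which ensures that the inclusion $\mV\otimes_\kappa\mW^\rev\subset\Ind_\kappa(\mV\otimes_\kappa\mW^\rev)$ on the source side corresponds under $\theta$ to the pair of inclusions $\mV\subset\Ind_\kappa(\mV)$, $\mW\subset\Ind_\kappa(\mW)$ on the target side.

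Next I would restrict $\Phi$ to these generated subcategories. Since $\Phi$ is an equivalence of enriched $\infty$-categories that preserves small colimits and intertwines the relevant actions (by the previous paragraph), its restriction to the full subcategory generated by $\theta(\mM)$ under $\kappa$-small colimits and the $\mV\otimes_\kappa\mW^\rev$-action lands in the full subcategory of the target generated by $\mM$ under $\kappa$-small colimits and the $\mV,\mW$-biaction, and conversely. By the minimality characterizations above, these two subcategories are precisely $\mP\L\Env_\kappa(\theta(\mM))_{\L\Enr}$ and $\theta(\mP\B\Env_\kappa(\mM)_{\B\Enr})$, respectively, so that $\Phi$ restricts to the desired $\mV\otimes_\kappa\mW^\rev$-linear equivalence.

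The only real obstacle is bookkeeping: one must check that ``$\kappa$-small colimits and left $\mV\otimes_\kappa\mW^\rev$-tensors on the source'' and ``$\kappa$-small colimits and $\mV,\mW$-bitensors on the target'' generate corresponding subcategories under $\Phi$, which ultimately reduces to the universal property of the $\kappa$-compactly generated tensor product $\mV\otimes_\kappa\mW^\rev$ combined with Corollary \ref{abcd}. No new analysis beyond what already appears in the proof of Proposition \ref{bienve} is required.
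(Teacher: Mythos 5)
Your proposal is correct and is essentially the intended derivation: the paper states this corollary without proof immediately after Proposition \ref{bienve}, the implicit argument being exactly to restrict the $\Ind_\kappa(\mV\otimes_\kappa\mW^\rev)$-linear equivalence of that proposition to the full subcategories generated by $\theta(\mM)$ (resp.\ $\mM$) under $\kappa$-small colimits and the respective actions, which match by definition of $\mP\L\Env_\kappa(-)_{\L\Enr}$ and $\mP\B\Env_\kappa(-)_{\B\Enr}$. Your care with the identification of the $\mV,\mW$-biaction with a $\mV\otimes_\kappa\mW^\rev$-action via Corollary \ref{abcd} and the universal property of $\otimes_\kappa$ is the right bookkeeping and introduces no gap.
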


\begin{corollary}\label{chan}
Let $\mM^\circledast \to \mV^\ot \times \mW^\ot$ be a small bienriched $\infty$-category.
The left $\mV \otimes \mW^\rev$-enriched embedding $\theta(\mM)^\circledast \to \theta(\mP\B\Env(\mM)_{\B\Enr})^\circledast$
induces a left $\mV \otimes \mW^\rev$-linear equivalence
$$\mP\L\Env(\theta(\mM))_{\L\Enr}^\circledast \to \theta(\mP\B\Env(\mM)_{\B\Enr})^\circledast.$$	
	
\end{corollary}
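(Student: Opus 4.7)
The plan is to deduce this as the specialization of the preceding corollary (itself a consequence of Proposition \ref{bienve}) to $\kappa=\sigma$, where $\sigma$ is the large strongly inaccessible cardinal corresponding to the small universe, after passing to one larger universe in which $\sigma$ becomes small. The only real content is a comparison of notation and size assumptions, since the geometric work has already been done in Proposition \ref{bienve}.

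First, I would match the hypotheses. For presentably monoidal $\mV^\ot,\mW^\ot$ (which is the implicit setting in which the tensor product $\mV\otimes\mW^\rev$ in the target of Corollary \ref{abcde} makes sense), a small bi-enriched $\infty$-category $\mM^\circledast\to\mV^\ot\times\mW^\ot$ in the sense of the paper is automatically a small $\sigma,\sigma$-bi-enriched $\infty$-category in the sense of Definition \ref{Luel}: $\sigma$-small limits agree with small limits, and the required preservation properties follow from the morphism objects already living in $\mV$ and $\mW$. Likewise $\theta(\mM)^\circledast$, viewed via the projection $\mV^\ot\times_\Ass(\mW^\ot)^\rev\to\mV^\ot\times\mW^\ot$, becomes left $\sigma$-enriched. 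Under Corollary \ref{abcd} and Corollary \ref{abcde}, the receiving left-enriched structure on $\theta(\mM)$ lies over $\mV\otimes_\sigma\mW^\rev$, which coincides with the presentable tensor product $\mV\otimes\mW^\rev$ because $\mV,\mW$ are already $\sigma$-cocomplete, so $\Ind_\sigma(\mV)\simeq\mV$ and $\Ind_\sigma(\mW)\simeq\mW$, giving $\Ind_\sigma(\mV)\otimes\Ind_\sigma(\mW)^\rev\simeq\mV\otimes\mW^\rev$ as in the equivalence in Proposition \ref{bienve}.

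Next I would match the presheaf constructions. By Notation \ref{enrprrr2}, $\mP\B\Env(\mM)_{\B\Enr}^\circledast$ is defined as $\widehat{\mP}\B\Env_\sigma(\mM)_{\B\Enr}^\circledast$, and $\mP\L\Env(\theta(\mM))_{\L\Enr}^\circledast$ is defined as $\widehat{\mP}\L\Env_\sigma(\theta(\mM))_{\L\Enr}^\circledast$. Moving one universe up, these are the constructions $\mP\B\Env_\kappa(-)_{\B\Enr}$ and $\mP\L\Env_\kappa(-)_{\L\Enr}$ of Notation \ref{enrprrr} with $\kappa=\sigma$, and the $\sigma,\sigma$-bi-enrichment on $\mM$ together with the preceding paragraph's size identifications lets the corollary immediately preceding Corollary \ref{chan} (the $\kappa$-version of Proposition \ref{bienve}) apply verbatim with $\kappa=\sigma$. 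That corollary then produces the desired $\mV\otimes_\sigma\mW^\rev$-linear equivalence $\mP\L\Env_\sigma(\theta(\mM))_{\L\Enr}^\circledast\simeq\theta(\mP\B\Env_\sigma(\mM)_{\B\Enr})^\circledast$, which under the notational and size identifications above is precisely the claimed $\mV\otimes\mW^\rev$-linear equivalence $\mP\L\Env(\theta(\mM))_{\L\Enr}^\circledast\simeq\theta(\mP\B\Env(\mM)_{\B\Enr})^\circledast$.

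The only real obstacle will be bookkeeping with universes: I need to confirm that applying Proposition \ref{bienve} with $\kappa=\sigma$ in one larger universe does not distort the relevant structures, in particular that the equivalence $\Ind_\sigma(\mV\otimes_\sigma\mW^\rev)\simeq\Ind_\sigma(\mV)\otimes\Ind_\sigma(\mW)^\rev$ of the proposition collapses to $\mV\otimes\mW^\rev$ under presentability of $\mV,\mW$, and that the locally small assumption implicit in bi-enrichment (as opposed to bi-quasi-enrichment) propagates correctly. Beyond these verifications, no new argument is needed.
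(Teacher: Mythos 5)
Your proposal is correct and matches the paper's intent: the paper gives no separate argument for this corollary, presenting it as the specialization of the preceding $\kappa$-indexed corollary (hence of Proposition \ref{bienve}) to $\kappa=\sigma$ after enlarging the universe, exactly as you describe. The identifications you flag — that $\mV\otimes_\sigma\mW^\rev$ agrees with the presentable tensor product $\mV\otimes\mW^\rev$, and that Notation \ref{enrprrr2} is by definition the $\kappa=\sigma$ instance of Notation \ref{enrprrr} — are indeed the only content to verify.
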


\begin{corollary}\label{bienve}
Let $\kappa$ be a small regular cardinal, $\mM^\circledast \to \mV^\ot \times \mW^\ot$ an absolute small weakly  bienriched $\infty$-category and $\bar{\mM}^\circledast\to (\mP\Env(\mV)^\kappa)^\ot \times (\mP\Env(\mW)^\kappa)^\ot$ the unique extension to a $\kappa,\kappa$-bienriched $\infty$-category.
The left $\mP\Env(\mV)^\kappa \otimes_\kappa (\mP\Env(\mW)^\kappa)^\rev$-enriched embedding
$$\theta(\bar{\mM})^\circledast \to \theta(\mP\B\Env(\mM))^\circledast$$
induces a left $\Ind_\kappa(\mP\Env(\mV)^\kappa \otimes_\kappa (\mP\Env(\mW)^\kappa)^\rev)\simeq \mP\Env(\mV) \otimes \mP\Env(\mW)^\rev$-linear equivalence
$$\mP\L\Env(\theta(\bar{\mM}))_{\L\Enr_\kappa}^\circledast \to \theta(\mP\B\Env(\mM))^\circledast.$$


\end{corollary}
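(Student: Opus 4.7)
The plan is to reduce this statement to the small $\kappa,\kappa$-bi-enriched case already handled in Proposition~\ref{bienve}, combined with the identification of enriched presheaf $\infty$-categories supplied by Corollary~\ref{eqistu}~(2). Writing $\mV' := \mP\Env(\mV)^\kappa$ and $\mW' := \mP\Env(\mW)^\kappa$, the Day convolution makes $\mV'^\ot, \mW'^\ot$ into small monoidal $\infty$-categories compatible with $\kappa$-small colimits, and $\bar{\mM}^\circledast \to \mV'^\ot \times \mW'^\ot$ is precisely the unique $\kappa,\kappa$-bi-enriched extension guaranteed by Corollary~\ref{cosqai}~(3). Proposition~\ref{bienve} therefore applies directly to $\bar{\mM}$ and produces a left $\Ind_\kappa(\mV' \otimes_\kappa \mW'^\rev)$-linear equivalence
$$\mP\L\Env(\theta(\bar{\mM}))_{\L\Enr_\kappa}^\circledast \xrightarrow{\simeq} \theta\bigl(\mP\B\Env(\bar{\mM})_{\B\Enr_{\kappa,\kappa}}\bigr)^\circledast.$$

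Next, I would rewrite the right hand side using Corollary~\ref{eqistu}~(2). Since our $\bar{\mM}$ is, in the notation of that corollary, the pullback $\widetilde{\mM}_{\kappa,\kappa}$ of the unique bi-enriched extension of $\mM$ over $\mP\Env(\mV)^\ot \times \mP\Env(\mW)^\ot$, the corollary asserts that the embedding $\bar{\mM}^\circledast \subset \mP\B\Env(\mM)^\circledast$ induces a $\mP\Env(\mV), \mP\Env(\mW)$-linear equivalence $\mP\B\Env(\bar{\mM})_{\B\Enr_{\kappa,\kappa}}^\circledast \simeq \mP\B\Env(\mM)^\circledast$. Applying the functor $\theta$ (which is natural in $\Op_\infty \times \Op_\infty$) and composing with the equivalence of the preceding paragraph yields the desired left linear equivalence
$$\mP\L\Env(\theta(\bar{\mM}))_{\L\Enr_\kappa}^\circledast \xrightarrow{\simeq} \theta(\mP\B\Env(\mM))^\circledast.$$

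Finally, the identification of coefficient $\infty$-categories $\Ind_\kappa(\mV' \otimes_\kappa \mW'^\rev) \simeq \mP\Env(\mV) \otimes \mP\Env(\mW)^\rev$ follows from Remark~\ref{comge}, which asserts that $\mP\Env(\mV)^\ot$ is $\kappa$-compactly generated so $\Ind_\kappa(\mV') \simeq \mP\Env(\mV)$ monoidally (and likewise for $\mW'^\rev$), together with the monoidal equivalence $\Ind_\kappa(\mV' \otimes_\kappa \mW'^\rev) \simeq \Ind_\kappa(\mV') \otimes \Ind_\kappa(\mW'^\rev)$ established in the proof of Lemma~\ref{basimo}. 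There is no substantive obstacle: the entire argument consists of composing Proposition~\ref{bienve} with Corollary~\ref{eqistu}~(2) and tracking the base along the embeddings $\mV'^\ot \subset \mP\Env(\mV)^\ot$, $\mW'^\ot \subset \mP\Env(\mW)^\ot$. The only point requiring a quick check is that the composed equivalence is linear over the larger category $\mP\Env(\mV) \otimes \mP\Env(\mW)^\rev$ rather than merely over the $\kappa$-compact part; this is automatic since both sides are presentably left tensored and the linear equivalence over the $\kappa$-compact coefficients extends uniquely under $\Ind_\kappa$ by Proposition~\ref{presta}~(3).
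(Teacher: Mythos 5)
Your proof is correct and is exactly the derivation the paper intends: apply Proposition \ref{bienve} to the small $\kappa,\kappa$-bi-enriched $\infty$-category $\bar{\mM}^\circledast \to (\mP\Env(\mV)^\kappa)^\ot \times (\mP\Env(\mW)^\kappa)^\ot$, identify $\theta(\mP\B\Env(\bar{\mM})_{\B\Enr_{\kappa,\kappa}})^\circledast \simeq \theta(\mP\B\Env(\mM))^\circledast$ via Corollary \ref{eqistu} (2), and identify the coefficients by $\kappa$-compact generation of $\mP\Env(\mV)$ and $\mP\Env(\mW)$. The only nitpick is the reference to Corollary \ref{cosqai} (3) for the unique $\kappa,\kappa$-bi-enriched extension, which concerns $\mP\Env_\kappa(\mV)$ rather than $\mP\Env(\mV)^\kappa$; the extension over the $\kappa$-compact objects is instead supplied by Corollary \ref{coronn} (3) together with Corollary \ref{cosqa} (3), but since the extension is part of the hypothesis this does not affect the argument.
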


\section{Enriched $\infty$-categories of enriched functors}


In this section we introduce and study enriched $\infty$-categories of enriched functors. 

\subsection{Left and right enriched $\infty$-categories of enriched functors}

\begin{notation}
Let $\mV^\ot \to \Ass,\mW^\ot \to \Ass$ be small $\infty$-operads.
The functor $$\Cat_{\infty / \mV^\ot} \times \Cat_{\infty / \mW^\ot} \to \Cat_{\infty / \mV^\ot \times \mW^\ot}, \ (\mM^\circledast \to \mV^\ot, \mN^\circledast \to \mW^\ot) \mapsto \mM^\circledast \times \mN^\circledast \to \mV^\ot \times \mW^\ot$$
restricts to a functor
\begin{equation}\label{kkk}
\alpha: {_\mV\omega\B\Enr}_\emptyset \times {_\emptyset \omega\B\Enr}_{\mW} \to {_\mV}\omega\B\Enr_{\mW}. \end{equation}

\end{notation}

\begin{remark}Let $\mV^\ot \to \Ass,\mW^\ot \to \Ass$ be small monoidal $\infty$-categories. The functor (\ref{kkk}) restricts to functors \begin{equation}\label{kkkk}
{_\mV\P\L\Enr}_\emptyset \times {_\emptyset \P\R\Enr}_{\mW} \to {_\mV\P\B\Enr_{\mW}}, \end{equation} 
$$ {_\mV\L\Mod}_\emptyset \times {_\emptyset \R\Mod}_{\mW} \to {_\mV\B\Mod_{\mW}}. $$
\end{remark}



We have the following proposition, which follows immediately from Proposition \ref{lehmmm}:
\begin{proposition}

For every small $\infty$-operads $\mV^\ot \to \Ass,\mW^\ot \to \Ass$ the functor
(\ref{kkk}) admits componentwise right adjoints.

\end{proposition}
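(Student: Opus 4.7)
The plan is to apply the adjoint functor theorem in each variable separately, using the presentability results already established in the paper. Fix a weakly left enriched $\mM^\circledast \to \mV^\ot$; I will show that $\alpha(\mM, -): {_\emptyset\omega\B\Enr}_\mW \to {_\mV\omega\B\Enr}_\mW$ admits a right adjoint, the argument for $\alpha(-, \mN)$ being symmetric. By Proposition \ref{pr}, both source and target are compactly generated, hence presentable. Thus it suffices to verify that $\alpha(\mM, -)$ preserves small colimits and is accessible.

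To check colimit preservation, I would argue as follows. Both ${_\emptyset\omega\B\Enr}_\mW$ and ${_\mV\omega\B\Enr}_\mW$ were realized in the proof of Proposition \ref{pr} as accessible localizations (spanned by $\mathfrak P$-fibered objects) of the slice categories $\Cat_{\infty/\emptyset^\ot \times \mW^\ot}^{\max,\min}$ and $\Cat_{\infty/\mV^\ot \times \mW^\ot}^{\max,\min}$, and the latter are themselves accessible localizations of ordinary slice $\infty$-categories. Small colimits in these presentable $\infty$-categories are therefore computed by taking the colimit in the relevant slice and applying the corresponding left adjoint localization. At the level of slice $\infty$-categories, $\alpha(\mM, -)$ is realized by fiber product with $\mM^\circledast$ over $\Ass$ (more precisely, by the map $\mN^\circledast \mapsto \mM^\circledast \times \mN^\circledast \to \mV^\ot \times \mW^\ot$), which preserves small colimits in the target slice since products in $\Cat_\infty$ preserve colimits in each variable. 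Composing with the localization and noting that localizations preserve colimits yields colimit preservation of $\alpha(\mM, -)$.

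For accessibility, note that the same fiber-product description shows $\alpha(\mM, -)$ preserves filtered colimits, and together with size estimates on $\mM^\circledast$ this gives accessibility. Hence the adjoint functor theorem produces the required right adjoint. Unraveling the construction, one sees the right adjoint applied to $\mO^\circledast \to \mV^\ot \times \mW^\ot$ is modelled by the weakly right $\mW$-enriched $\infty$-category whose underlying $\infty$-category is $\Enr\Fun_{\mV,\emptyset}(\mM,\mO)$ and whose $\mW$-enrichment comes from the right $\mW$-structure on $\mO$ applied objectwise, in agreement with Theorem~\ref{1}.

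The main obstacle is the verification that the two defining conditions in Definition \ref{bla} behave well under the product $\mM^\circledast \times \mN^\circledast$ and under the localization: one must check that the inert lifts that need to be preserved come, via the product structure, from the corresponding inert lifts in each factor, and that the $\mathfrak P$-fibered condition for the product is equivalent to the conjunction of the $\mathfrak P$-fibered conditions for the factors. Since the first factor only constrains the $\mV^\ot$-direction and the second only the $\mW^\ot$-direction, these checks reduce to elementary manipulations with the two pairs of inert morphisms featuring in Definition \ref{bla}, but this is the step that genuinely uses the clean separation of the two operadic variables in the definition of weak bi-enrichment.
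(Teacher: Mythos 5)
Your overall strategy (presentability via Proposition \ref{pr} plus the adjoint functor theorem) departs from the paper, which instead constructs the right adjoint explicitly as $\Enr\Fun_{\mV,\emptyset}(\mM,-)^\circledast$ via Lurie's relative functor categories $\Fun^{\rS}_{\T}(\mC,-)$ (Lemma \ref{lemist}, Lemma \ref{innerho}, Notation \ref{bbbb}) and then verifies the universal property in Proposition \ref{lehmmm}. The strategy could in principle work, but your verification of colimit preservation has a genuine gap. You assert that $\Cat^{\max,\min}_{\infty/\mV^\ot\times\mW^\ot}$ is an accessible localization of $\Cat_{\infty/\mV^\ot\times\mW^\ot}$ and that colimits in ${_\mV\omega\B\Enr}_{\mW}$ are computed by forming the colimit in the slice and then applying a localization functor. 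Neither claim is correct: $\Cat^{\max,\min}_{\infty/\mV^\ot\times\mW^\ot}$ is a \emph{non-full} subcategory of the slice (its morphisms are required to preserve the relevant cocartesian lifts), so it is not a localization of the slice, and the forgetful functor to the slice is a right adjoint (of a free-fibration construction) which does not preserve colimits. Already over $\rS=[1]$, colimits in $\Cocart_{/[1]}\simeq\Fun([1],\Cat_\infty)$ are computed pointwise and in general differ from colimits of total $\infty$-categories in $\Cat_{\infty/[1]}$. Consequently "products in $\Cat_\infty$ preserve colimits in each variable" does not transport to the statement you need, and the colimit preservation of $\alpha(\mM,-)$ is left unproved; it is true a posteriori because the right adjoint exists, but that is circular here.

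The repair inside your framework is to invoke the compatibility of the categorical-pattern model structures with exterior products, which exhibits $\mM^\circledast\times(-)$ as a left Quillen functor; but that same input already produces the right adjoint directly, and this is exactly what Lemma \ref{lemist} packages, so the detour through the adjoint functor theorem becomes unnecessary. A secondary issue: identifying the abstract right adjoint with $\Enr\Fun_{\mV,\emptyset}(\mM,\mO)^\circledast$ does not come for free from the adjoint functor theorem; it requires the explicit construction and the comparison of Proposition \ref{lehmmm}. Since the proposition only asserts existence this identification is not needed, but you should not present it as a mere "unraveling."
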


We will prove later that also the functor (\ref{kkkk}) admits componentwise right adjoints (Corollary \ref{ps}).
To prove Proposition \ref{lehmmm} we use the following lemma:

\begin{lemma}\label{lemist}
Let $\mC \to \T, \T \to \rS$ be functors such that the composition $\mC \to \T \to \rS$ is a cocartesian fibration. The functor 
$(-)\times_\rS \mC:\Cat_{\infty / \rS} \xrightarrow{ } \Cat_{\infty / \mC} \to \Cat_{\infty / \T}$ admits a right adjoint that we denote by
$ \Fun_\T^{\rS}(\mC,-).$ 
If $\rS, \T $ are contractible, we drop $\rS$, $\T$ from the notation.	
\end{lemma}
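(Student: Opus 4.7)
The strategy is to factor the given functor through the intermediate slice $\Cat_{\infty/\mC}$ and build the right adjoint as a composite. Write $g: \T \to \rS$ and $q: \mC \to \T$ for the given maps and $p = g \circ q: \mC \to \rS$ for their composition. Then $(-) \times_\rS \mC: \Cat_{\infty/\rS} \to \Cat_{\infty/\T}$ factors as
$$\Cat_{\infty/\rS} \xrightarrow{\ p^*\ } \Cat_{\infty/\mC} \xrightarrow{\ q_!\ } \Cat_{\infty/\T},$$
where $p^*$ denotes pullback along $p$ and $q_!$ sends $(\mD \to \mC)$ to $(\mD \to \mC \xrightarrow{q} \T)$. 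It therefore suffices to construct right adjoints of each factor and compose them.

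The right adjoint of $q_!$ is given by pullback $q^*$ along $q$; this is the universal property of slice $\infty$-categories and requires no hypothesis on $q$. The right adjoint of $p^*$ is where the cocartesian hypothesis enters essentially. Both $\Cat_{\infty/\rS}$ and $\Cat_{\infty/\mC}$ are presentable (as slices of the presentable $\infty$-category $\Cat_\infty$), so by the adjoint functor theorem it suffices to check that $p^*$ preserves small colimits. This is a standard consequence of $p$ being a cocartesian fibration; concretely, one may pass to the marked simplicial set model of $\Cat_{\infty/\rS}$, in which $(-)\times_\rS \mC$ becomes a left Quillen functor for the cocartesian model structure of \cite{lurie.HTT} whose right adjoint is given by the marked internal hom over $\rS$. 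Denote the resulting right adjoint by $\Fun_\rS(\mC, -): \Cat_{\infty/\mC} \to \Cat_{\infty/\rS}$.

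Composing the two right adjoints produces the desired right adjoint of $(-) \times_\rS \mC$, and inspection yields the explicit formula
$$\Fun_\T^\rS(\mC, \mY) := \Fun_\rS(\mC, \mY \times_\T \mC)$$
for $\mY \to \T$, with its structural map to $\rS$ induced by $p$. The adjunction is then witnessed by the natural chain of equivalences
$$\Cat_{\infty/\T}(\mX \times_\rS \mC, \mY) \simeq \Cat_{\infty/\mC}(\mX \times_\rS \mC, \mY \times_\T \mC) \simeq \Cat_{\infty/\rS}(\mX, \Fun_\T^\rS(\mC, \mY)),$$
the first from the slice adjunction for $q$ and the second from the $(p^*, \Fun_\rS(\mC, -))$ adjunction. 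The main obstacle is the colimit-preservation of $p^*$ (equivalently, the smoothness/flatness of cocartesian fibrations in the sense needed for base change in $\Cat_\infty$); once this input is secured, the remainder of the argument is purely formal.
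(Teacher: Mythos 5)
Your proposal is correct and follows essentially the same route as the paper: both reduce to showing that the pullback functor $\mC\times_\rS(-):\Cat_{\infty/\rS}\to\Cat_{\infty/\mC}$ admits a right adjoint (the postcomposition functor $\Cat_{\infty/\mC}\to\Cat_{\infty/\T}$ having pullback along $\mC\to\T$ as its right adjoint), and both ultimately rest on the fact that cocartesian fibrations are flat, which the paper invokes via \cite[Example B.3.11., Corollary B.3.15.]{lurie.higheralgebra} and you invoke via colimit-preservation of $p^*$ plus the adjoint functor theorem. The only cosmetic difference is that the paper gets the right adjoint directly from Lurie's relative exponential for flat fibrations rather than through presentability, but the mathematical content is the same.
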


\begin{proof}
It is enough to see that the functor $ \mC \times_\rS (-): \Cat_{\infty / \rS} \to \Cat_{\infty / \mC}$ admits a right adjoint. 	
This follows from \cite[Example B.3.11.]{lurie.higheralgebra} and \cite[Corollary B.3.15.]{lurie.higheralgebra}.

\end{proof}

The following remark is \cite[Remark 3.23.]{heine2024localglobalprincipleparametrizedinftycategories}:

\begin{remark}\label{puas}
For every functors $\mD \to \T$ and $\rS' \to \rS$ there is a canonical equivalence 
$ \rS' \times_\rS  \Fun_\T^{\rS}(\mC,\mD) \simeq \Fun_{\rS' \times_\rS \T}^{\rS'}( \rS' \times_\rS \mC, \rS' \times_\rS \mD)$ specifying the fibers of the functor $\Fun_\T^{\rS}(\mC,\mD) \to \rS.$
\end{remark}

\begin{remark}\label{reuil}
Let $\phi: \T \to \rS$ be a functor, $\mE \subset \Fun([1],\rS)$ a full subcategory
and $\mC \to \T$ a cartesian fibration relative to the collection of 
$\phi$-cocartesian lifts of morphisms of $\rS$ that belong to $\mE$ and 
$\mD \to \T$ a cocartesian fibration relative to the collection of 
$\phi$-cocartesian lifts of morphisms of $\rS$ that belong to $\mE$.
By \cite[Proposition B.4.1.]{lurie.higheralgebra} the functor $\Fun_\T^{\rS}(\mC,\mD) \to \rS$ is a cocartesian fibration relative to $\mE.$
	
\end{remark}

\begin{lemma}\label{faith}
Let $\mC \to \T, \T \to \rS$ be functors such that the composition $\mC \to \T \to \rS$ is a cocartesian fibration.
Let $\mD \to \mE$ be a fully faithful functor over $\T$. Then $\Fun^\rS_\T(\mC,\mD) \to \Fun^\rS_\T(\mC,\mE) $ is fully faithful.

\end{lemma}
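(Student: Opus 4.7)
Write $F : \mD \to \mE$ for the fully faithful functor under consideration. The plan is to verify fully faithfulness of $F_* := \Fun^\rS_\T(\mC, F)$ by computing its effect on mapping spaces, using the base-change identity of Remark \ref{puas} as the principal tool. For any $\rS' \to \rS$, that remark gives $\rS' \times_\rS \Fun^\rS_\T(\mC, \mD) \simeq \Fun^{\rS'}_{\rS' \times_\rS \T}(\rS' \times_\rS \mC, \rS' \times_\rS \mD)$, which lets us probe fibers of $F_*$ by taking $\rS' = \{s\}$ and cross-fiber mapping-space components by taking $\rS' = \Delta^1$.

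For the fiber of $F_*$ over a point $s \in \rS$, the base change with $\rS' = \{s\}$ identifies this fiber with $\Fun_{\T_s}(\mC_s, \T_s \times_\T \mD) \to \Fun_{\T_s}(\mC_s, \T_s \times_\T \mE)$. The pulled-back functor $\T_s \times_\T F$ remains fully faithful since fully faithfulness is stable under pullback. Post-composition with a fully faithful functor is fully faithful on ordinary functor $\infty$-categories (by the standard end-formula for natural transformation spaces), and the slice $\Fun_{\T_s}(\mC_s, -)$ is cut out of $\Fun(\mC_s, -)$ as a pullback along the point of $\Fun(\mC_s, \T_s)$ selecting the structure map $\mC_s \to \T_s$; fully faithfulness is again preserved by this pullback. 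Hence each fiber of $F_*$ is fully faithful.

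For the mapping spaces between objects $f, g$ projecting to distinct $s, s' \in \rS$ over a morphism $\tilde{\alpha}: s \to s'$, pulling back along $\Delta^1 \to \rS$ selecting $\tilde{\alpha}$ reduces via Remark \ref{puas} to the same question with $\rS = \Delta^1$. There, the hypothesis that $\mC \to \T \to \Delta^1$ composes to a cocartesian fibration provides cocartesian pushforward on $\mC \to \Delta^1$, by which morphisms in $\Fun^{\Delta^1}_\T(\mC, \mD)$ over the nontrivial edge of $\Delta^1$ can be expressed in terms of mapping spaces internal to the fibers of $\mD$ over $\T$; fully faithfulness of $F$ then gives the same description on the $\mE$-side, yielding the required equivalence. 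The main obstacle is making this cross-fiber step rigorous, since $\Fun^\rS_\T(\mC, \mD) \to \rS$ is not manifestly a cocartesian fibration; the plan is to handle this by iterating Remark \ref{puas} rather than invoking a Grothendieck construction directly.
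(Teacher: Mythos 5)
The gap is in your second step, and you flag it yourself: the cross-fiber mapping-space computation is only a plan, not an argument. Your fiberwise reduction (part one) is fine but by itself proves nothing about fully faithfulness of $\Fun^\rS_\T(\mC,\mD)\to\Fun^\rS_\T(\mC,\mE)$, since that requires control of mapping spaces between objects lying over different points of $\rS$; and for those you propose to use cocartesian pushforward along $\mC\to[1]$ to rewrite morphisms over the nontrivial edge "in terms of mapping spaces internal to the fibers of $\mD$ over $\T$". No such formula is established (nor is it straightforward here, because $\mD$ and $\mE$ live over $\T$, not over $[1]$ with any fibration property, so the usual description of edges in $\Fun_{[1]}(\mC,-)$ via the pushforward functor does not directly apply), and you explicitly acknowledge that making this rigorous is the "main obstacle". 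As written, the proof is incomplete precisely at the only point where something needs to be proved.

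The missing idea is that no computation of cross-fiber mapping spaces is needed. The paper argues in two formal steps. First, a fully faithful functor over $\T$ is in particular a monomorphism in $\Cat_{\infty/\T}$, and $\Fun^\rS_\T(\mC,-)$ is a right adjoint (Lemma \ref{lemist}), hence preserves monomorphisms; so the induced functor already induces embeddings on all mapping spaces. Second, for fullness one takes a functor $\phi\colon[1]\to\Fun^\rS_\T(\mC,\mE)$ over $\rS$ whose endpoints lie in $\Fun^\rS_\T(\mC,\mD)$; by adjunction it corresponds to a functor $\psi\colon[1]\times_\rS\mC\to\mE$ over $\T$ whose restrictions to the fibers over $0$ and $1$ factor through $\mD$. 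Since every object of $[1]\times_\rS\mC$ lies in one of these two fibers and a functor factors through a full subcategory if and only if it does so on objects, $\psi$ factors through $\mD$, hence $\phi$ factors through $\Fun^\rS_\T(\mC,\mD)$. If you want to salvage your base-change strategy, the same observation closes your gap after reducing to $\rS=[1]$: the space of edges from $f$ to $g$ over the nondegenerate edge is a fiber of $\Fun_\T([1]\times_\rS\mC,\mE)$ over the pair of restrictions, and the object-level factorization criterion identifies the $\mD$- and $\mE$-sides; the cocartesian pushforward is not needed.
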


\begin{proof}
Note that the monomorphisms in $\Cat_{\infty / \T}$ are detected by the forgetful functor to $\Cat_\infty$ and so are the functors over $\T$ whose image in $\Cat_\infty$ is an inclusion, i.e. induces embeddings on mapping spaces and the maximal subspace.	
Because the functor $\Fun^\rS_\T(\mC,-): \Cat_{\infty / \T} \to \Cat_{\infty / \rS}$ is a right adjoint, it preserves monomorphisms.
So it remains to see that the functor $\Fun^\rS_\T(\mC,\mD) \to \Fun^\rS_\T(\mC,\mE) $ is full, i.e. essentially surjective on mapping spaces. Let $\phi: [1] \to \Fun^\rS_\T(\mC,\mE)$ be a functor over $\rS$
whose restrictions to $\{0\} \subset [1], \{1\} \subset [1]$ belong to
$\Fun^\rS_\T(\mC,\mD).$ Then $\phi$ corresponds to a
functor $\psi: [1]\times_\rS \mC \to [1]\times_\rS \mE$ over $[1]$
whose fibers over $0,1$ factor through $\mD.$ But then $\psi$ factors through $\mD$ so that $\phi$ factors through $\Fun^\rS_\T(\mC,\mD).$

\end{proof}

The following lemma is \cite[Lemma 3.73.]{HEINE2023108941} whose proof we include for the reader's convenience:

\begin{lemma}\label{innerho}
Let $\mM^\circledast \to \mV^\ot, \mO^\circledast \to \mW^\ot, \mN^\circledast \to \mV^\ot \times \mW^\ot$ be weakly left, weakly right, weakly bienriched $\infty$-categories. 

\begin{enumerate}
\item The functor $\alpha: \Fun^{\mW^\ot}_{\mV^\ot \times \mW^\ot}(\mM^\circledast \times \mW^\ot, \mN^\circledast) \to \mW^\ot $ is a
weakly right enriched $\infty$-category.

\vspace{1mm}\item The functor $\beta: \Fun^{\mV^\ot}_{\mV^\ot \times \mW^\ot}(\mV^\ot \times \mO^\circledast, \mN^\circledast) \to \mV^\ot $ is a weakly left enriched $\infty$-category. \vspace{1mm}

\item If $\mN^\circledast \to\mV^\ot \times \mW^\ot$ exhibits $\mN$ as right tensored over $\mW$, the functor $\alpha$ is a right tensored $\infty$-category 
and for every $\X \in \mM$ the following canonical functor is right $\mW$-linear:
$$ \Fun^{\mW^\ot}_{\mV^\ot \times \mW^\ot}(\mM^\circledast \times \mW^\ot, \mN^\circledast) \to \mN^\circledast.$$

\item If $\mN^\circledast \to\mV^\ot \times \mW^\ot$ exhibits $\mN$ as left tensored over $\mV$, the functor $\beta$ is a left tensored $\infty$-category 
and for every $\X \in \mO$ the following canonical functor is left $\mV$-linear:
$$ \Fun^{\mV^\ot}_{\mV^\ot \times \mW^\ot}(\mV^\ot \times \mO^\circledast, \mN^\circledast) \to \mN^\circledast.$$

\end{enumerate}
\end{lemma}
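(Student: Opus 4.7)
The strategy is to realize $\alpha$ and $\beta$ as outputs of the right adjoint from Lemma \ref{lemist}, invoke Remark \ref{reuil} to obtain cocartesian lifts for the relevant inert morphisms, and verify the axioms of Definition \ref{bla} via the fiber description in Remark \ref{puas}. Parts (2) and (4) are dual to (1) and (3) by interchanging the roles of $\mV$ and $\mW$, so I focus on the latter.

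For part (1), I apply Lemma \ref{lemist} with $\rS = \mW^\ot$, $\T = \mV^\ot \times \mW^\ot$, and $\mC = \mM^\circledast \times \mW^\ot \to \T$: the composite $\mC \to \T \to \rS$ is the second projection, which is a cocartesian fibration since $\mM^\circledast$ is an $\infty$-category. Evaluating the resulting right adjoint at $\mN^\circledast$ produces $\alpha$. To see that $\alpha$ is a cocartesian fibration relative to the collection $\mE$ of inert morphisms of $\mW^\ot$ preserving the minimum, I invoke Remark \ref{reuil} with $\phi = \pi_2$: the $\pi_2$-cocartesian lifts of $\mE$ have the form $(\id_\V, \g)$ with $\g \in \mE$; for these, $\mC$ admits cartesian lifts given by pairs consisting of an equivalence in $\mM^\circledast$ and $\g$ in $\mW^\ot$, while $\mN^\circledast$ admits cocartesian lifts by weak bi-enrichment. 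Under the identification $\emptyset^\ot \times \mW^\ot \simeq \mW^\ot$, this is the cocartesian structure needed to view $\alpha$ as a weakly right enriched $\infty$-category.

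I then verify the two axioms of Definition \ref{bla}. By Remark \ref{puas}, the fiber of $\alpha$ over $[\m] \in \mW^\ot$ is $\Fun_{\mV^\ot}(\mM^\circledast, \mN^\circledast_{[\m]})$, where $\mN^\circledast_{[\m]} := \mN^\circledast \times_{\mW^\ot} \{[\m]\}$. Cocartesian transport in $\mN^\circledast$ along the inert morphism $[\m] \to [0]$ preserving the minimum, together with the $\m$ coordinate projections $[\m] \to [1]$, assembles into a functor $\mN^\circledast_{[\m]} \to \mN^\circledast_{[0]} \times \mW^{\times \m}$ over $\mV^\ot$; axiom 1 of Definition \ref{bla} for $\mN^\circledast$ makes this a fiberwise equivalence over $\mV^\ot$, and applying $\Fun_{\mV^\ot}(\mM^\circledast,-)$ together with $\Fun_{\mV^\ot}(\mM^\circledast, \mV^\ot) \simeq *$ yields axiom 1 for $\alpha$. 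Axiom 2 reduces, via the compatibility of $\Fun^\rS_\T$ with base change in $\mW^\ot$ supplied by Remark \ref{puas}, to axiom 2 for $\mN^\circledast$ applied pointwise in $\mM^\circledast$.

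For part (3), if $\mN^\circledast$ is right tensored then $\pi_2: \mN^\circledast \to \mW^\ot$ is a cocartesian fibration; applying Remark \ref{reuil} again with $\mE = \Fun([1], \mW^\ot)$ promotes $\alpha$ to a cocartesian fibration over $\mW^\ot$, exhibiting it as right tensored. For $\X \in \mM$, the evaluation functor $\alpha \to \mN^\circledast$ at $\X$, obtained from the inclusion $\{\X\} \times \mW^\ot \subset \mM^\circledast \times \mW^\ot$ via the functoriality of $\Fun^\rS_\T$, preserves cocartesian lifts because such lifts in $\alpha$ are computed pointwise in $\mM^\circledast$; this gives right $\mW$-linearity. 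The main obstacle is the fiberwise-to-total upgrade: axiom 1 for $\mN^\circledast$ only provides fiberwise equivalences over $\mV^\ot$, and I expect to handle the passage to an equivalence after applying $\Fun_{\mV^\ot}(\mM^\circledast,-)$ by testing representability against $\Cat_{\infty/\mV^\ot}$, thereby reducing to the pointwise statement for $\mN^\circledast$ already in hand.
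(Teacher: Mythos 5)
Your skeleton agrees with the paper's: both proofs produce $\alpha$ from Lemma \ref{lemist}, use Remark \ref{reuil} to get the cocartesian structure relative to the relevant inert morphisms (and over all of $\Ass$ in the tensored case, which settles (3) and (4)), identify the fibers via Remark \ref{puas}, and dispose of (2) and (4) by duality. Condition (1) of Definition \ref{bla} is then read off from the fiber description exactly as you do.

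The gap is in your treatment of condition (2) of Definition \ref{bla}. You assert that it ``reduces, via the compatibility of $\Fun^\rS_\T$ with base change supplied by Remark \ref{puas}, to axiom 2 for $\mN^\circledast$ applied pointwise.'' Remark \ref{puas} only computes the \emph{fibers} of $\Fun^{\mW^\ot}_{\mV^\ot\times\mW^\ot}(\mM^\circledast\times\mW^\ot,\mN^\circledast)\to\mW^\ot$ over objects of $\mW^\ot$, whereas condition (2) is a statement about mapping spaces between objects lying over \emph{different} objects of $\Ass$: such a mapping space is a space of functors $[1]\times_{\mW^\ot}(\mM^\circledast\times\mW^\ot)\to\mN^\circledast$ over $\mV^\ot\times\mW^\ot$ with prescribed boundary, and it is not a fiberwise or pointwise limit of mapping spaces of $\mN^\circledast$ in any way that base change alone delivers. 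The paper sidesteps this entirely: it first notes that when the target is right tensored the functor $\alpha$ is a cocartesian fibration over $\Ass$, for which condition (2) is automatic; it then embeds $\mN^\circledast$ into its right tensored envelope $\R\Env(\mN)^\circledast$, obtains a right tensored $\infty$-category $\Fun^{\mW^\ot}_{\mV^\ot\times\mW^\ot}(\mM^\circledast\times\mW^\ot,\R\Env(\mN)^\circledast)\to\Env(\mW)^\ot$, pulls back to $\mW^\ot$, and uses Lemma \ref{faith} to realize the original functor $\infty$-category as a full subcategory closed under the relevant cocartesian lifts, from which condition (2) is inherited. You should either supply an honest computation of mapping spaces in the relative functor category (commuting the defining limit with the pullback square of condition (2) for $\mN^\circledast$), or adopt the envelope reduction; as written, the step is unjustified. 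Your closing remark on (3) about ``testing representability against $\Cat_{\infty/\mV^\ot}$'' is harmless but unnecessary: condition (1) of Definition \ref{bla} already gives an equivalence of $\infty$-categories compatible with the projections, so applying the right adjoint $\Fun_{\mV^\ot}(\mM^\circledast,-)$ needs no further upgrade.
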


\begin{proof}
	
We prove (1) and (3). (2) and (4) are dual.
By Remark \ref{reuil} the functor $\alpha$ and for every $\X \in \mM$ the functor
$ \Fun^{\mW^\ot}_{\mV^\ot \times \mW^\ot}(\mM^\circledast \times \mW^\ot, \mN^\circledast) \to \mN^\circledast $ are maps of cocartesian fibrations
relative to the collection of inert morphisms of $\Ass$ preserving the maximum.
Moreover these functors are maps of cocartesian fibrations over $\Ass$
if $\mN^\circledast \to\mV^\ot \times \mW^\ot$ exhibits $\mN$ as right tensored over $\mW$.
For the general case every cocartesian lift $\W \to \W'$ in $\mW^\ot$ of the map $\{\n\} \subset [\n]$ in $\Ass^\op$ induces the functor
$$ \Fun^{\mW^\ot}_{\mV^\ot \times \mW^\ot}(\mM^\circledast \times \mW^\ot, \mN^\circledast)_{\W} \simeq \Fun_{\mV^\ot}(\mM^\circledast, \mN_{\W}^\circledast) \to \Fun_{\mV^\ot}(\mM^\circledast, \mN_{\W'}^\circledast) \simeq \Fun^{\mW^\ot}_{\mV^\ot \times \mW^\ot}(\mM^\circledast \times \mW^\ot, \mN^\circledast)_{\W'}$$
induced by the equivalence $\mN_{\W}^\circledast \to \mN_{\W'}^\circledast.$
This proves (2).
To see (1) it remains to verify condition (2) of Definition \ref{bla}.
By (2) the functor $ \Fun^{\mW^\ot}_{\mV^\ot \times \mW^\ot}(\mM^\circledast \times \mW^\ot, \R\Env(\mN)^\circledast) \to \Env(\mW)^\ot$ is a right tensored $\infty$-category. So the pullback $ \mW^\ot \times_{\Env(\mW)^\ot} \Fun^{\mW^\ot}_{\mV^\ot \times \mW^\ot}(\mM^\circledast \times \mW^\ot, \R\Env(\mN)^\circledast) \to \mW^\ot$ is a weakly right enriched $\infty$-category.
By Lemma \ref{faith} the embedding $\mN^\circledast \to \R\Env(\mN)^\circledast$ gives rise to an embedding $$ \Fun^{\mW^\ot}_{\mV^\ot \times \mW^\ot}(\mM^\circledast \times \mW^\ot, \mN^\circledast) \hookrightarrow \mW^\ot \times_{\Env(\mW)^\ot} \Fun^{\mW^\ot}_{\mV^\ot \times \mW^\ot}(\mM^\circledast \times \mW^\ot, \R\Env(\mN)^\circledast)$$ over $\mW^\ot$ that is a map of cocartesian fibrations 
relative to the collection of inert morphisms of $\Ass$ preserving the maximum.
This implies that $ \Fun^{\mW^\ot}_{\mV^\ot \times \mW^\ot}(\mM^\circledast \times \mW^\ot, \mN^\circledast) \to \mW^\ot $ satisfies condition (2) of Definition \ref{bla} and so is a right enriched $\infty$-category.
\end{proof}

The next notation is \cite[Notation 3.74.]{HEINE2023108941}:

\begin{notation}\label{bbbb}
Let $\mM^\circledast \to \mV^\ot, \mO^\circledast \to \mW^\ot,
\mN^\circledast \to \mV^\ot \times \mW^\ot$ be weakly left, weakly right and weakly bienriched $\infty$-categories, respectively.
\begin{enumerate}
\item Let $$\Enr\Fun_{\mV, \emptyset}(\mM,\mN)^\circledast \subset \Fun^{\mW^\ot}_{\mV^\ot \times \mW^\ot}(\mM^\circledast \times \mW^\ot, \mN^\circledast) $$ be the full weakly right $\mW$-enriched subcategory spanned by 
$$\Enr\Fun_{\mV, \emptyset}(\mM, \mN) \subset \Fun_{\mV^\ot}(\mM^\circledast, \mN_{[0]}^\circledast) \simeq \Fun^{\mW^\ot}_{\mV^\ot \times \mW^\ot}(\mM^\circledast \times \mW^\ot, \mN^\circledast)_{[0]}.$$

\item Let $$\Enr\Fun_{\emptyset,\mW}(\mO,\mN)^\circledast \subset \Fun^{\mV^\ot}_{\mV^\ot \times \mW^\ot}(\mV^\ot \times \mO^\circledast, \mN^\circledast)$$
be the full weakly left $\mV$-enriched subcategory spanned by 
$$\Enr\Fun_{\emptyset,\mW}(\mO,\mN) \subset \Fun_{\mV^\ot}(\mM^\circledast, \mN_{[0]}^\circledast) \simeq \Fun^{\mW^\ot}_{\mV^\ot \times \mW^\ot}(\mM^\circledast \times \mW^\ot, \mN^\circledast)_{[0]}.$$

\item Let $$\L\LinFun_{\mV, \emptyset}(\mM, \mN)^\circledast \subset \Enr\Fun_{\mV, \emptyset}(\mM,\mN)^\circledast$$ be the full weakly right $\mW$-enriched subcategory spanned by 
$$\L\LinFun_{\mV, \emptyset}(\mM, \mN) \subset \Enr\Fun_{\mV, \emptyset}(\mM, \mN).$$

\item Let $$\R\LinFun_{\emptyset,\mW}(\mO,\mN)^\circledast \subset\Enr\Fun_{\emptyset,\mW}(\mO,\mN)^\circledast$$
be the full weakly left $\mV$-enriched subcategory spanned by 
$$\R\LinFun_{\emptyset,\mW}(\mO,\mN) \subset \Enr\Fun_{\emptyset,\mW}(\mO,\mN).$$

\end{enumerate}
\end{notation}

\begin{remark}
There are canonical left $\mV$-enriched equivalences:
$$\Enr\Fun_{\emptyset,\mW}(\mO,\mN)^\circledast:= (\Enr\Fun_{\mW, \emptyset}(\mO^\rev, \mN^\rev)^\rev)^\circledast,$$
$$\LinFun_{\emptyset,\mW}(\mO,\mN)^\circledast:= (\LinFun_{\mW, \emptyset}(\mO^\rev, \mN^\rev)^\rev)^\circledast.$$	
\end{remark}

Remark \ref{reuil} implies the following proposition, which is \cite[Proposition 3.78.]{HEINE2023108941}: 

\begin{proposition}\label{lehmmm} 
Let $\mM^\circledast \to \mV^\ot, \mO^\circledast \to \mW^\ot,
\mN^\circledast \to \mV^\ot \times \mW^\ot$ be weakly left, weakly right and weakly bienriched $\infty$-categories, respectively.
\begin{enumerate}
\item The canonical equivalence
$$ \Fun_{\mW^\ot}(\mO^\circledast, \Fun^{\mW^\ot}_{\mV^\ot \times \mW^\ot}(\mM^\circledast \times \mW^\ot, \mN^\circledast)) \simeq \Fun_{\mV^\ot \times \mW^\ot}(\mM^\circledast \times \mO^\ot, \mN^\circledast)$$
restricts to an equivalence
\begin{equation*}\label{equino}
\Enr\Fun_{\emptyset, \mW}(\mO, \Enr\Fun_{\mV, \emptyset}(\mM,\mN)) \simeq \Enr\Fun_{\mV, \mW}(\mM \times \mO, \mN).
\end{equation*} 

\item The canonical equivalence $$ \Fun_{\mV^\ot}(\mM^\circledast,\Fun^{\mV^\ot}_{\mV^\ot \times \mW^\ot}(\mV^\ot \times \mO^\circledast, \mN^\circledast)) \simeq \Fun_{\mV^\ot \times \mW^\ot}(\mM^\circledast \times \mO^\ot, \mN^\circledast)$$ restricts to an equivalence $$ \Enr\Fun_{\mV, \emptyset}(\mM, \Enr\Fun_{\emptyset, \mW}(\mO,\mN)) \simeq \Enr\Fun_{\mV,\mW}(\mM \times \mO, \mN).$$
\end{enumerate}
\end{proposition}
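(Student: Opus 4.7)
The plan is to deduce both statements directly from the adjunction in Lemma \ref{lemist} together with the description of cocartesian lifts from Remark \ref{reuil}, by carefully unfolding the definitions of the enriched-functor full subcategories in Notation \ref{bbbb}. I will prove (1); statement (2) follows either by repeating the argument with the roles swapped or by applying the involution $(-)^\rev$ of Notation \ref{invo} to (1).

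First, the canonical equivalence
$$\Fun_{\mW^\ot}(\mO^\circledast, \Fun^{\mW^\ot}_{\mV^\ot \times \mW^\ot}(\mM^\circledast \times \mW^\ot, \mN^\circledast)) \simeq \Fun_{\mV^\ot \times \mW^\ot}(\mM^\circledast \times \mO^\circledast, \mN^\circledast)$$
is the adjunction of Lemma \ref{lemist} applied with $\rS = \mW^\ot$, $\T = \mV^\ot \times \mW^\ot$, and $\mC = \mM^\circledast \times \mW^\ot$. Using Remark \ref{puas}, the fiber of $\Fun^{\mW^\ot}_{\mV^\ot \times \mW^\ot}(\mM^\circledast \times \mW^\ot, \mN^\circledast) \to \mW^\ot$ over $\W \in \mW^\ot$ is $\Fun_{\mV^\ot}(\mM^\circledast, \mN^\circledast_{\W})$, and for $\V \in \mO^\circledast$ lying over $\phi_2(\V) \in \mW^\ot$ the value $F(\V)$ of a functor $F$ over $\mW^\ot$ corresponds to the fiber functor $G_\V : \mM^\circledast \simeq \mM^\circledast \times \{\V\} \hookrightarrow \mM^\circledast \times \mO^\circledast \xrightarrow{G} \mN^\circledast_{\phi_2(\V)}$ of the adjoint transpose $G$ of $F$.

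Next, I translate the two defining conditions on each side into matching fiberwise statements about $G$. On one hand, $F$ factors through $\Enr\Fun_{\mV,\emptyset}(\mM,\mN)^\circledast$ if and only if each $G_\V$ is a map of cocartesian fibrations relative to the inert morphisms of $\Ass$ preserving the maximum, which is equivalent to the assertion that $G$ sends cocartesian lifts of pairs $(\alpha,\mathrm{id})$, with $\alpha$ inert and max-preserving, to cocartesian morphisms of $\mN^\circledast$. On the other hand, $F$ is itself a map of cocartesian fibrations relative to the min-preserving inert morphisms of $\Ass$; by the explicit construction of such lifts in $\Fun^{\mW^\ot}_{\mV^\ot \times \mW^\ot}(\mM^\circledast \times \mW^\ot, \mN^\circledast) \to \mW^\ot$ from Remark \ref{reuil} (which builds them pointwise via the pushforward along $\mN^\circledast \to \mW^\ot$), this condition is equivalent to requiring that for every $X \in \mM^\circledast$ the functor $G(X, -): \mO^\circledast \to \mN^\circledast$ preserves min-inert cocartesian lifts, i.e.\ that $G$ sends cocartesian lifts of pairs $(\mathrm{id},\beta)$ to cocartesian morphisms of $\mN^\circledast$.

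Since every inert morphism of $\Ass \times \Ass$ whose first component preserves the maximum and whose second component preserves the minimum factors as $(\alpha,\mathrm{id}) \circ (\mathrm{id},\beta)$, the two conditions together are equivalent to $G$ preserving cocartesian lifts of all such morphisms, which is exactly the condition that $G$ be a $\mV,\mW$-enriched functor $\mM \times \mO \to \mN$. Thus the canonical equivalence restricts to the desired equivalence. The main subtlety is step concerning min-preserving cocartesian lifts: one must verify that the lifts produced by Remark \ref{reuil} inside the mapping $\infty$-category $\Fun^{\mW^\ot}_{\mV^\ot \times \mW^\ot}(\mM^\circledast \times \mW^\ot,\mN^\circledast)$ correspond under the transposition adjunction precisely to the fiberwise pushforward condition on $G$. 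This is a routine but careful unwinding, done by testing the cocartesianness of $F(\V) \to F(\V')$ against an arbitrary test functor and transposing back through Lemma \ref{lemist}.
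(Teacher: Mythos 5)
Your argument is correct and is essentially the paper's own (the paper gives no written proof beyond asserting that Remark \ref{reuil} implies the statement and citing the external reference): the equivalence of Lemma \ref{lemist} is transposed, the fibers are identified via Remark \ref{puas}, and the two defining conditions — objectwise membership in $\Enr\Fun_{\mV,\emptyset}(\mM,\mN)^\circledast$ and preservation of min-inert cocartesian lifts — are matched under transposition with preservation of cocartesian lifts of $(\alpha,\id)$ and $(\id,\beta)$ respectively, using that every relevant inert morphism of $\Ass\times\Ass$ factors through these. Your identification of the cocartesian lifts in $\Fun^{\mW^\ot}_{\mV^\ot\times\mW^\ot}(\mM^\circledast\times\mW^\ot,\mN^\circledast)\to\mW^\ot$ as pointwise pushforwards is exactly the content of Remark \ref{reuil} (i.e.\ of \cite[Proposition B.4.1]{lurie.higheralgebra}), so the "main subtlety" you flag is handled by the cited input rather than requiring further work.
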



Remark \ref{puas} implies the following remark:
\begin{remark}\label{leman}
Let $\mM^\circledast \to \mV^\ot, \mN^\circledast \to \mV^\ot \times \mW^\ot$ be weakly left, weakly bienriched $\infty$-categories, respectively.
For every map $\beta: \mW'^\ot \to \mW^\ot$ of $\infty$-operads
there is a canonical right $\mW'$-enriched equivalence
$$ \mW'^\ot \times_{\mW^\ot} \Enr\Fun_{\mV, \emptyset}(\mM,\mN)^\circledast \simeq \Enr\Fun_{\mV, \emptyset}(\mM,\beta^* \mN)^\circledast.$$

\end{remark}
Lemma \ref{faith} implies the following remark:

\begin{remark}

Let $\mM^\circledast \to \mV^\ot, \mN^\circledast \to \mV^\ot \times \mW^\ot, \mN'^\circledast \to \mV^\ot \times \mW^\ot$ be weakly left, weakly bienriched $\infty$-categories, respectively.
For every $\mV,\mW$-enriched embedding $ \mN^\circledast \to \mN'^\circledast$
the induced right $\mW$-enriched functor
$\Enr\Fun_{\mV, \emptyset}(\mM,\mN)^\circledast \to \Enr\Fun_{\mV, \emptyset}(\mM,\mN')^\circledast$
is an embedding.

\end{remark}

Lemma \ref{innerho}, Remark \ref{reuil} and Lemma \ref{colas} imply the following corollary:

\begin{corollary}\label{innerhost} Let $\mK \subset \Cat_\infty$ be a full subcategory and $\mM^\circledast \to \mV^\ot, 
\mN^\circledast \to \mV^\ot \times \mW^\ot$ weakly left enriched and 
weakly bienriched $\infty$-categories. 
If $\mN^\circledast \to \mV^\ot \times \mW^\ot$ is a right tensored $\infty$-category compatible with $\mK$-indexed colimits, 
$\Enr\Fun_{\mV, \emptyset}(\mM,\mN)^\circledast \to \mW^\ot$ is a right tensored $\infty$-category compatible with $\mK$-indexed colimits and the right $\mW$-enriched functor
$\Enr\Fun_{\mV, \emptyset}(\mM,\mN)^\circledast \to (\mN^\mM)^\circledast $ is right $\mW$-linear and preserves $\mK$-indexed colimits.
		
		
		
	
\end{corollary}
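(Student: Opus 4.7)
The plan is to assemble the claim from the three cited results by reducing to the ambient functor category and controlling closure properties of the full subcategory $\Enr\Fun_{\mV,\emptyset}(\mM,\mN)^\circledast$.

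First, since $\mN^\circledast \to \mV^\ot \times \mW^\ot$ is right tensored over $\mW$, Lemma \ref{innerho}(3) yields that the ambient functor $\alpha \colon \Fun^{\mW^\ot}_{\mV^\ot \times \mW^\ot}(\mM^\circledast \times \mW^\ot, \mN^\circledast) \to \mW^\ot$ is a right tensored $\infty$-category, with right $\mW$-action computed via post-composition with the right $\mW$-action on $\mN^\circledast$, and with the evaluation-at-$\X$ functors being right $\mW$-linear. I would then verify that the full subcategory $\Enr\Fun_{\mV,\emptyset}(\mM,\mN)^\circledast$ is closed under this right $\mW$-action: given a left $\mV$-enriched functor $\F \colon \mM^\circledast \to \mN^\circledast$ and $\W \in \mW$, the pointwise right tensor $\F \otimes \W$ remains left $\mV$-enriched because the $\mV$- and $\mW$-actions on $\mN$ commute, via the canonical equivalence $\V \otimes (\X \otimes \W) \simeq (\V \otimes \X) \otimes \W$ of Remark \ref{reuj}. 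Applying Remark \ref{reuil} to this closed full subcategory then yields that $\Enr\Fun_{\mV,\emptyset}(\mM,\mN)^\circledast \to \mW^\ot$ is itself a right tensored $\infty$-category.

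Second, for $\mK$-indexed conical colimits and compatibility, I would apply Lemma \ref{colas}(1): since $\mN$ admits left and right tensors and $\mK$-indexed conical colimits (by the compatibility hypothesis on $\mN^\circledast$), the $\infty$-category $\Enr\Fun_{\mV,\emptyset}(\mM,\mN)$ admits $\mK$-indexed colimits, and the forgetful functor to $\Fun(\mM,\mN)$ preserves them, so they are computed pointwise in $\mN$. Because the right $\mW$-action is also pointwise and the right $\mW$-action on $\mN$ preserves $\mK$-indexed conical colimits (again by compatibility), the right $\mW$-action on $\Enr\Fun_{\mV,\emptyset}(\mM,\mN)$ preserves $\mK$-indexed colimits. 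The multi-morphism spaces in $\Enr\Fun_{\mV,\emptyset}(\mM,\mN)^\circledast$ are pointwise limits of multi-morphism spaces in $\mN^\circledast$, which preserve $\mK$-indexed limits in each $\mW$-slot by compatibility of $\mN^\circledast$; hence $\Enr\Fun_{\mV,\emptyset}(\mM,\mN)^\circledast \to \mW^\ot$ is compatible with $\mK$-indexed colimits.

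Finally, the canonical functor $\Enr\Fun_{\mV, \emptyset}(\mM,\mN)^\circledast \to (\mN^\mM)^\circledast$ is right $\mW$-linear and preserves $\mK$-indexed colimits because the right $\mW$-action and $\mK$-indexed conical colimits on both sides are computed pointwise via the corresponding structures on $\mN$. The main obstacle will be the careful bookkeeping in verifying that the pointwise right $\mW$-tensors and pointwise $\mK$-indexed colimits indeed preserve the property of being left $\mV$-enriched, which ultimately reduces to the independence of the left and right actions expressed by Remark \ref{reuj}.
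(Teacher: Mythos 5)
Your proposal is correct and follows essentially the same route as the paper: the paper's entire proof is the one-line assertion that Lemma \ref{innerho}, Remark \ref{reuil} and Lemma \ref{colas} imply the corollary, and you assemble exactly these three ingredients in the expected way (Lemma \ref{innerho}(3) for the ambient right tensored structure and linearity of evaluation, closure of the full subcategory under the pointwise action plus Remark \ref{reuil} for the tensored structure on $\Enr\Fun_{\mV,\emptyset}(\mM,\mN)^\circledast$, and Lemma \ref{colas}(1) for the conical colimits, all computed pointwise). The only cosmetic looseness — invoking left tensors on $\mN$ when the hypothesis only supplies right tensors — is shared with the paper's own citation of Lemma \ref{colas} and does not constitute a departure from its argument.
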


The latter corollary admits the following refinement:

\begin{proposition}\label{innerhosty} Let $\mK \subset \Cat_\infty$ be a full subcategory and $\mM^\circledast \to \mV^\ot, 
\mN^\circledast \to \mV^\ot \times \mW^\ot$ weakly left enriched and weakly bienriched $\infty$-categories.
If $\mN^\circledast \to \mV^\ot \times \mW^\ot$ admits right tensors
and forming right tensors preserves $\mK$-indexed colimits, $\Enr\Fun_{\mV, \emptyset}(\mM,\mN)^\circledast \to \mW^\ot$ admits right tensors and $\mK$-indexed conical colimits and the right $\mW$-enriched functor
$\Enr\Fun_{\mV, \emptyset}(\mM,\mN)^\circledast \to (\mN^\mM)^\circledast $ is right $\mW$-linear and preserves $\mK$-indexed colimits.
		
		

\end{proposition}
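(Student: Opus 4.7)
The plan is to bootstrap from Corollary \ref{innerhost} by passing to the right tensored envelope $\R\Env(\mN)^\circledast \to \mV^\ot \times \Env(\mW)^\ot$ of Proposition \ref{unb} (3), then pull back to $\mW^\ot$.

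First I would show that $\R\Env(\mN)^\circledast \to \mV^\ot \times \Env(\mW)^\ot$ is a right tensored $\infty$-category compatible with $\mK$-indexed colimits. Its underlying $\infty$-category is $\mN$, which admits $\mK$-indexed conical colimits (implicit in the hypothesis that forming right tensors preserves them). Every object of $\Env(\mW)$ is a tensor product $\W_1 \ot \cdots \ot \W_\ell$ of objects $\W_i \in \mW$, so right tensors over $\Env(\mW)$ reduce to iterated right tensors over $\mW$ in $\mN$ and hence preserve $\mK$-indexed colimits. The subtle point is to identify the right tensor $\X \ot \W$ for $\X \in \mN$, $\W \in \mW$ in $\R\Env(\mN)$ with the right tensor in $\mN$: by right pseudo-enrichedness of $\R\Env(\mN)$ over $\Env(\mW)$, the universal property over $\Env(\mW)$ reduces to one with source entries in $\mW$; targets can be assumed to lie in the underlying $\infty$-category $\mN$, and there full faithfulness of $\mN \hookrightarrow \R\Env(\mN)$ gives an equivalence of multi-morphism spaces, reducing to the right tensor universal property in $\mN$.

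Next I would apply Corollary \ref{innerhost} to deduce that $\Enr\Fun_{\mV, \emptyset}(\mM, \R\Env(\mN))^\circledast \to \Env(\mW)^\ot$ is right tensored compatible with $\mK$-indexed colimits and that the forgetful functor to $(\R\Env(\mN)^\mM)^\circledast$ is right $\Env(\mW)$-linear and preserves $\mK$-indexed colimits. I would then pull back along the embedding $\mW^\ot \subset \Env(\mW)^\ot$. By Remark \ref{leman}, this pullback identifies with
\[
\Enr\Fun_{\mV, \emptyset}\bigl(\mM,\, \mW^\ot \times_{\Env(\mW)^\ot} \R\Env(\mN)\bigr)^\circledast,
\]
which is a weakly right $\mW$-enriched $\infty$-category containing $\Enr\Fun_{\mV, \emptyset}(\mM, \mN)^\circledast$ as a full weakly right $\mW$-enriched subcategory via the fully faithful embedding $\mN^\circledast \hookrightarrow \mW^\ot \times_{\Env(\mW)^\ot} \R\Env(\mN)^\circledast$. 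I would verify that this subcategory is closed under right tensors by $\W \in \mW$ and under $\mK$-indexed conical colimits of the ambient: both are computed pointwise, the pointwise right tensor in $\R\Env(\mN)$ over $\W$ of an object of $\mN$ lies in $\mN$ (by the identification from Step 1), and pointwise conical colimits of diagrams landing in $\mN^\mM$ land in $\mN^\mM$ since $\mN$ admits the relevant conical colimits.

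From the pointwise nature of these constructions it follows at once that the right $\mW$-enriched functor $\Enr\Fun_{\mV, \emptyset}(\mM, \mN)^\circledast \to (\mN^\mM)^\circledast$ is right $\mW$-linear and preserves $\mK$-indexed colimits. The main obstacle is the identification of right tensors in $\R\Env(\mN)$ with those in $\mN$ outlined in the first paragraph, since the envelope a priori provides right tensors only over the enlarged operad $\Env(\mW)$; the matching hinges on a careful use of right pseudo-enrichedness combined with the weak bi-enrichment axiom reducing multi-morphism spaces to the underlying $\infty$-category on the target side.
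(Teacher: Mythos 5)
Your argument has a genuine gap at its central step: the identification of right tensors in $\R\Env(\mN)^\circledast$ with the internal right tensors of $\mN$. The right tensored envelope of Proposition \ref{unb} (3) \emph{freely} adjoins right tensors: its underlying $\infty$-category consists of formal tensors $\Y \ot \W_1 \ot \cdots \ot \W_\m$, and the right tensor of $\X \in \mN$ by $\W \in \mW$ computed in $\R\Env(\mN)$ is the formal object $\X \ot \W$, which is in general \emph{not} equivalent to the image of the internal right tensor $\X \ot_{\mN} \W \in \mN$. Your justification --- ``targets can be assumed to lie in the underlying $\infty$-category $\mN$'' --- is where this breaks: the universal property of a right tensor must be tested against multi-morphism spaces into \emph{all} objects of $\R\Env(\mN)$, and there is no mechanism (right tensors corepresent on the source side; only right cotensors, which the envelope lacks, would let you push targets into $\mN$) to restrict to targets in $\mN$. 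Indeed, already $\R\Env(\mN)(\X \ot \W, \Y \ot \W')$ contains a summand of the form $\Mul_\mN(\X;\Y) \times \Mul_{\mW}(\W;\W')$ with no counterpart in $\R\Env(\mN)(\X \ot_\mN \W, \Y \ot \W')$. Consequently the full subcategory $\mN \subset \mW^\ot \times_{\Env(\mW)^\ot} \R\Env(\mN)^\circledast$ is \emph{not} closed under the ambient right tensors, and the closure argument in your last step, as well as the intended application of Corollary \ref{innerhost}, collapses.

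The repair is to replace the free envelope by the \emph{canonical} extension of $\mN$ to a right tensored $\infty$-category over $\Env(\mW)$ that keeps $\mN$ as underlying $\infty$-category and lets $\Env(\mW)$ act by iterated internal right tensors. This is exactly what Corollary \ref{emcolf} (2) (equivalently Lemma \ref{lekko}, applied with $\kappa = \emptyset$ after first reducing to $\mK = \emptyset$ via Lemma \ref{colas}) provides: $\mN^\circledast \to \mV^\ot \times \mW^\ot$ is the pullback along $\mW^\ot \subset \Env(\mW)^\ot$ of a \emph{unique} right tensored $\infty$-category $\mN'^\circledast \to \mV^\ot \times \Env(\mW)^\ot$, with no change of underlying objects. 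One then applies Lemma \ref{innerho} to $\mN'$ and pulls back via Remark \ref{leman}; since the pullback of $\mN'$ is literally $\mN$, no closure argument is needed. Your overall strategy (extend over $\Env(\mW)$, apply the tensored case, pull back) is the right one, but the extension must be the one of Corollary \ref{emcolf}, not the free envelope.
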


\begin{proof}
By Lemma \ref{colas} it is enough to prove the case that $\mK$ is empty.
By Corollary \ref{emcolf} (2) (for $\kappa=\emptyset$) the weakly bienriched $\infty$-category $\mN^\circledast \to \mV^\ot \times \mW^\ot$ that admits right tensors is the pullback of a unique right tensored $\infty$-category
$\mN'^\circledast \to \mV^\ot \times \Env(\mW)^\ot$.
By Proposition \ref{innerho} the weakly right enriched $\infty$-category $\Enr\Fun_{\mV, \emptyset}(\mM,\mN')^\circledast \to \Env(\mW)^\ot$ is a right tensored $\infty$-category and the right $\Env(\mW)$-enriched functor
$\Enr\Fun_{\mV, \emptyset}(\mM,\mN')^\circledast \to (\mN'^\mM)^\circledast $ is right linear. 
So using Remark \ref{leman} the pullback $\Enr\Fun_{\mV, \emptyset}(\mM,\mN)^\circledast \simeq \Enr\Fun_{\mV, \emptyset}(\mM,\mN')^\circledast \times_{\Env(\mW)^\ot} \mW^\ot  \to \mW^\ot$ is a weakly right enriched $\infty$-category that admits right tensors and the following right $\mW$-enriched functor is right $\mW$-linear:
$$\Enr\Fun_{\mV, \emptyset}(\mM,\mN)^\circledast \simeq \Enr\Fun_{\mV, \emptyset}(\mM,\mN')^\circledast \times_{\Env(\mW)^\ot} \mW^\ot \to \Fun(\mM,\mN)^\circledast \simeq \Fun(\mM,\mN')^\circledast \times_{\Env(\mW)^\ot} \mW^\ot.$$

\end{proof}
 




\subsection{Bienriched $\infty$-categories of enriched functors}

In this subsection we extend Notation \ref{bbbb} by refining the enriched $\infty$-category of enriched functors to a bienriched $\infty$-category (Notation \ref{silop} and Theorem \ref{vvvl}).

\begin{lemma}\label{kkpp}
Let $\mM^\circledast \to \mU^\ot \times \mV^\ot, \mN^\circledast \to \mV^\ot \times \mW^\ot$ be weakly bienriched $\infty$-categories.
\begin{enumerate}
\item Then $\mM^\circledast \times_{\mV^\ot} \mN^\circledast \to \mU^\ot \times \mW^\ot$ is a weakly bienriched $\infty$-category.

\vspace{1mm}
\item If $\mM^\circledast \to \mU^\ot \times \mV^\ot$ is a left tensored $\infty$-category and $ \mN^\circledast \to \mV^\ot \times \mW^\ot$ is a right tensored $\infty$-category, then $\mM^\circledast \times_{\mV^\ot} \mN^\circledast \to \mU^\ot \times \mW^\ot$ is a bitensored $\infty$-category.

\vspace{1mm}
\item If $\mM^\circledast \to \mU^\ot \times \mV^\ot$ is a right tensored $\infty$-category and $ \mN^\circledast \to \mV^\ot \times \mW^\ot$ is a left tensored $\infty$-category, then $\mM^\circledast \times_{\mV^\ot} \mN^\circledast \to \mV^\ot $ is a map of  cocartesian fibrations over $\Ass$.

\vspace{1mm}
\item If $\mM^\circledast \to \mU^\ot \times \mV^\ot$ is a left tensored $\infty$-category compatible with small colimits
and $ \mN^\circledast \to \mV^\ot \times \mW^\ot$ is a right tensored $\infty$-category compatible with small colimits, then $\mM^\circledast \times_{\mV^\ot} \mN^\circledast \to \mU^\ot \times \mW^\ot$ is a bitensored $\infty$-category compatible with small colimits.
\end{enumerate}

\end{lemma}

\begin{proof}
Via the enveloping bitensored $\infty$-category (1) follows from (2).
(2): The functors $\mM^\circledast \times \mW^\ot \to \mU^\ot \times \mV^\ot \times \mW^\ot, \mU^\ot \times \mN^\circledast \to \mU^\ot \times \mV^\ot \times \mW^\ot$ are maps of cocartesian fibrations relative to the collection of triples,
whose first and third component is cocartesian and whose second component is an equivalence.
Thus the pullback $$(\mM^\circledast \times \mW^\ot) \times_{(\mU^\ot \times \mV^\ot \times \mW^\ot)} (\mU^\ot \times \mN^\circledast) \simeq \mM^\circledast \times_{\mV^\ot} \mN^\circledast \to \mU^\ot \times \mW^\ot$$ is a cocartesian fibration relative to the collection of pairs whose first and second component is cocartesian.
So it is enough to observe that for every $\U \in \mU, \W \in \mW$
the cocartesian lift $\U \to \U'$ in $ \mU$ of the morphism $\{\n\} \subset [\n]$
in $\Delta$ and the cocartesian lift $\W \to \W'$ in $\mW$ of the morphism $\{0\} \subset [\m]$ in $\Delta$ induce an equivalence
$$(\mM^\circledast \times_{\mV^\ot} \mN^\circledast)_{\U,\W} \simeq \mM_\U^\circledast \times_{\mV^\ot} \mN_\W^\circledast \to (\mM^\circledast \times_{\mV^\ot} \mN^\circledast)_{\U',\W'}\simeq \mM_{\U'}^\circledast \times_{\mV^\ot} \mN_{\W'}^\circledast$$
since the functors $\mM_\U^\circledast \to \mM_{\U'}^\circledast$ and $\mN_\W^\circledast \to 
\mN_{\W'}^\circledast$ are equivalences.

(3): The functors $\mM^\circledast \to \mV^\ot, \mN^\circledast \to \mV^\ot $ are maps of cocartesian fibrations over $\Ass$, where we see $\mM^\circledast$ over $\Ass$ via projection to the second factor and $\mN^\circledast$ over $\Ass$ via projection to the first factor.
Hence the pullback $ \mM^\circledast \times_{\mV^\ot} \mN^\circledast \to \mV^\ot $ is a map of  cocartesian fibrations over $\Ass$.

(4): Let $\U \to \U'$ be a cocartesian lift in $ \mU$ of the morphism $\{0\} \subset [\n]$ in $\Delta$ and $\W \to \W'$ a cocartesian lift in $\mW$ of the morphism $\{\m\} \subset [\m]$ in $\Delta$.
The induced functor
$$(\mM^\circledast \times_{\mV^\ot} \mN^\circledast)_{\U,\W} \simeq \mM_\U^\circledast \times_{\mV^\ot} \mN_\W^\circledast \to (\mM^\circledast \times_{\mV^\ot} \mN^\circledast)_{\U',\W'}\simeq \mM_{\U'}^\circledast \times_{\mV^\ot} \mN_{\W'}^\circledast$$ preserves small colimits because the functors
$\mM_\U^\circledast \to \mM_{\U'}^\circledast$ and $\mN_\W^\circledast \to \mN_{\W'}^\circledast$ preserve small colimits.

\end{proof}

\begin{lemma}\label{bopp}
Let $\mM^\circledast \to \mU^\ot \times \mV^\ot$ be a right tensored $\infty$-category and $ \mN^\circledast \to \mV^\ot \times \mW^\ot$ a left tensored $\infty$-category.	
The embedding $\mO^\circledast := \mM^\circledast \times_{\mV^\ot} \mN^\circledast \subset \L\Env(\mM)^\circledast \times_{\mV^\ot} \R\Env(\mN)^\circledast $
induces an equivalence
$$ \B\Env(\mO)^\circledast \simeq \L\Env(\mM)^\circledast \times_{\mV^\ot} \R\Env(\mN)^\circledast.$$

\end{lemma}

\begin{proof}
By Notation \ref{ene} the induced enriched $\Env(\mU), \Env(\mW)$-linear functor $$ \B\Env(\mO)^\circledast \to \L\Env(\mM)^\circledast \times_{\mV^\ot} \R\Env(\mN)^\circledast$$
identifies with the canonical equivalence 
$$\text{Min} \times_{\Fun(\{0\}, \Ass)} (\mM^\circledast \times_{\mV^\ot} \mN^\circledast) \times_{\Fun(\{0\}, \Ass) } \Max \simeq (\mathrm{Min} \times_{\Fun(\{0\},\Ass)} \mM^\circledast) \times_{\mV^\ot} (\mN^\circledast \times_{\Fun(\{0\},\Ass)} \Max).$$

\end{proof}


\begin{notation}\label{pppu} Let $\mM^\circledast \to \mU^\ot \times \mV^\ot$ be a right tensored $\infty$-category and $ \mN^\circledast \to \mV^\ot \times \mW^\ot$ a left tensored $\infty$-category.
\begin{enumerate}
\item Let $$\Enr\Fun^\mV_{\mU,\mW}(\mM \times_{\mV} \mN, \mO) \subset \Enr\Fun_{\mU,\mW}(\mM \times_{\mV} \mN, \mO)$$ be the full subcategory spanned by the $\mU,\mW$-enriched functors that invert $\sigma$-cocartesian morphisms, where $\sigma$ is the canonical functor $ \mM^\circledast \times_{\mV^\ot} \mN^\circledast\to \mV^\ot \to \Ass.$

\item Let $\mM^\circledast \to \mU^\ot \times \mV^\ot, \mN^\circledast \to \mV^\ot \times \mW^\ot$ be bitensored $\infty$-categories.
Let $$\LinFun^\mV_{\mU,\mW}(\mM \times_{\mV} \mN, \mO) \subset \LinFun_{\mU,\mW}(\mM \times_{\mV} \mN, \mO)$$ be the full subcategory of $\mU,\mW$-linear functors that invert $\sigma$-cocartesian morphisms. 
\end{enumerate}

\end{notation}

\begin{remark}\label{rembrol}

By Lemma \ref{kkpp} under the assumptions of Notation \ref{pppu} the functor $\sigma$ is a cocartesian fibration whose cocartesian morphisms lie over equivalences in $\mU^\ot \times \mW^\ot.$ This implies that a morphism of $ \mM^\circledast \times_{\mV^\ot} \mN^\circledast$ is $\sigma$-cocartesian if and only if it is the image of a morphism of $ \emptyset^\ot \times_{\mU^\ot}\mM^\circledast \times_{\mV^\ot} \mN^\circledast \times_{\mW^\ot} \emptyset^\ot $ that is $\sigma'$-cocartesian, where $\sigma'$ is the functor $ \emptyset^\ot \times_{\mU^\ot}\mM^\circledast \times_{\mV^\ot} \mN^\circledast \times_{\mW^\ot} \emptyset^\ot  \to \mV^\ot.$
Consequently, $$\Enr\Fun^\mV_{\mU,\mW}(\mM \times_{\mV} \mN, \mO), \LinFun^\mV_{\mU,\mW}(\mM \times_{\mV} \mN, \mO)$$ are likewise the full subcategories of enriched functors whose underlying functors preserve $\sigma'$-cocartesian morphisms.
\end{remark}

\begin{notation}Let $\mM^\circledast \to \mU^\ot \times \mV^\ot, \mN^\circledast \to \mV^\ot \times \mW^\ot$ be small bitensored $\infty$-categories.
We write $$(\mM \boxtimes_{\mV} \mN)^\circledast \to \mU^\ot \times \mW^\ot$$
for the relative tensor product of $\mM^\circledast \to \mU^\ot \times \mV^\ot $ and $\mN^\circledast \to \mV^\ot\times \mW^\ot $ induced by the product of $\Cat_\infty.$	

\end{notation}


\begin{lemma}\label{retaco}
Let $\mM^\circledast \to \mU^\ot \times \mV^\ot, \mN^\circledast \to \mV^\ot \times \mW^\ot$ be small bitensored $\infty$-categories.
There is a $\mU, \mW$-linear functor \begin{equation}\label{idd} \mM^\circledast \times_{\mV^\ot} \mN^\circledast \to (\mM \boxtimes_{\mV} \mN)^\circledast\end{equation} that induces for every bitensored $\infty$-category $\mO^\circledast \to \mU^\ot \times \mW^\ot$ an equivalence $$
\LinFun_{\mU,\mW}(\mM \boxtimes_{\mV} \mN, \mO) \to \LinFun^\mV_{\mU,\mW}(\mM \times_{\mV} \mN, \mO).$$

\end{lemma}

\begin{proof}
Let $\sigma$ be the canonical functor $ \mM^\circledast \times_{\mV^\ot} \mN^\circledast\to \mV^\ot \to \Ass.$
Let $ \mZ^\circledast$ be the localization of $\mM^\circledast \times_{\mV^\ot} \mN^\circledast$ with respect to the set of $\sigma$-cocartesian morphisms. 
The functor $\mM^\circledast \times_{\mV^\ot} \mN^\circledast \to \mU^\ot \times \mW^\ot$ induces a functor $\theta: \mZ^\circledast \to \mU^\ot \times \mW^\ot$.

The functor $\mM^\circledast \times_{\mV^\ot} \mN^\circledast \to \mU^\ot \times \mW^\ot$ is a map of cocartesian fibrations over $ \Ass \times \Ass$. This guarantees by \cite[Proposition 2.1.4.]{HinichDwyer} that $\theta$ is a map of cocartesian fibrations over 
$ \Ass \times \Ass$ and the universal functor $ \mM^\circledast \times_{\mV^\ot} \mN^\circledast \to \mZ^\circledast$ is a map of cocartesian fibrations over $ \Ass \times \Ass$
over $\mU^\ot \times \mW^\ot$.  
Moreover by \cite[Proposition 2.1.4.]{HinichDwyer} the fiber of $\theta$ over any $\U \in \mU, \W \in \mW$ is the localization
of $ \emptyset^\ot \times_{\mU^\ot} \mM^\circledast \times_{\mV^\ot} \mN^\circledast \times_{\mW^\ot} \emptyset^\ot$ with respect to the set of $\sigma$-cocartesian morphisms.
In particular, for every morphism $\U \to \U'$ in $\mU$ and $\W \to \W'$ in $\mW$
the induced functor $\mZ^\circledast_{\U,\V} \to \mZ^\circledast_{\U',\V'}$ identifies with the identity of the localization of $ \emptyset^\ot \times_{\mU^\ot} \mM^\circledast \times_{\mV^\ot} \mN^\circledast \times_{\mW^\ot} \emptyset^\ot$ with respect to the set of $\sigma$-cocartesian morphisms.
So $\theta$ is a bitensored $\infty$-category and the universal functor
$ \mM^\circledast \times_{\mV^\ot} \mN^\circledast \to \mZ^\circledast $ is a $\mU,\mW$-linear functor.
By definition the latter induces for every bitensored $\infty$-category $\mO^\circledast \to \mU^\ot \times \mW^\ot$ an equivalence
$$ \LinFun_{\mU,\mW}(\mZ, \mO) \to \LinFun^\mV_{\mU,\mW}(\mM \times_{\mV} \mN, \mO).$$

Projection gives a canonical equivalence $$\Fun'_{\mU^\ot \times \Ass \times \mW^\ot}(\mM^\circledast \times_{\mV^\ot} \mN^\circledast, \Ass \times \mO^\circledast) \simeq \LinFun^\mV_{\mU,\mW}(\mM \times_{\mV} \mN, \mO),$$
where the left hand side is the full subcategory of functors over $\mU^\ot \times\Ass \times \mW^\ot$ preserving cocartesian lifts of morphisms of $\mU^\ot \times\Ass \times \mW^\ot$
whose first and third component is cocartesian.

The map $\mM^\circledast \times_{\mV^\ot} \mN^\circledast \to \mU^\ot \times \Ass \times \mW^\ot$ of cocartesian fibrations over $ \Ass $ classifies the functor
$$\rho: \Ass \to {_\mU\BMod_\mW}, [\n] \mapsto \mM^\circledast \times_{\mV^\ot} \emptyset^\ot \times \mV^{\times \n} \times \emptyset^\ot \times_{\mV^\ot} \mN^\circledast$$
and there is a canonical equivalence 
$$\Fun'_{\mU^\ot \times \Ass \times \mW^\ot}(\mM^\circledast \times_{\mV^\ot} \mN^\circledast, \Ass \times \mO^\circledast)^\simeq \simeq \Fun(\Ass, {_\mU\BMod_\mW})(\rho, \delta(\mO)),$$
where $\delta: {_\mU\BMod_\mW} \to  \Fun(\Ass, {_\mU\BMod_\mW})$ is the diagonal functor.
So $\mZ^\circledast \to \mU^\ot \times \mW^\ot$ is the colimit of $\rho,$ 
which by \cite[Theorem 4.4.2.8.]{lurie.higheralgebra} is the the relative tensor product $(\mM \boxtimes_{\mV} \mN)^\circledast \to \mU^\ot \times \mW^\ot.$



\end{proof}

\begin{notation}\label{notlino} Let $\mV^\ot \to \Ass,\mW^\ot \to \Ass$ be small monoidal $\infty$-categories.
The functor $$ {_\mV\B\Mod_\mU} \times {_\mU\B\Mod_\mW} \to {_\mV\B\Mod_\mW}, (\mO^\circledast \to \mV^\ot \times \mU^\ot, \mM^\circledast \to \mU^\ot \times \mW^\ot) \mapsto ((\mO \boxtimes_\mU \mM)^\circledast \to \mV^\ot \times \mW^\ot) $$ 
preserves component-wise small colimits by the description of the relative tensor product \cite[Definition 4.4.2.10.]{lurie.higheralgebra} and so admits component-wise right adjoints that we denote by 
$$\L\LinFun_{\mV, \emptyset}(\mO,-)^\circledast: {_\mV\B\Mod_\mW} \to {_\mU\B\Mod_\mW},$$
$$\R\LinFun_{\emptyset,\mW}(\mM,-)^\circledast: {_\mV\B\Mod_\mW} \to {_\mV\B\Mod_\mU}.$$

\end{notation}

The following remark and Corollary \ref{nokiop} show that Notation \ref{notlino} is not in conflict with Notation \ref{bbbb}.

\begin{remark}\label{nocon} Let $\mV^\ot \to \Ass,\mW^\ot \to \Ass$ be small monoidal $\infty$-categories.
The functor $$  {_\mV\L\Mod_\emptyset} \times {_\emptyset\R\Mod_\mW} \simeq {_\mV\B\Mod_*} \times {_*\B\Mod_\mW} \to {_\mV\B\Mod_\mW},$$$$ (\mO^\circledast \to \mV^\ot, \mM^\circledast \to \mW^\ot) \mapsto ((\mO \boxtimes_* \mM)^\circledast \to \mV^\ot \times \mW^\ot) $$
identifies with the functor 
$$ {_\mV\L\Mod_\emptyset} \times {_\emptyset\R\Mod_\mW} \to {_\mV\B\Mod_\mW}, (\mO^\circledast \to \mV^\ot, \mM^\circledast \to \mW^\ot) \mapsto (\mO^\circledast \times \mM^\circledast \to \mV^\ot \times \mW^\ot).$$
Consequently, Notation \ref{notlino} for $\mU^\ot=\Ass$ agrees with Notation \ref{bbbb}.

\end{remark}

\begin{lemma}
Let $\mV^\ot \to \Ass,\mW^\ot \to \Ass,\mV'^\ot \to \Ass, \mW'^\ot \to \Ass, \mU^\ot \to \Ass,\mU'^\ot \to \Ass $ be small monoidal $\infty$-categories, $\alpha: \mV'^\ot \to \mV^\ot, \beta: \mW'^\ot \to \mW^\ot, \gamma: \mU'^\ot \to \mU^\ot $ monoidal functors
and $\mO^\circledast \to \mV^\ot \times \mU^\ot,  \mM^\circledast \to \mU^\ot \times \mW^\ot, \mN^\circledast \to \mV^\ot \times \mW^\ot$ bitensored $\infty$-categories.
There is a canonical $\mU',\mW'$-linear equivalence $$\mU'^\ot \times_{\mU^\ot} \L\LinFun_{\mV, \emptyset}(\mO,\mN)^\circledast \times_{\mW^\ot} {\mW'^\ot} \simeq \L\LinFun_{\mV, \emptyset}(\gamma^*\mO,\beta^*\mN)^\circledast$$
and a canonical $\mV',\mU'$-linear equivalence $$\mV'^\ot \times_{\mV^\ot} \R\LinFun_{\emptyset,\mW}(\mM,\mN)^\circledast \times_{\mU^\ot} \mU'^\ot \simeq \R\LinFun_{\emptyset,\mW}(\gamma^* \mM ,  \alpha^*\mN)^\circledast. $$

\end{lemma}

\begin{proof}
Let $\mM^\circledast \to \mU'^\ot \times \mW'^\ot$ be a bitensored $\infty$-category.
The first equivalence is represented by the canonical equivalence
$$ \LinFun_{\mU',\mW'}(\mM, (\gamma,\beta)^*\L\LinFun_{\mV, \emptyset}(\mO,\mN)) \simeq \LinFun_{\mU,\mW}(\mU \boxtimes_{\mU'} \mM \boxtimes_{\mW'} \mW, \L\LinFun_{\mV, \emptyset}(\mO,\mN))$$$$ \simeq 
\LinFun_{\mV,\mW}(\mO \boxtimes_\mU \mU \boxtimes_{\mU'} \mM \boxtimes_{\mW'} \mW, \mN) \simeq 
\LinFun_{\mV,\mW}(\gamma^*\mO \boxtimes_{\mU'}\mM \boxtimes_{\mW'} \mW, \mN) \simeq $$$$ \LinFun_{\mV,\mW'}(\gamma^*\mO \boxtimes_{\mU'} \mM,  \beta^*\mN) \simeq \LinFun_{\mU',\mW'}(\mM,\L\LinFun_{\mV, \emptyset}(\gamma^*\mO,\beta^*\mN)).$$

Let $\mO^\circledast \to \mV'^\ot \times \mU'^\ot$ be a bitensored $\infty$-category.	
The second equivalence is represented by the canonical equivalence
$$ \LinFun_{\mV',\mU'}(\mO, (\gamma,\alpha)^*\R\LinFun_{\emptyset,\mW}(\mM,\mN)) \simeq \LinFun_{\mV,\mU}(\mV \boxtimes_{\mV'} \mO \boxtimes_{\mU'} \mU, \R\LinFun_{\emptyset,\mW}(\mM,\mN)) \simeq $$$$
\LinFun_{\mV,\mW}(\mV \boxtimes_{\mV'} \mO \boxtimes_{\mU'} \mU \ot_{\mU} \mM,\mN) \simeq 
\LinFun_{\mV,\mW}(\mV \boxtimes_{\mV'} \mO \boxtimes_{\mU'} \gamma^*\mM,\mN) \simeq $$$$
\LinFun_{\mV',\mW}(\mO \boxtimes_{\mU'}\gamma^* \mM, \alpha^*\mN) \simeq \LinFun_{\mV',\mU'}(\mO,\R\LinFun_{\emptyset,\mW}(\gamma^* \mM , \alpha^* \mN)).$$

\end{proof}

\begin{corollary}\label{nokiop}
Let $\mV^\ot \to \Ass,\mW^\ot \to \Ass,\mV'^\ot \to \Ass, \mW'^\ot \to \Ass, \mU^\ot \to \Ass,\mU'^\ot \to \Ass $ be small monoidal $\infty$-categories and $\mO^\circledast \to \mV^\ot \times \mU^\ot, \mM^\circledast \to \mU^\ot \times \mW^\ot, \mN^\circledast \to \mV^\ot \times \mW^\ot$ bitensored $\infty$-categories.
There is a canonical right $\mW$-linear equivalence $$\emptyset^\ot \times_{\mU^\ot} \L\LinFun_{\mV, \emptyset}(\mO,\mN)^\circledast \simeq \L\LinFun_{\mV, \emptyset}(\mO,\mN)^\circledast$$
and a canonical left $\mV$-linear equivalence $$ \R\LinFun_{\emptyset,\mW}(\mM,\mN)^\circledast \times_{\mU^\ot} \emptyset^\ot \simeq \R\LinFun_{\emptyset,\mW}(\mM , \mN)^\circledast, $$
where the left hand sides are Notation \ref{notlino} and the right hand sides are Notation \ref{bbbb}.
\end{corollary}

\begin{remark}\label{hido}
Let $\mM^\circledast \to \mV^\ot$ be a left tensored $\infty$-category and $\V \in \mV$.
Under the canonical equivalence $\L\LinFun_{\mV, \emptyset}(\mV,\mM) \simeq \mM$ evaluating at the tensor unit the functor $\V \ot (-): \mM \to \mM$ corresponds to the functor
$\L\LinFun_{\mV, \emptyset}(\mV,\mM) \to \L\LinFun_{\mV, \emptyset}(\mV,\mM)$ precomposing along the left $\mV$-enriched functor $(-)\ot \V: \mV \to \mV.$

Consequently, for every bitensored $\infty$-categories $\mO^\circledast \to \mV^\ot \times \mU^\ot, \mN^\circledast \to \mV^\ot \times \mW^\ot$ and $\X \in \mU$ the functor $ \X \ot (-): \L\LinFun_{\mV, \emptyset}(\mO,\mN) \to \L\LinFun_{\mV, \emptyset}(\mO,\mN)$
precomposes along the left $\mV$-linear functor $$(-)\ot \X \simeq \mO \boxtimes_\mU ((-)\ot \X) : \mO \simeq \mO \boxtimes_\mU \mU \to \mO \boxtimes_\mU \mU \simeq \mO .$$

\end{remark}

\begin{notation}\label{silop}
Let $\mU^\ot \to \Ass$ be a monoidal $\infty$-category, $\mM^\circledast \to \mU^\ot \times \mW^\ot$ a left tensored $\infty$-category, $\mO^\circledast \to \mV^\ot \times \mU^\ot$ a right tensored $\infty$-category  and $\mN^\circledast \to \mV^\ot \times \mW^\ot$ a weakly bienriched $\infty$-category.

\begin{enumerate}
\item Let $$\hspace{4mm}\Enr\Fun_{\emptyset,\mW}(\mM,\mN)^\circledast \subset \mV^\ot \times_{\Env(\mV)^\ot} \R\LinFun_{\emptyset,\Env(\mW)}(\R\Env(\mM),\B\Env(\mN))^\circledast \to \mV^\ot \times \mU^\ot$$
be the weakly bienriched subcategory of right $\Env(\mW) $-enriched functors that carry $\mM$ to $\mN.$ 

\item Let $$\hspace{3mm}\Enr\Fun_{\mV,\emptyset}(\mO,\mN)^\circledast \subset \L\LinFun_{\Env(\mV),\emptyset}(\L\Env(\mO),\B\Env(\mN))^\circledast \times_{\Env(\mW)^\ot} \mW^\ot \to \mU^\ot \times \mW^\ot$$
be the weakly bienriched subcategory of left $\Env(\mV)$-enriched functors that carry $\mO$ to $\mN.$ 

\end{enumerate}

\end{notation}

The next lemma guarantees that Notation \ref{silop} is not in conflict with Notation \ref{bbbb}:

\begin{lemma}
Let $\mU^\ot \to \Ass$ be a monoidal $\infty$-category, $\mM^\circledast \to \mU^\ot \times \mW^\ot$ a left tensored $\infty$-category, $\mO^\circledast \to \mV^\ot \times \mU^\ot$ a right tensored $\infty$-category  and $\mN^\circledast \to \mV^\ot \times \mW^\ot$ a weakly bienriched $\infty$-category.

\begin{enumerate}
\item The underlying weakly left enriched $\infty$-category of $\Enr\Fun_{\emptyset,\mW}(\mM,\mN)^\circledast \to \mV^\ot \times \mU^\ot$ is canonically equivalent to $ \Enr\Fun_{\emptyset,\mW}(\mM,\mN)^\circledast \to \mV^\ot$ of Notation \ref{bbbb}:

\item The underlying weakly right enriched $\infty$-category of
$\Enr\Fun_{\mV,\emptyset}(\mO,\mN)^\circledast \to \mU^\ot \times \mW^\ot$
is canonically equivalent to $\Enr\Fun_{\mV,\emptyset}(\mO,\mN)^\circledast \to \mW^\ot$
of Notation \ref{bbbb}.
\end{enumerate}	

\end{lemma}

\begin{proof}
We prove (1). The proof of (2) is similar.
By Remark \ref{nocon} the underlying weakly left enriched $\infty$-category of $\Enr\Fun_{\emptyset,\mW}(\mM,\mN)^\circledast \to \mV^\ot \times \mU^\ot$ is by definition the full weakly left enriched subcategory of $\mV^\ot \times_{\Env(\mV)^\ot} \R\LinFun_{\emptyset,\Env(\mW)}(\R\Env(\mM),\B\Env(\mN))^\circledast \to \mV^\ot $ spanned by the right $\Env(\mW) $-enriched functors that carry $\mM$ to $\mN, $ where we use Notation \ref{bbbb} for the pullback.
By Remark \ref{silu} the embedding $\mM^\circledast \subset \R\Env(\mM)^\circledast$ induces a left $\Env(\mV)$-linear functor $$\R\LinFun_{\emptyset,\Env(\mW)}(\R\Env(\mM),\B\Env(\mN))^\circledast \to \R\LinFun_{\emptyset,\mW}(\mM,\B\Env(\mN))^\circledast $$ that is an equivalence since it induces an equivalence on underlying $\infty$-categories by Proposition \ref{unb}.
The pullback of the latter equivalence to $\mV^\ot$ restricts to an equivalence between the full weakly left enriched subcategory spanned by the right $\Env(\mW) $-enriched functors that carry $\mM$ to $\mN$ and
$ \Enr\Fun_{\emptyset,\mW}(\mM,\mN)^\circledast \to \mV^\ot$.

\end{proof}

\begin{lemma}\label{klmoto}

Let $\mU^\ot \to \Ass$ be a monoidal $\infty$-category, $\mM^\circledast \to \mU^\ot \times \mW^\ot$ a left tensored $\infty$-category, $\mO^\circledast \to \mV^\ot \times \mU^\ot$ a right tensored $\infty$-category and $\mN^\circledast \to \mV^\ot \times \mW^\ot$ a weakly bienriched $\infty$-category.

\begin{enumerate}

\item If $\mN^\circledast \to \mV^\ot \times \mW^\ot$ is a left tensored $\infty$-category,
then $\Enr\Fun_{\emptyset,\mW}(\mM,\mN)^\circledast\to \mV^\ot \times \mU^\ot$ is the full weakly bienriched subcategory of
$$  \R\LinFun_{\emptyset,\Env(\mW)}(\R\Env(\mM),\R\Env(\mN))^\circledast \to \mV^\ot \times \mU^\ot$$ spanned by the right $\Env(\mW) $-enriched functors that carry $\mM$ to $\mN.$ 

\vspace{1mm}

\item If $\mN^\circledast \to \mV^\ot \times \mW^\ot$ is a bitensored $\infty$-category,
then $\Enr\Fun_{\emptyset,\mW}(\mM,\mN)^\circledast\to \mV^\ot \times \mU^\ot$ is the full weakly bienriched subcategory of
$$ \R\LinFun_{\emptyset,\Env(\mW)}(\R\Env(\mM),\L^*\mN)^\circledast \to \mV^\ot \times \mU^\ot$$ spanned by the right $\Env(\mW) $-enriched functors, where $\L$ is the left adjoint relative to $\Ass$ of the embedding $\mW^\ot \subset \Env(\mW)^\ot.$

\vspace{1mm}

\item If $\mN^\circledast \to \mV^\ot \times \mW^\ot$ is a right tensored $\infty$-category,
then $\Enr\Fun_{\mV,\emptyset}(\mO,\mN)^\circledast \to \mU^\ot \times \mW^\ot$ is the full weakly bienriched subcategory of
$$\L\LinFun_{\Env(\mV),\emptyset}(\L\Env(\mO),\L\Env(\mN))^\circledast  \to \mU^\ot \times \mW^\ot$$
spanned by the left $\Env(\mV)$-enriched functors that carry $\mO$ to $\mN.$ 

\vspace{1mm}

\item If $\mN^\circledast \to \mV^\ot \times \mW^\ot$ is a bitensored $\infty$-category,
then $\Enr\Fun_{\mV,\emptyset}(\mO,\mN)^\circledast \to \mU^\ot \times \mW^\ot$ is the full weakly bienriched subcategory of
$$\L\LinFun_{\Env(\mV),\emptyset}(\L\Env(\mO),\L^*\mN)^\circledast  \to \mU^\ot \times \mW^\ot$$
spanned by the left $\Env(\mV)$-enriched functors, where $\L$ is the left adjoint relative to $\Ass$ of the embedding $\mV^\ot \subset \Env(\mV)^\ot.$

\end{enumerate}
\end{lemma}

\begin{proof}
We prove (1) and (2). The proofs of (3) and (4) are similar.	

(1): Since $\mN^\circledast \to \mV^\ot \times \mW^\ot$ is a left tensored $\infty$-category, by Lemma \ref{loccyy} and Remark \ref{rati} the right tensored $\infty$-category $\R\Env(\mN)^\circledast \to \mV^\ot \times \Env(\mW)^\ot$ is a bitensored $\infty$-category, the $\mV$-enriched embedding $\mN^\circledast \subset \R\Env(\mN)^\circledast$ is left $\mV$-linear
and the $\R\Env(\mW)$-enriched embedding $\R\Env(\mN)^\circledast \subset \B\Env(\mN)^\circledast$
admits an enriched left adjoint.
The left adjoint gives rise to a $\mV,\mU$-linear functor $$\mV^\ot \times_{\Env(\mV)^\ot} \R\LinFun_{\emptyset,\Env(\mW)}(\R\Env(\mM),\B\Env(\mN))^\circledast \to \R\LinFun_{\emptyset,\Env(\mW)}(\R\Env(\mM),\R\Env(\mN))^\circledast$$
whose underlying functor admits a fully faithful right adjoint, and which therefore admits a
$\mV,\mU$-enriched right adjoint that is an embedding.
This embedding identifies $\Enr\Fun_{\emptyset,\mW}(\mM,\mN)^\circledast \to \mV^\ot \times \mU^\ot$ with the full weakly bienriched subcategory of $\R\LinFun_{\emptyset,\Env(\mW)}(\R\Env(\mM),\R\Env(\mN))^\circledast \to \mV^\ot \times \mU^\ot$
spanned by the right $\Env(\mW) $-enriched functors that carry $\mM$ to $\mN.$ 	

(2): Since $\mN^\circledast \to \mV^\ot \times \mW^\ot$ is a bitensored $\infty$-category, by Lemma \ref{loccyy} 
the enriched embedding $\mN^\circledast \subset \R\Env(\mN)^\circledast$ admits a $\mV$-enriched left adjoint.
The left adjoint corresponds to a $\mV, \Env(\mW)$-linear functor $ \R\Env(\mN)^\circledast \to \L^*(\mN)^\circledast$ whose underlying functor admits a right adjoint and that therefore admits a $ \mV, \Env(\mW)$-enriched right adjoint . 
The left adjoint gives rise to a $\mV,\mU$-linear functor $$ \R\LinFun_{\emptyset,\Env(\mW)}(\R\Env(\mM),\R\Env(\mN))^\circledast \to \R\LinFun_{\emptyset,\Env(\mW)}(\R\Env(\mM),\L^*\mN)^\circledast$$
whose underlying functor admits a fully faithful right adjoint, and which therefore admits a $\mV,\mU$-enriched right adjoint that is an embedding.
In view of (1) this embedding identifies $$\Enr\Fun_{\emptyset,\mW}(\mM,\mN)^\circledast \to \mV^\ot \times \mU^\ot$$ with the full weakly bienriched subcategory of $\R\LinFun_{\emptyset,\Env(\mW)}(\R\Env(\mM),\L^*\mN)^\circledast \to \mV^\ot \times \mU^\ot$
spanned by the right $\Env(\mW) $-enriched functors.

\end{proof}

\begin{proposition}\label{klmo}
Let $\mU^\ot \to \Ass$ be a monoidal $\infty$-category, $\mM^\circledast \to \mU^\ot \times \mW^\ot$ a left tensored $\infty$-category, $\mO^\circledast \to \mV^\ot \times \mU^\ot$ a right tensored $\infty$-category and $\mN^\circledast \to \mV^\ot \times \mW^\ot$ a weakly bienriched $\infty$-category.	

\begin{enumerate}
\item The weakly bienriched $\infty$-category $\Enr\Fun_{\emptyset,\mW}(\mM,\mN)^\circledast \to \mV^\ot \times \mU^\ot$ is a right tensored $\infty$-category.
The right $\mU$-action sends a right $\mW$-enriched functor $\F: \mM^\circledast \to \mN^\circledast$ and an object $\Z \in \mU$ to the right $\mW$-enriched functor $ \mM^\circledast \xrightarrow{\Z \ot (-) }\mM^\circledast \xrightarrow{\F} \mN^\circledast.$	

\vspace{1mm}
\item The weakly bienriched $\infty$-category $\Enr\Fun_{\mV,\emptyset}(\mO,\mN)^\circledast \to \mU^\ot \times \mW^\ot$
is a left tensored $\infty$-category. The left $\mU$-action sends a left $\mV$-enriched functor $\F: \mO^\circledast \to \mN^\circledast$ and an object $\Z \in \mU$ to the left $\mV$-enriched functor $ \mO^\circledast \xrightarrow{ (-)\ot \Z}\mO^\circledast \xrightarrow{\F} \mN^\circledast.$

\vspace{1mm}
\item If $\mN^\circledast \to \mV^\ot \times \mW^\ot$ is a left tensored $\infty$-category,
$\Enr\Fun_{\emptyset,\mW}(\mM,\mN)^\circledast \to \mV^\ot \times \mU^\ot$ is a bitensored $\infty$-category. 
The left $\mV$-action sends a right $\mW$-enriched functor $\F: \mM^\circledast \to \mN^\circledast$ and an object $\Y \in \mV$ to the right $\mW$-enriched functor $ \mM^\circledast  \xrightarrow{\F} \mN^\circledast \xrightarrow{\Y \ot (-)}\mN^\circledast.$	
\vspace{1mm}
\item If $\mN^\circledast \to \mV^\ot \times \mW^\ot$ is a right tensored $\infty$-category,
$\Enr\Fun_{\mV,\emptyset}(\mO,\mN)^\circledast \to \mU^\ot \times \mW^\ot$ is a bitensored $\infty$-category. 
The right $\mW$-action sends a left $\mV$-enriched functor $\F: \mO^\circledast \to \mN^\circledast$ and an object $\Y \in \mW$ to the left $\mV$-enriched functor $ \mO^\circledast  \xrightarrow{\F} \mN^\circledast\xrightarrow{ (-)\ot \Y}\mN^\circledast.$

\end{enumerate}
\end{proposition}

\begin{proof}
We prove (1) and (3). The proofs of (2) and (4) are similar.
(1): Since $$\R\LinFun_{\emptyset,\Env(\mW)}(\R\Env(\mM),\B\Env(\mN))^\circledast \to \Env(\mV)^\ot \times \mU^\ot$$ is a bitensored $\infty$-category, $\mV^\ot \times_{\Env(\mV)^\ot} \R\LinFun_{\emptyset,\Env(\mW)}(\R\Env(\mM),\B\Env(\mN))^\circledast \to \mV^\ot \times \mU^\ot$
is a right tensored $\infty$-category. By Remark \ref{hido} the right $\mU$-action sends a right $\Env(\mW)$-linear functor $\F: \R\Env(\mM)^\circledast\to \B\Env(\mN)^\circledast$ and an object $\Z \in \mU$ to the right $\Env(\mW)$-linear functor $  \R\Env(\mM)^\circledast \xrightarrow{\Z \ot (-) }\R\Env(\mM)^\circledast\to \B\Env(\mN)^\circledast.$
The embedding $\mM^\circledast \subset \R\Env(\mM)^\circledast$ is left $\mU$-linear
because $\mM^\circledast \to \mU^\ot \times \mW^\ot$ a left tensored $\infty$-category.
Consequently, the right $\mU$-action restricts to the full subcategory of right $\Env(\mW) $-linear functors that carry $\mM$ to $\mN.$
The resulting right $\mU$-action sends a right $\mW$-enriched functor $\F: \mM^\circledast \to \mN^\circledast$ whose unique right $\Env(\mW)$-linear extension we denote by $\F': \R\Env(\mM)^\circledast \to \B\Env(\mN)^\circledast$, and an object $\Z \in \mU$ to the right $\mW$-enriched functor $$\mM^\circledast \subset  \R\Env(\mM)^\circledast \xrightarrow{\Z \ot (-) }\R\Env(\mM)^\circledast \xrightarrow{\F'} \B\Env(\mN)^\circledast,$$
which is $ \mM^\circledast \xrightarrow{\Z \ot (-) }\mM^\circledast \xrightarrow{\F} \mN^\circledast \subset \B\Env(\mN)^\circledast.$

(3): By Lemma \ref{klmoto} (1) the weakly bienriched $\infty$-category $\Enr\Fun_{\emptyset,\mW}(\mM,\mN)^\circledast \to \mV^\ot \times \mU^\ot$ is the full weakly bienriched subcategory of $\R\LinFun_{\emptyset,\Env(\mW)}(\R\Env(\mM),\R\Env(\mN))^\circledast \to \mV^\ot \times \mU^\ot$
spanned by the right $\Env(\mW) $-enriched functors that carry $\mM$ to $\mN.$ The weakly bienriched $\infty$-category $$\R\LinFun_{\emptyset,\Env(\mW)}(\R\Env(\mM),\R\Env(\mN))^\circledast \to \mV^\ot \times \mU^\ot$$
is by definition a bitensored $\infty$-category.
By Remark \ref{hido} the right $\mU$-action sends a right $\Env(\mW)$-enriched functor $\F: \R\Env(\mM)^\circledast \to \R\Env(\mN)^\circledast$ and an object $\Z \in \mU$ to the right $\Env(\mW)$-enriched functor $\R\Env(\mM)^\circledast \xrightarrow{\Z \ot (-)}\R\Env(\mM)^\circledast \xrightarrow{\F} \R\Env(\mN)^\circledast$, and the left $\mV$-action sends a right $\Env(\mW)$-enriched functor $\F: \mM^\circledast \to \mN^\circledast$ and an object $\Y \in \mV$ to the right $\Env(\mW)$-enriched functor $ \R\Env(\mM) ^\circledast\xrightarrow{\F} \R\Env(\mN)^\circledast \xrightarrow{\Y \ot (-)}\R\Env(\mN)^\circledast.$		

The $\mV,\mU$-biaction on $ \R\LinFun_{\emptyset,\Env(\mW)}(\R\Env(\mM),\R\Env(\mN))$
restricts to the full subcategory of right $\Env(\mW) $-enriched functors that carry $\mM$ to $\mN:$  the right $\mU$-action restricts for the same reason as in the proof of (1) and has the same description because the $\mV$-enriched embedding $\mM^\circledast \subset \R\Env(\mM)^\circledast$ is left $\mU$-linear.
The left $\mV$-action restricts because the $\mV$-enriched embedding $\mN^\circledast \subset \R\Env(\mN)^\circledast$ is left $\mV$-linear, and so has the claimed description.

\end{proof}

\begin{notation}\label{nozzk}
Let $\mV^\ot \to \Ass,\mW^\ot \to \Ass$ be monoidal $\infty$-categories, $\mM^\circledast \to \mU^\ot \times \mW^\ot$ a left tensored $\infty$-category, $\mO^\circledast \to \mV^\ot \times \mU^\ot$ a right tensored $\infty$-category  and $\mN^\circledast \to \mV^\ot \times \mW^\ot$ a weakly bienriched $\infty$-category.

\begin{enumerate}

\item Let $$\R\LinFun_{\emptyset,\mW}(\mM,\mN)^\circledast \subset\Enr\Fun_{\emptyset,\mW}(\mM,\mN)^\circledast \to \mV^\ot \times \mU^\ot$$
be the weakly bienriched subcategory spanned by the right $\mW$-linear functors.

\item Let $$\L\LinFun_{\mV,\emptyset}(\mO,\mN)^\circledast \subset \Enr\Fun_{\mV,\emptyset}(\mO,\mN)^\circledast \to \mU^\ot \times \mW^\ot$$
be the weakly bienriched subcategory spanned by the left $\mV$-linear functors.

\end{enumerate}

\end{notation}

\begin{corollary}
Let $\mV^\ot \to \Ass,\mW^\ot \to \Ass$ be monoidal $\infty$-categories, $\mM^\circledast \to \mU^\ot \times \mW^\ot$ a left tensored $\infty$-category, $\mO^\circledast \to \mV^\ot \times \mU^\ot$ a bitensored $\infty$-category  and $\mN^\circledast \to \mV^\ot \times \mW^\ot$ a weakly bienriched $\infty$-category.	

\begin{enumerate}
\item The weakly bienriched $\infty$-category $\R\LinFun_{\emptyset,\mW}(\mM,\mN)^\circledast \to \mV^\ot \times \mU^\ot$ is a right tensored $\infty$-category.
The right $\mU$-action sends a right $\mW$-linear functor $\F: \mM^\circledast \to \mN^\circledast$ and an object $\Z \in \mU$ to the right $\mW$-linear functor $ \mM^\circledast \xrightarrow{\Z \ot (-) }\mM^\circledast \xrightarrow{\F} \mN^\circledast.$	

\item The weakly bienriched $\infty$-category $\L\LinFun_{\mV,\emptyset}(\mO,\mN)^\circledast \to \mU^\ot \times \mW^\ot$
is a left tensored $\infty$-category. The left $\mU$-action sends a left $\mV$-linear functor $\F: \mO^\circledast \to \mN^\circledast$ and an object $\Z \in \mU$ to the left $\mV$-linear functor $ \mO^\circledast\xrightarrow{ (-)\ot \Z}\mO^\circledast \xrightarrow{\F} \mN^\circledast.$
\end{enumerate}
\end{corollary}

The next theorem guarantees that Notation \ref{nozzk} extends Notation \ref{notlino}.

\begin{theorem}\label{vvvl}
Let $\mV^\ot \to \Ass,\mW^\ot \to \Ass$ be monoidal $\infty$-categories, $\mM^\circledast \to \mU^\ot \times \mW^\ot$ a left tensored $\infty$-category, $\mO^\circledast \to \mV^\ot \times \mU^\ot$ a right tensored $\infty$-category  and $\mN^\circledast \to \mV^\ot \times \mW^\ot$ a weakly bienriched $\infty$-category.
There is a canonical equivalence $$ \R\LinFun_{\mV,\mU}(\mO,\Enr\Fun_{\emptyset,\mW}(\mM,\mN)) \simeq \Enr\Fun^\mU_{\mV,\mW}(\mO \times_\mU \mM,\mN) \simeq \L\LinFun_{\mU,\mW}(\mM,\Enr\Fun_{\mV,\emptyset}(\mO,\mN))$$
that restricts to equivalences 
$$ \LinFun_{\mV,\mU}(\mO,\Enr\Fun_{\emptyset,\mW}(\mM,\mN)) \simeq \L\LinFun_{\mU,\mW}(\mM,\L\LinFun_{\mV,\emptyset}(\mO,\mN)), $$
$$ \R\LinFun_{\mV,\mU}(\mO,\R\LinFun_{\emptyset,\mW}(\mM,\mN)) \simeq  \LinFun_{\mU,\mW}(\mM,\Enr\Fun_{\mV,\emptyset}(\mO,\mN)),$$
$$ \LinFun_{\mV,\mU}(\mO,\R\LinFun_{\emptyset,\mW}(\mM,\mN)) \simeq \LinFun^\mU_{\mV,\mW}(\mO \times_\mU \mM,\mN)  \simeq$$$$ \LinFun_{\mV,\mW}(\mO \boxtimes_\mU \mM,\mN) \simeq \LinFun_{\mU,\mW}(\mM,\L\LinFun_{\mV,\emptyset}(\mO,\mN)).$$
\end{theorem}

\begin{proof}
Since $\mM^\circledast \to \mU^\ot \times \mW^\ot$ a left tensored $\infty$-category
and $\mO^\circledast \to \mV^\ot \times \mU^\ot$ a right tensored $\infty$-category,
the embedding $\mM^\circledast \subset \R\Env(\mM)^\circledast$ is a left $\mU$-linear functor and
the embedding $\mO^\circledast \subset \L\Env(\mO)^\circledast$ is a right $\mU$-linear functor.
So the induced embedding $ \mZ^\circledast:=\mO^\circledast \times_{\mU^\ot} \mM^\circledast \subset  \L\Env(\mO)^\circledast\times_{\mU^\ot} \R\Env(\mM)^\circledast \simeq \B\Env(\mZ)^\circledast$, where the latter equivalence is by Lemma \ref{bopp}, preserves and detects morphisms cocartesian over $\Ass$ via the functor $\mU^\ot \to \Ass$.
Every object of $ \B\Env(\mZ)$ is of the form $\V_1 \ot ... \ot \V_\n \ot \X \ot \W_1 \ot ... \ot \W_\m$ for $\n,\m \geq 0$ and $\V_1,...,\V_\n \in \mV, \W_1,..., \W_\m \in \mW, \X \in \mZ$. 
This guarantees that every morphism of $\B\Env(\mZ)$ that is cocartesian over $\Ass$ via the functor $\mU^\ot \to \Ass$ is of the form
$\V_1 \ot ... \ot \V_\n \ot \f \ot \W_1 \ot ... \ot \W_\m$ for $\n,\m \geq 0$ and $\V_1,...,\V_\n \in \mV, \W_1,..., \W_\m \in \mW$ and $\f$ a morphism in $ \mZ$ cocartesian over $\Ass$ via the functor $\mU^\ot \to \Ass$.
Consequently, the induced equivalence $$\LinFun_{\Env(\mV),\Env(\mW)}(\L\Env(\mO) \times_\mU \R\Env(\mM),\B\Env(\mN)) \to \Enr\Fun_{\mV,\mW}(\mO \times_\mU \mM,\B\Env(\mN))$$
of Proposition \ref{unb} restricts to an equivalence
$$\LinFun^\mU_{\Env(\mV),\Env(\mW)}(\L\Env(\mO) \times_\mU \R\Env(\mM),\B\Env(\mN)) \simeq \Enr\Fun^\mU_{\mV,\mW}(\mO \times_\mU \mM,\B\Env(\mN)).$$

We obtain a canonical equivalence $$ \R\LinFun_{\mV,\mU}(\mO,\R\LinFun_{\emptyset,\Env(\mW)}(\R\Env(\mM),\B\Env(\mN))) \simeq$$$$
\LinFun_{\Env(\mV),\mU}(\L\Env(\mO),\R\LinFun_{\emptyset,\Env(\mW)}(\R\Env(\mM),\B\Env(\mN)))\simeq$$$$ \LinFun_{\Env(\mV),\Env(\mW)}(\L\Env(\mO) \boxtimes_\mU \R\Env(\mM),\B\Env(\mN)) \simeq$$
$$\LinFun^\mU_{\Env(\mV),\Env(\mW)}(\L\Env(\mO) \times_\mU \R\Env(\mM),\B\Env(\mN)) \simeq$$
$$\Enr\Fun^\mU_{\mV,\mW}(\mO \times_\mU \mM,\B\Env(\mN)),$$
where the first equivalence is by Proposition \ref{unb}, the second equivalence is tautological and the third equivalence is by Lemma \ref{retaco}.
This equivalence restricts to an equivalence
$$\R\LinFun_{\mV,\mU}(\mO,\Enr\Fun_{\emptyset,\mW}(\mM,\mN)) \simeq \Enr\Fun^\mU_{\mV,\mW}(\mO \times_\mU \mM,\mN).$$
Similarly, there is a canonical equivalence $$ \L\LinFun_{\mU,\mW}(\mM,\L\LinFun_{\Env(\mV),\emptyset}(\L\Env(\mO),\B\Env(\mN))) \simeq$$$$
\L\LinFun_{\mU,\Env(\mW)}(\R\Env(\mM),\L\LinFun_{\Env(\mV),\emptyset}(\L\Env(\mO),\B\Env(\mN)))
\simeq $$$$ \LinFun_{\Env(\mV),\Env(\mW)}(\L\Env(\mO) \boxtimes_\mU \R\Env(\mM),\B\Env(\mN)) \simeq$$
$$\Enr\Fun^\mU_{\mV,\mW}(\mO \times_\mU \mM,\B\Env(\mN))$$
that restricts to an equivalence
$$\L\LinFun_{\mU,\mW}(\mM,\Enr\Fun_{\mV,\emptyset}(\mO,\mN)) \simeq \Enr\Fun^\mU_{\mV,\mW}(\mO \times_\mU \mM,\mN).$$

It is clear that these equivalences restrict as claimed.

\end{proof}

\begin{lemma}\label{leujkp}
Let $\mU^\ot \to \Ass,\mU'^\ot \to \Ass $ be small monoidal $\infty$-categories, $\alpha: \mV'^\ot \to \mV^\ot, \beta: \mW'^\ot \to \mW^\ot$ maps of small $\infty$-operads, $\gamma: \mU'^\ot \to \mU^\ot $ a monoidal functor and $\mO^\circledast \to \mV^\ot \times \mU^\ot$ a left tensored $\infty$-category, $  \mM^\circledast \to \mU^\ot \times \mW^\ot$ a right tensored $\infty$-category and $ \mN^\circledast \to \mV^\ot \times \mW^\ot$ a weakly bienriched $\infty$-category.
There is a canonical $\mU',\mW'$-enriched equivalence $$\mU'^\ot \times_{\mU^\ot} \Enr\Fun_{\mV, \emptyset}(\mO,\mN)^\circledast \times_{\mW^\ot} {\mW'^\ot} \simeq \Enr\Fun_{\mV, \emptyset}(\gamma^*\mO,\beta^*\mN)^\circledast$$
and a canonical $\mV',\mU'$-enriched equivalence $$\mV'^\ot \times_{\mV^\ot} \Enr\Fun_{\emptyset,\mW}(\mM,\mN)^\circledast \times_{\mU^\ot} \mU'^\ot \simeq \Enr\Fun_{\emptyset,\mW}(\gamma^* \mM ,  \alpha^*\mN)^\circledast. $$

\end{lemma}

\begin{proof}	We prove the case of the first equivalence. The case of the second equivalenc is similar.
The universal $\mV, \mW$-enriched functor $$ \mO^\circledast \times_{\mU^\ot} \Enr\Fun_{\mV, \emptyset}(\mO,\mN)^\circledast \to \mN^\circledast $$ gives rise to a $\mV, \mW'$-enriched functor $$(\mO^\circledast \times_{\mU^\ot} \mU'^\ot) \times_{\mU'^\ot} (\mU'^\ot \times_{\mU^\ot}\Enr\Fun_{\mV, \emptyset}(\mO,\mN)^\circledast\times_{\mW^\ot} \mW'^\ot) \simeq $$$$  \mU'^\ot \times_{\mU^\ot}\mO^\circledast \times_{\mU^\ot}\Enr\Fun_{\mV, \emptyset}(\mO,\mN)^\circledast\times_{\mW^\ot} \mW'^\ot \to \mO^\circledast \times_{\mU^\ot} \Enr\Fun_{\mV, \emptyset}(\mO,\mN)^\circledast \times_{\mW^\ot} \mW'^\ot\to \mN^\circledast \times_{\mW^\ot} \mW'^\ot . $$
By Theorem \ref{vvvl} the latter corresponds to a $\mU',\mW' $-enriched functor
$$ \rho: (\gamma, \beta)^* \Enr\Fun_{\mV, \emptyset}(\mO,\mN)^\circledast \to  \Enr\Fun_{\mV, \emptyset}(\gamma^*\mO,\beta^*\mN)^\circledast.$$
The latter is $\mU'$-linear by the description of left actions of Proposition \ref{klmo}.
Hence $\rho$ is an equivalence if it induces an equivalence on underlying right enriched $\infty$-categories. The $\mU',\mW' $-enriched functor $\rho$ induces on underlying right enriched $\infty$-categories the canonical $\mW'$-enriched functor
$$ \beta^* \Enr\Fun_{\mV, \emptyset}(\mO,\mN)^\circledast \to  \Enr\Fun_{\mV, \emptyset}(\mO,\beta^*\mN)^\circledast,$$
which is an equivalende by Lemma \ref{leman}.

\end{proof} 

\begin{proposition}\label{klmou}
Let $\mU^\ot \to \Ass$ be a monoidal $\infty$-category, $\mM^\circledast \to \mU^\ot \times \mW^\ot$ a left tensored $\infty$-category, $\mO^\circledast \to \mV^\ot \times \mU^\ot$ a right tensored $\infty$-category and $\mN^\circledast \to \mV^\ot \times \mW^\ot$ a weakly bienriched $\infty$-category.	

\begin{enumerate}

\item If $\mN^\circledast \to \mV^\ot \times \mW^\ot$ is a left pseudo-enriched $\infty$-category,
$\Enr\Fun_{\emptyset,\mW}(\mM,\mN)^\circledast \to \mV^\ot \times \mU^\ot$ is a bipseudo-enriched $\infty$-category. 

\vspace{1mm}
\item If $\mN^\circledast \to \mV^\ot \times \mW^\ot$ is a right pseudo-enriched $\infty$-category,
$\Enr\Fun_{\mV,\emptyset}(\mO,\mN)^\circledast \to \mU^\ot \times \mW^\ot$ is a bipseudo-enriched $\infty$-category. 

\end{enumerate}
\end{proposition}

\begin{proof}
We prove (1). The proof of (2) is similar.
Let $$ \mN'^\circledast := \mP\L\Env(\mN)_{\L\Pseu}^\circledast\to \mP(\mV)^\ot \times \mW^\ot$$ be the left tensored $\infty$-category of Notation \ref{locpat2}
and $$ \mN''^\circledast := \mV^\ot \times_{\mP(\mV^\ot)} \mN'^\circledast \to \mV^\ot \times \mW^\ot.$$
By Proposition \ref{klmo} (3) the functor $\Enr\Fun_{\emptyset,\mW}(\mM,\mN')^\circledast \to \mP(\mV)^\ot \times \mU^\ot$ is a bitensored $\infty$-category. 
So also the pullback $$\mV^\ot \times_{\mP(\mV^\ot)}\Enr\Fun_{\emptyset,\mW}(\mM,\mN')^\circledast \to \mV^\ot \times \mU^\ot$$ is a bitensored $\infty$-category.
By Lemma \ref{leujkp} there is a canonical $\mV, \mU$-enriched equivalence $$\mV^\ot \times_{\mP(\mV^\ot)}\Enr\Fun_{\emptyset,\mW}(\mM,\mN')^\circledast \simeq \Enr\Fun_{\emptyset,\mW}(\mM,\mN'')^\circledast.$$

The $\mV,\mW$-enriched embedding $\mN^\circledast \subset \mN''^\circledast$
induces a $\mV,\mU$-enriched embedding $$\Enr\Fun_{\emptyset,\mW}(\mM,\mN)^\circledast \subset \Enr\Fun_{\emptyset,\mW}(\mM,\mN'')^\circledast.$$
Thus $\Enr\Fun_{\emptyset,\mW}(\mM,\mN)^\circledast \to \mV^\ot \times \mU^\ot$ is a bipseudo-enriched $\infty$-category.

\end{proof}

\begin{proposition}Let $\mN^\circledast \to \mV^\ot \times \mW^\ot$ be a right pseudo-enriched $\infty$-category.
There is a $\mV, \mW$-enriched functor $$ \Enr\Fun_{\emptyset,\mW}(\mW,\mN)^\circledast \to \mN^\circledast$$ 
evaluating at the tensor unit.

\end{proposition}

\begin{proof}
We assume first that $\mN^\circledast \to \mV^\ot \times \mW^\ot $ is a bitensored $\infty$-category, $\mN$ is  presentable and for every $\W \in \mW$ the right tensor $(-)\ot\W: \mN \to \mN$ preserves small colimits.
Under these conditions by Lemma \ref{colas} (3) the embedding $ \L\LinFun_{\emptyset,\mW}(\mW,\mN) \subset  \Enr\Fun_{\emptyset,\mW}(\R\Env(\mW),\mN)$ admits a right adjoint $\R.$
By Proposition \ref{klmo} (3) the weakly bienriched $\infty$-category $ \Enr\Fun_{\emptyset,\mW}(\mW,\mN)^\circledast \to \mV^\ot\times \mW^\ot$ is a bitensored $\infty$-category and by the description of biaction of Proposition \ref{klmo} the full subcategory $ \L\LinFun_{\emptyset,\mW}(\mW,\mN) \subset  \Enr\Fun_{\emptyset,\mW}(\mW,\mN)$
is closed under the biaction. Consequently, also $ \L\LinFun_{\emptyset,\mW}(\mW,\mN)^\circledast \to \mV^\ot\times \mW^\ot$ is a bitensored $\infty$-category and the $\mV,\mW$-enriched embedding
$ \L\LinFun_{\emptyset,\mW}(\mW,\mN)^\circledast  \subset  \Enr\Fun_{\emptyset,\mW}(\mW,\mN)^\circledast $ is linear and so by Lemma \ref{Adj2} admits a $\mV,\mW$-enriched right adjoint.

The composition $$ \theta: \Enr\Fun_{\emptyset,\mW}(\mW,\mN)^\circledast \to \L\LinFun_{\emptyset,\mW}(\mW,\mN)^\circledast \simeq \mN^\circledast $$ of $\mV, \mW$-enriched functors
sends $\F$ to $\R(\F)(\tu)$.
For every $\F \in \Enr\Fun_{\emptyset,\mW}(\mW,\mN)$ by Corollary \ref{explicas} the component of the counit $\R(\F)(\tu)\to \F(\tu)$ induces for every $\Y \in \mN$ an equivalence $$ \mN(\Y,\R(\F)(\tu)) \simeq \L\LinFun_{\emptyset,\mW}(\mW,\mN)(\Y\ot(-) , \R(\F)) \simeq $$$$\L\LinFun_{\emptyset,\mW}(\mW,\mN)(\Y\ot (-) , \F) \simeq \mN(\Y,\F(\tu))$$ and so is an  equivalence. 

Next we treat the general case. Let $$\mN'^\circledast:=\mP\B\Env(\mM)_{\R\Pseu}^\circledast  \times_{\mP(\mW)^\ot} \mW^\ot \to \mP\Env(\mV)^\ot \times \mW^\ot, \ \mN''^\circledast:=\mV^\ot \times_{\mP\Env(\mV)^\ot} \mN'^\circledast \to \mV^\ot \times \mW^\ot. $$
Then $\mN'^\circledast \to \mP\Env(\mV)^\ot \times \mW^\ot $ is a bitensored $\infty$-category, $\mN'$ is  presentable and for every $\W \in \mW$ the right tensor $(-)\ot\W: \mN'  \to \mN' $ preserves small colimits.
So by what we have proven, there is a $\mP\Env(\mV), \mW$-enriched functor $ \theta: \Enr\Fun_{\emptyset,\mW}(\mW,\mN')^\circledast \to \mN'^\circledast $ evaluating at the tensor unit.
The pullback of the latter to $\mV^\ot$ is a $\mV, \mW$-enriched functor $$\theta': \Enr\Fun_{\emptyset,\mW}(\mW,\mN'')^\circledast  \simeq  \mV^\ot \times_{\mP\Env(\mV)^\ot} \Enr\Fun_{\emptyset,\mW}(\mW,\mN')^\circledast \to  \mN''^\circledast $$ evaluating at the tensor unit, where the left hand equivalence is by Lemma \ref{leujkp}.

The canonical $\mV,\mW$-enriched embedding $\mN^\circledast \subset \mN''^\circledast$
induces a $\mV, \mW$-enriched embedding $$\Enr\Fun_{\emptyset,\mW}(\mW,\mN)^\circledast \subset \Enr\Fun_{\emptyset,\mW}(\mW,\mN'')^\circledast.$$
The $\mV, \mW$-enriched functor $ \theta'$ restricts to a  $\mV, \mW$-enriched functor $ \Enr\Fun_{\emptyset,\mW}(\mW,\mN)^\circledast \to \mN^\circledast $ evaluating at the tensor unit.


\end{proof}

\subsection{Enriched Kan-extensions}

In the following we use Proposition \ref{laan} to construct a left Kan-extension 
for $\infty$-categories enriched in any presentably monoidal $\infty$-category
(Proposition \ref{laaaaan}, Proposition \ref{siewa}).
Passing to opposite enriched $\infty$-categories we obtain a formally dual theory of right Kan-extensions for $\infty$-categories enriched in any presentably monoidal $\infty$-category.


\begin{proposition}\label{laaaaan}

\begin{enumerate}
\item Let $\mV^\ot \to \Ass, \mW^\ot \to\Ass$ be small $\infty$-operads and $\psi: \mM^\circledast \to \mN^\circledast $ a $\mV,\mW$-enriched functor from an absolute small to a locally small weakly bienriched $\infty$-category. Let $\mD^\circledast \to \mV^\ot \times \mW^\ot$ be a weakly bienriched $\infty$-category that admits left and right tensors and small conical colimits.
The induced functor $$\Enr\Fun_{\mV,\mW}(\mN,\mD) \to \Enr\Fun_{\mV, \mW}(\mM,\mD) $$ admits a left adjoint $\psi_!$, which is fully faithful if $\psi$ is an embedding.
	
\item Let $\mV^\ot \to \Ass$ be a presentably monoidal $\infty$-category, $\mW^\ot \to\Ass$ a small $\infty$-operad and $\psi: \mM^\circledast \to \mN^\circledast $ a $\mV,\mW$-enriched functor from a small to a locally small left enriched $\infty$-category. Let $\mD^\circledast \to \mV^\ot \times \mW^\ot$ be a left enriched $\infty$-category that admits left and right tensors and small conical colimits.
The induced functor $$\Enr\Fun_{\mV,\mW}(\mN,\mD) \to \Enr\Fun_{\mV, \mW}(\mM,\mD) $$ admits a left adjoint $\psi_!$, which is fully faithful if $\psi$ is an embedding.


\item Let $\mV^\ot \to \Ass, \mW^\ot \to\Ass$ be presentably monoidal $\infty$-categories and $\psi: \mM^\circledast \to \mN^\circledast $ a $\mV,\mW$-enriched functor from a small to a locally small bienriched $\infty$-category. Let $\mD^\circledast \to \mV^\ot \times \mW^\ot$ be a bienriched $\infty$-category that admits left and right tensors and small conical colimits.
The induced functor $$\Enr\Fun_{\mV,\mW}(\mN,\mD) \to \Enr\Fun_{\mV, \mW}(\mM,\mD) $$ admits a left adjoint $\psi_!$, which is fully faithful if $\psi$ is an embedding.

\item For every $\mV,\mW$-enriched functor $\F: \mM^\circledast \to \mD^\circledast$ 
and $\X \in \mN$ in all cases there is an equivalence
$$ \hspace{8mm}\psi_!(\F)(\X) \simeq \colim_{\V_1,..., \V_\n, \psi(\Y), \W_1,..., \W_{\m} \to \X}\ \bigotimes_{\bi=1}^\n \V_\bi \ot \F(\Y) \ot \bigotimes_{\bj=1}^\m \W_\bj.$$
\end{enumerate}		
\end{proposition}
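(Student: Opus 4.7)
The plan is to derive all three cases as applications (for (1)) or reductions (for (2) and (3)) of Proposition \ref{laan}. For case (1), since $\mD^\circledast$ admits left and right tensors and small conical colimits, Corollary \ref{emcolf} (3), applied with $\kappa$ the strongly inaccessible cardinal corresponding to the small universe, produces a unique bitensored $\infty$-category $\bar{\mD}^\circledast \to \mP\Env(\mV)^\ot \times \mP\Env(\mW)^\ot$ compatible with small colimits whose pullback along the embeddings $\mV^\ot \subset \mP\Env(\mV)^\ot$, $\mW^\ot \subset \mP\Env(\mW)^\ot$ is $\mD^\circledast$. I would then invoke Proposition \ref{laan} with $\alpha = \id_{\mV^\ot}, \beta = \id_{\mW^\ot}$ and $\alpha', \beta'$ the canonical embeddings; the hypothesis $\alpha_!(\alpha' \circ \alpha) = \alpha'$ is automatic when $\alpha, \beta$ are identities. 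This yields a left adjoint to the restriction $\Enr\Fun_{\mV, \mW}(\mN, \mD) \to \Enr\Fun_{\mV, \mW}(\mM, \mD)$ via the pullback identification $(\alpha', \beta')^*(\bar{\mD}) = \mD$, and fully faithfulness when $\psi$ is an embedding is inherited. Since the output is a $\mV, \mW$-enriched functor $\mN^\circledast \to \bar{\mD}^\circledast$, it factors automatically through $\mD^\circledast \subset \bar{\mD}^\circledast$, and the colimit formula of Proposition \ref{laan} with $\alpha, \beta$ identities specialises to the formula asserted in (4).

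For (2) and (3) I would reduce to (1) via compact generation. Choose a small regular cardinal $\kappa$ (using \cite[Proposition 7.15.]{Rune}) such that $\mV$, in case (2), or both $\mV$ and $\mW$, in case (3), is $\kappa$-compactly generated, and pull back $\mM, \mN, \mD, \psi$ along $(\mV^\kappa)^\ot \subset \mV^\ot$ (and $(\mW^\kappa)^\ot \subset \mW^\ot$ in (3)). The resulting $\mM_\kappa$ is totally small, $\mN_\kappa$ is locally small, and $\mD_\kappa$ still admits left tensors (over $\mV^\kappa$), right tensors, and small conical colimits; this last point uses left $\kappa$-enrichedness together with $\mV \simeq \Ind_\kappa(\mV^\kappa)$ to equate conical colimit diagrams in $\mD$ over $\mV$ with those in $\mD_\kappa$ over $\mV^\kappa$. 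Part (1) applied to $\psi_\kappa$ gives a left adjoint $\psi_{\kappa, !}$ with the formula from (4). Transfer back to the enrichment over $\mV$ (resp.\ $\mV, \mW$) proceeds via Corollary \ref{cosqa} (1) (resp.\ (3)). The formula in (4) for (2) and (3) follows after a cofinality check: the indexing category of multi-morphisms with $\V_\bi \in \mV^\kappa, \W_\bj \in \mW^\kappa$ is cofinal in the one with $\V_\bi \in \mV, \W_\bj \in \mW$, because the tensors on $\mD$ preserve $\kappa$-filtered colimits and every object of $\mV$ (resp.\ $\mW$) is such a colimit of $\kappa$-compact objects.

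The main obstacle I anticipate is upgrading the equivalence in Corollary \ref{cosqa} from one of $\infty$-categories of enriched $\infty$-categories to one that respects the $\Cat_\infty$-enrichment of Remark \ref{2-catt}, so that the pulled-back left adjoint $\psi_{\kappa, !}$ genuinely transports to a left adjoint of the original restriction functor between enriched functor $\infty$-categories rather than merely between their underlying spaces. The handle is that the pullback functor is compatible with the left $\Cat_\infty$-action by products: testing the equivalence of Corollary \ref{cosqa} against $\K \times (-)$ for arbitrary $\K \in \Cat_\infty$, rewriting via Remark \ref{2-cat} as an equivalence $\Fun(\K, \Enr\Fun_{\mV, \mW}(\mM, \mD)) \simeq \Fun(\K, \Enr\Fun_{\mV^\kappa, \mW}(\mM_\kappa, \mD_\kappa))$ natural in $\K$, and invoking Yoneda, produces the required equivalence of enriched functor $\infty$-categories.
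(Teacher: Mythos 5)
Your proposal follows the paper's own proof essentially verbatim: part (1) is obtained by extending $\mD^\circledast$ to a bitensored $\infty$-category compatible with small colimits via Corollary \ref{emcolf} and then applying Proposition \ref{laan}, and parts (2)--(3) are reduced to (1) by pulling back along $(\mV^\kappa)^\ot \subset \mV^\ot$ (and $(\mW^\tau)^\ot \subset \mW^\ot$) using $\kappa$-compact generation and the equivalences of Corollary \ref{cosqa}. The two points you flag as potential obstacles --- upgrading Corollary \ref{cosqa} from an equivalence of $\infty$-categories of enriched $\infty$-categories to an equivalence of enriched functor $\infty$-categories, and the cofinality check needed to state the colimit formula of (4) over all of $\mV,\mW$ rather than over $\mV^\kappa,\mW^\tau$ --- are simply left implicit in the paper, and your resolutions of both are sound.
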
 

\begin{proof}
(1): By Corollary \ref{emcolf} the weakly bienriched $\infty$-category $\mD^\circledast \to \mP\Env(\mV)^\ot \times \mP\Env(\mW)^\ot$ is the pullback of a bitensored $\infty$-category $\bar{\mD}^\circledast \to \mV^\ot \times \mW^\ot $ compatible with  
small colimits. We apply Proposition \ref{laan}.
	
(2): The presentably monoidal $\infty$-category $\mV^\ot \to \Ass$ is $\kappa$-compactly generated for some small regular cardinal $\kappa$.
Let $\psi_\kappa: \mM_\kappa^\circledast \to \mN_\kappa^\circledast$ be the pullback of $\psi$ along $(\mV^\kappa)^\ot \subset \mV^\ot,$ which is a $\mV^\kappa,\mW$-enriched functor between absolute small weakly bienriched $\infty$-categories.
By Corollary \ref{cosqa} the induced functors 
$$ \Enr\Fun_{\mV,\mW}(\mN,\mD) \to \Enr\Fun_{\mV^\kappa,\mW}(\mN_\kappa,\mD_\kappa) ,$$
$$ \Enr\Fun_{\mV, \mW}(\mM,\mD) \to \Enr\Fun_{\mV^\kappa, \mW}(\mM_\kappa,\mD_\kappa)$$
are equivalences.
Hence (2) follows from (1), the statement that the induced functor
$$ 	\Enr\Fun_{\mV^\kappa,\mW}(\mN_\kappa,\mD_\kappa) \to \Enr\Fun_{\mV^\kappa,\mW}(\mM_\kappa,\mD_\kappa)$$
admits a left adjoint, which is fully faithful if $\psi_\kappa$ is an embedding.

(3): The proof of (3) is similar: the presentably monoidal $\infty$-categories $\mV^\ot \to \Ass, \mW^\ot \to\Ass$ are $\kappa$-compactly generated, $\tau$-compactly generated, respectively, for some small regular cardinals $\kappa, \tau$.
Let $\psi_{\kappa, \tau}: \mM_{\kappa, \tau}^\circledast \to \mN_{\kappa,\tau}^\circledast$ be the pullback of $\psi$ along $(\mV^\kappa)^\ot \times (\mW^\tau)^\ot \subset \mV^\ot \times \mW^\ot,$ which is a $\mV^\kappa,\mW^\tau$-enriched functor between absolute small weakly bienriched $\infty$-categories.
By Corollary \ref{cosqa} the induced functors 
$$ \Enr\Fun_{\mV,\mW}(\mN,\mD) \to \Enr\Fun_{\mV^\kappa,\mW^\tau}(\mN_{\kappa,\tau},\mD_{\kappa,\tau}) ,$$
$$ \Enr\Fun_{\mV, \mW}(\mM,\mD) \to \Enr\Fun_{\mV^\kappa, \mW^\tau}(\mM_{\kappa,\tau},\mD_{\kappa,\tau})$$
are equivalences.
Hence (3) follows from (1), the statement that the induced functor
$$ \Enr\Fun_{\mV^\kappa,\mW^\tau}(\mN_{\kappa,\tau},\mD_{\kappa,\tau})\to \Enr\Fun_{\mV^\kappa,\mW^\tau}(\mM_{\kappa,\tau},\mD_{\kappa,\tau})$$
admits a left adjoint, which is fully faithful if $\psi_{\kappa,\tau}$ is an embedding.
(4) holds by Proposition \ref{laan}.

\end{proof}

\begin{notation}\label{siew}

Let $\mM^\circledast \to \mV^\ot, \mM'^\circledast \to \mV^\ot $ be weakly left enriched $\infty$-categories, $\mN^\circledast \to \mV^\ot \times \mW^\ot, \mN'^\circledast \to \mV^\ot \times \mW^\ot$ be weakly bienriched $\infty$-categories, $\F:\mM'^\circledast \to \mM^\circledast$ a left $\mV$-enriched functor and $\G: \mN^\circledast \to \mN'^\circledast$ a $\mV,\mW$-enriched functor. 
Let $$\Enr\Fun_{\mV,\emptyset}(\F,\G): \Enr\Fun_{\mV,\emptyset}(\mM,\mN)^\circledast \to \Enr\Fun_{\mV,\emptyset}(\mM',\mN')^\circledast$$ be the right $\mW$-enriched functor corresponding to the $\mV,\mW$-enriched functor
$$\mM'^\circledast \times \Enr\Fun_{\mV, \emptyset}(\mM,\mN)^\circledast \xrightarrow{\F \times \Enr\Fun_{\mV, \emptyset}(\mM,\mN)^\circledast} \mM^\circledast \times \Enr\Fun_{\mV,\emptyset}(\mM,\mN)^\circledast \to \mN^\circledast \xrightarrow{\G} \mN'^\circledast.$$

\end{notation}

\begin{remark}
The right $\mW$-enriched functor $$\Enr\Fun_{\mV,\emptyset}(\F,\G): \Enr\Fun_{\mV,\emptyset}(\mM,\mN)^\circledast \to \Enr\Fun_{\mV,\emptyset}(\mM',\mN')^\circledast$$ induces on underlying $\infty$-categories the canonical functor $\Enr\Fun_{\mV,\emptyset}(\mM,\mN) \to \Enr\Fun_{\mV',\emptyset}(\mM',\mN').$

\end{remark}

\begin{remark}\label{silu}
If $\mN^\circledast \to \mV^\ot \times \mW^\ot, \mN'^\circledast \to \mV^\ot \times \mW^\ot$ admit right tensors and $\G: \mN^\circledast \to \mN'^\circledast$ is a right $\mW$-linear functor,
then $\Enr\Fun_{\mV,\emptyset}(\F,\G): \Enr\Fun_{\mV,\emptyset}(\mM,\mN)^\circledast \to \Enr\Fun_{\mV,\emptyset}(\mM',\mN')^\circledast$ is a right $\mW$-linear functor
between weakly bienriched $\infty$-categories that admit right tensors by Corollary \ref{innerhosty}.

\end{remark}

\begin{proposition}\label{siewa}

\begin{enumerate}
\item Let $\mM^\circledast \to \mV^\ot, \mM'^\circledast \to \mV^\ot $ be absolute small weakly left enriched $\infty$-categories, $\F:\mM'^\circledast \to \mM^\circledast$ a left $\mV$-enriched functor and $\mN^\circledast \to \mV^\ot \times \mW^\ot$ 
a weakly bienriched $\infty$-category that admits small conical colimits and left and right tensors.
The right $\mW$-enriched functor $$\Enr\Fun_{\mV,\emptyset}(\F,\mN): \Enr\Fun_{\mV,\emptyset}(\mM,\mN)^\circledast \to \Enr\Fun_{\mV,\emptyset}(\mM',\mN)^\circledast$$ admits a right $\mW$-enriched left adjoint $$\F_!: \Enr\Fun_{\mV,\emptyset}(\mM',\mN)^\circledast \to \Enr\Fun_{\mV,\emptyset}(\mM,\mN)^\circledast.$$

\item Let $\mM^\circledast \to \mV^\ot, \mM'^\circledast \to \mV^\ot $ be small left enriched $\infty$-categories, $\F:\mM'^\circledast \to \mM^\circledast$ a left $\mV$-enriched functor and $\mN^\circledast \to \mV^\ot \times \mW^\ot$ 
a weakly bienriched $\infty$-category that admits small conical colimits and left and right tensors whose underlying weakly left enriched $\infty$-category is a left enriched $\infty$-category. 
The right $\mW$-enriched functor $$\Enr\Fun_{\mV,\emptyset}(\F,\mN): \Enr\Fun_{\mV,\emptyset}(\mM,\mN)^\circledast \to \Enr\Fun_{\mV,\emptyset}(\mM',\mN)^\circledast$$ admits a right $\mW$-enriched left adjoint $$\F_!: \Enr\Fun_{\mV,\emptyset}(\mM',\mN)^\circledast \to \Enr\Fun_{\mV,\emptyset}(\mM,\mN)^\circledast.$$

\end{enumerate}

\end{proposition}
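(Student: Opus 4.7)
My approach would be to verify directly the criterion of Lemma~\ref{Adj}, applied with $\alpha=\beta$ taken to be the identity, interpreting both the source and target of $\F^*:=\Enr\Fun_{\mV,\emptyset}(\F,\mN)$ as weakly bi-enriched over $\emptyset,\mW$ (for which ``admits left tensors'' is vacuous). Thus I need two ingredients: first, a left adjoint $\F_!$ of the underlying functor of $\F^*$, and second, the compatibility $\F_!(\G\ot\W_1\ot\cdots\ot\W_\m)\simeq\F_!(\G)\ot\W_1\ot\cdots\ot\W_\m$ with right tensors by objects of $\mW$. Once both are in hand, Lemma~\ref{Adj} upgrades $\F_!$ to a right $\mW$-enriched left adjoint.

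For the first ingredient, I would invoke Proposition~\ref{laaaaan}: in case~(1) its part~(1) applies directly, since the totally small weakly left enriched data on $\mM,\mM'$ together with the weakly bi-enriched $\mN$ (which admits left and right tensors and small conical colimits) meet the hypotheses with $\mW$ trivial on $\mM,\mM'$. In case~(2), part~(2) applies, where the additional assumption that $\mN$'s underlying weakly left enriched $\infty$-category is a genuine left enriched $\infty$-category is exactly what is needed. Either way, Proposition~\ref{laan} supplies the pointwise formula
\[
\F_!(\G)(\X)\;\simeq\;\colim_{\V_1,\ldots,\V_\n,\,\F(\Y)\to\X}\V_1\ot\cdots\ot\V_\n\ot\G(\Y),
\]
with the colimit taken in $\mN$.

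For the second ingredient, I would first appeal to Proposition~\ref{innerhosty} to conclude that both $\Enr\Fun_{\mV,\emptyset}(\mM,\mN)^\circledast$ and $\Enr\Fun_{\mV,\emptyset}(\mM',\mN)^\circledast$ admit right tensors, computed pointwise as $(\G\ot\W)(\X)=\G(\X)\ot\W$; the hypothesis required---that right tensors in $\mN$ preserve the relevant colimits---holds because $\mN$ has left and right tensors, so by Remark~\ref{laulo}(4) every conical colimit in $\mN$ is preserved by $(-)\ot\W$. Combining the two pointwise descriptions, the canonical map $\F_!(\G\ot\W)\to\F_!(\G)\ot\W$ evaluated at $\X\in\mM$ becomes
\[
\colim_{\V_1,\ldots,\V_\n,\,\F(\Y)\to\X}\bigl(\V_1\ot\cdots\ot\V_\n\ot\G(\Y)\bigr)\ot\W\;\longrightarrow\;\Bigl(\colim_{\V_1,\ldots,\V_\n,\,\F(\Y)\to\X}\V_1\ot\cdots\ot\V_\n\ot\G(\Y)\Bigr)\ot\W
\]
in $\mN$, which is an equivalence by the same preservation property. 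Iterating over $\W_1,\ldots,\W_\m$ delivers the criterion of Lemma~\ref{Adj}.

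The main technical obstacle, I expect, lies in reconciling the two descriptions of the colimit defining $\F_!(\G)(\X)$: a priori the operadic left Kan extension of Proposition~\ref{laan} is computed after embedding $\mN$ into an enveloping (closed) bitensored $\infty$-category (e.g.\ via Corollary~\ref{emcolf}), whereas the tensor-preservation argument above requires this colimit to be a conical colimit in $\mN$ itself that is preserved by the right $\mW$-action. The hypotheses that $\mN$ admits left and right tensors together with small conical colimits, combined with Remark~\ref{laulo}(4), are precisely what guarantee that these two interpretations agree and that the preservation holds; carefully tracking this through the enveloping construction is where the delicate point sits.
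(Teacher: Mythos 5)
Your proposal is correct and follows essentially the same route as the paper's proof: obtain the underlying left adjoint $\F_!$ from Proposition~\ref{laaaaan} (part (1) for case (1), part (2) for case (2)), note via Proposition~\ref{innerhosty}/Remark~\ref{silu} that source and target admit right tensors computed pointwise, reduce via the adjunction criterion of Lemma~\ref{Adj} to the compatibility of $\F_!$ with iterated right tensors by $\W_1,\ldots,\W_\m$, and verify this using the pointwise colimit formula of Proposition~\ref{laan} together with the fact that $(-)\ot\W_\bi$ preserves small conical colimits. The ``technical obstacle'' you flag is handled exactly as you suggest, so no further commentary is needed.
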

\begin{proof} 
The induced functor $\F^*: \Enr\Fun_{\mV,\emptyset}(\mM,\mN)\to \Enr\Fun_{\mV,\emptyset}(\mM',\mN)$ admits a left adjoint $\F_!$ under the assumptions of (1) by Proposition \ref{laaaaan} (1) and under the assumptions of (2) by Proposition \ref{laaaaan} (2).
By Remark \ref{silu} the right $\mW$-enriched functor $$\Enr\Fun_{\mV,\emptyset}(\mM,\mN)^\circledast\to \Enr\Fun_{\mV,\emptyset}(\mM',\mN)^\circledast$$ induced by $\F$ preserves right tensors and source and target of it admit right tensors since $\mN^\circledast \to \mV^\ot \times \mW^\ot$ admits right tensors.
Using Definition \ref{enradj} (2) it is enough to prove that for every $\m \geq 0$ and $\W_1,...,\W_\m \in \mW$ and left $\mV$-linear functor $\rH: \mM^\circledast \to \mN^\circledast$ the following canonical morphism is an equivalence:
$$ (\F_! \circ ((-) \ot \W_{\m}) \circ ... \circ ((-) \ot \W_1))(\rH) 
\to (((-) \ot \W_{\m}) \circ ... \circ ((-) \ot \W_1) \circ \F_!)(\rH).$$ 
By Proposition \ref{laan} (1) the latter morphism induces at $\X \in \mM$ the canonical morphism
$$ \colim_{\V_1,..., \V_\n, \F(\Y) \to \X}(\bigotimes_{\bi=1}^\n \V_\bi \ot (((-) \ot \W_{\m}) \circ ... \circ ((-) \ot \W_1) \circ\rH(\Y))) $$$$\to ((-) \ot \W_{\m}) \circ ... \circ ((-) \ot \W_1) \circ(\colim_{\V_1,..., \V_\n, \F(\Y) \to \X}\ \bigotimes_{\bi=1}^\n \V_\bi \ot \rH(\Y)).$$
The latter morphism is an equivalence since the functors $(-)\ot \W_\bi: \mN \to \mN$ preserve small conical colimits for $1 \leq \bi \leq \m.$

\end{proof}

\subsection{The monad of enriched functors}


In this subsection we prove that enriched functors are monadic over non-enriched functors and 
compute the monad, thereby proving an enriched Yoneda-lemma (Theorem \ref{expli}, Corollary \ref{explicas}). We use the description of monad to obtain monadic resolutions of enriched functors (Proposition \ref{mondec}) and deduce a totalization formula for morphism objects of enriched $\infty$-categories of enriched functors (Corollary \ref{enrhom}).

\begin{lemma}\label{hep}Let $\mB $ be a locally small $\infty$-category and $\mC$ an $\infty$-category that admits small colimits.
\begin{enumerate}
\item For every $\Z \in \mB$ the functor $\ev^\mB_\Z: \Fun(\mB,\mC)\to \mC$ evaluating at $\Z \in \mB$ admits a left adjoint that sends $\Y \in \mC$ to the functor $\mB(\Z,-)\ot\Y:\mB \to\mC$.

\item Let $\mB$ be a small space. For every functor $\rH: \mB \to \mC$ there is a canonical equivalence $\rH \simeq \colim_{\Z \in \mB} \mB(\Z,-) \ot \rH(\Z)$ in $\Fun(\mB,\mC).$	

\item  Let $\mB$ be a small $\infty$-category. The functor $\G: \Fun(\mB,\mC) \to \Fun(\mB^\simeq,\mC)$ restricting along the inclusion $\mB^\simeq \subset \mB$ admits a left adjoint and is monadic. The left adjoint sends the functor
$\mB^\simeq(\Z,-) \ot \Y:\mB^\simeq \to\mC$ for $\Y \in \mC$, $\Z \in \mB$
to the functor $\mB(\Z,-) \ot \Y:\mB \to\mC$.

\item Let $\mB$ be a small $\infty$-category. The $\infty$-category $\Fun(\mB,\mC)$ is generated under small colimits by $\{\mB(\Z,-)\ot\Y:\mB \to\mC \mid \Z \in \mB, \Y \in \mC\}$.
\end{enumerate}
\end{lemma}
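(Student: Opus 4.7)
The plan is to prove the four parts in order, using (1) to feed (3), and combining (2) with (3) to obtain (4). The main non-trivial ingredient is the co-Yoneda identification needed in (2); the rest is formal.

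For (1), observe that $\ev^\mB_\Z$ is restriction along $\{\Z\} \hookrightarrow \mB$, so its left adjoint is the pointwise left Kan extension, which exists because $\mC$ admits small colimits. By the pointwise formula, $\mathrm{Lan}_{\{\Z\} \hookrightarrow \mB}(\Y)(\Z') = \colim_{\{\Z\} \times_\mB \mB_{/\Z'}} \Y$; the comma $\infty$-category $\{\Z\} \times_\mB \mB_{/\Z'}$ is canonically the mapping space $\mB(\Z,\Z')$, and the colimit of the constant $\Y$-diagram on this space is the copower $\mB(\Z,\Z') \ot \Y$.

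For (2), since $\mB$ is a space (so $\mB \simeq \mB^\op$), the assignment $\Z \mapsto \mB(\Z,\Z') \ot \rH(\Z)$ is a genuine functor $\mB \to \mC$ for each $\Z'$, and it depends functorially on $\Z'$. Colimits in $\Fun(\mB,\mC)$ are computed pointwise, so it suffices to exhibit a natural equivalence $\colim_{\Z \in \mB} \mB(\Z,\Z') \ot \rH(\Z) \simeq \rH(\Z')$. The Grothendieck construction applied to $\mB(-,\Z') : \mB \to \mS$ identifies this colimit with the colimit of $\mB_{/\Z'} \to \mB \xrightarrow{\rH} \mC$; since $\id_{\Z'}$ is a final object of $\mB_{/\Z'}$, that colimit is $\rH(\Z')$, naturally in $\Z'$.

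For (3), the restriction $\G = \iota^*$ along $\iota: \mB^\simeq \hookrightarrow \mB$ preserves small colimits (computed pointwise) and is conservative, because $\mB^\simeq$ contains every object of $\mB$. A left adjoint $\mathrm{Lan}_\iota$ exists as a pointwise left Kan extension, and by Lurie's Barr-Beck-Lurie theorem \cite[Theorem 4.7.3.5.]{lurie.higheralgebra}, $\G$ is monadic. To identify the left adjoint on generators, compose Kan extensions: $\mathrm{Lan}_\iota \circ \mathrm{Lan}_{\{\Z\} \hookrightarrow \mB^\simeq} \simeq \mathrm{Lan}_{\{\Z\} \hookrightarrow \mB}$ by uniqueness of left adjoints to $\ev^\mB_\Z$, and both sides are computed by (1).

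For (4), monadicity of $\G$ implies that every $\F \in \Fun(\mB,\mC)$ is the geometric realization of a simplicial object whose terms lie in the essential image of $\mathrm{Lan}_\iota$; hence $\Fun(\mB,\mC)$ is generated under small colimits by that essential image. By (2), applied to the space $\mB^\simeq$, every $\rK \in \Fun(\mB^\simeq,\mC)$ is a colimit of objects $\mB^\simeq(\Z,-) \ot \Y$; applying the colimit-preserving functor $\mathrm{Lan}_\iota$ and the formula from (3), the image of such a colimit is a colimit of objects $\mB(\Z,-) \ot \Y$. The hard part will be (2): one must rigorously pass between the pointwise colimit expression and a colimit over the left fibration $\mB_{/\Z'} \to \mB$; once this unstraightening step is in place, the finality of $\id_{\Z'}$ immediately finishes the argument, and the remaining parts reduce to formal manipulations with Kan extensions and monadicity.
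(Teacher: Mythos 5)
Your proof is correct, but it takes a genuinely different route from the paper's for parts (1)--(3). For (1) the paper does not mention Kan extensions at all: it simply writes down the chain of equivalences $\Fun(\mB,\mC)(\mB(\Z,-)\ot\Y,-)\simeq \mC(\Y,-)\circ\ev^\mB_\Z$ using the Yoneda lemma, i.e.\ it verifies corepresentability directly; your pointwise-Kan-extension argument is equivalent, and local smallness of $\mB$ is exactly what makes the copower $\mB(\Z,\Z')\ot\Y$ exist, so you should say that explicitly. For (2) the difference is more substantive: you construct the object $\colim_\Z \mB(\Z,-)\ot\rH(\Z)$ and identify it with $\rH$ pointwise via unstraightening and finality of $\id_{\Z'}$ in $\mB_{/\Z'}$, which requires you first to produce the canonical comparison map $\colim_\Z\mB(\Z,-)\ot\rH(\Z)\to\rH$ (assembled from the counits of the adjunctions in (1)) before checking it is a pointwise equivalence --- this is the step you flag as "the hard part," and it is indeed where the coherence lives. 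The paper sidesteps this entirely by computing $\Fun(\mB,\mC)(\colim_\Z\mB(\Z,-)\ot\rH(\Z),\rH')\simeq\lim_\Z\mC(\rH(\Z),\rH'(\Z))\simeq\Fun(\mB,\mC)(\rH,\rH')$ and letting the (ordinary) Yoneda lemma produce the equivalence; that is the cheaper route and I recommend it. For (3) you invoke existence of the pointwise left Kan extension along $\mB^\simeq\subset\mB$ directly (legitimate, since $\mB$ is small and $\mC$ cocomplete), whereas the paper reduces to the generators $\mB^\simeq(\Z,-)\ot\Y$ via (2) and checks corepresentability of $\Fun(\mB^\simeq,\mC)(\rH,-)\circ\G$ there; your composition-of-Kan-extensions identification of the left adjoint on generators agrees with the paper's. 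Part (4) is argued identically in both: monadicity plus (2) applied to $\mB^\simeq$.
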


\begin{proof}
(1): By the Yoneda-lemma for $\infty$-categories there is a canonical equivalence $$\Fun(\mB,\mC)(\mB(\Z,-) \ot \Y,-) \simeq \Fun(\mB,\mS)(\mB(\Z,-),-) \circ\mC(\Y,-)_* \simeq \ev^\mB_\Z \circ \mC(\Y,-)_* \simeq \mC(\Y,-) \circ \ev^\mB_\Z.$$

(2) Let $\mB$ be a locally small space. Then for all functors $\rH,\rH': \mB \to \mC$ the canonical equivalence $$ \Fun(\mB, \mC)(\colim_{\Z \in \mB} \mB(\Z,-) \ot \rH(\Z),\rH') \simeq \lim_{\Z \in \mB} \Fun(\mB, \mC)(\mB(\Z,-) \ot \rH(\Z),\rH') $$$$\simeq\lim_{\Z \in \mB} \mC(\rH(\Z), \rH'(\Z)) \simeq \Fun(\mB, \mC)(\rH,\rH')$$ represents an equivalence $\rH \simeq \colim_{\Z \in \mB} \mB(\Z,-) \ot \rH(\Z)$
in $\Fun(\mB,\mC).$	

(3): Let $\mB$ be a small $\infty$-category. 
To see that $\G$ admits a left adjoint, we need to show that for overy $\rH \in \Fun(\mB^\simeq,\mC)$
the functor $\Fun(\mB^\simeq,\mC)(\rH,-) \circ \G: \Fun(\mB,\mC) \to \mS$
is corepresentable. The full subcategory of $\Fun(\mB^\simeq,\mC)$ spanned by the
functors $\rH$ such that the functor $\Fun(\mB^\simeq,\mC)(\rH,-) \circ \G: \Fun(\mB,\mC) \to \mS$ is corepresentable, is closed under small colimits
because $\mB$ is small and $\mC$ and so $\Fun(\mB,\mC)$ admit small colimits.
By the first part of the proof the $\infty$-category 
$\Fun(\mB^\simeq,\mC)$ is generated under small colimits by the functors
$\mB^\simeq(\Z,-) \ot \Y:\mB^\simeq \to\mC$ for $\Y \in \mC$, $\Z \in \mB$.
Therefore $\G$ admits a left adjoint if for every $\Y \in \mC$, $\Z \in \mB$ the functor $\Fun(\mB^\simeq,\mC)(\mB^\simeq(\Z,-) \ot \Y,-) \circ \G: \Fun(\mB^\simeq,\mC) \to \mS$ is corepresentable.
There is a canonical equivalence, which implies (2): $$\Fun(\mB^\simeq,\mC)(\mB^\simeq(\Z,-) \ot \Y,-) \circ \G 
\simeq \mC(\Y,-)\circ \ev^{\mB^\simeq}_\Z \circ \G \simeq \mC(\Y,-)\circ \ev^\mB_\Z 
\simeq \Fun(\mB,\mC)(\mB(\Z,-) \ot \Y,-).$$

(4): The functor $\G$ is conservative because the inclusion $\mB^\simeq \subset \mB$ is essentially surjective.
The functor $\G$ preserves small colimits since such a formed object-wise.
Thus $\G$ is monadic \cite[Theorem 4.7.3.5]{lurie.higheralgebra}.
This guarantees that $\Fun(\mB,\mC)$ is generated under small colimits by the essential image of $\F.$ By the first part of the proof $\Fun(\mB^\simeq,\mC)$ is generated under small colimits by the functors
$\mB^\simeq(\Z,-) \ot \Y:\mB^\simeq \to\mC$ for $\Y \in \mC$, $\Z \in \mB$
so that $\Fun(\mB,\mC)$ is generated under small colimits by the images under $\F$ of all functors
$\mB^\simeq(\Z,-) \ot \Y:\mB^\simeq \to\mC$ for $\Y \in \mC$, $\Z \in \mB$. 

\end{proof}

\begin{theorem}\label{expli}
Let $\mM^\circledast \to \mV^\ot \times \mW^\ot$ be a weakly bienriched $\infty$-category and $\mN^\circledast \to \mV^\ot \times \mW^\ot$ a weakly bienriched $\infty$-category that admits left and right tensors and small conical colimits. 
\begin{enumerate}
\item For every $\rH \in \Enr\Fun_{\mV, \mW}(\mM,\mN), \X \in \mM, \N \in \mN$ and $ \V_1,...,\V_\n \in \mV, \W_1,...,\W_\m \in \mW$ for $\n,\m \geq 0$ the following map is an equivalence: $$\Enr\Fun_{\mV, \mW}(\mM,\mN)(\L\Mor_{\bar{\mM}}(\X,-)\ot\N,\rH) \to
\mN(\N, \rH(\X)).$$
\item If 
$\mM$ is small, then $\Enr\Fun_{\mV, \mW}(\mM,\mN)$ is generated under small colimits by $\{\L\Mor_{\bar{\mM}}(\X,-) \ot \N \mid \X \in \mM, \N \in \mN\}$ and the forgetful functor $\Enr\Fun_{\mV, \mW}(\mM,\mN) \to \Fun(\mM, \mN)$ admits a left adjoint and is monadic.
The left adjoint sends 
$\mM(\Z,-) \ot \N$ to $\L\Mor_{\bar{\mM}}(\Z,-) \ot \N$ for every $\Z \in \mM, \N \in \mN$.

\vspace{1mm}
\item If $\mM$ is small, the forgetful functor $\nu: \Enr\Fun_{\mV, \mW}(\mM,\mN) \to \Fun(\mM^\simeq, \mN)$ admits a left adjoint $\phi$ and is monadic and for every 
$\F : \mM^\simeq \to \mN$ the canonical morphism
$$ \colim_{\Z \in \mM^\simeq}(\L\Mor_{\bar{\mM}}(\Z,-) \ot \F(\Z)) \to \phi(\F)$$
in $\Enr\Fun_{\mV, \mW}(\mM,\mN) $ is an equivalence.

\end{enumerate}

\end{theorem}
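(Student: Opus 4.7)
The plan is to prove (3) first, then derive (1) by an enriched-Yoneda specialization, and obtain (2) by running the same adjunction machinery in a parallel setup. For (3), I would apply Proposition~\ref{laaaaan} to the $\mV,\mW$-enriched inclusion $\iota\colon \mM^\simeq_{\mV,\mW}\to\mM$ from the trivial $\mV,\mW$-enrichment on the maximal subspace $\mM^\simeq$ (Definition~\ref{Triv}). By Corollary~\ref{asio}(3) there is a canonical equivalence $\Enr\Fun_{\mV,\mW}(\mM^\simeq_{\mV,\mW},\mN)\simeq\Fun(\mM^\simeq,\mN)$ identifying $\nu$ with $\iota^*$, so the left adjoint $\phi:=\iota_!$ exists. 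Monadicity follows from Barr--Beck \cite[Theorem 4.7.3.5]{lurie.higheralgebra}: $\nu$ preserves small colimits by Lemma~\ref{colas} and Proposition~\ref{innerhosty}, and is conservative since equivalences of enriched functors are detected objectwise and $\mM^\simeq\subset\mM$ is essentially surjective.

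The formula for $\phi$ comes from Proposition~\ref{laaaaan}(4), which gives $\phi(\F)(\X)\simeq\colim_{\V_1,\dots,\V_\n,\Y,\W_1,\dots,\W_\m\to\X}\V_1\otimes\cdots\otimes\V_\n\otimes\F(\Y)\otimes\W_1\otimes\cdots\otimes\W_\m$. To rewrite this as $\colim_{\Z\in\mM^\simeq}\L\Mor_{\bar{\mM}}(\Z,\X)\otimes\F(\Z)$, I would extend $\mN$ via Corollary~\ref{emcolf} to a bitensored $\bar{\mN}\to\mP\Env(\mV)^\ot\times\mP\Env(\mW)^\ot$ compatible with small colimits, so that the biaction of $\mP\Env(\mV)\otimes\mP\Env(\mW)$ on $\bar{\mN}$ is available. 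By Construction~\ref{Enros}, $\L\Mor_{\bar{\mM}}(\Z,\X)$ is the coend $\int^{\V_\bullet,\W_\bullet}\Mul_\mM(\V_\bullet,\Z,\W_\bullet;\X)\cdot(\V_\bullet\otimes\W_\bullet)$ in $\mP\Env(\mV)\otimes\mP\Env(\mW)$; tensoring with $\F(\Z)$, taking $\colim_{\Z\in\mM^\simeq}$, and applying a Fubini interchange produces exactly the Kan-extension formula. The main technical step will be this coend manipulation, together with the verification that the result lies in $\mN\subset\bar{\mN}$.

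Part (1) follows by specializing the formula of (3) to $\F=\mM^\simeq(\X,-)\otimes\N$: Yoneda on the space $\mM^\simeq$ (Lemma~\ref{hep}(2)), applied to $\Z\mapsto\L\Mor_{\bar{\mM}}(\Z,-)\otimes\N$, collapses $\colim_{\Z\in\mM^\simeq}\L\Mor_{\bar{\mM}}(\Z,-)\otimes(\mM^\simeq(\X,\Z)\otimes\N)$ to $\L\Mor_{\bar{\mM}}(\X,-)\otimes\N$, hence $\L\Mor_{\bar{\mM}}(\X,-)\otimes\N\simeq\phi(\mM^\simeq(\X,-)\otimes\N)$. The $\phi\dashv\nu$-adjunction combined with Lemma~\ref{hep}(1) then yields $\Enr\Fun_{\mV,\mW}(\mM,\mN)(\L\Mor_{\bar{\mM}}(\X,-)\otimes\N,\rH)\simeq\mN(\N,\rH(\X))$, as claimed.

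For (2) I would rerun the same argument for the counit $\epsilon\colon\mM_{\mV,\mW}\to\mM$ of the trivial-enrichment adjunction from Corollary~\ref{asio}(3); by construction $\Enr\Fun_{\mV,\mW}(\mM_{\mV,\mW},\mN)\simeq\Fun(\mM,\mN)$, so $\epsilon^*$ is the forgetful functor $\nu'$ to plain functors and its left adjoint $\phi':=\epsilon_!$ exists and is monadic by the same Barr--Beck argument. Computing $\phi'(\mM(\Z,-)\otimes\N)$ via Proposition~\ref{laaaaan}(4) and absorbing the $\mM(\Z,\Y)$-factor by Yoneda in $\mM$ (Lemma~\ref{hep}(2)) yields $\L\Mor_{\bar{\mM}}(\Z,-)\otimes\N$; the generation claim in (2) then follows from monadicity of $\nu'$ together with Lemma~\ref{hep}(4), which provides generators $\mM(\Z,-)\otimes\N$ of $\Fun(\mM,\mN)$.
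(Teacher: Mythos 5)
Your strategy is sound and genuinely different from the paper's. The paper proves (1) \emph{first}: it passes to the full subcategory $\bar{\mN}^\circledast\subset\mP\B\Env(\mN)^\circledast$ spanned by $\mN$ (presentably bitensored by Lemma \ref{lekko}), invokes Theorem \ref{corok} to see that $\L\Mor_{\mP\B\Env(\mM)}(\X,-)$ is linear and small-colimit-preserving, and then uses the universal property of the envelope (Corollary \ref{envvcor} together with Proposition \ref{Line}) to identify $\ev_\X$ with restriction along $(-)\otimes\X\otimes(-)$, whose left adjoint on linear functor categories is restriction along $\L\Mor_{\mP\B\Env(\mM)}(\X,-)$; parts (2) and (3) are then deduced from (1) by reducing to the generators $\mM(\Z,-)\otimes\N$ of $\Fun(\mM,\mN)$ via Lemma \ref{hep}. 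You reverse the order: (3) is obtained as the enriched Kan extension $\iota_!$ along the trivially enriched inclusion $\mM^\simeq_{\mV,\mW}\to\mM$ (Proposition \ref{laaaaan}), and (1), (2) follow by specialization. This is not circular (Propositions \ref{laan} and \ref{laaaaan} do not rely on Theorem \ref{expli}), and it has the advantage of producing the pointwise colimit formula directly rather than by a generator reduction. What it costs is that the entire burden shifts onto the step you only sketch: rewriting the operadic Kan-extension colimit of Proposition \ref{laaaaan}(4) as $\colim_{\Z\in\mM^\simeq}\L\Mor_{\bar{\mM}}(\Z,-)\otimes\F(\Z)$ requires expressing $\Gamma_\mM(\Z,\X)$ as a colimit of representables over its category of elements, commuting the $\mP\Env(\mV)\otimes\mP\Env(\mW)$-action on $\bar{\mN}$ past that colimit, and then exhibiting a cofinal comparison between the total Grothendieck construction over $\mM^\simeq$ and the comma category indexing the Kan-extension formula -- all functorially in $\X$ so as to get an equivalence of enriched functors, not merely a pointwise one. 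The paper's route to (1) avoids exactly this cofinality argument by hiding it inside Theorem \ref{corok} and the envelope's universal property.

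Two caveats you should address. First, your proof of (1) factors through (3) and hence only covers small $\mM$ (the colimit over $\mM^\simeq$ and Proposition \ref{laaaaan} both need this), whereas part (1) is stated, and later used via Corollary \ref{explicas}, without a smallness hypothesis; the paper's envelope argument does not have this restriction (beyond the size issues already present in Theorem \ref{corok}). Second, statement (1) asserts that a \emph{specific} map is an equivalence, so after producing the abstract adjunction $\phi\dashv\nu$ you still need to check that its unit at $\mM^\simeq(\X,-)\otimes\N$ is induced by the identity $\tu\to\L\Mor_{\bar{\mM}}(\X,\X)$, which is what identifies your abstract equivalence with the displayed map.
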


\begin{proof}
(1): Let $\bar{\mN}^\circledast \subset \mP\B\Env(\mN)^\circledast$ be the full weakly bienriched subcategory spanned by $\mN$.
By Lemma \ref{lekko} (5) the bienriched $\infty$-category $\bar{\mN}^\circledast \to \mP\Env(\mV)^\ot \times \mP\Env(\mW)^\ot$ is a bitensored $\infty$-category compatible with small colimits.
Consider the $\mP\Env(\mV), \mP\Env(\mW)$-enriched adjunction $$(-)\ot \X\ot(-) : (\mP\Env(\mV) \ot \mP\Env(\mW))^\circledast \rightleftarrows \mP\B\Env(\mM)^\circledast: \L\Mor_{\mP\B\Env(\mM)}(\X,-).$$
By Proposition \ref{corok} the right adjoint preserves small colimits and is $\mP\Env(\mV), \mP\Env(\mW)$-linear.
Hence by Remark \ref{indadj} the latter $\mP\Env(\mV), \mP\Env(\mW)$-enriched adjunction induces an adjunction
$$ \L\Mor_{\mP\B\Env(\mM)}(\X,-)^*: \LinFun^\L_{\mP\Env(\mV), \mP\Env(\mW)}(\mP\B\Env(\mM),\bar{\mN}) \rightleftarrows $$$$ \LinFun^\L_{\mP\Env(\mV), \mP\Env(\mW)}(\mP\Env(\mV)\ot \mP\Env(\mW),\bar{\mN}): ((-)\ot \X)^*.$$
Consider the commutative square
$$\begin{xy}
\xymatrix{\LinFun^\L_{\mP\Env(\mV), \mP\Env(\mW)}(\mP\B\Env(\mM),\bar{\mN}) \ar[d]^{ ((-)\ot \X)^*} \ar[rrr]^\simeq
&&& \Enr\Fun_{\mV,\mW}(\mM,\mN) \ar[d]^{\ev_\X} 
\\
\LinFun^\L_{\mP\Env(\mV), \mP\Env(\mW)}(\mP\Env(\mV)\ot\mP\Env(\mW),\bar{\mN})\ar[rrr]^\simeq &&& \mN.}
\end{xy}$$
By Proposition \ref{envvcor} and Proposition \ref{Line} the horizontal functors are equivalences.
Consequently, the right vertical functor admits a left adjoint that sends
$\N\in \mN$ to the following composition of $\mV,\mW$-enriched functors:
$$\mM^\circledast\subset \mP\B\Env(\mM)^\circledast\xrightarrow{\L\Mor_{\mP\B\Env(\mM)}(\X,-)} (\mP\Env(\mV)\ot\mP\Env(\mW))^\circledast \xrightarrow{(-)\ot\N\ot (-)} \mN^\circledast.$$
The unit of this adjunction 
is induced by the canonical morphism $\tu \to \L\Mor_{\mP\B\Env(\mM)}(\X,\X)$
in $ \mP\Env(\mV)\ot\mP\Env(\mW).$
This proves (1).



(2): Let $\theta: \Enr\Fun_{\mV, \mW}(\mM,\mN) \to \Fun(\mM,\mN)$ be the forgetful functor, which is conservative by \cref{conserva}.
We first show that $\theta$ admits a left adjoint. Let $\Theta \subset \Fun(\mM,\mN)$ be the full subcategory spanned by those $\rH$ such that the functor $\Fun(\mM,\mN)(\rH,-)\circ \theta: \Enr\Fun_{\mV, \mW}(\mM,\mN) \to \mS$ is corepresentable. 
The functor $\theta$ admits a left adjoint if and only if $\Theta= \Fun(\mM,\mN)$.
By Lemma \ref{colas} (1) the $\infty$-category 
$\Enr\Fun_{\mV, \mW}(\mM,\mN)$ admits small colimits preserved by $\theta$. By 
This guarantees that $\Theta$ is closed under small colimits.
By Lemma \ref{hep} (4) the $\infty$-category $\Fun(\mM,\mN)$ is generated under small colimits by the full subcategory $\{\mM(\Z,-) \ot \N:\mM \to\mN \mid\N \in \mN,\Z \in \mM\}$.
Therefore it is enough to see that for every $\N \in \mN$, $\Z \in \mM$
the functor $\Fun(\mM,\mN)(\mM(\Z,-) \ot \N,-)\circ \theta: \Enr\Fun_{\mV, \emptyset}(\mM,\mN) \to \mS$ is corepresentable.
By Lemma \ref{hep} (1) for every $\Z \in \mM$ the functor $$ \mN \to \Fun(\mM, \mN), \N \mapsto \mM(\Z,-) \ot \N $$ is left adjoint to the functor $\ev_\Z$ evaluating at $\Z.$
Thus by (1) the functor $$\Fun(\mM,\mN)(\mM(\Z,-) \ot \N,-)\circ \theta \simeq \mN(\N,-)\circ \ev_\Z \circ \theta: \Enr\Fun_{\mV, \mW}(\mM,\mN) \to \mS$$ is equivalent to the corepresentable functor
$$\Enr\Fun_{\mV, \mW}(\mM,\mN)(\L\Mor_{\bar{\mM}}(\Z,-) \ot \N,-): \Enr\Fun_{\mV, \mW}(\mM,\mN) \to \mS.$$ So $\theta$ admits a left adjoint that sends 
$\mM(\Z,-) \ot \N$ to $\L\Mor_{\bar{\mM}}(\Z,-) \ot \N$ for every $\Z \in \mM, \N \in \mN$.

By \cite[4.7.3.5.]{lurie.higheralgebra} this guarantees that $\theta$ is monadic so that $\Enr\Fun_{\mV, \mW}(\mM,\mN)$ is generated under small colimits by the essential image of the left adjoint of $\theta.$
Since the $\infty$-category $\Fun(\mM,\mN)$ is generated under small colimits by the functors $\mM(\Z,-) \ot \N:\mM \to\mN$ for $\Z \in \mM$, $\N \in \mN$,
the $\infty$-category $\Enr\Fun_{\mV, \mW}(\mM,\mN)$ is generated under small colimits by the objects $\L\Mor_{\bar{\mM}}(\Z,-) \ot \N$ for $\Z \in \mM, \N \in \mN$.

(3): By (2), Lemma \ref{hep} (3) and \cite[4.7.3.5.]{lurie.higheralgebra} the forgetful functor $$\nu: \Enr\Fun_{\mV, \mW}(\mM,\mN) \to \Fun(\mM^\simeq, \mN)$$ has a left adjoint $\phi$ and is monadic.
The left adjoint sends $\mM^\simeq(\Z,-) \ot \N$ to $\L\Mor_{\bar{\mM}}(\Z,-) \ot \N$ for every $\Z \in \mM, \N \in \mN$.

For every functor $\F : \mM^\simeq \to \mN$ the unit $\F \to \nu(\phi(\F))$ in
$\Fun(\mM^\simeq,\mN)$ gives rise to a map 
$$\rho: \colim_{\Z \in \mM^\simeq}(\L\Mor_{\bar{\mM}}(\Z,-) \ot \F(\Z)) \to \phi(\F) $$
in $\Enr\Fun_{\mV, \mW}(\mM,\mN)$
whose summand $\L\Mor_{\bar{\mM}}(\Z,-) \ot \F(\Z)) \to \phi(\F)$ for $\Z \in \mM^\simeq$
corresponds to the morphism $\F(\Z) \to \phi(\F)(\Z)$ in $\mN.$
Since $\nu, \phi$ preserve small colimits and $\mN$ admits small conical colimits,
the full subcategory of $\Fun(\mM^\simeq,\mN)$ spanned by the functors $\F$, for which the $\rho$ is an equivalence is closed under small colimits. So by Lemma \ref{hep} (4)
we can assume that $\F= \mM^\simeq(\Z,-)\ot \N: \mM^\simeq \to \mN$ for $\N \in \mN$ and $\Z \in \mM.$
By Lemma \ref{hep} (2) there is a canonical equivalence $$ \F \simeq \colim_{\Z \in \mM^\simeq} \mM^\simeq(\Z,-) \ot \F(\Z)$$ in $\Fun(\mM^\simeq,\mN).$
Note that $\nu \circ \L\Mor_{\bar{\mN}}(\F(\Z),-)_* \simeq \L\Mor_{\bar{\mN}}(\F(\Z),-)_* \circ \nu$ so that by adjointness
$$\phi \circ ((-)\ot \F(\Z))_* \simeq ((-)\ot \F(\Z))_* \circ \phi.$$
Hence there is a canonical equivalence $$\colim_{\Z \in \mM^\simeq} \L\Mor_{\bar{\mM}}(\Z,-) \ot \F(\Z) \simeq \colim_{\Z \in \mM^\simeq} \phi(\mM^\simeq(\Z,-)) \ot \F(\Z) $$$$ \simeq \colim_{\Z \in \mM^\simeq} \phi( \mM^\simeq(\Z,-) \ot \F(\Z)) \simeq \phi(\colim_{\Z \in \mM^\simeq} \mM^\simeq(\Z,-) \ot \F(\Z))\simeq \nu(\phi(\F)) $$ in $\Enr\Fun_{\mV, \mW}(\mM,\mN),$
which canonically identifies with $\rho.$

\end{proof}

\begin{corollary}\label{explicas}

Let $\mM^\circledast \to \mV^\ot $ be a weakly left enriched $\infty$-category
and $\mN^\circledast \to \mV^\ot\times\mW^\ot $ a weakly bienriched $\infty$-category
that admits left tensors.
\begin{enumerate}
\item For any $\rH \in \Enr\Fun_{\mV, \emptyset}(\mM,\mN), \X \in \mM, \N \in \mN$ the following canonical morphism is an equivalence: $$\R\Mor_{\overline{\Enr\Fun_{\mV, \emptyset}(\mM,\mN)}}(\L\Mor_{\bar{\mM}}(\X,-) \ot \N, \rH) \to \R\Mor_{\bar{\mN}}(\N, \rH(\X)).$$


\item If $\mM$ is small and $\mN^\circledast \to \mV^\ot\times\mW^\ot $ admits small conical colimits, 
the right $\mW$-enriched forgetful functor $\Enr\Fun_{\mV, \emptyset}(\mM,\mN)^\circledast \to \Fun(\mM, \mN)^\circledast$ admits a right $\mW$-enriched left adjoint that sends 
$\mM(\X,-) \ot \N$ to $\L\Mor_{\bar{\mM}}(\X,-) \ot \N$ for every $\X \in \mM, \N \in \mN$.

\item If $\mM$ is small and $\mN^\circledast \to \mV^\ot\times\mW^\ot $ admits small conical colimits, the right $\mW$-enriched forgetful functor $\nu: \Enr\Fun_{\mV, \emptyset}(\mM,\mN)^\circledast \to \Fun(\mM^\simeq, \mN)^\circledast$ admits a right $\mW$-enriched left adjoint $\phi$ and for every $\F : \mM^\simeq \to \mN$ the following canonical morphism in $\Enr\Fun_{\mV, \emptyset}(\mM,\mN) $ is an equivalence:
$$ \colim_{\Z \in \mM^\simeq}(\L\Mor_{\bar{\mM}}(\Z,-) \ot \F(\Z)) \to \phi(\F).$$
\end{enumerate}

\end{corollary}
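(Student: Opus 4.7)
The plan is to reduce all three parts to Theorem \ref{expli} by enlarging the codomain from $\mN$ to its right tensored envelope $\R\Env(\mN)^\circledast \to \mV^\ot \times \Env(\mW)^\ot$. By Proposition \ref{envv}(3) combined with Remark \ref{leman}, the right $\mW$-enriched $\infty$-category $\Enr\Fun_{\mV,\emptyset}(\mM,\mN)^\circledast \to \mW^\ot$ identifies with the pullback along $\mW^\ot \subset \Env(\mW)^\ot$ of $\Enr\Fun_{\mV,\emptyset}(\mM,\R\Env(\mN))^\circledast \to \Env(\mW)^\ot$. Since $\R\Env(\mN)^\circledast$ is right $\Env(\mW)$-tensored and (one checks from the construction in Notation \ref{ene}) still admits left tensors extending those of $\mN$, Proposition \ref{innerhosty} applies to the enlarged target to give a right $\Env(\mW)$-tensored structure on $\Enr\Fun_{\mV,\emptyset}(\mM,\R\Env(\mN))^\circledast$ in which right tensors are computed object-wise. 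Moreover, since the embedding $\mN^\circledast \subset \R\Env(\mN)^\circledast$ is right $\mW$-linear and preserves left morphism objects (Proposition \ref{morpre}), the object $\L\Mor_{\bar\mM}(\X,-) \ot \N$ is unchanged under this identification.

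For part (1), under this identification the left-hand multi-morphism space becomes
\[
\Enr\Fun_{\mV,\Env(\mW)}(\mM,\R\Env(\mN))\bigl((\L\Mor_{\bar\mM}(\X,-) \ot \N) \ot (\W_1 \ot \cdots \ot \W_\m),\, \rH\bigr).
\]
Computing the right $\Env(\mW)$-tensor object-wise, the first argument is the left $\mV$-enriched functor $\Y \mapsto \L\Mor_{\bar\mM}(\X,\Y) \ot (\N \ot \W_1 \ot \cdots \ot \W_\m)$. Applying Theorem \ref{expli}(1) in the weakly bi-enriched setting (target $\R\Env(\mN)$) identifies this mapping space with $\R\Env(\mN)(\N \ot \W_1 \ot \cdots \ot \W_\m,\, \rH(\X))$, which by the universal property of right tensors in $\R\Env(\mN)$ is $\Mul_\mN(\N,\W_1,\ldots,\W_\m;\rH(\X))$.

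For parts (2) and (3), Theorem \ref{expli}(2),(3) already supplies the underlying adjunction $\phi \dashv \nu$ with the stated values on generators, so by Remark \ref{enradj}(1) upgrading to a right $\mW$-enriched adjunction reduces to verifying that for all $\F$, $\rH$, and $\W_1,\ldots,\W_\m$, the comparison
\[
\Mul_{\Enr\Fun_{\mV,\emptyset}(\mM,\mN)}(\phi(\F),\W_1,\ldots,\W_\m;\rH) \to \Mul_{\Fun(\mM,\mN)^\circledast}(\F,\W_1,\ldots,\W_\m;\nu(\rH))
\]
is an equivalence (and analogously for $\mM^\simeq$). Both sides send small colimits in $\F$ to limits of spaces: the left side because $\phi$ preserves small colimits and multi-morphism spaces are limit-preserving in the source, the right side because the right $\mW$-enrichment on $\Fun(\mM,\mN)^\circledast$ is computed object-wise via that of $\mN$. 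So by Lemma \ref{hep}(3),(4) it suffices to check the equivalence on generators $\F = \mM(\Z,-) \ot \N$ (resp.\ $\F = \mM^\simeq(\Z,-) \ot \N$); there $\phi(\F) \simeq \L\Mor_{\bar\mM}(\Z,-) \ot \N$ by Theorem \ref{expli}(2),(3), and the required equivalence is precisely part (1). The explicit formula in (3) is then obtained by applying the colimit decomposition of the proof of Theorem \ref{expli}(3) object-wise to the enriched left adjoint $\phi$.

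The main technical obstacle is the compatibility bookkeeping in Step 1: one has to confirm that $\R\Env(\mN)$ genuinely inherits left tensors from $\mN$, that left morphism objects agree under $\mN \subset \R\Env(\mN)$, and that right tensors in $\Enr\Fun_{\mV,\emptyset}(\mM,\R\Env(\mN))$ are actually computed point-wise, so that all four $\ot$ symbols in the displayed calculation of part (1) can be interchanged without ambiguity. Everything else is a formal consequence of Theorem \ref{expli} together with the enriched adjunction criterion of Remark \ref{enradj}.
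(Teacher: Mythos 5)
Your overall strategy for parts (2) and (3) — get the unenriched adjunction from Theorem \ref{expli}, then upgrade it to a right $\mW$-enriched one by checking compatibility with right tensors on the generators $\mM(\Z,-)\ot\N$ — is exactly the paper's route (the paper phrases the check via Lemma \ref{Adj} and the colimit-preservation of right tensors from Proposition \ref{innerhosty}, rather than via the multi-morphism criterion of Remark \ref{enradj}, but the reduction is the same). One small omission there: on a generator the comparison map has \emph{two} sides to identify, and part (1) only computes the left one; you still need the elementary unenriched analogue $\Mul_{\Fun(\mM,\mN)}(\mM(\Z,-)\ot\N,\W_1,\dots,\W_\m;\G)\simeq\Mul_\mN(\N,\W_1,\dots,\W_\m;\G(\Z))$, which amounts to the copower $\mM(\Z,\Y)\ot(-)$ commuting with $(-)\ot\W$.

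The genuine gap is in your treatment of part (1). First, the claimed identification of $\Enr\Fun_{\mV,\emptyset}(\mM,\mN)^\circledast$ with the pullback along $\mW^\ot\subset\Env(\mW)^\ot$ of $\Enr\Fun_{\mV,\emptyset}(\mM,\R\Env(\mN))^\circledast$ is false: by Remark \ref{leman} that pullback is $\Enr\Fun_{\mV,\emptyset}(\mM,\mW\times_{\Env(\mW)}\R\Env(\mN))^\circledast$, whose underlying $\infty$-category consists of \emph{all} enriched functors into $\R\Env(\mN)$, not just those landing in $\mN$; you only get a full weakly right enriched subcategory (which is enough for computing multi-morphism spaces, but should be said). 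Second, and more seriously, you cannot "apply Theorem \ref{expli}(1) with target $\R\Env(\mN)$": that theorem requires the target to admit left and right tensors \emph{and small conical colimits}, and its proof genuinely uses this (it needs the full subcategory $\bar\mN\subset\mP\B\Env(\mN)$ to be closed under the $\mP\Env(\mV),\mP\Env(\mW)$-biaction). The formal envelope $\R\Env(\mN)$ has no conical colimits, so the hypothesis fails even when $\mN$ itself satisfies it; moreover the existence of left tensors in $\R\Env(\mN)$ is itself a computation you defer (Proposition \ref{morpre}, which you cite, is about morphism objects, not tensors). The paper avoids the envelope altogether: it reformulates (1) as saying that evaluation at $\X$ admits a right $\mW$-enriched left adjoint $\N\mapsto\L\Mor_{\bar\mM}(\X,-)\ot\N$, gets the underlying adjoint from Theorem \ref{expli}(1) applied to $\mN$ itself, and upgrades it via Proposition \ref{innerhosty} and Lemma \ref{Adj}, so that the only thing to verify is the single equivalence $\L\Mor_{\bar\mM}(\X,-)\ot(\N\ot\W)\simeq(\L\Mor_{\bar\mM}(\X,-)\ot\N)\ot\W$. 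You should replace your Step 1 with this reformulation (or otherwise justify the conical-colimit-free version of Theorem \ref{expli}(1) you are implicitly invoking).
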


\begin{proof}
Statement (1) is equivalent to say that for every $\X\in \mM$ the right $\mW$-enriched functor $$\gamma: \Enr\Fun_{\mV, \emptyset}(\mM,\mN)^\circledast \to \mN^\circledast$$ evaluating at $\X$ admits a right $\mW$-enriched left adjoint and the unit of the adjunction is induced by the canonical morphism
$\tu \to \L\Mor_{\bar{\mM}}(\X,\X)$ in $\mP\Env(\mV).$ 
By Theorem \ref{expli} (1) the functor $\Enr\Fun_{\mV, \emptyset}(\mM,\mN) \to \mN$ evaluating at $\X$ admits a left adjoint that sends $\N\in \mN$ to $ \L\Mor_{\bar{\mM}}(\X,-)\ot\N$, and the unit of this adjunction is induced by the canonical morphism $\tu \to \L\Mor_{\bar{\mM}}(\X,\X)$ in $\mP\Env(\mV).$ 
By Proposition \ref{innerhosty} the right $\mW$-enriched functor $\gamma$ is a right $\mW$-linear functor between weakly right $\mW$-enriched $\infty$-categories that admit right tensors. Therefore by Lemma \ref{Adj} the result follows from the fact that 
for every $\N\in \mN, \W \in \mW$ the morphism
$$\kappa: \L\Mor_{\bar{\mM}}(\X,-)\ot(\N \ot \W) \to (\L\Mor_{\bar{\mM}}(\X,-)\ot\N) \ot \W $$ corresponding to the morphism $\tu \ot \N \ot \W \to \L\Mor_{\bar{\mM}}(\X,\X)\ot\N \ot \W$ is the canonical equivalence.

(2): By Proposition \ref{innerhosty} the right $\mW$-enriched forgetful functor $$\Enr\Fun_{\mV, \emptyset}(\mM,\mN)^\circledast \to \Fun(\mM, \mN)^\circledast$$ is a right $\mW$-linear functor between weakly right $\mW$-enriched $\infty$-categories that admit right tensors. By Theorem \ref{expli} (2) the forgetful functor $\Enr\Fun_{\mV, \emptyset}(\mM,\mN) \to \Fun(\mM, \mN)$ admits a left adjoint $\phi.$ 
Thus by Lemma \ref{Adj} it is enough to see that for every $\rH\in \Fun(\mM,\mN), \W \in \mW$ the canonical morphism
$\lambda: \phi(\rH\ot\W)\to \phi(\rH) \ot \W $ 
is an equivalence.
By Lemma \ref{hep} (2) the $\infty$-category $\Fun(\mM,\mN)$ is generated under small colimits by the full subcategory $\{\mM(\X,-) \ot \N:\mM \to\mN \mid\N \in \mN,\X \in \mM\}$.
By Proposition \ref{innerhosty} forming right tensors in $\Fun(\mM, \mN)$ and $\Enr\Fun_{\mV, \emptyset}(\mM,\mN)$ preserves small colimits. So it suffices to show that
$\lambda$ is an equivalence for $\rH=\mM(\X,-) \ot \N:\mM \to\mN$ for any $\N \in \mN,\X \in \mM$. In this case $\lambda$ identifies with $\kappa.$	
The proof of (3) is similar to the one of (2), where we use Theorem \ref{expli} (3). 

\end{proof}

\begin{corollary}\label{yotul}

Let $\mV^\ot \to \Ass$ be a monoidal $\infty$-category compatible with small colimits and $\mM^\circledast \to \mV^\ot $ a small left enriched $\infty$-category.
\begin{enumerate}
\item Then $\Enr\Fun_{\mV,\emptyset}(\mM,\mV)$ is generated under small colimits and left tensors by the full subcategory $$\{\L\Mor_{\bar{\mM}}(\X,-) \mid \X \in \mM \}.$$ 

\item For every $\X \in \mM$ the right $\mV$-enriched functor $$\R\Mor_{\overline{\Enr\Fun_{\mV, \emptyset}(\mM,\mV)}}(\L\Mor_\mM(\X,-),-):\Enr\Fun_{\mV, \emptyset}(\mM,\mV)^\circledast \to \mV^\circledast $$
is equivalent to the functor evaluating at $\X$ and so preserves small colimits and is right $\mV$-linear.

\end{enumerate}

\end{corollary}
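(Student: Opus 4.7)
The plan is to derive both statements from the enriched Yoneda lemmas of Theorem \ref{expli} and Corollary \ref{explicas}, applied with the target $\mN = \mV$ equipped with its canonical bitensored structure $\mV^\circledast \to \mV^\ot \times \mV^\ot$. Under this choice, Notation \ref{bbbb} together with Lemma \ref{innerho} endows $\Enr\Fun_{\mV,\emptyset}(\mM,\mV)^\circledast \to \mV^\ot$ with a right $\mV$-tensored structure whose tensors are formed pointwise, and this is the structure underlying $\overline{\Enr\Fun_{\mV,\emptyset}(\mM,\mV)}$.

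For part (1), I would invoke Theorem \ref{expli} (2), which asserts that $\Enr\Fun_{\mV,\emptyset}(\mM,\mV)$ is generated under small colimits by the collection $\{\L\Mor_{\bar{\mM}}(\X,-) \ot \N \mid \X \in \mM,\, \N \in \mV\}$. The key observation is that each generator $\L\Mor_{\bar{\mM}}(\X,-) \ot \N$, represented by the functor $\Y \mapsto \L\Mor_{\bar{\mM}}(\X,\Y) \ot \N$, is the tensor of the representable $\L\Mor_{\bar{\mM}}(\X,-)$ by $\N$ in the enriched functor $\infty$-category. This is the content of Corollary \ref{explicas} (2) combined with Proposition \ref{innerhosty}, which guarantee that tensors in $\Enr\Fun_{\mV,\emptyset}(\mM,\mV)^\circledast$ are computed pointwise. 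Consequently, generation under small colimits by the full collection upgrades to generation under small colimits and tensors by the smaller set $\{\L\Mor_{\bar{\mM}}(\X,-) \mid \X \in \mM\}$.

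For part (2), I would apply Corollary \ref{explicas} (1) with target $\mN = \mV$ bitensored over $\mV, \mV$, taking $\N = \tu_\mV$. For any $\W_1, \ldots, \W_\m \in \mV$ and $\rH \in \Enr\Fun_{\mV,\emptyset}(\mM,\mV)$, this produces the natural equivalence
\[
\Mul_{\overline{\Enr\Fun_{\mV,\emptyset}(\mM,\mV)}}(\L\Mor_{\bar{\mM}}(\X,-),\, \W_1,\ldots,\W_\m;\, \rH) \simeq \Mul_\mV(\tu_\mV,\, \W_1,\ldots,\W_\m;\, \rH(\X)) \simeq \mV(\W_1 \ot \cdots \ot \W_\m,\, \rH(\X)),
\]
using that $\L\Mor_{\bar{\mM}}(\X,-) \ot \tu_\mV \simeq \L\Mor_{\bar{\mM}}(\X,-)$ and that $\tu_\mV$ is the monoidal unit. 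By the defining universal property of the right morphism object, this identifies the right $\mV$-enriched functor $\R\Mor_{\overline{\Enr\Fun_{\mV,\emptyset}(\mM,\mV)}}(\L\Mor_\mM(\X,-),-)$ with the evaluation-at-$\X$ functor $\Enr\Fun_{\mV,\emptyset}(\mM,\mV)^\circledast \to \mV^\circledast$. Its preservation of small colimits and right $\mV$-linearity are then immediate from the pointwise computation of colimits and right tensors in the functor $\infty$-category, via Corollary \ref{innerhost} and Proposition \ref{innerhosty}.

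The main subtlety lies in passing from the multi-morphism-level statement of Corollary \ref{explicas} (1) to a genuine equivalence of right $\mV$-enriched functors: one must verify that the family of equivalences assembles coherently as $\rH$ and the $\W_i$ vary, which follows from the functoriality built into the construction of the right $\mV$-enriched left adjoint in Corollary \ref{explicas} (1). Once this bookkeeping is in place, both conclusions follow formally.
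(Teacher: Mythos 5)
Your derivation is correct and is essentially the intended one: the paper states Corollary \ref{yotul} without proof as an immediate consequence of Theorem \ref{expli} and Corollary \ref{explicas}, and your argument — reading the generators $\L\Mor_{\bar{\mM}}(\X,-)\ot\N$ of Theorem \ref{expli}~(2) as tensors of the representables, and specializing Corollary \ref{explicas}~(1) to $\N=\tu_\mV$ to identify $\R\Mor(\L\Mor_\mM(\X,-),-)$ with evaluation at $\X$ via the universal property of right morphism objects — is exactly the expected route. The coherence point you raise at the end is already packaged in the paper's formulation of Corollary \ref{explicas}~(1) as the existence of a right $\mV$-enriched left adjoint to evaluation, so no further bookkeeping is needed.
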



\begin{remark}\label{Func}
Let $\mM^\circledast \to \mV^\ot, \mN^\circledast \to \mV^\ot $ be left enriched $\infty$-categories, $\F:\mM^\circledast \to \mN^\circledast$ a left $\mV$-enriched functor and $\X \in \mM.$
The morphism $\tu_\mV \to \L\Mor_\mN(\F(\X),\F(\X))$ in $\mV$
corresponds to a morphism $\L\Mor_\mM(\X,-)\to \L\Mor_\mN(\F(\X),-)\circ \F$
in $\Enr\Fun_{\mV,\emptyset}(\mM,\mV)$ whose component at any $\Y \in \mM$
is the canonical morphism $\L\Mor_\mM(\X,\Y)\to \L\Mor_\mN(\F(\X),\F(\Y))$ in $\mV.$
\end{remark}

\begin{proposition}\label{mondec}
Let $\mM^\circledast \to \mV^\ot \times \mW^\ot$ be a small weakly bienriched $\infty$-category, $\mN^\circledast \to \mV^\ot \times \mW^\ot$ a weakly bienriched $\infty$-category that admits left and right tensors and small conical colimits.
Every $\mV,\mW$-enriched functor $\F : \mM^\circledast \to \mN^\circledast$ is the colimit of a canonical simplicial object $\mX$ in $ \Enr\Fun_{\mV, \mW}(\mM,\mN)$ such that for every $\n \geq 0$ there is a canonical equivalence in $\Enr\Fun_{\mV, \mW}(\mM,\mN):$ $$\mX_\n \simeq \colim_{\Z_1, ...., \Z_\n \in \mM^\simeq}\L\Mor_{\bar{\mM}}(\Z_{\n},-)\ot \L\Mor_{\bar{\mM}}(\Z_{\n-1},\Z_\n)\ot ... \ot \L\Mor_{\bar{\mM}}(\Z_2,\Z_3) \ot \L\Mor_{\bar{\mM}}(\Z_1,\Z_2) \ot \F(\Z_1).$$
	
\end{proposition}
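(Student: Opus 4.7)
The plan is to realise $\mX_\bullet$ as the canonical bar resolution attached to the monadic adjunction
$$\phi: \Fun(\mM^\simeq, \mN) \rightleftarrows \Enr\Fun_{\mV,\mW}(\mM,\mN): \nu$$
provided by Theorem \ref{expli}(3). The monadicity of $\nu$ implies that every object $\F$ of $\Enr\Fun_{\mV,\mW}(\mM,\mN)$ is canonically the colimit of the simplicial object whose $\n$-th term is the iterated free-forgetful composite $(\phi\nu)^{\n}\F$, with faces induced by the counit $\phi\nu\to\id$ and degeneracies by the unit $\id\to\nu\phi$. This gives the simplicial object $\mX$ and the equivalence $\colim_{[\n]\in\Delta^\op}\mX_\n\simeq\F$ for free.

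The remaining work is to compute the $\n$-th term of this bar resolution explicitly. By Theorem \ref{expli}(3) the free functor is given by the coend-type formula $\phi(\G)(-)\simeq\colim_{\Z\in\mM^\simeq}\L\Mor_{\bar{\mM}}(\Z,-)\ot\G(\Z)$ for every $\G \in \Fun(\mM^\simeq, \mN)$. Setting $\G=\nu\F$ identifies $\mX_1\simeq\colim_{\Z_1\in\mM^\simeq}\L\Mor_{\bar{\mM}}(\Z_1,-)\ot\F(\Z_1)$. For the inductive step, one substitutes $\G=(\phi\nu)^{\n-1}\F$ restricted to $\mM^\simeq$, obtaining a double colimit in which the outer tensor must be pulled across the inner colimit; this is legitimate because, by Theorem \ref{expli}(1), the functor $\N\mapsto\L\Mor_{\bar{\mM}}(\Z,-)\ot\N$ is left adjoint to evaluation at $\Z$, and therefore preserves the small conical colimits which exist in $\mN$ by hypothesis. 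Reindexing the resulting $\n$-fold colimit then produces the displayed formula.

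The main technical obstacle is precisely this reindexing and commutation of colimits with the pointwise tensors. One has to check carefully that the chain $\L\Mor_{\bar{\mM}}(\Z_\n,-)\ot\L\Mor_{\bar{\mM}}(\Z_{\n-1},\Z_\n)\ot\cdots\ot\L\Mor_{\bar{\mM}}(\Z_1,\Z_2)\ot\F(\Z_1)$ arising from the iterative substitution genuinely matches the iterated $(\phi\nu)^\n$, taking into account the extended left action of $\mP\Env(\mV)$ (in which $\L\Mor_{\bar{\mM}}(\X,\Y)$ lives) on $\Enr\Fun_{\mV,\mW}(\mM,\mN)$ implicit in the notation of Theorem \ref{expli}(1). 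Once this identification is made, the simplicial face and degeneracy maps are inherited tautologically from the bar construction of the monad $\nu\phi$, so no further verification is required.
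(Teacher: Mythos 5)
Your proposal is correct and follows essentially the same route as the paper: the paper likewise invokes the monadic adjunction of Theorem \ref{expli}(3), takes the associated bar resolution with $\mX_\n \simeq (\phi\circ\nu)^{\circ\n}(\F)$, and identifies the terms by induction using the explicit formula for $\phi$. Your additional remarks on why the tensors commute with the inner colimits simply make explicit what the paper leaves to the reader in its "by induction" step.
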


\begin{proof}

By Theorem \ref{expli} (3) the forgetful functor $\nu: \Enr\Fun_{\mV, \mW}(\mM,\mN) \to \Fun(\mM^\simeq, \mN)$ admits a left adjoint $\phi$ and is monadic.
This guarantees that $\F$ admits a monadic resolution, i.e. $\F$ is the colimit of a canonical simplicial object $\mX$ in $\Enr\Fun_{\mV, \mW}(\mM,\mN)$ such that for every $\n \geq 0$ there is a canonical equivalence in $\Enr\Fun_{\mV, \mW}(\mM,\mN) :$$$\mX_\n \simeq (\phi \circ \nu)^{\circ\n}(\F).$$ 
By Theorem \ref{expli} (3) and induction over $\n \geq 0$ there is a canonical equivalence in $\Enr\Fun_{\mV, \mW}(\mM,\mN): $ $$\mX_\n \simeq (\phi \circ \nu)^{\circ\n}(\F) \simeq $$$$ \colim_{\Z_1, ...., \Z_\n \in \mM^\simeq}\L\Mor_{\bar{\mM}}(\Z_{\n},-)\ot \L\Mor_{\bar{\mM}}(\Z_{\n-1},\Z_\n)\ot ... \ot \L\Mor_{\bar{\mM}}(\Z_2,\Z_3) \ot \L\Mor_{\bar{\mM}}(\Z_1,\Z_2) \ot \F(\Z_1).$$
	
\end{proof}

\begin{corollary}
Let $\mM^\circledast \to \mV^\ot \times \mW^\ot$ be a small weakly bienriched $\infty$-category and $\mN^\circledast \to \mV^\ot \times \mW^\ot$ a weakly bienriched $\infty$-category that admits left and right tensors.
For every $\mV,\mW$-enriched functors $\F,\G : \mM^\circledast \to \mN^\circledast$ there is a canonical equivalence of spaces $$\Enr\Fun_{\mV, \mW}(\mM,\mN)(\F,\G) \simeq $$$$ \underset{[\n]\in \Delta^\op}{\lim} \lim_{\Z_1, ...., \Z_\n \in \mM^\simeq} \mN(\L\Mor_{\bar{\mM}}(\Z_{\n-1},\Z_\n)\ot ... \ot \L\Mor_{\bar{\mM}}(\Z_1,\Z_2) \ot \F(\Z_1), \G(\Z_\n)).$$
	
\end{corollary}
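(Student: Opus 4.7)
The plan is to apply the monadic resolution of Proposition \ref{mondec} to $\F$ and then unpack the resulting expression using the enriched Yoneda lemma (Theorem \ref{expli}(1)). First, by Proposition \ref{mondec}, the enriched functor $\F$ is the colimit of a canonical simplicial object $\mX_\bullet$ in $\Enr\Fun_{\mV,\mW}(\mM,\mN)$ whose $\n$-simplices are
$$\mX_\n \simeq \colim_{\Z_1,\dots,\Z_\n \in \mM^\simeq} \L\Mor_{\bar{\mM}}(\Z_\n,-)\ot \L\Mor_{\bar{\mM}}(\Z_{\n-1},\Z_\n)\ot\cdots\ot \L\Mor_{\bar{\mM}}(\Z_1,\Z_2)\ot \F(\Z_1).$$

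Applying the contravariant functor $\Enr\Fun_{\mV,\mW}(\mM,\mN)(-,\G)$ converts this colimit into a limit, yielding
$$\Enr\Fun_{\mV,\mW}(\mM,\mN)(\F,\G) \simeq \underset{[\n]\in\Delta^\op}{\lim}\ \Enr\Fun_{\mV,\mW}(\mM,\mN)(\mX_\n,\G).$$
Next I would commute the mapping space past the inner colimit over $\mM^\simeq\times\cdots\times\mM^\simeq$, turning it into a limit
$$\Enr\Fun_{\mV,\mW}(\mM,\mN)(\mX_\n,\G) \simeq \lim_{\Z_1,\dots,\Z_\n \in \mM^\simeq} \Enr\Fun_{\mV,\mW}(\mM,\mN)(\L\Mor_{\bar{\mM}}(\Z_\n,-)\ot\N_{\Z_\bullet},\G),$$
where $\N_{\Z_\bullet}:=\L\Mor_{\bar{\mM}}(\Z_{\n-1},\Z_\n)\ot\cdots\ot\L\Mor_{\bar{\mM}}(\Z_1,\Z_2)\ot\F(\Z_1)\in\mN$.

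Then I would invoke the enriched Yoneda lemma, i.e.\ the equivalence in Theorem \ref{expli}(1), to identify each term
$$\Enr\Fun_{\mV,\mW}(\mM,\mN)(\L\Mor_{\bar{\mM}}(\Z_\n,-)\ot\N_{\Z_\bullet},\G) \simeq \mN(\N_{\Z_\bullet},\G(\Z_\n)),$$
which, after substituting the definition of $\N_{\Z_\bullet}$, is exactly the integrand in the target formula. Combining the three steps produces the desired equivalence. The forward-direction obstacle is minor and really only bookkeeping: one must ensure that the equivalences at each simplicial level assemble coherently into an equivalence of cosimplicial diagrams (so that passing to the totalization is legitimate), but this follows because every equivalence used, namely the monadic-resolution identification in Proposition \ref{mondec} and the adjunction equivalence of Theorem \ref{expli}(1), is natural in all the variables involved.
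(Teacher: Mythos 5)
Your argument is exactly the derivation the paper intends: the corollary is stated immediately after Proposition \ref{mondec} with no separate proof, and your three steps --- resolve $\F$ by the bar resolution of Proposition \ref{mondec}, turn the colimits into limits under the contravariant mapping-space functor $\Enr\Fun_{\mV,\mW}(\mM,\mN)(-,\G)$, and identify each resulting term via the enriched Yoneda lemma of Theorem \ref{expli} (1) --- are precisely the intended ones. The coherence issue you raise at the end is indeed only bookkeeping, since all the identifications are natural.

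There is, however, one hypothesis mismatch you should address. The corollary assumes only that $\mN^\circledast \to \mV^\ot \times \mW^\ot$ admits left and right tensors, whereas both Proposition \ref{mondec} and Theorem \ref{expli} (1) additionally require $\mN$ to admit small conical colimits; without that assumption the inner colimits defining $\mX_\n$ and the geometric realization over $\Delta^\op$ need not exist in $\Enr\Fun_{\mV,\mW}(\mM,\mN)$, so as written your proof only covers that special case. The standard fix is a reduction: embed $\mN^\circledast$ fully faithfully into a weakly bi-enriched $\infty$-category admitting left and right tensors and small conical colimits, in such a way that the embedding preserves the relevant tensors and multi-morphism objects (for instance via the pullback of $\mP\B\Env(\mN)^\circledast$ to $\mV^\ot\times\mW^\ot$, using Proposition \ref{morpre} and Lemma \ref{lekko}). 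Both sides of the asserted equivalence are then unchanged: the left-hand side because the induced functor on enriched functor $\infty$-categories is fully faithful by Lemma \ref{faith}, and the right-hand side because the embedding is fully faithful and compatible with the tensors appearing in the integrand. With that reduction inserted at the start, your proof is complete.
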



Corollary \ref{explicas} (1) implies the following:
\begin{corollary}\label{enrhom}
Let $\mM^\circledast \to \mV^\ot \times \mW^\ot$ be a small weakly bienriched $\infty$-category and $\mN^\circledast \to \mV^\ot \times \mW^\ot$ a weakly bienriched $\infty$-category that admits left tensors.
For every left $\mV$-enriched functors $\F,\G : \mM^\circledast \to \mN^\circledast$ there is a canonical equivalence in $\mW:$ $$\R\Mor_{\overline{\Enr\Fun_{\mV, \emptyset}(\mM,\mN)}}(\F,\G) \simeq $$$$ \underset{[\n]\in \Delta^\op}{\lim} \lim_{\Z_1, ...., \Z_\n \in \mM^\simeq}  \R\Mor_{\bar{\mN}}(\L\Mor_{\bar{\mM}}(\Z_{\n-1},\Z_\n) \ot  ...\ot \L\Mor_{\bar{\mM}}(\Z_1,\Z_2) \ot \F(\Z_1),\G(\Z_\n)).$$
	
\end{corollary}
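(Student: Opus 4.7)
The idea is to mimic the proof of Proposition \ref{mondec} in the present left-$\mV$-enriched setting, using Corollary \ref{explicas}(3) in place of Theorem \ref{expli}(3), and then convert the resulting colimit presentation of $\F$ into a limit presentation of $\R\Mor(\F,\G)$ via Corollary \ref{explicas}(1).

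First I would replace $\mN$ by its closed bi-tensored envelope: embed $\mN^\circledast \hookrightarrow \mP\B\Env(\mN)^\circledast$ (pulled back to $\mV^\ot \times \mW^\ot$). By Proposition \ref{morpre} and Lemma \ref{ato} this embedding preserves right morphism objects, so $\R\Mor_{\overline{\Enr\Fun_{\mV,\emptyset}(\mM,\mN)}}(\F,\G)$ is unchanged by this enlargement, while the enlarged target now admits all small conical colimits together with left and right tensors. In this enlarged setting Corollary \ref{explicas}(3) supplies a right $\mW$-enriched monadic adjunction $\phi \dashv \nu$ with $\nu$ the restriction along $\mM^\simeq\subset\mM$, and its associated bar construction yields a canonical simplicial resolution $\mX_\bullet \to \F$ in $\Enr\Fun_{\mV,\emptyset}(\mM,\mN)$ with $|\mX_\bullet|\simeq\F$. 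Iterating the explicit formula for $\phi\circ\nu$ from Corollary \ref{explicas}(3), exactly as in the proof of Proposition \ref{mondec}, gives
\[
\mX_\n \simeq \colim_{\Z_1,\ldots,\Z_\n\in\mM^\simeq} \L\Mor_{\bar{\mM}}(\Z_\n,-) \ot \N_{\Z_1,\ldots,\Z_\n},
\]
where $\N_{\Z_1,\ldots,\Z_\n}:=\L\Mor_{\bar{\mM}}(\Z_{\n-1},\Z_\n) \ot \cdots \ot \L\Mor_{\bar{\mM}}(\Z_1,\Z_2) \ot \F(\Z_1)\in\mN$.

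Next I would apply $\R\Mor_{\overline{\Enr\Fun_{\mV,\emptyset}(\mM,\mN)}}(-,\G)$ term-by-term. This functor is computed in the presentably bitensored $\infty$-category $\mP\B\Env(\Enr\Fun_{\mV,\emptyset}(\mM,\mN))$, so it sends colimits in its first argument to limits in $\mP\Env(\mW)$, giving
\[
\R\Mor_{\overline{\Enr\Fun_{\mV,\emptyset}(\mM,\mN)}}(\F,\G) \simeq \lim_{[\n]\in\Delta^\op} \lim_{\Z_1,\ldots,\Z_\n\in\mM^\simeq} \R\Mor_{\overline{\Enr\Fun_{\mV,\emptyset}(\mM,\mN)}}(\L\Mor_{\bar{\mM}}(\Z_\n,-)\ot \N_{\Z_1,\ldots,\Z_\n},\G).
\]
Each inner term is then identified via Corollary \ref{explicas}(1): for every $\W_1,\ldots,\W_\m\in\mW$ the induced map
\[
\Mul_{\Enr\Fun_{\mV,\emptyset}(\mM,\mN)}(\L\Mor_{\bar{\mM}}(\Z_\n,-)\ot \N_{\Z_1,\ldots,\Z_\n},\W_1,\ldots,\W_\m;\G) \to \Mul_\mN(\N_{\Z_1,\ldots,\Z_\n},\W_1,\ldots,\W_\m;\G(\Z_\n))
\]
is an equivalence, which at the level of representing objects translates to
\[
\R\Mor_{\overline{\Enr\Fun_{\mV,\emptyset}(\mM,\mN)}}(\L\Mor_{\bar{\mM}}(\Z_\n,-)\ot \N_{\Z_1,\ldots,\Z_\n},\G) \simeq \R\Mor_{\bar{\mN}}(\N_{\Z_1,\ldots,\Z_\n},\G(\Z_\n)).
\]
Substituting this into the previous display yields the stated formula.

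The main obstacle I anticipate is arranging the conical cocompleteness needed to apply Corollary \ref{explicas}(3) when the given $\mN$ only admits left tensors; this is precisely what the passage through $\mP\B\Env(\mN)$ in the first step is designed to handle, and Proposition \ref{morpre} together with Lemma \ref{ato} ensures this enlargement does not distort the right morphism objects that enter the statement. A secondary technical point is checking that the right morphism object functor really converts the two layers of colimit (the simplicial realisation and the colimit over $(\mM^\simeq)^{\times\n}$) into limits; this follows once the ambient $\infty$-category is presentably bitensored, which is again arranged by the envelope step.
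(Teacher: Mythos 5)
Your proposal is correct and follows essentially the same route the paper intends: the paper derives this corollary by combining the monadic/bar resolution of Proposition \ref{mondec} (whose construction is exactly your iteration of Corollary \ref{explicas}(3)) with the term-wise identification from Corollary \ref{explicas}(1), the conversion of conical colimits to limits under $\R\Mor(-,\G)$ being automatic in the enveloping presentably bitensored setting. Your explicit passage through $\mP\B\Env(\mN)^\circledast$ to supply the missing conical cocompleteness, justified by Proposition \ref{morpre} and Lemma \ref{ato}, is precisely the detail the paper's bar notation $\bar{\mN}$, $\overline{\Enr\Fun_{\mV,\emptyset}(\mM,\mN)}$ leaves implicit.
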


\subsection{Enriched Yoneda-embedding and opposite enrichment}

Next we use the enriched Yoneda-lemma (Corollary \ref{explicas}) to construct an opposite enriched $\infty$-category and an enriched Yoneda-embedding.


\begin{definition}\label{notori}
Let $\mM^\circledast \to \mV^\ot $ be a left enriched $\infty$-category.
The opposite right $\mV$-enriched $\infty$-category of $\mM^\circledast \to \mV^\ot$
is the full weakly right $\mV$-enriched subcategory 
$$(\mM^\op)^\circledast \subset \Enr\Fun_{\mV, \emptyset}(\mM,\mV)^\circledast $$ 
spanned by all left $\mV$-enriched functors $\Mor_\mM(\X,-):\mM^\circledast \to \mV^\circledast$ for $\X \in \mM.$

\end{definition}

The following remark, proposition and lemma justify the terminology of Definition \ref{notori}:

\begin{remark}
By construction \ref{Enros} and Corollary \ref{MorFu} for every weakly left enriched $\infty$-category $\mM^\circledast \to \mV^\ot$ there is an embedding
$$\xi: \mM^\op \to \Enr\Fun_{\mV, \emptyset}(\mM,\mP\Env(\mV))^\circledast$$
sending $\X\in \mM$ to $\L\Mor_{\bar{\mM}}(\X,-).$
If $\mM^\circledast \to \mV^\ot$ is a left enriched $\infty$-category, by Remark \ref{EEnr} the embedding $\xi$ induces an embedding
$\mM^\op \to \Enr\Fun_{\mV, \emptyset}(\mM,\mV)^\circledast \subset \Enr\Fun_{\mV, \emptyset}(\mM,\mP\Env(\mV))^\circledast$
denoted by the same name, which sends $\X\in \mM$ to $\L\Mor_{\bar{\mM}}(\X,-).$
The embedding $\xi$ 
induces an equivalence $\mM^\op \simeq (\mM^\op)^\circledast \times_{\mV^\ot} \emptyset^\ot $.
	
\end{remark}
\begin{proposition}\label{oppoen}

Let $\mM^\circledast \to \mV^\ot $ be a left enriched $\infty$-category.
The weakly right enriched $\infty$-category $(\mM^\op)^\circledast \to \mV^\ot$ exhibits $\mM^\op$ as right enriched in $\mV$, where for any $\X,\Y \in \mM$ there is a canonical equivalence $$\R\Mor_{\mM^\op}(\X,\Y) \simeq \L\Mor_\mM(\Y,\X).$$	
\end{proposition}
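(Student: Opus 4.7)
The plan is to identify $\L\Mor_\mM(Y,X) \in \mV$ as the right morphism object of $X$ and $Y$ in $(\mM^\op)^\circledast$, for every $X, Y \in \mM$. By Definition \ref{notori}, the inclusion $(\mM^\op)^\circledast \subset \Enr\Fun_{\mV,\emptyset}(\mM,\mV)^\circledast$ is a full weakly right $\mV$-enriched subcategory, so the right multi-morphism spaces in $(\mM^\op)^\circledast$ coincide with those in the ambient functor $\infty$-category. Thus it suffices to produce, for every $\W_1,\ldots,\W_m \in \mV$ ($m \geq 0$), a canonical equivalence
\begin{equation*}
\Mul_{\Enr\Fun_{\mV,\emptyset}(\mM,\mV)}(\L\Mor_\mM(X,-),\W_1,\ldots,\W_m;\L\Mor_\mM(Y,-)) \simeq \Mul_\mV(\W_1,\ldots,\W_m;\L\Mor_\mM(Y,X)).
\end{equation*}

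The principal input is Corollary \ref{explicas}~(1), the enriched Yoneda lemma with extra right legs. Assuming first that $\mV^\ot \to \Ass$ is monoidal, the bi-enriched $\infty$-category $\mV^\circledast \to \mV^\ot \times \mV^\ot$ admits left tensors via the monoidal structure, so Corollary \ref{explicas}~(1) applies with $\mN = \mV^\circledast$, $\rH := \L\Mor_\mM(Y,-)$, and $\N := \tu_\mV$. Since $\L\Mor_\mM(X,-) \ot \tu_\mV \simeq \L\Mor_\mM(X,-)$ pointwise, the corollary yields
\begin{equation*}
\Mul_{\Enr\Fun_{\mV,\emptyset}(\mM,\mV)}(\L\Mor_\mM(X,-),\W_1,\ldots,\W_m;\L\Mor_\mM(Y,-)) \simeq \Mul_\mV(\tu_\mV,\W_1,\ldots,\W_m;\L\Mor_\mM(Y,X)).
\end{equation*}
Because $\mV^\circledast \to \mV^\ot \times \mV^\ot$ is automatically bi-pseudo-enriched, the active morphism $\tu_\mV,\W_1,\ldots,\W_m \to \W_1 \ot \cdots \ot \W_m$ together with pseudo-enrichment collapses the unit and identifies the right-hand side with $\Mul_\mV(\W_1,\ldots,\W_m; \L\Mor_\mM(Y,X))$.

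For the case in which $\mV^\ot \to \Ass$ is only an $\infty$-operad, I would reduce to the monoidal case via Corollary \ref{coronn}~(1): the left enriched $\infty$-category $\mM^\circledast \to \mV^\ot$ extends uniquely along the embedding $\mV^\ot \subset \mP\Env(\mV)^\ot$ to a left $\mP\Env(\mV)$-enriched $\infty$-category $\widetilde{\mM}^\circledast \to \mP\Env(\mV)^\ot$, and since $\mM$ is already left enriched in $\mV$ the left morphism objects agree: $\L\Mor_{\widetilde{\mM}}(Y,X) = \L\Mor_\mM(Y,X) \in \mV \subset \mP\Env(\mV)$. By Remark \ref{leman} the pullback along $\mV^\ot \subset \mP\Env(\mV)^\ot$ of $\Enr\Fun_{\mP\Env(\mV),\emptyset}(\widetilde{\mM},\mP\Env(\mV))^\circledast$ recovers $\Enr\Fun_{\mV,\emptyset}(\mM,\mV)^\circledast$, so the full weakly right enriched subcategories $(\mM^\op)^\circledast$ and $(\widetilde{\mM}^\op)^\circledast$ correspond under pullback. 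The monoidal argument applied to $\widetilde{\mM}$ then produces the right morphism object, which already lies in $\mV \subset \mP\Env(\mV)$.

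The main subtlety is the bookkeeping in this reduction: verifying that $(\mM^\op)^\circledast$, defined inside $\Enr\Fun_{\mV,\emptyset}(\mM,\mV)^\circledast$, coincides with the pullback of $(\widetilde{\mM}^\op)^\circledast$, defined inside $\Enr\Fun_{\mP\Env(\mV),\emptyset}(\widetilde{\mM},\mP\Env(\mV))^\circledast$. This follows from the universal property of the extension $\mM \leadsto \widetilde{\mM}$ in Corollary \ref{coronn}~(1) together with Remark \ref{leman}, but checking that the generating objects $\L\Mor_\mM(X,-)$ match on both sides requires some care. Once this is settled, the Yoneda computation above is essentially routine.
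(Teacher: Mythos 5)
Your proof is correct and follows essentially the same route as the paper, which also deduces the statement directly from the enriched Yoneda lemma (Corollary \ref{explicas}(1)) applied to the functors $\L\Mor_\mM(\X,-)$. The only difference is that you spell out the reduction to the case where the target $\mV^\circledast \to \mV^\ot \times \mV^\ot$ admits left tensors (passing through $\mP\Env(\mV)$), a hypothesis of Corollary \ref{explicas} that the paper's two-line proof leaves implicit.
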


\begin{proof}
By Corollary \ref{explicas} for any $\X,\Y \in \mM$ and $\V_1,...,\V_\n \in \mV$ for $\n \geq 0$ the canonical map
$$\Mul_{\mM^\op}(\V_1,...,\V_\n, \L\Mor_\mM(\X,-);\L\Mor_\mM(\Y,-)) \to  \Mul_{\mV}(\V_1,...,\V_\n, \L\Mor_\mM(\Y,\X)) $$ is an equivalence.
So the result follows.
\end{proof}

\begin{notation}Let $\mV^\ot \to \Ass, \mW^\ot \to \Ass$ be small $\infty$-operads.
Via the canonical equivalences
$$ {_\mV\omega\B\Enr}_{\mW} \simeq {_{\mP(\Env(\mW)^\rev\times \Env(\mV))}\L\Enr_\emptyset}, {_\mW\omega\B\Enr}_{\mV} \simeq {_\emptyset\R\Enr}_{\mP(\Env(\mW)^\rev\times \Env(\mV))}$$
of Corollary \ref{coronn} we define for every weakly bienriched $\infty$-category  $\mM^\circledast \to \mV^\ot \times \mW^\ot $ an opposite weakly bienriched $\infty$-category $(\mM^\op)^\circledast \to \mW^\ot \times \mV^\ot $
extending Definition \ref{notori}.

\end{notation}

\begin{lemma}\label{aos}
Let $\mM^\circledast \to \mV^\ot, \mN^\ot \to \mW^\ot $ be left enriched $\infty$-categories.
The right $\mV\times \mW$-enriched functor $$\rho: \Enr\Fun_{\mV, \emptyset}(\mM,\mV)^\circledast \times_\Ass \Enr\Fun_{\mV, \emptyset}(\mN,\mV)^\circledast \to \Enr\Fun_{\mV \times \mW, \emptyset}(\mM \times \mN,\mV\times \mW)^\circledast$$
corresponding to the $\mV\times \mW,\mV\times \mW$-enriched functor $$(\mM^\circledast \times_\Ass \mN^\circledast) \times (\Enr\Fun_{\mV, \emptyset}(\mM,\mV)^\circledast \times_\Ass \Enr\Fun_{\mV, \emptyset}(\mN,\mV)^\circledast) \simeq $$$$ (\mM^\circledast \times \Enr\Fun_{\mV, \emptyset}(\mM,\mV)^\circledast) \times_{\Ass \times \Ass} (\mN^\circledast \times \Enr\Fun_{\mV, \emptyset}(\mN,\mV)^\circledast) \to \mV^\circledast \times_{\Ass \times \Ass} \mW^\circledast,$$
which is the product of the $\mV,\mV$-enriched and $\mW,\mW$-enriched evaluation functors,
restricts to a right $\mV\times \mW$-enriched equivalence $$ \rho': (\mM^\op)^\circledast \times_\Ass (\mN^\op)^\circledast \simeq ((\mM \times \mN)^\op)^\circledast.$$
\end{lemma}
\begin{proof}
	
The enriched functor $\rho$ sends $\L\Mor_\mM(\X,-), \L\Mor_\mN(\Y,-)$ for $\X \in \mM, \Y \in \mN$ to $$\L\Mor_\mM(\X,-) \times \L\Mor_\mN(\Y,-) \simeq \L\Mor_{\mM \times \mN}((\X,\Y),-)$$ and so restricts to $\rho'$. By Corollary \ref{yotul} the enriched functor $\rho'$ induces on right morphism objects the following equivalence for $\X' \in \mM, \Y' \in \mN$:
$$ \L\Mor_\mM(\X',\X)\times \L\Mor_\mN(\Y',\Y) \simeq $$$$\R\Mor_{\mM^\op}(\L\Mor_\mM(\X,-),\L\Mor_\mM(\X',-)) \times \R\Mor_{\mN^\op}(\L\Mor_\mN(\Y,-),\L\Mor_\mN(\Y',-))\to $$$$ \R\Mor_{(\mM \times \mN)^\op}(\L\Mor_{\mM \times \mN}((\X,\Y),-),\L\Mor_{\mM \times \mN}((\X',\Y'),-)) \simeq \L\Mor_{\mM \times \mN}((\X',\Y'),(\X,\Y)).$$
\end{proof}

\begin{proposition}\label{dua} Let $\mM^\circledast \to \mV^\ot $ be a left enriched $\infty$-category, $\mO^\circledast \to \mW^\ot$ a right enriched $\infty$-category  and $\mN^\circledast \to \mV \times \mW^\ot $ a bienriched $\infty$-category.

\begin{enumerate}
\item There is a $\mW,\mV$-enriched equivalence
$$ ((\mM \times \mO)^\op)^\circledast \simeq (\mM^\op)^\circledast \times (\mO^\op)^\circledast.$$

\item There is a right $\mW$-enriched equivalence $$ \Enr\Fun_{\mV, \emptyset}(\mM,\mN) \simeq \Enr\Fun_{\emptyset, \mV}(\mM^\op,\mN^\op)^\op.$$

\end{enumerate}

\end{proposition}

\begin{proof}(1): By Lemma \ref{aos} there is a left $\mV,\mW^\rev$-enriched equivalence
$$ (\mM \times \mO^\rev)^\op)^\circledast \simeq (\mM^\op)^\circledast \times_\Ass ((\mO^\op)^\rev)^\circledast.$$
Under the equivalence ${_\mV\B\P\Enr_\mW} \simeq {_{\mV\times\mW^\rev}\L\P\Enr}_\emptyset$
the latter equivalence corresponds to the $\mV,\mW$-enriched equivalence
$((\mM \times \mO)^\op)^\circledast \simeq (\mM^\op)^\circledast \times (\mO^\op)^\circledast.$

(2): The claimed equivalence is represented by the following chain of natural equivalences:
$${_\emptyset\omega\B\Enr_\mW}(\mO, \Enr\Fun_{\emptyset, \mV}(\mM^\op,\mN^\op)^\op) \simeq {_\mW\omega\B\Enr_\emptyset}(\mO^\op, \Enr\Fun_{\emptyset, \mV}(\mM^\op,\mN^\op))$$$$
\simeq {_\mW\omega\B\Enr_\mV}(\mO^\op \times \mM^\op,\mN^\op) 
\simeq {_\mW\omega\B\Enr_\mV}((\mM \times \mO)^\op,\mN^\op) \simeq $$$${_\mV\omega\B\Enr_\mW}(\mM \times \mO,\mN) \simeq {_\emptyset\omega\B\Enr_\mW}(\mO, \Enr\Fun_{\mV, \emptyset}(\mM,\mN)).$$	
	
\end{proof}



\begin{notation}\label{nnoo}Let $\mM^\circledast \to \mV^\ot $ be a left enriched $\infty$-category.
\begin{enumerate}
\item The $\mV,\mV$-enriched left morphism object functor $$\L\Mor_\mM: \mM^\circledast \times (\mM^\op)^\circledast \to \mV^\circledast$$ is the restricted $\mV,\mV$-enriched evaluation functor $$\mM^\circledast \times (\mM^\op)^\circledast \subset \mM^\circledast \times  \Enr\Fun_{\mV, \emptyset}(\mM,\mV)^\circledast \to \mV^\circledast.$$
	
\item The left $\mV$-enriched Yoneda-embedding $$\rho_\mM: \mM^\circledast \to \Enr\Fun_{\emptyset, \mV}(\mM^\op,\mV)^\circledast$$
is the left $\mV$-enriched functor corresponding to the left morphism object functor $\L\Mor_\mM: \mM^\circledast \times (\mM^\op)^\circledast \to \mV^\circledast $
under the equivalence of Theorem \ref{vvvl}.
\end{enumerate}
\end{notation}

\begin{remark}
Notation \ref{nnoo} also applies to any right $\mV$-enriched $\infty$-category
$\mM^\circledast \to \mV^\ot$ (viewed as a left $\mV^\rev$-enriched $\infty$-category).
In this case we obtain a $\mV,\mV$-enriched right morphism object functor $$\R\Mor_\mM: (\mM^\op)^\circledast \times \mM^\circledast \to \mV^\circledast,$$ which is the restricted $\mV,\mV$-enriched evaluation functor $$(\mM^\op)^\circledast  \times \mM^\circledast \subset \Enr\Fun_{\emptyset,\mV}(\mM,\mV)^\circledast \times \mM^\circledast \to \mV^\circledast,$$
and a corresponding right $\mV$-enriched Yoneda-embedding $\rho_\mM: \mM^\circledast \to \Enr\Fun_{\mV,\emptyset}(\mM^\op,\mV)^\circledast.$
\end{remark}

\begin{remark}\label{Reyy}
The $\mV,\mV$-enriched left morphism object functor $\L\Mor_\mM: \mM^\circledast \times (\mM^\op)^\circledast \to \mV^\circledast$
induces on underlying weakly left $\mV$-enriched $\infty$-categories the left 
$\mV$-enriched functor $\Gamma_\mM: \mM^\circledast \times \mM^\op  \to \mV^\circledast$
of Construction \ref{Enros} corresponding to $\xi.$ 
\end{remark}


\begin{lemma}\label{exhibl}
Let $\mV^\ot \to \Ass$ be a monoidal $\infty$-category compatible with small colimits, $\mM^\circledast \to \mV^\ot$ a small left enriched $\infty$-category
and $\Z\in \mM.$ By Theorem \ref{expli} the canonical morphism $$\tu_\mV \to \rho(\Z)(\xi(\Z)) \simeq \L\Mor_\mM(\Z,\Z)$$ in $\mV$ corresponds to a morphism $$\kappa: \R\Mor_{\mM^\op}(\xi(\Z),-) \to \rho(\Z)$$ 
in $\Enr\Fun_{\emptyset, \mV}(\mM^\op,\mV)$.
The morphism $\kappa$ is an equivalence.
\end{lemma}

\begin{proof}
The right $\mV$-enriched functor $\rho(\Z): (\mM^\op)^\circledast \to \mV^\circledast$ induces a morphism
$$\R\Mor_{\mM^\op}(\xi(\Z),-) \to \R\Mor_{\mP\Env(\mV)}(\L\Mor_\mM(\Z,\Z),-) \circ \rho(\Z) \to \rho(\Z)$$ in $\Enr\Fun_{\emptyset, \mV}(\mM^\op,\mV)$ whose component at $\rH \in \mM^\op$ is the canonical morphism $$\R\Mor_{\mM^\op}(\xi(\Z),\rH) \to \R\Mor_{\mP\Env(\mV)}(\L\Mor_\mM(\Z,\Z),\rH(\Z)) \to \rH(\Z)$$ in $\mV$, which is an equivalence by Corollary \ref{explicas}.

	
\end{proof}

\begin{proposition}\label{yofaith}
Let $\mM^\circledast \to \mV^\ot $ be a small left enriched $\infty$-category. The left $\mV$-enriched Yoneda-embedding $\rho: \mM^\circledast \to \Enr\Fun_{\emptyset, \mV}(\mM^\op,\mV)^\circledast $ induces an equivalence $\mM^\circledast \to ((\mM^\op)^\op)^\circledast.$	
\end{proposition}

\begin{proof}
By Lemma \ref{exhibl} the left $\mV$-enriched Yoneda-embedding $\rho_\mM: \mM^\circledast \to\Enr\Fun_{\mV,\emptyset}(\mM^\op,\mV)^\circledast$ induces a left $\mV$-enriched functor $\mM^\circledast \to ((\mM^\op)^\op)^\circledast$ whose underlying functor $\mM \to (\mM^\op)^\op$ is essentially surjective.	
By Remark \ref{Func} the left $\mV$-enriched functor $\rho: \mM^\circledast \to (\mM^\op)^\op)^\circledast$ induces a morphism
$$\kappa: \L\Mor_\mM(\Z ,-) \to \L\Mor_{(\mM^\op)^\op}(\rho(\Z ),-)\circ \rho$$
in $\Enr\Fun_{\mV,\emptyset}(\mM,\mV)$.	
By Remark \ref{Func} the left $\mV$-enriched functor $\ev_{\xi(\Z)}: \Enr\Fun_{\emptyset, \mV}(\mM^\op,\mV)^\circledast \to \mV^\circledast$ evaluating at $\xi(\Z) \in \mM^\op$
induces the following morphism $\lambda$ in $\Enr\Fun_{\mV,\emptyset}(\mM,\mV) $:
$$\L\Mor_{(\mM^\op)^\op}(\rho(\Z ),-)\circ \rho \to \L\Mor_{\mP\Env(\mV)}(\L\Mor_\mM(\Z,\Z),-)\circ\ev_{\xi(\Z)} \circ \rho \to $$$$ \ev_{\xi(\Z)} \circ \rho \simeq \xi(\Z)=\L\Mor_\mM(\Z ,-),$$
which is an equivalence by Corollary \ref{explicas}.
By Corollary \ref{explicas} the composition $\lambda \circ \kappa$
is the identity because $\lambda_\Z  \circ \kappa_\Z $ sends the identity of $\Z $ to the identity of $\Z .$ So the result follows.
\end{proof}

\begin{remark}
Let $\mM^\circledast \to \mV^\ot $ be a small left enriched $\infty$-category. 
By Proposition \ref{yofaith} the left $\mV$-enriched Yoneda-embedding $\rho_\mM: \mM^\circledast \to \Enr\Fun_{\emptyset, \mV}(\mM^\op,\mV)^\circledast $ induces an equivalence $\rho'_\mM: \mM^\circledast \to ((\mM^\op)^\op)^\circledast.$	
By definition $\rho_\mM: \mM^\circledast \to \Enr\Fun_{\emptyset, \mV}(\mM^\op,\mV)^\circledast$ corresponds to the
$\mV,\mV$-enriched left morphism object functor
$\L\Mor_\mM: \mM^\circledast \times (\mM^\op)^\circledast \to \mV^\circledast.$
Thus $\L\Mor_\mM: \mM^\circledast \times (\mM^\op)^\circledast \to \mV^\circledast$
factors as $\mV,\mV$-enriched functors $$\mM^\circledast \times (\mM^\op)^\circledast \xrightarrow{\rho_\mM\times (\mM^\op)^\circledast}\Enr\Fun_{\emptyset, \mV}(\mM^\op,\mV)^\circledast \times (\mM^\op)^\circledast \to \mV^\circledast,$$
which factors as 
$$\mM^\circledast \times (\mM^\op)^\circledast \xrightarrow{\rho'_\mM\times (\mM^\op)^\circledast} (\mM^\op)^\op)^\circledast \times (\mM^\op)^\circledast \xrightarrow{\R\Mor_{\mM^\op}} \mV^\circledast.$$
In particular, $\rho_{\mM^\op}: (\mM^\op)^\circledast \to \Enr\Fun_{\mV,\emptyset}((\mM^\op)^\op,\mV)^\circledast \simeq \Enr\Fun_{\mV,\emptyset}(\mM,\mV)^\circledast,$ where the latter equivalence precomposes with $\rho'$, is the tautological embedding $(\mM^\op)^\circledast \subset \Enr\Fun_{\mV,\emptyset}(\mM,\mV)^\circledast.$


\end{remark}

\begin{lemma}\label{siewal}
Let $\mV^\ot \to \Ass$ be a monoidal $\infty$-category compatible with small colimits, $\mM^\circledast \to \mV^\ot, \mN^\circledast \to \mV^\ot $ small left enriched $\infty$-categories and $\F:\mM^\circledast \to \mN^\circledast$ a left $\mV$-enriched functor.
For every $\X \in \mM$ the morphism $$\kappa: \F_!(\L\Mor_\mM(\X,-)) \to \L\Mor_\mN(\F(\X),-)$$ in $\Enr\Fun_{\mV,\emptyset}(\mN,\mV)$
corresponding to the 
morphism $\tu_\mV \to \L\Mor_\mN(\F(\X),\F(\X))$ given by the identity is an equivalence.
 
\end{lemma}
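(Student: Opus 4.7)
The plan is to show that both $\F_!\L\Mor_\mM(\X,-)$ and $\L\Mor_\mN(\F(\X),-)$ corepresent the same functor on $\Enr\Fun_{\mV,\emptyset}(\mN,\mV)$, and that $\kappa$ corresponds under these identifications to the identity, whence $\kappa$ is an equivalence by an enriched Yoneda argument. Note first that $\F_!$ exists by Proposition \ref{siewa} (2) applied with $\mN=\mV^\circledast$, since $\mV$ admits left and right tensors and small conical colimits.

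For any test object $\rH \in \Enr\Fun_{\mV,\emptyset}(\mN,\mV)$, I would combine the defining adjunction $\F_! \dashv \F^*$ with Theorem \ref{expli} (1), applied to $\mM$ with $\mW=\emptyset$, $\mN=\mV$, and $\N=\tu_\mV$, to produce a chain of natural equivalences
$$\Enr\Fun_{\mV,\emptyset}(\mN,\mV)(\F_!\L\Mor_\mM(\X,-),\rH) \simeq \Enr\Fun_{\mV,\emptyset}(\mM,\mV)(\L\Mor_\mM(\X,-),\F^*\rH) \simeq \mV(\tu_\mV, \rH(\F(\X))).$$
Independently, Theorem \ref{expli} (1) applied directly to $\mN$ yields
$$\Enr\Fun_{\mV,\emptyset}(\mN,\mV)(\L\Mor_\mN(\F(\X),-), \rH) \simeq \mV(\tu_\mV, \rH(\F(\X))).$$
Both identifications are natural in $\rH$, so precomposition with $\kappa$ induces a natural self-map $\theta_\rH$ of the corepresented presheaf $\rH \mapsto \mV(\tu_\mV,\rH(\F(\X)))$.

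By the Yoneda principle, $\theta$ is determined by its value at the universal pair: $\rH = \L\Mor_\mN(\F(\X),-)$ and the universal element $\id \colon \tu_\mV \to \L\Mor_\mN(\F(\X),\F(\X))$. Unwinding the two identifications, this amounts to checking that the morphism $\L\Mor_\mM(\X,-) \to \F^*\L\Mor_\mN(\F(\X),-) = \L\Mor_\mN(\F(\X),\F(-))$ adjoint to $\kappa$ corresponds, under Theorem \ref{expli} (1), to the identity $\tu_\mV \to \L\Mor_\mN(\F(\X),\F(\X))$; but this is precisely the defining property of $\kappa$. Consequently $\theta$ is the identity, so $\kappa$ induces an equivalence on all mapping spaces in $\Enr\Fun_{\mV,\emptyset}(\mN,\mV)$, and hence $\kappa$ is itself an equivalence.

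The main book-keeping obstacle is making sure the two Yoneda identifications are spelled out in mutually compatible form, so that the universal-element check really reduces to the defining property of $\kappa$. Once the adjunction isomorphism and the Yoneda identification of Theorem \ref{expli} (1) are written down with matching normalizations (which is essentially tautological), the conclusion is immediate.
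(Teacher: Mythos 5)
Your proof is correct and follows essentially the same route as the paper: the paper likewise precomposes with $\kappa$, identifies both mapping spaces $\Enr\Fun_{\mV,\emptyset}(\mN,\mV)(-,\rH)$ with (the underlying space of) $\rH(\F(\X))$ via the adjunction $\F_!\dashv\F^*$ and the enriched Yoneda lemma (Corollary \ref{explicas}/Theorem \ref{expli}(1)), and observes that the induced self-map is the identity because $\kappa$ is by definition adjoint to the unit $\tu_\mV \to \L\Mor_\mN(\F(\X),\F(\X))$. Your version is, if anything, slightly more explicit about the universal-element bookkeeping.
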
 

\begin{proof}
The morphism $\kappa$ induces for every $\rH \in \Enr\Fun_{\mV,\emptyset}(\mN,\mV)$
the identity
$$ \rH(\F(\X)) \simeq \Enr\Fun_{\mV,\emptyset}(\mN,\mV)(\Mor_\mN(\F(\X),-),\rH) \to \Enr\Fun_{\mV,\emptyset}(\mN,\mV)(\F_!(\Mor_\mM(\X,-)),\rH)$$$$ \simeq \Enr\Fun_{\mV,\emptyset}(\mM,\mV)(\Mor_\mM(\X,-),\rH \circ \F) \simeq \rH(\F(\X)),$$
where we use Corollary \ref{explicas}.
\end{proof}

\begin{lemma}\label{siewalt}
Let $\mV^\ot \to \Ass$ be a presentably monoidal $\infty$-category, $\mM^\circledast \to \mV^\ot, \mN^\circledast \to \mV^\ot $ small left enriched $\infty$-categories and $\F:\mM^\circledast \to \mN^\circledast$ a left $\mV$-enriched functor.
There is a commutative square:
$$\begin{xy}
\xymatrix{\mM^\op  \ar[d]^{\xi_\mM}\ar[rr]^{\F^\op}
&& \mN^\op \ar[d]^{\xi_\mN} 
\\
\Enr\Fun_{\mV,\emptyset}(\mM,\mV) \ar[rr]^{\F_!} && \Enr\Fun_{\mV,\emptyset}(\mN,\mV).}
\end{xy}$$
	
\end{lemma}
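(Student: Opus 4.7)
The plan is to exhibit the square commuting by producing a canonical natural transformation $\beta \colon \F_! \circ \xi_\mM \to \xi_\mN \circ \F^\op$ of functors $\mM^\op \to \Enr\Fun_{\mV,\emptyset}(\mN,\mV)$ and proving that $\beta$ is componentwise an equivalence via the enriched Yoneda-lemma. Write $\F^* := \Enr\Fun_{\mV,\emptyset}(\F,\mV) \colon \Enr\Fun_{\mV,\emptyset}(\mN,\mV) \to \Enr\Fun_{\mV,\emptyset}(\mM,\mV)$; Proposition \ref{siewa}, which is applicable because $\mV$ is presentably monoidal, gives the adjunction $\F_! \dashv \F^*$. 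Constructing $\beta$ is equivalent under this adjunction to constructing a natural transformation $\alpha \colon \xi_\mM \to \F^* \circ \xi_\mN \circ \F^\op$ and passing to the mate.

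To construct $\alpha$, I would use the graph picture of Construction \ref{Enros}: by Corollary \ref{MorFu}, the embedding $\xi_\mM$ is the curry of the graph $\Gamma_\mM \colon \mM^\circledast \times \mM^\op \to \mP\Env(\mV)^\circledast$, and because $\mM$ is left enriched, $\Gamma_\mM$ factors through $\mV^\circledast$ and so corresponds to $\L\Mor_\mM \colon \mM^\op \times \mM \to \mV$. Constructing $\alpha$ therefore reduces to constructing a natural transformation $\L\Mor_\mM \to \L\Mor_\mN \circ (\F^\op \times \F)$ of functors $\mM^\op \times \mM \to \mV$, whose component at each pair $(\X,\Y)$ is the comparison morphism of Remark \ref{Func}. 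The coherence as a 2-variable natural transformation is obtained from naturality of Construction \ref{Enros} applied to the composite $\mM^\circledast \xrightarrow{\F} \mN^\circledast \hookrightarrow \mP\B\Env(\mN)^\circledast$: the graph of this composite is $\L\Mor_\mN \circ (\F^\op \times \F)$, and the canonical comparison to $\Gamma_\mM$ supplies the desired natural transformation. Currying and taking mates then yields $\alpha$ and $\beta$.

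To see $\beta$ is componentwise an equivalence, fix $\X \in \mM$ and $\rH \in \Enr\Fun_{\mV,\emptyset}(\mN,\mV)$. Theorem \ref{expli} (1), applied with $\mW = \emptyset$, target $\mV$, and $\N = \tu_\mV$, yields natural equivalences
$$\Enr\Fun_{\mV,\emptyset}(\mN,\mV)(\xi_\mN(\F(\X)), \rH) \simeq \mV(\tu_\mV, \rH(\F(\X))) \simeq \Enr\Fun_{\mV,\emptyset}(\mM,\mV)(\xi_\mM(\X), \F^*\rH).$$
Composing with the adjointness equivalence $\Enr\Fun_{\mV,\emptyset}(\mN,\mV)(\F_!\xi_\mM(\X), \rH) \simeq \Enr\Fun_{\mV,\emptyset}(\mM,\mV)(\xi_\mM(\X), \F^*\rH)$, one obtains an identification that, after tracing through the construction of $\alpha$, is precomposition with $\beta_\X$. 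The $\infty$-categorical Yoneda-lemma in $\Enr\Fun_{\mV,\emptyset}(\mN,\mV)$ then forces $\beta_\X$ to be an equivalence.

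I expect the main obstacle to lie in the coherence of the construction of $\alpha$: Remark \ref{Func} supplies the individual comparison morphisms, and upgrading these to a natural transformation of two-variable functors must be done by invoking the naturality of the graph construction in enriched functors, which was not spelled out explicitly in Construction \ref{Enros}. Once this naturality is in hand, the rest of the argument is a direct Yoneda computation. As an alternative, one could combine the explicit left-Kan-extension colimit formula of Proposition \ref{laan} with a coYoneda-style manipulation to compute $\F_!(\L\Mor_\mM(\X,-))$ directly, but this reduces to the same Yoneda calculation and still requires the same naturality bookkeeping in $\X$, so I prefer the graph-picture approach.
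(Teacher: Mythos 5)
Your overall strategy coincides with the paper's: construct $\beta\colon \F_!\circ\xi_\mM\to\xi_\mN\circ\F^\op$ as the mate of a unit-side transformation $\alpha\colon\xi_\mM\to\F^*\circ\xi_\mN\circ\F^\op$ under the adjunction $\F_!\dashv\F^*$ supplied by Proposition \ref{siewa}, and then check componentwise that $\beta_\X$ is an equivalence by a Yoneda computation. Your verification step is essentially identical to the paper's (it is isolated there as Lemma \ref{siewal}, proved exactly by the chain of equivalences you write down using Corollary \ref{explicas} and the adjunction).

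The gap is in the construction of $\alpha$, and it sits exactly where you flag it. You appeal to "naturality of Construction \ref{Enros} in enriched functors," but no such functoriality is established anywhere: $\Gamma_\mM$ is defined only for a fixed weakly bi-enriched $\infty$-category, via a chain of equivalences involving $\mP\B\Env(\mM)$, and the phrase "the graph of the composite $\mM^\circledast\xrightarrow{\F}\mN^\circledast\hookrightarrow\mP\B\Env(\mN)^\circledast$" has no meaning in the paper's setup. Remark \ref{Func} only gives you the individual morphisms $\L\Mor_\mM(\X,-)\to\L\Mor_\mN(\F(\X),-)\circ\F$ one object at a time, and assembling these coherently into a transformation of functors out of $\mM^\op$ is precisely the content that needs proof. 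The paper fills this by working at the level of presheaf envelopes: the enriched adjunction $\F_!\colon\mP\L\Env(\mM)^\circledast\rightleftarrows\mP\L\Env(\mN)^\circledast\colon\F^*$ induces, by Remark \ref{indadj}, an adjunction $(\F^*)^*\dashv(\F_!)^*$ between the $\infty$-categories $\Enr\Fun^\R_{\mP\Env(\mV),\emptyset}(\mP\L\Env(\mM),\mP\Env(\mV))$ and $\Enr\Fun^\R_{\mP\Env(\mV),\emptyset}(\mP\L\Env(\mN),\mP\Env(\mV))$, and the embedding $\mM^\op\hookrightarrow\Enr\Fun^\R_{\mP\Env(\mV),\emptyset}(\mP\L\Env(\mM),\mP\Env(\mV))$ is built from a diagram every square of which commutes for tautological reasons (each arrow is induced by $\F_!$ on envelopes); the unit of $(\F^*)^*\dashv(\F_!)^*$ then produces $\alpha$ coherently. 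You should replace your appeal to an unproven naturality of the graph construction by this envelope argument; once that is done, the rest of your proof goes through as written.
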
 

\begin{proof}

The enriched adjunction $\F_!: \mP\L\Env(\mM)^\circledast\rightleftarrows\mP\L\Env(\mN)^\circledast:\F^*$
gives rise to an adjunction $(\F^*)^*: \Enr\Fun^\R_{\mP\Env(\mV),\emptyset}(\mP\L\Env(\mM),\mP\Env(\mV)) \rightleftarrows \Enr\Fun^\R_{\mP\Env(\mV),\emptyset}(\mP\L\Env(\mN),\mP\Env(\mV)): (\F_!)^*.$

There is a commutative diagram
$$\begin{xy}
\xymatrix{\mM^\op\ar[d] \ar[r]^{\F^\op} & \mN^\op \ar[d]
\\
\mP\L\Env(\mM)^\op \ar[d]^\simeq\ar[r]^{}
& \mP\L\Env(\mN)^\op \ar[d]^{\simeq} 
\\
\Enr\Fun^\L_{\mP\Env(\mV),\emptyset}(\mP\Env(\mV), \mP\L\Env(\mM))^\op \ar[r]^{}\ar[d]^\simeq & \Enr\Fun^\L_{\mP\Env(\mV),\emptyset}(\mP\Env(\mV),\mP\L\Env(\mN))^\op\ar[d]^\simeq
\\
\Enr\Fun^\R_{\mP\Env(\mV),\emptyset}(\mP\L\Env(\mM), \mP\Env(\mV)) \ar[r]^{(\F^*)^*} 
& \Enr\Fun^\R_{\mP\Env(\mV),\emptyset}(\mP\L\Env(\mN),\mP\Env(\mV))}
\end{xy}$$	
Let $\alpha_\mM$ be the left vertical functor in the diagram so that $\alpha_\mN$ is the right vertical functor in the diagram.
The diagram provides an equivalence
$(\F^*)^* \circ \alpha_\mM \simeq \alpha_\mN \circ \F^\op.$
Composing the composition $$ \alpha_\mM \to (\F_!)^*\circ (\F^*)^* \circ \alpha_\mM \simeq (\F_!)^*\circ \alpha_\mN \circ \F^\op $$
of maps of functors $\mM^\op \to \Enr\Fun^\R_{\mP\Env(\mV),\emptyset}(\mP\L\Env(\mM), \mP\Env(\mV))$
with the functor $$\Enr\Fun^\R_{\mP\Env(\mV),\emptyset}(\mP\L\Env(\mM), \mP\Env(\mV))
\to \Enr\Fun_{\mV,\emptyset}(\mM, \mP\Env(\mV))$$
gives a map $ \xi_\mM \to \F^* \circ \xi_\mN \circ \F^\op $
of functors $\mM^\op \to \Enr\Fun_{\mV,\emptyset}(\mM, \mV) \subset \Enr\Fun_{\mV,\emptyset}(\mM, \mP\Env(\mV)).$
The composition 
$$ \F_! \circ \xi_\mM \to \F_!\circ \F^* \circ \xi_\mN \circ \F^\op \to \xi_\mN \circ \F^\op $$
induces at $\Z \in \mM$ 
the morphism $\kappa: \F_!(\L\Mor_\mM(\Z,-)) \to \L\Mor_\mN(\F(\Z),-)$
in $\Enr\Fun_{\mV,\emptyset}(\mN,\mV)$
corresponding to the morphism $\tu_\mV \to \L\Mor_\mN(\F(\Z),\F(\Z))$ given by the identity. By Lemma \ref{siewal} the morphism $\kappa$ is an equivalence.


\end{proof}

\begin{corollary}\label{siewalty}
Let $\mV^\ot \to \Ass$ be a monoidal $\infty$-category compatible with small colimits, $\mM^\circledast \to \mV^\ot, \mN^\circledast \to \mV^\ot $ small left enriched $\infty$-categories and $\F:\mM^\circledast \to \mN^\circledast$ a left $\mV$-enriched functor.
The right $\mV$-enriched functor $$\F_!: \Enr\Fun_{\mV,\emptyset}(\mM,\mV)^\circledast \to \Enr\Fun_{\mV,\emptyset}(\mN,\mV)^\circledast$$
restricts to a right $\mV$-enriched functor $(\mM^\op)^\circledast \to (\mN^\op)^\circledast$, which we denote by $\F^\op.$	
	
\end{corollary}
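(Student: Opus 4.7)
The plan is to deduce this immediately from Lemma \ref{siewal} together with the definition of the opposite enriched $\infty$-category. Recall from Definition \ref{notori} that $(\mM^\op)^\circledast \subset \Enr\Fun_{\mV,\emptyset}(\mM,\mV)^\circledast$ is the full weakly right $\mV$-enriched subcategory spanned by the objects of the form $\L\Mor_\mM(\X,-)$ for $\X \in \mM$, and similarly for $(\mN^\op)^\circledast$. Since a full weakly right $\mV$-enriched subcategory is determined by its collection of objects, to check that the right $\mV$-enriched functor $\F_!$ restricts to a right $\mV$-enriched functor $(\mM^\op)^\circledast \to (\mN^\op)^\circledast$, it suffices to show that $\F_!$ carries the collection of objects of $(\mM^\op)^\circledast$ into the collection of objects of $(\mN^\op)^\circledast$.

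First I would invoke Lemma \ref{siewal}: for every $\X \in \mM$, the canonical morphism
\[ \F_!(\L\Mor_\mM(\X,-)) \to \L\Mor_\mN(\F(\X),-) \]
in $\Enr\Fun_{\mV,\emptyset}(\mN,\mV)$ is an equivalence. Consequently $\F_!$ sends any object of $(\mM^\op)^\circledast$ lying in the underlying $\infty$-category $\mM^\op$ to an object of $(\mN^\op)^\circledast$. To handle objects of $(\mM^\op)^\circledast$ lying over $[\n] \in \Ass$ for $\n \geq 1$, I would use the first axiom of a weakly right enriched $\infty$-category (Definition \ref{bla}): such objects are equivalent to tuples $(\V_1,\dots,\V_\n, \L\Mor_\mM(\X,-))$ with $\V_1,\dots,\V_\n \in \mV$ and $\X \in \mM$, and the right $\mV$-enriched functor $\F_!$ covers the identity of $\mV^\ot$, so these are sent to the corresponding tuples built from $\L\Mor_\nathsf{N}(\F(\X),-) = \F_!(\L\Mor_\mM(\X,-))$, which again lie in $(\mN^\op)^\circledast$.

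Since $(\mM^\op)^\circledast \subset \Enr\Fun_{\mV,\emptyset}(\mM,\mV)^\circledast$ and $(\mN^\op)^\circledast \subset \Enr\Fun_{\mV,\emptyset}(\mN,\mV)^\circledast$ are full subcategories stable under the relevant structure, the restriction $\F^\op: (\mM^\op)^\circledast \to (\mN^\op)^\circledast$ exists as a right $\mV$-enriched functor and is uniquely determined. There is essentially no obstacle here: the entire content of the corollary is packaged into Lemma \ref{siewal}, which in turn relies on Corollary \ref{explicas} and Proposition \ref{laaaaan}. The only bookkeeping point to be careful about is that fullness of the subcategory inclusions is preserved, so that a functor between the ambient enriched functor $\infty$-categories whose restriction on objects factors through the subcategories automatically descends, which follows from the analogue of Lemma \ref{faith} in the enriched setting.
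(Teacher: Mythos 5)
Your proof is correct and takes the same route the paper intends: the corollary is stated immediately after Lemma \ref{siewal} precisely because it is the observation that $\F_!$ sends the spanning objects $\L\Mor_\mM(\X,-)$ of the full weakly right enriched subcategory $(\mM^\op)^\circledast$ to the objects $\L\Mor_\mN(\F(\X),-)$ of $(\mN^\op)^\circledast$, so the restriction exists automatically for full enriched subcategories. The additional bookkeeping you supply about objects over $[\n]$ for $\n\geq 1$ and the fullness of the inclusions is exactly the right justification, even though the paper leaves it implicit.
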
 

\begin{construction}\label{Varz}
Let $\mV^\ot\to \Ass$ be a monoidal $\infty$-category compatible with small colimits.
By Proposition \ref{siewa} the functor $\Enr\Fun_{\mV,\emptyset}(-,\mV)^\circledast: (_\mV\omega\B\Enr_{\emptyset})^\op \to {_\emptyset\omega\widehat{\B\Enr}_\mV} $
lands in ${_\emptyset\omega\widehat{\B\Enr}^\R_\mV}.$
By Corollary \ref{kurt} there is a canonical equivalence ${_\emptyset\omega\widehat{\B\Enr}^\R_\mV}\simeq ({_\emptyset\omega\widehat{\B\Enr}^\L_\mV})^\op.$
Corollary \ref{siewalty} implies that there is a subfunctor $\zeta_\mV: {_\mV\omega\B\Enr_{\emptyset}} \to {_\emptyset\omega\B\Enr_\mV} \subset {_\emptyset\omega\widehat{\B\Enr}_\mV}$ of the composition $$ _\mV\omega\B\Enr_{\emptyset} \xrightarrow{\Enr\Fun_{\mV,\emptyset}(-,\mV)^\circledast} ({_\emptyset\omega\B\Enr^\R_\mV})^\op \simeq {_\emptyset\omega\widehat{\B\Enr}^\L_\mV} \subset {_\emptyset\omega\widehat{\B\Enr}_\mV},$$
where $\zeta$ sends $\mM^\circledast \to \mV^\ot$ to $(\mM^\op)^\circledast \to \mV^\ot.$
	
\end{construction}

\begin{proposition}\label{Dungo}
Let $\mV^\ot\to\Ass$ be a monoidal $\infty$-category compatible with small colimits.
There is a canonical 
equivalence $$\id \to \zeta_{\mV^\rev} \circ \zeta_\mV$$ of endofunctors of ${_\mV\B\Enr_\emptyset}$ whose component at $\mM^\circledast \to \mV^\ot$
is $\rho_\mM: \mM^\circledast \to ((\mM^\op)^\op)^\circledast.$

Replacing $\mV^\ot \to \Ass$ by $(\mV^\rev)^\ot \to \Ass$
there is also an equivalence of endofunctors of ${_\emptyset\B\Enr_\mV}$: $$\id \to \zeta_{\mV} \circ \zeta_{\mV^\rev}.$$ 

\end{proposition}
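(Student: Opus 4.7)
The plan is to produce the natural transformation from the universal property underlying the Yoneda embedding, and then invoke Proposition \ref{yofaith} to conclude that every component is an equivalence. The second statement about $\zeta_\mV \circ \zeta_{\mV^\rev}$ follows by replacing $\mV$ by $\mV^\rev$ throughout.

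First, I would recall that Proposition \ref{yofaith} already guarantees that for each object $\mM^\circledast \to \mV^\ot$ of ${_\mV\B\Enr_\emptyset}$, the left $\mV$-enriched Yoneda embedding $\rho_\mM: \mM^\circledast \to ((\mM^\op)^\op)^\circledast$ is an equivalence. So the substantive task is to assemble the family $\{\rho_\mM\}$ into a morphism in $\Fun({_\mV\B\Enr_\emptyset}, {_\mV\B\Enr_\emptyset})$ from $\id$ to $\zeta_{\mV^\rev}\circ \zeta_\mV$, rather than treating it pointwise.

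To build this natural transformation, I would use the fact that $\zeta_\mV$ and $\zeta_{\mV^\rev}$ are constructed as subfunctors of compositions involving $\Enr\Fun_{\mV,\emptyset}(-,\mV)^\circledast$ and $\Enr\Fun_{\emptyset,\mV}(-,\mV)^\circledast$ via the duality equivalence of Corollary \ref{kurt}/Proposition \ref{adjeq}. Applied in sequence, these produce a functor $\mM \mapsto \Enr\Fun_{\emptyset,\mV}(\Enr\Fun_{\mV,\emptyset}(\mM,\mV)^\circledast, \mV)^\circledast$ (restricted through the equivalences) which receives the unit of an ambient ``double-representation'' adjunction. Realizing $\rho_\mM$ as the adjoint of the $\mV,\mV$-enriched left morphism object functor $\L\Mor_\mM: \mM^\circledast \times (\mM^\op)^\circledast \to \mV^\circledast$ under the equivalence of Proposition \ref{lehmmm}, I would verify that this adjoint pairing is natural in $\mM$: for a left $\mV$-enriched functor $\F: \mM^\circledast \to \mN^\circledast$, Lemma \ref{siewalt} provides a commutative square $\F_! \circ \xi_\mM \simeq \xi_\mN \circ \F^\op$, and Lemma \ref{siewal} identifies $\F_!(\L\Mor_\mM(\X,-)) \simeq \L\Mor_\mN(\F(\X),-)$ canonically. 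Applying this construction once more (to the opposite side) yields the required equivalence $(\F^\op)^\op \circ \rho_\mM \simeq \rho_\mN \circ \F$ of left $\mV$-enriched functors.

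To promote these pointwise squares to a coherent natural transformation $\id \to \zeta_{\mV^\rev} \circ \zeta_\mV$, I would use the $\Cat_\infty$-enrichment of ${_\mV\B\Enr_\emptyset}$ from Remark \ref{2-catt} to rephrase the data as a functor $[1] \times {_\mV\B\Enr_\emptyset} \to {_\mV\B\Enr_\emptyset}$ whose restrictions to $\{0\}$ and $\{1\}$ are $\id$ and $\zeta_{\mV^\rev}\circ \zeta_\mV$ respectively. Concretely, this functor is extracted from the natural bifunctor sending $(\mM,\mN)$ to the enriched evaluation $\mM^\circledast \times \Enr\Fun_{\mV,\emptyset}(\mM,\mV)^\circledast \to \mV^\circledast$, combined with Construction \ref{Enros} and the duality of Proposition \ref{adjeq}. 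Once the natural transformation is constructed, Proposition \ref{yofaith} finishes the proof, as each component is already known to be an equivalence.

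The main obstacle I expect is the coherence issue in the third step: genuinely producing the natural transformation in $\Fun({_\mV\B\Enr_\emptyset}, {_\mV\B\Enr_\emptyset})$ rather than a mere levelwise family, because $\zeta_\mV$ is defined only as a subfunctor of a composition and the adjoint/opposite equivalences are implicit. The resolution will require tracking how the enriched evaluation map transforms under the equivalence ${_\emptyset\omega\widehat{\B\Enr}^\R_\mV}\simeq ({_\emptyset\omega\widehat{\B\Enr}^\L_\mV})^\op$ of Corollary \ref{kurt}, so that the restriction to the representable subfunctor defining $(-)^\op$ yields a morphism of functors. Everything else is straightforward consequence of the Yoneda-type calculations already assembled in Lemmas \ref{siewal}, \ref{siewalt}, and \ref{exhibl}.
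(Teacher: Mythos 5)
You correctly isolate the real content of the proposition — the pointwise equivalence is Proposition \ref{yofaith}, and the issue is coherence — but the step where you ``promote these pointwise squares to a coherent natural transformation'' is exactly where your argument has a gap. Lemmas \ref{siewalt} and \ref{siewal} only supply, for each individual left $\mV$-enriched functor $\F$, a commuting square up to equivalence; a natural transformation between endofunctors of ${_\mV\B\Enr_\emptyset}$ is a functor $[1]\times{_\mV\B\Enr_\emptyset}\to{_\mV\B\Enr_\emptyset}$ and requires all higher coherence data, which a levelwise family of squares does not determine. Your proposed fix — invoking the $\Cat_\infty$-enrichment of Remark \ref{2-catt} and ``extracting'' the functor from the evaluation bifunctor — is not a construction: that enrichment concerns functor $\infty$-categories between two fixed weakly bi-enriched $\infty$-categories, not transformations between endofunctors of ${_\mV\B\Enr_\emptyset}$, and transporting the evaluation map through the duality $({_\emptyset\omega\widehat{\B\Enr}^\R_\mV})^\op\simeq{_\emptyset\omega\widehat{\B\Enr}^\L_\mV}$ coherently is precisely the unresolved point.

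The paper closes this gap by a parametrized (fibrational) argument that you do not have. For an arbitrary base $\rS$ and a map $\mM^\circledast\to\rS\times\mV^\ot$ of cocartesian fibrations over $\rS$ classifying a functor $\rS\to{_\mV\B\Enr_\emptyset}$, one forms $\Enr\Fun^\rS_{\mV,\emptyset}(\mM,\mV)^\circledast$ and $(\mM^\op)^\circledast$ inside $\Fun^{\rS\times\mV^\ot}_{\rS\times\mV^\ot\times\mV^\ot}(-,-)$, checks via Remark \ref{reuil}, Proposition \ref{siewa} and \cite[Corollary 5.2.2.5.]{lurie.HTT} that these are simultaneously maps of cartesian and cocartesian fibrations over $\rS$, and then uses Lemma \ref{siewal} to see that the fiberwise Yoneda maps assemble into a single map $\sigma$ of cocartesian fibrations over $\rS$ inducing $\rho_{\mM_\s}$ on each fiber. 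Straightening, together with the identification of the classified functors (via \cite[Proposition 7.3.]{articles}) and the classical Yoneda lemma for $\Ho(\widehat{\Cat}_\infty)(\rS,-)$ applied to the universal case $\rS={_\mV\B\Enr_\emptyset}$, then produces the equivalence $\id\to\zeta_{\mV^\rev}\circ\zeta_\mV$. This universal-fibration mechanism is the missing idea; without it (or an equivalent device) your argument only yields the statement object by object and morphism by morphism.
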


\begin{proof}


Let $\mM^\circledast \to \rS \times \mV^\ot$ be a map of cocartesian fibrations
over $\rS$ 
classifying a functor $\rS \to {_\mV\B\Enr_\emptyset} \subset \Cat_{\infty/\mV^\ot}$.
Let $$ (\mM^\op)^\circledast\subset \Enr\Fun^\rS_{\mV,\emptyset}(\mM,\mV)^\circledast\subset \Fun^{\rS\times\mV^\ot}_{\rS\times\mV^\ot\times\mV^\ot}(\mM^\circledast\times\mV^\ot,\rS\times\mV^\circledast)$$
be the full subcategories spanned by the objects of $$\Enr\Fun_{\mV,\emptyset}(\mM_\s,\mV)^\circledast\subset \Fun^{\rS\times\mV^\ot}_{\rS\times\mV^\ot\times\mV^\ot}(\mM^\circledast\times\mV^\ot,\rS\times\mV^\circledast)_\s \simeq \Fun^{\mV^\ot}_{\mV^\ot\times\mV^\ot}(\mM_\s^\circledast\times\mV^\ot,\mV^\circledast),$$$$(\mM_\s^\op)^\circledast\subset \Enr\Fun_{\mV,\emptyset}(\mM_\s,\mV)^\circledast\subset \Fun^{\rS\times\mV^\ot}_{\rS\times\mV^\ot\times\mV^\ot}(\mM^\circledast\times\mV^\ot,\rS\times\mV^\circledast)_\s \simeq \Fun^{\mV^\ot}_{\mV^\ot\times\mV^\ot}(\mM_\s^\circledast\times\mV^\ot,\mV^\circledast),$$ respectively, for some $\s \in \rS.$


By Remark \ref{reuil} the functor $\Fun^{\rS\times\mV^\ot}_{\rS\times\mV^\ot\times\mV^\ot}(\mM^\circledast\times\mV^\ot,\rS\times\mV^\circledast) \to \rS \times \mV^\ot$ is a map of cartesian fibrations
over $\rS$. 
Since the fiber transports restrict, also $ \Enr\Fun^\rS_{\mV,\emptyset}(\mM,\mV)^\circledast \to \rS \times \mV^\ot$ is a map of cartesian fibrations over $\rS$ that induces on the fiber over any $\s \in \rS$
the weakly right enriched $\infty$-category $\Fun^{\mV^\ot}_{\mV^\ot\times\mV^\ot}(\mM_\s^\circledast\times\mV^\ot,\mV^\circledast)\to \mV^\ot$. 
This implies that $ \Enr\Fun^\rS_{\mV,\emptyset}(\mM,\mV)^\circledast \to \mV^\ot$ is also a map of cocartesian fibrations relative to the collection of
cocartesian lifts of inert morphism of $\Ass$ preserving the minimum,
whose fiber transports preserve cartesian morphisms over $\rS$ being projections.
This guarantees that the map $ \Enr\Fun^\rS_{\mV,\emptyset}(\mM,\mV)^\circledast \to \rS \times \mV^\ot$ of cartesian fibrations over $\rS$ classifies a functor $\alpha: \rS^\op \to {_\emptyset\widehat{\B\Enr}_\mV}$.
Proposition \ref{siewa} guarantees that $\alpha$ lands in ${_\emptyset\widehat{\B\Enr}^\R_\mV}$.
This implies by \cite[Corollary 5.2.2.5.]{lurie.HTT} that $ \Enr\Fun^\rS_{\mV,\emptyset}(\mM,\mV)^\circledast \to \rS \times \mV^\ot$ is also a map of cocartesian fibrations over $\rS$
whose fiber transports preserve cocartesian lifts of inert morphisms preserving the minimum, and so classifies a functor $\beta: \rS \to {_\emptyset\widehat{\B\Enr}^\L_\mV}$,
which by construction factors as $\rS \xrightarrow{\alpha^\op}({_\emptyset\widehat{\B\Enr}^\R_\mV})^\op \simeq {_\emptyset\widehat{\B\Enr}^\L_\mV}\subset {_\emptyset\widehat{\B\Enr}_\mV}$.

By Corollary \ref{siewalty} the fiber transports of the map
$ \Enr\Fun^\rS_{\mV,\emptyset}(\mM,\mV)^\circledast \to \rS \times \mV^\ot$ of cocartesian fibrations over $\rS$ restrict to $(\mM^\op)^\circledast $ so that also $(\mM^\op)^\circledast \to \rS \times \mV^\ot$ is a map of cocartesian fibrations over $\rS$
classifying a functor $\rS \to{_\emptyset\widehat{\B\Enr}_\mV} .$

The restricted evaluation functor $ \mM^\circledast \times_\rS (\mM^\op)^\circledast \subset  \mM^\circledast \times_\rS \Enr\Fun^\rS_{\mV,\emptyset}(\mM,\mV)^\circledast\to \mV^\circledast$
over $\rS \times \mV^\ot \times \mV^\ot$ corresponds to a functor
$\sigma: \mM^\circledast \to \Enr\Fun^\rS_{\emptyset,\mV}(\mM^\op,\mV)^\circledast$
over $\rS \times \mV^\ot$ that induces on the fiber over any $\s \in \rS$
the embedding $\rho: \mM_\s^\circledast \simeq ((\mM_\s^\op)^\op)^\circledast \subset \Enr\Fun_{\emptyset,\mV}(\mM_\s^\op,\mV)^\circledast$ of Proposition \ref{yofaith}.
Lemma \ref{siewal} guarantees that $\sigma$ is a map of cocartesian fibrations over $\rS$.
Hence $\sigma$ is an embedding whose essential image is $((\mM^\op)^\op)^\circledast$.
Thus $\sigma$ induces an equivalence $\sigma': \mM^\circledast \to ((\mM^\op)^\op)^\circledast$
that induces on the fiber over any $\s \in \rS$
the equivalence $\rho: \mM_\s^\circledast \simeq ((\mM_\s^\op)^\op)^\circledast.$

If $\mM^\circledast \to \rS \times \mV^\ot$ classifies the identity of $ {_\mV\B\Enr_\emptyset}$, 
the map $ \Enr\Fun^\rS_{\mV,\emptyset}(\mM,\mV)^\circledast \to \rS \times \mV^\ot$
of cartesian fibrations over $\rS$ classifies the functor $$\Enr\Fun_{\mV,\emptyset}(-,\mV)^\circledast: (_\mV\omega\B\Enr_{\emptyset})^\op \to {_\emptyset\omega\widehat{\B\Enr}^\R_\mV} \subset {_\emptyset\widehat{\B\Enr}_\mV} $$ of Construction \ref{Varz} by \cite[Proposition 7.3.]{articles}.
Thus the map $ \Enr\Fun^\rS_{\mV,\emptyset}(\mM,\mV)^\circledast \to \rS \times \mV^\ot$ of cocartesian fibrations over $\rS$ classifies the composition $$ _\mV\omega\B\Enr_{\emptyset} \xrightarrow{\Enr\Fun_{\mV,\emptyset}(-,\mV)^\circledast} ({_\emptyset\omega\B\Enr^\R_\mV})^\op \simeq {_\emptyset\omega\widehat{\B\Enr}^\L_\mV} \subset {_\emptyset\omega\widehat{\B\Enr}_\mV}$$ 
and the map $(\mM^\op)^\circledast \to \rS \times \mV^\ot$ of cocartesian fibrations over $\rS$ classifies the functor $\zeta_\mV: {_\mV\B\Enr_\emptyset} \to {_\emptyset\B\Enr_\mV} $ of Construction \ref{Varz}.

Sending a map $\mM^\circledast \to \rS \times \mV^\ot$ of cocartesian fibrations over $\rS$ classifying a functor $\rS \to {_\mV\B\Enr_\emptyset} $ to the functor $\rS \to {_\emptyset\B\Enr_\mV}$ classified by the map $(\mM^\op)^\circledast \to \rS \times \mV^\ot$ of cocartesian fibrations over $\rS$ gives a natural map $$\Ho(\widehat{\Cat}_\infty)(\rS, {_\mV\B\Enr_\emptyset}) \to \Ho(\widehat{\Cat}_\infty)(\rS,{_\emptyset\B\Enr_\mV}).$$
By the classical Yoneda-lemma this map postcomposes with
the functor $\zeta_\mV: {_\mV\B\Enr_\emptyset} \to {_\emptyset\B\Enr_\mV}$.
In particular for any map $\mM^\circledast \to \rS \times \mV^\ot$ of cocartesian fibrations over $\rS$ classifying a functor $\tau: \rS \to {_\mV\B\Enr_\emptyset} $
the map $(\mM^\op)^\op)^\circledast \to \rS \times \mV^\ot$ of cocartesian fibrations over $\rS$ classifies the functor $\zeta_{\mV^\rev} \circ \zeta_\mV \circ \tau: \rS \to {_\mV\B\Enr_\emptyset}$.
Therefore the equivalence $\sigma'$ classifies an equivalence
$\id \to \zeta_{\mV^\rev}\circ\zeta_\mV$ of functors 
${_\mV\B\Enr_\emptyset} \to {_\mV\B\Enr_\emptyset}.$


\end{proof}

Next we deduce naturality of the left $\mV$-enriched Yoneda-embedding $$\rho_\mM: \mM^\circledast \to \Enr\Fun_{\emptyset, \mV}(\mM^\op,\mV)^\circledast$$ (Corollary \ref{ohay}).
\begin{definition}
Let $\mV^\ot\to \Ass$ be a monoidal $\infty$-category compatible with small colimits.
Let $\mP_\mV$ be the composition
$$ _\mV\omega\B\Enr_{\emptyset} \xrightarrow{\zeta_\mV}
{_\emptyset\omega\B\Enr_\mV} \xrightarrow{\Enr\Fun_{\emptyset,\mV}(-,\mV)^\circledast} ({_\mV\omega\B\Enr^\R_\emptyset})^\op \simeq {_\mV\omega\widehat{\B\Enr}^\L_\emptyset} \subset {_\mV\omega\widehat{\B\Enr}_\emptyset}.$$
\end{definition}

\begin{corollary}\label{ohay}
Let $\mV^\ot\to \Ass$ be a monoidal $\infty$-category compatible with small colimits.	
There is a natural transformation $\id \to \mP_\mV$ of functors ${_\mV\omega\B\Enr_\emptyset}\to {_\mV\omega\widehat{\B\Enr}_\emptyset}$
whose component at any left enriched $\infty$-category $\mM^\circledast \to \mV^\ot$
is the left $\mV$-enriched Yoneda-embedding $\rho_\mM: \mM^\circledast \to \Enr\Fun_{ \emptyset, \mV}(\mM^\op,\mV)^\circledast$.
	
\end{corollary}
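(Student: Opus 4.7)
The plan is to reuse verbatim the relative-over-$\rS$ construction appearing in the proof of Proposition \ref{Dungo}, only without restricting the target along $(\mM^\op)^\op \subset \Enr\Fun^\rS_{\emptyset,\mV}(\mM^\op,\mV)$. Concretely, I will argue that the functor $\sigma$ built in that proof already represents the sought-after natural transformation, once one tracks which functor out of $\rS$ the ambient weakly bi-enriched functor category classifies.

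First, I fix a small $\infty$-category $\rS$ and a functor $\tau \colon \rS \to {_\mV\omega\B\Enr_\emptyset}$, classified by a map of cocartesian fibrations $\mM^\circledast \to \rS \times \mV^\ot$ over $\rS$. Running the argument of Proposition \ref{Dungo} produces two maps of cocartesian fibrations over $\rS$, namely $(\mM^\op)^\circledast \to \rS\times\mV^\ot$ and $\Enr\Fun^\rS_{\emptyset,\mV}(\mM^\op,\mV)^\circledast \to \rS \times \mV^\ot$, whose classifying functors are $\zeta_\mV\circ\tau$ and $\mP_\mV\circ\tau$, respectively. The second identification is exactly the unpacking of the definition of $\mP_\mV$ given just before the statement of the corollary: the application of $\Enr\Fun_{\emptyset,\mV}(-,\mV)^\circledast$ to the family $(\mM^\op)^\circledast$ assembles into the relative construction $\Enr\Fun^\rS_{\emptyset,\mV}(\mM^\op,\mV)^\circledast$, and the passage from $_\emptyset\omega\B\Enr^\R_\mV$ to $_\mV\omega\widehat{\B\Enr}^\L_\emptyset$ used in the definition of $\mP_\mV$ is precisely the dualization supplied by \cite[Corollary 5.2.2.5.]{lurie.HTT} that turns a map of cartesian fibrations into a map of cocartesian fibrations.

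Second, I use the $\mV,\mV$-enriched restricted evaluation functor
$$\mM^\circledast \times_\rS (\mM^\op)^\circledast \longrightarrow \mV^\circledast$$
over $\rS \times \mV^\ot \times \mV^\ot$, which corresponds by Proposition \ref{lehmmm} to a functor
$$\sigma \colon \mM^\circledast \longrightarrow \Enr\Fun^\rS_{\emptyset,\mV}(\mM^\op,\mV)^\circledast$$
over $\rS \times \mV^\ot$. On the fiber over $\s \in \rS$ this functor is, by construction, precisely the left $\mV$-enriched Yoneda-embedding $\rho_{\mM_\s}$ of Notation \ref{nnoo}. The key point, which is already recorded inside the proof of Proposition \ref{Dungo}, is that $\sigma$ is a map of cocartesian fibrations over $\rS$; this uses Lemma \ref{siewal}, whose content is exactly that for a left $\mV$-enriched functor $\F \colon \mM^\circledast_{\s}\to \mM^\circledast_{\s'}$ induced by a morphism of $\rS$ and any $\X \in \mM_\s$, the canonical morphism $\F_!(\L\Mor_{\mM_\s}(\X,-)) \to \L\Mor_{\mM_{\s'}}(\F(\X),-)$ in $\Enr\Fun_{\mV,\emptyset}(\mM_{\s'},\mV)$ is an equivalence, i.e.\ the fiber transport of $\Enr\Fun^\rS_{\emptyset,\mV}(\mM^\op,\mV)^\circledast$ sends $\rho_{\mM_\s}(\X)$ to $\rho_{\mM_{\s'}}(\F(\X))$.

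Third, by \cite[Proposition 7.3.]{articles} the map of cocartesian fibrations $\sigma$ classifies a natural transformation $\tau \to \mP_\mV \circ \tau$ of functors $\rS \to {_\mV\omega\widehat{\B\Enr}_\emptyset}$ whose component at every $\s\in\rS$ is $\rho_{\mM_\s}$. Because the whole construction commutes with pullback along any functor $\rS' \to \rS$ (by Remark \ref{puas}), it is natural in the variable $\rS$. Specializing to $\rS = {_\mV\omega\B\Enr_\emptyset}$ and $\tau = \id$ then yields the desired natural transformation $\id \to \mP_\mV$ of functors $_\mV\omega\B\Enr_\emptyset \to {_\mV\omega\widehat{\B\Enr}_\emptyset}$ with component $\rho_\mM$ at each $\mM^\circledast \to \mV^\ot$. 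The only mildly subtle step is the verification that $\sigma$ is genuinely a map of cocartesian fibrations (and not merely of the underlying maps of $\infty$-categories over $\rS\times\mV^\ot$); this reduces, exactly as in Proposition \ref{Dungo}, to Lemma \ref{siewal}, so no new work is required.
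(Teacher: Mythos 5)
Your proof is correct, and the underlying machinery is the same as the paper's (the relative construction $\Enr\Fun^\rS_{\emptyset,\mV}(\mM^\op,\mV)^\circledast$, Lemma \ref{siewal}, and the classification of maps of cocartesian fibrations), but the packaging differs from the paper's actual proof of the corollary. The paper deduces the statement in two lines: Construction \ref{Varz} already provides an embedding of functors $\zeta_{\mV^\rev}\circ\zeta_\mV\hookrightarrow\mP_\mV$, and Proposition \ref{Dungo} provides the equivalence $\id\simeq\zeta_{\mV^\rev}\circ\zeta_\mV$; composing gives $\id\to\mP_\mV$ with component $\rho_\mM\colon\mM^\circledast\to((\mM^\op)^\op)^\circledast\subset\Enr\Fun_{\emptyset,\mV}(\mM^\op,\mV)^\circledast$. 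You instead re-enter the proof of Proposition \ref{Dungo} and observe that the map $\sigma$ constructed there, viewed as landing in the \emph{unrestricted} relative functor category, already classifies the desired transformation $\id\to\mP_\mV$; this requires you to identify the cocartesian-fibration classifying functor of $\Enr\Fun^\rS_{\emptyset,\mV}(\mM^\op,\mV)^\circledast\to\rS\times\mV^\ot$ with $\mP_\mV\circ\tau$, which you justify exactly as the paper does for $\Enr\Fun^\rS_{\mV,\emptyset}(\mM,\mV)^\circledast$ (via \cite[Proposition 7.3.]{articles} and the dualization of \cite[Corollary 5.2.2.5.]{lurie.HTT}). What your route buys is that you never need the intermediate factorization through $((\mM^\op)^\op)^\circledast$ nor the subfunctor statement of Construction \ref{Varz}; what it costs is repeating the verification (via Lemma \ref{siewal}) that $\sigma$ is a map of cocartesian fibrations, which the paper's formal deduction gets for free. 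The remark about naturality in $\rS$ via Remark \ref{puas} is harmless but redundant, since in the end you only use the case $\tau=\id$, exactly as the paper does.
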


\begin{proof}
By Construction \ref{Varz} there is an embedding $\zeta_{\mV^\rev}\circ \zeta_\mV \hookrightarrow \mP_\mV$ of functors ${_\mV\omega\B\Enr_\emptyset}\to {_\mV\omega\widehat{\B\Enr}_\emptyset}.$
By Proposition \ref{Dungo} there is a canonical equivalence $\id \to \zeta_{\mV^\rev} \circ \zeta_\mV$ of endofunctors of ${_\mV\B\Enr_\emptyset}$.
We obtain an embedding $\id \hookrightarrow \mP_\mV$ of functors ${_\mV\omega\B\Enr_\emptyset}\to {_\mV\omega\widehat{\B\Enr}_\emptyset}$
whose component at any left enriched $\infty$-category $\mM^\circledast \to \mV^\ot$
is the left $\mV$-enriched Yoneda-embedding $\rho_\mM: \mM^\circledast \to ((\mM^\op)^\op)^\circledast \subset \Enr\Fun_{ \emptyset, \mV}(\mM^\op,\mV)^\circledast$ by Proposition \ref{Dungo}.
\end{proof}

\begin{remark}
Corollary \ref{ohay} gives naturality of the enriched Yoneda-embedding.
Naturality of the enriched Yoneda-embedding in Hinich's model of enrichment is the topic of \cite{BENMOSHE2024107625}.	
	
\end{remark}

Next we apply Corollary \ref{Univ} to deduce that our both models for the left enriched $\infty$-category of enriched presheaves are equivalent:

\begin{theorem}\label{unitol}
Let $\mV^\ot \to \Ass$ be a monoidal $\infty$-category compatible with small colimits and $\mM^\circledast \to \mV^\ot $ a left enriched $\infty$-category. 

\begin{enumerate}
\item 
The left $\mV$-enriched Yoneda-embedding $$\rho: \mM^\circledast \to \Enr\Fun_{\emptyset, \mV}(\mM^\op,\mV)^\circledast $$
uniquely extends to a left $\mV$-enriched equivalence $$\theta: \mP\L\Env(\mM)_{\L\Enr}^\circledast\simeq\Enr\Fun_{\emptyset, \mV}(\mM^\op,\mV)^\circledast.$$	

\item If $\mV^\ot \to \Ass$ is a presentably monoidal $\infty$-category, $\theta$ corresponds to the $\mV,\mV$-enriched functor $$\hspace{10mm}\mP\L\Env(\mM)_{\L\Enr}^\circledast\times (\mM^\op)^\circledast \subset\mP\L\Env(\mM)_{\L\Enr}^\circledast \times (\mP\L\Env(\mM)_{\L\Enr}^\op)^\circledast\xrightarrow{\L\Mor_{\mP\L\Env(\mM)_{\L\Enr}}(-,-)}\mV^\circledast.$$
\end{enumerate}	
	
\end{theorem}

\begin{proof}

(1): By Proposition \ref{innerhosty} and Theorem \ref{unipor3} there exists a unique left $\mV$-linear small colimits preserving extension $\theta$ of $\rho.$ By Corollary \ref{Univ} it suffices to see that the essential image of $\rho$ generates the target of $\rho$ 
under small colimits and left tensors and that for any
$\Z \in \mM$ the left $\mV$-enriched functor
$$\L\Mor_{\Enr\Fun_{\emptyset, \mV}(\mM^\op,\mV)}(\rho(\Z),-):
\Enr\Fun_{\emptyset, \mV}(\mM^\op,\mV)^\circledast \to \mV^\circledast$$ preserves small colimits and is $\mV$-linear.
This holds by Corollary \ref{yotul}.
	
(2): 
The $\mV,\mV$-enriched functor $$\mP\L\Env(\mM)_{\L\Enr}^\circledast\times (\mM^\op)^\circledast \subset\mP\L\Env(\mM)_{\L\Enr}^\circledast \times (\mP\L\Env(\mM)_{\L\Enr}^\op)^\circledast\xrightarrow{\L\Mor_{\mP\L\Env(\mM)_{\L\Enr}}(-,-)}\mV^\circledast$$
corresponds to a left $\mV$-enriched functor $\theta': \mP\L\Env(\mM)_{\L\Enr}^\circledast\to\Enr\Fun_{\emptyset, \mV}(\mM^\op,\mV)^\circledast.$
By Proposition \ref{corok}, Proposition \ref{innerhosty} and Lemma \ref{colas} the left $\mV$-enriched functor $\theta'$ preserves small colimits and is left $\mV$-linear.
Hence it is enough to see that $\theta'_{\mid \mM^\circledast}\simeq \theta_{\mid \mM^\circledast}$ by Proposition \ref{envvcor}.
Both latter enriched functors correspond to the $\mV,\mV$-enriched functor $$\L\Mor_{\mM}(-,-): \mM^\circledast\times (\mM^\op)^\circledast \subset\mP\L\Env(\mM)_{\L\Enr}^\circledast \times (\mP\L\Env(\mM)_{\L\Enr}^\op)^\circledast\xrightarrow{\L\Mor_{\mP\L\Env(\mM)_{\L\Enr}}(-,-)}\mV^\circledast.$$
	
\end{proof}

\begin{corollary}\label{saewo}
	
Let $\mV^\ot \to \Ass$ be a monoidal $\infty$-category compatible with small colimits, $\mM^\circledast \to \mV^\ot, \mN^\circledast \to \mV^\ot $ small left enriched $\infty$-categories and $\F:\mM^\circledast \to \mN^\circledast$ a left $\mV$-enriched functor.
There is a commutative square of $\infty$-categories left tensored over $\mV$:
$$\begin{xy}
\xymatrix{\mP\L\Env(\mM)_{\L\Enr}^\circledast  \ar[d]^{\simeq}\ar[rr]^{\bar{\F}}
&& \mP\L\Env(\mN)_{\L\Enr}^\circledast \ar[d]^{\simeq} 
\\
\Enr\Fun_{\emptyset, \mV}(\mM^\op,\mV)^\circledast \ar[rr]^{(\F^\op)_!} && \Enr\Fun_{\emptyset, \mV}(\mN^\op,\mV)^\circledast.}
\end{xy}$$
\end{corollary}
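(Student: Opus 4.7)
The plan is to reduce the claim to the naturality of the enriched Yoneda-embedding. Both composites $\theta_\mN\circ\bar\F$ and $(\F^\op)_!\circ\theta_\mM$ go from $\mP\L\Env(\mM)_{\L\Enr}^\circledast$ to $\Enr\Fun_{\emptyset,\mV}(\mN^\op,\mV)^\circledast$, and I would argue that both are left $\mV$-linear functors preserving small colimits and that they agree on $\mM^\circledast\subset\mP\L\Env(\mM)_{\L\Enr}^\circledast$. By the universal property of enriched presheaves (Theorem \ref{unipor3}(1)), this forces them to be canonically equivalent as left $\mV$-linear functors.

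First, I would verify that the target $\Enr\Fun_{\emptyset,\mV}(\mN^\op,\mV)^\circledast$ is a left tensored $\infty$-category compatible with small colimits, which follows from Corollary \ref{innerhost} applied to the bitensored $\infty$-category $\mV^\circledast\to\mV^\ot\times\mV^\ot$. Next, I would note that $\theta_\mM$ and $\theta_\mN$ are left $\mV$-linear equivalences by Theorem \ref{unitol}(1), that $\bar\F$ is left $\mV$-linear and preserves small colimits by construction via Theorem \ref{unipor3}(1), and that $(\F^\op)_!$ is a left $\mV$-enriched left adjoint by Proposition \ref{siewa}(2) and therefore in particular left $\mV$-linear and small-colimits-preserving. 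Hence both composites fall under the universal property of $\mP\L\Env(\mM)_{\L\Enr}^\circledast$, which says they are determined by their restrictions along the embedding $\mM^\circledast\subset\mP\L\Env(\mM)_{\L\Enr}^\circledast$.

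Next, I would compute these restrictions. Since $\bar\F$ restricts to the composition $\mM^\circledast\xrightarrow{\F}\mN^\circledast\hookrightarrow\mP\L\Env(\mN)_{\L\Enr}^\circledast$ and $\theta_\mN$ restricts to the Yoneda-embedding $\rho_\mN$ by Theorem \ref{unitol}(1), the restriction of $\theta_\mN\circ\bar\F$ is $\rho_\mN\circ\F$. Similarly, the restriction of $(\F^\op)_!\circ\theta_\mM$ is $(\F^\op)_!\circ\rho_\mM$. Therefore the corollary reduces to producing a canonical equivalence $(\F^\op)_!\circ\rho_\mM\simeq\rho_\mN\circ\F$ of left $\mV$-enriched functors $\mM^\circledast\to\Enr\Fun_{\emptyset,\mV}(\mN^\op,\mV)^\circledast$.

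This last equivalence is precisely naturality of the left $\mV$-enriched Yoneda-embedding, which is Corollary \ref{ohay}: there is a natural transformation $\id\to\mP_\mV$ of functors ${_\mV\omega\B\Enr_\emptyset}\to{_\mV\omega\widehat{\B\Enr}_\emptyset}$ whose component at $\mM$ is $\rho_\mM$. Unwinding the definition of $\mP_\mV$ as $\Enr\Fun_{\emptyset,\mV}(-,\mV)^\circledast\circ\zeta_\mV$ composed with the equivalence $({_\mV\omega\B\Enr_\emptyset^\R})^\op\simeq{_\mV\omega\widehat{\B\Enr}_\emptyset^\L}$ of Corollary \ref{kurt}, the morphism $\mP_\mV(\F)$ is identified with the left $\mV$-enriched left adjoint of $\Enr\Fun_{\emptyset,\mV}(\F^\op,\mV)^\circledast$, that is, with $(\F^\op)_!$. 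The main obstacle I foresee is being careful with this last identification of $\mP_\mV(\F)$ with $(\F^\op)_!$, since it involves tracking the $({-})^\op$-equivalence between ${_\mV\omega\B\Enr_\emptyset^\R}$ and ${_\mV\omega\widehat{\B\Enr}_\emptyset^\L}$; once this bookkeeping is done, the result follows immediately from Corollary \ref{ohay}.
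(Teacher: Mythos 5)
Your proof is correct, and it is essentially the derivation the paper intends: the corollary is stated without proof immediately after Theorem \ref{unitol}, and the evident argument is exactly yours — both composites are left $\mV$-linear, small-colimits-preserving (hence left adjoint) functors out of $\mP\L\Env(\mM)_{\L\Enr}^\circledast$, so by Theorem \ref{unipor3}/Corollary \ref{envvcor} they are determined by their restrictions to $\mM^\circledast$, which agree by the naturality of the enriched Yoneda embedding (Corollary \ref{ohay}). Your identification of $\mP_\mV(\F)$ with $(\F^\op)_!$ is also right: $\zeta_\mV(\F)=\F^\op$ by Corollary \ref{siewalty}, $\Enr\Fun_{\emptyset,\mV}(\F^\op,\mV)$ has left adjoint $(\F^\op)_!$ by Proposition \ref{siewa}, and the equivalence of Lemma \ref{kurt} passes to this left adjoint.
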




\subsection{A bienriched Yoneda-embedding}

Next we enhance the enriched Yoneda-embedding of the latter section to a bienriched  Yoneda-embedding.

\begin{lemma}\label{notori2}
Let $\mM^\circledast \to \mV^\ot \times \mW^\ot $ be a right tensored left enriched $\infty$-category.
The full weakly bienriched subcategory 
$$(\mM^\op)^\circledast \subset \Enr\Fun_{\mV, \emptyset}(\mM,\mV)^\circledast \to \mV^\ot \times \mV^\ot $$ 
spanned by all left $\mV$-enriched functors $\Mor_\mM(\X,-):\mM^\circledast \to \mV^\circledast$ for $\X \in \mM$ is closed under right cotensors.
	
\end{lemma}

\begin{lemma}
	
$\mM$ weakly bitensored over $\mV,\mV$ and has right tensors.

$\Enr\Fun_{\mV, \emptyset}(\mM,\mV)$ is bitensored over $\mV,\mV.$

$\mM^\op$ 

comparison between left tensors:

$ \L\Mor_\mM(\X,-) \circ ((-)\ot \V) \to  \L\Mor_\mM({\X^\V},-).$
	
\end{lemma}

The following remark, proposition and lemma justify the terminology of Definition \ref{notori}:

\begin{remark}
By construction \ref{Enros} and Corollary \ref{MorFu} for every weakly left enriched $\infty$-category $\mM^\circledast \to \mV^\ot$ there is an embedding
$$\xi: \mM^\op \to \Enr\Fun_{\mV, \emptyset}(\mM,\mP\Env(\mV))^\circledast$$
sending $\X\in \mM$ to $\L\Mor_{\bar{\mM}}(\X,-).$
If $\mM^\circledast \to \mV^\ot$ is a left enriched $\infty$-category, by Remark \ref{EEnr} the embedding $\xi$ induces an embedding
$\mM^\op \to \Enr\Fun_{\mV, \emptyset}(\mM,\mV)^\circledast \subset \Enr\Fun_{\mV, \emptyset}(\mM,\mP\Env(\mV))^\circledast$
denoted by the same name, which sends $\X\in \mM$ to $\L\Mor_{\bar{\mM}}(\X,-).$
The embedding $\xi$ 
induces an equivalence $\mM^\op \simeq (\mM^\op)^\circledast \times_{\mV^\ot} \emptyset^\ot $.
	
\end{remark}
\begin{proposition}\label{oppoen}
	
Let $\mM^\circledast \to \mV^\ot $ be a left enriched $\infty$-category.
The weakly right enriched $\infty$-category $(\mM^\op)^\circledast \to \mV^\ot$ exhibits $\mM^\op$ as right enriched in $\mV$, where for any $\X,\Y \in \mM$ there is a canonical equivalence $$\R\Mor_{\mM^\op}(\X,\Y) \simeq \L\Mor_\mM(\Y,\X).$$	
\end{proposition}

\begin{proof}
By Corollary \ref{explicas} for any $\X,\Y \in \mM$ and $\V_1,...,\V_\n \in \mV$ for $\n \geq 0$ the canonical map
$$\Mul_{\mM^\op}(\V_1,...,\V_\n, \L\Mor_\mM(\X,-);\L\Mor_\mM(\Y,-)) \to  \Mul_{\mV}(\V_1,...,\V_\n, \L\Mor_\mM(\Y,\X)) $$ is an equivalence.
So the result follows.
\end{proof}

\begin{lemma}\label{aos}
Let $\mM^\circledast \to \mV^\ot, \mN^\ot \to \mW^\ot $ be left enriched $\infty$-categories.
The right $\mV\times \mW$-enriched functor $$\rho: \Enr\Fun_{\mV, \emptyset}(\mM,\mV)^\circledast \times_\Ass \Enr\Fun_{\mV, \emptyset}(\mN,\mV)^\circledast \to \Enr\Fun_{\mV \times \mW, \emptyset}(\mM \times \mN,\mV\times \mW)^\circledast$$
corresponding to the $\mV\times \mW,\mV\times \mW$-enriched functor $$(\mM^\circledast \times_\Ass \mN^\circledast) \times (\Enr\Fun_{\mV, \emptyset}(\mM,\mV)^\circledast \times_\Ass \Enr\Fun_{\mV, \emptyset}(\mN,\mV)^\circledast) \simeq $$$$ (\mM^\circledast \times \Enr\Fun_{\mV, \emptyset}(\mM,\mV)^\circledast) \times_{\Ass \times \Ass} (\mN^\circledast \times \Enr\Fun_{\mV, \emptyset}(\mN,\mV)^\circledast) \to \mV^\circledast \times_{\Ass \times \Ass} \mW^\circledast,$$
which is the product of the $\mV,\mV$-enriched and $\mW,\mW$-enriched evaluation functors,
restricts to a right $\mV\times \mW$-enriched equivalence $$ \rho': (\mM^\op)^\circledast \times_\Ass (\mN^\op)^\circledast \simeq ((\mM \times \mN)^\op)^\circledast.$$
\end{lemma}
\begin{proof}
	
The enriched functor $\rho$ sends $\L\Mor_\mM(\X,-), \L\Mor_\mN(\Y,-)$ for $\X \in \mM, \Y \in \mN$ to $$\L\Mor_\mM(\X,-) \times \L\Mor_\mN(\Y,-) \simeq \L\Mor_{\mM \times \mN}((\X,\Y),-)$$ and so restricts to $\rho'$. By Corollary \ref{yotul} the enriched functor $\rho'$ induces on right morphism objects the following equivalence for $\X' \in \mM, \Y' \in \mN$:
$$ \L\Mor_\mM(\X',\X)\times \L\Mor_\mN(\Y',\Y) \simeq $$$$\R\Mor_{\mM^\op}(\L\Mor_\mM(\X,-),\L\Mor_\mM(\X',-)) \times \R\Mor_{\mN^\op}(\L\Mor_\mN(\Y,-),\L\Mor_\mN(\Y',-))\to $$$$ \R\Mor_{(\mM \times \mN)^\op}(\L\Mor_{\mM \times \mN}((\X,\Y),-),\L\Mor_{\mM \times \mN}((\X',\Y'),-)) \simeq \L\Mor_{\mM \times \mN}((\X',\Y'),(\X,\Y)).$$
\end{proof}

\begin{proposition}\label{dua} Let $\mM^\circledast \to \mV^\ot $ be a left enriched $\infty$-category, $\mO^\circledast \to \mW^\ot$ a right enriched $\infty$-category  and $\mN^\circledast \to \mV \times \mW^\ot $ a bienriched $\infty$-category.
	
\begin{enumerate}
\item There is a $\mW,\mV$-enriched equivalence
$$ ((\mM \times \mO)^\op)^\circledast \simeq (\mM^\op)^\circledast \times (\mO^\op)^\circledast.$$
		
\item There is a right $\mW$-enriched equivalence $$ \Enr\Fun_{\mV, \emptyset}(\mM,\mN) \simeq \Enr\Fun_{\emptyset, \mV}(\mM^\op,\mN^\op)^\op.$$
		
\end{enumerate}
	
\end{proposition}

\begin{proof}(1): By Lemma \ref{aos} there is a left $\mV,\mW^\rev$-enriched equivalence
$$ (\mM \times \mO^\rev)^\op)^\circledast \simeq (\mM^\op)^\circledast \times_\Ass ((\mO^\op)^\rev)^\circledast.$$
Under the equivalence ${_\mV\B\P\Enr_\mW} \simeq {_{\mV\times\mW^\rev}\L\P\Enr}_\emptyset$
the latter equivalence corresponds to the $\mV,\mW$-enriched equivalence
$((\mM \times \mO)^\op)^\circledast \simeq (\mM^\op)^\circledast \times (\mO^\op)^\circledast.$
	
(2): The claimed equivalence is represented by the following chain of natural equivalences:
$${_\emptyset\omega\B\Enr_\mW}(\mO, \Enr\Fun_{\emptyset, \mV}(\mM^\op,\mN^\op)^\op) \simeq {_\mW\omega\B\Enr_\emptyset}(\mO^\op, \Enr\Fun_{\emptyset, \mV}(\mM^\op,\mN^\op))$$$$
\simeq {_\mW\omega\B\Enr_\mV}(\mO^\op \times \mM^\op,\mN^\op) 
\simeq {_\mW\omega\B\Enr_\mV}((\mM \times \mO)^\op,\mN^\op) \simeq $$$${_\mV\omega\B\Enr_\mW}(\mM \times \mO,\mN) \simeq {_\emptyset\omega\B\Enr_\mW}(\mO, \Enr\Fun_{\mV, \emptyset}(\mM,\mN)).$$	
	
\end{proof}



\begin{notation}\label{nnoo}Let $\mM^\circledast \to \mV^\ot $ be a left enriched $\infty$-category.
\begin{enumerate}
\item The $\mV,\mV$-enriched left morphism object functor $$\L\Mor_\mM: \mM^\circledast \times (\mM^\op)^\circledast \to \mV^\circledast$$ is the restricted $\mV,\mV$-enriched evaluation functor $$\mM^\circledast \times (\mM^\op)^\circledast \subset \mM^\circledast \times  \Enr\Fun_{\mV, \emptyset}(\mM,\mV)^\circledast \to \mV^\circledast.$$
		
\item The left $\mV$-enriched Yoneda-embedding $$\rho_\mM: \mM^\circledast \to \Enr\Fun_{\emptyset, \mV}(\mM^\op,\mV)^\circledast$$
is the left $\mV$-enriched functor corresponding to the left morphism object functor $\L\Mor_\mM: \mM^\circledast \times (\mM^\op)^\circledast \to \mV^\circledast $
under the equivalence of Theorem \ref{vvvl}.
\end{enumerate}
\end{notation}

\begin{remark}
Notation \ref{nnoo} also applies to any right $\mV$-enriched $\infty$-category
$\mM^\circledast \to \mV^\ot$ (viewed as a left $\mV^\rev$-enriched $\infty$-category).
In this case we obtain a $\mV,\mV$-enriched right morphism object functor $$\R\Mor_\mM: (\mM^\op)^\circledast \times \mM^\circledast \to \mV^\circledast,$$ which is the restricted $\mV,\mV$-enriched evaluation functor $$(\mM^\op)^\circledast  \times \mM^\circledast \subset \Enr\Fun_{\emptyset,\mV}(\mM,\mV)^\circledast \times \mM^\circledast \to \mV^\circledast,$$
and a corresponding right $\mV$-enriched Yoneda-embedding $\rho_\mM: \mM^\circledast \to \Enr\Fun_{\mV,\emptyset}(\mM^\op,\mV)^\circledast.$
\end{remark}

\begin{remark}\label{Reyy}
The $\mV,\mV$-enriched left morphism object functor $\L\Mor_\mM: \mM^\circledast \times (\mM^\op)^\circledast \to \mV^\circledast$
induces on underlying weakly left $\mV$-enriched $\infty$-categories the left 
$\mV$-enriched functor $\Gamma_\mM: \mM^\circledast \times \mM^\op \to \mV^\circledast$
of Construction \ref{Enros} corresponding to $\xi.$ 
\end{remark}

\begin{proposition}\label{yofaith}
Let $\mM^\circledast \to \mV^\ot $ be a small left enriched $\infty$-category. The left $\mV$-enriched Yoneda-embedding $\rho: \mM^\circledast \to \Enr\Fun_{\emptyset, \mV}(\mM^\op,\mV)^\circledast $ induces an equivalence $\mM^\circledast \to ((\mM^\op)^\op)^\circledast.$	
\end{proposition}

\begin{proof}
By Lemma \ref{exhibl} the left $\mV$-enriched Yoneda-embedding $\rho_\mM: \mM^\circledast \to\Enr\Fun_{\mV,\emptyset}(\mM^\op,\mV)^\circledast$ induces a left $\mV$-enriched functor $\mM^\circledast \to ((\mM^\op)^\op)^\circledast$ whose underlying functor $\mM \to (\mM^\op)^\op$ is essentially surjective.	
By Remark \ref{Func} the left $\mV$-enriched functor $\rho: \mM^\circledast \to (\mM^\op)^\op)^\circledast$ induces a morphism
$$\kappa: \L\Mor_\mM(\Z ,-) \to \L\Mor_{(\mM^\op)^\op}(\rho(\Z ),-)\circ \rho$$
in $\Enr\Fun_{\mV,\emptyset}(\mM,\mV)$.	
By Remark \ref{Func} the left $\mV$-enriched functor $\ev_{\xi(\Z)}: \Enr\Fun_{\emptyset, \mV}(\mM^\op,\mV)^\circledast \to \mV^\circledast$ evaluating at $\xi(\Z) \in \mM^\op$
induces the following morphism $\lambda$ in $\Enr\Fun_{\mV,\emptyset}(\mM,\mV) $:
$$\L\Mor_{(\mM^\op)^\op}(\rho(\Z ),-)\circ \rho \to \L\Mor_{\mP\Env(\mV)}(\L\Mor_\mM(\Z,\Z),-)\circ\ev_{\xi(\Z)} \circ \rho $$$$ \to \ev_{\xi(\Z)} \circ \rho \simeq \xi(\Z)=\L\Mor_\mM(\Z ,-),$$
which is an equivalence by Corollary \ref{explicas}.
By Corollary \ref{explicas} the composition $\lambda \circ \kappa$
is the identity because $\lambda_\Z  \circ \kappa_\Z $ sends the identity of $\Z $ to the identity of $\Z .$ So the result follows.
\end{proof}

\begin{remark}
Let $\mM^\circledast \to \mV^\ot $ be a small left enriched $\infty$-category. 
By Proposition \ref{yofaith} the left $\mV$-enriched Yoneda-embedding $\rho_\mM: \mM^\circledast \to \Enr\Fun_{\emptyset, \mV}(\mM^\op,\mV)^\circledast $ induces an equivalence $\rho'_\mM: \mM^\circledast \to ((\mM^\op)^\op)^\circledast.$	
By definition $\rho_\mM: \mM^\circledast \to \Enr\Fun_{\emptyset, \mV}(\mM^\op,\mV)^\circledast$ corresponds to the
$\mV,\mV$-enriched left morphism object functor
$\L\Mor_\mM: \mM^\circledast \times (\mM^\op)^\circledast \to \mV^\circledast.$
Thus $\L\Mor_\mM: \mM^\circledast \times (\mM^\op)^\circledast \to \mV^\circledast$
factors as $\mV,\mV$-enriched functors $$\mM^\circledast \times (\mM^\op)^\circledast \xrightarrow{\rho_\mM\times (\mM^\op)^\circledast}\Enr\Fun_{\emptyset, \mV}(\mM^\op,\mV)^\circledast \times (\mM^\op)^\circledast \to \mV^\circledast,$$
which factors as 
$$\mM^\circledast \times (\mM^\op)^\circledast \xrightarrow{\rho'_\mM\times (\mM^\op)^\circledast} (\mM^\op)^\op)^\circledast \times (\mM^\op)^\circledast \xrightarrow{\R\Mor_{\mM^\op}} \mV^\circledast.$$
In particular, $\rho_{\mM^\op}: (\mM^\op)^\circledast \to \Enr\Fun_{\mV,\emptyset}((\mM^\op)^\op,\mV)^\circledast \simeq \Enr\Fun_{\mV,\emptyset}(\mM,\mV)^\circledast,$ where the latter equivalence precomposes with $\rho'$, is the tautological embedding $(\mM^\op)^\circledast \subset \Enr\Fun_{\mV,\emptyset}(\mM,\mV)^\circledast.$


\end{remark}

\begin{corollary}\label{siewalty}
Let $\mV^\ot \to \Ass$ be a monoidal $\infty$-category compatible with small colimits, $\mM^\circledast \to \mV^\ot, \mN^\circledast \to \mV^\ot $ small left enriched $\infty$-categories and $\F:\mM^\circledast \to \mN^\circledast$ a left $\mV$-enriched functor.
The right $\mV$-enriched functor $$\F_!: \Enr\Fun_{\mV,\emptyset}(\mM,\mV)^\circledast \to \Enr\Fun_{\mV,\emptyset}(\mN,\mV)^\circledast$$
restricts to a right $\mV$-enriched functor $(\mM^\op)^\circledast \to (\mN^\op)^\circledast$, which we denote by $\F^\op.$	

\end{corollary} 

Next we apply Corollary \ref{Univ} to deduce that our both models for the left enriched $\infty$-category of enriched presheaves are equivalent:

\begin{theorem}\label{unitol}
Let $\mV^\ot \to \Ass$ be a monoidal $\infty$-category compatible with small colimits and $\mM^\circledast \to \mV^\ot $ a left enriched $\infty$-category. 

\begin{enumerate}
\item The left $\mV$-enriched Yoneda-embedding $$\rho: \mM^\circledast \to \Enr\Fun_{\emptyset, \mV}(\mM^\op,\mV)^\circledast $$
uniquely extends to a left $\mV$-enriched equivalence $$\theta: \mP\L\Env(\mM)_{\L\Enr}^\circledast\simeq\Enr\Fun_{\emptyset, \mV}(\mM^\op,\mV)^\circledast.$$	
	
\item If $\mV^\ot \to \Ass$ is a presentably monoidal $\infty$-category, $\theta$ corresponds to the $\mV,\mV$-enriched functor $$\hspace{10mm}\mP\L\Env(\mM)_{\L\Enr}^\circledast\times (\mM^\op)^\circledast \subset\mP\L\Env(\mM)_{\L\Enr}^\circledast \times (\mP\L\Env(\mM)_{\L\Enr}^\op)^\circledast\xrightarrow{\L\Mor_{\mP\L\Env(\mM)_{\L\Enr}}(-,-)}\mV^\circledast.$$
\end{enumerate}	

\end{theorem}

\begin{proof}

(1): By Proposition \ref{innerhosty} and Theorem \ref{unipor3} there exists a unique left $\mV$-linear small colimits preserving extension $\theta$ of $\rho.$ By Corollary \ref{Univ} it suffices to see that the essential image of $\rho$ generates the target of $\rho$ 
under small colimits and left tensors and that for any
$\Z \in \mM$ the left $\mV$-enriched functor
$$\L\Mor_{\Enr\Fun_{\emptyset, \mV}(\mM^\op,\mV)}(\rho(\Z),-):
\Enr\Fun_{\emptyset, \mV}(\mM^\op,\mV)^\circledast \to \mV^\circledast$$ preserves small colimits and is $\mV$-linear.
This holds by Corollary \ref{yotul}.

(2): 
The $\mV,\mV$-enriched functor $$\mP\L\Env(\mM)_{\L\Enr}^\circledast\times (\mM^\op)^\circledast \subset\mP\L\Env(\mM)_{\L\Enr}^\circledast \times (\mP\L\Env(\mM)_{\L\Enr}^\op)^\circledast\xrightarrow{\L\Mor_{\mP\L\Env(\mM)_{\L\Enr}}(-,-)}\mV^\circledast$$
corresponds to a left $\mV$-enriched functor $\theta': \mP\L\Env(\mM)_{\L\Enr}^\circledast\to\Enr\Fun_{\emptyset, \mV}(\mM^\op,\mV)^\circledast.$
By Proposition \ref{corok}, Proposition \ref{innerhosty} and Lemma \ref{colas} the left $\mV$-enriched functor $\theta'$ preserves small colimits and is left $\mV$-linear.
Hence it is enough to see that $\theta'_{\mid \mM^\circledast}\simeq \theta_{\mid \mM^\circledast}$ by Proposition \ref{envvcor}.
Both latter enriched functors correspond to the $\mV,\mV$-enriched functor $$\L\Mor_{\mM}(-,-): \mM^\circledast\times (\mM^\op)^\circledast \subset\mP\L\Env(\mM)_{\L\Enr}^\circledast \times (\mP\L\Env(\mM)_{\L\Enr}^\op)^\circledast\xrightarrow{\L\Mor_{\mP\L\Env(\mM)_{\L\Enr}}(-,-)}\mV^\circledast.$$

\end{proof}

\begin{corollary}\label{saewo}

Let $\mV^\ot \to \Ass$ be a monoidal $\infty$-category compatible with small colimits, $\mM^\circledast \to \mV^\ot, \mN^\circledast \to \mV^\ot $ small left enriched $\infty$-categories and $\F:\mM^\circledast \to \mN^\circledast$ a left $\mV$-enriched functor.
There is a commutative square of $\infty$-categories left tensored over $\mV$:
$$\begin{xy}
\xymatrix{\mP\L\Env(\mM)_{\L\Enr}^\circledast  \ar[d]^{\simeq}\ar[rr]^{\bar{\F}}
&& \mP\L\Env(\mN)_{\L\Enr}^\circledast \ar[d]^{\simeq} 
\\
\Enr\Fun_{\emptyset, \mV}(\mM^\op,\mV)^\circledast \ar[rr]^{(\F^\op)_!} && \Enr\Fun_{\emptyset, \mV}(\mN^\op,\mV)^\circledast.}
\end{xy}$$
\end{corollary}

\subsection{Enriched $\infty$-categories of enriched functors as an internal hom}

In the following we prove that the weakly enriched $\infty$-categories of enriched functors of Notation \ref{bbbb} are in fact pseudo-enriched, enriched $\infty$-categories, respectively, under reasonable conditions (Theorem \ref{psinho}), and identify such as the internal hom for the tensor product of enriched $\infty$-categories
(Theorem \ref{cloff}).

\begin{theorem}\label{psinho}
\begin{enumerate}
\item Let $\mV^\ot \to \Ass$ be a small $\infty$-operad, $ (\mW^\otimes \to \Ass, \T)$ a small localization pair, $\mM^\circledast \to \mV^\ot$ a small weakly left enriched $\infty$-category and $\mN^\circledast \to \mV^\ot \times \mW^\ot$ a small 
right $\T$-enriched $\infty$-category. Then $\Enr\Fun_{\mV, \emptyset}(\mM, {\mN})^\circledast \to \mW^\ot $ is right $\T$-enriched. 

\vspace{1mm}
\item Let $\mV^\ot \to \Ass$ be an $\infty$-operad, $\mM^\circledast \to \mV^\ot$ a small weakly left enriched $\infty$-category, $\mN^\circledast \to \mV^\ot \times \mW^\ot $ a right enriched $\infty$-category and $ \mW^\ot \to \Ass$ an $\infty$-operad that exhibits $\mW$ as right enriched in $\mW$ such that $\mW$ admits small limits and for every $\W \in \mW$ the functor $\R\Mor_\mW(\W,-):\mW \to \mW$ preserves small limits. Then $\Enr\Fun_{\mV, \emptyset}(\mM, {\mN})^\circledast \to \mW^\ot $ is a right enriched $\infty$-category.

\end{enumerate}

\end{theorem}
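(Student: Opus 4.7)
The strategy is to express the right morphism object $\R\Mor_{\overline{\Enr\Fun_{\mV,\emptyset}(\mM,\mN)}}(\F,\G)\in \mP\Env(\mW)$ as a small limit whose factors lie in the target subcategory of $\mP\Env(\mW)$ (namely $\T^{-1}\mP\Env(\mW)$ in case (1), and the essential image of $\mW$ in case (2)), and then to invoke closure of that subcategory under small limits. The limit formula will come from Corollary \ref{enrhom} applied after embedding $\mN^\circledast$ into the closed bi-tensored envelope $\mP\B\Env(\mN)^\circledast$, which admits left tensors.

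In more detail, I exploit the $\mV,\mW$-enriched embedding $\mN^\circledast\hookrightarrow \mP\B\Env(\mN)^\circledast$ which is fully faithful and preserves right morphism objects by Proposition \ref{morpre}. By Lemma \ref{faith} together with Remark \ref{leman}, the induced map of weakly right $\mW$-enriched $\infty$-categories $\Enr\Fun_{\mV,\emptyset}(\mM,\mN)^\circledast \hookrightarrow \mW^\ot \times_{\mP\Env(\mW)^\ot} \Enr\Fun_{\mV,\emptyset}(\mM,\mP\B\Env(\mN))^\circledast$ is a fully faithful embedding, which passes to a fully faithful embedding on bar-extensions preserving right morphism objects in $\mP\Env(\mW)$. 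After identifying $\mM$ with its unique $\mP\Env(\mV)$-left-enriched extension via Corollary \ref{coronn}(1) (so as to match the monoidality hypothesis of Corollary \ref{enrhom}), applying Corollary \ref{enrhom} in the ambient yields
\begin{equation*}
\R\Mor_{\overline{\Enr\Fun_{\mV,\emptyset}(\mM,\mN)}}(\F,\G)\simeq \underset{[\n]\in\Delta^\op}{\lim}\lim_{\Z_1,\dots,\Z_\n\in\mM^\simeq}\R\Mor_{\mP\B\Env(\mN)}\bigl(\L\Mor_{\bar\mM}(\Z_{\n-1},\Z_\n)\ot\cdots\ot\L\Mor_{\bar\mM}(\Z_1,\Z_2)\ot\F(\Z_1),\,\G(\Z_\n)\bigr).
\end{equation*}

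Now the term-by-term verification. Each factor on the right has the form $\R\Mor_{\mP\B\Env(\mN)}(\V\ot\X,\Y)$ with $\V\in\mP\Env(\mV)$ and $\X,\Y\in\mN$; by Theorem \ref{corok} this is computed as the cotensor of $\R\Mor_{\bar\mN}(\X,\Y)\in\mP\Env(\mW)$ by $\V$ in the closed biaction of $\mP\Env(\mV)$ on $\mP\Env(\mV)\ot\mP\Env(\mW)$ (it represents the presheaf $\W\mapsto \mP\Env(\mV)\ot\mP\Env(\mW)(\V\ot\W,\R\Mor_{\bar\mN}(\X,\Y))$). For (1), $\R\Mor_{\bar\mN}(\X,\Y)\in\T^{-1}\mP\Env(\mW)$ by hypothesis, and Lemma \ref{Lemut} shows that cotensoring by $\V$ preserves $\T$-locality; since $\T^{-1}\mP\Env(\mW)\subset\mP\Env(\mW)$ is reflective and hence closed under small limits, the iterated limit stays in $\T^{-1}\mP\Env(\mW)$. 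For (2), I resolve $\V=\L\Mor_{\bar\mM}(\Z_\bi,\Z_{\bi+1})\in\mP\Env(\mV)$ as a small colimit of representables $\V'=\V'_1\ot\cdots\ot\V'_\bk\in\Env(\mV)$ with $\V'_\jj\in\mV$; each corresponding cotensor reduces to $\R\Mul\Mor_\mN(\V'_1,\dots,\V'_\bk,\X;\Y)\in\mW$ by right enrichment of $\mN$ in $\mW$ together with right pseudo-enrichedness. The hypothesis that $\R\Mor_\mW(\W,-):\mW\to\mW$ preserves small limits implies by induction on $\bk$ that $\Mul_\mW(\V'_1,\dots,\V'_\bk;-):\mW\to\mS$ preserves small limits (using the chain $\Mul_\mW(\V'_1,\dots,\V'_\bk;-)\simeq \mW(\V'_1,\R\Mor_\mW(\V'_2,\R\Mor_\mW(\cdots)))$), which is equivalent to saying that the canonical embedding $\mW\hookrightarrow\mP\Env(\mW)$ preserves small limits; hence $\mW$ is closed under small limits in $\mP\Env(\mW)$ and the iterated limit remains in $\mW$.

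The main obstacle is rigorously justifying the limit formula above. The technical heart of the matter is verifying that the right morphism object of $\F,\G$ computed inside the full weakly right $\mW$-enriched subcategory $\Enr\Fun_{\mV,\emptyset}(\mM,\mN)^\circledast$ agrees, via the embedding, with the right morphism object computed in the ambient $\Enr\Fun_{\mV,\emptyset}(\mM,\mP\B\Env(\mN))^\circledast$, and that Corollary \ref{enrhom} applies in the ambient after the change of enriching $\infty$-operad from $\mV$ to $\mP\Env(\mV)$ via Corollary \ref{coronn}(1). Once this framework is secured via Proposition \ref{morpre}, Lemma \ref{faith}, and the monadic resolution of Proposition \ref{mondec} (which is what drives Corollary \ref{enrhom} under the hood), the term-by-term verifications and the closure-under-limits arguments proceed as sketched.
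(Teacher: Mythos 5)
Your overall strategy is viable and part (1) goes through essentially as you sketch it (modulo the bookkeeping you acknowledge about transporting morphism objects along the embedding $\mN^\circledast\hookrightarrow\mP\B\Env(\mN)^\circledast$ and along the change of enriching operad); note, though, that the paper's own proof of (1) is purely formal and avoids the limit formula entirely: it observes that $\Enr\Fun_{\mV,\emptyset}(\mM,\mN')^\circledast\to\T^{-1}\mP\Env(\mW)^\ot$ is presentably right tensored for $\mN'^\circledast=\mV^\ot\times_{\mP\Env(\mV)^\ot}\mP\B\Env(\mN)^\circledast_\T$, hence right enriched, and then pulls back along $\mW^\ot\subset\T^{-1}\mP\Env(\mW)^\ot$ using Proposition \ref{eqq}. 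Your closure argument for $\mW\subset\widehat{\mP}\Env(\mW)$ under small limits in part (2) is exactly the paper's.

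The genuine gap is in your term-by-term verification for part (2). There you must show that $\R\Mor_{\mP\B\Env(\mN)}(\V\ot\X,\Y)$ lies in $\mW$ for $\V=\L\Mor_{\bar\mM}(\Z_\bi,\Z_{\bi+1})\in\widehat{\mP}\Env(\mV)$, and you do this by writing $\V$ as a \emph{small} colimit of representables $\V'_1\ot\cdots\ot\V'_\bk$ and using closure of $\mW$ under \emph{small} limits. But in part (2) the hypothesis is only that $\mM^\circledast\to\mV^\ot$ is a small weakly left enriched $\infty$-category, which by the paper's conventions means $\mM$ is small and the multi-morphism spaces are small; the operad $\mV^\ot\to\Ass$ itself may be large (and is large in the intended applications). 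A presheaf on the large $\infty$-category $\Env(\mV)$ with small values need not be a small colimit of representables, so the weighted limit computing $\R\Mor_{\mP\B\Env(\mN)}(\V\ot\X,\Y)$ is a priori a large limit, and closure of $\mW$ under small limits does not suffice. The paper circumvents precisely this point: instead of expanding $\R\Mor(\F,\G)$ into the cobar limit whose terms carry the weights $\L\Mor_{\bar\mM}(\Z_\bi,\Z_{\bi+1})$, it shows that the full subcategory $\Lambda$ of functors $\G$ with $\R\Mor_\Theta(\G,\iota\rH)\in\mW$ is closed under small colimits and contains the generators $\mF_\X(\V_1\ot\cdots\ot\V_\n\ot\Z\ot\W_1\ot\cdots\ot\W_\m)$; for these generators the adjunction $\mF_\X\dashv\ev_\X$ gives $\R\Mor_\Theta(\mF_\X(\Z),\iota\rH)\simeq\R\Mor_{\widehat{\mP}\B\Env(\mN)}(\Z,\rH(\X))$, so the $\L\Mor_{\bar\mM}$-weights cancel and one lands directly on right multi-morphism objects of $\mN$ without ever resolving a presheaf on $\Env(\mV)$. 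To repair your argument you would either need to add the hypothesis that $\mV$ is small (or that $\mM$ is totally small), or replace the term-by-term analysis by the adjunction trick.
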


\begin{proof}

(1): The $\mV, \mW$-enriched embedding $\mN^\circledast \hookrightarrow \widetilde{\B\Env}(\mN)_{\T}$ induces a right $\mW$-enriched embedding 
$$\Enr\Fun_{\mV, \emptyset}(\mM,\mN)^\circledast \to \Enr\Fun_{\mV, \emptyset}(\mM,\mP\widetilde{\B\Env}(\mN)_{\T})^\circledast.$$ 
Since $\mN'^\circledast:= \mV^\ot \times_{\mP\Env(\mV)^\ot}\mP\B\Env(\mN)_{\T}^\circledast \to \mV^\ot \times \T^{-1}\mP\Env(\mW)^\ot$ is a presentably right tensored $\infty$-category, by Corollary \ref{innerhost} and Lemma \ref{colas} also $ \Enr\Fun_{\mV, \emptyset}(\mM,\mN')^\circledast \to \T^{-1}\mP\Env(\mW)^\ot$ is a presentably right tensored $\infty$-category and so a right enriched $\infty$-category.
Thus by Proposition \ref{eqq} the pullback $$\mW^\ot \times_{ \T^{-1}\mP\Env(\mW)^\ot} \Enr\Fun_{\mV, \emptyset}(\mM,\mN')^\circledast \simeq \Enr\Fun_{\mV, \emptyset}(\mM,\mP\widetilde{\B\Env}(\mN)_{\T})^\circledast $$
is a right $\T$-enriched $\infty$-category, where the equivalence is by Remark \ref{leman}. So also $\Enr\Fun_{\mV, \emptyset}(\mM,\mN)^\circledast \to \mW^\ot$ is a right $\T$-enriched $\infty$-category.

(2): By Proposition \ref{eqq} the weakly left enriched $\infty$-category
$\mM^\circledast \to \mV^\ot$ is the pullback of a unique left enriched $\infty$-category $\bar{\mM}^\circledast \to \widehat{\mP}\Env(\mV)^\ot$.
The following right $\widehat{\mP}\Env(\mW)$-enriched functor is an equivalence since it
is linear by Corollary \ref{innerhost} and induces an equivalence on underlying $\infty$-categories by Proposition \ref{eqq}:
$$\Theta^\circledast := \Enr\Fun_{\widehat{\mP}\Env(\mV), \emptyset}(\bar{\mM},\widehat{\mP}\B\Env(\mN))^\circledast\to \Enr\Fun_{\mV, \emptyset}(\mM,\widehat{\mP}\B\Env(\mN))^\circledast.$$
We obtain a right $\mW$-enriched embedding:
$$\Enr\Fun_{\mV, \emptyset}(\mM,\mN)^\circledast \subset \Enr\Fun_{\mV, \emptyset}(\mM,\widehat{\mP}\widetilde{\B\Env}(\mN))^\circledast\simeq \mW^\ot \times_{\widehat{\mP}\Env(\mW)^\ot}  \Theta^\circledast.$$
By Corollary \ref{innerhost} and Lemma \ref{colas} the weakly right enriched $\infty$-category $ \Theta^\circledast \to \widehat{\mP}\Env(\mW)^\ot$ is a presentably right tensored $\infty$-category and so exhibits $\Theta$ as right $\widehat{\mP}\Env(\mW)$-enriched.
To prove (2) it is enough to see that for any $\G, \rH \in \Enr\Fun_{\mV, \emptyset}(\mM,\mN)$ the morphism object $\R\Mor_{\Theta}(\iota \circ \G,\iota \circ \rH) \in \widehat{\mP}\Env(\mW) $ belongs to $\mW,$ where $\iota: \mN^\circledast \to \widehat{\mP}\B\Env(\mN)^\circledast$ is the canonical embedding.

The following right $\widehat{\mP}\Env(\mW)$-enriched functor is an equivalence since it is linear by Corollary \ref{innerhost} and induces an equivalence on underlying $\infty$-categories by Corollary \ref{cosqai}:
$$\Theta^\circledast = \Enr\Fun_{\widehat{\mP}\Env(\mV), \emptyset}(\bar{\mM},\widehat{\mP}\B\Env(\mN))^\circledast \to \Enr\Fun_{\mP\Env(\mV), \emptyset}(\bar{\mM},\widehat{\mP}\B\Env(\mN))^\circledast. $$
By Remark \ref{silu} the linear small colimits preserving embedding $ \mP\B\Env(\mN)^\circledast \subset  \widehat{\mP}\B\Env(\mN)^\circledast$ induces a right $\mP\Env(\mW)$-linear small colimits preserving embedding $$ \Theta'^\circledast:= \Enr\Fun_{\mP\Env(\mV), \emptyset}(\bar{\mM},\mP\B\Env(\mN))^\circledast \subset \Enr\Fun_{\mP\Env(\mV), \emptyset}(\bar{\mM},\widehat{\mP}\B\Env(\mN))^\circledast $$$$\simeq \mP\Env(\mW)^\ot \times_{\widehat{\mP}\Env(\mW)^\ot} \Theta^\circledast.$$

Let $\Lambda \subset \Theta$ be the full subcategory of all $\G \in \Theta$
such that for every $\rH \in \Enr\Fun_{\mV, \emptyset}(\mM,\mN)$ the morphism object $\R\Mor_{\Theta}(\G,\iota \circ \rH) \in \widehat{\mP}\Env(\mW) $ belongs to $\mW.$
We will prove that $\Theta' \subset \Lambda.$
We prove first that $\Lambda $ is closed in $\Theta$ under small colimits.
This follows from the fact that the functor $\R\Mor_{\Theta}(-,-): \Theta^\op \times \Theta \to \widehat{\mP}\Env(\mW)$
preserves limits component-wise and that $\mW$ is closed in $\widehat{\mP}\Env(\mW)$ under small limits since $\mW$ admits small limits and both functors $\mW \subset \Env(\mW), \Env(\mW) \subset \widehat{\mP}\Env(\mW)$ preserve arbitrary limits:
the second functor does since it is the Yoneda-embedding, the first functor does because 
for every $\W_1,..., \W_\m $ for $\m \geq 0$ the functor
$\Mul_\mW(\W_1,..., \W_\m,-): \mW \to \widehat{\mS}$ preserves limits as it factors as
$\mW(\W_\m,-) \circ \R\Mor_\mW(\W_{\m-1},-) \circ ... \circ \R\Mor_\mW(\W_1,-): \mW \to \widehat{\mS}.$
Thus $\Lambda $ is closed in $\Theta$ under small colimits.

By Theorem \ref{expli} for every $\X \in \mM$ there is a $\widehat{\mP}\Env(\mW)$-enriched adjunction $ \mF_\X: \widehat{\mP}\B\Env(\mN)^\circledast \rightleftarrows \Theta^\circledast: \ev_\X$, where the right adjoint evaluates at $\X$ and the left adjoints sends $\Z \in \widehat{\mP}\B\Env(\mN)$ to $\L\Mor_\mM(\X,-)\ot \Z.$ 
Since $\mM$ is locally small and the enriched embedding $ \mP\B\Env(\mN)^\circledast \subset  \widehat{\mP}\B\Env(\mN)^\circledast$ is linear, the latter adjunction restricts to a $\mP\Env(\mW)$-enriched adjunction $ \mP\B\Env(\mN)^\circledast \rightleftarrows \Theta'^\circledast,$ where the right adjoint evaluates at $\X$.
Since $\mM$ is small, by Corollary \ref{explicas} the $\infty$-category $ \Theta'$ is generated under small colimits by the objects $\mF_\X(\Z)$ for $\X \in \mM, \Z \in \mP\B\Env(\mN).$
Because $\mP\B\Env(\mN)$ is by definition generated under small colimits by $\B\Env(\mN)$, the $\infty$-category $ \Theta'$ is generated under small colimits by the objects $\mF_\X(\V_1 \ot ... \ot \V_\n \ot \Z \ot \W_1 \ot ... \ot \W_\m)$ for $\X \in \mM, \Z \in \mN, \V_1,...,\V_\n \in \mV, \W_1,...,\W_\m \in \mW$ for $\n, \m \geq 0.$
Since the embedding $\Theta' \subset \Theta$ preserves small colimits, $\Theta'$ is the smallest full subcategory of $\Theta$ closed under small colimits containing the objects $\mF_\X(\V_1 \ot ... \ot \V_\n \ot \Z \ot \W_1 \ot ... \ot \W_\m)$ for $\X \in \mM, \Z \in \mN, \V_1,...,\V_\n \in \mV, \W_1,...,\W_\m \in \mW$ for $\n, \m \geq 0.$
Consequently, $\Theta' \subset \Lambda$ if these objects belong to $\Lambda,$ i.e. for every $\X \in \mM, \Z \in \mN, \V_1,...,\V_\n \in \mV, \W_1,...,\W_\m \in \mW$ for $\n, \m \geq 0$ the morphism object $$\R\Mor_{\Theta}(\mF_\X(\V_1 \ot ... \ot \V_\n \ot \Z \ot \W_1 \ot ... \ot \W_\m),\iota \circ \rH) \simeq $$$$\R\Mor_{\widehat{\mP}\B\Env(\mN)}(\V_1 \ot ... \ot \V_\n \ot \Z \ot \W_1 \ot ... \ot \W_\m,\rH(\X))$$ belongs to $\mW.$ 
We first reduce to the case that $\m=0.$
For that we note that the canonical embedding $\bj: \mW^\circledast \to \widehat{\mP}\Env(\mW)^\circledast$ preserves right morphism objects since 
$\mW$ is right enriched in itself: for every $\W_1,...\W_\m,\X,\Y \in \mW$ for $\m \geq 0$ the induced map $$ \widehat{\mP}\Env(\mW)(\bj(\W_1) \ot ... \ot \bj(\W_\m,), \bj(\R\Mor_\mW(\X,\Y)))\to $$$$\widehat{\mP}\Env(\mW)(\bj(\W_1) \ot ... \ot \bj(\W_\m), \R\Mor_{\mP\Env(\mW)}(\bj(\X),\bj(\Y)))$$ identifies with the canonical equivalence $$\Mul_{\widehat{\mP}\Env(\mW)}(\bj(\W_1), ... , \bj(\W_\m); \bj(\R\Mor_\mW(\X,\Y)))\simeq \Mul_{\mW}(\W_1, ... , \W_\m ; \R\Mor_\mW(\X,\Y))$$$$ \simeq \Mul_{\mW}(\W_1, ... , \W_\m, \X; \Y)\simeq \Mul_{\widehat{\mP}\Env(\mW)}(\bj(\W_1), ... , \bj(\W_\m), \bj(\X); \bj(\Y)).$$
The morphism object $ \R\Mor_{\widehat{\mP}\B\Env(\mN)}(\V_1 \ot ... \ot \V_\n \ot \Z \ot \W_1 \ot ... \ot \W_\m,\rH(\X))$
is the image of 
$$ \R\Mor_{\widehat{\mP}\B\Env(\mN)}(\V_1 \ot ... \ot \V_\n \ot \iota(\Z),\rH(\X))$$ under the functor
$$\R\Mor_{\widehat{\mP}\Env(\mW)}(\bj(\W_\m),-) \circ ... \circ \R\Mor_{\widehat{\mP}\Env(\mW)}(\bj(\W_{1}),-): \widehat{\mP}\Env(\mW) \to \widehat{\mP}\Env(\mW).$$ 
Since $\bj: \mW^\circledast \to \widehat{\mP}\Env(\mW)^\circledast$ preserves right morphism objects, we can assume that $\m=0.$ In this case we like to see that $ \R\Mor_{\widehat{\mP}\B\Env(\mN)}(\V_1 \ot ... \ot \V_\n \ot \Z,\rH(\X))$ belongs to $\mW$.
This follows from the following equivalence: 
$$\R\Mor_{\widehat{\mP}\B\Env(\mN)}(\V_1 \ot ... \ot \V_\n \ot \Z,\rH(\X)) \simeq \R\Mul\Mor_{\widehat{\mP}\B\Env(\mN)}(\V_1, ..., \V_\n, \Z,\rH(\X)) $$$$ \simeq \R\Mul\Mor_{\mN}(\V_1, ... , \V_\n, \Z,\rH(\X)).$$

\end{proof} 

\begin{corollary}
Let $\kappa$ be a regular cardinal, $\mV^\otimes \to \Ass$ an $\infty$-operad, $ \mW^\ot \to \Ass$ a monoidal $\infty$-category compatible with $\kappa$-small colimits, $\mM^\circledast \to \mV^\ot$ a weakly left enriched $\infty$-category and $\mN^\circledast \to \mV^\ot \times \mW^\ot$ a right $\kappa$-enriched $\infty$-category. Then $\Enr\Fun_{\mV, \emptyset}(\mM, {\mN})^\circledast \to \mW^\ot $ is right $\kappa$-enriched.
	
\end{corollary}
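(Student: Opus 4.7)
The plan is to obtain the corollary as an immediate instance of Theorem \ref{psinho}(1) applied to the small localization pair $\T := \Enr^\kappa_\mW$ from Notation \ref{enr}. By Example \ref{Exaso}(1), this pair is indeed a small localization pair whose $\infty$-category of local objects in $\mP\Env(\mW)$ is $\Ind_\kappa(\mW)$, and the associated monoidal localization $\mP\Env(\mW)^\ot \to \Ind_\kappa(\mW)^\ot$ is exactly the universal monoidal functor preserving $\kappa$-small colimits.

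The key translation needed is the right-handed analog of Example \ref{Exaso}(1): a weakly bi-enriched $\infty$-category $\mN^\circledast \to \mV^\ot \times \mW^\ot$ exhibits $\mN$ as right $\kappa$-enriched in $\mW$ (in the sense of Definition \ref{Luel}) if and only if it exhibits $\mN$ as right $\Enr^\kappa_\mW$-enriched in $\mW$. This is obtained either by passing to the reverse via Notation \ref{invo} (which exchanges left $\kappa$-enrichment and left $\Enr^\kappa$-enrichment with their right-handed counterparts) and invoking Example \ref{Exaso}(1) for $\mW^\rev$, or more directly by unwinding the definitions together with Corollary \ref{corg}(2): right $\Enr^\kappa_\mW$-enrichment amounts to the right multi-morphism objects in $\mP\Env(\mW)$ lying in $\Ind_\kappa(\mW)$, which is the same as saying that the right multi-morphism object functors are representable in $\mW$ and that the corresponding presheaves on $\mW$ preserve $\kappa$-small limits, i.e.\ that $\mN$ is right pseudo-enriched in $\mW$ and right $\kappa$-enriched in the sense of Definition \ref{Luel}.

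With this identification in place, Theorem \ref{psinho}(1) applied to $\T = \Enr^\kappa_\mW$ yields that $\Enr\Fun_{\mV, \emptyset}(\mM, \mN)^\circledast \to \mW^\ot$ is right $\Enr^\kappa_\mW$-enriched, hence right $\kappa$-enriched. The smallness hypotheses imposed by Theorem \ref{psinho}(1) on $\mV^\ot$, $\mM$ and $\mN$ can be arranged by passing to a larger universe if necessary, since both the formation of $\Enr\Fun_{\mV,\emptyset}(\mM,\mN)^\circledast$ (via Notation \ref{bbbb}) and the property of right $\kappa$-enrichment are preserved under such a move. No genuine obstacle arises: the proof is essentially a dictionary lookup translating between the $\kappa$-indexed formulation of the corollary and the localization-pair formulation of the theorem.
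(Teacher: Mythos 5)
Your proposal is correct and is exactly the argument the paper intends: the corollary is stated as an immediate instance of Theorem \ref{psinho}(1) with $\T=\Enr^\kappa_\mW$, using Example \ref{Exaso}(1) (transported to the right-handed setting via the reversal involution of Notation \ref{invo}) to identify right $\kappa$-enrichment with right $\Enr^\kappa_\mW$-enrichment. Your remarks on the smallness hypotheses are consistent with the paper's own handling of universe issues, so nothing further is needed.
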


\begin{corollary}\label{ps} Let $\mV^\ot \to \Ass,\mW^\ot \to \Ass$ be small monoidal $\infty$-categories. The functor $$
{_\mV\P\L\Enr}_\emptyset \times {_\emptyset \P\R\Enr}_{\mW} \to {_\mV\P\B\Enr_{\mW}} \ (\mM^\circledast \to \mV^\ot, \mN^\circledast \to \mW^\ot) \mapsto \mM^\circledast \times \mN^\circledast \to \mV^\ot \times \mW^\ot $$
admits component-wise right adjoints.
\end{corollary}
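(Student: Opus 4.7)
The plan is to first exhibit the candidate right adjoints using Proposition \ref{lehmmm}, and then to show that these right adjoints preserve the pseudo-enriched subcategories. Proposition \ref{lehmmm} gives, at the weakly enriched level, natural equivalences
\[
\Enr\Fun_{\mV, \mW}(\mM \times \mN, \mP) \simeq \Enr\Fun_{\mV, \emptyset}(\mM, \Enr\Fun_{\emptyset, \mW}(\mN, \mP)) \simeq \Enr\Fun_{\emptyset, \mW}(\mN, \Enr\Fun_{\mV, \emptyset}(\mM, \mP)),
\]
so that the candidate right adjoints are $\Enr\Fun_{\emptyset, \mW}(\mN, -)$ in the first variable and $\Enr\Fun_{\mV, \emptyset}(\mM, -)$ in the second. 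Since the inclusion $_\mV\P\B\Enr_\mW \subset {_\mV\omega\B\Enr_\mW}$ is full, what remains to verify is that, for $\mM \in {_\mV\P\L\Enr_\emptyset}$, $\mN \in {_\emptyset\P\R\Enr_\mW}$, and $\mP \in {_\mV\P\B\Enr_\mW}$, the weakly right-enriched $\infty$-category $\Enr\Fun_{\mV, \emptyset}(\mM, \mP)^\circledast \to \mW^\ot$ is right pseudo-$\mW$-enriched, and dually $\Enr\Fun_{\emptyset, \mW}(\mN, \mP)^\circledast \to \mV^\ot$ is left pseudo-$\mV$-enriched.

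I will handle the first assertion; the second is dual via the equivalence $(-)^\rev$ of Notation \ref{invo}. The idea is to reduce to the closed monoidal setting, where Theorem \ref{psinho}(2) applies. By Corollary \ref{coronn}(4) and (6), the data $\mM$ and $\mP$ correspond uniquely to a left enriched $\bar{\mM}^\circledast \to \mP(\mV)^\ot$ and a bi-enriched $\bar{\mP}^\circledast \to \mP(\mV)^\ot \times \mP(\mW)^\ot$. Now $\mP(\mW)^\ot \to \Ass$ is a closed monoidal $\infty$-category that admits all small limits, hence $\mP(\mW)$ is right enriched in itself and the functor $\R\Mor_{\mP(\mW)}(\W, -): \mP(\mW) \to \mP(\mW)$ preserves small limits for every $\W \in \mP(\mW)$. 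Therefore Theorem \ref{psinho}(2), applied with $\mW^\ot$ replaced by $\mP(\mW)^\ot$ and with $\bar{\mM}, \bar{\mP}$ as input, yields that $\Enr\Fun_{\mP(\mV), \emptyset}(\bar{\mM}, \bar{\mP})^\circledast \to \mP(\mW)^\ot$ is right $\mP(\mW)$-enriched.

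The final step is to pull back along the embedding $\mW^\ot \subset \mP(\mW)^\ot$. By Remark \ref{leman} this pullback is naturally identified with the right-enriched functor $\infty$-category into the pullback $\mP(\mV)^\ot \times_{\mP(\mV)^\ot} \bar{\mP} \times_{\mP(\mW)^\ot} \mW^\ot$; and under Corollary \ref{coronn}(4) this latter weakly right-enriched functor $\infty$-category is equivalent to $\Enr\Fun_{\mV, \emptyset}(\mM, \mP)^\circledast$. Finally, by Corollary \ref{coronn}(5) a weakly right $\mW$-enriched $\infty$-category is right pseudo-$\mW$-enriched precisely when its extension to $\mP(\mW)$ is right enriched. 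Thus $\Enr\Fun_{\mV, \emptyset}(\mM, \mP)^\circledast \to \mW^\ot$ is right pseudo-$\mW$-enriched, completing the first variable.

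The main obstacle will be the careful bookkeeping needed to justify the identification $\Enr\Fun_{\mV, \emptyset}(\mM, \mP)^\circledast \simeq \Enr\Fun_{\mP(\mV), \emptyset}(\bar{\mM}, \bar{\mP}|_{\mW})^\circledast$ at the level of weakly right $\mW$-enriched $\infty$-categories. This amounts to showing that the $\mV$-enriched functors from $\mM$ into the pulled-back target agree with $\mP(\mV)$-enriched functors from $\bar{\mM}$, which ultimately reduces to Corollary \ref{coronn}(4) applied parametrically over $\mW^\ot$, using the description of enriched functors into an $\mW^\ot$-family of $\mV$-enriched $\infty$-categories via Notation \ref{bbbb}.
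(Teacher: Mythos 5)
Your identification of the candidate right adjoints via Proposition \ref{lehmmm} is exactly the intended first step, and your reduction to showing that $\Enr\Fun_{\mV,\emptyset}(\mM,\mP)^\circledast \to \mW^\ot$ is right pseudo-enriched (and dually) is correct, since the relevant subcategories are full. Where you diverge from the paper is in how you verify this: the paper obtains it in a single application of Theorem \ref{psinho}(1), taking the localization pair $(\mW^\ot \to \Ass, \Enr^\emptyset_\mW)$ of Example \ref{Exaso} with $\kappa=\emptyset$, for which right $\T$-enrichment \emph{is} right pseudo-enrichment and $\T^{-1}\mP\Env(\mW)=\mP(\mW)$; this is precisely what the unlabelled corollary preceding Corollary \ref{ps} records (at $\kappa=\emptyset$). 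Your detour through Theorem \ref{psinho}(2), after extending $\mM$ and $\mP$ to $\mP(\mV)$- and $\mP(\mW)$-enrichment via Corollary \ref{coronn}, is therefore unnecessary, and — more importantly — it leaves a genuine gap.

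The gap is the final identification of $\Enr\Fun_{\mV,\emptyset}(\mM,\mP)^\circledast$ with $\mW^\ot\times_{\mP(\mW)^\ot}\Enr\Fun_{\mP(\mV),\emptyset}(\bar\mM,\bar\mP)^\circledast$ \emph{as weakly right $\mW$-enriched $\infty$-categories}. Corollary \ref{coronn}(4) (equivalently Proposition \ref{eqq}) identifies the underlying $\infty$-categories fibrewise over $\mW^\ot$, but says nothing about the multi-morphism spaces $\Mul(\F,\W_1,\dots,\W_\m;\G)$ of the two weakly right enriched structures, and it is exactly these spaces that carry the pseudo-enrichedness you are trying to transport. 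The device the paper uses for such identifications elsewhere — a right-linear functor between right tensored $\infty$-categories that is an equivalence on underlying $\infty$-categories is an equivalence (via Corollary \ref{innerhost}) — is not available here, because $\mP$ is merely pseudo-enriched and the functor $\infty$-categories in question need not admit right tensors. So the step requires a direct comparison of multi-morphism spaces inside the ambient $\Fun^{\mW^\ot}_{-}(-,-)$ constructions, which you do not supply; it is a verification of essentially the same order as the statement you are proving. The clean fix is to drop the detour and invoke Theorem \ref{psinho}(1) with the localization pair for pseudo-enrichment, which applies verbatim to $\mP \in {_\mV\P\B\Enr_\mW}$ and yields the required right pseudo-enrichment of $\Enr\Fun_{\mV,\emptyset}(\mM,\mP)^\circledast \to \mW^\ot$ directly.
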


\begin{construction}
Let $\mV^\ot \to \Ass, \mW^\ot \to \Ass$ be monoidal $\infty$-categories
and $\mu: \mV^\ot \times_\Ass \mW^\ot \to \mW^\ot$ a monoidal functor.
By Proposition \ref{biii} the pullback $\mu^\ast(\mN)^\circledast \to \mV^\ot \times_\Ass \mW^\ot $ along $\mu$ is the pullback of a unique bipseudo-enriched $\infty$-category $\mN_\mu^\circledast \to \mV^\ot\times (\mW^\rev)^\ot$.
Let $$\widetilde{\mN}_\mu^\circledast \to \mW^\ot \times (\mV^\rev)^\ot$$ be the corresponding bipseudo-enriched $\infty$-category via Notation \ref{invo}.
\end{construction}
\begin{remark}
If for every $\V \in \mV$ the functor $\mu(\V,-): \mW \to \mW $ admits a right adjoint $\Gamma_\V$ and $\mN^\circledast \to \mW^\ot$ exhibits $\mN$ as right enriched in $\mW$, then $\mN_\mu^\circledast \to \mV^\ot\times (\mW^\rev)^\ot$ exhibits $\mN$ as right enriched in $\mW^\rev$, where the right multi-morphism object for $\V_1,...,\V_\n \in \mV, \X,\Y \in \mN$ is $$\R\Mul\Mor_{\mN_\mu}(\V_1,...,\V_\n, \X,\Y) =\Gamma_{\V_1 \ot ... \ot \V_\n}( \R\Mor_\mN(\X,\Y)).$$

If for every $\W \in \mW$ the functor $\mu(-,\W): \mV \to \mW $ admits a right adjoint $\Xi_\W$ and $\mN^\circledast \to \mW^\ot$ exhibits $\mN$ as right enriched in $\mW$, then $\mN_\mu^\circledast \to \mV^\ot\times (\mW^\rev)^\ot$ exhibits $\mN$ as left enriched in $\mV$, where the left multi-morphism object for $\W_1,...,\W_\m \in \mW, \X,\Y \in \mN$ is $$\L\Mul\Mor_{\mN_\mu}(\X,\W_1,...,\W_\m,\Y) =\Xi_{\W_1 \ot ... \ot \W_\m}( \R\Mor_\mN(\X,\Y)).$$

\end{remark}
\begin{notation}\label{Arat}
Let $\mV^\ot \to \Ass, \mW^\ot \to \Ass$ be monoidal $\infty$-categories
and $\mu: \mV^\ot \times_\Ass \mW^\ot \to \mW^\ot$ a monoidal functor.
Let $\mN^\circledast \to \mW^\ot$ be a left pseudo-enriched $\infty$-category and $\mM^\circledast \to \mV^\ot, \mO^\circledast \to \mW^\ot$ weakly left enriched $\infty$-categories.

\begin{enumerate}
\item Let $$\Enr\Fun_\mV(\mM,\mN)^\circledast \to \mW^\ot $$ be the weakly left enriched $\infty$-category corresponding to the weakly right enriched $\infty$-category $$\Enr\Fun_{\mV, \emptyset}(\mM,\mN_\mu)^\circledast \to (\mW^\rev)^\ot.$$

\item Let $$\Enr\Fun_\mW(\mO,\mN)^\circledast \to \mV^\ot $$ be the weakly left enriched $\infty$-category corresponding to the weakly right enriched $\infty$-category $$\Enr\Fun_{\mW, \emptyset}(\mO,\widetilde{\mN}_\mu)^\circledast \to (\mV^\rev)^\ot. $$
\end{enumerate}

\end{notation}

Theorem \ref{psinho} has the following consequences: 
\begin{corollary}\label{psinhosp}
Let $\kappa$ be a small regular cardinal, 
$\mV^\ot \to \Ass, \mW^\ot \to \Ass$ monoidal $\infty$-categories compatible with $\kappa$-small colimits and $\mu: \mV^\ot \times_\Ass \mW^\ot \to \mW^\ot$ a monoidal functor preserving $\kappa$-small colimits component-wise.
Let $\mN^\circledast \to \mW^\ot$ be a left $\kappa$-enriched $\infty$-category and $\mM^\circledast \to \mV^\ot, \mO^\circledast \to \mW^\ot$ weakly left enriched $\infty$-categories.
The following weakly left enriched $\infty$-categories are left $\kappa$-enriched:
$$\Enr\Fun_{\mV}(\mM, {\mN})^\circledast \to \mW^\ot, \ \Enr\Fun_\mW(\mO,\mN)^\circledast \to \mV^\ot.$$

\end{corollary}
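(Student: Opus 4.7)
The plan is to apply Theorem \ref{psinho} (1) after converting left $\kappa$-enrichment in $\mW$ (resp.\ $\mV$) into right $\kappa$-enrichment in $\mW^\rev$ (resp.\ $\mV^\rev$) via the rev-involution of Notation \ref{invo}. By the very definitions of Notation \ref{Arat}, the first claim is equivalent to $\Enr\Fun_{\mV,\emptyset}(\mM,\mN_\mu)^\circledast \to (\mW^\rev)^\ot$ being right $\kappa$-enriched in $\mW^\rev$, and the second to $\Enr\Fun_{\mW,\emptyset}(\mO,\widetilde{\mN}_\mu)^\circledast \to (\mV^\rev)^\ot$ being right $\kappa$-enriched in $\mV^\rev$. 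Theorem \ref{psinho} (1), applied with the small localization pairs $((\mW^\rev)^\ot, \Enr^\kappa_{\mW^\rev})$, respectively $((\mV^\rev)^\ot, \Enr^\kappa_{\mV^\rev})$, reduces both statements to verifying that $\mN_\mu^\circledast \to \mV^\ot \times (\mW^\rev)^\ot$ is right $\kappa$-enriched in $\mW^\rev$ and that $\widetilde{\mN}_\mu^\circledast \to \mW^\ot \times (\mV^\rev)^\ot$ is right $\kappa$-enriched in $\mV^\rev$.

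Both properties will follow from a single claim: $\mN_\mu$ is bi-$\kappa$-enriched in $(\mV,\mW^\rev)$, whence $\widetilde{\mN}_\mu = \mN_\mu^\rev$ is bi-$\kappa$-enriched in $(\mW,\mV^\rev)$. Bi-pseudo-enrichment of $\mN_\mu$ holds by construction via Theorem \ref{biii}, so only the $\kappa$-limit preservation of the multi-morphism presheaves needs to be checked. Unfolding Remark \ref{embrace} together with the pullback definition of $\mu^*(\mN)$, the multi-morphism spaces of $\mN_\mu$ factor through $\mu$, giving
\[ \Mul_{\mN_\mu}(\V,\X,\W;\Y) \simeq \Mul_\mN(\mu(\V,\W),\X;\Y) \]
for $\V \in \mV, \W \in \mW^\rev, \X,\Y \in \mN$. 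Viewed as a presheaf in the $\W$-variable on $(\mW^\rev)^\op = \mW^\op$, this factors as $\mu(\V,-)^\op : \mW^\op \to \mW^\op$ followed by $\Mul_\mN(-,\X;\Y) : \mW^\op \to \mS$; the former preserves $\kappa$-small limits because $\mu$ preserves $\kappa$-small colimits component-wise by hypothesis, and the latter does so by left $\kappa$-enrichment of $\mN$, so the composite preserves $\kappa$-small limits. The symmetric argument in the $\V$-variable yields left $\kappa$-enrichment of $\mN_\mu$ in $\mV$, completing the bi-$\kappa$-enrichment.

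The main obstacle is the smallness hypotheses on the monoidal $\infty$-operads in Theorem \ref{psinho} (1), which are not explicitly imposed in the present corollary. I expect this to be handled by restricting to the small full sub-$\infty$-operads of $\mV^\ot$ and $\mW^\ot$ generated by the essential images of the (small) weakly enriched $\infty$-categories $\mM$ and $\mO$ together with the multi-morphism objects arising in their enrichment: since the right $\kappa$-enrichment of the enriched functor $\infty$-category is a pointwise condition on multi-morphism spaces which depends only on these small data, Theorem \ref{psinho} (1) applies after such a restriction, and the conclusion lifts back to the ambient setting.
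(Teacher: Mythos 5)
Your proof is correct and follows the route the paper intends: the corollary is stated as an immediate consequence of Theorem \ref{psinho} (1) (via Example \ref{Exaso} and Notation \ref{Arat}), and the one substantive point — that $\mN_\mu$ is $\kappa,\kappa$-bi-enriched over $\mV,\mW^\rev$ because its multimorphism spaces factor as $\Mul_{\mN}(\mu(-,-),\X;\Y)$ with $\mu$ preserving $\kappa$-small colimits in each variable — is exactly what you verify. The smallness mismatch you flag is real but equally unaddressed in the paper; a universe-enlargement argument (left $\kappa$-enrichment for a fixed small regular cardinal $\kappa$ is a universe-independent condition, and Theorem \ref{psinho} (1) is universe-polymorphic) would be cleaner than your closing-up construction, though both work.
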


\begin{corollary}\label{psinhosp2}
Let $\mV^\ot \to \Ass, \mW^\ot \to \Ass$ be monoidal $\infty$-categories, $\mu: \mV^\ot \times_\Ass \mW^\ot \to \mW^\ot$ a monoidal functor, $\mM^\circledast \to \mV^\ot, \mO^\circledast \to \mW^\ot$ small weakly left enriched $\infty$-categories and $\mN^\circledast \to \mW^\ot$ a left enriched $\infty$-category. 

\begin{enumerate}
\item If $\mW$ admits small limits, the monoidal structure on $\mW$ is closed and for every $\mV \in \mV$ the functor $\mu(\V,-): \mW \to \mW$ admits a right adjoint, the weakly left enriched $\infty$-category		
$$\Enr\Fun_{\mV}(\mM, {\mN})^\circledast \to \mW^\ot$$
is a left enriched $\infty$-category.

\item If $\mV$ admits small limits, the monoidal structure on $\mV$ is closed and for every $\mW \in \mW$ the functor $\mu(-,\W): \mV \to \mW$ admits a right adjoint, the weakly left enriched $\infty$-category		
$$\Enr\Fun_\mW(\mO,\mN)^\circledast \to \mV^\ot $$ is a left enriched $\infty$-category.	
\end{enumerate}
\end{corollary}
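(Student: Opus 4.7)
The strategy is to derive both statements from Theorem \ref{psinho}(2) by unwinding the defining involution in Notation \ref{Arat}. I will treat part (1) in detail; part (2) is entirely parallel after swapping the roles of $\mV$ and $\mW$ and using $\widetilde{\mN}_\mu$ in place of $\mN_\mu$.

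By Notation \ref{Arat}(1), the weakly left enriched $\infty$-category $\Enr\Fun_\mV(\mM,\mN)^\circledast \to \mW^\ot$ is the image under the involution $(-)^\rev$ of Notation \ref{invo} of the weakly right enriched $\infty$-category $\Enr\Fun_{\mV,\emptyset}(\mM,\mN_\mu)^\circledast \to (\mW^\rev)^\ot$. Since $(-)^\rev$ interchanges left enriched and right enriched $\infty$-categories, it suffices to show the latter is right enriched, and I would apply Theorem \ref{psinho}(2) with $\mV$ unchanged, with the role of $\mW$ played by $\mW^\rev$, and with the bi-enriched input $\mN_\mu^\circledast \to \mV^\ot \times (\mW^\rev)^\ot$. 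Three of the four hypotheses of Theorem \ref{psinho}(2) transfer immediately: the underlying $\infty$-category of $\mW^\rev$ coincides with that of $\mW$ and so admits small limits; closedness of $\mW$ gives closedness of $\mW^\rev$, whence $\mW^\rev$ is self-enriched on both sides; and in a closed monoidal $\infty$-category the right internal hom $\R\Mor_{\mW^\rev}(\W,-)$ is right adjoint to $\W \otimes_\mW (-)$ and thus preserves all small limits.

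The essential step, and the one that actually uses the hypothesis on right adjoints $\Gamma_\V$ to $\mu(\V,-)$, is to verify that $\mN_\mu^\circledast \to \mV^\ot \times (\mW^\rev)^\ot$ is right enriched in $\mW^\rev$. Unwinding the bi-pseudo-enrichment of $\mN_\mu$ via Theorem \ref{biii} from the pullback $\mu^*(\mN)^\circledast \to \mV^\ot \times_\Ass \mW^\ot$, the multi-morphism space $\Mul_{\mN_\mu}(\V_1,\dots,\V_\n,\X,\W_1,\dots,\W_\m;\Y)$ identifies, by monoidality of $\mu$ together with the identity $(\V,\tu_\mW) \otimes (\tu_\mV,\W) = (\V,\W)$ in $\mV^\ot \times_\Ass (\mW^\rev)^\ot$, with $\mW(\mu(\V_1 \otimes \dots \otimes \V_\n, \W_\m \otimes_\mW \dots \otimes_\mW \W_1), \L\Mor_\mN(\X,\Y))$, and the adjunction $\mu(\V_1 \otimes \dots \otimes \V_\n,-) \dashv \Gamma_{\V_1 \otimes \dots \otimes \V_\n}$ rewrites this as $\mW^\rev(\W_1 \otimes_{\mW^\rev} \dots \otimes_{\mW^\rev} \W_\m, \Gamma_{\V_1 \otimes \dots \otimes \V_\n}(\L\Mor_\mN(\X,\Y)))$. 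This exhibits $\Gamma_{\V_1 \otimes \dots \otimes \V_\n}(\L\Mor_\mN(\X,\Y))$ as the desired right multi-morphism object of $\mN_\mu$ in $\mW^\rev$. The main bookkeeping obstacle is to carefully track the order reversal in the second component introduced by $\mW^\rev$ and to verify naturality of the identification in all variables; once this is done, right enrichment of $\mN_\mu$ in $\mW^\rev$ follows and Theorem \ref{psinho}(2) finishes (1). Part (2) is formally identical after interchanging $\mV \leftrightarrow \mW$ and using $\Xi_\W$ in place of $\Gamma_\V$.
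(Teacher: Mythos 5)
Your proposal is correct and follows essentially the same route the paper intends: the corollary is stated as a direct consequence of Theorem \ref{psinho}(2), applied after unwinding Notation \ref{Arat}, with the hypothesis on the right adjoints $\Gamma_\V$ (resp.\ $\Xi_\W$) used exactly as in the remark preceding Notation \ref{Arat} to show that $\mN_\mu$ (resp.\ $\widetilde{\mN}_\mu$) is right enriched in $\mW^\rev$ (resp.\ $\mV^\rev$), with right multi-morphism object $\Gamma_{\V_1\ot\cdots\ot\V_\n}(\L\Mor_\mN(\X,\Y))$. Your verification of the remaining hypotheses of Theorem \ref{psinho}(2) for $\mW^\rev$ (small limits, self-enrichment via closedness, and limit-preservation of $\R\Mor_{\mW^\rev}(\W,-)$ as a right adjoint) is exactly what is needed.
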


\begin{proposition}\label{clof}
Let $\mV^\ot \to \Ass, \mW^\ot \to \Ass$ be monoidal $\infty$-categories
and $\mu: \mV^\ot \times_\Ass \mW^\ot \to \mW^\ot$ a monoidal functor.
Let $\mN^\circledast \to \mW^\ot$ be a left pseudo-enriched $\infty$-category and $\mM^\circledast \to \mV^\ot, \mO^\circledast \to \mW^\ot$ weakly left enriched $\infty$-categories.	
There are canonical equivalences
$$\Enr\Fun_{\mW, \emptyset}(\mO, \Enr\Fun_{\mV}(\mM, \mN)) \simeq \Enr\Fun_{\mW, \emptyset}(\mu_!(\mM \times \mO), \mN) \simeq \Enr\Fun_{\mV, \emptyset}(\mM, \Enr\Fun_{\mW}(\mO, \mN)).$$

\end{proposition}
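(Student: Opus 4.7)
The plan is to identify all three $\infty$-categories in the statement with a common $\infty$-category of weakly left $(\mV \times_\Ass \mW)$-enriched functors $\Enr\Fun_{\mV \times_\Ass \mW, \emptyset}(\mM \boxtimes \mO, \mu^*(\mN))$, where $\mu^*(\mN)^\circledast \to \mV^\ot \times_\Ass \mW^\ot$ is the pullback of $\mN$ along $\mu$ and $\mM \boxtimes \mO$ denotes the pullback of $\mM \times \mO \to \mV^\ot \times \mW^\ot$ along the canonical functor $\mV^\ot \times_\Ass \mW^\ot \to \mV^\ot \times \mW^\ot$, viewed as weakly left $(\mV \times_\Ass \mW)$-enriched. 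For the middle term this identification is immediate: it is the cocartesian property of the pushforward $\mu_!$ along the monoidal functor $\mu$ in $\omega\L\Enr \to \Op_\infty$, i.e. the left-enriched analogue of Theorem \ref{bica}.

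For the first equivalence I unwind Notation \ref{Arat}(1): by definition, $\Enr\Fun_\mV(\mM, \mN)^\circledast$ is the rev of the weakly right $\mW^\rev$-enriched $\infty$-category $\Enr\Fun_{\mV, \emptyset}(\mM, \mN_\mu)^\circledast$. Passing through this rev involution and then applying Proposition \ref{lehmmm}(1) with $\mW$ replaced by $\mW^\rev$ gives
$$\Enr\Fun_{\mW, \emptyset}(\mO, \Enr\Fun_\mV(\mM, \mN)) \simeq \Enr\Fun_{\emptyset, \mW^\rev}(\mO^\rev, \Enr\Fun_{\mV, \emptyset}(\mM, \mN_\mu)) \simeq \Enr\Fun_{\mV, \mW^\rev}(\mM \times \mO^\rev, \mN_\mu).$$
Since $\mN_\mu$ is by construction the unique bi-pseudo-enriched extension of $\mu^*(\mN)$ (Theorem \ref{biii}), and since $\theta$ is defined by pullback along $\tau = (\id, (-)^\op) : \Ass \to \Ass \times \Ass$, the image $\theta(\mN_\mu)$ is $\mu^*(\mN)$ and, by an analogous unwinding, $\theta(\mM \times \mO^\rev) \simeq \mM \boxtimes \mO$. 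It remains to identify the enriched functor $\infty$-categories across $\theta$, yielding $\Enr\Fun_{\mV, \mW^\rev}(\mM \times \mO^\rev, \mN_\mu) \simeq \Enr\Fun_{\mV \times_\Ass \mW, \emptyset}(\mM \boxtimes \mO, \mu^*(\mN))$; combining this with the cocartesian identification above completes the first equivalence.

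The second equivalence is obtained by the symmetric dual argument, using Notation \ref{Arat}(2), Proposition \ref{lehmmm}(2), and the rev involution on the $\mV$-factor. The main obstacle will be the identification across $\theta$ in the previous step, because only the target $\mN_\mu$ (and not the source $\mM \times \mO^\rev$) is bi-pseudo-enriched, while Theorem \ref{biii} states an equivalence only on the full subcategory of bi-pseudo-enriched objects. The way to handle this is to verify directly that the mapping $\infty$-category in $\omega\B\Enr_{\mV,\mW^\rev}$ from a weakly bi-enriched source to a bi-pseudo-enriched target agrees with the mapping $\infty$-category in $\omega\L\Enr_{\mV \times_\Ass \mW}$ between their $\theta$-images, which reduces to the universal property of pseudo-enrichedness (every weakly bi-enriched functor into a bi-pseudo-enriched target factors uniquely through the pseudo-enriched reflection, and the same holds for the left pseudo-enriched reflection on the $\theta$-side by compatibility of the reflections with the pullback defining $\theta$).
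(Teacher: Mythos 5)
Your proof is correct and follows essentially the same route as the paper's: both identify the outer two terms with a common hub via the rev involution and Proposition \ref{lehmmm}, identify the middle term via the cocartesian pushforward along $\mu$, and glue these through the transformation $\theta$ between $\mV,\mW^\rev$-bi-enrichment and left $\mV\times_\Ass\mW$-enrichment (the paper's hub is $\Enr\Fun_{\mV,\mW^\rev}(\mM\times\mO^\rev,\mN_\mu)$, the $\theta$-preimage of yours). The subtlety you flag about $\theta$ only being an equivalence on bi-pseudo-enriched objects is real but is passed over just as tersely in the paper's own proof, so your sketch of how to handle it is, if anything, more careful than the original.
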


\begin{proof}

There is a canonical equivalence
$$\Enr\Fun_{\mW, \emptyset}(\mu_!(\mM \times \mO), \mN) \simeq \Enr\Fun_{\mV \times \mW, \emptyset}(\mM \times \mO, \mu^\ast(\mN))\simeq \Enr\Fun_{\mV, \mW^\rev}(\mM \times \mO^\rev, \mN_\mu).$$

There are canonical equivalences:
$$\Enr\Fun_{\mW, \emptyset}(\mO, \Enr\Fun_{\mV}(\mM, \mN)) \simeq \Enr\Fun_{\emptyset, \mW^\rev}(\mO^\rev, \Enr\Fun_{\mV, \emptyset}(\mM, \mN_\mu)) \simeq $$$$ \Enr\Fun_{\mV, \mW^\rev}(\mM \times \mO^\rev, \mN_\mu),$$		
$$\Enr\Fun_{\mV, \emptyset}(\mM, \Enr\Fun_{\mW}(\mO, \mN)) \simeq \Enr\Fun_{\emptyset, \mV^\rev}(\mM^\rev, \Enr\Fun_{\mW, \emptyset}(\mO, \widetilde{\mN}_\mu)) \simeq $$$$ \Enr\Fun_{\mW, \mV^\rev}(\mO \times \mM^\rev, \widetilde{\mN}_\mu) \simeq \Enr\Fun_{\mV, \mW^\rev}(\mM \times \mO^\rev, \mN_\mu).$$		

\end{proof}

Next we recall the construction of the tensor product of enriched $\infty$-categories
\cite{GEPNER2015575}, \cite[Corollary 5.7.12.]{haugseng2023tensor}.
For that we will use braided monoidal $\infty$-categories and left actions of a braided monoidal $\infty$-category on a monoidal $\infty$-category. In the following we use Definition 
\ref{monob}.

\begin{definition}
A braided monoidal $\infty$-category is a cocartesian fibration $\mV^\boxtimes \to \Ass \times \Ass$ whose pullbacks along the functors $\Ass \times \{[\n] \} \to \Ass \times \Ass, \{[\n]\}\times \Ass \to \Ass \times \Ass$ for every $\n \geq 0$ are monoidal $\infty$-categories.
	
\end{definition}

\begin{notation}
We write $\mV^\ot \to \Ass$ for the pullback of a braided monoidal $\infty$-category $\mV^\boxtimes \to \Ass \times \Ass$ along the functor $\Ass \times \{[1]\} \subset \Ass \times \Ass $. Then $\mV^\ot \to \Ass$ is a monoidal $\infty$-category, which agrees with the pullback along the functor $ \{[1]\} \times \Ass \subset \Ass \times \Ass $.
\end{notation}

A braided monoidal $\infty$-category $\mV^\boxtimes \to \Ass \times \Ass$
classifies a monoid object $\Ass \to \Mon$ (Definition \ref{monob}) corresponding to an associative algebra $\Ass \to \Mon^\times$ for the cartesian structure
by Remark \ref{carst}.

\begin{definition}
A monoidal left action of a braided monoidal $\infty$-category $\mV^\boxtimes \to \Ass \times \Ass$ on a monoidal $\infty$-category $\mW^\ot \to \Ass$ is a map $\mW^\star \to \mV^\boxtimes$ 
of cocartesian fibrations over $\Ass \times \Ass$ that classifies a left action object $\Ass \times [1] \to \Mon$ (Definition \ref{monob}). 
\end{definition}

\begin{remark}Let $\mV^\boxtimes \to \Ass \times \Ass$ be a braided monoidal $\infty$-category
corresponding to an associative algebra $\beta: \Ass \to \Mon^\times$.
A monoidal left action of $\mV^\boxtimes \to \Ass \times \Ass$ on a monoidal $\infty$-category $\mW^\ot \to \Ass$ corresponds to a map $ \alpha \to \beta$ of functors $ \Ass \to \Mon^\times$ that lies over the map $(-)^{\triangleright}\to \id $ of functors $ \Ass \to \Ass$ such that $\alpha$ is a map of cocartesian fibrations relative to the collection of inert morphisms preserving the minimum (Remark \ref{carst}).
\end{remark}


The canonical functor $\L\Enr_\emptyset \to \Op_\infty$ preserves finite products and so gives rise to a monoidal functor on cartesian structures $\L\Enr_\emptyset^\times \to \Op_\infty^\times$.
\begin{proposition}\label{tenpro}
	
The monoidal functor $\L\Enr_\emptyset^\times \to \Op_\infty^\times$ is a cocartesian fibration.
	
\end{proposition}

\begin{proof}
By Theorem \ref{bica} the functor $\phi: \L\Enr_\emptyset \to \Op_\infty$ is a cocartesian fibration.
So it is enough to see that the product of two $\phi$-cocartesian morphisms in $\L\Enr_\emptyset $ is again $\phi$-cocartesian. This follows immediately from the description of $\phi$-cocartesian morphsisms of Theorem \ref{bica}.
	
\end{proof}

\begin{notation}Let $\mW^\ot \to \Ass$ be a monoidal $\infty$-category equipped with a monoidal left action of a braided monoidal $\infty$-category $\mV^\boxtimes \to \Ass\times \Ass$.
	
\begin{enumerate}
\item Then $\mV^\boxtimes \to \Ass\times \Ass$ corresponds to an associative algebra $\beta: \Ass \to \Mon^\times$. 
Let $$_\mV\L\Enr_\emptyset^\ot \to \Ass$$ be the pullback of the cocartesian fibration
$\L\Enr_\emptyset^\times \to \Op_\infty^\times$ along $\beta: \Ass \to \Mon^\times \subset \Op_\infty^\times,$	which is a monoidal $\infty$-category.

\item The left action of $ \mV^\boxtimes \to \Ass\times\Ass$ on $\mW^\ot \to \Ass$ in $\Mon $ corresponds to a map $\sigma: \alpha\to \beta$ of functors $ \Ass \to \Mon^\times$ that lies over the map $(-)^{\triangleright}\to \id $ of functors $ \Ass \to \Ass$ such that 
$\alpha$ is a map of cocartesian fibrations relative to the collection of inert morphisms preserving the minimum.
Let $$_\mW\L\Enr_\emptyset^\circledast \to {_\mV\L\Enr_\emptyset^\ot} $$ be the functor over $\Ass$, which is a left tensored $\infty$-category, classified by the pullback $([1]\times \Ass) \times_{\Op_\infty^\times} \L\Enr_\emptyset^\times\to [1]\times \Ass $ along $\sigma$, which is a map of cocartesian fibrations $ $ over $[1].$

\end{enumerate}	
\end{notation}

The tensor product and left action are defined as in the following notation:

\begin{notation}\label{opot}Let $\mW^\ot \to \Ass$ be a monoidal $\infty$-category equipped with a monoidal left action of a braided monoidal $\infty$-category $\mV^\boxtimes \to \Ass \times \Ass$.
Let $\mM^\circledast \to \mV^\ot, \mN^\circledast \to \mV^\ot, \mO^\circledast \to \mW^\ot$ be small left enriched $\infty$-categories.

\begin{enumerate}
\item Let $(\mM\ot\mN)^\circledast \to \mV^\ot$ be the tranfer of enrichment 
of the left $\mV \times \mV$-enriched $\infty$-category
$\mM^\circledast \times_\Ass \mN^\circledast \to \mV^\ot \times_\Ass \mV^\ot$
along the monoidal tensor product functor $ \mV^\ot \times_\Ass \mV^\ot \to \mV^\ot$.

\item Let $(\mM\ot\mO)^\circledast \to \mW^\ot$ be the tranfer of enrichment 
of the left $\mV \times \mW$-enriched $\infty$-category
$\mM^\circledast \times_\Ass \mN^\circledast \to \mV^\ot \times_\Ass \mW^\ot$
along the monoidal left action functor $\mV^\ot \times_\Ass \mW^\ot \to \mW^\ot$.
\end{enumerate}

\end{notation}

\begin{theorem}\label{cloff}Let $\mW^\ot \to \Ass$ be a closed monoidal $\infty$-category equipped with a monoidal left action of a closed braided monoidal $\infty$-category $\mV^\boxtimes \to \Ass \times \Ass$ such that $\mV,\mW$ admit small limits
and the tensor product $\mV \times \mV \to \mV$ and left action $\mV \times \mW \to \mW$ admit component-wise right adjoints.

\begin{enumerate}
\item Let $\mM^\circledast \to \mV^\ot, \mN^\circledast \to \mV^\ot, \mO^\circledast \to \mV^\ot$ be small left enriched $\infty$-categories.	
The small weakly left enriched $\infty$-categories $\Enr\Fun_{\mV}(\mM, \mN)^\circledast \to \mV^\ot, \Enr\Fun_{\mV}(\mO, \mN)^\circledast \to \mV^\ot$
are left enriched $\infty$-categories and there are canonical equivalences
$$\hspace{12mm}\Enr\Fun_{\mV, \emptyset}(\mO, \Enr\Fun_{\mV}(\mM, \mN)) \simeq \Enr\Fun_{\mW, \emptyset}(\mM \otimes \mO, \mN) \simeq \Enr\Fun_{\mV, \emptyset}(\mM, \Enr\Fun_{\mV}(\mO, \mN)).$$

\item Let $\mM^\circledast \to \mV^\ot, \mN^\circledast \to \mV^\ot, \mO^\circledast \to \mW^\ot$ be small left enriched $\infty$-categories.	
The small weakly left enriched $\infty$-categories $\Enr\Fun_{\mV}(\mM, \mN)^\circledast \to \mW^\ot, \Enr\Fun_{\mW}(\mO, \mN)^\circledast \to \mV^\ot$
are left enriched $\infty$-categories and there are canonical equivalences
$$\hspace{12mm}\Enr\Fun_{\mW, \emptyset}(\mO, \Enr\Fun_{\mV}(\mM, \mN)) \simeq \Enr\Fun_{\mW, \emptyset}(\mM \otimes \mO, \mN) \simeq \Enr\Fun_{\mV, \emptyset}(\mM, \Enr\Fun_{\mW}(\mO, \mN)).$$
\end{enumerate}
	
\end{theorem}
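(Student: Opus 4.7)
The plan is to derive Theorem \ref{cloff} as a direct specialization of Proposition \ref{clof}, using Corollary \ref{psinhosp2} to upgrade ``weakly left enriched'' to ``left enriched'' and identifying the transfer of enrichment $\mu_!(\mM \times \mO)$ appearing in Proposition \ref{clof} with $\mM \otimes \mO$ of Notation \ref{opot}.

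For statement (1), I apply Proposition \ref{clof} with $\mW$ replaced by $\mV$ and $\mu = \ot: \mV^\ot \times_\Ass \mV^\ot \to \mV^\ot$ the tensor product of $\mV$, which is a monoidal functor since $\mV$ is braided. To promote both weakly left enriched $\infty$-categories $\Enr\Fun_\mV(\mM,\mN)^\circledast \to \mV^\ot$ and $\Enr\Fun_\mV(\mO,\mN)^\circledast \to \mV^\ot$ to left enriched $\infty$-categories I invoke Corollary \ref{psinhosp2}(1) and (2) respectively: $\mV$ admits small limits, is closed monoidal, and the tensor product $\ot: \mV \times \mV \to \mV$ admits component-wise right adjoints (all by hypothesis). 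By construction of $\mu_!$ (Proposition \ref{bica}(4)) and Notation \ref{opot}(1), $\mu_!(\mM \times \mO) \simeq \mM \otimes \mO$ as left enriched $\infty$-categories over $\mV^\ot$. The three equivalences of Proposition \ref{clof} then yield the two asserted equivalences of (1).

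For statement (2), I apply Proposition \ref{clof} with $\mu: \mV^\ot \times_\Ass \mW^\ot \to \mW^\ot$ the monoidal left action functor. To promote $\Enr\Fun_\mV(\mM,\mN)^\circledast \to \mW^\ot$ to a left enriched $\infty$-category I apply Corollary \ref{psinhosp2}(1): $\mW$ admits small limits, is closed monoidal, and for each $\V \in \mV$ the functor $\mu(\V,-): \mW \to \mW$ admits a right adjoint. To promote $\Enr\Fun_\mW(\mO,\mN)^\circledast \to \mV^\ot$ to a left enriched $\infty$-category I apply Corollary \ref{psinhosp2}(2): $\mV$ admits small limits, is closed monoidal, and for each $\W \in \mW$ the functor $\mu(-,\W): \mV \to \mW$ admits a right adjoint; these are again all part of the hypotheses. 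The transfer $\mu_!(\mM \times \mO)$ matches $\mM \otimes \mO$ as defined in Notation \ref{opot}(2), and the three equivalences of Proposition \ref{clof} specialize to those in (2).

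The main obstacle is the bookkeeping between the various forms of the product (fibered over $\Ass$ versus unfibered), and making sure that $\mu_!(\mM \times \mO)$ as appearing in Proposition \ref{clof} coincides with the transfer-of-enrichment construction of Notation \ref{opot}; both are formed as the scalar extension along $\mu$ of the left $\mV \times \mV$- or $\mV \times \mW$-enriched product $\mM^\circledast \times_\Ass \mO^\circledast$, so once the notational alignment is settled via the universal property of Proposition \ref{bica}(4), the identification is immediate.
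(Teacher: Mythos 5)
Your proposal is correct and is exactly the derivation the paper intends: Theorem \ref{cloff} is stated without a separate proof precisely because it is the specialization of Proposition \ref{clof} (with $\mu$ the tensor product, resp.\ the action functor) combined with Corollary \ref{psinhosp2} to upgrade weak enrichment to enrichment, together with the identification $\mu_!(\mM\times\mO)\simeq\mM\otimes\mO$ from Notation \ref{opot}. Your verification of the hypotheses of Corollary \ref{psinhosp2} in each of the four cases matches what the stated assumptions of the theorem are designed to supply.
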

Every monoidal $\infty$-category $\mV^\ot \to \Ass$ compatible with small colimits carries a canonical monoidal left action of the cartesian braided monoidal structure $\mS^\times \to \Ass \times \Ass$ \cite[Remark 4.8.1.8]{lurie.higheralgebra}.
The monoidal left action functor is the universal monoidal functor $\mu: \mS^\times \times_\Ass \mV^\ot \to (\mS \ot \mV)^\ot \simeq \mV^\ot$ preserving small colimits component-wise.
The latter monoidal left action gives rise to a left action of $\Cat_\infty \simeq {_\mS\L\Enr_\emptyset} $ on $ {_\mV\L\Enr_\emptyset},$ which we use in the following corollary:


\begin{corollary}\label{space}
Let $\mV^\otimes \to \Ass$ be a presentably monoidal $\infty$-category.
Let $\mM^\circledast \to \mV^\ot, \mN^\circledast \to \mV^\ot$ be left $\mV$-enriched $\infty$-categories and $\K \in \Cat_\infty.$
There is a canonical equivalence $$\Fun(\K, \Enr\Fun_{\mV, \emptyset}(\mM,\mN)) \simeq \Enr\Fun_\mV(\mu_!(\K \times \mM),\mN).$$		
\end{corollary}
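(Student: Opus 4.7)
The plan is to apply Proposition~\ref{clof} with $\mV^\ot = \mS^\times$ (the cartesian monoidal structure on spaces) playing the role of the acting monoidal $\infty$-category and $\mW^\ot = \mV^\otimes$ (our presentably monoidal $\infty$-category) playing the role of the acted-on one, where $\mu: \mS^\times \times_\Ass \mV^\otimes \to \mV^\otimes$ is the canonical monoidal left action encoding the $\mS$-tensoring on $\mV$. Since $\mV$ is presentable this action exists and is monoidal. I would then substitute $\mM = \K$ in the proposition's notation (regarding $\K$ as a left $\mS$-enriched $\infty$-category via Corollary~\ref{uhol}), $\mO = \mM$ in the proposition's notation (the left $\mV$-enriched $\infty$-category from the statement), and keep $\mN$ as the left $\mV$-enriched target. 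The second chain of equivalences in Proposition~\ref{clof} then reads
$$\Enr\Fun_{\mV,\emptyset}(\mu_!(\K \times \mM), \mN) \simeq \Enr\Fun_{\mS,\emptyset}(\K, \Enr\Fun_\mV(\mM, \mN)),$$
where the right-hand factor $\Enr\Fun_\mV(\mM, \mN)^\circledast \to \mS^\times$ is the weakly left $\mS$-enriched $\infty$-category from Notation~\ref{Arat}(2).

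Next I would identify the underlying $\infty$-category of $\Enr\Fun_\mV(\mM, \mN)$ with $\Enr\Fun_{\mV,\emptyset}(\mM, \mN)$. Unwinding Notation~\ref{Arat}(2), this underlying $\infty$-category equals the underlying $\infty$-category of $\Enr\Fun_{\mV,\emptyset}(\mM, \widetilde{\mN}_\mu)$. Because $\mu(1_\mS, -) = \id_\mV$, restricting the bi-pseudo-enriched $\infty$-category $\widetilde{\mN}_\mu^\circledast \to \mS^\times \times \mV^\otimes$ along $\emptyset^\ot \subset \mS^\times$ recovers the original left $\mV$-enrichment on $\mN$; hence the underlying $\infty$-category of left $\mV$-enriched functors $\mM \to \widetilde{\mN}_\mu$ coincides with that of left $\mV$-enriched functors $\mM \to \mN$.

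Finally, since $\K$ is left $\mS$-enriched, Corollary~\ref{uhol} (combined with the canonical identification coming from the unit of the adjunction in Corollary~\ref{asio}) gives that $\Enr\Fun_{\mS,\emptyset}(\K, -)$ agrees with $\Fun(\K, -)$ on underlying $\infty$-categories of weakly left $\mS$-enriched targets. Applied to $\Enr\Fun_\mV(\mM, \mN)$, whose underlying $\infty$-category is $\Enr\Fun_{\mV,\emptyset}(\mM, \mN)$ by the previous paragraph, this identifies the right-hand side of the displayed equivalence with $\Fun(\K, \Enr\Fun_{\mV,\emptyset}(\mM, \mN))$, yielding the claimed canonical equivalence.

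The routine part is invoking Proposition~\ref{clof}; the main obstacle is the careful bookkeeping required in the second paragraph to match the ambient notation, which essentially amounts to verifying that the trivial direction of the $\mS$-action (i.e., the unit $1_\mS$) does not twist the $\mV$-enrichment on $\mN$. Once this is checked, the remaining steps are formal consequences of the universal properties and of Corollary~\ref{uhol}.
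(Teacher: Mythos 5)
Your proposal is correct and follows essentially the same route as the paper: view $\K$ as the unique left $\mS$-enriched $\infty$-category on $\K$ via Corollary \ref{uhol}, apply the equivalence between the middle and third terms of Proposition \ref{clof} for the canonical $\mS$-action $\mu$ on $\mV$, and then use Corollary \ref{uhol} again to identify $\Enr\Fun_{\mS,\emptyset}(\K,-)$ with $\Fun(\K,-)$. Your second paragraph, checking that the unit of $\mS$ does not twist the $\mV$-enrichment so that the underlying $\infty$-category of $\Enr\Fun_\mV(\mM,\mN)$ is $\Enr\Fun_{\mV,\emptyset}(\mM,\mN)$, spells out a bookkeeping step the paper leaves implicit.
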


\begin{proof}By Corollary \ref{uhol} (3) the forgetful functor $_\mS\L\Enr_\emptyset \to \Cat_\infty$ is an equivalence. Let $\K^\circledast \to \mS^\times$ be the unique left enriched $\infty$-category whose underlying $\infty$-category is $\K.$
		
By Proposition \ref{clof} and Corollary \ref{uhol} there is a canonical equivalence $$\Enr\Fun_{\mV, \emptyset}(\mu_!(\K \times \mM),\mN) \simeq \Enr\Fun_{\mS, \emptyset}(\K, \Enr\Fun_\mV(\mM,\mN)) \simeq \Fun(\K, \Enr\Fun_{\mV, \emptyset}(\mM,\mN)).$$		

\end{proof}

\subsection{Enriched profunctors}

\begin{definition}Let $\mV^\ot \to \Ass$ be a monoidal $\infty$-category compatible with small colimits and $\mC^\circledast \to \mV^\ot, \mD^\circledast \to \mV^\ot$ right $\mV$-enriched $\infty$-categories. A $\mV$-enriched profunctor $\mC \to \mD$
is a $\mV,\mV$-enriched functor $ (\mD^\op)^\circledast \times \mC^\circledast \to \mV^\circledast.$	

\end{definition}

\begin{remark}
	
Haugseng \cite{haugseng_2016} defines enriched bimodules, another model for enriched profunctors. Hinich \cite[8.1.]{HINICH2020107129} defines enriched
profunctors under the name enriched correspondences and identifies such with enriched functors to the interval \cite[Proposition 8.3.2.]{HINICH2020107129}. 

\end{remark}

\begin{remark}
Let $\mC^\circledast \to \mV^\ot, \mD^\circledast \to \mV^\ot$ be right enriched $\infty$-categories. A $\mV$-enriched profunctor $\mC \to \mD$
corresponds to a right $\mV$-enriched functor 
$ \mC^\circledast \to \Enr\Fun_{\mV,\emptyset}(\mD^\op, \mV)^\circledast,$		
which corresponds to a right $\mV$-linear small colimits preserving functor
$ \Enr\Fun_{\mV,\emptyset}(\mC^\op, \mV)^\circledast \to \Enr\Fun_{\mV,\emptyset}(\mD^\op, \mV)^\circledast$ by Theorem \ref{unitol}.

\end{remark}

\begin{example}Let $\mC^\circledast \to \mV^\ot$ be a right enriched $\infty$-category.
The $\mV,\mV$-enriched morphism object functor $\L\Mor: (\mC^\op)^\circledast \times \mC^\circledast \to \mV^\circledast $ of Notation \ref{nnoo} is a $\mV$-enriched profunctor $\mC \to \mC$,
which corresponds to the identity of $\Enr\Fun_{\mV,\emptyset}(\mC^\op, \mV)^\circledast$
and to the right $\mV$-enriched Yoneda-embedding $\mC^\circledast \to \Enr\Fun_{\mV,\emptyset}(\mC^\op, \mV)^\circledast.$

\end{example}

\begin{example}\label{relto} Let $\mM^\circledast \to \mV^\ot $ be a right tensored $\infty$-category compatible with small colimits and $\A,\B$ associative algebras in $\mV.$
By \cite[Theorem 4.8.4.1.]{lurie.higheralgebra} and Corollary \ref{corz} there is a canonical left $\mV$-enriched equivalence $$\Enr\Fun_{\emptyset,\mV}(B\A, \mM)^\circledast \simeq \LinFun^\L_{\emptyset,\mV}(\LMod_\A(\mV), \mM)^\circledast \simeq \RMod_\A(\mM)^\circledast.$$
Hence there is a canonical left $\mV$-enriched equivalence $\Enr\Fun_{\emptyset,\mV}(B\A, \mV)^\circledast \simeq \RMod_\A(\mV)^\circledast$ and so dually a right $\mV$-enriched equivalence $\Enr\Fun_{\mV,\emptyset}(B\A^\op, \mV)^\circledast \simeq \LMod_\A(\mV)^\circledast.$

Thus a $\mV$-enriched profunctor $B\A \to B\B$ corresponds to a right $\mV$-linear small colimits preserving functor
$\LMod_\A(\mV)^\circledast \to \LMod_\B(\mV)^\circledast,$ which corresponds to an object of
$ \RMod_\A(\LMod_\B(\mV)) $ identified with an $\A,\B$-bimodule in $\mV$ by Remark \ref{bimo}.	

\end{example}

\begin{definition} Let $\mC^\circledast \to \mV^\ot, \mD^\circledast \to \mV^\ot, \mE^\circledast \to \mV^\ot$ be right enriched $\infty$-categories and 
$\F: \mC \to \mD, \G: \mD \to \mE$ be $\mV$-enriched profunctors corresponding to right $\mV$-linear small colimits preserving functors $\Enr\Fun_{\mV,\emptyset}(\mC^\op, \mV)^\circledast \to \Enr\Fun_{\mV,\emptyset}(\mD^\op, \mV)^\circledast, \Enr\Fun_{\mV,\emptyset}(\mD^\op, \mV)^\circledast \to \Enr\Fun_{\mV,\emptyset}(\mE^\op, \mV)^\circledast.$
The relative tensor product of $\F$ and $\G$, denoted by $\F \ot_\mD \G$, is the $\mV$-enriched profunctor $\mC \to \mE$ corresponding to the right $\mV$-linear small colimits preserving functor $$\Enr\Fun_{\mV,\emptyset}(\mC^\op, \mV)^\circledast \to \Enr\Fun_{\mV,\emptyset}(\mD^\op, \mV)^\circledast \to \Enr\Fun_{\mV,\emptyset}(\mE^\op, \mV)^\circledast.$$
\end{definition}

\begin{example}
There are canonical equivalences $\F \ot_\mD \L\Mor_{\mD} \simeq \F,\ \L\Mor_{\mD} \ot_\mD \G \simeq\G.$	

\end{example}

\begin{corollary}\label{reltu} Let $\mC^\circledast \to \mV^\ot, \mD^\circledast \to \mV^\ot, \mE^\circledast \to \mV^\ot$ be right enriched $\infty$-categories and 
$\F: \mC \to \mD, \G: \mD \to \mE$ be $\mV$-enriched profunctors.
For every $\X \in \mC, \Y \in \mE$ there is a canonical equivalence $$ (\F \ot_\mD \G)(\Y,\X) \simeq $$$$ \underset{[\n]\in \Delta^\op}{\colim}(\colim_{\Z_1, ...., \Z_\n \in \mD^\simeq} \F(\Z_1,\X) \ot\L\Mor_{\mD}(\Z_2,\Z_1) \ot .... \ot \L\Mor_{\mD}(\Z_{\n},\Z_{\n-1})\ot \G(\Y,\Z_{\n}))).$$
\end{corollary}

\begin{proof}By Proposition \ref{mondec} there is a canonical equivalence
$$\F(-,\X) \simeq $$$$\underset{[\n]\in \Delta^\op}{\colim}(\colim_{\Z_1, ...., \Z_\n \in \mD^\simeq} \F(\X,\Z_1) \ot\L\Mor_{\mD}(\Z_2,\Z_1) \ot .... \ot \L\Mor_{\mD}(\Z_{\n},\Z_{\n-1})\ot \L\Mor_{\mD}(-,\Z_{\n}))$$ in $\Enr\Fun_{\mV,\emptyset}(\mD^\op,\mV)$
and so a canonical equivalence in $\Enr\Fun_{\mV,\emptyset}(\mE^\op,\mV):$
$$ (\F \ot_\mD \G)(-,\X) \simeq \bar{\G}(\F(,-\X)) \simeq $$$$ \underset{[\n]\in \Delta^\op}{\colim} (\colim_{\Z_1, ...., \Z_\n \in \mD^\simeq} \F(\Z_1,\X) \ot\L\Mor_{\mD}(\Z_2,\Z_1) \ot .... \ot \L\Mor_{\mD}(\Z_{\n},\Z_{\n-1})\ot \bar{\G}(\L\Mor_{\mD}(-,\Z_{\n}))) $$$$ \simeq  \underset{[\n]\in \Delta^\op}{\colim}(\colim_{\Z_1, ...., \Z_\n \in \mD^\simeq} \F(\Z_1,\X) \ot\L\Mor_{\mD}(\Z_2,\Z_1) \ot .... \ot \L\Mor_{\mD}(\Z_{\n},\Z_{\n-1})\ot \G(-,\Z_{\n}))).$$
\end{proof}

\subsection{An end formula for morphism objects of enriched functor $\infty$-categories}

In the following we describe morphism objects in the enriched $\infty$-category of enriched functors as an enriched end (Theorem \ref{end}).

In the following we will use symmetric monoidal $\infty$-categories \cite[Definition 2.0.0.7.]{lurie.higheralgebra}, which are cocartesian fibrations
$\mV^\boxtimes \to \Comm$ to the category $\Comm$ of finite pointed sets
such that for every $\n \geq 0$ the canonical induced functor $\mV^\boxtimes_{\{1,...,\n,*\}} \to \mV^{\times\n}$ is an equivalence, where $ \mV:=\mV^\boxtimes_{\{1,*\}}.$
There is a canonical functor $\theta: \Ass \to \Comm, [\n]\mapsto \{1,...,\n,*\} $
\cite[Construction 4.1.2.9.]{lurie.higheralgebra} and we write $\mV^\ot \to \Ass$ for the pullback of a symmetric monoidal $\infty$-category $\mV^\boxtimes \to \Comm$ along $\theta,$ which is a monoidal $\infty$-category.

If $\mV^\ot \to \Ass$ is the pullback of a symmetric monoidal $\infty$-category $\mV^\boxtimes \to \Comm$, there is a canonical monoidal equivalence
$(\mV^\rev)^\ot \simeq \mV^\ot$ since $\theta$ factors as $ \Ass \xrightarrow{(-)^\op} \Ass \xrightarrow{\theta}\Comm$.
Consequently, there is no need to distinguish between (weakly) left and right $\mV$-(pseudo)-enriched $\infty$-categories, which we therefore call (weakly) $\mV$-(pseudo)-enriched $\infty$-categories. 
Moreover there is no need to distinguish between left and right morphism objects, which we call morphism objects, and between left and right (co)tensors, which we call (co)tensors.

\begin{notation}Let $\mV^\boxtimes \to \Comm$ be a symmetric monoidal $\infty$-category and $\mM^\circledast \to \mV^\ot $ a right $\mV$-enriched $\infty$-category. Let $$\Mor_\mM: (\mM^\op  \otimes \mM)^\circledast \to \mV^\circledast$$
be the left $\mV$-enriched functor corresponding to the left $\mV \times \mV$-enriched functor $ (\mM^\op)^\circledast \times_\Ass \mM^\circledast \to \ot_\mV^*(\mV)^\circledast$ corresponding to the $\mV,\mV$-enriched functor $\R\Mor_\mM: (\mM^\op)^\circledast \times \mM^\circledast \to \mV^\circledast$ of Notation \ref{nnoo}. 
\end{notation}



\begin{definition}\label{Aho} Let $\mV^\ot \to \Comm$ be a symmetric monoidal $\infty$-category, $\mM^\circledast \to \mV^\ot, \mJ^\circledast \to \mV^\ot$ pseudo-enriched $\infty$-categories and $\F: (\mJ^\op\ot\mJ)^\circledast \to \mM^\circledast$ a $\mV$-enriched functor.
	
\begin{itemize}
\item The $\mV$-enriched end of $\F$, denoted by $\int_\mJ \F$, is the object representing the $\mV$-enriched presheaf
$$\mM^\op \to \mV, \X \mapsto \Mor_{\Enr\Fun_{\mV}(\mJ^\op \otimes \mJ,\mV)}(\Mor_\mJ, (\Mor_{\mM}(\X,-)) \circ \F).$$ 

\item The $\mV$-enriched coend of $\F$, denoted by $\int^\mJ \F$, is the object corepresenting the $\mV$-enriched functor
$$\mM \to \mV, \X \mapsto \Mor_{\Enr\Fun_{\mV}(\mJ \otimes \mJ^\op,\mV)}(\Mor_{\mJ^\op}, \Mor_{\mM}(-,\X) \circ \F^\op).$$ 	
\end{itemize}	
	
\end{definition}

\begin{remark}
By universal property the $\mV$-enriched end of $\F$ is the $\mV$-enriched coend of $\F^\op$.
	
\end{remark}

\begin{remark}\label{ioo}
Let $\mV^\ot \to \Comm$ be a symmetric monoidal $\infty$-category, $\mJ^\circledast \to \mV^\ot $ a pseudo-enriched $\infty$-category and $\F: (\mJ^\op\ot\mJ)^\circledast \to \mV^\circledast$ a $\mV$-enriched functor.
By Definition \ref{Aho} there is an equivalence
$$ \int_\mJ \F \simeq \Mor_{\Enr\Fun_{\mV}(\mJ^\op \otimes \mJ,\mV)}(\Mor_\mJ, \F).$$
\end{remark}

\begin{remark}
	
Let $\mV^\ot \to \Comm$ be a symmetric monoidal $\infty$-category, $\mM^\circledast \to \mV^\ot, \mJ^\circledast \to \mV^\ot$ pseudo-enriched $\infty$-categories and $\F: (\mJ^\op\ot\mJ)^\circledast \to \mM^\circledast$ a $\mV$-enriched functor.
A $\mV$-enriched functor $\phi: \mM^\circledast \to \mN^\circledast$ that admits a $\mV$-enriched right adjoint $\gamma$ preserves $\mV$-enriched coends: the canonical morphism $\int^\mJ \phi \circ \F \to \phi(\int_\mJ\F)$
is corepresented by the canonical equivalence
$$ \Mor_{\Enr\Fun_{\mV}(\mJ \otimes \mJ^\op,\mV)}(\Mor_{\mJ^\op}, \Mor_{\mM}(-,\gamma(\X)) \circ \F^\op) \to $$$$ \Mor_{\Enr\Fun_{\mV}(\mJ \otimes \mJ^\op,\mV)}(\Mor_{\mJ^\op}, \Mor_{\mN}(-,\X) \circ \phi^\op \circ \F^\op).$$
	
\end{remark}

\begin{notation}
Let $\mJ$ be an $\infty$-category. Let $\q: \Tw(\mJ)\to \mJ^\op\times \mJ$ be the left fibration classifying the mapping space functor $\mJ(-,-):\mJ^\op\times\mJ \to \mS.$	
	
\end{notation}

\begin{lemma}
Let $\mM^\circledast \to \mS^\times$ be an enriched $\infty$-category.
The $\mS$-enriched end of a $\mS$-enriched functor
$\F: (\mJ^\op)^\circledast\times_{\Ass}\mJ^\circledast\simeq (\mJ^\op \otimes \mJ)^\circledast \to \mM^\circledast$ is the limit of the functor
$$ \Tw(\mJ) \xrightarrow{\q} \mJ^\op \times \mJ \xrightarrow{\F} \mM.$$
	
	
\end{lemma}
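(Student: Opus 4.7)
The plan is to reduce everything to ordinary (unenriched) $\infty$-category theory and then invoke the classical end formula of Gepner-Haugseng-Nikolaus. By Corollary \ref{uhol}(1), $\mS$-enrichment agrees with ordinary $\infty$-categorical structure, so $\mM^\circledast \to \mS^\times$ corresponds to an $\infty$-category $\mM$, and for every $\X \in \mM$ the morphism object $\Mor_\mM(\X,-) \colon \mM^\circledast \to \mS^\circledast$ corresponds to the functor $\map_\mM(\X,-) \colon \mM \to \mS$. Similarly, the $\mS$-enriched functor $\Mor_\mJ \colon (\mJ^\op \otimes \mJ)^\circledast \to \mS^\circledast$ corresponds to the mapping-space functor $\map_\mJ \colon \mJ^\op \times \mJ \to \mS$, and the natural equivalence $\Enr\Fun_\mS(\mJ^\op \otimes \mJ, \mS) \simeq \Fun(\mJ^\op \times \mJ, \mS)$ identifies $\mS$-enriched natural transformations with ordinary ones.

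Under these identifications, by Remark \ref{ioo} applied fiberwise (and Definition \ref{Aho}), the defining representability of $\int_\mJ \F$ becomes
\[
\map_\mM\!\left(\X, \int_\mJ \F\right) \simeq \Map_{\Fun(\mJ^\op \times \mJ, \mS)}\!\left(\map_\mJ,\ \map_\mM(\X,-) \circ \F\right)
\]
naturally in $\X \in \mM$. The key classical input, due to Gepner-Haugseng-Nikolaus \cite[Proposition 5.1]{articles}, is that for any functor $\G \colon \mJ^\op \times \mJ \to \mS$ there is a canonical equivalence
\[
\Map_{\Fun(\mJ^\op \times \mJ, \mS)}(\map_\mJ, \G) \simeq \lim_{\Tw(\mJ)} \G \circ \q,
\]
which is precisely the definition of the end $\int_{\Y \in \mJ} \G(\Y,\Y)$ in that model. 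Applying this to $\G = \map_\mM(\X,-) \circ \F$ and using that $\map_\mM(\X,-)$ preserves limits, we obtain
\[
\map_\mM\!\left(\X, \int_\mJ \F\right) \simeq \lim_{\Tw(\mJ)} \map_\mM(\X, \F \circ \q) \simeq \map_\mM\!\left(\X,\ \lim_{\Tw(\mJ)} \F \circ \q\right)
\]
naturally in $\X$. By the Yoneda lemma this produces an equivalence $\int_\mJ \F \simeq \lim(\F \circ \q)$ in $\mM$, as desired.

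The only subtle step is the first one: one must genuinely check that under the equivalence $_\mS\B\Enr_\emptyset \simeq \Cat_\infty$ of Corollary \ref{uhol}, the $\mS$-enriched morphism objects $\Mor_\mJ$ and $\Mor_\mM(\X,-)$ are transported to the mapping-space functor and the representable functor respectively, and that the $\mS$-enriched functor $\infty$-category $\Enr\Fun_\mS(\mJ^\op \otimes \mJ, \mS)$ corresponds under this equivalence to $\Fun(\mJ^\op \times \mJ, \mS)$ with its ordinary mapping spaces. The first two identifications are immediate from the construction of $\Mor$ in Notation \ref{nnoo} once one unwinds Corollary \ref{uhol}, and the third follows from Proposition \ref{lehmmm} combined with Corollary \ref{uhol} applied to functor $\infty$-categories. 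After this bookkeeping, the result is purely a restatement of the classical twisted-arrow formula for ends in spaces; no further calculation is needed.
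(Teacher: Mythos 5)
Your proof is correct and follows essentially the same route as the paper: reduce to ordinary $\infty$-category theory via the equivalence $_\mS\B\Enr_\emptyset\simeq\Cat_\infty$, identify the defining representability of $\int_\mJ\F$ with a mapping space in $\Fun(\mJ^\op\times\mJ,\mS)$, and conclude via the twisted-arrow formula and the fact that $\map_\mM(\X,-)$ preserves limits. The only difference is that where you cite Gepner--Haugseng--Nikolaus for the equivalence $\Map_{\Fun(\mJ^\op\times\mJ,\mS)}(\map_\mJ,\G)\simeq\lim_{\Tw(\mJ)}\G\circ\q$, the paper proves this step inline by unstraightening: it identifies the mapping space with $\Fun_{\mJ^\op\times\mJ}(\Tw(\mJ),\mX)$ for $\mX$ the left fibration classifying $\G$, and then with the space of sections of $\Tw(\mJ)\times_{\mJ^\op\times\mJ}\mX\to\Tw(\mJ)$, which computes the limit.
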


\begin{proof}
For every $\X \in \mM$ let $\mX \to \mJ^\op \times\mJ$ be the left fibration classifying the functor $\mM(\X,-)\circ \F: \mJ^\op\times\mJ \to \mS$.
By Remark \ref{ioo} there is a canonical equivalence
$$ \mM(\X,\int_\mJ \F) \simeq  \Fun(\mJ^\op\times\mJ,\mS)(\mJ(-,-),\mM(\X,\F(-))) \simeq \Fun_{\mJ^\op \times \mJ}(\Tw(\mJ),\mX) \simeq $$$$\Fun_{\Tw(\mJ)}(\Tw(\mJ),\Tw(\mJ) \times_{\mJ^\op \times \mJ} \mX) \simeq \lim(\mM(\X,-)\circ \F \circ \q) \simeq \mM(\X,\lim(\F \circ \q))$$
representing an equivalence
$ \int_\mJ \F \simeq \lim(\F \circ \q).$
	
	
\end{proof}

\begin{theorem}\label{end}Let $\mV^\ot \to \Comm$ be a presentably symmetric monoidal $\infty$-category, $\mM^\circledast \to \mV^\ot$ a small enriched $\infty$-category and $ \mJ^\circledast \to \mV^\ot $ a small weakly enriched $\infty$-category. Let $\F,\G: \mJ^\circledast \to \mM^\circledast$ be $\mV$-enriched functors.	
Then $\Mor_{\Enr\Fun_{\mV}(\mJ,\mM)}(\F,\G)$
is the $\mV$-enriched end $ \int_\mJ \Mor_\mM \circ (\F^\op \otimes \G).$
	
\end{theorem}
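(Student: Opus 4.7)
The plan is to reduce this end computation to the fully faithfulness of the enriched Yoneda embedding via the tensor-hom adjunction. By Theorem \ref{cloff} applied to the symmetric monoidal $\infty$-category $\mV$, there is a canonical $\mV$-enriched equivalence $\Enr\Fun_\mV(\mJ^\op \otimes \mJ, \mV) \simeq \Enr\Fun_\mV(\mJ, \mP_\mV(\mJ))$, where $\mP_\mV(\mJ) := \Enr\Fun_\mV(\mJ^\op,\mV)$. Under this equivalence, $\Mor_\mJ$ transposes to the enriched Yoneda embedding $\rho_\mJ: \mJ \to \mP_\mV(\mJ)$ of Notation \ref{nnoo}, and $\Mor_\mM \circ (\F^\op \otimes \G)$ transposes to the composite $\F^* \circ \rho_\mM \circ \G$, where $\F^*: \mP_\mV(\mM) \to \mP_\mV(\mJ)$ denotes restriction along $\F^\op$. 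Combined with Remark \ref{ioo}, this yields
$$\int_\mJ \Mor_\mM \circ (\F^\op \otimes \G) \simeq \Mor_{\Enr\Fun_\mV(\mJ, \mP_\mV(\mJ))}(\rho_\mJ, \F^* \circ \rho_\mM \circ \G).$$

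Next I would invoke the $\mV$-enriched adjunction $\bar{\F}: \mP_\mV(\mJ) \rightleftarrows \mP_\mV(\mM): \F^*$ of Corollary \ref{saewo}. By Remark \ref{indadj}, postcomposition promotes this to an enriched adjunction on $\Enr\Fun_\mV(\mJ, -)$, producing
$$\Mor_{\Enr\Fun_\mV(\mJ, \mP_\mV(\mJ))}(\rho_\mJ, \F^* \circ \rho_\mM \circ \G) \simeq \Mor_{\Enr\Fun_\mV(\mJ, \mP_\mV(\mM))}(\bar{\F} \circ \rho_\mJ, \rho_\mM \circ \G).$$
The naturality of the enriched Yoneda embedding, which follows from Lemma \ref{siewalt} together with the identification of $\bar{\F}$ in Corollary \ref{saewo}, gives a canonical equivalence $\bar{\F} \circ \rho_\mJ \simeq \rho_\mM \circ \F$, so the right hand side reduces to $\Mor_{\Enr\Fun_\mV(\mJ, \mP_\mV(\mM))}(\rho_\mM \circ \F, \rho_\mM \circ \G)$.

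The final step, and the main obstacle, is to show that postcomposition with the fully faithful Yoneda embedding $\rho_\mM: \mM \to \mP_\mV(\mM)$ induces an equivalence
$$\Mor_{\Enr\Fun_\mV(\mJ, \mM)}(\F, \G) \simeq \Mor_{\Enr\Fun_\mV(\mJ, \mP_\mV(\mM))}(\rho_\mM \circ \F, \rho_\mM \circ \G).$$
The Yoneda embedding is fully faithful on morphism objects by Proposition \ref{yofaith} and Lemma \ref{exhibl}, giving $\Mor_\mM(\X,\Y) \simeq \Mor_{\mP_\mV(\mM)}(\rho_\mM \X, \rho_\mM \Y)$, and by the universal property of morphism objects this upgrades to $\Mul_\mM(\V_1,\dots,\V_\n,\X;\Y) \simeq \Mul_{\mP_\mV(\mM)}(\V_1,\dots,\V_\n, \rho_\mM \X; \rho_\mM \Y)$ for any $\V_1, \dots, \V_\n \in \mV$. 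The delicate point is to promote this to an equivalence of morphism objects in the weakly $\mV$-enriched functor $\infty$-category, where the target $\mM$ is not assumed to admit tensors so that Corollary \ref{enrhom} does not directly apply. I would argue this either abstractly, using that $\omega\B\Enr$ is an $(\infty,2)$-category in which $\Enr\Fun_\mV(\mJ,-)$ is computed by the internal hom (Remark \ref{2-catt}) and hence preserves fully faithful morphisms, or concretely by resolving $\F$ and $\G$ via the monadic presentation of Proposition \ref{mondec} applied after embedding $\mM$ into $\mP_\mV(\mM)$, reducing the comparison to the pointwise equivalences above together with the compatibility of $\rho_\mM$ with the tensors of $\mP_\mV(\mM)$ needed to build the bar resolution.
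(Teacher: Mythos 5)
Your proposal is correct and follows essentially the same chain of equivalences as the paper's proof: transposition via the tensor--hom adjunction of Theorem \ref{cloff}, the adjunction $\F_!\dashv\F^*$ from Corollary \ref{saewo} together with Remark \ref{indadj}, naturality of the enriched Yoneda embedding, and finally full faithfulness of postcomposition with $\rho_\mM$. The only point worth noting is that the step you flag as ``the main obstacle'' is dispatched in the paper directly by Lemma \ref{faith} (applied to the embedding $\rho_\mM$ of total $\circledast$-categories over $\mV^\ot$, which yields equivalences on all multimorphism spaces and hence on morphism objects), so neither the $(\infty,2)$-categorical detour nor the bar-resolution argument you sketch is needed.
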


\begin{proof}
We use Remark \ref{ioo}. There is a chain of equivalences:
$$\Mor_{\Enr\Fun_{\mV}(\mJ^\op\otimes\mJ,\mV)}(\Mor_\mJ, \Mor_\mM \circ (\F^\op \otimes \G)) \simeq $$$$
\Mor_{\Enr\Fun_{\mV}(\mJ,\Enr\Fun_{\mV}(\mJ^\op,\mV))}(\rho_\mJ, \Enr\Fun_{\mV}(\F^\op,\mV) \circ\rho_\mM \circ \G) \simeq $$$$ \Mor_{\Enr\Fun_{\mV}(\mJ,\Enr\Fun_{\mV}(\mM^\op,\mV))}(\F_! \circ \rho_\mJ,  \rho_\mM \circ \G)\simeq $$
$$\Mor_{\Enr\Fun_{\mV}(\mJ,\Enr\Fun_{\mV}(\mM^\op,\mV))}(\rho_\mM \circ \F, \rho_\mM \circ \G) \simeq \Mor_{\Enr\Fun_{\mV}(\mJ,\mM)}(\F, \G).$$
The first equivalence is by definition of the $\mV$-enriched functor 
$$\Enr\Fun_{\mV}(\F^\op,\mV): \Enr\Fun_{\mV}(\mM^\op,\mV)^\circledast \to \Enr\Fun_{\mV}(\mJ^\op,\mV)^\circledast $$ of Notation \ref{siew} and Notation \ref{Arat}. The second equivalence is by adjointness. The third equivalence is by Corollary \ref{saewo}.
The fourth equivalence holds by Lemma \ref{faith} and the fact that the enriched Yoneda-embedding is an embedding by Proposition \ref{yofaith}. 

\end{proof}

\begin{corollary}\label{uhoz}
Let $\mV^\ot \to \Comm$ be a presentably symmetric monoidal $\infty$-category, $\mJ^\circledast \to \mV^\ot $ a small enriched $\infty$-category and $\mM^\circledast \to \mV^\ot $ an enriched $\infty$-category that admits small conical colimits and tensors.
Every $\mV$-enriched functor $\F:\mJ^\circledast \to \mM^\circledast$ is the $\mV$-enriched coend of the $\mV$-enriched functor $$\ot \circ (\F \ot \iota_{\mJ^\op}): (\mJ \ot \mJ^\op)^\circledast \xrightarrow{\F \ot \iota_{\mJ^\op}} (\mM \ot \Enr\Fun_\mV(\mJ,\mV))^\circledast \xrightarrow{\ot} \Enr\Fun_\mV(\mJ,\mM)^\circledast,$$
where $\ot$ is the $\mV$-enriched functor
$$(\mM \ot \Enr\Fun_\mV(\mJ,\mV))^\circledast \simeq (\LinFun^\L_\mV(\mV,\mM) \ot \Enr\Fun_\mV(\mJ,\mV))^\circledast\xrightarrow{\circ} \Enr\Fun_\mV(\mJ,\mM)^\circledast.$$

\end{corollary}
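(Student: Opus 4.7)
The strategy is to verify that $\F$ satisfies the defining universal property of the $\mV$-enriched coend of $\rH := \ot \circ (\F \ot \iota_{\mJ^\op}) : (\mJ \ot \mJ^\op)^\circledast \to \Enr\Fun_\mV(\mJ,\mM)^\circledast$, and then conclude by the enriched Yoneda lemma (Proposition \ref{yofaith}). By Definition \ref{Aho} this amounts to producing, naturally in $\G \in \Enr\Fun_\mV(\mJ,\mM)$, a $\mV$-enriched equivalence
\begin{equation*}
\Mor_{\Enr\Fun_\mV(\mJ,\mM)}(\F,\G) \ \simeq\  \Mor_{\Enr\Fun_\mV(\mJ \ot \mJ^\op, \mV)}\bigl(\Mor_{\mJ^\op},\ \Mor_{\Enr\Fun_\mV(\mJ,\mM)}(-,\G) \circ \rH^\op\bigr).
\end{equation*}

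The first step is to compute the right-hand integrand. Unwinding the construction of $\rH$, for every $(\X,\Y) \in \mJ \ot \mJ^\op$ the object $\rH(\X,\Y)$ is the $\mV$-enriched functor $\Z \mapsto \Mor_\mJ(\Y,\Z) \ot \F(\X)$. The enriched Yoneda lemma in the form of Corollary \ref{explicas} (1), packaged over all sequences $\W_1,\ldots,\W_\m \in \mV$ into a morphism-object statement, yields the pointwise equivalence $\Mor_{\Enr\Fun_\mV(\mJ,\mM)}(\rH(\X,\Y), \G) \simeq \Mor_\mM(\F(\X), \G(\Y))$ in $\mV$. I would upgrade this to a $\mV$-enriched equivalence of functors $(\mJ \ot \mJ^\op)^\circledast \to \mV^\circledast$ by combining the functoriality of the Yoneda-type embedding supplied by Corollary \ref{MorFu} and Corollary \ref{ohay} with the tautological description of $\Mor_\mM$ coming from Construction \ref{Enros} and Remark \ref{Reyy}. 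The outcome is that $\Mor_{\Enr\Fun_\mV(\mJ,\mM)}(-,\G) \circ \rH^\op$ is the pullback of $\Mor_\mM \circ (\F^\op \ot \G)$ along the symmetry $\sigma : \mJ \ot \mJ^\op \simeq \mJ^\op \ot \mJ$.

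The second step exploits the symmetry of $\mV$ to identify $\Mor_{\mJ^\op}$ on $\mJ \ot \mJ^\op$ with $\sigma^* \Mor_\mJ$ via Proposition \ref{oppoen} and the symmetric monoidal structure on ${_\mV\B\Enr}$; transporting along $\sigma$ gives
\begin{equation*}
\Mor_{\Enr\Fun_\mV(\mJ \ot \mJ^\op,\mV)}\bigl(\Mor_{\mJ^\op},\ \Mor_{\Enr\Fun_\mV(\mJ,\mM)}(-,\G) \circ \rH^\op\bigr) \simeq \Mor_{\Enr\Fun_\mV(\mJ^\op \ot \mJ,\mV)}\bigl(\Mor_\mJ,\ \Mor_\mM \circ (\F^\op \ot \G)\bigr),
\end{equation*}
which by Remark \ref{ioo} is precisely the $\mV$-enriched end $\int_\mJ \Mor_\mM \circ (\F^\op \ot \G)$. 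Theorem \ref{end} then supplies the final canonical equivalence $\int_\mJ \Mor_\mM \circ (\F^\op \ot \G) \simeq \Mor_{\Enr\Fun_\mV(\mJ,\mM)}(\F,\G)$, natural in $\G$, and chaining the three identifications yields the coend universal property. The main obstacle will be promoting the pointwise Yoneda equivalence of step one to an equivalence of $\mV$-enriched functors on $\mJ \ot \mJ^\op$, not merely a pointwise one: this naturality requires careful bookkeeping through Construction \ref{Enros} and the fully faithful enriched Yoneda embedding of Proposition \ref{yofaith} combined with Corollary \ref{MorFu}, together with the compatibility of $\Mor_{\mJ^\op}$ with the symmetry $\sigma$ provided by Proposition \ref{oppoen}.
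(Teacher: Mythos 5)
Your proposal is correct and follows essentially the same route as the paper: identify the composite $\Mor_{\Enr\Fun_\mV(\mJ,\mM)}(-,\G)\circ(\ot\circ(\F\ot\iota_{\mJ^\op}))^\op$ with $\Mor_\mM\circ(\F^\op\ot\G)$ via the enriched Yoneda lemma (Corollary \ref{explicas}), and then invoke Theorem \ref{end} to match the resulting morphism object out of $\Mor_\mJ$ with $\Mor_{\Enr\Fun_\mV(\mJ,\mM)}(\F,\G)$. The extra care you take with the symmetry $\mJ\ot\mJ^\op\simeq\mJ^\op\ot\mJ$ and with upgrading the pointwise Yoneda equivalence to an equivalence of enriched functors is bookkeeping the paper leaves implicit, not a different argument.
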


\begin{proof}Let $\G:\mJ^\circledast \to \mM^\circledast$ be a $\mV$-enriched functor.
By the enriched Yoneda-lemma (Corollary \ref{explicas}) the composition $$(\mJ^\op \ot \mJ)^\circledast \xrightarrow{(\ot \circ (\F \ot \iota_{\mJ^\op}))^\op} (\Enr\Fun_\mV(\mJ,\mM)^\op)^\circledast \xrightarrow{\Mor_{\Enr\Fun_\mV(\mJ,\mM)}(-,\G)} \mV^\circledast$$
is the $\mV$-enriched functor $ \Mor_\mM \circ (\F^\op \otimes \G).$
Hence by Theorem \ref{end} there is a canonical equivalence $$\Mor_{\Enr\Fun_\mV(\mJ,\mM)}(\F,\G) \simeq \Mor_{\Enr\Fun_{\mV}(\mJ^\op\otimes\mJ,\mV)}(\Mor_\mJ, \Mor_\mM \circ (\F^\op \otimes \G)) \simeq $$$$\Mor_{\Enr\Fun_{\mV}(\mJ^\op\otimes\mJ,\mV)}(\Mor_\mJ, \Mor_{\Enr\Fun_\mV(\mJ,\mM)}(-,\G) \circ (\ot \circ (\F \ot \iota_{\mJ^\op}))^\op).$$
	
	
\end{proof}

\begin{corollary}
Let $\mV^\ot \to \Comm$ be a presentably symmetric monoidal $\infty$-category, $\mJ^\circledast \to \mV^\ot $ a small $\mV$-enriched $\infty$-category, $\mN^\circledast \to \mV^\ot$ a $\mV$-enriched $\infty$-category that admits small conical colimits and tensors and let $\F: (\mJ^\op)^\circledast \to \mV^\circledast $, $\G:\mJ^\circledast \to \mN^\circledast$ be $\mV$-enriched functors. Let $\bar{\G}: \Enr\Fun_{\mV}(\mJ^\op,\mV)^\circledast \to \mN^\circledast$ be the unique $\mV$-enriched left adjoint extension $\bar{\G}: \Enr\Fun_{\mV}(\mJ^\op,\mV)^\circledast \to \mN^\circledast$ 
of $\G$. There is a canonical equivalence in $\mN: $$$ \bar{\G}(\F)
\simeq \int^\mJ \ot \circ (\F \ot \G).$$ 
	
\end{corollary}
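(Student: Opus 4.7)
The plan is to reduce the claim to Corollary \ref{uhoz} applied to the presheaf $\F$ itself, combined with the fact that the $\mV$-enriched left adjoint $\bar{\G}$ preserves $\mV$-enriched coends and $\mV$-tensors. In broad strokes: expand $\F$ as an enriched coend of representables (tensored by its own values), push this coend through $\bar{\G}$, and use the defining property $\bar{\G}\circ\iota_\mJ\simeq \G$ coming from Theorem \ref{unitol}.

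More concretely, first I would apply Corollary \ref{uhoz} with the roles of $\mJ$ played by $\mJ^\op$ and the target $\mM$ played by $\mV$. This presents the $\mV$-enriched presheaf $\F:(\mJ^\op)^\circledast\to\mV^\circledast$ as the enriched coend
$$\F \simeq \int^{\mJ} \ot\circ(\F\ot\iota_\mJ)$$
in $\Enr\Fun_\mV(\mJ^\op,\mV)$, where $\iota_\mJ:\mJ^\circledast\to \Enr\Fun_\mV(\mJ^\op,\mV)^\circledast$ is the enriched Yoneda embedding of Corollary \ref{ohay}; note $\iota_\mJ$ is the image of $\iota_{(\mJ^\op)^\op}$ under the equivalence of Proposition \ref{yofaith}.

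Second, I would verify that $\bar{\G}$ preserves $\mV$-enriched coends. Let $\gamma$ be a $\mV$-enriched right adjoint to $\bar{\G}$; such a $\gamma$ exists because $\bar{\G}$ is a $\mV$-enriched left adjoint between $\mV$-enriched $\infty$-categories with enough structure, and $\mV$-enriched adjunctions in the sense of Definition \ref{ajlt} give rise to natural equivalences of morphism objects. For any $\rH:(\mJ^\op\ot\mJ)^\circledast\to\Enr\Fun_\mV(\mJ^\op,\mV)^\circledast$ and $\X\in\mN$, unwinding Definition \ref{Aho} gives
$$\Mor_\mN\!\bigl(\bar{\G}(\tstyle\int^\mJ\rH),\X\bigr) \simeq \Mor_{\Enr\Fun_\mV(\mJ^\op,\mV)}\!\bigl(\tstyle\int^\mJ\rH,\gamma(\X)\bigr) \simeq \Mor_{\Enr\Fun_\mV(\mJ\ot\mJ^\op,\mV)}\!\bigl(\Mor_{\mJ^\op},\Mor_\mN(-,\X)\circ(\bar{\G}\circ\rH)^\op\bigr),$$
which by the defining universal property exhibits $\bar{\G}(\int^\mJ\rH)$ as $\int^\mJ \bar{\G}\circ\rH$.

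Third, since $\bar{\G}$ is $\mV$-linear (by Theorem \ref{unitol} combined with Lemma \ref{Adj2}), the composition $\bar{\G}\circ\ot$ agrees with $\ot\circ(\id_\mV\ot\bar{\G})$ as $\mV,\mV$-enriched functors $\mV\ot\Enr\Fun_\mV(\mJ^\op,\mV)\to\mN$. Hence
$$\bar{\G}\circ\ot\circ(\F\ot\iota_\mJ)\;\simeq\;\ot\circ\bigl(\F\ot(\bar{\G}\circ\iota_\mJ)\bigr)\;\simeq\;\ot\circ(\F\ot\G),$$
where in the last step I use the defining property $\bar{\G}\circ\iota_\mJ\simeq\G$ of the enriched left adjoint extension (Theorem \ref{unitol}). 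Combining the three steps and applying $\bar{\G}$ to the coend presentation of $\F$ yields
$$\bar{\G}(\F)\;\simeq\;\int^\mJ \bar{\G}\circ\ot\circ(\F\ot\iota_\mJ)\;\simeq\;\int^\mJ\ot\circ(\F\ot\G),$$
as claimed. The main technical point to be careful about is notational, namely that after substituting $\mJ\leadsto\mJ^\op$ in Corollary \ref{uhoz} the Yoneda embedding appearing there becomes precisely $\iota_\mJ$ rather than some variant, so that the final identification $\bar{\G}\circ\iota_\mJ\simeq\G$ matches the defining property of $\bar{\G}$; everything else is formal given preservation of enriched coends and tensors by $\mV$-enriched left adjoints.
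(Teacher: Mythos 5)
Your proposal is correct and follows essentially the same route as the paper's proof: expand $\F$ as the enriched coend $\int^\mJ \ot\circ(\F\ot\iota_\mJ)$ via Corollary \ref{uhoz}, push $\bar{\G}$ through the coend using that enriched left adjoints preserve enriched coends (the paper records this in a preceding remark), and commute $\bar{\G}$ past the tensor using linearity together with $\bar{\G}\circ\iota_\mJ\simeq\G$. The extra care you take with the substitution $\mJ\leadsto\mJ^\op$ in Corollary \ref{uhoz} is a welcome clarification but not a departure from the paper's argument.
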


\begin{proof}
	
By Corollary \ref{uhoz} there is a canonical equivalence
$\F \simeq \int^\mJ \ot \circ (\F \ot \iota_{\mJ})$.
Since $\bar{\G}$ is a $\mV$-enriched left adjoint, it preserves $\mV$-enriched coends.
Moreover because $\bar{\G}$ preserves tensors, the canonical map
$\ot \circ (\F \ot (\bar{\G} \circ \iota_{\mJ})) \to \bar{\G} \circ (\ot \circ (\F \ot \iota_{\mJ})) $ is an equivalence.
We obtain a canonical equivalence $$\bar{\G}(\F) \simeq \bar{\G}(\int^\mJ \ot \circ (\F \ot \iota_{\mJ})) \simeq \int^\mJ \bar{\G}\circ (\ot \circ (\F \ot \iota_{\mJ}))$$$$ \simeq \int^\mJ \ot \circ (\F \ot (\bar{\G} \circ \iota_{\mJ})) \simeq \int^\mJ \ot \circ (\F \ot \G).$$
	
\end{proof}

\begin{corollary}\label{endrel}
Let $\mV^\ot \to \Comm$ be a presentably symmetric monoidal $\infty$-category, $\mC^\circledast \to \mV^\ot,\mD^\circledast \to \mV^\ot,\mE^\circledast \to \mV^\ot $ small $\mV$-enriched $\infty$-categories and $\F$ a $\mV$-enriched profunctor $\mC \to \mD$ and
$\G$ a $\mV$-enriched profunctor $\mD \to \mE$.
For every $\X \in \mC, \Y \in \mE$ there is a canonical equivalence in $\mV: $$$ (\F \ot_{\mD} \G)(\X,\Y)
\simeq \int^\mJ \ot \circ (\F(-,\X) \ot \G(\Y,-)).$$ 

\end{corollary}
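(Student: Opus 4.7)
The plan is to reduce the claim to the preceding coend formula $\bar{\G}(\F) \simeq \int^\mJ \otimes \circ (\F \otimes \G)$ by unwinding the definition of the relative tensor product of profunctors and then pushing the coend formula through an evaluation functor. First I would unwind the definitions: the profunctor $\F$ corresponds, via Theorem \ref{unitol}, to a $\mV$-enriched left adjoint $\bar{\F}: \mP_\mV(\mC)^\circledast \to \mP_\mV(\mD)^\circledast$ that extends the functor $\hat{\F}: \mC^\circledast \to \mP_\mV(\mD)^\circledast, \X \mapsto \F(-,\X)$, and similarly for $\G$ with $\hat{\G}(\Z) = \G(-,\Z) \in \mP_\mV(\mE)$. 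By definition of the relative tensor product, evaluating the composition $\bar{\G}\circ\bar{\F}$ at the representable $\rho_\mC(\X)$ yields $(\F\otimes_\mD\G)(-,\X) \simeq \bar{\G}(\F(-,\X))$ in $\mP_\mV(\mE)$, and evaluating further at $\Y\in\mE$ gives $(\F\otimes_\mD\G)(\Y,\X) \simeq \ev_\Y\bigl(\bar{\G}(\F(-,\X))\bigr)$.

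Next I would apply the preceding corollary with $\mJ = \mD$, $\mN = \mP_\mV(\mE)$ (which admits pointwise small conical colimits and pointwise tensors), $\F := \F(-,\X)\in\Enr\Fun_\mV(\mD^\op,\mV)$, and $\G := \hat{\G}$. This yields
$$\bar{\G}(\F(-,\X)) \ \simeq \ \int^{\mD} \otimes \circ (\F(-,\X) \otimes \hat{\G})$$
in $\mP_\mV(\mE)$, where $\otimes: \mV \otimes \mP_\mV(\mE) \to \mP_\mV(\mE)$ is the pointwise $\mV$-tensor action. I would then push $\ev_\Y$ through the coend. By the remark following Definition \ref{Aho}, any $\mV$-enriched left adjoint preserves $\mV$-enriched coends, and $\ev_\Y: \mP_\mV(\mE)^\circledast \to \mV^\circledast$ admits a $\mV$-enriched right adjoint, namely cotensoring with $\Mor_\mE(-,\Y)$ (existing because $\mP_\mV(\mE)$ is presentably $\mV$-tensored, c.f. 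Corollary \ref{Corfolsa}). Hence
$$\ev_\Y\bigl(\bar{\G}(\F(-,\X))\bigr) \ \simeq \ \int^{\mD} \ev_\Y \circ \otimes \circ (\F(-,\X) \otimes \hat{\G}).$$

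Finally, since $\ev_\Y$ is $\mV$-linear (tensors in $\mP_\mV(\mE)$ are pointwise) and $\ev_\Y \circ \hat{\G}(\Z) = \G(-,\Z)(\Y) = \G(\Y,Z)$, the composition $\ev_\Y \circ \otimes \circ (\F(-,\X) \otimes \hat{\G})$ sends $(\Z_1,\Z_2)$ to $\F(\Z_1,\X) \otimes \G(\Y,\Z_2)$ and so is canonically identified with $\otimes \circ (\F(-,\X) \otimes \G(\Y,-))$ as $\mV$-enriched functors $(\mD^\op \otimes \mD)^\circledast \to \mV^\circledast$. Combining the displays gives the desired equivalence. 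The main obstacle is Step 3, i.e., verifying that $\ev_\Y$ preserves the $\mV$-enriched coend; once one observes that $\mP_\mV(\mE)$ admits pointwise cotensors (so $\ev_\Y$ is a $\mV$-enriched left adjoint) this is immediate, but it is the step where the symmetric monoidal hypothesis on $\mV$ and the presentability of the enriched presheaf $\infty$-category are genuinely used.
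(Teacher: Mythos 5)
Your argument is correct and is the intended derivation: the paper states this corollary without proof, and the natural route is exactly the one you take, namely specializing the preceding corollary to $\mJ=\mD$, $\mN=\mP_\mV(\mE)$, $\F:=\F(-,\X)$, $\G:=\hat{\G}$, and then commuting the evaluation $\ev_\Y$ past the enriched coend. Your identification of $(\F\ot_\mD\G)(-,\X)$ with $\bar{\G}(\F(-,\X))$ matches the definition of the relative tensor product, and your justification that $\ev_\Y$ preserves $\mV$-enriched coends (it is $\mV$-linear and colimit-preserving between presentably tensored $\infty$-categories, hence admits a $\mV$-enriched right adjoint by Lemma \ref{Adj2}, so the remark after Definition \ref{Aho} applies) is the right way to handle the one nontrivial step. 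Note only that the statement as printed contains two typos that you have implicitly corrected: the variables should appear in the order $(\Y,\X)$ to match the convention that a profunctor $\mC\to\mE$ is a functor on $(\mE^\op)^\circledast\times\mC^\circledast$, and the coend should be taken over $\mD$ rather than the undeclared $\mJ$.
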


\bibliographystyle{plain}
\bibliography{ma}
	
\end{document}